\def\BState{\State\hskip-\ALG@thistlm}
\newtheorem{thm}{Theorem}[section]
\newtheorem{prop}[thm]{Proposition}
\newtheorem{cor}[thm]{Corollary}
\newtheorem{lem}[thm]{Lemma}
\newtheorem{conj}[thm]{Conjecture}
\newtheorem{assump}{}
\newtheorem{assumpB}{}
\newtheorem{conv}[thm]{Convention}
\theoremstyle{definition}
\newtheorem{defn}[thm]{Definition}
\theoremstyle{remark}
\newtheorem{quest}[thm]{Question}
\newtheorem{rem}[thm]{Remark}
\newtheorem*{example}{Example}
\newtheorem{examples}{Examples}
\newcommand{\isom}{\textrm{Isom}} 
\newcommand{\out}{\textrm{Out}} 
\newcommand{\mcg}{\textrm{Mod}(\Sigma_g)}  
\newcommand{\Gx }{\mathscr{G} (G, S)}
\newcommand{\GP }{(G, \mathcal P)}
\newcommand{\act}{\curvearrowright}
\newcommand{\Gf}{\overline{G}_{\mathrm{F}}}
\newcommand{\pGf}{\partial_{\mathrm{F}}{G}}
\newcommand{\pG}{\Lambda(Go)}
\newcommand{\dG}{\Lambda G\Join\Lambda G}
\newcommand{\cG}{\Lambda_c{(Go)}}
\newcommand{\mG}{\Lambda_m{(Go)}}
\newcommand{\dT}{\mathcal T_g \Join\mathcal T_g}
\newcommand{\p}{\mathcal P}
\newcommand{\f}{\mathscr F}
\newcommand{\s}{\mathscr S}
\newcommand{\U}{\mathrm Y}
\newcommand{\bU}{\overline{\mathrm Y}}
\newcommand{\hU}{\partial_{\mathrm H}{\mathrm Y}}
\newcommand{\pU}{\partial{\mathrm Y}}
\newcommand{\bF}{\partial_{\mathrm F}\mathrm Y} 
\newcommand{\bG}{\partial_{\mathrm G}\mathrm Y} 
\newcommand{\bB}{\partial_{\mathrm B}\mathrm Y} 
\newcommand{\bV}{\partial_{\mathrm{Vis}}\mathrm Y} 
\newcommand{\bR}{\partial_{\mathrm{R}}\mathrm Y}
\newcommand{\bTh}{\partial_{\mathrm{Th}}\mathcal T_g}
\newcommand{\bGM}{\partial_{\mathrm{GM}}\mathcal T_g}
\newcommand{\dirac}[1]{{\mbox{Dirac}}{(#1)}}
\newcommand{\e}[1]{\omega_{#1}}
\newcommand{\HD}{\textrm{HDim}}
\newcommand{\ax}{\textrm{Ax}}
\newcommand{\pmf}{\mathscr {PMF}}
\newcommand{\mf}{\mathscr {MF}}
\newcommand{\minf}{\mathscr {MIN}}
\newcommand{\ue}{\mathscr {UE}} 
\newcommand{\UQT}{\mathcal {Q}^1\mathcal T_g}
\newcommand{\UQM}{\mathcal {Q}^1\mathcal M_g}
\newcommand{\M}{\mathcal {M}}
\newcommand{\T}{\mathcal {T}_g}
\newcommand{\QD}{\mathcal {Q}\mathcal T_g}
\newcommand{\ext}{\mathrm {Ext}}
\newcommand{\QT}{\mathcal C(\mathscr F)}
\newcommand{\PC}{\mathcal P_K(\mathscr F)}
\newcommand{\len }{\ell}
\newtheoremstyle{query}%
{}{}
{\color{red}}
{}
{\sffamily\bfseries}{:}{12pt}
{}
\theoremstyle{query}
\newcommand{\ywy}[1]{{\color{red}{#1}}}
\begin{document}

\title[Conformal dynamics at infinity]{Conformal dynamics at infinity for groups with contracting elements}

\author{Wenyuan Yang}

\email{wyang@math.pku.edu.cn}
\thanks{Partially supported by National Key R \& D Program of China (SQ2020YFA070059) and  National Natural Science Foundation of China (No. 12131009 and No. 12326601)}






\begin{abstract}
This paper develops a   theory of conformal density at  infinity  for groups with contracting elements. We start by introducing a  class of convergence boundary encompassing  many known hyperbolic-like boundaries, on which a detailed study of conical points and Myrberg points is carried out. The basic theory of conformal density   is then established on the convergence boundary, including the Sullivan shadow lemma and a Hopf--Tsuji--Sullivan dichotomy. This gives a unification of the theory of   conformal density on the Gromov and Floyd boundary for (relatively) hyperbolic groups,  the visual boundary for rank-1 CAT(0) groups, and  Thurston boundary for mapping class groups. Besides that, the conformal density on the horofunction boundary provides a new important example of our general theory. Applications include the identification of Poisson boundary of random walks,  the co-growth problem of divergent groups, measure theoretical results for CAT(0) groups and mapping class groups.


\end{abstract}

\keywords{Patterson--Sullivan measures, contracting elements, Myrberg points, horofunction boundary, ergodicity}

\subjclass{20F65,20F67,37D40,37B05,20F69,30F60}


\maketitle
\setcounter{tocdepth}{1} \tableofcontents


\section{Introduction}

Suppose that a group $G$ admits a proper and  isometric action on a proper
geodesic metric space $(\U, d)$. The group $G$ is assumed to be \textit{non-elementary}: there is no finite index subgroup isomorphic to the integer group $\mathbb Z$ or the trivial group.  This paper continues the investigation of such group actions with a contracting element  from the point of  dynamics at infinity. This provides a complementary view to several studies of growth problems carried out in  \cite{YANG10,YANG11,HYANG, GYANG}.  

The contracting property captures the key feature of quasi-geodesics in Gromov hyperbolic spaces, rank-1 geodesics in CAT(0) spaces, and thick geodesics in Teichm{\"u}ller spaces, and many others. In recent years,   this notion and its variants have been proven fruitful in the setup  of general metric spaces.

Let $X$ be a closed subset of $\U$, and $\pi_X: \U\to X$ be the   shortest projection (set-valued) map. We say that $X$ is \textit{$C$-contracting} for $C\ge 0$ if     $$\|\pi_X(B)\|\le C$$ for any metric ball $B$ disjoint with $X$, where $\|\cdot \|$ denotes the diameter of a set.  An element of infinite order is called \textit{contracting}, if it acts by translation on a contracting quasi-geodesic.  The prototype of a contracting element is  a hyperbolic isometry on a hyperbolic space, but many more  examples are furnished by the following:
\begin{itemize}
\item
hyperbolic elements in the relatively hyperbolic groups, cf. \cite{GePo4}, \cite{GePo2};
\item
rank-1 elements in  proper CAT(0) spaces, cf. \cite{Ballmann}, \cite{BF2}; 
\item
pseudo-Anosov elements in Teichm{\"u}ller spaces with Teichm{\"u}ller metric, cf. \cite{Minsky}.
\end{itemize}
The first goal of this paper is aiming to develop a boundary theory for a proper action with contracting elements. In particular, our study   is motivated by and attempts to   axiomize the corresponding theory of the following  
\begin{itemize}
\item
Gromov boundary of  hyperbolic spaces;
\item
Floyd boundary and ends boundary for locally finite graphs;
\item
Visual boundary of proper CAT(0) spaces;
\item
Thurston boundary, and Gardiner--Masur boundary of Teichm{\"u}ller spaces. 
\end{itemize}
One of the basic findings in the paper is that these boundaries are in a certain sense,  quotients  or of its variants, of the horofunction boundary of general metric spaces.


Our results  comprise   the following three parts. 

In the first part, we define a class of compactification for metric spaces, called \textit{convergence boundary}, reflecting the convergence behaviour of the contracting subsets to infinity. The main novelty is that we develop the notion of conical   points and of more intrinsic Myrberg   points in line with  the corresponding notions in the theory of convergence groups.  

The second part   studies the conformal measures on the convergence compactifications via  the (partial) shadow lemma. We prove a Hopf--Tsuji--Sullivan theorem, setting up the equivalences between the divergence action, and positive/full conformal measure  supported on the Myrberg set.

The last part is devoted to applications to growth problems, random walks,  measure vanishing results in CAT(0) spaces, and Thurston boundary of mapping class groups. 

We start by introducing the so-called convergence compactification. 
\subsection{Convergence compactification}

A \textit{bordification} of a metric space $\U$ is a metrizable Hausdorff   topological space $\bU$ in which $\U$ embeds as an open and dense subset, and the \textit{boundary} is $\pU:=\bU\setminus \U$.  
We say that a sequence of subsets $X_n$ is   \textit{escaping} if $d(o, X_n)\to \infty$ for some (thus any) basepoint $o\in \U$, and  \textit{converges} to a point $\xi\in \pU$ if any sequence of points $x_n\in X_n$ converges to $\xi$.
 
We are interested in a   \textit{compactification} of $\U$:  $\bU$ and   $\pU$ are compact, though the language of bordification is convenient, as $\U$ is bordified by any subset of the boundary.
 
A fundamental notion in this work is the so-called  \textit{convergence bordification} $\bU$ so that   the following two assumptions are true: 
\begin{enumerate}
\item[\textbf{(A)}]
Any contracting geodesic ray $X$ converges to a   boundary point $\xi\in \pU$, and any sequence of    $y_n\in \U$ with escaping projections $\pi_X(y_n)$ tends to the same $\xi$.
\item[\textbf{(B)}]
Let $X_n$ be any escaping sequence of  $C$-contracting subsets. Then for any      $x\in \U$, a subsequence of their \textit{cones} defined as follows
$$
\Omega_x(X_n):=\{y\in \U: \|[x,y]\cap N_C(X_n)\|\ge 10C\}
$$
converges to a boundary point in $\pU$.
\end{enumerate} 
By definition, the  convergence   bordification persists under taking Hausdorff quotient: if  we have a quotient map of $\pU$ onto a Hausdorff metrizable space, denoted by $[\cdot]$ as follows   $$[\cdot]:\quad \pU\to  [\pU]$$  extending the identification $\U\to \U$, then $[\pU]$ is a convergence   bordification of $\U$. As one-point compactification is always a  convergence bordification, we say that  a convergence bordification   is   \textit{nontrivial} if the following assumption holds
\begin{enumerate} 
\item[\textbf{(C)}]
the set  $\mathcal C$ of \textit{non-pinched} points $\xi\in \pU$ is non-empty: if $x_n, y_n\in \U$ converge to $\xi$, then $[x_n, y_n]$ is an  escaping sequence. 
 
\end{enumerate}  
See  a schematic illustration of Assumptions (A) and (B) in Fig. \ref{fig:AssumpsAB}.

\begin{figure}[htb] 
\begin{minipage}{0.40\linewidth}

\includegraphics[width=0.98\linewidth]{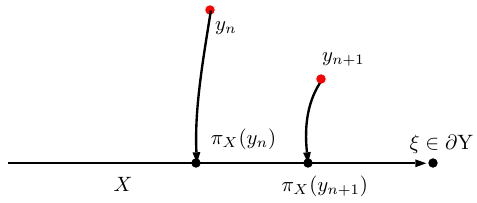} 
\end{minipage}
\medskip
\begin{minipage}{0.59\linewidth}
\includegraphics[width=0.98\linewidth]{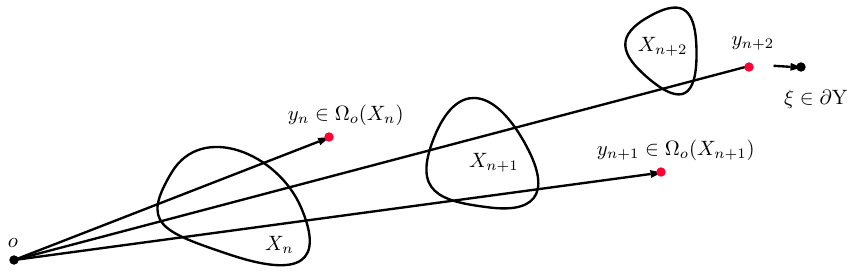} 
    
\end{minipage}
\caption{Assumption A (left) and Assumption B (right)} 
\label{fig:AssumpsAB}
\end{figure}


Let us keep in mind the  following examples, which motivate and  satisfy the above two assumptions:
\begin{examples} \label{ConvBdryEx1}
\begin{enumerate}

\item
Gromov boundary $\bG$ of a  (possibly non-proper) Gromov hyperbolic space $\U$ gives a convergence bordification, where  all boundary points are non-pinched.

\item
Bowditch boundary $\bB$ of (the Cayley graph of) a relatively hyperbolic group $\U$  gives a convergence compactification (\cite{Bow1, Ge2}). Non-pinched points  are exactly  conical points, while   certain bi-infinite geodesics exist with two half-rays tending to the same parabolic point.

\item
Floyd boundary  $\bF$ of any locally finite graph $\U$  forms a convergence compactification (\cite{Floyd, Ge2}). Any pinched point  admits a horocycle, so  non-pinched points contain at least all conical points. See Lemma \ref{FloydConvBdry}.

The same holds for the end boundary of a locally finite graph (Corollary \ref{EndConvBdry}).

\item
Visual boundary $\bV$  of a proper CAT(0) space $\U$ is a convergence boundary, where  all boundary points are non-pinched. See Lemma \ref{ContractiveVisualBdry}.
\end{enumerate}
\end{examples}

In spite of these examples, one main purpose of our investigation is to setup a unified theory for several boundaries of Teichm\"uller spaces (Thurston boundary and Gardiner--Masur boundary, etc), Roller boundary of CAT(0) cube complexes, and more generally, the horofunction boundary of any proper metric space. For boundaries of Teichm\"uller spaces, a typical phenomenon is that  a geodesic ray  may not admit a unique limit point in the boundary (\cite{Len08}), but generic ones do have (\cite{Ma82a}). Moreover, in pursuing the common feature, we found morally that all these  boundaries have much like  the horofunction boundary.   In the end, we are able to formulate   the above assumptions (A) and (B) for these boundaries, modulo   a certain ambiguity of the convergence described as follows. 

To absorb  the pathological convergence, we  endow an $\isom(\U)$-invariant partition $[\cdot]$ on the boundary. This might be artificial at the first sight, but partitioning  boundary could be found quite naturally in the study of boundary comparison. For instance, establishing the topological  comparison map  between Floyd boundary, Bowditch boundary and Martin boundary has generated many applications in the study of relatively hyperbolic groups and random walks (\cite{Ge2, GePo2, GePo3, GGPY}). Here we shall go one step further to allow  that the corresponding quotient space might be non-Hausdorff.   

Given such a partition $[\cdot]$,  the bordification $\bU=\U\cup \pU$ is called a \textit{convergence bordification} if the convergence  to a point $\xi$ in the     the assumptions (A),(B) and (C) is replaced by   the convergence  to \textbf{some} point in its equivalent class $[\xi]$.   The above Examples   \ref{ConvBdryEx1}  of convergence boundaries have  been equipped with \emph{maximal} partitions (i.e. $[\xi]$ are singletons). See \ref{AssumpA}, \ref{AssumpB}, \ref{AssumpC} in \textsection \ref{SecBoundary} for  precise formulations.

From a different point of view, we could understand the convergence in the assumptions (A), (B) and (C) after passing to the quotient  of $\pU$ by identifying points in the same $[\cdot]$-class
$$\begin{aligned}
[\cdot]: \quad  \pU &\longrightarrow [\pU]:=\pU/[\sim]\\
\xi &\longmapsto [\xi]
\end{aligned}
$$
As $[\cdot]$ may not necessarily  form a closed relation, the quotient $[\pU]$ might be non-Hausdorff or even not $T_1$ (that is, $[\cdot]$-classes are not necessarily closed subsets).  We say that the partition $[\cdot]$ is \textit{maximal} if every $[\cdot]$-class is singleton, and it is \textit{trivial} if $\pU$ is a $[\cdot]$-class. We shall refer to $[\pU]$ as the \textit{reduced} convergence boundary.

Introducing the partition is  indispensable in obtaining a hyperbolic-like boundary, as   certain natural subsets in the boundary are distinguishable in a   view of negative curvature.  The following examples illustrate some aspects of such consideration.

\begin{examples}\label{ConvBdryEx2}
Let $\T$ be the Teichm{\"u}ller space of a closed orientable surface of genus $g\ge 2$, endowed with the following two boundaries  (recalled in \textsection \ref{SecThurstonBdry} in details):
\begin{enumerate}
\item Thurston boundary $\bTh$: it is the set $\pmf$ of projective measured foliations, which is a topological sphere of dimension $6g-7$ (\cite{FLP}). Kaimanovich--Masur \cite{KaMasur} considered a partition $[\cdot]$ on $\pmf$ which is induced by  an intersection form. The restriction to the subset $\minf$ of  the minimal measured foliations is closed. Its quotient, the (Hausdorff) ending lamination space,  can be identified with the Gromov boundary of the  complex  of curves on $\Sigma_g$ (\cite{Klarreich, H06}).   Inside $\minf$, the partition is {maximal} exactly on the subset $\ue$ of   uniquely ergodic  points: their equivalent classes are singletons.

\item
Gardiner--Masur boundary $\bGM$ (\cite{GM91}): it is   homeomorphic to the horofunction boundary of $\T$ with respect to the Teichm{\"u}ller metric (\cite{LS14}). It turned out that $\pmf$ embeds as a proper subset (with a different subspace topology) in $\bGM$. The relation between $\bTh$ and $\bGM$ has been carefully investigated in \cite{Miy08, Miy13,Miy14a,Walsh19}.
\end{enumerate} 
\end{examples}

\begin{examples}\label{ConvBdryEx3}
The horofunction boundary $\hU$ was introduced  by Gromov for any metric space $\U$. Two horofunctions are said to be \textit{equivalent} if they differ  by a bounded amount. This defines a  \textit{finite difference relation} on $\hU$, and the quotient $[\hU]$ is usually called \textit{reduced} horofunction boundary. Here are three examples   we are interested in. 
\begin{enumerate}
    \item 
    If $\U$ is a hyperbolic space,  the finite difference relation can be shown  closed, for two horofunctions in a $[\cdot]$-class have a uniformly bounded difference. Thus, the quotient of the horofunction boundary is a Hausdorff space, which is homeomorphic to the Gromov boundary. A non-trivial finite difference relation arises naturally when taking a direct product of a hyperbolic graph with an interval, though the Gromov boundary is kept same.
    \item 
    Roller boundary of a locally finite CAT(0) cubical complex is homeomorphic to the horofunction boundary of $\U$ with combinatorial metric (\cite{FLM}). Every boundary point is represented by a geodesic ray and the finite difference of horofunctions is equivalent to the finite symmetric difference of boundary points viewed as ultrafilters. The quotient is homeomorphic to the combinatorial boundary studied in  \cite{Genevois}. See Lemma \ref{Roller}. 
    \item
    Martin boundary of a finitely supported random walk on a non-amenable group is homeomorphic to the horofunction boundary defined using (generally non-geodesic) Green metric.  The points  on which the   finite difference relation restricts to be a maximal partition are minimal harmonic functions. The resulting  \textit{minimal} Martin boundary is  essential in representing all positive harmonic functions. 
\end{enumerate}
\end{examples}

Our   first result  confirms that the horofunction boundary with the finite difference partition provides a  nontrivial convergence compactification.  The proof is presented  in \textsection\ref{SecHorobdry}.
\begin{thm}\label{ContractiveThm1}
The  horofunction boundary $\hU$ of a proper geodesic metric space $\U$  with contracting subsets is a convergence boundary, with finite difference relation, so that all boundary points are non-pinched. 
\end{thm}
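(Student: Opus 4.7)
The strategy is to verify Assumptions (A), (B), and (C) for the horofunction boundary $\bX$ equipped with the finite difference partition. For (A), I start with a contracting geodesic ray $X = \{x_t\}_{t \ge 0}$. The Busemann function $b_X(z) := \lim_{t \to \infty}(d(z, x_t) - d(o, x_t))$ exists pointwise, since the expression is monotone in $t$ and bounded by $d(o,z)$, and defines a horofunction providing the required limit $\xi_X \in \bX$. For the second clause of (A), when $\pi_X(y_n)$ exits along $X$, the contracting projection property forces geodesics $[z, y_n]$ to pass within a uniform neighbourhood of $\pi_X(z)$ and $\pi_X(y_n)$. Decomposing $d(z, y_n)$ and $d(o, y_n)$ along these geodesics, the ``tails'' beyond $\pi_X(y_n)$ cancel up to bounded error, leaving a quantity that converges modulo additive constants to $b_X(z)$. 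Hence $h_{y_n}$ converges to a horofunction in the same finite difference class as $\xi_X$.

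For (B), consider an exiting sequence of $C$-contracting subsets $X_n$ and a point $y_n \in \Omega_x(X_n)$, so that $[x, y_n]$ meets $X_n$ at some $z_n \in X_n$. By contracting, for any fixed $z \in \U$ the geodesic $[z, y_n]$ also comes within a bounded neighbourhood of $X_n$ (once $\pi_{X_n}(z)$ and $\pi_{X_n}(y_n)$ are far apart, which holds eventually since $X_n$ is exiting). Consequently $h_{y_n}(z) = d(z, y_n) - d(o, y_n)$ decomposes, up to $O(1)$ error, as $d(z, w_n) - d(o, w'_n) + C(X_n)$ with $w_n, w'_n$ near the projections of $z, o$ on $X_n$; the long ``tail'' along $y_n$ cancels. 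Compactness of $\bX$ extracts a subsequence along which the reference horofunctions $h_{w_n}$ converge, and the $O(1)$ error is absorbed by the finite difference equivalence. Applied to two sequences in $\Omega_x(X_n)$, the same argument shows they converge to the same finite difference class.

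For (C), the most delicate step, I argue by contradiction. Suppose $x_n, y_n \to [\xi]$, so $h_{x_n} \to h$ and $h_{y_n} \to h'$ on compacta with $|h - h'| \le K$ globally, yet $[x_n, y_n]$ does not exit: there exist $z_n \in [x_n, y_n]$ with $d(o, z_n) \le M$. Properness lets me pass to a subsequence so that $z_n \to z_* \in \U$, and Arzel\`a-Ascoli with a diagonal extraction yields arc-length parametrizations $\sigma_n \colon [0, d(z_n, x_n)] \to [z_n, x_n]$ and $\tau_n \colon [0, d(z_n, y_n)] \to [z_n, y_n]$ converging locally uniformly to geodesic rays $\gamma, \gamma'$ emanating from $z_*$. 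Since $\sigma_n \cup \tau_n$ parametrizes the geodesic $[x_n, y_n]$ through $z_n$, the union $\gamma \cup \gamma'$ is a bi-infinite geodesic through $z_*$. The key computation is that $\sigma_n(t)$ lies at distance $t$ from $z_n$ toward $x_n$ on $[x_n, y_n]$, giving $d(\sigma_n(t), x_n) = d(z_n, x_n) - t$ and $d(\sigma_n(t), y_n) = d(z_n, y_n) + t$, whence $h_{x_n}(\sigma_n(t)) = h_{x_n}(z_n) - t$ and $h_{y_n}(\sigma_n(t)) = h_{y_n}(z_n) + t$. Letting $n \to \infty$ and using uniform convergence on the compact ball $B(o, M + t + 1)$ yields $h(\gamma(t)) = h(z_*) - t$ and $h'(\gamma(t)) = h'(z_*) + t$. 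Therefore $(h - h')(\gamma(t)) = (h(z_*) - h'(z_*)) - 2t \to -\infty$ as $t \to \infty$, contradicting $|h - h'| \le K$.

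The main obstacle is precisely the rigidity in (C): producing a bi-infinite geodesic from the non-exit hypothesis and extracting the \emph{slope $\mp 1$ linearity} of horofunctions along geodesic limits of $[z_n, x_n]$ and $[z_n, y_n]$. Once this linearity is recorded, the \emph{global} finite difference bound inevitably fails along the long geodesic, forcing the contradiction. It is worth noting that the contracting hypothesis on $\U$ enters only in the verifications of (A) and (B); step (C) requires only properness, the geodesic structure, and the exact cancellation $d(\sigma_n(t), x_n) = d(z_n, x_n) - t$ along segments.
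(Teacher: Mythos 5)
Your proposal is correct and follows essentially the same route as the paper's proof in \textsection\ref{SSecAssump}. For (A) you rely on the contracting projection to show $b_{y_n}(z)$ and $b_{\pi_X(y_n)}(z)$ differ by a bounded amount, which is exactly Lemma~\ref{ZLocusLem}; for (B) you decompose distances through the exit point in $N_C(X_n)$ and pass to a subsequence, as in Lemma~\ref{UnifDiffLem}; and for (C) you take a locally uniform limit of the non-exiting geodesics $[x_n,y_n]$ to a bi-infinite geodesic and use the slope-$\mp 1$ linearity of $b_{x_n}$ and $b_{y_n}$ along it to contradict the global finite-difference bound, which is precisely the computation in Lemma~\ref{NoPinchedLineLem}. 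One small imprecision: in (A) the map $t\mapsto d(z,x_t)-d(o,x_t)$ is monotone only when the basepoint $o$ lies on the ray; for a general basepoint one should instead invoke monotonicity of $t\mapsto d(z,x_t)-t$ with $x_0$ as basepoint and then shift, though the convergence claim is of course still correct.
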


Gardiner--Masur boundary and Roller boundary   are  the horofunction boundary of Teichm\"uller spaces and of CAT(0) cube complexes endowed with Teichm\"uller metric and combinatorial metric respectively. Hence, they provide    examples of convergence boundary with finite difference relation.

Our next result says that Thurston boundary is  also a convergence boundary for Teichm\"uller space endowed with Teichm\"uller metric. By a result of \cite{Walsh19}, Thurston boundary is actually the horofunction boundary of the Teichm\"uller space endowed with Thurston's Lipschitz metric.  The following result proved in Theorem \ref{ThurstonBdryContractive} is thus not derived as a corollary from Theorem \ref{ContractiveThm1}. 
\begin{thm}\label{ContractiveThm2}
The Thurston boundary $\bTh$   is a convergence boundary, with respect to the above Kaimanovich--Masur partition, so that the set of non-pinched points contains the set of uniquely ergodic points. 
\end{thm}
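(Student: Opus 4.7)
The Kaimanovich--Masur partition $[\cdot]$ on $\pmf$ is defined by $\xi\sim\eta \iff i(\xi,\eta)=0$, using the (extended) geometric intersection form. We verify Assumptions \ref{AssumpA}, \ref{AssumpB} and \ref{AssumpC} for the Thurston compactification relative to this partition, with the non-pinched set containing $\minf$.

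\emph{Step 1 (Assumptions \ref{AssumpA} and \ref{AssumpB}).} Contracting Teichm\"uller rays are exactly the thick rays of bounded combinatorial type (Minsky--Rafi). By Masur's criterion the vertical foliation $F$ of such a ray $X$ is uniquely ergodic, so $[F]=\{F\}$ and $X(t)\to F$ in $\bTh$; moreover, if $\pi_X(y_n)$ exits then the contracting property forces $[o,y_n]$ to shadow $X$ arbitrarily far, so $y_n\to F$ as well, which is \ref{AssumpA}. For \ref{AssumpB}, given exiting $C$-contracting subsets $X_n$ and $y_n\in\Omega_x(X_n)$, choose $z_n\in X_n\cap[x,y_n]$ and let $\eta\in\pmf$ be a projective limit of $z_n$. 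By Kaimanovich--Masur, the Teichm\"uller geodesics $[x,z_n]$ accumulate on rays whose forward endpoints lie in $[\eta]$; the contracting property forces $[x,y_n]$ to shadow $[x,z_n]$ until the passage through $X_n$, and since $d(x,y_n)\ge d(x,z_n)\to\infty$, the cone $\Omega_x(X_n)$ also converges to $[\eta]$.

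\emph{Step 2 ($\minf\subseteq\mathcal C$).} Fix $\xi\in\minf$ and suppose for contradiction that $x_n,y_n$ converge to points of $[\xi]$ while the Teichm\"uller segments $\gamma_n=[x_n,y_n]$ do \emph{not} exit. Pass to a subsequence where each $\gamma_n$ meets a fixed ball $B(p,R)$; after shifting parametrizations to $\gamma_n(0)\in B(p,R)$ and invoking local compactness of the unit-area quadratic differential bundle over $\T$, extract a further subsequence so that $\gamma_n$ converges on compact subintervals to a bi-infinite Teichm\"uller geodesic $\gamma$ with vertical/horizontal foliations $F^+,F^-\in\pmf$. Comparing extremal-length growth rates along $\gamma_n$ and $\gamma$ identifies the Thurston limits of $y_n$ and $x_n$ with $F^+$ and $F^-$ up to $[\sim]$, so that $F^\pm\in[\xi]$, i.e.\ $i(F^\pm,\xi)=0$. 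Since $\xi$ is minimal, any measured foliation with zero intersection against $\xi$ is supported on the same topological foliation as $\xi$; hence $F^+$ and $F^-$ share a common support, and so $i(F^+,F^-)=0$. This contradicts the universal inequality $i(F^+,F^-)>0$ for the vertical and horizontal foliations of a nonzero quadratic differential.

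\emph{Main obstacle.} The technical heart is Step 2: extracting the limiting geodesic $\gamma$ requires care (descent to moduli space for compactness of quadratic differentials), and identifying $F^\pm$ as the Thurston limits of $y_n,x_n$ requires a quantitative comparison of extremal-length growth along $\gamma_n$ and $\gamma$. Once these identifications are in hand, minimality of $\xi$ upgrades the Kaimanovich--Masur relations $i(F^\pm,\xi)=0$ to a coincidence of topological supports of $F^+$ and $F^-$, which is exactly what fails outside $\minf$ and is consistent with the fact that non-minimal foliations carry ``horocycles'' in $\bTh$.
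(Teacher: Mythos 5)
Your proof has a genuine gap in the verification of \ref{AssumpB}, and it stems from a misstatement of the partition at the outset. The Kaimanovich--Masur partition is \emph{not} ``$\xi\sim\eta\iff I(\xi,\eta)=0$'' on all of $\pmf$: that relation is only an equivalence relation on $\minf$ (by Rees, zero intersection with a minimal foliation forces minimality and topological equivalence). On $\pmf\setminus\minf$ the classes are defined by sharing the same set of curves $c\in\s$ with $I(\cdot,c)=0$. Your Step 1 argument for \ref{AssumpB} chooses $z_n\in X_n\cap[x,y_n]$ with projective limit $\eta$ and then asserts that the cones converge into $[\eta]$ because $d(x,y_n)\ge d(x,z_n)\to\infty$. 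That deduction is exactly Kaimanovich--Masur's convergence lemma (Lemma \ref{PMFConv} in the paper), and it is only available when $\eta\in\minf$. The hard case of \ref{AssumpB} is precisely when the accumulation point of $\pi_{X_n}(o)$ is \emph{non-minimal}; nothing in your argument rules this out, and for non-minimal limits one must show directly that the limit of $y_n$ has the same vanishing-intersection curve system as $\eta$. The paper does this via Miyachi's continuous extension of the Gromov product to the Gardiner--Masur boundary (Lemma \ref{GromovProduct}): from $|d(o,y_n)-d(o,x_n)-d(x_n,y_n)|\le 2C$ one gets $\langle y_n,w_n\rangle_o\ge\langle x_n,w_n\rangle_o-C$ for $w_n\to c$, transferring $I(\xi,c)=0$ to $I(\eta,c)=0$. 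Without an argument of this kind your verification of \ref{AssumpB} is incomplete.

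Your Step 2 takes a genuinely different route from the paper for $\minf\subseteq\mathcal C$, but it also has an unproven step. The paper's argument is one line: for $\xi\in\minf$ all points of $[\xi]$ are pairwise of zero intersection, so Lemma \ref{GromovProduct} gives $\langle x_n,y_n\rangle_o\to\infty$, and the elementary inequality $\langle x_n,y_n\rangle_o\le d(o,[x_n,y_n])$ (valid in any geodesic space) forces the segments to exit. Your compactness argument extracts a limiting bi-infinite geodesic $\gamma$ with vertical/horizontal foliations $F^\pm$ and then claims that ``comparing extremal-length growth rates'' identifies the Thurston limits of $y_n,x_n$ with $[F^+],[F^-]$. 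That identification is the whole difficulty: endpoints of long geodesic segments need not converge in $\bTh$ to anything related to the vertical foliation of the limiting geodesic (Teichm\"uller rays can fail to converge in $\bTh$ at all), and the quantitative comparison you allude to is essentially a reproof of Kaimanovich--Masur's Lemma 1.4.1, which again requires minimality hypotheses you have not verified for $F^\pm$. If that identification were supplied, the rest of Step 2 (Rees's theorem plus transversality of the vertical and horizontal foliations of a quadratic differential) would close the argument, but as written the step is asserted rather than proved; the Gromov-product route is both shorter and avoids the issue entirely.
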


As fore-mentioned in Examples \ref{ConvBdryEx3}, the reduced horofunction boundary of a hyperbolic space is metrizable. In general,  the reduced $[\hU]$ of a metric space may be neither Hausdorff nor second countable. This leads to technical issues in the  topological and measure theoretical applications using convergence boundary. 

A key observation in fixing the issues is that, if  we consider a non-elementary group $G< \isom(\U,d)$ so that
\begin{equation}\label{GActUAssump}
G \text{ acts     properly on  a proper geodesic space } (\U, d) \text{ with a contracting element}   
\end{equation}
then a smaller subset of the boundary consisting of the so-called \textit{Myrberg   points}  would have   better properties being Hausdorff, second countable and metrizable  on the quotient space.

If $G\act \U$ is of divergent type,  we shall establish in \textsection \ref{SSConfdensity} that Myrberg   points   are generic in conformal measure, and furthermore, the push-forward measures to the quotient become unique and ergodic in \textsection \ref{SecUniqueness}. 

Before moving on the measurable theoretical applications, let us begin a topological investigation of Myrberg   points and their relatives conical   points. 

\subsection{Conical and Myrberg limit sets}

Conical   points are an important sub-class of limit points, which have been well-studied in Kleinian groups and in a more general class of convergence group actions (see Section \ref{SecRHG}). Myrberg points form a strictly smaller subset of conical points, on which the dynamics is quite powerful, but seems to be under-estimated    in literature.   The main novelty here is to formulate the notion of  Myrberg   and conical limit points for a general proper action as in (\ref{GActUAssump}), which shares desirable properties as much as in a convergence group action (see Subsection \ref{SSecConvAction}).  


\subsubsection*{Dynamics on the limit set} We   start discussing the notion of limit sets for any group $G\act \U$ as in (\ref{GActUAssump}).

A main consequence of the convergence boundary allows us to  develop a ``good'' theory of limit sets  in a sense  that the limit set is the unique and minimal invariant closed subset, up to taking $[\cdot]$-closure.   

Define the \textit{limit set} $\pG$ to be the set of accumulation points of a $G$-orbit $Go$ in $\pU$. It may depend on the choice of $o\in\U$ but    the $[\cdot]$-closure $[\pG]$ does not by Lemma \ref{[LimitSet]}. By \ref{AssumpA}, if $h$ is a contracting element, the positive and negative half rays (i.e. $n>0$ and $n<0$) of its quasi-geodesic 
$$
n\in\mathbb Z\longmapsto h^no\in \U
$$ determine two $[\cdot]$-classes of boundary points denoted by $[h^+],[h^-]$. The union $[h^\pm]=[h^-]\cup[h^+]$  are the $[\cdot]$-closure of the limit set of  $\langle h\rangle\cdot o$. We say that $h$ is  \textit{non-pinched} if its fixed points $[h^\pm]$ are non-pinched in \ref{AssumpC}. Equivalently, this is amount to asking $[h^-]\ne [h^+]$ by Lemma \ref{DisjFixpts=Non-pinched}. In a discrete group, any two non-pinched contracting elements have either the same fixed points or disjoint fixed points (Lemma \ref{DisjOrSameFixedSet}).

In what follows, we always assume that the convergence boundary $\pU$ under consideration is \textit{non-elementary} for the action $G\act \U$ in (\ref{GActUAssump}). That is, $G$ contains a non-pinched contracting element, whose fixed points are distinct $[\cdot]$-classes in $\pU$. 

\begin{lem}[NS dynamics, see Lemma \ref{SouthNorthLem}]\label{SouthNorthLemIntro}
Let $h\in \isom(\U)$ be a non-pinched contracting isometry. Then, under \ref{AssumpA} and \ref{AssumpC}, the action of $\langle h\rangle$ on $\pU\setminus [h^\pm]$ has the North--South dynamics: 

\begin{itemize}
    \item for any two open sets   $[h^+] \subseteq U$ and $[h^-]\subseteq V$ in $\pU$, there exists an integer $n>0$ such that $h^n (\pU \setminus V)\subseteq U$ and $h^{-n} (\pU \setminus U)\subseteq V$. 
\end{itemize}  
\end{lem}

Let us formulate  the main conclusion we obtained for the limit set.
\begin{lem} [see Lemma \ref{UniqueLimitSet}]
Let  $h\in G$ be a non-pinched contracting element with  minimal fixed points $[h^-]=\{h^-\},[h^+]=\{h^+\}$ on a convergence boundary $\pU$.   Let $\Lambda$  be the closure of the fixed points of all conjugates of $h$ in $G$. Then
\begin{enumerate}
    \item 
    $\Lambda$ is the   unique,  minimal, and $G$-invariant closed subset.
    \item
    $[\Lambda]=[\pG ]$ for any $o\in \U$.
    \item
    The set of the fixed point pairs of all non-pinched contracting elements in $G$ is dense in the distinct pairs of $\Lambda\times \Lambda$.
\end{enumerate} 
\end{lem}
All the boundaries  in Examples (\ref{ConvBdryEx1}, \ref{ConvBdryEx2} and \ref{ConvBdryEx3}), except the horofunction boundary of a general metric space,  contain a non-pinched contracting element  with minimal fixed points.  The  list of properties as above are mostly known in these boundaries, but seem to be new  for Roller and Gardiner--Masur boundaries, see Lemmas \ref{Roller} and  \ref{GMBdryConvergence} for concrete formulations.  

\subsubsection*{Myrberg points} We first introduce the notion of Myrberg points, which is more canonical than that of conical points. To that end, let us denote the set of distinct pairs of $[\cdot]$-classes: 
\begin{equation}\label{doubleBdryEq}
 \dG:=\{([\xi],[\eta])\in [\pG]\times [\pG]: [\xi]\ne[\eta]\}   
\end{equation}  Recall that $\mathcal C$ is the subset of non-pinched boundary points in \ref{AssumpC}.

\begin{defn}[Myrberg points]
A point $\xi \in \mathcal C\subseteq \pU$ is called \textit{Myrberg point} if for any $x\in \U$, the set of $G$-translates of the ordered pair $(x, \xi)$ is dense in $\dG$ in the following sense:
\begin{itemize}
    \item For any $[\zeta]\ne[\eta]\in [\pG]$ there exists $g_n\in G$ so that $g_nx\to [\zeta]$ and $g_n\xi\to [\eta]$.
\end{itemize}
\end{defn}
\begin{rem}
Introduced by Myrberg \cite{Myr31}, this class of limit points was proved to be full Lebesgue measure in the ideal boundary $\mathbb S^1$   for finite co-volume Fuchsian groups. This was generalized later on to higher dimension \cite{Agard, Tukia4}.
\end{rem}

In contrast with the reduced convergence boundary $[\pU]$, the Myrberg subset enjoys  quite good topological properties after passing to the quotient. 

\begin{lem}[see Lemma \ref{VisualMetriconMyrberg}]\label{MyrbergIntro} 
The quotient $[\mG]$ of the Myrberg set in a non-elementary convergence boundary $\pU$ is a Hausdorff, second countable,  metrizable topological space. 

Moreover, $\U\cup [\mG]$ forms a convergence bordification,  endowed with maximal partition, so that all boundary points are non-pinched.   
\end{lem}

If $\pU$ is the horofunction boundary we prove in Lemma \ref{MyrbergGood}, via different means, that  $[\mG]$ is a Hausdorff and second countable topological space. Lemma \ref{MyrbergIntro} is proved in Subsection \textsection \ref{SSecGromovBdryQT} by embedding $[\mG]$ into the Gromov boundary of a quasi-tree of spaces. The above properties are therefore inherited from those of the Gromov boundary.

\subsubsection*{Conical points} Our notion of  conical points is defined relative to a collection $\f$ of contracting subsets with bounded intersection. There are various (quantitatively) equivalent ways to define it.  We give a quick definition in Introduction, and the  other two elaborating ones are discussed in Section \textsection \ref{SecMyrberg}. 

Let  $F$ be a set of three (mutually) independent non-pinched contracting elements  $f_i$ ($i=1,2,3$), which form a contracting system as follows 
\begin{equation}\label{SystemFDefIntro}
\f =\{g\cdot \ax(f_i):   g\in G \}
\end{equation}
where  the axis $\ax(f_i)$    defined in (\ref{axisdefn})  depends on the choice of a basepoint $o\in \U$.

One consequence of \ref{AssumpA} and \ref{AssumpC} together is as follows:  the   shortest projection map $\pi_X:\U\to X$ to a contracting bi-infinite quasi-geodesic $X$  extends to the  subset $\mathcal C$ of non-pinched  boundary points (\textsection\ref{SSProjBdryPts}):
$$
\pi_X: \U\cup \mathcal C\setminus [\Lambda X]\longrightarrow X
$$   
which satisfies the bounded geodesic image property (for possibly larger $C$): 
$$
d_X(x,y)\ge C\quad \Longrightarrow\quad [x,y]\cap N_C(X)\ne\emptyset
$$
where $d_X(x,y):=\|\pi_X(x)\cup\pi_X(y)\|$. See Lemma \ref{BdryProjLem}.

\begin{defn}[Conical points, see Definition \ref{ConicalDef1prime}]\label{ConicalDef1}
A non-pinched point $\xi \in \mathcal C$ is called \textit{$L$-conical point} for $L>0$ relative to $\f$ if  for some $x\in \U$, there exists a sequence of $X_n\in \f$ such that $d_{X_n}(x, \xi)\ge L$. Denote by $\Lambda_L^\f(Go)$ the set of all such points.
\end{defn}

An important ingredient in our argument is the axiomized construction of a projection complex $\PC$  from $\f$,  developed by Bestvina-Bromberg-Fujiwara   \cite{BBF}.  Loosely speaking, fixing a constant $K$, $\PC$ is a graph with vertex set $\f$ so that two vertices $U\ne V\in \f$ are adjacent if and only if  they are visually $K$-small from any third party $W\in \f\setminus \{U, V\}$:
$$
d_W(U,V)\le K
$$
(see \textsection\ref{SSecProjectionComplex}  for more details). The basic fact is that for any $K\gg 0$, the graph $\PC$ is a unbounded quasi-tree (with universal hyperbolicity constant).  

Our next result, which is proved in Lemma \ref{EmbedGromov}, describes the above-defined conical points via the familiar Gromov boundary of the projection complex.
\begin{lem}\label{EmbedGromovThm}
For any   $L\gg 0$, there exists $K>0$ such that the Gromov boundary of the projection complex $\PC$ admits a topological embedding into the set of $L$-conical points relative to $\f$.
\end{lem}

In \cite{BBF}, a quasi-tree of spaces  denoted by $\QT$  is built from the projection complex $\PC$ in the following way. Keeping the edge incidence,  $\QT$ is obtained from $\PC$ by replacing each vertex with the corresponding subspace in $\U$. The technical advantage is that the fixed points of $f\in F$ are incorporated into the Gromov boundary of $\QT$ (but not in $\PC)$.  Consequently, we are able to embed  the conical points $\Lambda_r^F(Go)$ into the Gromov boundary $\partial \QT$ in Lemma \ref{EmbedConical}. In particular,  Myrberg points are thus embedded in the Gromov boundary as a subset of a separable metrizable space. This proves Lemma \ref{MyrbergIntro}.

We close the discussion with a connection between  Myrberg and conical points. Namely,  Corollary \ref{MyrbergConical}  sets up an equivalent characterization of Myrberg points in terms of conical points.
\begin{prop}\label{MyrbergConicalThm}
The    Myrberg  set $\mG$ is the (countable) intersection of all $(L, \f)$-conical sets $\Lambda_L^\f(Go)$, over all possible $\f$   given by (\ref{SystemFDefIntro})   and all large integers $L \in \mathbb N$. 
\end{prop}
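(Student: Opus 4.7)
The plan is to prove both inclusions of $\mG \subseteq \bigcap_{\f,L}\Lambda_L^\f(Go) \subseteq \mG$, using Lemma \ref{BdryProjLem} (boundary extension of $\pi_X$ together with the bounded geodesic image property), the $G$-equivariance of these projections, and the double density of contracting fixed pairs in $\dG$ recorded before Proposition \ref{Myrberg}.

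For $\mG \subseteq \Lambda_L^\f$, fix a system $\f$ associated to $F=\{h_1,h_2,h_3\}$ and an integer $L \ge 1$. The Myrberg assumption that $\{(gx,g\xi):g\in G\}$ is dense in $\dG$ lets us choose $g\in G$ with $gx$ and $g\xi$ in arbitrarily small neighborhoods of $[h_1^-]$ and $[h_1^+]$ respectively. Combining \ref{AssumpA} with BGI, the shadow projections $\pi_{\ax(h_1)}(gx)$ and $\pi_{\ax(h_1)}(g\xi)$ then escape toward the two endpoints of $\ax(h_1)$, so $d_{\ax(h_1)}(gx,g\xi) \ge L$ once the approximation is close enough; $G$-equivariance gives $d_{g^{-1}\ax(h_1)}(x,\xi) \ge L$ with $g^{-1}\ax(h_1)\in\f$, so $\xi \in \Lambda_L^\f$.

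For the converse, suppose $\xi$ lies in the intersection, and let $\Gamma \subseteq \bU\times\bU$ denote the closure of $\{(gx,g\xi):g\in G\}$; this is closed and $G$-invariant, and it suffices to show $\Gamma\supseteq\dG$. Pick any $\f$ from some $F=\{h_1,h_2,h_3\}$. For each $L_n\uparrow\infty$, conicality produces $g_n\in G$ and $i_n\in\{1,2,3\}$ with $X_n:=g_n\ax(h_{i_n})\in\f$ and $d_{\ax(h_{i_n})}(g_n^{-1}x,g_n^{-1}\xi) \ge L_n$. After subsequencing so that $i_n\equiv i$ is constant, BGI forces the projections of $g_n^{-1}x$ and $g_n^{-1}\xi$ onto $\ax(h_i)$ to separate toward its two endpoints; invoking \ref{AssumpA} (and a further subsequence fixing the orientation) gives $(g_n^{-1}x,g_n^{-1}\xi)\to(h_i^{\epsilon},h_i^{-\epsilon})$ for some $\epsilon\in\{\pm\}$. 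Thus $\Gamma\cap\dG$ is a non-empty, closed, $G$-invariant subset of $\dG$, and minimality of the $G$-action on $\dG$---a direct consequence of double density of contracting fixed pairs together with north--south dynamics of contracting elements on $\bU$---forces $\Gamma\cap\dG=\dG$, i.e., $\xi$ is Myrberg.

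The main obstacle is the careful interplay of \ref{AssumpA} with BGI in the converse direction to ensure the limit $(g_n^{-1}x,g_n^{-1}\xi)\to(h_i^\pm,h_i^\mp)$: one needs that exiting projections of a sequence of (possibly boundary) points toward an endpoint of the axis force the sequence itself to converge to that endpoint. Should the clean minimality statement not be directly available, one can instead vary $F$ over all independent triples and use double density to place a dense subset of $\dG$ directly into $\Gamma$, managing the two possible orientations of the limit pair by treating $h$ and $h^{-1}$ separately.
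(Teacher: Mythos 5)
Your forward inclusion $\mG\subseteq\Lambda_L^\f$ is essentially the paper's argument, and the right tool is Corollary~\ref{OpenNBHD} rather than ``\ref{AssumpA}~$+$~BGI'' alone: you need that \emph{convergence to $[h^\pm]$ forces escaping projections onto $\ax(h)$}, which is the converse of \ref{AssumpA} and is precisely what that corollary records. With that substitution, this direction is fine.

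The converse direction, as you primarily wrote it, has a genuine gap: the $G$-action on $\dG$ is \emph{not} minimal in this setting, so ``$\Gamma\cap\dG$ non-empty, closed, $G$-invariant'' does not force $\Gamma\cap\dG=\dG$. Concretely, for any non-pinched contracting $h$ the orbit $G\cdot([h^-],[h^+])$ is itself a proper, closed, $G$-invariant subset of $\dG$. Closedness follows from the dichotomy you would also need elsewhere: if $g_n\ax(h)$ exits, then by \ref{AssumpB} the points $g_n[h^-],g_n[h^+]$ accumulate in the same $[\cdot]$-class, hence on the diagonal and outside $\dG$; if not, after a subsequence $g_n\ax(h)$ is constant and the pairs stabilize. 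So this countable orbit is already a counterexample to minimality. What actually holds is only \emph{topological transitivity} (a dense orbit exists, which is essentially the content of the proposition being proved), not minimality — so invoking minimality is circular at best.

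Your fallback is the correct route and is what the paper does: by Lemma~\ref{DoubleDense} the fixed pairs $([h^-],[h^+])$ of non-pinched contracting elements are dense in $\dG$, so it suffices to show each such pair lies in $\overline{\{(g\xi,gx):g\in G\}}$. For this, one uses the $(L,\f)$-conical hypothesis applied to the system $\f$ containing $h$ (with $f\in E(h)$ of arbitrarily large translation length), the well-defined projection on $\mathcal C$ from Lemma~\ref{BdryProjLem}, and the converse part of Corollary~\ref{OpenNBHD} to obtain convergence to a fixed pair. One further remark on orientation: because $\ax(h)=\ax(h^{-1})$, the argument produces $([h^{\epsilon}],[h^{-\epsilon}])$ for an $\epsilon$ you do not control; this is harmless, since the set of ordered pairs $\{([h^-],[h^+])\}$ is already symmetric under $h\mapsto h^{-1}$, and that symmetric set is what Lemma~\ref{DoubleDense} gives as dense. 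So ``treating $h$ and $h^{-1}$ separately'' is not needed once this is observed. In sum: drop the minimality step, keep the fallback, and cite Corollary~\ref{OpenNBHD}; the paper structures all of this through the barrier characterization in Lemma~\ref{CharMyrberg}, which is quantitatively equivalent to your Definition~\ref{ConicalDef1} formulation via Lemma~\ref{QuantativeEquiv12}.
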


This result admits several applications, which shall be delivered in the forth-coming work. At the moment, let us mention the following the topological one (with measurable one in next subsection) concerning  Bowditch boundary of relatively hyperbolic groups. 
\begin{thm}[see Theorem \ref{HomeoMybergRHG}]\label{HomeoMyrbergIntro}
Let $\GP$ be a relatively hyperbolic pair and $\partial_{\mathrm{B}}\GP$  be the associated Bowditch boundary. Then the Myrberg set for the action on $\partial_{\mathrm{B}}\GP$ is homeomorphic to the Myrberg set for the action the Floyd boundary. 
\end{thm}
We refer the reader to  \textsection\ref{SSecRHG} for a brief introduction to relatively hyperbolic groups.

A finitely generated group may admit different peripheral structure $\mathcal P$ to be relatively hyperbolic (see \cite{YANG3} for their relations). Hence, this result says that the Myrberg set is invariant up to changing the peripheral structure on $G$. This could be compared with Floyd mapping theorem (\cite{Floyd, Ge2}). A similar conclusion holds for Cannon--Thurston maps; see  Theorem \ref{HomeoMybergCT} for precise statement.


\subsection{Conformal density on the convergence boundary}\label{SSConfdensity}

A family of conformal measures on the limit set of Fuchsian groups was firstly constructed by Patterson \cite{Patt} and studied extensively by Sullivan \cite{Sul} in higher dimensional Kleinian groups. Patterson's construction is very robust and carries over verbatim  to the general metric spaces with horofunction boundary  (see \cite{BuMo}).      The second part of our study is to give a vast generalization of these works in groups with contracting elements, endowed with a  convergence compatification.  Let us first introduce the setup for the Patterson's construction.

Fix a basepoint $o \in \U$. Consider the  growth function of the ball of radius $R>0$:
$$N(o, R):=\{v\in Go: d(o, v)\le n\}.$$ The \textit{critical exponent} $\e \Gamma$ for a subset $\Gamma \subseteq G$:
\begin{equation}\label{criticalexpo}
\omega_\Gamma = \limsup\limits_{R \to \infty} \frac{\log \sharp(N(o, R)\cap \Gamma o)}{R},
\end{equation}
  is independent of the choice of $o \in \U$, and intimately related  to the Poincar\'e series 
\begin{equation}\label{PoincareEQ}
s\ge 0, x,y\in\U, \quad \p_\Gamma(s,x, y) = \sum\limits_{g \in \Gamma} e^{-sd(x, gy)}
\end{equation}
as $\p_{\Gamma}(s,x,y)$ diverges for $s<\e \Gamma$ and converges for $s>\e \Gamma$. Thus, the   action   $G\act \U$ as in (\ref{GActUAssump}) is called of \textit{divergent type} (resp.
\textit{convergent type})   if $\p_{G}(s,x,y)$ is divergent (resp. convergent) at
$s=\e G$.  

Consider a further assumption, \ref{AssumpE}, on a convergence compactification $\bU=\U\cup\pU$, so that for non-pinched points $\xi\in \mathcal C$, as $z\to \xi,$ the Buseman cocycle 
$$
\forall x,y, z\in \U:\; B_z(x, y) := d(x,z)-d(y,z)
$$
extends coarsely to a well-defined Buseman quasi-cocycle $B_\xi(x,y)$. See \ref{AssumpE} for precise formulation and Remark \ref{ExamplesConformalDensity} for further clarification.

Denote by $\mathcal M_+(\bU)$  the set of positive Radon measures on $\bU$. If $G\act\U$ is of divergent type,  the \textit{Patterson--Sullivan measures} $\{\mu_x\}_{x \in \U}$ are accumulation points in $\mathcal M_+(\bU)$ of the following family of measures $\{\mu_x^{s}\}_{x \in \U}$ supported on $Go$ 
$$
\mu_{x}^{s} = \frac{1}{\p_G(s, o, o)} \sum\limits_{g \in G} e^{-sd(x, go)} \cdot \dirac{go},
$$
as $s >\e G$ tends to $\e G$. Similar construction works for groups of convergent type  via  a Patterson's trick \cite{Patt}  on the Poincar\'e series. 
The $G$-equivariance of such measures $\mu_x$ on basepoints $x$  and their conformality on switching   basepoints  are formulated in a general notion of (quasi)-conformal density (see Definition \ref{ConformalDensityDefn}).

In \cite{Sul}, Sullivan proved a shadow lemma describing the local nature of Patterson's measure on the limit set of Kleinian groups.  We extend it to our setting. Define the usual shadow $\Pi_x(y, r)$ as the topological closure in $\pU$ of the cone as follows:
$$\Omega_x(y, r)=\{z\in \U: [x,z]\cap B(y, r)\ne\emptyset \}$$

Analogous to the partial cone and shadow in \cite{YANG7}, we are actually working with a similar version of shadows via contracting segments. With a set $F$ of three independent contracting elements as in (\ref{SystemFDefIntro}) and $r>0$,  a segment $\alpha$ in $\U$ contains an \textit{$(r, F)$-barrier} at $go$ if  for some $f\in F$, we have $go,\; gfo\in N_r([x,z]).$
For $x\in \U, y\in Go$ and $r>0$, the \textit{$(r, F)$-cone} is defined as follows
$$\Omega_{x}^F(y, r):=\{z\in \U: y \text{ is an } (r, F) \text{-barrier for some geodesic } [x, z]\}
 $$
whose  topological closure  in $\pU$  gives the \textit{$(r, F)$-shadow} $\Pi_{x}^F(y, r)$. See \textsection\ref{SecBoundary} for details. The following version of Shadow Lemma is proved in Lemma \ref{ShadowLem}.

\begin{lem}
Let $G\act \U$ be as in (\ref{GActUAssump}). 
Let $\{\mu_x\}_{x\in\U}$ be a $\omega$-dimensional $G$-quasi-equivariant, quasi-conformal density on $\pU$ for some $\omega>0$, charging positive measure to the non-pinched points $\mathcal C$. Then there exists $r_0 > 0$ such that  
$$
\begin{array}{rl}
e^{-\omega \cdot d(o, go)}\; \prec \; \mu_o(\Pi_o^F(go, r)\cap \mathcal C)\; \le \; \mu_o(\Pi_o(go, r)\cap \mathcal C) \; \prec_r\;  e^{-\omega \cdot  d(o, go)}\\
\end{array}
$$
for any $g\in G$ and $r \ge  r_0$, where   $\prec_r$ means the inequality holds up to a multiplicative constant depending on $r$ (or a universal constant if no $r$ is displayed). 
\end{lem}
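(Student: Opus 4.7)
The plan is to follow the Patterson--Sullivan template, treating the upper bound on $\Pi_o(go, r)$ and the lower bound on $\Pi_o^F(go, r)$ separately; the middle inequality is immediate since any $(r,F)$-barrier at $go$ forces $[o,z]$ to meet $B(go,r)$, giving $\Omega_o^F(go,r)\subseteq \Omega_o(go,r)$ and hence $\Pi_o^F\subseteq \Pi_o$. The two workhorses throughout are quasi-conformality, which yields a pointwise estimate for $d\mu_o/d\mu_{go}(\xi)$ in terms of a Busemann-type cocycle $\beta_\xi(o,go)$, and quasi-equivariance $g_*\mu_o\asymp \mu_{go}$, which transports mass estimates across basepoints.

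For the upper bound, for every $\xi\in\Pi_o(go,r)\cap\mathcal{C}$ a geodesic $[o,\xi]$ enters $B(go,r)$, so the triangle inequality yields $|\beta_\xi(o,go) - d(o,go)|\le 2r$. Quasi-conformality then produces $d\mu_o/d\mu_{go}(\xi)\asymp_r e^{-\omega d(o,go)}$ uniformly on this set, and integration gives
\[
\mu_o(\Pi_o(go,r)\cap\mathcal{C}) \asymp_r e^{-\omega d(o,go)}\cdot \mu_{go}(\Pi_o(go,r)\cap\mathcal{C}).
\]
The remaining factor is at most the total mass of $\mu_{go}$, which is uniformly bounded in $g$ by quasi-equivariance, closing the right-hand inequality.

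For the lower bound, the same pointwise cocycle estimate on $\Pi_o^F(go,r)\cap\mathcal{C}$ reduces the claim to a uniform lower bound on $\mu_{go}(\Pi_o^F(go,r)\cap\mathcal{C})$. Using quasi-equivariance together with the change-of-variable identity $g^{-1}\Pi_o^F(go,r) = \Pi_{g^{-1}o}^F(o,r)$, this in turn amounts to
\[
\mu_o\bigl(\Pi_{g^{-1}o}^F(o,r)\cap\mathcal{C}\bigr)\succ 1 \quad \text{uniformly in } g\in G.
\]
Now the basepoint moves but the barrier stays anchored at $o$ and its $F$-translates $fo$, so the task becomes genuinely geometric: as $g^{-1}o$ wanders, the fixed barrier must still capture a definite amount of limit mass. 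This is where the three mutually independent contracting generators $h_1,h_2,h_3\in F$ enter: their six attracting/repelling fixed points sit in general position in $[\pU]$, so for every $g$ one may select an index $i=i(g)$ with $g^{-1}o$ far from the repelling fixed point of $h_i$. For $\xi$ in a fixed neighborhood $U_i$ of the attracting fixed point of $h_i$, Assumption~\ref{AssumpB} together with the bounded geodesic image property of Lemma~\ref{BdryProjLem} forces $[g^{-1}o,\xi]$ to fellow-travel a long portion of $\ax(h_i)$, in particular to come within $r$ of both $o$ and $h_i^{N}o$ for some $N=N(r)$. This realizes $o$ as an $(r,F)$-barrier, so $U_i\cap\mathcal{C}\subseteq \Pi_{g^{-1}o}^F(o,r)$. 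Since the $U_i$ are fixed, the attracting fixed points are non-pinched, and $\mu_o$ charges $\mathcal{C}$, each $\mu_o(U_i\cap\mathcal{C})$ is strictly positive, and the minimum over the three indices furnishes the desired uniform lower bound.

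The main obstacle is this final uniformity step: one must verify that for every $g$ at least one $h_i$ has both fixed points safely away from the direction of $g^{-1}o$, and one must carry out the shadowing of $[g^{-1}o,\xi]$ onto $\ax(h_i)$ rigorously inside the abstract convergence boundary (rather than in a Gromov-hyperbolic space). I expect the projection complex $\PC$, the extension of projections to non-pinched boundary points from Lemma~\ref{BdryProjLem}, and the mutual independence of $h_1,h_2,h_3$ to be the right package of tools to pin down a choice of $i(g)$ and to execute the shadowing robustly. Once this geometric input is secured, both directions of the inequality fall out of the quasi-conformal bookkeeping described above.
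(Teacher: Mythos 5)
Your middle inequality, your upper bound, and the quasi-conformal bookkeeping — including the reduction of the lower bound, via quasi-equivariance and the identity $g^{-1}\Pi_o^F(go,r)=\Pi_{g^{-1}o}^F(o,r)$, to a uniform bound $\mu_o\bigl(\Pi_{g^{-1}o}^F(o,r)\cap\mathcal C\bigr)\succ 1$ — are all correct and match the paper. But your route for that last uniform bound is not the paper's main argument. The paper applies the Extension Lemma \ref{extend3} to each direction separately: for every $\xi\in\mathcal C$ and $z_n\to\xi$ it produces some $f\in F$ making $go$ an $(r,f)$-barrier for $[o,gfz_n]$, hence $gf\xi\in\Pi_o^F(go,r)$; this partitions $\mathcal C$ into three pieces $U_1\sqcup U_2\sqcup U_3$ with $gf_iU_i\subseteq\Pi_o^F(go,r)$, and the lower bound follows with no claim about fixed neighborhoods of attracting fixed points. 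Your route (fit a \emph{fixed} neighborhood of some $[h_i^+]$ inside $\Pi_{g^{-1}o}^F(o,r)$) is only sketched in the paper as an alternative proof via Lemma \ref{IntShadow}, and it is exactly at your self-identified ``main obstacle'' that the proposal breaks.

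Concretely, the selection rule ``choose $i(g)$ with $g^{-1}o$ far from the repelling fixed point of $h_i$'' is insufficient: take $g^{-1}o=h_i^{n}o$ for large $n$. This point is far from $[h_i^-]$ (it converges to $[h_i^+]$), yet for $\xi$ in any fixed neighborhood of $[h_i^+]$ the geodesic $[g^{-1}o,\xi]$ can run off along the positive half of $\ax(h_i)$ and never approach $o$ or $h_i^{N}o$, so $o$ is not a barrier for that $i$. The correct criterion is to choose $i$ with $d_{\ax(h_i)}(o,g^{-1}o)\le K_0$ for a uniform $K_0$, i.e.\ $g^{-1}o$ projects close to $o$ on that axis; such an $i$ exists because if two of the three axes both had large $d_{\ax(h_i)}(o,g^{-1}o)$, then $[o,g^{-1}o]$ would carry long initial subsegments lying in finite neighborhoods of both axes, contradicting their bounded intersection. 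With that choice, Corollary \ref{OpenNBHD}, Lemma \ref{BigThree} and Lemma \ref{AnyBarrierLem} do yield $U_i\cap\mathcal C\subseteq\Pi_{g^{-1}o}^F(o,r)$ once $r$ is enlarged to absorb $K_0$. A second, smaller gap: $\mu_o(U_i\cap\mathcal C)>0$ does not follow merely from $\mu_o(\mathcal C)>0$, since the measure could a priori avoid the fixed set $U_i$; you need the North-South dynamics of Lemma \ref{SouthNorthLem} to push a positive-measure subset of $\mathcal C$ into $U_i$ by a single group element, at measure cost at most $\lambda$ by quasi-equivariance. Both gaps are repairable, but as written the key geometric step would fail.
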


\begin{rem}
By Theorem \ref{ContractiveThm1}, we have $\mathcal C=\hU$: the complete version of shadow lemma holds in the horofunction boundary. This is the main new instance, see \textsection\ref{Connections} for a short history of it.    
\end{rem}
 
In order to state the next result, we also introduce the   usual version of \textit{conical points} defined via the usual shadow without involving $\f$:  
\begin{equation}\label{ConicalEQ}
\Lambda_{c}(Go) :=\bigcup_{r\ge 0}\bigcup_{x\in Go} \left(\limsup_{y\in Go} \Pi_x(y, r)\right).
\end{equation}
By definition, $L$-conical points and Myrberg points are conical. Note that every  point in the visual boundary is conical in this sense for any geometric action on a CAT(0) space. This is partly the reason why we restrict our attention to the smaller subset of $L$-conical points and Myrberg points. 

The main result of this investigation is relating the divergence of Poincar\'e series to the positive conformal measure charged on conical points. This type of results is usually refereed to as Hopf--Tsuji--Sullivan dichotomy by researchers. Note here that the item on the ergodicity of product measures  is missing.

\begin{thm}\label{HTSThm} 
Let $G\act \U$ be as in (\ref{GActUAssump}). Let $\{\mu_x\}_{x\in \U}$ be a $\omega$-dimensional $G$-quasi-equivariant, quasi-conformal density on   a non-elementary convergence boundary $\pU$ for some $\omega>0$. Assume that $\mu_o(\mathcal C)>0$.  Then the following statements are equivalent: Let  $\f$ be as in (\ref{SystemFDefIntro}) and any $L\gg 0$.
\begin{enumerate}
\item \label{itm:1}
The Poincar\'e series $\p_G(s,o,o)$ diverges at $s=\omega$;
\item \label{itm:2}
The Myrberg set has  the positive   $\mu_x$-measure;
\item \label{itm:3}
The Myrberg  set has the full $\mu_x$-measure in $\mathcal C$; 
\item \label{itm:4}
The set of conical points (or $(L, \f)$-conical points)   has positive   $\mu_x$-measure;
\item \label{itm:5}
The set of conical points (or $(L, \f)$-conical points) has full   $\mu_x$-measure in $\mathcal C$.
 
\end{enumerate} 
If one of the above statements is true, then $\omega=\e G$. 
\end{thm}
\begin{rem}
The theorem is proved via the routine: (1) $\Longleftrightarrow$ (4) $\Longrightarrow$ (5) $\Longrightarrow$ (3)  $\Longrightarrow$ (2)  $\Longrightarrow$ (4). We comment on the following directions while the others   are  trivial or classic.   
\begin{itemize}
    \item 
     (1) $\Longrightarrow$ (4): this is the most difficult direction, whose proof uses the projection complex to build a family of the \textit{visual sphere} decomposition of group elements. With light source at the basepoint $o\in \U$, each visual sphere is shadowed by the previous one and blocks the following one. Such a structure allows to re-arrange the series $\p_G(s, o, o)$ as a geometric-like series. See \textsection\ref{SecHTSTProof} for more details. 
     \item
     (5) $\Longrightarrow$ (3): This follows from Proposition \ref{MyrbergConicalThm}, as $\mG$ is the countable intersection of $\mu_o$-full $(L, \f)$-conical points, over all possible $\f$ in (\ref{SystemFDefIntro}).
\end{itemize}

\end{rem}

Under any statement in Theorem \ref{HTSThm}, the conformal density turns out to be unique when passed to the quotient of Myrberg  set (which is Hausdorff and second countable by Lemma \ref{MyrbergIntro}). This is proved in Theorem \ref{Unique}.
\begin{thm} \label{UniqueConfThm}
Let $G\act \U$ be as in (\ref{GActUAssump})   of divergent type, compactified by a non-elementary convergence boundary $\hU$. Then the $\e G$-dimensional $G$-quasi-equivariant, quasi-conformal density on the quotient $[\mG]$ of $\mG\subseteq \pU$   is unique up to a bounded multiplicative constant, and ergodic.
\end{thm}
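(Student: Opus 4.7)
The plan is to deduce uniqueness from ergodicity, both obtained through the $(r,F)$-shadow lemma (Lemma \ref{ShadowLem}) on the Hausdorff, second countable quotient $[\mG]$ provided by Proposition \ref{Myrberg}. Throughout, let $\mu=\{\mu_x\}$ and $\nu=\{\nu_x\}$ be two $\e G$-dimensional $G$-quasi-equivariant, quasi-conformal densities; by Theorem \ref{HTSThm} applied in the divergent regime, both $\mu_o$ and $\nu_o$, viewed as measures on $\bX$, assign full mass to the Myrberg set, so I work henceforth with their pushforwards on $[\mG]$.

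First I would establish that the $G$-action on $([\mG],\mu_o)$ is ergodic. Suppose $A\subseteq [\mG]$ is $G$-invariant Borel with $\mu_o(A)>0$, and set $B=[\mG]\setminus A$. By Proposition \ref{MyrbergConicalThm} every point of $[\mG]$ lies in arbitrarily deep $(r,F)$-shadows $\Pi_o^F(go,r)$, so these shadows furnish a countable Vitali-type basis adapted to the topology of $[\mG]$. A density-point argument yields a sequence $g_n\in G$ with $g_no\to\xi\in B$ such that
\[
\frac{\mu_o(B\cap \Pi_o^F(g_no,r))}{\mu_o(\Pi_o^F(g_no,r))}\longrightarrow 1 .
\]
Applying $g_n^{-1}$ and combining quasi-equivariance with the two-sided mass bound of Lemma \ref{ShadowLem}, the translated shadow $g_n^{-1}\Pi_o^F(g_no,r)$ covers a subset of $[\mG]$ whose $\mu_o$-mass (after renormalization) is bounded below uniformly in $n$, while the portion occupied by $g_n^{-1}B=B$ tends to $1$. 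Hence $\mu_o(A)=0$, a contradiction.

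Next I would apply the shadow lemma to both densities with a common choice of $F$ and $r\gg 0$ to obtain
\[
\mu_o(\Pi_o^F(go,r))\;\asymp\;e^{-\e G\,d(o,go)}\;\asymp\;\nu_o(\Pi_o^F(go,r))
\]
uniformly in $g\in G$. The same differentiation argument on the shadow basis then forces $\mu_o$ and $\nu_o$ to be mutually absolutely continuous on $[\mG]$ with Radon--Nikodym derivative $f=d\mu_o/d\nu_o$ essentially bounded above and below. Quasi-conformality and quasi-equivariance of both densities make $f$ a $G$-quasi-invariant function; by the ergodicity just established, $f$ is $\nu_o$-almost everywhere equal to a constant, which yields $\mu_o\asymp\nu_o$.

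The main obstacle is that the $(r,F)$-shadows are not balls in a metric space, so classical Besicovitch/Lebesgue differentiation is not directly available. This will be handled by exploiting the geometric structure of the shadow system: the bounded projection/intersection properties of the contracting family $\f$ provide a controlled multiplicity, the two-sided estimate of Lemma \ref{ShadowLem} serves as the doubling-type input, and Proposition \ref{MyrbergConicalThm} furnishes the ``shrinking to a point'' of shadows at every Myrberg point. Carrying out the differentiation theorem rigorously inside the non-metric, yet Hausdorff and second countable, quotient $[\mG]$ will be the principal technical burden.
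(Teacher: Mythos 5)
Your route is genuinely different from the paper's, and one step of it does not close as written. The paper proves uniqueness first and deduces ergodicity from it, not the other way around. It introduces a Hausdorff--Carath\'eodory measure $\mathcal H^{\omega}$ on $[\mG]$ built from the shadow family $B(v)=[\Pi_o^F(v,r)]$ with gauge $\|B(v)\|^{\omega}=e^{-\omega d(o,v)}$, shows this family is \emph{hierarchically locally finite} (Lemma \ref{coveringlemma}, using Lemma \ref{OverlapAnnulus} for bounded multiplicity inside an annulus and Lemma \ref{IntersectShadow} for the nesting of a deeper shadow inside the extension $[\Pi_o^F(v,\hat r)]$ of a shallower one), and then applies the Vitali-type covering Lemma \ref{covering} to obtain $[\mu_1](A)\asymp\mathcal H^{\omega}(A)$ for every Borel $A$ (Lemma \ref{Unique}). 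Uniqueness up to a bounded multiplicative constant is then immediate, since any two densities are both comparable to $\mathcal H^{\omega}$; and ergodicity follows by the restriction trick of Lemma \ref{Ergodic}: if $A$ is $G$-invariant of positive measure, then $[\mu_x]|_A$ is again an $\e G$-dimensional density, hence comparable to $\mathcal H^{\omega}$, which forces the complement of $A$ to be null. In other words, the ``principal technical burden'' you defer at the end is precisely the content of the paper's covering machinery, and once it is discharged you never need a density-point argument nor the constancy of a Radon--Nikodym derivative, because the theorem only asserts uniqueness up to a bounded multiplicative constant. Your second paragraph, carried out with that covering lemma, already yields the full statement; I would reorder your proof accordingly.

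The genuine gap is in your ergodicity paragraph. From the density point $\xi\in B$ you correctly obtain, via quasi-conformality and Lemma \ref{ShadowLem}, that $\mu_o\bigl(A\cap g_n^{-1}\Pi_o^F(g_no,r)\bigr)\to 0$ while $\mu_o\bigl(g_n^{-1}\Pi_o^F(g_no,r)\bigr)\ge c>0$. But this only says that $A$ occupies a vanishing fraction of a set of mass at least $c$; it proves $\mu_o(A^c)\ge c$, not $\mu_o(A)=0$. To finish, you would need the reverse shadows $g_n^{-1}\Pi_o^F(g_no,r)=\Pi^F_{g_n^{-1}o}(o,r)$ to exhaust $[\mG]$ up to a $\mu_o$-null set --- classically one shows they eventually contain every compact set missing the limit point $\eta$ of $g_n^{-1}o$, and that $\eta$ is not an atom (Corollary \ref{NoAtom}); in the present setting this needs an extra argument along the lines of Corollary \ref{OpenNBHD} that you have not supplied. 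As it stands, ``Hence $\mu_o(A)=0$'' does not follow.
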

By Rohklin's theory,  $[\mG]$ endowed with conformal measures are Lebesgue  spaces (\cite[Lemma 15.4]{C16Book}), i.e.  measurably isomorphic to    the interval $[0,1]$ with Lebesgue measure.

\subsection{More applications}
We now present various applications to the growth problems, and measure theoretical results in CAT(0) groups and mapping class groups.

\subsubsection*{Cogrowth tightness}
We start with the following co-growth   results. This extends the work of \cite{MYJ20} about  divergent group actions on hyperbolic spaces, and answers positively \cite[Questions 4.1 and 4.2]{AC20}. The following is proved in Theorem \ref{SecCogrowthThm}.

\begin{thm}\label{CogrowthThm}
Let $G\act \U$ be as in (\ref{GActUAssump}).  Suppose that  the group $G$ is of divergent type. Then for any infinite normal subgroup $H$ of $G$, we have $$\e H> \frac{ \e G}{2}$$
Moreover, if $H$   is of divergent type, then $\e H=\e G$.
\end{thm}
\begin{rem}
In \cite{AC20}, Arzhantseva-Cashen proved the same result under the assumption that $G\act \U$ has purely exponential growth.  This already includes many interesting examples, but the divergent action is weaker than having purely exponential growth.
\end{rem}


\subsubsection*{Myrberg set as Poisson boundary of random walks}
We now give an application of previous results to random walks on groups with contracting elements. A probability measure $\mu$ on $G$ is called \emph{irreducible} if its support generates the group $G$ as a semi-group.   A \emph{$\mu$-random walk} on $G$ is the stochastic process $(w_n)$ defined as 
$$w_n := g_1 g_2 \dots g_n$$
where $(g_n)$ is a sequence of i.i.d. random variables valued in $G$ with law $\mu$. The \emph{entropy}  is defined by $$H(\mu) := - \sum_{g \in \textrm{supp}(\mu)} \mu(g) \cdot |\log \mu(g)|.$$ 

By the work of Maher--Tiozzo \cite{MT}, the following statement holds for the random walk for acylindrical actions on hyperbolic spaces.  The previous two results transfer the results to the horofunction boundary. More details on the  proof is given in \textsection\ref{ProofPoissonBdry}.

\begin{thm}\label{PoissonBdry}
Let $G\act \U$ as in (\ref{GActUAssump}). Consider an irreducible  $\mu$-random walk driven by  a probability measure $\mu$ on $G$ with  finite logarithmic moment on $\U$: $$\sum_{g\in G}\mu(g)\cdot |\log d(o, go)|<\infty.$$ Then almost every trajectory of $\mu$-random walk converges to the $[\cdot]$-class of a Myrberg  point in $\hU$.

Moreover, if  $H(\mu)<\infty$, then  the reduced horofunction boundary $[\hU]$  is the Poisson boundary,  with harmonic measure supported on the quotient $[\mG]$ of Myrberg  set.
\end{thm} 
\begin{rem}
If $\U$ is a CAT(0) space with rank-1 elements, this completes the theorem of Karlsson--Margulis  \cite{KM99} by showing  the positive drift.  For   the {proper} essential action on CAT(0) cube complex, our result recovers the main results \cite[Thm 1.2]{FLM} \cite{F18}  that almost every trajectory  tends to a point in the Roller boundary. See Lemma \ref{Roller} for the relevant facts on Roller boundary. 

Using the recent work \cite{CFFT}, the finite logarithmic moment condition could be removed in the above statement, still under finite entropy assumption. See the remark \ref{NoMomentCondtion}. 
\end{rem} 

\subsubsection*{Conformal measure on the contracting boundary}
Contracting boundary for CAT(0) spaces was introduced by Charney--Sultan \cite{ChaSul} as quasi-isometric invariant, and has attracted active research interests in recent years. It is  observed that the contracting boundary is measurably negligible in harmonic measures. We derive the same  result in conformal measures from a  more general Theorem  \ref{nullity}. Here let us state it in CAT(0) spaces.

The  underlying set of the contracting boundary  consists of the endpoints of contracting geodesic rays in the visual boundary.

\begin{thm}\label{CATnullity}
Let $G\act \U$  in (\ref{GActUAssump}) be co-compact, where $\U$ is a proper \textrm{CAT}(0) space. Let $\{\mu_x\}_{x\in \U}$ be the $\e G$-dimensional conformal density on the visual boundary $\bV$. Then  the underlying set of contracting boundary is $\mu_o$-null if and only if $G$ is not a hyperbolic group. 
\end{thm}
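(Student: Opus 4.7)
The proof splits along the dichotomy on hyperbolicity of $G$.

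\emph{Suppose $G$ is hyperbolic.} Since $G$ acts properly cocompactly on the proper \textrm{CAT}(0) space $\U$, the Flat Plane Theorem of Bridson--Haefliger (II.9.33), combined with hyperbolicity of $G$, rules out any isometrically embedded Euclidean plane in $\U$; hence $\U$ itself is Gromov hyperbolic. Every geodesic ray in $\U$ then has a uniform contracting constant, so the underlying set of the contracting boundary coincides with all of $\mathcal C = \bV$. A cocompact action on a \textrm{CAT}(0) space with a contracting element is of divergent type, so by Theorem \ref{HTSThm} and Theorem \ref{UniqueConfThm} the measure $\mu_o$ is a nontrivial Radon measure charging the whole of $\bV$. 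Consequently the contracting boundary has full, and in particular positive, $\mu_o$-measure.

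\emph{Suppose $G$ is not hyperbolic.} By the Flat Plane Theorem, $\U$ contains an isometrically embedded Euclidean plane $F$, whose visual boundary $\partial F \subset \bV$ is a topological circle consisting of non-contracting points. The set $\Lambda$ of contracting visual boundary points is $G$-invariant, so the ergodicity provided by Theorem \ref{UniqueConfThm} forces $\Lambda$ to be either $\mu_o$-null or $\mu_o$-conull in $\mathcal C = \bV$. I would rule out conullity by invoking the general Theorem \ref{nullity}. Concretely, using cocompactness of the action one translates a basepoint lying deep inside $F$ by group elements $g \in G$ and applies the shadow lemma (Lemma \ref{ShadowLem}) to obtain, for any scale $L$, a positive-measure union of shadows capturing geodesic rays that fellow-travel a portion of some $g \cdot F$ of length at least $L$. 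A \textrm{CAT}(0) geodesic ray that runs alongside a flat of size $L$ cannot have contracting constant smaller than a quantity tending to infinity with $L$. Taking a $\limsup$ of such shadow families over $L \to \infty$ yields a positive $\mu_o$-measure $G$-invariant set of non-contracting endpoints, contradicting conullity of $\Lambda$.

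The main obstacle is the non-hyperbolic direction. The quantitative core, namely that rays trapped along a large flat region cannot be uniformly contracting, must be arranged uniformly across the $G$-translates $g \cdot F$ so that the resulting shadows combine to positive total mass independently of the scale $L$. This is precisely the bookkeeping packaged by the general Theorem \ref{nullity}, of which the present statement is the \textrm{CAT}(0) specialization; the easy direction is essentially a consequence of the Flat Plane Theorem together with the Hopf--Tsuji--Sullivan machinery established earlier in the paper.
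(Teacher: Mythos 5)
The hyperbolic direction of your proof is correct and is a slightly more direct route than the paper's: the Flat Plane Theorem gives Gromov hyperbolicity of $\U$, so every geodesic ray is uniformly contracting and the contracting boundary is all of $\bV$, which is $\mu_o$-conull. The paper instead argues for the general Theorem~\ref{nullity} via the bounded-to-one map from the horofunction boundary to the Gromov boundary and the fact that conical points are $\mu$-full; for the CAT(0) specialization your version is fine.

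In the non-hyperbolic direction, invoking Theorem~\ref{nullity} is exactly what the paper does. However, the ``concrete'' sketch you give of how that theorem would be proved contains a genuine gap and does not match the paper's argument. The shadow $\Pi_o(go,r)$ of an orbit point $go$ deep inside a translate $g\cdot F$ of a flat captures only rays passing within distance $r$ of that \emph{single point}; the Shadow Lemma gives its $\mu_o$-mass but says nothing about how long such rays fellow-travel $g\cdot F$ --- a ray may enter and leave the flat immediately and still lie in the shadow. So the step ``obtain \ldots geodesic rays that fellow-travel a portion of some $g\cdot F$ of length at least $L$'' is not justified. More importantly, the paper's proof of Theorem~\ref{nullity} does not try to build a positive-measure set of \emph{non}-contracting endpoints at all (nor does it appeal to the ergodicity dichotomy). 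It argues directly by contradiction: assume $\mu_1(\Lambda)>0$; write $\Lambda=\bigcup_{C}\bigcup_{x}\Lambda_C(x)$ and use quasi-equivariance plus cocompactness to find a single $C$ with $\mu_1(\Lambda_C(o))>0$; then Lemma~\ref{growthhull} (where shadows are used, but only for a growth lower bound) shows the hull $U=\text{Hull}_R(\Lambda_C(o))\subseteq Go$ has critical exponent $\e G$. Since $G$ is not hyperbolic, there is a segment $\alpha=[o,ho]$ that is not $D$-contracting for the relevant $D=D(C,R)$; every $[o,v]$ with $v\in U$ lies in a bounded neighborhood of a $C$-contracting ray, hence is $D$-contracting and cannot contain $\alpha$ as a barrier. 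The growth-tightness theorem from \cite{YANG10} then forces $\e U<\e G$, a contradiction. The difference is not just bookkeeping: the paper's mechanism is a growth-tightness/barrier argument on the orbit, not a boundary-measure estimate on flat-hugging rays, and the measure-positivity of the contracting set (rather than positivity of its complement) is what gets contradicted.
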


\subsubsection*{Hopf--Tsuji--Sullivan Dichotomy for mapping class groups and cubical groups}
At last,  for subgroups of mapping class groups, we can complete the Hopf--Tsuji--Sullivan  Dichotomy \ref{SecTeichFlow} with ergodicity of product measures.

\begin{thm}\label{HTSModThm}
Consider any non-elementary subgroup $G$ of $\mcg$ ($g\ge 2$) with pseudo-Anosov elements. Let $\{\mu_x\}_{x\in \T}$ be a $\omega$-dimensional $G$-equivariant conformal density on $\bTh=\pmf$. Then the following are equivalent:
\begin{enumerate}
\item
The Poincar\'e series $\p_G(s,o,o)$  of $G$ diverges at $\omega$;
\item
The Myrberg  set $\mG$ has either full or positive $\mu_x$-measure;

\item
The conical  set $\cG$ has either full or positive $\mu_x$-measure;

\item
The diagonal action on $\pmf\times \pmf$ is ergodic with respect to the product measure  $\mu_o\times \mu_o$. 
\end{enumerate} 
\end{thm}

\begin{rem}
If $G=\mcg$   the uniqueness of $\e G$-dimensional conformal density was proved \cite{H06} \cite{LM08}, and the ergodicity was known long before by work of Masur \cite{Ma82a} and Veech \cite{V82}. The above statement, however,  seems new for    non-elementary proper subgroups.
\end{rem}

At last, we note the following corollary to the non-elementary  action on a CAT(0) cube complex for further reference. Such an action is called \textit{essential} if no half-space contains an orbit in its fixed finite neighborhood (See \cite{CapSag}). In this setup, a Myrbert limit point is minimal, so we do not need take the quotient to obtain the uniqueness of PS measures.
\begin{thm}  \label{UniqueConfCubeThm}
Let $G\act \U$ in (\ref{GActUAssump}) be   an essential action of divergent type on  a CAT(0) cube complex compactified by the Roller  boundary $\bR$. Then the $\e G$-dimensional $G$-quasi-equivariant, quasi-conformal density on $\bR$   is unique up to a bounded multiplicative constant, and ergodic.
\end{thm}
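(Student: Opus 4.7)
The approach is to deduce this corollary from Theorem \ref{UniqueConfThm} via the identification of the Roller boundary with the horofunction boundary of the combinatorial metric, as asserted in Lemma \ref{Roller}.

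First, I would verify that the hypothesis (\ref{GActUAssump}) is satisfied in this setting. Non-elementary essential actions on CAT(0) cube complexes admit rank-one isometries by the Caprace--Sageev double skewering lemma, and a rank-one isometry of a CAT(0) space is a contracting element in the sense used in this paper. Together with properness of the action, this places $G\act\U$ within the framework required for Theorems \ref{HTSThm} and \ref{UniqueConfThm}. Lemma \ref{Roller} supplies the $G$-equivariant homeomorphism $\bR \cong \bX$ under which finite symmetric difference of half-space ultrafilters corresponds to the finite difference relation on horofunctions; Theorem \ref{ContractiveThm1} then tells us that $\bR$ is a convergence compactification in which every point is non-pinched, so in particular $\mathcal C=\bR$.

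Next, since the action is of divergent type, Theorem \ref{HTSThm} forces any $\e G$-dimensional $G$-quasi-equivariant quasi-conformal density $\{\mu_x\}$ on $\bR$ to have $\mu_o(\mG)=\mu_o(\bR)>0$, hence to be concentrated on the Myrberg set $\mG\subseteq\bR$. Applying Theorem \ref{UniqueConfThm} (or Theorem \ref{Unique}) yields uniqueness up to a bounded multiplicative constant, together with ergodicity, for the pushforward density on the Hausdorff second-countable quotient $[\mG]$ provided by Proposition \ref{Myrberg}.

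The remaining task is to promote uniqueness from $[\mG]$ back to $\mG$ inside $\bR$. The plan is to adapt the proof of Theorem \ref{UniqueConfThm} directly on $\bR$: since $\bR$ is already a compact metrizable (hence Hausdorff and second countable) space, the shadow lemma \ref{ShadowLem} gives two-sided estimates $\mu_o(\Pi_o^F(go,r))\asymp e^{-\e G\,d(o,go)}$ on shadows in $\bR$ without having to pass to a quotient, and the same re-arrangement-of-Poincar\'e-series argument used in Theorem \ref{UniqueConfThm} pins the density down on $\bR$ up to a bounded multiplicative constant. Ergodicity likewise transfers, since the quotient map $\mG \to [\mG]$ is measurable and $G$-equivariant.

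The main obstacle I anticipate is exactly this final lifting step: one has to check that the shadow-lemma mass estimates genuinely determine the density on $\bR$, not merely on $[\mG]$. Equivalently, one needs to rule out the possibility that essentially distinct $G$-quasi-equivariant quasi-conformal densities on $\bR$ project to the same density on the quotient. The expected mechanism is that essentiality of the action forces Myrberg ultrafilters to have singleton finite-symmetric-difference classes on the $\mu_o$-full support, so the projection $\mG\to[\mG]$ is injective on a full measure set; verifying this rigidity carefully, using the double-skewering dynamics of contracting elements provided by essentiality, is the delicate point.
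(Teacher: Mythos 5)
Your reduction to Theorem \ref{UniqueConfThm} via Lemma \ref{Roller} is the right route, and you have even named the exact mechanism that closes the argument: Myrberg points in the Roller boundary should have singleton finite-symmetric-difference classes, so that the quotient $\mG\to[\mG]$ is injective there. But you leave this as ``the delicate point'' to be verified, whereas the paper has already supplied it: Lemma \ref{Roller}(3) states that Myrberg points of $G$ are regular, and (invoking \cite[Lemma 5.2]{FLM}) regular points have singleton $[\cdot]$-class, i.e.\ are minimal. The regularity of Myrberg points is precisely where essentiality and double-skewering enter — the proof of Lemma \ref{Roller}(3) shows a Myrberg ray must recur to $r$-neighbourhoods of translated contracting axes, and the combinatorial axis of a contracting isometry in an essential cube complex carries arbitrarily far pairs of strongly separated half-spaces, producing the descending chain that characterizes regularity.

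Once you invoke Lemma \ref{Roller}(3), the ``lifting'' step you worried about collapses: since the action is divergent, Theorem \ref{HTSThm} concentrates any $\e G$-dimensional quasi-conformal density on $\mG\subseteq\bR$; on $\mG$ the quotient map $\mG\to[\mG]$ is a $G$-equivariant bijection (indeed a homeomorphism, since $[\cdot]$ restricted to $\mG$ is closed with singleton fibers); and the uniqueness and ergodicity given by Theorem \ref{UniqueConfThm} on $[\mG]$ therefore transfer verbatim to $\bR$. Your alternative plan of re-running the Hausdorff--Carath\'eodory/shadow argument directly on $\bR$ is not needed and would in any case have to justify the covering lemma without the saturation by loci — the loci being trivial on $\mG$ is exactly what makes both routes work, so you may as well use the bijection directly. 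Also note that, as you anticipated, properness of the action and of $\U$ (built into (\ref{GActUAssump})) makes the cube complex locally finite, so the Bader--Guralnik identification $\bR\cong\bX$ used in Lemma \ref{Roller} applies.
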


\subsection{Historical remarks and further questions} \label{Connections}
In a series of works \cite{YANG10,YANG11,HYANG, GYANG} without using      ergodic theory,  the elementary and  geometric   arguments  are employed  to establish  coarse counting results for groups   with contracting elements acting on    general metric spaces. 

Analogous to Anosov shadowing / closing property, a geometric tool  called extension lemma was used (recalled here in Lemma \ref{extend3}): roughly speaking,  any two geodesics $\alpha, \beta$ can be connected by a short arc $c$ to  form    a quasi-geodesic $\alpha\cdot c\cdot \beta$. The short arc is provided by a contracting segment,  whose existence    facilities the extensive use of Gromov-hyperbolic geometry. In particular, if $\alpha=\beta$, then a repeated application of extension lemma concatenates copies of $\alpha \cdot c$ to get a  periodic quasi-geodesic. In other words, this gives   a closed geodesic nearby, with length coarsely equal $\ell(\alpha)$ on the quotient $\U/G$.   On account of our previous works,   the complementary view from   the  ergodic theory is anticipated, and actually forms the primary goal of the present paper.
  
The conformal density on the limit set of Fuchsian groups was famously constructed by Patterson \cite{Patt}, and further developed by Suillvan \cite{Sul} in Kleinian groups with a number of applications. Particularly, their work sets the right track for the further generalization of (quasi)-conformal density on boundaries of hyperbolic spaces \cite{Coor}, CAT(-1) spaces \cite{BuMo}\cite{Roblin}, and CAT(0) spaces \cite{R17,L18}.  The Sullivan Shadow Lemma     \cite{Sul} provides a very useful tool in applications, and thus is most desirable in any ``good" theory of conformal density. Although the Sullivan's proof carries  over to Gromov hyperbolic spaces,   its generalization in CAT(0) manifolds and spaces follows  a different argument given by Knieper  (\cite{Kneiper1},\cite[Remark on p.781]{Kneiper2}) and  Link \cite{L18}. In Teichm\"uller space, the Shadow lemma is obtained in \cite{TYANG} with applications to fundamental inequality of random walks (see \cite[Lemma 5.1]{Gekht2} for a special case). 
 
Attempting to formulate a unified framework for the  (quasi-)conformal density in the above examples motivates the investigation of this paper. 
 
In \cite{YANG7}, the quasi-conformal density on Floyd   and Bowditch boundary of  relatively hyperbolic groups is used to study growth problems in word metrics. Like  Teichm\"uller space, the Cayley graph of a relatively hyperbolic group is generally neither Gromov hyperbolic  nor CAT(0) spaces. Moreover, the lack of Buseman cocyles at all boundary points forces us to define Buseman (quasi-)cocyles only at conical points. To handle  non-conical points, we  prove   the constructed Patterson--Sullivan measures are fully supported on conical points. This     strategy is axiomized in the present paper as the  \ref{AssumpE}. As another instance, the conformal density obtained  on the Thurston boundary \cite{ABEM}  follows a similar route   detailed in \textsection\ref{SecMCGDensity}.

Concerning  the convergence boundary, it would be   interesting to compare with Karlsson's theory of \textit{stars at infinity} \cite{Ka2}. Defined using half-spaces, the stars describe some incidental relation on boundary,   but do not form a partition in general. A nice feature of his theory allows to formulate  an   analogue of convergence group property. Some partial convergence property  also exists on  our conical points but is not included as we do not see an application.  
  
 
\subsubsection*{\textbf{Further questions}}
We discuss some  questions we found interesting about the convergence boundary. The notion of contracting subsets in this paper is usually called \textit{strongly contracting} in literature: there are many co-existing notions of contracting subsets (see \cite{ACGH}).  We are working  with proper {geodesic} metric spaces for simplicity, but the framework proposed here could be adapted to non-geodesic and non-symmetric metric spaces with strongly contracting elements. Here are two concrete examples in potential applications:
\begin{enumerate}
    \item 
    Martin boundary of  groups with nontrivial Floyd boundary, that is the horofunction boundary of Green metric (see \cite{GGPY}).
    \item
    Horofunction boundary of outer space for $\out(\mathbb F_n)$ with Lipschitz (asymmetric) metric. 
\end{enumerate}
On the other hand, a  challenging question would be the following.
\begin{quest}
Does there exist  a ``good" theory of the convergence boundary  for groups with weakly contracting elements / Morse elements, and the corresponding conformal density at infinity?  
\end{quest}

Boundary comparison is an interesting research topic, and  has been well-studied for convergence group actions, see the work of Floyd and Gerasimov \cite{Floyd, Ge2}. Recently, a surjective continuous map from Martin boundary to Floyd boundary was discovered in \cite{GGPY}, where the Martin boundary can be seen as the horofunction boundary of the Green metric.  As    the dynamical notions of conical points and Myrberg points  witness, the convergence boundary shows many similarities with  convergence group actions. In view of these results, we are interested in understanding.
\begin{quest}
Which group action on a convergence compactification $G \act \bU$  admit a  quotient with  a nontrivial convergence group action?     
\end{quest}

All known examples with this property  have   non-trivial Floyd boundary. It is easy to check that the horofunction boundary for word metric surjects to the Floyd boundary (cf. Lemma \ref{HoroFloydMap}).  It is still open that whether such groups are necessarily relatively hyperbolic. 

The last question concerns about the Myrberg  set. We  prove in Lemma \ref{HomeoMybergRHG} that Myrberg  set in Floyd boundary persists in quotients: there exists a homeomorphism from the Myrberg  set in the Floyd boundary onto the  Myrberg  set in any of its quotient with a non-trivial convergence group action.  This result seems  providing a positive evidence to the following.

\begin{conj}
Suppose a group $G$ acts geometrically on two proper CAT(0) spaces $X$ and $Y$. Then there exists a $G$-equivariant homeomorphism  between the corresponding Myrberg  sets. 
\end{conj}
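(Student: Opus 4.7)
The plan is to realize the identification of Myrberg sets through the projection complex, a group-theoretic invariant which survives a change of CAT(0) structure on $G$. By the Schwarz--Milnor lemma there is a $G$-equivariant $(\lambda,c)$-quasi-isometry $\phi\colon X\to Y$. Since the contracting (rank-1) property is a QI-invariant of an element of $G$, the two actions share the same contracting elements, and $\phi$ sends each axis $\ax_X(h)$ into a uniform neighborhood of $\ax_Y(h)$. Fix three independent contracting elements $h_1,h_2,h_3$ and form the systems $\f_X, \f_Y$ via (\ref{SystemFDef}). The natural $G$-equivariant vertex bijection $g\cdot\ax_X(h_i)\mapsto g\cdot\ax_Y(h_i)$ distorts the Bestvina--Bromberg--Fujiwara projection distances by only an additive constant, so after enlarging the BBF parameter $K$ it induces a $G$-equivariant quasi-isometry between the projection complexes $\mathcal P_K(\f_X)$ and $\mathcal P_K(\f_Y)$. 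Both being Gromov hyperbolic quasi-trees, their Gromov boundaries are $G$-equivariantly homeomorphic.

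By Theorem \ref{EmbedGromovThm}, for $L\gg K$ these Gromov boundaries embed, respectively, as the conical set $\Lambda_L^{\f_X}(Go)\subseteq\partial X$ and $\Lambda_L^{\f_Y}(Go)\subseteq\partial Y$. Composing yields a $G$-equivariant homeomorphism $\Phi_{F,L}\colon \Lambda_L^{\f_X}(Go)\to\Lambda_L^{\f_Y}(Go)$. By Proposition \ref{MyrbergConicalThm}, the Myrberg set of each action is the countable intersection of such conical sets as $F$ ranges over triples of independent contracting elements and $L$ over integers; intersecting along a cofinal sequence of parameters $(F,L)$ produces the candidate $G$-equivariant map between the two Myrberg sets.

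The principal obstacle is verifying that the maps $\Phi_{F,L}$ agree on overlaps, since one cannot hope for a global homeomorphism of visual boundaries (Croke--Kleiner). I would characterize $\Phi_{F,L}(\xi)$ intrinsically: a Myrberg ray to $\xi$ is contracting in $X$, its $\phi$-image fellow-travels a unique contracting geodesic ray in $Y$ by QI-stability of contracting geodesics in proper CAT(0) spaces, and $\Phi_{F,L}(\xi)$ is the visual endpoint of that fellow-traveller. Because this description is independent of $F$ and $L$, the $\Phi_{F,L}$ glue on the intersection to a continuous $G$-equivariant bijection, and the symmetric argument with $\phi^{-1}$ supplies a continuous inverse. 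The technical heart of the proof therefore reduces to (i) Hausdorff-stability of contracting axes under the equivariant QI, and (ii) coherence of the Gromov topology on $\partial\mathcal P_K(\f)$ with the subspace topology on $\Lambda_L^\f\subseteq\partial\U$, both of which should be accessible from the machinery already developed in the present paper.
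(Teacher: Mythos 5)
This statement is posed in the paper as an open conjecture: the paper offers no proof, only the heuristic remark that the two actions share the same rank-1 elements. So your proposal should be judged as an attempt at an open problem, and as it stands it has a fatal gap precisely at the point you identify as "the principal obstacle."

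Your resolution of the gluing problem rests on the claim that "a Myrberg ray to $\xi$ is contracting in $X$." This is false. A geodesic ray ending at a Myrberg point contains infinitely many $(r,f)$-barriers (Lemma \ref{CharMyrberg}), i.e.\ it recurs to contracting axes infinitely often, but it is not itself a contracting (rank-1) geodesic; indeed Theorem \ref{CATnullity} shows the contracting directions form a $\mu$-null set when $G$ is not hyperbolic, whereas the Myrberg set is $\mu$-full, so generic Myrberg points are \emph{not} endpoints of contracting rays. Without a contracting ray, the step "its $\phi$-image fellow-travels a unique contracting geodesic ray in $Y$ by QI-stability" collapses — the image of a non-Morse geodesic under an equivariant quasi-isometry need not fellow-travel any geodesic, which is exactly the Croke--Kleiner phenomenon you cite. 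Consequently you have no intrinsic, parameter-free description of $\Phi_{F,L}(\xi)$, and no argument that the maps $\Phi_{F,L}$ agree on overlaps or that the intersection over all $(F,L)$ yields a well-defined bijection of Myrberg sets. Two further points are weaker than you state: the BBF projection distances are distorted multiplicatively (not merely additively) under the quasi-isometry, so comparing $\PC$ built from $\f_X$ and $\f_Y$ requires comparing projection complexes at \emph{different} thresholds $K$, which is not automatic; and Lemma \ref{EmbedGromov} establishes continuity of the embedding from $\partial\mathcal P_K(\f)$ into $[\pU]$ but the continuity of its inverse (needed to compose $\Phi_Y\circ\Phi_X^{-1}$ into a homeomorphism) is exactly your unresolved point (ii). The first half of your outline — same contracting elements, equivariant vertex bijection of axes, quasi-isometric projection complexes with homeomorphic Gromov boundaries — is a plausible and natural starting point, but the conjecture remains open and the present argument does not close it.
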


As the two CAT(0) actions share the same set of rank-1 elements, it  still has some chance  to have a positive answer. Compare with the famous examples of Croke--Kleiner \cite{CK00} and the        quasi-isometric invariant  boundaries recently studied in  \cite{ChaSul, Co17, QR22}.
    
At the completion of writing of this paper, the author received a preprint of Gekhtman--Qing--Rafi \cite{GQR22} which contains similar results as Theorems \ref{PoissonBdry} and \ref{CATnullity} for sublinearly Morse directions.   On this regard, it would be interesting to compare sublinearly Morse boundary with Myrberg  set. 

Recently, R. Coulon independently developed a Patterson--Sullivan theory on the horofunction boundary     in \cite{C22}, in a large overlap with ours, including  Shadow lemma \ref{ShadowLem},   Theorems \ref{HTSThm}, \ref{UniqueConfThm}, and   \ref{CogrowthThm}. However, our proof of Theorem \ref{HTSThm}  via the projection complex machinery are very different from his approach, and is actually a novelty of our study.  The measurable quotient of horofunction boundary in Theorem \ref{UniqueConfThm}   is a Lebesgue space, which is   not known in \cite{C22}.

\subsubsection*{\textbf{Organization of the paper}} 
The paper is  organized into three parts. The first part from Sections \textsection \ref{SecPrelim} -- \textsection\ref{SecHorobdry} presents the basic theory of convergence boundary.  After the preparatory section \textsection\ref{SecPrelim}, we introduce the notion of convergence boundary in \textsection\ref{SecBoundary} and derive basic facts for later use. The study of conical points and Myrberg points is carried out in  \textsection \ref{SecMyrberg}, with   Proposition \ref{MyrbergConicalThm} and Theorem \ref{EmbedGromovThm} proved among others. Theorem \ref{ContractiveThm1} about convergence of horofunction boundary is shown in \textsection\ref{SecHorobdry}.

The second part from Sections \textsection\ref{SecDensity} -- \textsection\ref{SecUniqueness} develops  the theory of  conformal density on the convergence boundary. Shadow Lemma \ref{ShadowLem} and Shadow Principle \ref{ShadowPrinciple} are established in \textsection\ref{SecDensity}, and the Hopf--Tsuji--Sullivan Theorem \ref{HTSThm} in \textsection\ref{SecHTST}. Section \textsection\ref{SecUniqueness} contains two different but related results:  the unique  conformal density Theorem \ref{UniqueConfThm} for divergent groups, and the reduced Myrberg  set as Poisson boundary  stated in Theorem \ref{PoissonBdry}.    

The final Sections \ref{SecCogrowth}, \ref{SecRHG}, \ref{SecCAT0} and \ref{SecMCG}  collect various applications of this study; in particular, the last three Sections are devoted accordingly to Case Studies of Relatively hyperbolic groups, CAT(0) groups and Mapping class groups. In \textsection\ref{SecCogrowth}, we establish a co-growth tightness for normal subgroups in a general divergent action, answering questions in \cite{AC20}. Among others, Theorem \ref{HomeoMyrbergIntro} about homeomorphic Myrberg sets is proved in \textsection\ref{SecCAT0} for relatively hyperbolic groups, and Theorem \ref{CATnullity} on nullity of contracting boundary  is derived as a corollary of a general theorem \ref{nullity} for CAT(0) groups in \textsection\ref{SecCAT0},  and Hopf--Tsuji-Sullivan Dichotomy  \ref{HTSModThm}  for  mapping class groups is proved in  \textsection\ref{SecMCG}.

\subsection*{Acknowledgments} 
This work   started in December 2015, where the author was visiting  Universit\'e de Lille 1 under a 3-month CNRS research fellowship. He wishes to thank Professor Leonid Potyagailo for the invitation and the hospitality of Math department. During the visit, the proof of shadow lemma was obtained after discussions with Ilya Gekhtman   and  was explained to   Fanny Kassel. The author thanks  Ilya   for    his  interests since then, and R\'emi Coulon for related conversations in August 2018 and May 2019.  Thanks  also to Hideki Miyachi for sharing his prints, and Anthony Genevois, Weixu Su and  Giulio Tiozzo for  many helpful discussions. Part of the writing was completed during the visit in 2022 to the Institute for Advanced Study in Mathematics in Zhejiang University. 
    

\section{Preliminary}\label{SecPrelim}

\subsection{Notation and Convention}
Let $(\U, d)$ be a proper geodesic metric space. The \textit{shortest projection} of  a point $y \in \U$ to a closed subset $X \subseteq \U$ is given by $$\pi_X(y)=\{x\in X: d(y, x)
= d(y, X)\},$$ and of a subset
$A \subseteq \U$,  $\pi_X(A) = \bigcup_{a \in A} \pi_X(a).$ Denote the diameter of a set $A$: $$\|A\|=\sup\{d(x,y): x,y\in A\}$$ If $F$ is a finite set of isometries and $o\in \U$ is a base point, we also denote 
$$\begin{aligned}
\|Fo\|=\max\{d(o,fo): f\in F\}\\
\|Fo\|_{\min}=\min\{d(o,fo): f\in F\}
\end{aligned}$$


Let $\alpha: [s,t]\to\U$ be an arc-length parameterized   path  from the initial point $\alpha^-=\alpha(s)$ to the terminal point $\alpha^+=\alpha(t)$. Denote by $[x,y]_\alpha$ the parametrized
subpath of $\alpha$ going from $x$ to $y$. Precisely, if $x=\alpha(s_0)$ and $y=\alpha(t_0)$, $[x,y]_\alpha$ denotes the restriction of $\alpha$ on $[s_0,t_0]$.  Without the sub-index, 
 $[x, y]$ denotes a choice of a geodesic between $x, y\in \U$. 

Let $\alpha :\mathbb R\to\U$ be a bi-infinite path. The restriction of $\alpha$ to $[a, +\infty)$ for $a\in\mathbb R$  is referred to  as a \textit{positive ray}, and its complement a \textit{negative ray}. By abuse of language, we often denote them by   $\alpha^+$ and $\alpha^-$. In particular,  if $\U$ is compactified with boundary $\pU$ and two half ways $\alpha^\pm$ converge in the compactification,  $\alpha^+$ and $\alpha^-$ acquire the usual sense as the end points of $\alpha$ in the compact space $\U\cup\pU$.

A path $\alpha$ is called a \textit{$c$-quasi-geodesic} for $c\ge 1$ if for   any  subpath $\beta$,
$$\len(\beta)\le c \cdot d(\beta^-, \beta^+)+c$$
 where   $\ell(\beta)$ denotes the length of $\beta$.
 
Denote by $\alpha\cdot \beta$ (or simply $\alpha\beta$) the concatenation of two paths $\alpha, \beta$  provided that $\alpha^+ =
\beta^-$.

Let $f, g$ be real-valued functions. Then $f \prec_{c_i} g$ means that
there is a constant $C >0$ depending on parameters $c_i$ such that
$f < Cg$. The symbol $\succ_{c_i}  $ is defined similarly, and  $\asymp_{c_i}$ means both $\prec_{c_i}  $ and $\succ_{c_i}  $  are true. The constant $c_i$ will be omitted if it is a universal constant.

The group action on $\U$ is always assumed  by isometry and the identity in a group $G$ shall be denoted by $1$.

\subsection{Contracting subsets}\label{SContractingSubset}
\begin{defn}[Contracting subset]\label{ContrDefn}
For given $C\ge 1$, a subset $U$ in $\U$ is called $C$-\textit{contracting}   if for any geodesic $\gamma$ with $d(\gamma,
U) \ge C$, we have
$$\|\pi_{U} (\gamma)\|  \le C.$$
 A
collection of $C$-contracting subsets is referred to
as a $C$-\textit{contracting system}. 
\end{defn}

Contracting property has several equivalent characterizations. When speaking about $C$-contracting property, the constant $C$ shall be assumed to satisfy  the following three statements.

\begin{lem}\label{BigThree}
Let $U$ be a contracting subset. Then there exists $C>0$ such that 
\begin{enumerate}
\item
If $d(\gamma,
U) \ge C$ for a geodesic  $\gamma$, we have
$\|\pi_U (\gamma)\|  \le C.$
\item
If $\|\pi_U (\gamma)\|  \ge  C$ then $d(\pi_U(\gamma^-),\gamma)\le C, \;d(\pi_U(\gamma^+),\gamma)\le C$.
\item
For a metric ball $B$ disjoint with $U$, we have $\|\pi_U(B)\|\le C$.
\end{enumerate}
\end{lem}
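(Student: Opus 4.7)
My plan is to take $U$ to be contracting with some constant $C_0$ (as in the Section 2 definition, i.e., statement (1)) and to derive constants $C_2, C_3$ for which (2), (3) respectively hold and which depend only on $C_0$; the common constant is then $C := \max\{C_0, C_2, C_3\}$.

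The central tool is a bounded-geodesic-image consequence of (1). For any geodesic $\alpha$ the endpoints $\alpha^\pm$ lie on $\alpha$, so $\pi_U(\alpha^\pm) \subseteq \pi_U(\alpha)$; hence by (1),
\[
d(\alpha, U) \ge C_0 \;\Longrightarrow\; d(\pi_U(\alpha^-), \pi_U(\alpha^+)) \le \|\pi_U(\alpha)\| \le C_0.
\]
Contrapositively, if $d(\pi_U(\alpha^-), \pi_U(\alpha^+)) > C_0$, then $\alpha$ must enter the $C_0$-neighborhood of $U$.

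To establish (2), I would take $\gamma$ with $\|\pi_U(\gamma)\|$ larger than a suitable multiple of $C_0$, select $a, b \in \gamma$ with $d(\pi_U(a), \pi_U(b)) > C_0$, apply the principle above to the sub-geodesic $[a,b]_\gamma$ to produce $z \in \gamma$ with $d(z, U) \le C_0$, and then apply the principle again to $[\gamma^-, z]_\gamma$ to bound $d(\pi_U(\gamma^-), \pi_U(z)) \le C_0$. Combining with $d(\pi_U(z), z) \le C_0$ and $z \in \gamma$ gives $d(\pi_U(\gamma^-), \gamma) \le 2C_0$; the same argument handles $\pi_U(\gamma^+)$.

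For (3), given a ball $B = B(x,r)$ disjoint from $U$ and $y_1, y_2 \in B$ with $d(\pi_U(y_1), \pi_U(y_2)) > C_0$, the same principle applied to $[y_1, y_2]$ forces the geodesic to enter $N_{C_0}(U)$ at some point $z$; together with the shortest-projection identity $d(y_i, \pi_U(y_i)) = d(y_i, U)$, this controls the $\pi_U(y_i)$ in terms of $z$. The delicate point is obtaining a bound uniform in the radius $r$: here I would use the geodesicity of chords in $B$ in an essential way, iterating the bounded geodesic image along sub-segments of $[y_1, y_2]$ that stay outside $N_{C_0}(U)$ (on which (1) bounds the projection spread by $C_0$), and concluding that any very large projection spread would force a shortcut through $U$ contradicting that $[y_1, y_2]$ is a geodesic.

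The main obstacle is the uniformity in $r$ in step (3); the other two implications are essentially direct consequences of the bounded-geodesic-image principle, whereas (3) requires carefully exploiting the geodesicity of $[y_1, y_2]$ together with the shortest-projection property, so that the final constant depends only on $C_0$ and not on the radius of $B$.
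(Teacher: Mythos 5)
The paper states this lemma without proof, so there is no internal argument to compare against; I assess your proposal on its own.

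Your derivation of (2) from (1) is essentially sound, though one step needs care: you cannot apply (1) directly to $[\gamma^-,z]_\gamma$, because $z$ already lies within $C_0$ of $U$, so $d([\gamma^-,z]_\gamma,U)<C_0$. The repair is routine: take $z'$ to be the first point of $[\gamma^-,z]_\gamma$ (from $\gamma^-$) with $d(z',U)=C_0$ (or take $z'=\gamma^-$ if $d(\gamma^-,U)\le C_0$); then $[\gamma^-,z']_\gamma$ stays at distance $\ge C_0$ from $U$, so (1) gives $d(\pi_U(\gamma^-),\pi_U(z'))\le C_0$, and since $d(\pi_U(z'),z')=C_0$ and $z'\in\gamma$, you get $d(\pi_U(\gamma^-),\gamma)\le 2C_0$.

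The genuine gap is in (3), and it is precisely where you flagged delicacy. The mechanism you propose, that a large projection spread along $[y_1,y_2]$ would ``force a shortcut through $U$ contradicting that $[y_1,y_2]$ is a geodesic,'' does not yield a contradiction: the detour $y_1\to\pi_U(y_1)\to\pi_U(y_2)\to y_2$ has length $d(y_1,U)+d(\pi_U(y_1),\pi_U(y_2))+d(y_2,U)$, which is always $\ge d(y_1,y_2)$, so geodesicity is never violated. If you carry out the quantitative bookkeeping on $[y_1,y_2]$ (letting $w_1,w_2$ be the first and last entry points into $N_{C_0}(U)$, using the $2C_0$-proximity of $\pi_U(y_i)$ to $w_i$ from (2), and comparing arclengths), the resulting bound on $d(\pi_U(y_1),\pi_U(y_2))$ comes out of order $r+C_0$, i.e.\ it grows with the radius of the ball. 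So iterating BGI along the chord $[y_1,y_2]$ is the wrong chord: it does not see that $B$ is a \emph{ball}.

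The correct choice is the chord from the \emph{center} of the ball, and it turns the already-established (2) into exactly the uniformity you need. Let $B=B(x,r)$ with $d(x,U)\ge r$, take $y\in B$, and set $\gamma=[x,y]$, so $d(x,y)<r$. If $\|\pi_U(\gamma)\|\le C$ (with $C$ the constant from (2)), we are done. Otherwise (2) produces points $a_1,a_2\in\gamma$ with $d(\pi_U(x),a_1)\le C$ and $d(\pi_U(y),a_2)\le C$; in particular $d(a_i,U)\le C$, and therefore $d(x,a_i)\ge d(x,U)-d(a_i,U)\ge r-C$, while $a_i\in[x,y]$ gives $d(x,a_i)\le d(x,y)<r$. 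Hence both $a_1$ and $a_2$ lie in the terminal sub-arc of $[x,y]$ of length $<C$, so $d(a_1,a_2)<C$ and $d(\pi_U(x),\pi_U(y))\le 3C$. Applying this to two points of $B$ and using the triangle inequality yields $\|\pi_U(B)\|\le 6C$, with no dependence on $r$. The point your sketch was missing is that (2) pins the entry points onto the sphere of radius about $r$ around $x$; that geometric pinning, not a length contradiction, is what makes the ball bound uniform.
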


\begin{figure}
    \centering

\tikzset{every picture/.style={line width=0.75pt}} 

\begin{tikzpicture}[x=0.75pt,y=0.75pt,yscale=-1,xscale=1]

\draw    (61.5,190) .. controls (120.5,154) and (184.5,157) .. (256.5,190) ;
\draw   (124.5,108.25) .. controls (124.5,91.27) and (138.27,77.5) .. (155.25,77.5) .. controls (172.23,77.5) and (186,91.27) .. (186,108.25) .. controls (186,125.23) and (172.23,139) .. (155.25,139) .. controls (138.27,139) and (124.5,125.23) .. (124.5,108.25) -- cycle ;
\draw  [dash pattern={on 4.5pt off 4.5pt}]  (186,108.25) .. controls (166.5,126.78) and (166.49,135.56) .. (166.5,161.01) ;
\draw [shift={(166.5,163)}, rotate = 270] [color={rgb, 255:red, 0; green, 0; blue, 0 }  ][line width=0.75]    (10.93,-3.29) .. controls (6.95,-1.4) and (3.31,-0.3) .. (0,0) .. controls (3.31,0.3) and (6.95,1.4) .. (10.93,3.29)   ;
\draw [shift={(186,108.25)}, rotate = 136.47] [color={rgb, 255:red, 0; green, 0; blue, 0 }  ][fill={rgb, 255:red, 0; green, 0; blue, 0 }  ][line width=0.75]      (0, 0) circle [x radius= 3.35, y radius= 3.35]   ;
\draw  [dash pattern={on 4.5pt off 4.5pt}]  (124.5,108.25) .. controls (139.77,125.73) and (147.27,120.17) .. (144.63,161.1) ;
\draw [shift={(144.5,163)}, rotate = 273.99] [color={rgb, 255:red, 0; green, 0; blue, 0 }  ][line width=0.75]    (10.93,-3.29) .. controls (6.95,-1.4) and (3.31,-0.3) .. (0,0) .. controls (3.31,0.3) and (6.95,1.4) .. (10.93,3.29)   ;
\draw [shift={(124.5,108.25)}, rotate = 48.87] [color={rgb, 255:red, 0; green, 0; blue, 0 }  ][fill={rgb, 255:red, 0; green, 0; blue, 0 }  ][line width=0.75]      (0, 0) circle [x radius= 3.35, y radius= 3.35]   ;
\draw    (316,184) .. controls (356,154) and (454,157) .. (498.5,182) ;
\draw    (341,102) .. controls (346.5,132) and (348.5,148) .. (375.5,152) .. controls (402.5,156) and (431.5,154) .. (450.5,152) .. controls (469.5,150) and (476.5,125) .. (479.5,101) ;
\draw [shift={(479.5,101)}, rotate = 277.13] [color={rgb, 255:red, 0; green, 0; blue, 0 }  ][fill={rgb, 255:red, 0; green, 0; blue, 0 }  ][line width=0.75]      (0, 0) circle [x radius= 3.35, y radius= 3.35]   ;
\draw [shift={(341,102)}, rotate = 79.61] [color={rgb, 255:red, 0; green, 0; blue, 0 }  ][fill={rgb, 255:red, 0; green, 0; blue, 0 }  ][line width=0.75]      (0, 0) circle [x radius= 3.35, y radius= 3.35]   ;
\draw  [line width=0.75]  (144.5,167) .. controls (144.5,170.29) and (146.15,171.94) .. (149.44,171.94) -- (149.44,171.94) .. controls (154.15,171.94) and (156.5,173.59) .. (156.5,176.88) .. controls (156.5,173.59) and (158.85,171.94) .. (163.56,171.94)(161.44,171.94) -- (163.56,171.94) .. controls (166.85,171.94) and (168.5,170.29) .. (168.5,167) ;
\draw    (352.5,168) -- (355.26,144.99) ;
\draw [shift={(355.5,143)}, rotate = 96.84] [color={rgb, 255:red, 0; green, 0; blue, 0 }  ][line width=0.75]    (10.93,-3.29) .. controls (6.95,-1.4) and (3.31,-0.3) .. (0,0) .. controls (3.31,0.3) and (6.95,1.4) .. (10.93,3.29)   ;
\draw [shift={(352.5,168)}, rotate = 276.84] [color={rgb, 255:red, 0; green, 0; blue, 0 }  ][fill={rgb, 255:red, 0; green, 0; blue, 0 }  ][line width=0.75]      (0, 0) circle [x radius= 3.35, y radius= 3.35]   ;
\draw    (465.5,170) -- (463.67,148.99) ;
\draw [shift={(463.5,147)}, rotate = 85.03] [color={rgb, 255:red, 0; green, 0; blue, 0 }  ][line width=0.75]    (10.93,-3.29) .. controls (6.95,-1.4) and (3.31,-0.3) .. (0,0) .. controls (3.31,0.3) and (6.95,1.4) .. (10.93,3.29)   ;
\draw [shift={(465.5,170)}, rotate = 265.03] [color={rgb, 255:red, 0; green, 0; blue, 0 }  ][fill={rgb, 255:red, 0; green, 0; blue, 0 }  ][line width=0.75]      (0, 0) circle [x radius= 3.35, y radius= 3.35]   ;

\draw (139,177.4) node [anchor=north west][inner sep=0.75pt]    {$\leq C$};
\draw (60,163.4) node [anchor=north west][inner sep=0.75pt]    {$U $};
\draw (150,97.4) node [anchor=north west][inner sep=0.75pt]    {$B$};
\draw (405,172.4) node [anchor=north west][inner sep=0.75pt]    {$U $};
\draw (471,149.4) node [anchor=north west][inner sep=0.75pt]    {$\leq C$};
\draw (323,81.4) node [anchor=north west][inner sep=0.75pt]    {$x$};
\draw (488,86.4) node [anchor=north west][inner sep=0.75pt]    {$y$};
\draw (316,147.4) node [anchor=north west][inner sep=0.75pt]    {$C\geq $};
\draw (333,174.4) node [anchor=north west][inner sep=0.75pt]    {$\pi _{U }( x)$};
\draw (446,174.4) node [anchor=north west][inner sep=0.75pt]    {$\pi _{U }( y)$};

\end{tikzpicture}
    \caption{Contracting property in Lemma \ref{BigThree}}
    \label{fig:contracting}
\end{figure}



We continue to list a few more consequences used through out this paper.

A subset $U\subseteq\U$ is called \textit{$\sigma$-Morse} for a function $\sigma: \mathbb R^+ \to \mathbb R^+$ if  given $c \ge
1$,  any $c$-quasi-geodesic with endpoints in $U$ lies in $N_{\sigma(c)}(U)$. 

\begin{lem}\label{BigFive}
Let $U\subseteq \U$ be a  $C$-contracting subset for $C>0$. 

\begin{enumerate}
\item

There exists $\sigma=\sigma(C)$ such that   $U$ is $\sigma$-Morse. 
\item
For any $r>0$, there exists $\hat C=\hat C(C, r)$ such that a subset $V\subseteq \U$ within $r$-Hausdorff distance to $U$ is $\hat C$-contracting.

\item
If  a geodesic $\gamma$  intersects $N_r(U)$ only at the endpoint $\gamma^-$ for given $r\ge C$, then $$ \pi_U(\gamma^+)\subseteq B(\gamma^-, r+ C).$$
\item
For any  geodesic $\gamma$, we have $$\big |d_U(\gamma^-,\gamma^+)- \|\gamma\cap N_C(U)\|\big|\leq 4C.$$

\item
There exists $\hat C=\hat C(C)$ such that   $d_U(y, z)\le d(y, z)+\hat C$ for any $y,z\in \U$.

\end{enumerate}
\end{lem}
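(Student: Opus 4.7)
My plan is to prove the five items in order, treating Lemma \ref{BigThree} as the engine: a ball disjoint from $U$ projects with diameter $\le C$, and if $\|\pi_U(\gamma)\|\ge C$ then both endpoints of $\gamma$ lie within $C$ of $U$.

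For (1), I will rule out large excursions of a $c$-quasi-geodesic $q$ with endpoints in $U$. Let $q'\subseteq q$ be a maximal subpath with $d(q',U)\ge C$; Lemma \ref{BigThree}(1) gives $\|\pi_U(q')\|\le C$, and by maximality the endpoints $a,b$ of $q'$ satisfy $d(a,U),d(b,U)\le C$, which force $d(a,b)\le 3C$. The quasi-geodesic hypothesis then bounds $\ell(q')\le 3cC+c$, so every point of $q$ lies within $\sigma(c):=3cC+c+C$ of $U$. For (2), if $V$ has Hausdorff distance $\le r$ to $U$ and $\gamma$ is a geodesic with $d(\gamma,V)\ge C+r$, then $d(\gamma,U)\ge C$ gives $\|\pi_U(\gamma)\|\le C$, and comparing $\pi_V(\gamma)$ to $\pi_U(\gamma)$ via the Hausdorff bound yields $\|\pi_V(\gamma)\|\le C+2r$; hence $\hat C:=C+2r$ works.

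For (3), the sub-geodesic $\gamma\setminus\{\gamma^-\}$ is disjoint from $N_C(U)$ since $r\ge C$, so $\|\pi_U(\gamma\setminus\{\gamma^-\})\|\le C$ by Lemma \ref{BigThree}(1); combined with $\pi_U(\gamma^-)\subseteq B(\gamma^-,r)$, this gives $\pi_U(\gamma^+)\subseteq B(\gamma^-,r+C)$. For (4), I decompose $\gamma$ into maximal subpaths inside and outside $N_C(U)$: on each outside piece the projection has diameter $\le C$ by contracting, while on each inside piece its length matches its projection contribution up to an additive $C$ by Lemma \ref{BigThree}(2) applied at its entry and exit points. Summing over the pieces and absorbing transition errors yields the advertised additive $4C$ bound. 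Item (5) is then an immediate consequence of (4) applied to a geodesic $[y,z]$, with $\hat C:=4C$.

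The main obstacle I foresee is in (4): when $\gamma\cap N_C(U)$ has many connected components, naive summation of contributions from successive components risks a multiplicative error, because projections could in principle backtrack. The fix I will use is coarse monotonicity of $\pi_U$ along a geodesic, itself a standard consequence of the contracting characterizations in Lemma \ref{BigThree}, which guarantees that successive projection contributions accumulate along $U$ in one direction rather than cancel, preserving the additive structure of the estimate.
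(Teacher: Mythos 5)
The paper states Lemma \ref{BigFive} without proof (it is offered as a list of standard consequences of the contracting property), so there is no argument of the author's to compare against; I can only assess your proposal on its own terms. Items (3) and (5) are essentially fine (in (3) you should compare $\pi_U(\gamma^+)$ with the projections of points of $\gamma$ just beyond $\gamma^-$ rather than with $\pi_U(\gamma^-)$ itself, but that is a limiting detail), and the strategy for (2) is correct modulo the standard fact that near-minimizers of $d(x,\cdot)$ on $U$ lie uniformly close to $\pi_U(x)$ — your ``Hausdorff bound'' comparison silently uses this, and the resulting constant is larger than $C+2r$, which is harmless.

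There are two genuine gaps. First, in (1) you apply Lemma \ref{BigThree}(1) to the maximal subpath $q'$ of the quasi-geodesic $q$ lying outside $N_C(U)$; but that lemma, and the definition of contracting, only control projections of \emph{geodesics} and of metric balls disjoint from $U$. A quasi-geodesic can travel arbitrarily far while staying at distance just over $C$ from $U$, and nothing you have quoted bounds $\|\pi_U(q')\|$ by $C$. The standard repair is to work at a larger threshold $R$: each subarc of $q'$ of length $R$ lies in a ball $B(x,R)$ disjoint from $U$ and hence projects to a set of diameter $\le C$, giving $\|\pi_U(q')\|\le C\cdot\ell(q')/R+C$; feeding this into the quasi-geodesic inequality for the endpoints of $q'$ and choosing $R>2cC$ bootstraps a bound on $\ell(q')$. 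This bootstrapping is the whole content of the Morse property and cannot be replaced by citing the geodesic case. Second, in (4) the quantity $\|\gamma\cap N_C(U)\|$ is the \emph{diameter} of the intersection, i.e.\ the distance between the first entry point $x$ and the last exit point $y$, not the total length of the ``inside'' pieces; moreover, summing errors of size $C$ over the components of $\gamma\cap N_C(U)$ yields a bound proportional to the number of components, not the advertised $4C$ — coarse monotonicity prevents cancellation but does not stop additive errors from accumulating. The correct argument needs no decomposition: $d_U(\gamma^-,x)\le C$ and $d_U(y,\gamma^+)\le C$ because the portions of $\gamma$ strictly before $x$ and strictly after $y$ avoid $N_C(U)$, while $|d_U(x,y)-d(x,y)|\le 2C$ because $x,y\in N_C(U)$; the triangle inequality for $d_U$ then gives the stated $4C$, and (5) follows as you say.
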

\begin{proof}
All the statements are well-known or straightforward. We leave them as exercises to the interested reader.    
\end{proof}


In this paper, we are interested in a  contracting system $\f$ with  \textit{$\tau$-bounded intersection} property for a function $\tau: \mathbb R_{\ge 0}\to \mathbb R_{\ge 0}$ so that the
following holds
$$\forall U, V\in \f: \quad \|{N_r (U) \cap N_r (V)}\| \le \tau(r)$$
for any $r \geq 0$. This  is, in fact, equivalent to a \textit{bounded projection
property} of $\f$:  there exists a constant $B>0$ such that the
following holds
$$\|\pi_{U}(V)\| \le B$$
for any $U\ne V \in \f$.  See \cite[Lemma 2.3]{YANG7} for a proof.

We now state two elementary lemmas used later on, which  precises the same idea: the projection  of a geodesic to a contracting subset equals the intersection up to a bounded amount. 
This could be thought of as a  general version of the assertion (4) in Lemma \ref{BigFive}. 

Consider   a $C$-contracting subset $U\subseteq \U$ and  $r\ge 100C$. Let $\alpha$ be a geodesic so that $\| N_r(U)\cap \alpha\|>3r$. Recall that the entry and exit points, $x,y\in \alpha\cap N_r(U)$, of $\alpha$ in $N_r(U)$  satisfy $d(x,y)=\|N_r(U)\cap \alpha\|$. 

  
\begin{lem}\label{Transform}
The   entry and exit points of $\alpha$ in $N_C(U)$ are $(3r/2)$-close to the entry and exit points  of $\alpha$ in $N_r(U)$ respectively. In particular, 
\begin{align}
\label{EQchangeNbhd} \| N_C(U)\cap \alpha\|\ge \|N_r(U)\cap \alpha \| -3r\\
\label{EQProj=Intersec} |d_U(\alpha^-, \alpha^+)- \|N_r(U)\cap \alpha\||\le 4r
\end{align}
\end{lem} 
\begin{proof}
By hypothesis, we have $d(x,y)> 3r$. First notice that $N_C(U)\cap \alpha\ne\emptyset$. Otherwise, if $N_C(U)\cap \alpha=\emptyset$, the contracting property gives  $$d(x,y)\le d(x,U)+d(y,U)+\|\pi_U(\alpha)\|\le 2r+C<3r.$$ This is a contradiction, confirming the above observation. 

Let us   thus choose  $u, v\in [x,y]\cap N_C(U)$ such that $d(u, v)=\|N_C(U)\cap \alpha \|$.  Using again contracting property of $U$, we obtain $$d(x,u)\le d(x,U)+d(u,U)+\|\pi_U([x,u])\|\le r+2C\le 3r/2.$$ Similarly,  $d(v, y)\le r+2C\le 3r/2$. Thus, $d(u,v)\ge d(x,y)-3r$, so the (\ref{EQchangeNbhd}) follows. 

To show the inequality (\ref{EQProj=Intersec}), note that $\pi_U(\alpha^-)$ (resp. $\pi_U(\alpha^+)$) is $2C$-close to $u$ (resp. $v$). Hence, (\ref{EQProj=Intersec}) follows from (\ref{EQchangeNbhd}). The proof is complete.
\end{proof}

\begin{lem}\label{TwoGeodesicsEnterULem}
Let $\beta$ be a geodesic such that 
$d_U(\alpha^-,\beta^-)\le 10C$ and $d_U(\alpha^+,\beta^+)\le 10C$. Then the entry and exit points of  $\beta$ in $N_r(U)$ are $(4r)$-close to $x$ and $y$ respectively. 
\end{lem}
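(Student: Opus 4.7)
The plan is to locate the projections $\pi_U(\alpha^\pm)$ and $\pi_U(\beta^\pm)$ close to the corresponding entry/exit points of $\alpha$ and $\beta$ in $N_r(U)$, and then compare the four points via the hypothesis $d_U(\alpha^\pm,\beta^\pm)\le 10C$ using a triangle inequality in $U$.

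First I will apply Lemma~\ref{BigFive}(3) to the subsegment of $\alpha$ running from $x$ back to $\alpha^-$: since $x$ is the first point where $\alpha$ enters $N_r(U)$, this reversed subsegment meets $N_r(U)$ only at its initial endpoint, so the lemma yields $\pi_U(\alpha^-)\subseteq B(x,r+C)$. The identical argument applied to $\beta$ gives $\pi_U(\beta^-)\subseteq B(x',r+C)$. The only subtle point is the degenerate case in which $\alpha^-$ (or $\beta^-$) already lies in $N_r(U)$, forcing $x=\alpha^-$; here Lemma~\ref{BigFive}(3) is vacuous, but the same inclusion is immediate from $d(\alpha^-,\pi_U(\alpha^-))=d(\alpha^-,U)\le r$.

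With these two inclusions in hand, the hypothesis gives $d(p,p')\le d_U(\alpha^-,\beta^-)\le 10C$ for any $p\in\pi_U(\alpha^-)$ and $p'\in\pi_U(\beta^-)$, so the triangle inequality yields
\[
d(x,x')\;\le\;d(x,p)+d(p,p')+d(p',x')\;\le\;2(r+C)+10C\;=\;2r+12C.
\]
Because $r\ge 100C$, the error term $12C$ is absorbed and $d(x,x')\le 4r$. A completely symmetric argument, replacing $\alpha^-,\beta^-$ by $\alpha^+,\beta^+$ and $x,x'$ by $y,y'$, delivers $d(y,y')\le 4r$. I do not expect a real obstacle here: the proof is essentially a one-line triangle inequality once Lemma~\ref{BigFive}(3) has been invoked to pin the projections near the entry and exit points, and the slack $r\ge 100C$ is precisely what makes the $12C$-error negligible against the claimed $4r$ bound.
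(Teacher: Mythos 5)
Your triangle-inequality argument for the distance bound is sound as far as it goes, and it is a somewhat cleaner route than the paper's (the paper passes through the entry/exit points of $\beta$ in the smaller neighborhood $N_C(U)$ and then invokes Lemma~\ref{Transform}, whereas you pin $\pi_U(\alpha^-)$ and $\pi_U(\beta^-)$ directly next to the entry points in $N_r(U)$ via Lemma~\ref{BigFive}(3) and compare them through the hypothesis $d_U(\alpha^\pm,\beta^\pm)\le 10C$). The numerics also check out: $2r+12C\le 2.12\,r<4r$ once $r\ge 100C$.

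However, there is a genuine gap: you never establish that $\beta$ meets $N_r(U)$ at all, so the entry and exit points $x',y'$ on which your whole argument rests are not known to exist. This is not a triviality — it is the first half of the paper's proof. One can easily have two geodesics satisfying $d_U(\alpha^-,\beta^-)\le 10C$ and $d_U(\alpha^+,\beta^+)\le 10C$ with $\beta$ disjoint from $N_r(U)$ (e.g.\ $\beta$ far from $U$ with $\|\pi_U(\beta)\|\le C$); what rules this out is the standing assumption $\|N_r(U)\cap\alpha\|>3r$, i.e.\ $d(x,y)>3r$, set up just before the lemma. The paper argues by contradiction: if $\beta\cap N_C(U)=\emptyset$ then $\|\pi_U(\beta)\|\le C$, and chaining the projections of $\alpha^-,\beta^-,\beta^+,\alpha^+$ forces $d(x,y)\le 2r+14C<3r$, a contradiction. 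Your proof never uses the hypothesis $d(x,y)>3r$, which is the tell-tale sign that the existence step is missing. Once you add this step (or any argument showing $\beta\cap N_r(U)\ne\emptyset$), the rest of your proof goes through.
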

\begin{proof}
We first prove that $\beta\cap N_C(U)\ne\emptyset$. Indeed, assume to the contrary that $\beta\cap N_C(U)=\emptyset$, so $\|\pi_U(\beta)\|\le C$ by the contracting property.

Since $[x, \alpha^-]\cap N_r(U)=\emptyset$ and $[ \alpha^-, y]\cap N_r(U)=\emptyset$, the contracting property again shows $d_U(x, \alpha^-) \le C$ and $d_U(\alpha^-, y)\le C$.  We compute
$$
\begin{array}{rl}
d(x, y) \le& d(x, U)+d_U(x, \alpha^-)  + d_U(\alpha^-,\beta^-) \\
&+\|\pi_U(\beta)\|+ d_U(\alpha^+,\beta^+) +d_U(\alpha^-, y) + d(y, U)\\
\le & 14C+2r <3r.
\end{array}
$$ 
This contradicts the assumption that $d(x,y)>3r$.  Thus, $\beta\cap N_C(U)\ne\emptyset$ is proved, so we can   choose  $u, v\in \beta\cap N_C(U)$ such that $d(u, v)=\|N_C(U)\cap \beta \|$.  Using the contracting property of $U$, we obtain 
$$\begin{array}{rl}
d(x,u)\le & d(x,U)+d_U(x,\alpha^-)+d_U(\alpha^-,\beta^-)+d_U(\beta^-,u)+ d(u,U)\\
\le &2 r+12C\le (5/2)r.
\end{array}
$$ Similarly, we have $d(v, y)\le 2r+12C\le (5/2)r$.

By Lemma \ref{Transform}, the   enter and exit points of $\beta$ in $N_r(U)$ are $(3r/2)$-close to $u$ and $v$ respectively, so are $(4r)$-close to the enter and exit points $x,y$ of  $\alpha$ respectively. The proof of the lemma is proved. 
\end{proof}

\subsection{Contracting elements}
A group  $H$ is
called \textit{contracting} if for some (hence any) $o \in \U$, the
subset $Ho$ is contracting in $\U$. 


An  isometry $h \in \isom(\U)$ is called  
\textit{contracting} if it is of infinite order and the subgroup $\langle h \rangle$ is contracting. The set of contracting isometries is preserved under conjugacy. We say that $h$ has the \textit{quasi-isometric image} property if the orbital map
\begin{equation}\label{QIEmbed}
n\in \mathbb Z\to h^no \in \mathrm Y
\end{equation}
is a quasi-isometric embedding. We then define the axis of $h$ as the following concatenating path:
\begin{equation}\label{axisdefn0}
\ax(h)=\cup_{n\in \mathbb Z} h^n[o,ho].
\end{equation}

Let $G<\isom(\U)$ be a discrete group, that is, which acts properly on $\U$. 

\begin{lem}\cite[Lemma 2.11]{YANG10}\label{elementarygroup}
Let $h\in G$ be a contracting element  with quasi-isometric image. Then $h$ is contained in a maximal elementary subgroup defined as follows 
$$
E(h)=\{g\in G: \exists n\in \mathbb N_{> 0}, (\;gh^ng^{-1}=h^n)\; \lor\;  (gh^ng^{-1}=h^{-n})\}.
$$
\end{lem}
 
Keeping in mind the basepoint $o\in\U$, the \textit{axis} of $h$  is defined as the following quasi-geodesic 
\begin{equation}\label{axisdefn}
\ax(h)=\{f o: f\in E(h)\}.
\end{equation} Notice that $\ax(h)=\ax(k)$ and $E(h)=E(k)$    for any contracting element   $k\in E(h)$.
\begin{rem} [Definition of axis]
For most results in the paper,   contracting elements   are contained in a discrete group, for which we make use of the axis in (\ref{axisdefn}). As $\langle h\rangle$ is of finite index, $E(h)\cdot o$ has a finite Hausdorff distance to $\langle h\rangle\cdot o$. Hence, there exists no essential difference in these two definitions.        
\end{rem}

An element $g\in G$ \textit{preserves the orientation} of a bi-infinite quasi-geodesic $\gamma$ if $\alpha$ and $g\alpha$ has finite Hausdorff distance for any half ray $\alpha$ of  $\gamma$. Let $E^+(h)$ be the subgroup of $E(h)$ with possibly index 2 whose elements  preserve  the orientation of their axis. Then we have 
$$
E^+(h)=\{g\in G: \exists n\in \mathbb N_{> 0}, \;gh^ng^{-1}=h^n\}.
$$
and $E^+(h)$ contains all contracting elements in $E(h)$. In the sequel, unless explicitly mentioned, we always assume  contracting elements to satisfy quasi-isometrically embedded image property (\ref{QIEmbed}).

Two  contracting elements $h_1, h_2\in G$  are called \textit{independent} if the collection $\{g\ax(h_i): g\in G;\ i=1, 2\}$ is a contracting system with bounded intersection. Note that two conjugate contracting elements with disjoint fixed points are not independent in our sense. This is slightly different from the use of  independence   by other researchers (cf. \cite{MT}).  



\begin{defn}\label{barriers}
Fix $r>0$ and a set $F$ in $G$.  A geodesic $\gamma$ contains an \textit{$(r, f)$-barrier} for $f\in F$   if there exists    an element $g \in G$ so that 
\begin{equation}\label{barrierEQ}
\max\{d(g\cdot o, \gamma), \; d(g\cdot fo, \gamma)\}\le r.
\end{equation}
By abuse of language,    the point $go$ or  the axis $g\ax(f)$ is called {$(r, F)$-barrier} of type $f$ on $\gamma$.
\end{defn}

Recall that $\|Fo\|$ denotes the diameter of $Fo$, and   $\|Fo\|_{\min}:=\min\{d(o,fo): f\in F\}$ the minimal length of elements in $F$. As $F$ is usually chosen as a finite set (of three elements), their distinction does not really matter and could be ignored on a first reading.  

\begin{lem}\label{BarrierFellowLem}
For any $r>0$ there exists $\hat r=\hat r(r) >0$ with the following property. 
 
Let $f$ be a contracting element with $d(o,fo)> 3r$. Suppose that a geodesic $\alpha$ contains an $(r, f)$-barrier $go$. Set $X=g\ax(f)$. Let $\beta$ be a geodesic such that $d_X(\alpha^-,\beta^-)\le 10C$ and $d_X(\alpha^+,\beta^+)\le 10C$.  Then $go$ is an $(\hat r, f)$-barrier for $\beta$.
\end{lem}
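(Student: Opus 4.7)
The strategy is to show that $\alpha$ has a long passage through a bounded neighborhood of $X = g\ax(f)$, transfer this passage to $\beta$ via the projection hypothesis, and then conclude that $\beta$ comes close to both $go$ and $gfo$. Throughout the argument, let $C$ be the contracting constant of $X$, which depends only on $f$ by $G$-isometric invariance; by enlarging $r$ and absorbing the cost into $\hat r$, I may assume $r \ge 100C$.

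From the $(r,f)$-barrier condition, there exist $p, q \in \alpha$ with $d(p, go) \le r$ and $d(q, gfo) \le r$. Since $go, gfo \in X$, both $p, q$ lie in $N_r(X) \cap \alpha$, and $d(p, q) \ge d(o, fo) - 2r$. The concatenation $[go, p] \cdot [p, q]_\alpha \cdot [q, gfo]$ has length at most $d(o, fo) + 4r$; when $d(o, fo)$ is suitably larger than $r$ (the bounded regime $d(o, fo) = O(r)$ being trivially handled by inflating $\hat r$), this is a $2$-quasi-geodesic with endpoints on $X$. By the Morse property (Lemma \ref{BigFive}(1)) it lies in $N_\sigma(X)$ for $\sigma = \sigma(C)$. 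Hence $[p,q]_\alpha \subset N_\sigma(X)$, giving $\|\alpha \cap N_\sigma(X)\| \ge d(o, fo) - 2r$. Combining with Lemma \ref{Transform} yields $\|\alpha \cap N_C(X)\| \ge d(o, fo) - O(r)$, and Lemma \ref{BigFive}(4) then produces $d_X(\alpha^-, \alpha^+) \ge d(o, fo) - O(r)$.

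The hypotheses $d_X(\alpha^\pm, \beta^\pm) \le 10C$ now give $d_X(\beta^-, \beta^+) \ge d(o, fo) - O(r)$, and a second application of Lemma \ref{BigFive}(4) delivers $\|\beta \cap N_C(X)\| \ge d(o, fo) - O(r)$. It remains to check that this long intersection of $\beta$ with $N_C(X)$ projects onto a subsegment of $X$ straddling both $go$ and $gfo$ up to $O(r)$. Using Lemma \ref{BigFive}(3) applied to the geodesic subsegments of $\alpha$ before its first entry into $N_r(X)$ and after its last exit, one locates $\pi_X(\alpha^-), \pi_X(\alpha^+)$ on opposite sides of $[go, gfo]$ on $X$ up to $O(r)$; the $d_X$ hypotheses then transport this configuration to $\pi_X(\beta^-), \pi_X(\beta^+)$ at the cost of an additional $10C$. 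Consequently there exist $z_1, z_2 \in \beta \cap N_C(X)$ with $d(\pi_X(z_1), go), d(\pi_X(z_2), gfo) = O(r)$, and hence $d(z_1, go), d(z_2, gfo) = O(r)$. Taking $\hat r$ to be this linear function of $r$ and choosing $h = g$ in Definition \ref{barriers} concludes the argument.

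The main obstacle is the coarse monotonicity step: showing that the $N_C(X)$-intersection of $\beta$ actually straddles both barrier points, rather than clustering on one side. This rests on pinning down $\pi_X(\alpha^\pm)$ to the entry/exit points of $\alpha$ in $N_r(X)$ via Lemma \ref{BigFive}(3), together with the Morse containment $[p,q]_\alpha \subset N_\sigma(X)$, which together force $\alpha$ to approach $N_r(X)$ precisely near $go$ on one end and near $gfo$ on the other. Once this geometric bookkeeping is in place, the transfer from $\alpha$ to $\beta$ is a routine application of the projection estimates in Lemma \ref{BigFive}.
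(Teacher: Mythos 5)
Your proposal is correct but takes a genuinely different route from the paper. The paper's proof leans directly on Lemma \ref{TwoGeodesicsEnterULem} — proven immediately beforehand precisely to feed this lemma — which places the entry and exit points of $\beta$ in $N_r(X)$ within $4r$ of the entry/exit points $x,y$ of $\alpha$; a single Morse application then keeps $[x,y]_\alpha$ in a $\tau(r)$-neighborhood of the corresponding subsegment of $\beta$, so $go, gfo$ (which are $r$-close to $[x,y]_\alpha$) are close to $\beta$, giving $\hat r = 4r + \tau$. You instead bypass \ref{TwoGeodesicsEnterULem} entirely and route everything through the scalar $d_X$: Morse plus Lemma \ref{Transform} plus Lemma \ref{BigFive}(4) show $d_X(\alpha^-,\alpha^+)$ is large, the $10C$ hypothesis transfers this to $d_X(\beta^-,\beta^+)$, and \ref{BigFive}(4) again produces a long $N_C(X)$-passage for $\beta$. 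The cost of compressing to a scalar is that you discard \emph{where} on $X$ this passage sits, and you must recover it afterward with the ``straddling'' step: using \ref{BigFive}(3) to locate $\pi_X(\alpha^\pm)$ near the entry/exit of $\alpha$, transporting via the $d_X$ hypotheses to $\pi_X(\beta^\pm)$, and then arguing that $\beta$'s $N_C(X)$-passage covers $[go,gfo]$ in projection. That recovery step is the delicate part of your route, and you leave it at the level of $O(r)$ bookkeeping — it is in effect a reproof of \ref{TwoGeodesicsEnterULem} in a less organized form. Both arguments are sound; the paper's is shorter and more transparent because it treats the location of the entry/exit points head-on rather than forgetting and recovering it, and it keeps the constants explicit.
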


\begin{proof}
Let $x, y\in \alpha\cap N_r(X)$ be the corresponding entry and exit points. The   $(r, f)$-barrier $go$   for $\alpha$  implies that $d(go, [x,y]_\alpha)\le r$ and $d(gfo, [x,y]_\alpha)\le r$. By Lemma \ref{TwoGeodesicsEnterULem}, the entry and exit points $u, v$ of  $\beta$ in $N_r(X)$ are $(4r)$-close to those $x, y$ respectively. 

As $X$ is a contracting quasi-geodesic, any geodesic subsegment (say $[u, v]$) in its $r$-neighborhood is also uniformly contracting by \cite[Prop. 2.2]{YANG11}. By    Morse property, there exists $\tau=\tau(r)>0$ such that   $[x,y]$  is contained in a $\tau$-neighborhood of $[u,v]$. Setting  $\hat r=4r+\tau$ then completes the proof. 
\end{proof}


Let $h$ be a contracting element  with a $C$-contracting axis  $\gamma:=\ax(h)$  in (\ref{axisdefn}).  Here we understand  $\gamma$  as a quasi-geodesic path for simplicity.  Let          $f\in E^+(h)$ be an orientation-preserving element which must be contracting. By the Morse property, there exist constants   $r, L$ depending only on $C$ such that    any geodesic segment in $N_r(\gamma)$ of length $L+ d(o, fo)$ contains $[o,fo]$.  We record this fact as the following.
 \begin{lem}\label{AnyBarrierLem}
There exist  $r=r(C), L=L(C)>0$ with the following property.

Let    $x, y\in \gamma\cap N_r(\alpha)$ be the entry and exit points of $\gamma$ in a geodesic segment $\alpha$.  If $z\in [x,y]_\gamma$ is a point such that $\min\{d(z,x), d(z,y)\}\ge L+ d(o, fo)$, then $z$  is   an $(r, F)$-barrier for   $\alpha$, where $F:=\{f,f^{-1}\}$
\end{lem}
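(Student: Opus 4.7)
The plan is to realize $z$ as an $(r,F)$-barrier by finding a witness element $g\in E^+(h)$ such that both $go$ and $gf^{\pm 1}o$ lie in the subsegment $[x,y]_\gamma$. Since $[x,y]_\gamma\subseteq N_r(\alpha)$ by hypothesis on the entry/exit points, both orbit points will automatically be within $r$ of $\alpha$, which is exactly what Definition \ref{barriers} demands.

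First I would extract the auxiliary fact flagged in the prose preceding the lemma: there exist $r=r(C)$ and $L=L(C)$ so that any subarc of $\gamma$ of length at least $L+d(o,fo)$ contains a consecutive $f$-orbit pair $go,\,gfo$ (for some $g\in E^+(h)$). This is precisely the content of the Morse/quasi-geodesic structure of $\gamma=\ax(h)$: the contracting hypothesis forces $\gamma$ to be a uniform quasi-geodesic with constants depending only on $C$ (Lemmas \ref{BigThree} and \ref{BigFive}), the orbit $E^+(h)\cdot o$ is an $R$-net in $\gamma$ with $R=R(C)$, and $f\in E^+(h)$ acts on $\gamma$ as an orientation-preserving translation of displacement comparable to $d(o,fo)$. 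Adjusting $L$ to absorb the net constant and the quasi-geodesic error lets a window of length $L+d(o,fo)$ accommodate both $go$ and $gfo$ for some $g$.

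With this in hand, the lemma follows quickly. Let $z\in[x,y]_\gamma$ satisfy $d(z,x),d(z,y)\ge L+d(o,fo)$. I would take the subarc $I\subseteq[x,y]_\gamma$ of length $L+d(o,fo)$ starting from $z$ and extending toward, say, $y$; the lower bound $d(z,y)\ge L+d(o,fo)$ guarantees $I\subseteq[x,y]_\gamma$. By the auxiliary fact, $I$ contains $go$ and $gfo$ for some $g\in E^+(h)$ (if $f$ translates in the opposite direction along $\gamma$, swap to the other side and use $f^{-1}\in F$ instead, which is why $F=\{f,f^{-1}\}$ enters the statement). Both $go,gf^{\pm 1}o\in[x,y]_\gamma\subseteq N_r(\alpha)$, so $\max\{d(go,\alpha),\,d(gf^{\pm 1}o,\alpha)\}\le r$, exhibiting $g$ as the element realizing the barrier condition \eqref{barrierEQ}. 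Since $go$ lies within distance $L$ of $z$ along $\gamma$, the orbit point $go$ is close to $z$, which justifies calling $z$ itself the barrier per the abuse of language in Definition \ref{barriers}.

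The main delicate point, as usual in this setup, is verifying that the constants $r,L$ depend only on the contracting constant $C$, and not on $h$ or $f$. This uniformity is precisely what the Morse and contracting apparatus (Lemma \ref{BigFive}) deliver: the net constant $R$, the quasi-geodesic constants of $\gamma$, and the error in identifying ``translation of $f$ on $\gamma$'' with $d(o,fo)$ are all controlled by $C$ alone. The dependence on $f$ is confined to the single additive term $d(o,fo)$ in the length threshold, which matches the statement of the lemma.
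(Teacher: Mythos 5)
Your proof follows the same route the paper sketches in the prose immediately preceding the lemma (the paper records the statement as a consequence of the Morse property of the contracting axis without giving a detailed argument), so the comparison is essentially against that heuristic. Your identification of the key inputs is right: $\gamma=\ax(h)$ is a uniform quasi-geodesic with constants controlled by $C$; $E^+(h)\cdot o$ is a coarse net in $\gamma$; $f\in E^+(h)$ translates along $\gamma$ by roughly $d(o,fo)$; and the two-sided margin together with $F=\{f,f^{-1}\}$ absorbs the unknown orientation.

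One step is asserted more strongly than the hypotheses literally give: you write ``$[x,y]_\gamma\subseteq N_r(\alpha)$ by hypothesis on the entry/exit points.'' The hypothesis is only that $x,y\in\gamma\cap N_r(\alpha)$ and that they are the first and last such points; that the whole subarc $[x,y]_\gamma$ stays uniformly close to $\alpha$ is a consequence of the Morse property of the $C$-contracting quasi-geodesic $\gamma$ (e.g.\ Lemma \ref{BigFive}(1) applied to the geodesic $\alpha$ with endpoints near $\gamma$, or Lemma \ref{Transform}), possibly after enlarging $r$ to a constant $r'=r'(C)$. Since the lemma only asserts the existence of some $r=r(C)$, this constant inflation is harmless, and you do invoke the Morse apparatus elsewhere; but it is worth flagging that this containment is a conclusion of the contracting property rather than part of the hypothesis. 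With that adjustment made explicit, the argument is complete and is the intended one.
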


Lastly,   the following result will be used later on.
\begin{lem}\label{InfiniteNormalLem}\cite[Lemma 4.6]{YANG6}
Let $G\act \U$ be as in (\ref{GActUAssump}) and  $\Gamma<G$ be an infinite normal subgroup. Then $\Gamma$ must be non-elementary and contains infinitely many independent contracting elements.
\end{lem}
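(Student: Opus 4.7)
My plan is to produce at least one contracting element of $\Gamma$ via a commutator trick, then inflate to infinitely many pairwise independent ones by conjugating within $\Gamma$. The starting point is to rule out $\Gamma\subseteq E(h)$ for any contracting $h\in G$. If $\Gamma\subseteq E(h)$, then by normality $\Gamma=g\Gamma g^{-1}\subseteq gE(h)g^{-1}=E(ghg^{-1})$ for every $g\in G$, so $\Gamma\subseteq E(h)\cap E(ghg^{-1})$. Since $G$ is non-elementary there is some $g\notin E(h)$, and then this intersection is finite: by Lemma \ref{elementarygroup} any element common to both would have a nontrivial power preserving both of the contracting axes $\ax(h)$ and $g\ax(h)$, which is impossible because they have bounded intersection. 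This contradicts $|\Gamma|=\infty$, so I may fix some $\gamma\in\Gamma\setminus E(h)$.

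Next I build a single contracting element in $\Gamma$. The commutator
\[
[\gamma,h^n]=\gamma\cdot(h^n\gamma^{-1}h^{-n})=(\gamma h^n\gamma^{-1})\cdot h^{-n}
\]
lies in $\Gamma$ for every $n$, because $h^n\gamma^{-1}h^{-n}\in\Gamma$ by normality. Viewed via the right-hand factorization, its two factors are contracting elements whose axes $\gamma\ax(h)$ and $\ax(h)$ are distinct (since $\gamma\notin E(h)$) and therefore have bounded intersection. The extension lemma (Lemma \ref{extend3}) should then yield, for all sufficiently large $n$, a contracting quasi-geodesic passing through $o$, $h^{-n}o$ and $[\gamma,h^n]o$ on which $[\gamma,h^n]$ acts as a translation. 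Its $\langle[\gamma,h^n]\rangle$-orbit will close up to a bi-infinite contracting quasi-axis, exhibiting $[\gamma,h^n]$ as a contracting element of $\Gamma$.

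Finally I upgrade from a single $f\in\Gamma$ to infinitely many independent contracting elements. Applying the opening step with $h$ replaced by $f$ gives $\Gamma\not\subseteq E(f)$. Moreover, if $\Gamma\cap E(f)$ had finite index in $\Gamma$, then $\Gamma$ itself would be virtually cyclic; combined with normality and the classification of $E(f)$ this would force a finite-index subgroup of $G$ into $E(f)$, contradicting the non-elementarity of $G$. Hence the cosets $\Gamma/(\Gamma\cap E(f))$ are infinite. Choosing representatives $\gamma_i\in\Gamma$ from pairwise distinct cosets, the conjugates $f_i:=\gamma_if\gamma_i^{-1}\in\Gamma$ are contracting with pairwise distinct axes $\gamma_i\ax(f)$, all lying in the single $G$-orbit $\{g\ax(f):g\in G\}$; since this orbit is a contracting system with bounded intersection, the family $\{f_i\}$ is pairwise independent. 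In particular $\Gamma$ is non-elementary.

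The main obstacle I anticipate is the middle step: verifying that $[\gamma,h^n]$ genuinely becomes contracting for $n\gg 0$. This requires applying the extension lemma carefully to two conjugate contracting axes with only bounded intersection, and then checking that the resulting finite quasi-geodesic segment closes up into a contracting bi-infinite axis under iteration by $[\gamma,h^n]$ itself (rather than just being a one-off extension). The outer two stages are essentially bookkeeping once normality and the structure of elementary subgroups are in hand.
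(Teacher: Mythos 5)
The paper gives no proof of this lemma; it simply cites \cite{YANG6}, so I am assessing your argument on its own merits. Your opening step is fine: if $\Gamma\subseteq E(h)$, normality gives $\Gamma\subseteq E(h)\cap E(ghg^{-1})$ for any $g\notin E(h)$, and this intersection is finite, since an infinite-order element of it would, by the remark after (\ref{axisdefn}), have the same axis as both $h$ and $ghg^{-1}$, contradicting bounded intersection of the distinct translates $\ax(h)$ and $g\ax(h)$. For the middle step the tool you actually want is Lemma~\ref{contractingcriteron2} (i.e.\ \cite[Prop.~2.9]{YANG10}), not the extension Lemma~\ref{extend3}; and although $\gamma h\gamma^{-1}$ and $h^{-1}$ are not \emph{independent} in the paper's technical sense (they are conjugate), the proof of Lemma~\ref{contractingcriteron2} only uses bounded projection between the two axes, which does hold for the distinct translates $\gamma\ax(h)$ and $\ax(h)$, so $[\gamma,h^{n}]$ is indeed a contracting element of $\Gamma$ for $n\gg 0$.

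The genuine gap is in your final step, not the middle one you flagged. You produce $f_i:=\gamma_i f\gamma_i^{-1}$ and declare them pairwise independent because all their axes lie in the single orbit $\{g\ax(f):g\in G\}$. But the paper warns, immediately after defining independence, that two conjugate contracting elements with disjoint fixed points are \emph{not} independent in its sense: the collection $\{g\ax(h_i):g\in G;\;i=1,2\}$ is read as an indexed family, and conjugacy forces repeated members with unbounded overlap. All your $f_i$ are mutual conjugates, so no two of them are independent, and in particular you cannot feed a triple of them into the extension Lemma~\ref{extend3} as in (\ref{SystemFDef}) --- which is exactly how Lemma~\ref{InfiniteNormalLem} is applied later, e.g.\ in Lemma~\ref{NormalLimitSetLem}. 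To repair this you need contracting elements of $\Gamma$ whose axes lie in genuinely distinct $G$-orbits: for instance products such as $f^{n}(\gamma_1 f\gamma_1^{-1})^{n}$, $f^{n}(\gamma_2 f\gamma_2^{-1})^{n}$, and so on, together with a further argument (via the contracting/Morse property) that the resulting zigzag axes have uniformly bounded projection to every $G$-translate of $\ax(f)$ and to one another. That verification is where the real work lies, and it is not the bookkeeping you dismiss it as.
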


\subsection{Admissible paths}
Let $\f$ be a $C$-contracting system in the space $\U$; that is, a collection of $C$-contracting subsets for a common $C>0$. A class of admissible paths  introduced in \cite{YANG6} is of great importance in our study,  which are  quasi-geodesics with   fellow travel property relative to   such a system $\f$. In most cases, this paper considers the $C$-contracting system $\f$ as in (\ref{SystemFDefIntro}).

\begin{figure}
    \centering

\tikzset{every picture/.style={line width=0.75pt}} 

\begin{tikzpicture}[x=0.75pt,y=0.75pt,yscale=-1,xscale=1]

\draw   (80.5,98.5) .. controls (80.5,83.31) and (99.53,71) .. (123,71) .. controls (146.47,71) and (165.5,83.31) .. (165.5,98.5) .. controls (165.5,113.69) and (146.47,126) .. (123,126) .. controls (99.53,126) and (80.5,113.69) .. (80.5,98.5) -- cycle ;
\draw   (221,100.5) .. controls (221,85.31) and (236.33,73) .. (255.25,73) .. controls (274.17,73) and (289.5,85.31) .. (289.5,100.5) .. controls (289.5,115.69) and (274.17,128) .. (255.25,128) .. controls (236.33,128) and (221,115.69) .. (221,100.5) -- cycle ;
\draw   (461,109) .. controls (461,95.19) and (482.15,84) .. (508.25,84) .. controls (534.35,84) and (555.5,95.19) .. (555.5,109) .. controls (555.5,122.81) and (534.35,134) .. (508.25,134) .. controls (482.15,134) and (461,122.81) .. (461,109) -- cycle ;
\draw   (361,100.5) .. controls (361,85.59) and (375.33,73.5) .. (393,73.5) .. controls (410.67,73.5) and (425,85.59) .. (425,100.5) .. controls (425,115.41) and (410.67,127.5) .. (393,127.5) .. controls (375.33,127.5) and (361,115.41) .. (361,100.5) -- cycle ;
\draw  [dash pattern={on 4.5pt off 4.5pt}]  (165.5,98.5) -- (219,100.43) ;
\draw [shift={(221,100.5)}, rotate = 182.06] [color={rgb, 255:red, 0; green, 0; blue, 0 }  ][line width=0.75]    (10.93,-3.29) .. controls (6.95,-1.4) and (3.31,-0.3) .. (0,0) .. controls (3.31,0.3) and (6.95,1.4) .. (10.93,3.29)   ;
\draw  [dash pattern={on 4.5pt off 4.5pt}]  (425,100.5) -- (459.02,105.7) ;
\draw [shift={(461,106)}, rotate = 188.69] [color={rgb, 255:red, 0; green, 0; blue, 0 }  ][line width=0.75]    (10.93,-3.29) .. controls (6.95,-1.4) and (3.31,-0.3) .. (0,0) .. controls (3.31,0.3) and (6.95,1.4) .. (10.93,3.29)   ;
\draw    (289.5,100.5) -- (361,100.5) ;
\draw [shift={(361,100.5)}, rotate = 0] [color={rgb, 255:red, 0; green, 0; blue, 0 }  ][fill={rgb, 255:red, 0; green, 0; blue, 0 }  ][line width=0.75]      (0, 0) circle [x radius= 3.35, y radius= 3.35]   ;
\draw [shift={(289.5,100.5)}, rotate = 0] [color={rgb, 255:red, 0; green, 0; blue, 0 }  ][fill={rgb, 255:red, 0; green, 0; blue, 0 }  ][line width=0.75]      (0, 0) circle [x radius= 3.35, y radius= 3.35]   ;
\draw    (299.5,140) .. controls (305.29,130.35) and (319,118.84) .. (296.13,107.26) ;
\draw [shift={(293.5,106)}, rotate = 24.36] [fill={rgb, 255:red, 0; green, 0; blue, 0 }  ][line width=0.08]  [draw opacity=0] (8.93,-4.29) -- (0,0) -- (8.93,4.29) -- cycle    ;
\draw    (346.5,140) .. controls (338.7,134.15) and (322.34,121.65) .. (353.04,106.19) ;
\draw [shift={(355.5,105)}, rotate = 154.8] [fill={rgb, 255:red, 0; green, 0; blue, 0 }  ][line width=0.08]  [draw opacity=0] (8.93,-4.29) -- (0,0) -- (8.93,4.29) -- cycle    ;
\draw    (92.5,119) -- (165.5,98.5) ;
\draw [shift={(165.5,98.5)}, rotate = 344.31] [color={rgb, 255:red, 0; green, 0; blue, 0 }  ][fill={rgb, 255:red, 0; green, 0; blue, 0 }  ][line width=0.75]      (0, 0) circle [x radius= 3.35, y radius= 3.35]   ;
\draw [shift={(92.5,119)}, rotate = 344.31] [color={rgb, 255:red, 0; green, 0; blue, 0 }  ][fill={rgb, 255:red, 0; green, 0; blue, 0 }  ][line width=0.75]      (0, 0) circle [x radius= 3.35, y radius= 3.35]   ;
\draw    (221,100.5) -- (289.5,100.5) ;
\draw [shift={(221,100.5)}, rotate = 0] [color={rgb, 255:red, 0; green, 0; blue, 0 }  ][fill={rgb, 255:red, 0; green, 0; blue, 0 }  ][line width=0.75]      (0, 0) circle [x radius= 3.35, y radius= 3.35]   ;
\draw    (361,100.5) -- (425,100.5) ;
\draw [shift={(425,100.5)}, rotate = 0] [color={rgb, 255:red, 0; green, 0; blue, 0 }  ][fill={rgb, 255:red, 0; green, 0; blue, 0 }  ][line width=0.75]      (0, 0) circle [x radius= 3.35, y radius= 3.35]   ;
\draw    (461,106) -- (550.5,122) ;
\draw [shift={(550.5,122)}, rotate = 10.14] [color={rgb, 255:red, 0; green, 0; blue, 0 }  ][fill={rgb, 255:red, 0; green, 0; blue, 0 }  ][line width=0.75]      (0, 0) circle [x radius= 3.35, y radius= 3.35]   ;
\draw [shift={(461,106)}, rotate = 10.14] [color={rgb, 255:red, 0; green, 0; blue, 0 }  ][fill={rgb, 255:red, 0; green, 0; blue, 0 }  ][line width=0.75]      (0, 0) circle [x radius= 3.35, y radius= 3.35]   ;

\draw (108,48.4) node [anchor=north west][inner sep=0.75pt]    {$X_{0}$};
\draw (248,43.4) node [anchor=north west][inner sep=0.75pt]    {$X_{i}$};
\draw (379,49.4) node [anchor=north west][inner sep=0.75pt]    {$X_{i+1}$};
\draw (501,57.4) node [anchor=north west][inner sep=0.75pt]    {$X_{n}$};
\draw (248,144.4) node [anchor=north west][inner sep=0.75pt]    {$\| \pi _{X_{i}}( q_{i}) \| ,$};
\draw (322,144.4) node [anchor=north west][inner sep=0.75pt]    {$\| \pi _{X_{i+1}}( q_{i}) \| \leq B$};
\draw (250,79.4) node [anchor=north west][inner sep=0.75pt]    {$p_{i}$};
\draw (116,87.4) node [anchor=north west][inner sep=0.75pt]    {$p_{0}$};
\draw (380,80.4) node [anchor=north west][inner sep=0.75pt]    {$p_{i+1}$};
\draw (319,76.4) node [anchor=north west][inner sep=0.75pt]    {$q_{i}$};
\draw (501,92.4) node [anchor=north west][inner sep=0.75pt]    {$p_{n}$};
\draw (239,103.4) node [anchor=north west][inner sep=0.75pt]    {$\geq L$};
\draw (374,101.4) node [anchor=north west][inner sep=0.75pt]    {$\geq L$};

\end{tikzpicture}
    \caption{Admissible path}
    \label{fig:admissiblepath}
\end{figure}

\begin{defn}[Admissible Path]\label{AdmDef}
Given $L, B \ge 0$, a path $\gamma$  is called \textit{$(L,
B)$-admissible} relative to $\f$, if  the path $\gamma = p_0 q_1
p_1 \ldots q_n p_n$ is  a piece-wise geodesic path  satisfying     the     \textit{Long Local} and \textit{Bounded Projection} properties:
\begin{enumerate}

\item[\textbf{(LL)}]
Each  $p_i$ for $0\le i\le n$ has the two endpoints in $X_i\in \f$ and  length bigger than  $L$ unless $i=0$ or $i=n$,

\item[\textbf{(BP)}]
For each $X_i$,  we have $X_i\ne X_{i+1}$ and 
$$
\|\pi_{X_i}(q_{i+1})\|\le B, \quad \|\pi_{X_i}(q_i)\|\le B
$$
whenever the previous and next paths $q_{i}, q_{i+1}$ are defined. See Fig. \ref{fig:admissiblepath} for an illustration.


\end{enumerate}
The indexed collection of $X_i \in \f$  as above, denoted by $\f(\gamma)$, will be referred to as the \textit{saturation} of $\gamma$. 

In the case that $p_n$ (or $q_n$ if $p_n$ is trivial)  is a geodesic ray, we say that $\gamma$ is an admissible ray.
\end{defn}

As the  properties \textbf{(LL)} and (\textbf{BP}) are  local conditions, we can produce admissible paths via the following operations: 
 
\begin{itemize}
    \item
    {\textbf{Subpath}} A subpath of an $(L, B)$-admissible path is $(L, B)$-admissible.  
    \item 
    {\textbf{Concatenation}} Let  $\alpha, \beta$ be two $(L, B)$-admissible paths so that  the last contracting subset associated to the geodesic   $p$ of $\alpha$ is the same as the first contracting subset  to the geodesic    $q$ of $\beta$. If  $d(p^-, q^+)>L$, the path $\gamma$  by concatenating paths $$\gamma:=[\alpha^-, p^-]_{\alpha} \cdot  [p^-, q^+] \cdot [q^+, \beta^+]_{\beta}$$ 
 is $(L, B)$-admissible.  
 
\end{itemize}


A sequence of points $x_i$ in a path $p$   is called \textit{linearly ordered} if $x_{i+1}\in [x_i, p^+]_p$ for each $i$. 

\begin{defn}[Fellow travel]\label{Fellow}
Let   $\gamma = p_0 q_1 p_1 \cdots q_n p_n$ be an $(L, B)-$admissible
path. We say $\gamma$ has \textit{$r$-fellow travel} property for some $r>0$   if for any geodesic  
$\alpha$  with the same endpoints as $\gamma$,   there exists a sequence of linearly ordered points $z_i,
w_i$ ($0 \le i \le n$) on $\alpha$ such that  
$$d(z_i, p_{i}^-) \le r,\quad d(w_i, p_{i}^+) \le r.$$
In particular, $\|N_r(X_i)\cap \alpha\|\ge L$ for each $X_i\in \f(\gamma)$. 
\end{defn}
\begin{rem}
Note that $\|N_r(X_i)\cap \alpha\|\ge L$ is basically amount to saying that  $X_i$ is an $(r, F)$-barrier for $\alpha$. Thus, the $R$-fellow travel property is closely related to the existence of $(r, F)$-barriers. These cumbersome terminologies may have their own convenience in practice.
\end{rem}
 
The following result  says that   a local long admissible path enjoys the fellow travel property.
 
\begin{prop}\label{admisProp}\cite{YANG6}
For any $B>0$, there exist $L,  r>0$ depending only on $B,C$ such that  any $(L, B)$-admissible path   has $r$-fellow travel property.
\end{prop}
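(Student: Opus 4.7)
The strategy is to show that for $L$ and $r$ chosen sufficiently large in terms of $B$ and $C$, any geodesic $\alpha$ joining the endpoints of an $(L,B)$-admissible path $\gamma = p_0 q_1 p_1 \cdots q_n p_n$ passes through a definite neighborhood of each $X_i \in \f(\gamma)$, with entry and exit points close to $p_i^-$ and $p_i^+$, and that these pairs appear on $\alpha$ in the correct linear order. Those entry/exit points will serve as the fellow-travel markers $z_i, w_i$.

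First I would establish a uniform projection estimate: there exists $K = K(B, C)$ such that
\[
d_{X_i}(\gamma^-, p_i^-) \le K, \qquad d_{X_i}(p_i^+, \gamma^+) \le K
\]
for every $i$. The pieces $p_{i-1}$ and $q_i$ immediately adjacent to $p_i$ contribute a bounded amount by the (BP) property of Definition~\ref{AdmDef} together with Lemma~\ref{BigFive}(5) applied to the geodesic $q_i$. For non-adjacent pieces, the $\tau$-bounded intersection of $\f$ gives bounded mutual projection $\|\pi_{X_i}(X_j)\|$ for $j \ne i$; combined with Lemma~\ref{BigFive}(4)--(5), the projection onto $X_i$ of each hop $p_j q_{j+1}$ ($j < i-1$) is absorbed into a universal constant. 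This telescopes to the claimed uniform bound $K$ independent of $n$.

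With $K$ in hand, choose $L > 2K + 100C$ and $r$ correspondingly large. By Lemma~\ref{BigFive}(5) and the projection estimate, $d_{X_i}(\alpha^-, \alpha^+) \ge d_{X_i}(p_i^-, p_i^+) - 2K$, which grows with $L$ and in particular exceeds $C$. The contracting property Lemma~\ref{BigThree}(1)--(2) then forces $\alpha$ to enter $N_C(X_i)$; applying Lemma~\ref{TwoGeodesicsEnterULem} to the two geodesic subsegments $[p_i^-, p_i^+]$ and the corresponding part of $\alpha$---which have comparable projections onto $X_i$ by the previous step---shows that the entry and exit points of $\alpha$ in $N_r(X_i)$ are within $r$ of $p_i^-$ and $p_i^+$. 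Call these $z_i$ and $w_i$. The linear ordering of the pairs $(z_i, w_i)$ along $\alpha$ then follows from $\tau$-bounded intersection: since $\|N_r(X_i) \cap N_r(X_{i+1})\| \le \tau(r)$ is negligible compared to $L$, the portions of $\alpha$ lying in successive $N_r(X_i)$ do not overlap, so $w_i$ necessarily precedes $z_{i+1}$ on $\alpha$.

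The main obstacle is the first step: propagating projection control beyond the adjacent contracting subsets. The (BP) condition supplies only local information, and a naive estimate would let error accumulate linearly in $n$. The key is a dichotomy, iterated inductively: for $j < i - 1$, either the bridge $q_{j+1}$ stays at distance $\ge C$ from $X_i$, so the contracting property bounds $\|\pi_{X_i}(q_{j+1})\|$ by $C$, or it enters $N_C(X_i)$, in which case $X_j$ or $X_{j+1}$ must intersect $N_r(X_i)$ in a set of diameter $\gtrsim L$, contradicting the $\tau$-bounded intersection of $\f$ once $L$ is chosen large enough. Making this dichotomy quantitative, and ensuring the bound $K$ is absorbed rather than accumulated at each hop, is the technical heart of the proof.
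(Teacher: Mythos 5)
Your overall strategy — establish a uniform projection estimate $d_{X_i}(\gamma^-,p_i^-)\le K$ and $d_{X_i}(p_i^+,\gamma^+)\le K$, then combine the contracting property with Lemma~\ref{TwoGeodesicsEnterULem} to locate the entry/exit points of $\alpha$ near $p_i^{\pm}$, and finally order them via bounded intersection — is the right shape of argument, and steps 2--4 are fine \emph{once} the projection estimate is available. The gap is precisely the projection estimate, and the proposed mechanism for proving it does not close.

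The issue is that bounding each individual piece's projection onto $X_i$ is not enough: you are implicitly summing diameters of $\pi_{X_i}(p_jq_{j+1})$ over $j<i-1$, so even a uniform per-hop bound of $C+\kappa$ gives only $d_{X_i}(\gamma^-,p_i^-)\lesssim i\cdot(C+\kappa)$, which is linear in the number of pieces. The ``telescoping'' claim is asserted but not substantiated: the projection $\pi_{X_i}$ of the subpath is a union of sets, and nothing in your argument constrains these sets to be nested or to lie close to one another. The proposed dichotomy does not repair this. In the first branch, ``$q_{j+1}$ stays $\ge C$ from $X_i$ so $\|\pi_{X_i}(q_{j+1})\|\le C$'' again only controls a single hop. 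In the second branch, ``$q_{j+1}$ enters $N_C(X_i)$, hence $X_j$ or $X_{j+1}$ meets $N_r(X_i)$ in a set of diameter $\gtrsim L$'' is simply not an implication: a geodesic bridge from $p_j^+\in X_j$ to $p_{j+1}^-\in X_{j+1}$ can dip briefly into $N_C(X_i)$ without either $X_j$ or $X_{j+1}$ having a large intersection with $N_r(X_i)$. The (BP) condition only constrains $\pi_{X_j}(q_{j+1})$ and $\pi_{X_{j+1}}(q_{j+1})$, and bounded intersection of $\f$ only bounds $\|\pi_{X_i}(X_j)\|$; neither forbids $q_{j+1}$ from wandering near $X_i$.

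The argument in the cited reference \cite{YANG6} avoids this accumulation by inducting on the number of pieces $n$, and the nontrivial point is that the fellow-travel constant $r$ does not grow under the induction. Roughly: assume the tail subpath $p_1q_2\cdots p_n$ fellow-travels $[p_1^-,\gamma^+]$ with constant $r$. Then $d_{X_1}(p_1^-,\gamma^+)\ge L-O(r)$, while $d_{X_1}(\gamma^-,p_1^-)\le B+\kappa$ by (BP) and bounded projection applied \emph{only} to the adjacent pieces $p_0,q_1$. Hence $d_{X_1}(\gamma^-,\gamma^+)$ is large, and Lemma~\ref{BigThree} forces $[\gamma^-,\gamma^+]$ to cross $N_C(X_1)$ at a point close to $p_1^-$. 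The segment of $[\gamma^-,\gamma^+]$ before that crossing then fellow-travels $p_0q_1$ (a one-piece admissible path, handled by the base case), and the segment after is close to $[p_1^-,\gamma^+]$, inheriting the inductive fellow-traveling. The analysis at each step involves only adjacent pieces and the inductive constant, so the recursion $r_n\le f(r_{n-1},B,C)$ has a fixed point once $L$ is chosen large relative to $B$ and $C$; the constant $r$ is then uniform in $n$. This is a genuinely different organization of the proof: the uniform projection estimate you want is an \emph{output} of the induction, not a standalone input.

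So to fix your proof, you should replace the dichotomy argument with an induction on $n$ and verify that the fellow-travel constant satisfies a self-improving recursion, choosing $L$ large at the end so the recursion stabilizes.
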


The constant $B$   in an $(L, B)$-admissible path  can be made uniform by the following operation.
\begin{itemize}
\item {\textbf{Truncation}}
 By the contracting property, a geodesic issuing from a $C$-contracting subset  diverges in an orthogonal way  from its $C$-neighborhood. At the expense of decreasing $L$, we can thus  truncate these  diverging parts to obtain the constant $B$ depending only on $\f$. 
\end{itemize}

We explain  this procedure in a more general form. 

\begin{figure}
    \centering

\tikzset{every picture/.style={line width=0.75pt}} 

\begin{tikzpicture}[x=0.75pt,y=0.75pt,yscale=-1,xscale=1]

\draw   (256.97,105.4) .. controls (256.97,89.13) and (279.59,75.94) .. (307.48,75.94) .. controls (335.37,75.94) and (357.99,89.13) .. (357.99,105.4) .. controls (357.99,121.68) and (335.37,134.87) .. (307.48,134.87) .. controls (279.59,134.87) and (256.97,121.68) .. (256.97,105.4) -- cycle ;
\draw  [dash pattern={on 4.5pt off 4.5pt}] (241.46,105.4) .. controls (241.46,85.47) and (271.02,69.31) .. (307.48,69.31) .. controls (343.94,69.31) and (373.5,85.47) .. (373.5,105.4) .. controls (373.5,125.34) and (343.94,141.5) .. (307.48,141.5) .. controls (271.02,141.5) and (241.46,125.34) .. (241.46,105.4) -- cycle ;
\draw  [dash pattern={on 0.84pt off 2.51pt}]  (243.99,92.15) -- (361.59,84.78) ;
\draw [shift={(361.59,84.78)}, rotate = 356.42] [color={rgb, 255:red, 0; green, 0; blue, 0 }  ][fill={rgb, 255:red, 0; green, 0; blue, 0 }  ][line width=0.75]      (0, 0) circle [x radius= 3.35, y radius= 3.35]   ;
\draw [shift={(243.99,92.15)}, rotate = 356.42] [color={rgb, 255:red, 0; green, 0; blue, 0 }  ][fill={rgb, 255:red, 0; green, 0; blue, 0 }  ][line width=0.75]      (0, 0) circle [x radius= 3.35, y radius= 3.35]   ;
\draw    (256.97,103.93) -- (357.27,96.56) ;
\draw [shift={(357.27,96.56)}, rotate = 355.8] [color={rgb, 255:red, 0; green, 0; blue, 0 }  ][fill={rgb, 255:red, 0; green, 0; blue, 0 }  ][line width=0.75]      (0, 0) circle [x radius= 3.35, y radius= 3.35]   ;
\draw [shift={(256.97,103.93)}, rotate = 355.8] [color={rgb, 255:red, 0; green, 0; blue, 0 }  ][fill={rgb, 255:red, 0; green, 0; blue, 0 }  ][line width=0.75]      (0, 0) circle [x radius= 3.35, y radius= 3.35]   ;
\draw    (177.5,86) -- (254.05,103.27) ;
\draw [shift={(256.97,103.93)}, rotate = 192.71] [fill={rgb, 255:red, 0; green, 0; blue, 0 }  ][line width=0.08]  [draw opacity=0] (8.93,-4.29) -- (0,0) -- (8.93,4.29) -- cycle    ;
\draw    (357.27,96.56) -- (433.53,106.61) ;
\draw [shift={(436.5,107)}, rotate = 187.5] [fill={rgb, 255:red, 0; green, 0; blue, 0 }  ][line width=0.08]  [draw opacity=0] (8.93,-4.29) -- (0,0) -- (8.93,4.29) -- cycle    ;
\draw    (228.5,153) .. controls (268.1,123.3) and (247.42,174.95) .. (285.82,146.88) ;
\draw [shift={(287,146)}, rotate = 143.13] [color={rgb, 255:red, 0; green, 0; blue, 0 }  ][line width=0.75]    (10.93,-3.29) .. controls (6.95,-1.4) and (3.31,-0.3) .. (0,0) .. controls (3.31,0.3) and (6.95,1.4) .. (10.93,3.29)   ;

\draw (232.54,65.61) node [anchor=north west][inner sep=0.75pt]    {$z_{i} '$};
\draw (334.54,98.61) node [anchor=north west][inner sep=0.75pt]    {$w_{i}$};
\draw (266.54,102.61) node [anchor=north west][inner sep=0.75pt]    {$z_{i}$};
\draw (363.54,64.61) node [anchor=north west][inner sep=0.75pt]    {$w_{i} '$};
\draw (172,145.4) node [anchor=north west][inner sep=0.75pt]    {$N_{C}( X_{i})$};
\draw (147,74.4) node [anchor=north west][inner sep=0.75pt]    {$\hat{q}_{i-1}$};
\draw (446,103.4) node [anchor=north west][inner sep=0.75pt]    {$\hat{q}_{i+1}$};
\draw (303,104.4) node [anchor=north west][inner sep=0.75pt]    {$\hat{q}_{i}$};

\end{tikzpicture}
    \caption{Truncating a geodesic around $N_C(X_i)$.}
    \label{fig:truncation}
\end{figure}

\begin{lem}\label{Truncation}
Assume that $\|Fo\|_{\min}>3r$ for some $r>0$. There exist $L=L(\|Fo\|_{\min},r), B=B(\f)>0$ with the following property.

Let $\alpha$  be a geodesic    with a set of   distinct  $(r, F)$-barriers $X_i\in \f$. Then there exists  an $(L,  B)$-admissible  path $\hat \alpha$   with the same endpoints as $\alpha$ with saturation $\{X_i\}$.
\end{lem}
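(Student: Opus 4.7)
The plan is to construct $\hat\alpha$ by replacing, for each barrier $X_i$, the portion of $\alpha$ that lies deep inside $N_C(X_i)$ with a geodesic detour whose endpoints lie on $X_i$, and then joining successive detours by geodesics. The long-local property will follow from the quantitative content of the barrier condition; the bounded-projection property is the substantive step, and its proof will rely on two ingredients: a geodesic issuing from a contracting subset and leaving its $C$-neighborhood has uniformly bounded projection (Lemma \ref{BigFive}(3)), and consecutive barriers $X_i, X_{i+1}$ have uniformly thin overlap of $C$-neighborhoods (the bounded-intersection property of $\f$).

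To set things up, let $C$ be the contracting constant of $\f$, and assume without loss of generality that $r\ge 100 C$. For each barrier $X_i$ on $\alpha$, the barrier condition supplies $h_i\in G$ and $f_i\in F$ with $h_i o,\, h_i f_i o \in N_r(\alpha)\cap X_i$, so
$$\|N_r(X_i)\cap\alpha\| \;\ge\; d(o, f_i o)-2r \;\ge\; \|F\|_m-2r \;>\; r.$$
Lemma \ref{Transform} then produces entry/exit points $u_i, v_i$ of $\alpha$ in $N_C(X_i)$ with $d(u_i, v_i)\ge \|F\|_m-5r$. Setting $x_i := \pi_{X_i}(u_i)$ and $y_i := \pi_{X_i}(v_i)$ gives points on $X_i$ with $d(u_i,x_i),\, d(v_i,y_i)\le C$. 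After linearly ordering the barriers along $\alpha$, the candidate path is
$$\hat\alpha \;:=\; p_0\cdot q_1\cdot p_1\cdot q_2\cdots q_n\cdot p_n$$
with $p_i := [x_i, y_i]$ and $q_i := [y_{i-1}, x_i]$, together with short initial and terminal connectors between $\alpha^\pm$ and $x_0, y_n$ (accommodated by the ``unless $i=0$ or $i=n$'' clause of \textbf{(LL)}). Each internal $p_i$ has length $\ge \|F\|_m-5r-2C$, so taking $L:=\|F\|_m-5r-2C$ verifies \textbf{(LL)}.

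For \textbf{(BP)}, I would estimate $\|\pi_{X_i}(q_{i+1})\|$ where $q_{i+1}=[y_i, x_{i+1}]$; the estimate on $q_i$ is symmetric. The key observation is that the subsegment $[v_i, u_{i+1}]_\alpha$ lies outside $N_C(X_i)$, because $v_i$ is the last point of $\alpha$ in that neighborhood. By the Morse property (Lemma \ref{BigFive}(1)) applied to $X_i$, $q_{i+1}$ uniformly fellow-travels $[v_i, u_{i+1}]_\alpha$, so it exits $N_C(X_i)$ immediately past $y_i$ and its $X_i$-projection outside that neighborhood is controlled by Lemma \ref{BigFive}(3). The only way this control could fail is if $q_{i+1}$ re-enters $N_C(X_i)$ near its endpoint $x_{i+1}$, which forces $x_{i+1}\in N_C(X_i)\cap N_C(X_{i+1})$; but by the bounded-intersection hypothesis on $\f$, the diameter of this set is at most $\tau(C)$. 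Combining these inputs yields a uniform bound $\|\pi_{X_i}(q_{i+1})\|\le B(\f)$ depending only on $\f$.

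The main obstacle will be this last verification, and specifically controlling the potential re-entry of $q_{i+1}$ into $N_C(X_i)$ when successive barriers $X_i, X_{i+1}$ happen to sit close together. It is precisely here that the bounded-intersection hypothesis on $\f$ becomes indispensable, and it is also this step that pins the constant $B$ to $\f$ alone, independently of $r$ and $\|F\|_m$.
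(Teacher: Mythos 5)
Your overall construction is the same as the paper's: apply Lemma \ref{Transform} to locate entry and exit points $u_i,v_i$ of $\alpha$ in $N_C(X_i)$, project them to points $x_i,y_i$ on $X_i$, and join by geodesics $p_i=[x_i,y_i]$, $q_{i+1}=[y_i,x_{i+1}]$; the \textbf{(LL)} verification is likewise identical.

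The one place a formalization would stall is \textbf{(BP)}. Lemma \ref{BigFive}(1) (Morse) controls quasi-geodesics whose \emph{both} endpoints lie on $X_i$; it says nothing about $q_{i+1}=[y_i,x_{i+1}]$, whose second endpoint is on $X_{i+1}$ rather than $X_i$, and in a general proper geodesic space two geodesics with endpoints $C$-apart need not fellow-travel. So the sentence ``by the Morse property, $q_{i+1}$ uniformly fellow-travels $[v_i,u_{i+1}]_\alpha$'' is not justified. What actually closes the gap (and what the paper leaves implicit) is this: the $\alpha$-subsegment $[v_i,u_{i+1}]_\alpha$ stays outside $N_C(X_i)$ past $v_i$, so the contracting property gives $d_{X_i}(v_i,u_{i+1})\le C$; Lemma \ref{BigFive}(5) then gives $d_{X_i}(y_i,x_{i+1})\prec_C 1$; and Lemma \ref{BigFive}(3)--(4) convert this small endpoint separation on $X_i$ into a uniform bound on $\|\pi_{X_i}(q_{i+1})\|$. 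Relatedly, your claim that re-entry of $q_{i+1}$ into $N_C(X_i)$ would force $x_{i+1}\in N_C(X_i)\cap N_C(X_{i+1})$ is not warranted — re-entry need not occur at the far endpoint — but the contracting property already forbids deep re-entry, so bounded intersection of $\f$ is not doing the work you assign it here. These are corrections of mechanism, not of strategy: the decomposition, the use of Lemma \ref{Transform}, and the overall route all match the paper's proof.
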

 The    path $\hat \alpha$ is called \textit{truncation} of $\alpha$. If $\alpha$ is a  fellow travel geodesic for the admissible path $\gamma$ in Proposition \ref{admisProp}, then   $\alpha$ has $(r, F)$-barriers  $X\in \f(\gamma)$. The truncation   $\hat \alpha$ will   also be referred as \textit{truncation} of $\gamma$ and has a  uniform constant $B$. 
\begin{proof}
By definition,   $\|\alpha\cap N_r(X_i)\|\ge \|Fo\|_{\min}$. Consider the entry and exit points $z_i',w_i'\in \alpha$ in $N_C(X_i)$   by Lemma \ref{Transform}  such that $$d(z_i',w_i')=\|\alpha\cap N_C(X_i)\|\ge \|Fo\|_{\min}-3r.$$ Let $z_i, w_i\in X_i$ such that $d(z_i,z_i')=d(w_i,w_i')=C$, where $z_0:=\alpha^-$ and $w_n:=\alpha^+$. See Fig. \ref{fig:truncation}. Since $\f$ has $\tau$-bounded intersection and then bounded projection, one deduces     $$\|\pi_{X_i}([w_i,z_{i+1}])\|\le  B$$ for some   $ B$ depending only on $\tau$ and $C$. Note that  $d(z_i,w_i)\ge L:= \|Fo\|_{\min}-3r-2C$. Denoting $\hat p_i=[z_i,w_i]$ and $\hat q_{i+1}=[w_i, z_{i+1}]$, the path $$\hat \alpha := \hat p_0 \hat q_1 \hat p_1 \ldots \hat q_n \hat p_n$$   is an $(  L,   B)$-admissible  path   with the same endpoints as $\alpha$.
\end{proof}

\subsection{Extension lemma}\label{extensionlemSec}

Admissible paths have demonstrated strong fellow travel properties. We now introduce a basic tool   to build admissible paths. 

\begin{lem}[Extension Lemma]\label{extend3}
There exist   $L, r, B>0$ depending only on $C$ with the following property.  

Let $h_1,h_2,h_3\in G$ be three independent contracting elements. Choose any element $f_i\in \langle h_i\rangle$ for each $1\le i\le 3$  to form the set $F$ satisfying $\|Fo\|_{\min}\ge L$. Let $g\in G$ and $\alpha$ be a geodesic starting at $\alpha^-=o$.
\begin{enumerate}
\item
There exists an element $f \in F$ such that   the path  $$\gamma:=[o, go]\cdot(g[o, fo])\cdot(gf\alpha)$$ is an $(L, B)$-admissible path relative to $\f$ (defined in (\ref{SystemFDefIntro})). 
\item
The point  $go$  is an $(r, f)$-barrier for any geodesic  $[\gamma^-,\gamma^+]$.	
\end{enumerate}
\end{lem}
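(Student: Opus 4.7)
The plan is a pigeonhole argument on the three independent axes, exploiting bounded intersection to eliminate two candidate indices, after which admissibility and fellow-travel follow from Proposition~\ref{admisProp}. First fix $L_0, B_0, r$, depending only on $C$ and the bounded-intersection function $\tau$ of $\f$, so that every $(L_0, B_0)$-admissible path relative to $\f$ has $r$-fellow travel. Observe that the three axes $\ax(h_i)$ all pass through $o$ since $e \in E(h_i)$, and their $G$-translates $g\ax(h_i)$ all pass through $go$; both triples are subsystems of $\f$ and inherit $\tau$-bounded intersection.

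The key claim is that at most one $i \in \{1,2,3\}$ can have $\|\pi_{g\ax(h_i)}([o,go])\| \ge B_1$ for a suitable $B_1 = B_1(C, \tau)$. Suppose otherwise for $i = 1, 2$. Lemmas~\ref{Transform} and~\ref{BigFive}(4) then produce long $C$-neighborhood stays of $[o,go]$ inside $g\ax(h_1)$ and $g\ax(h_2)$, both anchored at the common point $go \in g\ax(h_1) \cap g\ax(h_2)$; using the bounded projection formulation equivalent to $\tau$-bounded intersection (to estimate $\|\pi_{g\ax(h_2)}(g\ax(h_1))\|$), these two stays must overlap in a piece lying in $N_C(g\ax(h_1)) \cap N_C(g\ax(h_2))$ of diameter growing with $B_1$, contradicting bounded intersection once $B_1 \gg \tau(C)$. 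The symmetric argument at the starting point $o = \alpha^-$, with the three axes $\ax(h_i)$ sharing the common point $o$, shows at most one $i$ has $\|\pi_{\ax(h_i)}(\alpha)\| \ge B_1$.

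By pigeonhole, some $i$ avoids both bad cases. Set $f := h_i^{n_i}$ with $n_i$ large enough that $d(o, fo) \ge L_0$, which is possible because $h_i$ has a quasi-isometrically embedded orbit. Since $E(h_i^{n_i}) = E(h_i)$ we have $\ax(f) = \ax(h_i)$, and equivariance of the shortest projection together with $f \in E(f)$ gives
\[
\|\pi_{g\ax(f)}(gf\alpha)\| \;=\; \|\pi_{\ax(f)}(f\alpha)\| \;=\; \|\pi_{\ax(f)}(\alpha)\| \;\le\; B_1.
\]
Setting $L := L_0$ and $B := \max(B_0, B_1)$, the path $\gamma = [o, go] \cdot g[o, fo] \cdot gf\alpha$ has a single non-trivial contracting piece $g[o, fo]$, whose endpoints $go, gfo$ lie in $g\ax(f)$ and whose length is $d(o, fo) \ge L$, with saturation $\f(\gamma) = \{g\ax(f)\}$; the two BP bounds on the adjacent geodesic segments $[o, go]$ and $gf\alpha$ have just been arranged. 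Thus $\gamma$ is $(L, B)$-admissible relative to $\f$, proving~(1). Part~(2) is immediate from Proposition~\ref{admisProp}: any geodesic $[\gamma^-, \gamma^+]$ contains points within $r$ of $go$ and $gfo$, which is precisely the $(r, f)$-barrier condition at $go$ in the sense of Definition~\ref{barriers}.

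The main technical obstacle is the quantitative pigeonhole step, namely verifying rigorously that two simultaneous long projections onto contracting subsets sharing a common point force a long common $C$-neighborhood intersection, with all constants tracked in terms of only $C$ and $\tau$. Once this standard estimate is in place, the rest of the argument is mechanical from the admissible-path framework of Section~\ref{SecPrelim}.
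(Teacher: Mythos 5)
Your proof is correct and is essentially the standard argument — the paper itself does not reprove this lemma but defers to \cite{YANG10}, whose proof is exactly your pigeonhole: among the three independent axes through $go$ (resp.\ through $o=\alpha^-$), bounded intersection rules out two having long projection from $[o,go]$ (resp.\ from $\alpha$), the surviving index gives the single \textbf{(LL)} piece $g[o,fo]$ with both \textbf{(BP)} bounds, and part (2) then follows from the fellow-travel property of Proposition~\ref{admisProp}. Two cosmetic corrections: $f$ should be the \emph{given} element $f_i\in F$ (the lemma quantifies over an arbitrary $F$ with $\|F\|_m\ge L$) rather than a freshly chosen power $h_i^{n_i}$, and the constants should be fixed in the order $B:=B_1(C,\tau)$ first, then $L,r$ from Proposition~\ref{admisProp} applied with that $B$ — as written, taking $B=\max(B_0,B_1)>B_0$ after fixing $(L_0,B_0,r)$ would void the fellow-travel guarantee.
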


\begin{rem}
\begin{enumerate}
\item
In \cite{YANG10}, the geodesic   $\alpha=[o, ho]$ is chosen to be ending at $Go$ for some $h\in G$. However, the proof given there works for a   geodesic starting from $o$ possibly ending at any point in $\U$. 
\item
Since admissible paths are local conditions, we can connect via $F$  any number of elements  $g\in G$ to satisfy (1) and (2). We refer the reader to \cite{YANG10} for precise formulation.
\end{enumerate}
\end{rem}

\subsection{Projection complex}\label{SSecProjectionComplex}
In this subsection, we  briefly recall  the work of Bestvina-Bromberg-Fujiwara  \cite{BBF} on constructing  a quasi-tree of spaces.

\begin{defn}[Projection axioms]
\label{defn:projaxioms}
Let $\f$ be a collection of   metric spaces equipped with (set-valued) projection maps  $$\{\pi_{U}:  \f\setminus \{U\}\to U\}_{U\in \f}.$$  Denote $d_{U}(V,W) := \| \pi_{U}(V) \cup \pi_{U}(W)\|$ for $V\ne U\ne W \in \f$.  The pair $(\f, \{\pi_{U}\}_{U\in \f})$ satisfies {\it projection axioms} for a {\it   constant} $\kappa \ge 0$ if
 \begin{enumerate}
     \item
     \label{axiom1}  $\|\pi_{U}(V)\| \le \kappa$ when $U \neq V$.
     \item
     \label{axiom2} if $U,V,W$ are distinct and $d_{V}(U, W) > \kappa$ then $d_{U}(V,W) \le \kappa$.
     \item
     \label{axiom3} the set $\{ U \in \f \,:\, d_{U}(V, W) > \kappa \}$ is finite for $V \neq W$.
 \end{enumerate}
 \end{defn}

By definition,  the following triangle inequality holds
\begin{equation}\label{axiom4}
d_Y(V,W)\le d_Y(V,U)+d_Y(U,W).
\end{equation}

It is well-known that the projection axioms hold for a contracting system  with bounded intersection (cf. \cite[Appendix]{YANG6}).
\begin{lem}\label{bp}
Let $h$ be any contracting element in a proper action of $G\act\U$. Then the collection  $\f=\{g\ax(h): g\in G\}$ with shortest   projection maps $\pi_U(V)$  satisfies the   projection   axioms with   constant $\kappa=\kappa(\f)>0$.
\end{lem}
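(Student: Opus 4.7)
The plan is to verify the three projection axioms in Definition \ref{defn:projaxioms} one by one, invoking the admissible-path / extension machinery together with the contracting characterizations from Lemmas \ref{BigThree} and \ref{BigFive}.

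Axiom (\ref{axiom1}) is essentially by definition. Since $h$ has quasi-isometrically embedded axis $\ax(h)$ and $E(h)$ is precisely the set-stabilizer of $\ax(h)$ up to bounded Hausdorff distance, a standard properness argument shows that $\f=\{g\ax(h):g\in G\}$ is a $C$-contracting system with $\tau$-bounded intersection for some function $\tau$. By the equivalence (bounded intersection $\Leftrightarrow$ bounded projection) recalled in the preliminaries, there exists $B=B(\f)$ with $\|\pi_U(V)\|\le B$ for all $U\ne V$. So any $\kappa\ge B$ already satisfies (\ref{axiom1}).

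For axiom (\ref{axiom2}) (the Behrstock inequality), suppose $d_V(U,W)>\kappa$ with $\kappa$ to be chosen large relative to $B$ and $C$. Pick any $u\in U$, $w\in W$. Since $\pi_V(u)$ is within $B$ of $\pi_V(U)$ and similarly for $W$, we have $d_V(u,w)\gg C$. The contracting characterization of Lemma \ref{BigThree}(2) forces the geodesic $[u,w]$ to pass within $C$ of $V$, and Lemma \ref{Transform} (together with Lemma \ref{BigFive}(4)) shows the entry/exit points in $N_C(V)$ lie uniformly close to $\pi_V(u)$ and $\pi_V(w)$, so that the piece of $[u,w]$ before entering $N_C(V)$ starts at $u$ and leaves $U$'s neighborhood; hence $\pi_U$ of this subpath is $C$-close to $\pi_U(u)\subseteq U$. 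The remainder of $[u,w]$ projects onto $U$ near $\pi_U(V)$, by the contracting property applied to $U$ and the fact that the rest of $[u,w]$ travels along $V$ and then past. Since $\|\pi_U(V)\|\le B$, we conclude $d_U(V,W)\le \kappa_2$ for some $\kappa_2=\kappa_2(C,B)$. Taking $\kappa:=\max(B,\kappa_2)$ gives (\ref{axiom2}).

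Finally, axiom (\ref{axiom3}) is a finiteness statement. If $d_U(V,W)>\kappa$, the same contracting argument as in (\ref{axiom2}) shows that any geodesic $[v,w]$ with $v\in V$, $w\in W$ must enter $N_C(U)$ and moreover spend a long time there (bounded below by a function of $\kappa$ via Lemma \ref{Transform}). In particular, $U$ meets the fixed $C$-neighborhood of the compact geodesic $[v,w]$. By properness of the $G$-action on $\U$ and the quasi-isometric embedding (\ref{QIEmbed}), only finitely many translates $g\ax(h)$ can intersect any prescribed bounded neighborhood of a compact set, so the collection $\{U\in\f:d_U(V,W)>\kappa\}$ is finite.

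The main technical issue is axiom (\ref{axiom2}): one must invoke the contracting property of $U$ and $V$ simultaneously and control what happens to $\pi_U$ of a point past $V$. The cleanest organization is to truncate $[u,w]$ via Lemma \ref{Truncation} into an admissible path passing through $V$, then apply the fellow-travel Proposition \ref{admisProp} to read off the projections onto $U$. Everything else is book-keeping from the constants produced in Convention \ref{ConvF}.
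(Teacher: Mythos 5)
The paper offers no proof of this lemma at all: it is stated as ``well-known'' (the verification is essentially that of \cite[Section 4.1]{BBF} for collections of contracting sets with bounded projection), so there is nothing to compare your argument against line by line. Judged on its own, your outline is correct and is the standard one: axiom (1) reduces to the bounded-projection property of the family $\f=\{g\ax(h)\}$, axiom (2) is the Behrstock inequality obtained by tracking a geodesic $[u,w]$ past $V$, and axiom (3) follows from properness once a large $d_U(V,W)$ forces $U$ to meet a fixed neighborhood of a compact geodesic.

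Two places deserve more care if this were to be written out. First, axiom (1) is where the real content sits: the uniform bound $\|\pi_U(V)\|\le B$ for distinct translates of $\ax(h)$ is exactly the bounded intersection/projection property of the system, and the ``standard properness argument'' you defer to (a long overlap of $g\ax(h)$ with $\ax(h)$ forces $g\in E(h)$ via properness and Lemma \ref{elementarygroup}) is the step one would actually have to supply. Second, in axiom (2), after $[u,w]$ exits $N_C(V)$ near $\pi_V(w)$, the claim that ``the remainder of $[u,w]$ projects onto $U$ near $\pi_U(V)$'' does not follow from the contracting property of $U$ alone: one must rule out that $[u,w]$ re-enters $N_C(U)$ somewhere deep along $V$ or beyond. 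This is where bounded intersection is used a second time -- if a point $q$ of $[a,b]\subseteq N_\sigma(V)$ lies in $N_C(U)$, then $[u,q]$ has endpoints in $N_C(U)$, hence lies in $N_{\sigma(C)}(U)$ by the Morse property, and the segment $[a,q]$ then sits in $N_\sigma(U)\cap N_\sigma(V)$, which has diameter at most $\tau(\sigma)$; combined with Lemma \ref{BigFive}(3) applied to the last such point, this pins $\pi_U(w)$ near $\pi_U(V)$. With those two points filled in, your proof is complete and consistent with what the paper takes for granted.
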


In \cite[Def. 3.1]{BBF}, a modified version of $d_U$  is introduced so that it is symmetric and agrees with the original $d_U$ up to an additive amount $2\kappa$. So, the axioms (1)-(3) are still true for $3\kappa$, and the triangle inequality in (\ref{axiom4}) holds up to a uniform error. In what follows, we   actually need to work with this modified $d_U$ to define the complex projection, but for sake of simplicity, we stick on the above definition of $d_U$.

We consider the interval-like set for $K>0$ and $V, W\in \f$ as follows $$\f_K(V,W):=\{U\in\f: d_U(V,W)>\kappa\}.$$ Denote $\f_K[V,W]:=\f_K(V,W)\bigcup\{V,W\}$.
It possesses a total order described in the next lemma.

\begin{lem}\label{OrderLem}\cite [Theorem 3.3.G]{BBF}
There exist  constants $D=D(\kappa), K=K(\kappa)>0$ for the above $\f$  such that  the set $\f_K[V,W]$ with order ``$<$'' is totally ordered  with least element $V$ and great element $W$, such that given $U_0$, $U_1$, $U_2\in \f_K[V,W]$, if $U_0<U_1<U_2$, then
    $$d_{U_1}(V,W)-D\leq d_{U_1}(U_0,U_2)\leq d_{U_1}(V,W),$$  and $$d_{U_0}(U_1,U_2)\leq D \quad and \quad d_{U_2}(U_0,U_1)\leq D.$$

\end{lem}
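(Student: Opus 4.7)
The plan is to follow the framework of Bestvina--Bromberg--Fujiwara and extract the order directly from the three projection axioms, choosing the threshold $K$ large enough compared to $\kappa$. Throughout I use the (approximate) triangle inequality (\ref{axiom4}) together with axiom~(\ref{axiom2}), which is the key asymmetry: from the viewpoint of a ``middle'' element, the two outer elements project to distinct sides, while from the viewpoint of either outer element the middle one collapses to the far side.

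\textbf{Step 1 (Defining the order).} Fix $K \ge 10\kappa$, say. For distinct $U, U' \in \f_K(V,W)$, declare $U < U'$ if $d_{U'}(V,U) \le \kappa$ (equivalently, from $U'$'s viewpoint, $U$ is on the $V$-side). Declare $V < U < W$ for every $U \in \f_K(V,W)$. I would first verify this is well-defined by showing that the two possible definitions --- ``$U$ is on the $V$-side of $U'$'' and ``$U'$ is on the $W$-side of $U$'' --- are equivalent: assuming $d_{U'}(V,U) \le \kappa$ and using $d_{U'}(V,W) > K$ together with (\ref{axiom4}), one deduces $d_{U'}(U,W) > K-\kappa$, which by axiom~(\ref{axiom2}) gives $d_U(U',W) \le \kappa$, so $U'$ is on the $W$-side of $U$. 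The reverse implication is symmetric.

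\textbf{Step 2 (Totality and antisymmetry).} For $U_0 \ne U_1$ in $\f_K(V,W)$, triangle inequality applied to $d_{U_0}(V,W) > K$ forces one of $d_{U_0}(V,U_1) > \kappa$ or $d_{U_0}(U_1,W) > \kappa$ (after adjusting for the additive error). In the first case, axiom~(\ref{axiom2}) applied with center $U_0$ yields $d_{U_1}(V,U_0) \le \kappa$, whence $U_0 < U_1$; the second case gives $U_1 < U_0$ symmetrically. That the two cases are mutually exclusive (antisymmetry) follows because if both $d_{U_1}(V,U_0) \le \kappa$ and $d_{U_0}(V,U_1) \le \kappa$ held, the same axiom combined with the estimate $d_{U_0}(V,W) > K \gg \kappa$ would contradict either the equivalence in Step~1 or axiom~(\ref{axiom1}).

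\textbf{Step 3 (Transitivity).} Given $U_0 < U_1 < U_2$, I would show $U_0 < U_2$, that is $d_{U_2}(V,U_0) \le \kappa$. By hypothesis $d_{U_2}(V,U_1) \le \kappa$, so by triangle inequality $d_{U_2}(V,U_0) \le \kappa + d_{U_2}(U_1,U_0)$, and it suffices to bound $d_{U_2}(U_1,U_0)$. Here I would use the equivalent formulation from Step~1, namely $d_{U_1}(W,U_2) \le \kappa$ combined with $d_{U_0}(V,U_1) > K - \kappa$ (coming from $d_{U_0}(V,W) > K$ and $d_{U_0}(U_1,W) \le \kappa$ since $U_0 < U_1$). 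Applying axiom~(\ref{axiom2}) once more to the triple $(U_0, U_1, U_2)$ collapses $d_{U_2}(U_0,U_1)$ into a $\kappa$-bound, which finishes the argument.

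\textbf{Step 4 (Quantitative estimates).} Once the order is set up, set $D := K + 3\kappa$ (the exact value absorbs the additive slack in (\ref{axiom4})). The bounds $d_{U_0}(U_1,U_2) \le D$ and $d_{U_2}(U_0,U_1) \le D$ come immediately from the Step~1 equivalence together with axiom~(\ref{axiom1}). For the sandwich estimate on $d_{U_1}(V,W)$, the upper bound is just (\ref{axiom4}), and the lower bound follows by writing $d_{U_1}(V,W) \le d_{U_1}(V,U_0) + d_{U_1}(U_0,U_2) + d_{U_1}(U_2,W)$ and noting that the first and third terms are $\le \kappa$ (from the order $U_0 < U_1 < U_2$ in the equivalent formulation).

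The main technical obstacle is \textbf{Step~3 (transitivity)}, since the only tool is the one-sided axiom~(\ref{axiom2}), and one has to chain its asymmetry twice without losing control of the bound. This is where taking $K$ sufficiently large relative to $\kappa$ and the additive error in (\ref{axiom4}) is essential; the rest of the argument is bookkeeping once transitivity is in place.
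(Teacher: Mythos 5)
The paper does not prove this lemma; it is cited directly from Bestvina--Bromberg--Fujiwara \cite[Theorem~3.3.G]{BBF}, so your proposal is a genuine reconstruction rather than something to compare against an in-text argument. Your overall strategy --- define $U<U'$ by ``$U$ lies on the $V$-side of $U'$'' (i.e.\ $d_{U'}(V,U)\le\kappa$), use axiom~(\ref{axiom2}) as the one-sided collapsing mechanism, and take $K$ large enough to absorb the additive slack --- is indeed the right one and matches the BBF mechanism.

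There is, however, a small but real gap in Step~3. You correctly deduce $d_{U_2}(U_0,U_1)\le\kappa$ via axiom~(\ref{axiom2}) (applied with center $U_1$, using $d_{U_1}(U_0,U_2)>K-2\kappa$, which you have from $d_{U_1}(V,U_0)\le\kappa$, $d_{U_1}(U_2,W)\le\kappa$, and $d_{U_1}(V,W)>K$). But then the triangle inequality only yields $d_{U_2}(V,U_0)\le d_{U_2}(V,U_1)+d_{U_2}(U_1,U_0)\le 2\kappa$, which does not meet the threshold $\le\kappa$ required for $U_0<U_2$ under your definition of the order. The fix is to avoid the final triangle inequality and instead apply axiom~(\ref{axiom2}) once more: from $d_{U_0}(U_1,U_2)\le\kappa$ (the symmetric conclusion of your application of (\ref{axiom2})) together with $d_{U_0}(U_1,W)\le\kappa$, one gets $d_{U_0}(U_2,W)\le 2\kappa$, hence $d_{U_0}(V,U_2)\ge d_{U_0}(V,W)-d_{U_0}(U_2,W)>K-2\kappa>\kappa$, and axiom~(\ref{axiom2}) now gives $d_{U_2}(V,U_0)\le\kappa$ directly, which is transitivity. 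A similar caveat applies to the upper bound $d_{U_1}(U_0,U_2)\le d_{U_1}(V,W)$ in Step~4: as written your triangle-inequality argument only gives this up to an additive $2\kappa$; to get the clean inequality one must work with the BBF-modified symmetric projections $d^\pi_U$ (which are monotone along the order), not the raw $d_U$, and absorb the discrepancy into $D$. Neither issue changes the architecture of your proof, but both need to be repaired before the argument is airtight.
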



We now give the definition of a projection complex.

\begin{defn}
The \textit{projection complex} $\p_K(\f)$ for $K$ satisfying Lemma \ref{OrderLem}  is  a graph with the vertex set consisting of the elements in $\f$. Two  vertices $u$ and $v$ are connected if $\f_K(U,V)=\emptyset$. We equip $\PC$ with a length metric $d_{\p}$ induced by assigning unit length to each edge.
\end{defn}
The projection complex $\p_K(\f)$ is connected since by  \cite[Proposition 3.7]{BBF}   the interval set $\f_K[U, V]$  gives    a connected path between $U$ and $V$ in $\PC$ called \textit{standard path}: the consecutive elements directed by the total order  are  adjacent in $\p_K(\f)$.  By \cite[Corollary 3.7]{BBFS}, standard paths are $(2,1)$-quasi-geodesics. The structural  result about the projection complex is the following.
\begin{thm}\label{projcplxThm}\cite{BBF}
For $K\gg 0$ as in Lemma \ref{OrderLem}, the projection complex  $\PC$ is a quasi-tree, on which $G$ acts co-boundedly.
\end{thm}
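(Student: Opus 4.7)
The plan is to apply Manning's bottleneck criterion for quasi-trees: a connected graph $\Gamma$ is a quasi-tree if and only if there is a constant $\Delta$ such that for every pair of vertices $U,V$ there is a path $\sigma(U,V)$ between them with the property that every other path from $U$ to $V$ stays within $\Delta$ of each vertex of $\sigma(U,V)$.  I would take the ``standard path'' through the totally ordered interval $\f_K[U,V]$ supplied by Lemma \ref{OrderLem} as the reference path, and argue that it is a bottleneck.

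First I would check that this reference sequence is genuinely a path in $\PC$.  Write $\f_K[U,V] = \{U = U_0 < U_1 < \cdots < U_n = V\}$, finite by projection axiom (3) and totally ordered by Lemma \ref{OrderLem}.  If consecutive $U_i, U_{i+1}$ were not adjacent in $\PC$, there would exist $W$ with $d_W(U_i, U_{i+1}) > \kappa$; the $D$-almost-monotonicity of projections along the total order in Lemma \ref{OrderLem} would then force $W \in \f_K[U,V]$ with $U_i < W < U_{i+1}$, contradicting consecutiveness.  Hence $\sigma(U,V)$ is a path of length $n$.

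The main step, and the main obstacle, is the bottleneck estimate: any alternative path $U = W_0, W_1, \ldots, W_m = V$ in $\PC$ comes within uniformly bounded distance of every $U_k$.  The tool is projection axiom (2) combined with the interval order.  For each $k$, track $i \mapsto d_{U_k}(U, W_i)$: since adjacency of $W_i, W_{i+1}$ in $\PC$ means $\f_K(W_i, W_{i+1}) = \emptyset$, we have $d_{U_k}(W_i, W_{i+1}) \le \kappa$ whenever $U_k \notin \{W_i, W_{i+1}\}$, so this sequence increases in jumps of at most $\kappa$.  Since $d_{U_k}(U_0, U_n)$ is strictly larger than $\kappa$, the alternative path must either visit $U_k$ itself or cross some index $i$ at which both $d_{U_k}(U, W_i) > \kappa$ and $d_{U_k}(W_i, V) > \kappa$.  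In the latter case $U_k$ lies in both intervals $\f_K[U, W_i]$ and $\f_K[W_i, V]$, so the standard path $\sigma(U, W_i)$ contains $U_k$.  A further application of the order lemma, together with axiom (3) to bound the cardinality of sub-intervals, shows that the $\PC$-distance between $W_i$ and $U_k$ is controlled by a universal $\Delta = \Delta(\kappa, D)$.  This is essentially the argument carried out in \cite[Sections~3.2--3.4]{BBF}; I would import it rather than re-derive, noting only that all constants depend solely on $\kappa$, hence on the contracting constant of $\f$.

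For the co-boundedness of the $G$-action, observe that $G$ acts on the index set $\f = \{g\cdot \ax(h_i): g\in G,\; i=1,2,3\}$ with exactly three orbits, one per generator $h_i$.  Since $G$ preserves all projections $\pi_U(V) = g\pi_{g^{-1}U}(g^{-1}V)$, this action passes to an isometric simplicial action on $\PC$ with three vertex orbits; choosing representatives $\ax(h_1), \ax(h_2), \ax(h_3)$, a fundamental domain for vertices has diameter at most the maximum of the three pairwise $d_{\PC}$-distances, which is finite because $\PC$ is connected.  Hence $G \act \PC$ is co-bounded.
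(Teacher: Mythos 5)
Your proposal reconstructs the argument of the cited reference \cite{BBF} rather than giving an independent proof, which is appropriate: the paper itself offers no proof here, only the citation, and the route you sketch (Manning's bottleneck criterion applied to the standard paths through the totally ordered intervals $\f_K[U,V]$, with the ``small jumps'' estimate from projection axiom (2) driving the bottleneck bound) is precisely how Bestvina--Bromberg--Fujiwara establish the quasi-tree property. The co-boundedness observation is also correct and is the relevant point for this particular $\f$: since $\f$ decomposes into at most three $G$-orbits of vertices, every vertex lies within a uniformly bounded $\PC$-distance of a fixed orbit.

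One small imprecision worth flagging: you conflate $\kappa$ and $K$ at a couple of spots. Adjacency in $\PC$ means $\f_K(U_i,U_{i+1})=\emptyset$, i.e.\ no $W$ with $d_W(U_i,U_{i+1})>K$; the axiom-(2) jump bound along the alternative path is $d_{U_k}(W_i,W_{i+1})\le K$ (or a uniform multiple thereof after the BBF modification of $d_U$), not $\kappa$; and the conclusion that the alternative path enters $\f_K[U,W_i]$ requires $d_{U_k}(U,W_i)>K$, which is what actually forces $U_k$ into that interval. Since $K\gg\kappa$, the qualitative argument goes through, but the statement of Lemma \ref{OrderLem} and the definition of the complex both run on $K$, so the bookkeeping should too. (To be fair, the paper's own definition of $\f_K(V,W)$ appears to have a typo --- it writes $d_U(V,W)>\kappa$ where $>K$ is meant --- so the ambiguity is partly inherited.)
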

\begin{proof}
It is proved there that the standard path  from $U$ to $V$ lies in the $2$-neighborhood of any path with the same endpoints. This implies the conclusion via (a variant of) Manning's bottleneck criterion of quasi-trees in \cite{Man05}.     
\end{proof}
A triangle formed by standard paths in the projection complex looks almost like a  tripod.
\begin{lem}\label{Tripod} \cite[Lemma 3.6]{BBFS}
For every $U,V,W\in \f$, the path $\f_K[U,V]$ is contained in  $\f_K[U,W]\cup \f_K[W,V]$ except for at most two consecutive vertices. Precisely, the (possibly empty) initial and terminal subpaths of $\f_K[U,V]$ are contained in $\f_K[U,W]\cup \f_K[W,V]$, with an exception of at most two vertices in the middle.  
\end{lem}

For any two points $x\in X$ and $z\in Z$, we often need to lift a standard path to $\U$. It is a piece-wise geodesic path (admissible path) as concatenation of the normal paths between two consecutive vertices and geodesics contained in vertices.
This is explained by the following lemma proved in \cite[Lemma 4.5]{HLYANG}; we include the proof for completeness.  
\begin{lem}\label{LiftPathLem}
For any $K>0$, there exist a constant $L=L(K,\kappa)\ge 0$ with $L\to \infty$ as $K\to\infty$, and a  constant $B=B(\kappa)>0$ with the following property.

For any two points $u\in U, v\in V$ there exists an $(L, B)$-admissible path $\gamma$ in $\U$ from $u$ to $v$ with saturation $\f_K[U, V]$. 
\end{lem}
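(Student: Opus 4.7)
The idea is to lift the standard path in the projection complex $\PC$ to a piecewise geodesic in $\U$ by threading geodesics inside consecutive ``stations'' with short connectors between them. By Lemma \ref{OrderLem} (applied with $K$ past the threshold $K(\kappa)$), enumerate
$$\f_K[U,V]=\{U_0=U,\,U_1,\ldots,U_n=V\}$$
in its induced total order. For each interior index $1\le i\le n-1$ choose
$$z_i\in\pi_{U_i}(U_{i-1}),\qquad w_i\in\pi_{U_i}(U_{i+1}),$$
and at the boundary set $z_0:=u$, $w_0\in\pi_{U_0}(U_1)$, $z_n\in\pi_{U_n}(U_{n-1})$, $w_n:=v$. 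Then form the geodesics $p_i:=[z_i,w_i]$, whose endpoints lie in $U_i$, and the connectors $q_i:=[w_{i-1},z_i]$, and concatenate $\gamma:=p_0q_1p_1q_2\cdots q_np_n$. By construction the saturation of $\gamma$ is exactly $\f_K[U,V]$.

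\textbf{Verifying LL.} For each interior $i$, Lemma \ref{OrderLem} yields $d_{U_i}(U_{i-1},U_{i+1})\ge d_{U_i}(U,V)-D \ge K-D$, and Projection Axiom (1) bounds the diameters $\|\pi_{U_i}(U_{i\pm1})\|\le\kappa$. Hence
$$\ell(p_i)=d(z_i,w_i)\ge d_{U_i}(U_{i-1},U_{i+1})-2\kappa\ge K-D-2\kappa=:L(K),$$
and $L(K)\to\infty$ as $K\to\infty$, as required.

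\textbf{Verifying BP.} The crux is bounding $\|\pi_{U_{i-1}}(q_i)\|$ (and symmetrically $\|\pi_{U_i}(q_i)\|$) by a constant $B=B(\kappa)$. On endpoints this is easy: $z_i\in U_i$ forces $\pi_{U_{i-1}}(z_i)\subseteq\pi_{U_{i-1}}(U_i)$, a set of diameter $\le\kappa$, while $w_{i-1}\in U_{i-1}$ projects to itself and lies within $\kappa$ of $\pi_{U_{i-1}}(U_i)$ by choice, giving $d_{U_{i-1}}(w_{i-1},z_i)\le 2\kappa$. Lemma \ref{BigFive}(4) then converts this endpoint estimate into a bound on the total dwell of $q_i$ inside the $C$-neighborhood of $U_{i-1}$:
$$\|q_i\cap N_C(U_{i-1})\|\le 2\kappa+4C.$$
Each maximal subarc of $q_i$ outside $N_C(U_{i-1})$ projects to a set of diameter $\le C$ by the contracting property (Lemma \ref{BigThree}(1)), so the full projection satisfies $\|\pi_{U_{i-1}}(q_i)\|\le 2\kappa+O(C)=:B$, with $B$ depending only on $\kappa$ and the contracting constant.

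\textbf{Main obstacle.} The subtlety is Step 3: passing from endpoint-projection control to a bound on the projection of the entire geodesic $q_i$. This requires orchestrating two tools, namely the contracting property (which handles arcs of $q_i$ at distance $\ge C$ from $U_{i-1}$) with the Morse-type estimate of Lemma \ref{BigFive}(4) (which limits how long $q_i$ can linger inside $N_C(U_{i-1})$ in terms of endpoint projections). The bounded intersection of $\f$ is what prevents $q_i$ from travelling along both $U_{i-1}$ and $U_i$ simultaneously for long, ensuring $q_i$ leaves $U_{i-1}$ promptly en route to $U_i$. The resulting $B$ is uniform in $K$ and in the pair $(U,V)$, depending only on $\kappa$ (hence only on $\f$), while $L(K)\to\infty$ as $K\to\infty$, completing the construction.
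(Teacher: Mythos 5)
Your construction is essentially the same as the paper's: both enumerate $\f_K[U,V]$ by the total order of Lemma \ref{OrderLem}, pick projection points between consecutive members, concatenate the resulting geodesics into a piecewise geodesic, and verify (LL) via the estimate $d_{S_i}(S_{i-1},S_{i+1})\ge K-D$ combined with the $\kappa$-bounded projections. The only difference is that you spell out the (BP) verification in full detail---bounding the projection of the entire connector $q_i$, not just its endpoints, by combining the contracting property (Lemma \ref{BigThree}) with the dwell-time estimate of Lemma \ref{BigFive}(4)---whereas the paper records only the endpoint bound $d_{S_i}(x_{i-1},y_{i-1})\le B$ and leaves the passage to the full-path projection bound implicit.
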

\begin{proof}
List $\f_K[U, V]=\{S_0=U, S_1, S_2, \cdots, S_k, S_{k+1}=V\}$   by the total order in Lemma \ref{OrderLem}. The admissible path is constructed by connecting projections between $S_i$ and $S_{i
+1}$. Namely, choose a sequence of points $x_i\in \pi_{S_i}(S_{i+1}), y_i\in \pi_{S_{i+1}}(S_{i})$ for $0\le i\le k$. We connect  consecutively the points in $\{u, x_0, y_0, \cdots, x_k, y_k, v\}$  to get a piecewise geodesic path $\gamma$. See Fig. \ref{fig:liftstandard}. By bounded projection $\|\pi_{S_{i}}(S_{i+1})\|\le B$, we have $$d_{S_i}(x_{i-1}, y_{i-1}),\; d_{S_i}(x_{i}, y_{i}) \le B.$$
By Lemma \ref{OrderLem},  we have $$d_{S_i}(S_{i-1}, S_{i+1})\ge d_{S_i}(U, V)-D\ge K-D,$$ for a constant $D=D(\kappa)$. Thus, $d(y_{i-1}, x_i)\ge  d_{S_i}(S_{i-1}, S_{i+1})-2B\ge K-D-2B:=L(K,\kappa)$. So, $\gamma$ is an $(L, B)$-admissible path relative to $\f_K[U, V]$. The lemma is then proved.
\end{proof}
\begin{figure}
    \centering

\tikzset{every picture/.style={line width=0.75pt}} 

\begin{tikzpicture}[x=0.75pt,y=0.75pt,yscale=-1,xscale=1]

\draw   (60.5,78.5) .. controls (60.5,63.31) and (79.53,51) .. (103,51) .. controls (126.47,51) and (145.5,63.31) .. (145.5,78.5) .. controls (145.5,93.69) and (126.47,106) .. (103,106) .. controls (79.53,106) and (60.5,93.69) .. (60.5,78.5) -- cycle ;
\draw   (201,80.5) .. controls (201,65.31) and (216.33,53) .. (235.25,53) .. controls (254.17,53) and (269.5,65.31) .. (269.5,80.5) .. controls (269.5,95.69) and (254.17,108) .. (235.25,108) .. controls (216.33,108) and (201,95.69) .. (201,80.5) -- cycle ;
\draw   (441,89) .. controls (441,75.19) and (462.15,64) .. (488.25,64) .. controls (514.35,64) and (535.5,75.19) .. (535.5,89) .. controls (535.5,102.81) and (514.35,114) .. (488.25,114) .. controls (462.15,114) and (441,102.81) .. (441,89) -- cycle ;
\draw   (341,80.5) .. controls (341,65.59) and (355.33,53.5) .. (373,53.5) .. controls (390.67,53.5) and (405,65.59) .. (405,80.5) .. controls (405,95.41) and (390.67,107.5) .. (373,107.5) .. controls (355.33,107.5) and (341,95.41) .. (341,80.5) -- cycle ;
\draw  [dash pattern={on 4.5pt off 4.5pt}]  (145.5,78.5) -- (199,80.43) ;
\draw [shift={(201,80.5)}, rotate = 182.06] [color={rgb, 255:red, 0; green, 0; blue, 0 }  ][line width=0.75]    (10.93,-3.29) .. controls (6.95,-1.4) and (3.31,-0.3) .. (0,0) .. controls (3.31,0.3) and (6.95,1.4) .. (10.93,3.29)   ;
\draw  [dash pattern={on 4.5pt off 4.5pt}]  (405,80.5) -- (439.02,85.7) ;
\draw [shift={(441,86)}, rotate = 188.69] [color={rgb, 255:red, 0; green, 0; blue, 0 }  ][line width=0.75]    (10.93,-3.29) .. controls (6.95,-1.4) and (3.31,-0.3) .. (0,0) .. controls (3.31,0.3) and (6.95,1.4) .. (10.93,3.29)   ;
\draw    (269.5,80.5) -- (341,80.5) ;
\draw [shift={(341,80.5)}, rotate = 0] [color={rgb, 255:red, 0; green, 0; blue, 0 }  ][fill={rgb, 255:red, 0; green, 0; blue, 0 }  ][line width=0.75]      (0, 0) circle [x radius= 3.35, y radius= 3.35]   ;
\draw [shift={(269.5,80.5)}, rotate = 0] [color={rgb, 255:red, 0; green, 0; blue, 0 }  ][fill={rgb, 255:red, 0; green, 0; blue, 0 }  ][line width=0.75]      (0, 0) circle [x radius= 3.35, y radius= 3.35]   ;
\draw    (279.5,120) .. controls (285.38,110.2) and (299.42,98.48) .. (275.04,86.72) ;
\draw [shift={(273.5,86)}, rotate = 24.36] [color={rgb, 255:red, 0; green, 0; blue, 0 }  ][line width=0.75]    (10.93,-3.29) .. controls (6.95,-1.4) and (3.31,-0.3) .. (0,0) .. controls (3.31,0.3) and (6.95,1.4) .. (10.93,3.29)   ;
\draw    (326.5,120) .. controls (318.62,114.09) and (302.01,101.39) .. (334,85.72) ;
\draw [shift={(335.5,85)}, rotate = 154.8] [color={rgb, 255:red, 0; green, 0; blue, 0 }  ][line width=0.75]    (10.93,-3.29) .. controls (6.95,-1.4) and (3.31,-0.3) .. (0,0) .. controls (3.31,0.3) and (6.95,1.4) .. (10.93,3.29)   ;

\draw (76,68.4) node [anchor=north west][inner sep=0.75pt]    {$S_{0} =U$};
\draw (228,69.4) node [anchor=north west][inner sep=0.75pt]    {$S_{i}$};
\draw (359,69.4) node [anchor=north west][inner sep=0.75pt]    {$S_{i+1}$};
\draw (455,81.4) node [anchor=north west][inner sep=0.75pt]    {$S_{k+1} =V$};
\draw (235,124.4) node [anchor=north west][inner sep=0.75pt]    {$\pi _{S_{i}}( S_{i+1})$};
\draw (315,125.4) node [anchor=north west][inner sep=0.75pt]    {$\pi _{S_{i+1}}( S_{i})$};

\end{tikzpicture}
    \caption{Schematic illustration of lifting standard paths}
    \label{fig:liftstandard}
\end{figure}
In practice, we always assume that $ K$ is large enough so that $L$ satisfies Proposition \ref{admisProp} and the path $\gamma$ is a $c$-quasi-geodesic for some $c>1$. 

Before moving on, let us put into perspective  what we have proved so far about projection complex.

Consider the union $\widetilde X:= \bigcup \{X\in \f\}$ of $\f$ with the restricted metric $d$ from $\U$. Then  the natural projection map collapsing $X\in \f$ to the corresponding vertex $\bar x\in \PC$ defined as follows
\begin{equation}\label{ShadowMap}\begin{aligned}
\Phi_1: (\widetilde X, d)   & \longrightarrow (\p_K(\f), d_{\mathcal P})\\    
 X& \longmapsto \bar x
\end{aligned}
\end{equation}
is a coarsely Lipschitz map: there exists a constant $c>1$ so that for any $x, y\in \widetilde X$,
$$
d_{\mathcal P}(\bar x, \bar y)\le c d(x,y)+c
$$
Indeed, this follows from the fact that the lifted path $\gamma$ in Lemma \ref{LiftPathLem} is a uniform quasi-geodesic. Moreover, by Proposition \ref{admisProp}, as $\gamma$ fellow travels any geodesic $\alpha$ with the same  endpoints as $\gamma$, we could deduce that the map $\Phi$ sends the geodesic $\alpha$  to a unparametrized quasi-geodesic (standard path) in $\PC$: the gaps between two consecutive axis in the saturation $\f(\gamma)$ are sent to the corresponding   edges. A map between two metric spaces with the property is usually called {\it shadowing map} in literature.

We now recall the following  result to be used later on. 
\begin{lem}\label{ForcingLem}\cite[Lemma 3.18]{BBF}
For any $K>0$ there exists $\hat K>0$ such that $\f_{\hat K}(X,Z)$ is contained in the geodesic  from $X$ to $Z$ in $\PC$.
\end{lem}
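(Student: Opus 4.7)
The plan is to argue by contradiction, combining the triangle inequality $(\ref{axiom4})$ for projection distances with the defining property that edges in $\PC$ force small projection distances at third vertices. Fix an arbitrary $\PC$-geodesic $\gamma=(V_0=X,V_1,\dots,V_m=Z)$ and assume $Y\in\f_{\hat K}(X,Z)$ is not a vertex of $\gamma$. Each edge $V_iV_{i+1}$ satisfies $\f_K(V_i,V_{i+1})=\emptyset$, so $Y\neq V_i,V_{i+1}$ forces $d_Y(V_i,V_{i+1})\le K$. Telescoping $(\ref{axiom4})$ along $\gamma$ yields
\[d_Y(X,Z)\;\le\;\sum_{i=0}^{m-1}d_Y(V_i,V_{i+1})\;\le\;mK.\]
Thus to reach a contradiction I must bound the individual contributions more sharply by exploiting the position of $Y$ in the canonical interval.

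Next, for $\hat K\ge K$ the assumption $d_Y(X,Z)>\hat K$ forces $Y\in\f_K(X,Z)\subseteq\f_K[X,Z]$, so $Y=U_j$ in the total order of Lemma~\ref{OrderLem}; moreover $d_Y(U_{j-1},U_{j+1})\ge d_Y(X,Z)-D>\hat K-D$. By \cite[Proposition~3.7]{BBF}, consecutive $U_i,U_{i+1}$ form an edge path in $\PC$, so there is a canonical path from $X$ to $Z$ passing through $Y$ whose existence we can compare to $\gamma$.

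The main obstacle, and the heart of the argument, is promoting the above lower bound into the assertion that $Y$ lies on \emph{every} geodesic $\gamma$ in $\PC$. I would invoke the quasi-tree hyperbolicity of $\PC$ provided by Theorem~\ref{projcplxThm}: any $\PC$-geodesic between $X$ and $Z$ stays uniformly close to the canonical path, so there exist vertices $V_a,V_b$ of $\gamma$ that are $\PC$-close to $U_{j-1},U_{j+1}$ respectively. Summing $d_Y(V_i,V_{i+1})\le K$ along the sub-path $V_a,V_{a+1},\dots,V_b$ of $\gamma$ (which by assumption avoids $Y$) then bounds $d_Y(V_a,V_b)$ by a universal constant depending only on $K$ and the hyperbolicity constant of $\PC$. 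Iterated application of the Behrstock-type axiom~(2) of Definition~\ref{defn:projaxioms} would translate ``$\PC$-close'' into ``close in the $d_Y$-coordinate'', producing a universal upper bound on $d_Y(U_{j-1},U_{j+1})$. Choosing $\hat K$ to exceed this universal bound plus $D$ contradicts the lower bound $d_Y(U_{j-1},U_{j+1})>\hat K-D$, closing the argument.

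I expect the last translation step — converting proximity in the $\PC$-metric into proximity in the scalar projection coordinate $d_Y(\cdot,\cdot)$ via the Behrstock inequality — to be the most delicate bookkeeping, since it requires tracking how projection coordinates accumulate along a geodesic that detours around a single high-projection vertex. Once this is in place, the value of $\hat K$ can be read off explicitly from $K$, the constant $D$ of Lemma~\ref{OrderLem}, and the hyperbolicity constant from Theorem~\ref{projcplxThm}.
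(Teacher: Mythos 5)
The paper does not actually prove this statement; it is quoted verbatim from \cite[Lemma 3.18]{BBF}, so your proposal has to stand on its own. Your opening moves are fine: telescoping the triangle inequality over the edges of a path avoiding $Y$, placing $Y=U_j$ in the totally ordered interval $\f_K[X,Z]$, and extracting $d_Y(U_{j-1},U_{j+1})>\hat K-D$ from Lemma~\ref{OrderLem} are all correct, as is the observation that consecutive elements of $\f_K[X,Z]$ are adjacent in $\PC$.

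The genuine gap is the sentence ``any $\PC$-geodesic between $X$ and $Z$ stays uniformly close to the canonical path, so there exist vertices $V_a,V_b$ of $\gamma$ that are $\PC$-close to $U_{j-1},U_{j+1}$.'' Hyperbolicity (even the quasi-tree property) only forces \emph{geodesics, or uniform quasi-geodesics,} with common endpoints to fellow-travel; it says nothing about an arbitrary path. The standard path through $\f_K[X,Z]$ is at this stage just a path, and even in a tree a path from $X$ to $Z$ can have interior vertices arbitrarily far from the geodesic $[X,Z]$ — what a tree \emph{does} force is that the path \emph{contains} the geodesic, which is exactly the conclusion you are trying to prove. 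Establishing that the standard path is a uniform quasi-geodesic (equivalently, that vertices with large $d_Y(X,Z)$ lie near every geodesic) is essentially the content of the forcing lemma itself, so your argument assumes what it is proving. The circularity is compounded by the appeal to Theorem~\ref{projcplxThm}: in \cite{BBF} the quasi-tree property of $\PC$ is \emph{deduced from} the forcing lemma via Manning's bottleneck criterion, so the deduction runs in the wrong direction. The actual proof in \cite{BBF} is a short combinatorial argument using only the projection axioms and works for \emph{every} path from $X$ to $Z$, not just geodesics: along a path avoiding $Y$ one locates a consecutive pair $V_i,V_{i+1}$ at which the projection to $Y$ jumps from near $\pi_Y(X)$ to near $\pi_Y(Z)$ (the dichotomy being supplied by axiom~(2) of Definition~\ref{defn:projaxioms}), which forces $d_Y(V_i,V_{i+1})>K$ and contradicts adjacency. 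If you want a self-contained proof, that is the route to take; no hyperbolicity input is needed.
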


 
\subsubsection*{Quasi-tree of spaces.} Fix   a positive number $L$ so that  $1/2K\leq L\leq 2K$. We now define a blowup version, $\QT$, of the projection complex $\PC$  by remembering the geometry of each $U\in \f$.  Precisely,

We replace each $U\in \f$, a vertex in $\PC$, by the corresponding subspace $U\subset\U$, and  keep the adjacency relation in $\PC$: if $U$ and $V$ are adjacent in $\PC$ (i.e.  $d_{\p}(U,V)=1$), then we attach an edge of length $L$    from every point $u\in \pi_UV$ to $v\in \pi_VU$.  This choice of $L$ by \cite [Lemma 4.2]{BBF}  ensures that $U\subseteq\U$ are geodesically embedded in $\QT$ (for simplicity, we mention $K$ rather than $L\in [K/2,2K]$).

Since $h$ is contracting,  the infinite cyclic subgroup $\langle h\rangle$ is of finite index in $E(h)$ by Lemma \ref{elementarygroup}, so  $\ax(h)=E(h)o$ is quasi-isometric to a line $\mathbb{R}$. Thus, $\f$ consists of  uniform quasi-lines. By \cite [Theorem B]{BBF}, we have the following.

\begin{thm}\cite{BBF}\label{quasitreeThm}
The quasi-tree of spaces  $\QT$ is a quasi-tree of infinite diameter, with each $U\in \f$ totally geodesically embedded into $\QT$.  Moreover, the shortest projection from $U$ to $V$ in $\QT$ agrees with the projection $\pi_UV$ up to a uniform finite Hausdorff distance.
\end{thm}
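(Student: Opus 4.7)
The plan is to promote the quasi-tree structure on $\PC$ (Theorem \ref{projcplxThm}) to the blowup $\QT$ by using the standard/admissible paths to transfer geometry between the two. Given $u\in U$ and $v\in V$ with $U\neq V$, I would first construct a candidate quasi-geodesic in $\QT$: list the elements $U=S_0<S_1<\cdots<S_k<S_{k+1}=V$ of $\f_K[U,V]$ in the total order from Lemma \ref{OrderLem}, pick projection points $x_i\in \pi_{S_i}(S_{i+1})$, $y_i\in\pi_{S_{i+1}}(S_i)$, and concatenate the geodesics $[x_{i-1},y_{i-1}]\subset S_i$ (inside each totally embedded quasi-line $S_i$) with the attaching edges of length $L$ from $y_{i-1}$ to $x_i$. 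This is essentially the lift provided by Lemma \ref{LiftPathLem}, and the length is comparable to $\sum_i d_{S_i}(S_{i-1},S_{i+1}) + (k+1)L$, which by the ordering inequality $d_{S_i}(U,V)-D\le d_{S_i}(S_{i-1},S_{i+1})\le d_{S_i}(U,V)$ agrees, up to an affine error, with the total $\PC$-distance from $U$ to $V$ dilated by $L$.

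Next I would show that \emph{every} path from $u$ to $v$ in $\QT$ has length at least a constant multiple of this. The point is that any such path must successively enter and exit each intermediate $S_i$; each traversal costs at least an attaching edge of length $L$, and if the path skips an $S_i\in\f_K[U,V]$ it must instead accumulate the projection distance $d_{S_i}(U,V)>\kappa$ by the bounded projection/contracting property together with axiom (2) of Definition \ref{defn:projaxioms}. This gives two-sided control and establishes that $\QT$ is quasi-isometric to $\PC$ with a linear dilation in $L$, hence a quasi-tree. Infinite diameter follows because $G$ contains independent contracting elements whose axes give an unbounded orbit in $\f$, already realizing infinite $\PC$-diameter. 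The total geodesic embedding of each $U$ is a consequence of the choice $L\ge K/2$: any candidate shortcut leaving $U$ must cross an attaching edge of length $L$ and project back into $U$, which by the bounded projection axiom contributes at most $\kappa$, so remaining inside $U$ is always at least as short; this is exactly the numerical reason behind \cite[Lemma 4.2]{BBF}.

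For the final statement, given $v\in V$ let $u\in U$ realize the shortest projection of $v$ to $U$ in $\QT$. Taking a geodesic $\gamma$ from $u$ to $v$ in $\QT$, the first argument shows $\gamma$ tracks the standard path through $\f_K[U,V]$, and in particular its initial segment inside $U$ is bounded in length; the exit point of $\gamma$ from $U$ must lie within uniformly bounded distance of $\pi_U(V)$ because any other exit point would force a longer crossing of the first intermediate vertex $S_1$ (using Lemma \ref{OrderLem} and the bounded projection axiom). Hence any minimizer $u$ sits in a uniform neighborhood of $\pi_U(V)$, and conversely every point of $\pi_U(V)$ is within bounded distance of such a minimizer. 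The main obstacle is the careful bookkeeping of additive constants in the ordering inequalities when chaining across many intermediate vertices, so that the final dependencies on $K$ and $L$ are uniform and independent of the path length; this is handled by the tripod approximation in Lemma \ref{Tripod}, which ensures that triangles of standard paths fellow travel a common center up to at most two vertices and so additive errors do not accumulate along longer paths.
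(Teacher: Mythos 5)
The paper does not prove this theorem; it is quoted directly as \cite[Theorem B]{BBF}, so there is no in-paper argument to compare against and your proposal must be measured against the original BBF proof. Your overall plan — lift standard paths along $\f_K[U,V]$, bound all competitor paths from below via the projection axioms, and extract both the total geodesic embedding and the projection control — does match the architecture of that argument, and the use of Lemma \ref{Tripod} to keep additive errors from accumulating is the right device.

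However, one step is false as written: the assertion that the two-sided length estimate ``establishes that $\QT$ is quasi-isometric to $\PC$ with a linear dilation in $L$, hence a quasi-tree.'' These spaces cannot be quasi-isometric. The natural collapsing map $\QT\to\PC$ sends each vertex space $U$, an unbounded quasi-line, to a single vertex; a quasi-isometry cannot have unbounded fibers. Moreover your lower bound on a competitor path from $u$ to $v$ only compares distances between \emph{distinct} vertex spaces $U\ne V$; it says nothing about distortion of distances within a single $U$, which alone already rules out the quasi-isometry. The BBF proof instead verifies Manning's bottleneck criterion directly for the blown-up space, and then uses crucially the hypothesis that each vertex space $U\in\f$ is itself a uniform quasi-tree (here a quasi-line). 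If the vertex spaces were, say, Euclidean planes, the BBF axioms could still hold and $\PC$ would still be a quasi-tree, but $\QT$ would contain quasi-flats and fail to be a quasi-tree. Your sketch does not invoke the quasi-line structure of the $U$ at the decisive step, so the argument would not close as written. The rest — infinite diameter from unbounded $\f$-orbits (or simply from any single quasi-line $U$), the shortcut estimate behind \cite[Lemma 4.2]{BBF} for total geodesic embedding, and the projection-agreement bookkeeping — is sound in outline.
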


From the construction, there exists a natural map which collapses each totally  geodesically embedded subspace $U\subseteq \QT$  as a point and identifies edges of length $L$ to  edges of unit length in $\PC$.  In a reverse direction, the connected path in $\PC$ given by $\f_K[X, Y]$ lifts to a standard path in  $\QT$ described as follows.   The notion of standard paths plays a key role to establish Theorem \ref{quasitreeThm}.

\begin{defn}\label{stdpathDef}
A path $\gamma$ from $u\in U$ to $v\in V$ in the space $\QT$ is called \textit{a $K$-standard path} if it passes through the set of vertices in $\f_K[U,V]$ in a natural order given in Lemma \ref{OrderLem} and, within each vertex, the path is a geodesic.
\end{defn}
Hence, standard paths in $\QT$ are also uniform quasi-geodesics. 

Summarizing the above discussion, we also have the coarsely Lipschitz map:
\begin{equation}\label{ShadowMap2}\begin{aligned}
\Phi_2: (\QT, d)   & \longrightarrow (\p_K(\f), d)\\    
 X& \longmapsto \bar x
\end{aligned}
\end{equation}
which sends a standard path in $\QT$ to the corresponding standard path in $\PC$. 

As each $U\in \f$ is isometrically embedded into $\QT$, the identification 
\begin{equation}\label{ShadowMap3}\begin{aligned}
\Phi: (\widetilde X, d)   &  \longrightarrow(\QT, d)\\    
 x& \longmapsto  x
\end{aligned}
\end{equation}
is a  coarsely Lipschitz map, where we recall $\widetilde X= \bigcup \{X\in \f\}$.

We shall continue the investigation on the Gromov boundary of projection complex in \textsection \ref{SSecGromovBdryPC}.

\subsection{Horofunction boundary}
We recall the definition of horofunction boundary and setup the notations for further reference.

Fix a basepoint $o\in \U$. For  each $y \in  \U$, we define a Lipschitz map $b_y^o:  \U\to \mathbb R$     by $$\forall x\in \U:\quad b^o_y(x)=d(x, y)-d(o,y).$$ This   family of $1$-Lipschitz functions sits in the set of continuous functions on $ \U$ vanishing at $o$.  Endowed  with the compact-open topology, the  Arzela-Ascoli Lemma implies that the closure  of $\{b^o_y: y\in  \U\}$  gives a compactification of $ \U$.  The complement, denoted by $\hU$, of $ \U$ is called  the \textit{horofunction boundary}. 

A \textit{Buseman cocycle} $B_\xi:  \U\times \U \to \mathbb R$ at $\xi\in \hU$ (independent of $o$) is given by $$\forall x_1, x_2\in  \U: \quad B_\xi(x_1, x_2)=b_\xi^o(x_1)-b_\xi^o(x_2).$$
 

The topological type of horofunction boundary is independent of  the choice of   basepoints, since if $d(x, y_n)-d(o, y_n)$ converges as $n\to \infty$, then so does $d(x, y_n)-d(o', y_n)$ for any $o'\in \U$. Moreover, the corresponding horofunctions differ by an additive amount: $$b^o_\xi(\cdot)-b_\xi^{o'}(\cdot)=b_\xi^o(o'),$$ so we will omit the upper index $o$. Every isometry $\phi$ of $\U$ induces a homeomorphism on $\hU$:  
$$
\forall y\in \U:\quad\phi(\xi)(y):=b_\xi(\phi^{-1}(y))-b_\xi(\phi^{-1}(o)).
$$
According to the context, both $\xi$ and $b_\xi$ are used to denote  the boundary points.

\subsubsection*{Finite difference relation}
Two horofunctions $b_\xi, b_\eta$ have   \textit{$K$-finite difference} for $K\ge 0$ if the $L^\infty$-norm of their difference is $K$-bounded: $$\|b_\xi-b_\eta\|_\infty\le K.$$ 
The   \textit{locus} of     $b_\xi$ consists of  horofunctions $b_\eta$ so that $b_\xi, b_\eta$ have   $K$-finite difference for some $K$ depending on $\eta$.  The loci   $[b_\xi]$  of    horofunctions $b_\xi$ form a \textit{finite difference equivalence relation} $[\cdot]$ on $\hU$. The \textit{locus} $[\Lambda]$ of a subset $\Lambda\subseteq \hU$ is the union of loci of all points in $\Lambda$. We say that $\Lambda$ is \textit{saturated} if $[\Lambda]=\Lambda$.

The following fact follows directly by definition.
\begin{lem}\label{bddLocusLem}
Let $x_n\in \U\to \xi\in \hU$ and  $y_n\in \U\to\eta\in \hU$ as $n\to \infty$. If  $\sup_{n\ge 1}d(x_n, y_n)<\infty$, then  $[\xi]=[\eta]$. 
\end{lem}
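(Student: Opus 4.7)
The plan is to prove the bound $\|b_\xi - b_\eta\|_\infty < \infty$ directly from the triangle inequality, then invoke the definition of the finite difference relation.

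First, set $M := \sup_{n \ge 1} d(x_n, y_n) < \infty$. For any test point $z \in \U$, I will rewrite the difference of the two approximating Busemann-type functions as
\[
b_{x_n}(z) - b_{y_n}(z) = \bigl[d(z, x_n) - d(z, y_n)\bigr] + \bigl[d(o, y_n) - d(o, x_n)\bigr].
\]
Two applications of the triangle inequality then bound each bracket by $d(x_n, y_n) \le M$, giving $|b_{x_n}(z) - b_{y_n}(z)| \le 2M$ uniformly in $n$ and $z$.

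Next, since $x_n \to \xi$ and $y_n \to \eta$ in the compact-open topology on the horofunction compactification, in particular $b_{x_n}(z) \to b_\xi(z)$ and $b_{y_n}(z) \to b_\eta(z)$ pointwise in $z$. Passing to the limit in the previous inequality yields $|b_\xi(z) - b_\eta(z)| \le 2M$ for every $z \in \U$, hence $\|b_\xi - b_\eta\|_\infty \le 2M < \infty$. By definition of the finite difference relation, this means $b_\eta \in [b_\xi]$, i.e.\ $[\xi] = [\eta]$, as required.

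There is no real obstacle here; the argument is a one-step triangle inequality followed by a pointwise limit. The only mild subtlety is that one must recall that convergence in the horofunction compactification is (by the Arzel\`a--Ascoli construction of $\bX$) the compact-open convergence of the $1$-Lipschitz functions $b_{x_n}$, which implies pointwise convergence, so that the uniform-in-$n$ bound on $|b_{x_n} - b_{y_n}|$ transfers to the limit.
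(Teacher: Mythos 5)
Your proof is correct and is exactly the argument the paper has in mind — the paper simply asserts the lemma "follows directly by definition," and your triangle-inequality decomposition plus pointwise passage to the limit is the standard way to fill that in. No issues.
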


We say that the      partition $[\cdot]$ restricted on a saturated subset $\Lambda$ is called \textit{$K$-finite} for $K\ge 0$ if any $[\cdot]$-class in $\Lambda$ consists of horofunctions with pairwise $K$-finite difference. 

At last, we mention the following topological criterion   to obtain second countable, Hausdorff quotient spaces.

\begin{lem}\label{GoodPartition}
Let $\Lambda\subseteq\hU$ be a saturated subset so that the restricted relation $[\cdot]$ on $\Lambda$ is $K$-finite for some $K\ge 0$.  Then
the quotient map 
$$
[\cdot]:\quad \xi\in \Lambda\longmapsto [\xi]\in [\Lambda]
$$
is a closed map from $\Lambda$ to $[\Lambda]$ with compact fibers. Moreover,     $[\Lambda]$ has a metrizable and  second countable quotient topology.
\end{lem}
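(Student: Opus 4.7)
The plan is to verify in turn: (1) each fiber $[\xi] \subseteq \Lambda$ is compact, (2) the quotient map $[\cdot] : \Lambda \to [\Lambda]$ is closed, and then deduce (3) Hausdorffness and (4) second-countability of $[\Lambda]$ from these together with the metrizability of the ambient horofunction compactification.

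For compact fibers, observe that $[\xi] = \{b_\eta : \|b_\eta - b_\xi\|_\infty \le K\}$ is cut out by the family of closed conditions $|b_\eta(x) - b_\xi(x)| \le K$ indexed by $x \in \U$; since pointwise evaluation is continuous on $\bX$, each such condition is closed in $\bX$, so $[\xi]$ is closed in the compact space $\bX$ and therefore compact. For closedness of the quotient map, I take a closed $A \subseteq \Lambda$ and consider a convergent sequence $\eta_n \to \eta$ in $\Lambda$ with each $\eta_n$ in the saturation $[A]$. Picking $\xi_n \in A$ with $\|b_{\eta_n} - b_{\xi_n}\|_\infty \le K$, compactness of $\bX$ lets us extract $\xi_n \to \xi \in \bX$. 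Pointwise passage to the limit preserves the $K$-bound, so $[\xi] = [\eta]$; saturation of $\Lambda$ forces $\xi \in \Lambda$, and closedness of $A$ in $\Lambda$ yields $\xi \in A$. Thus $[\cdot]^{-1}([\cdot](A)) = [A]$ is closed in $\Lambda$, and by definition of the quotient topology $[\cdot](A)$ is closed in $[\Lambda]$.

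Now $\Lambda$ inherits metrizability and second-countability from $\bX$, which is a closed subspace of the compact metrizable horofunction compactification. For Hausdorffness: given $[\xi] \ne [\eta]$, the fibers are disjoint compact subsets of the metrizable space $\Lambda$ and can be separated by open sets $U_\xi, U_\eta \subseteq \Lambda$. Using the closed map property, the sets $W_\ast := [\Lambda] \setminus [\cdot](\Lambda \setminus U_\ast)$ are open in $[\Lambda]$ for $\ast \in \{\xi,\eta\}$; each $W_\ast$ contains $[\ast]$ because the whole fiber $[\ast]$ lies inside $U_\ast$, and $W_\xi \cap W_\eta = \emptyset$ since any common point $[\zeta]$ would force $[\zeta] \subseteq U_\xi \cap U_\eta = \emptyset$. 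For second-countability: fix a countable basis $\{V_n\}$ of $\Lambda$ and let $\mathcal{F}$ be the countable family of its finite unions. Each $W_F := [\Lambda] \setminus [\cdot](\Lambda \setminus F)$ is open, and given any open $W \subseteq [\Lambda]$ with $[\xi] \in W$, the preimage $[\cdot]^{-1}(W)$ is an open neighborhood in $\Lambda$ of the compact fiber $[\xi]$, so some $F \in \mathcal{F}$ satisfies $[\xi] \subseteq F \subseteq [\cdot]^{-1}(W)$, giving $[\xi] \in W_F \subseteq W$.

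The main obstacle is the closedness of $[\cdot]$: one must argue that the $L^\infty$ condition defining the equivalence relation survives passage to limits in the compact-open topology of $\bX$. This works precisely because the $K$-finiteness hypothesis turns the $L^\infty$-bound into a pointwise bound that is preserved under pointwise limits, and the saturation hypothesis then keeps the limiting $\xi$ inside $\Lambda$. Everything else is a standard point-set consequence of a closed surjection with compact fibers from a second-countable metrizable space.
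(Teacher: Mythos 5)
Your proof is correct and takes essentially the same route as the paper: verify compactness of fibers and closedness of the quotient map by a sequential argument exploiting that the $K$-bound passes to pointwise limits (plus saturation of $\Lambda$), then derive Hausdorffness and second countability. The only difference is cosmetic: where the paper simply cites the fact that perfect maps (closed continuous surjections with compact fibers) preserve Hausdorffness and second countability, citing Munkres, you unpack that general theorem by separating fibers with open sets and pushing a countable basis through the closed map; this is exactly the standard proof of that cited result, so the content coincides.
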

\begin{proof}
A closed continuous surjective map $f: M\to N$ between two topological spaces with compact fibers is called \textit{perfect} in \cite[Ex \textsection 31.7]{M00}. A prefect map preserves the Hausdorff, regular, and second countable properties of $\Lambda$, so by Urysohn's metrization theorem, $[\Lambda]$ is metrizable. Hence, we only need to prove that the quotient map $[\cdot]$ is a closed map.

Note first that the restricted $K$-finite difference relation $[\cdot]$ on $\Lambda$ is closed. It suffices to recall that   $b_{\xi_n}\to b_{\xi}$ is amount to  the uniform convergence of $b_{\xi_n}(\cdot)\to b_{\xi}(\cdot)$ on any compact subset of $\U$. Thus, $|b_{\xi_n}-b_{\eta_n}|_{\infty}\le K$ implies $|b_{\xi}-b_{\eta}|_{\infty}\le K$. Moreover, the $[\cdot]$-class of a point in $\Lambda$ is a closed subset, so is compact in $\hU$. Thus, the map $[\cdot]$ on $\Lambda$ has compact fibers.

We now show   the quotient map $[\cdot]$ is closed:   the locus $[Z]$ (the preimage of the map) of a closed subset $Z\subseteq \Lambda$ is closed in $\Lambda$. Indeed, let $w_n\in [Z]$ tend to a limit point $\xi\in\Lambda$. Take a sequence     $z_n\in Z$ with $w_n\in [z_n]$, and for $\pU$ is compact, let $\eta\in \pU$ be any accumulation point of $z_n$. As the   relation $[\cdot]$ on $\Lambda$ is closed,   we have $[\xi]=[\eta]$ and then $\eta\in \Lambda$ for $\Lambda$ is saturated. As $Z$ is a closed subset, we have $\eta\in Z$, so $\xi\in[Z]$. This implies that $[Z]$ is a closed subset in $\Lambda$. Hence,  the quotient map is closed, and the lemma is proved. 
\end{proof}

\section{Convergence compactification}\label{SecBoundary}
We develop a general theory of a convergence compactification.  As in (\ref{GActUAssump}), let $(\U, d)$ be a proper metric space admitting an isometric action of a non-elementary countable group $G$ with a contracting element. Consider a metrizable compactification $\bU:=\pU\cup \U$, so  $\U$ is open and dense in $\bU$. We also assume that the action of $\textrm{Isom}(\U)$ extends by homeomorphism to  $\pU$. 

\subsection{Partition on a boundary}

We   equip $\pU$    with an  $\isom(\U)$-invariant  partition $[\cdot]$:   $[\xi]=[\eta]$ implies $[g\xi]=[g\eta]$ for any $g\in \isom(\U)$.   The  \textit{locus} $[Z]$  of a subset $Z\subseteq\pU$ is the union of all $[\cdot]$-classes of $\xi\in Z$.  We say that $\xi$ is \textit{minimal} if $[\xi]=\{\xi\}$.

Partitions     on the boundary might look   unnatural.   It is, however, amount to  descending the action to the quotient  $[\pU]$ of $\pU$ induced by the relation $[\cdot]$ via the map:
$$
\xi\in \pU\quad \longmapsto \quad [\xi]\in [\pU]
$$
so the open subsets of $[\pU]$ are  precisely the images of  saturated open sets $U$ in $\pU$ with $U=[U]$.
In general, $[\pU]$ may not be Hausdorff or $T_1$.

Say that $[\cdot]$ is a \textit{closed} partition if  $x_n\to \xi\in\pU$ and $y_n\to\eta\in\pU$ are two sequences with $[x_n]=[ y_n]$, then $[\xi]=[\eta]$. Equivalently, the relation $\{(\xi,\eta): [\xi]=[\eta]\}$ is a closed subset in $\pU\times \pU$, so the quotient space $[\pU]$ is Hausdorff. 

\begin{lem}\label{closedpartition}
Assume that  the partition $[\cdot]$ is closed.  For any given $Z\subseteq\pU$, we have  $\overline{[Z]}\subseteq [\overline Z]$. In particular, the quotient map is a closed map: if $Z$ is a closed subset, then $[Z]$ is closed.
\end{lem}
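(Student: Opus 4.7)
The plan is to trace a convergent sequence in $[Z]$ back to a sequence of witnesses in $Z$, extract a convergent subsequence using compactness of $\pU$, and then invoke the closed-partition hypothesis to identify the two limits up to the equivalence relation.

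In detail, start with an arbitrary $\xi\in\overline{[Z]}$. Since $\bU$ is metrizable, choose a sequence $\xi_n\in[Z]$ with $\xi_n\to\xi$. By definition of the locus, for each $n$ there exists a witness $z_n\in Z$ with $[\xi_n]=[z_n]$. Because $\pU$ is a closed subspace of the compact space $\bU$, it is itself compact, so after passing to a subsequence we may assume $z_n\to\eta$ for some $\eta\in\overline{Z}$. Now apply the closed-partition hypothesis to the two sequences $\xi_n\to\xi$ and $z_n\to\eta$, which satisfy $[\xi_n]=[z_n]$: the hypothesis gives $[\xi]=[\eta]$. Hence $\xi\in[\eta]\subseteq[\overline{Z}]$, which proves $\overline{[Z]}\subseteq[\overline{Z}]$.

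For the ``in particular'' clause, assume $Z$ is closed. Then $\overline Z=Z$, and the inclusion just established specializes to $\overline{[Z]}\subseteq[Z]$, so $[Z]$ is closed in $\pU$. By definition of the quotient topology on $[\pU]$, a subset $V\subseteq[\pU]$ is closed iff its preimage under the quotient map $\xi\mapsto[\xi]$ is closed in $\pU$. The preimage of the image of $Z$ is exactly the saturation $[Z]$, which we have just shown is closed; therefore the image of $Z$ is closed in $[\pU]$, proving that the quotient map is a closed map.

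The argument presents no genuine obstacle; the only point that requires care is that the closed-partition hypothesis is formulated sequentially, so the whole proof has to be run through sequences rather than nets. This is legitimate precisely because $\bU$ is assumed metrizable, so closures and continuity can be tested with sequences.
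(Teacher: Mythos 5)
Your proof is correct and follows essentially the same route as the paper: take a sequence $\xi_n\in[Z]$ converging to $\xi$, pick witnesses $z_n\in Z$ with $[z_n]=[\xi_n]$, pass to a convergent subsequence $z_n\to\eta\in\overline Z$ by compactness, and invoke the closed-partition hypothesis to get $[\xi]=[\eta]$. The only difference is that you spell out the metrizability justification for using sequences and the quotient-topology argument for the ``in particular'' clause, which the paper leaves implicit.
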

 
\begin{proof}
Let $w_n\in [Z]$ tend to $\xi\in\pU$. Then  there exists $z_n\in Z$ with $z_n\in [w_n]$, and for $\pU$ is compact, let $\eta\in \bar Z$ be any accumulation point of $z_n$. As $[\cdot]$ is closed,   we have $[\xi]=[\eta]$. Thus, we proved $\xi\in [\overline Z]$ and the proof is complete.
\end{proof}

Recall that   a point $\xi$ is  an \textit{accumulation point} of a sequence  $\{x_n\in \U:n\ge 1\}$, if there exists a subsequence denoted by $y_n$ so that  $y_n$ converges to $\xi$. We say that $x_n$ \textit{tends} (resp. \textit{accumulates}) to $[\xi]$ if the limit point (resp. any accumulate point) is contained in $[\xi]$. This implies that $[x_n]$ tends or accumulates to $[\xi]$ in the quotient space $[\pU]$. So, an infinite ray $\gamma$ \textit{terminates} at a point in $[\xi] \in \pU$ if any sequence of points on $\gamma$ accumulates in $[\xi]$.

Let $X$ be a subset in $\U$. The \textit{limit set} of $X$, denoted by $\Lambda X$, is the topological closure of $X$ in $\pU$.

\begin{conv}
We write  ``$[Z]\subseteq \pU$" to emphasize $[Z]$ as a subset in $\pU$, and ``$[Z]\subseteq [\pU]$" as a subset in $[\pU]$. In the latter case, the convergence to $[\xi]$  makes the usual sense in the quotient topology.
\end{conv}

\subsection{Assumptions A and B}

The first two assumptions  relate the  contracting property of subsets in the space to the boundary. Recall that a sequence of subsets $X_n$ is \textit{escaping} if $d(o,X_n)\to\infty$ for some basepoint $o\in \U$. 
\begin{assump}\label{AssumpA} 
Any contracting geodesic ray $\gamma$ accumulates into a closed subset $[\xi]$ for some  $\xi\in \pU$. Moreover, any sequence of    $y_n\in \U$ with escaping projections $\pi_\gamma(y_n)$ tends to $[\xi]$.
\end{assump} 
Note that, by assumption, $[\xi]$ is requested to be a closed subset.
 
\begin{lem}\label{RayLimitsLem}
Under \ref{AssumpA}, any contracting quasi-geodesic ray $\gamma$ and   its finite neighborhood  accumulates in only one   $[\cdot]$-class, denoted by $[\gamma^+]\in [\pU]$. 
\end{lem}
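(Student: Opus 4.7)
The plan is to reduce to \ref{AssumpA} by associating to $\gamma$ a contracting geodesic ray $\gamma'$ at bounded Hausdorff distance. Once this is in place, any sequence in $N_r(\gamma)$ automatically lies in a bounded neighborhood of $\gamma'$, and \ref{AssumpA} applied to $\gamma'$ will identify a single $[\cdot]$-class into which $N_r(\gamma)$ accumulates.

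\textbf{Step 1 (constructing $\gamma'$).} Set $x_0 := \gamma^-$ and pick $t_n \to \infty$. Apply Arzela--Ascoli in the proper space $\U$ to the geodesic segments $\alpha_n := [x_0, \gamma(t_n)]$ to extract a subsequential limit $\gamma'$, which is a geodesic ray issuing from $x_0$. By the Morse property of the $C$-contracting quasi-geodesic $\gamma$ (Lemma~\ref{BigFive}(1)), each $\alpha_n$ lies in $N_\sigma(\gamma)$ for a uniform $\sigma = \sigma(C,c)$; passing to the limit yields $\gamma' \subseteq N_\sigma(\gamma)$.

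\textbf{Step 2 (reverse containment; main obstacle).} I then want to show $\gamma \subseteq N_{R}(\gamma')$ for some $R = R(C,c)$. For $y = \gamma(s)$ and $t_n \gg s$, the endpoints of $\alpha_n$ lie on $\gamma$, so Lemma~\ref{BigFive}(4) forces $\|\alpha_n \cap N_C(\gamma)\|$ to be comparable to $d(x_0, \gamma(t_n))$, with the entry/exit points of $\alpha_n$ in $N_C(\gamma)$ close to $x_0$ and $\gamma(t_n)$ respectively (Lemma~\ref{Transform}). Exploiting this together with the bounded-projection behavior of $\gamma$ rules out long excursions and produces a uniform estimate $d(y, \alpha_n) \le R$; passing to the limit, $d(y, \gamma') \le R$. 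By Lemma~\ref{BigFive}(2), $\gamma'$ is then a $\hat C$-contracting geodesic ray for some $\hat C = \hat C(C,\sigma,R)$. This is the main obstacle: in a hyperbolic space it would be routine via quasi-geodesic stability, but in the general contracting setting the Morse property only supplies one inclusion for free, and the reverse has to be extracted from the contracting behavior of $\gamma$ itself.

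\textbf{Step 3 (invoking \ref{AssumpA}).} Since $\gamma'$ is a contracting geodesic ray, \ref{AssumpA} provides $\xi \in \pU$ such that $\gamma'$ accumulates into $[\xi]$ and any $z_n \in \U$ with exiting projections $\pi_{\gamma'}(z_n)$ tends to $[\xi]$. For any sequence $y_n \in N_r(\gamma)$ with $d(o, y_n) \to \infty$, Steps 1--2 give $y_n \in N_{r+\sigma+R}(\gamma')$, so $d(y_n, \pi_{\gamma'}(y_n)) \le r + \sigma + R + \hat C$ is bounded. Hence $d(o, \pi_{\gamma'}(y_n)) \to \infty$, so $\pi_{\gamma'}(y_n)$ is exiting and $y_n \to [\xi]$ by \ref{AssumpA}. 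Because this class does not depend on the sequence $y_n$, every accumulation point of $N_r(\gamma)$ lies in the single class $[\gamma] := [\xi]$, completing the proof.
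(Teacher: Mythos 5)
Your proof is correct and follows essentially the same route as the paper, whose entire argument is the one-line observation that, by the Morse property, a contracting quasi-geodesic ray lies in a finite neighborhood of a contracting geodesic ray, after which \ref{AssumpA} applies. Your Steps 1--2 simply spell out that observation (the two-sided Hausdorff closeness you flag as the main obstacle is the standard strengthening of the Morse property for contracting sets, used implicitly elsewhere in the paper, e.g.\ in the proof of Lemma \ref{BarrierFellowLem}), and Step 3 is the intended application of \ref{AssumpA}.
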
 

\begin{proof}
By Morse property in Lemma \ref{BigFive}, a contracting quasi-geodesic ray is contained in a finite neighborhood of a contracting geodesic  ray.  
 \end{proof}

Let $\gamma: \mathbb R\to \U$ be a contracting   quasi-geodesic, with   the   positive and negative half rays denoted by $\gamma^+, \gamma^-$ respectively. Following   \cite{GePo2},  $\gamma$ is called  \textit{horocycle}    if $\gamma^+, \gamma^-$ accumulate into the same $[\cdot]$-class: $[\gamma^+]=[\gamma^-]$. This is opposite to a class of non-pinched points, which shall be consider in \ref{AssumpC}.   If the points $[\xi]$ in \ref{AssumpA} are non-pinched,  we show that the only possible way for $x_n$ tending to $[\xi]$    is that their projection to $\gamma$ is escaping. See Lemma \ref{ConverseAssumpALem} for a precise statement. 

\begin{defn}
Let $h\in \isom(\U)$ be a contracting isometry. The \textit{attracting fixed set} $[h^+]$ and \textit{repelling fixed set} $[h^-]$ in $\pU$ are the locus of   all accumulation points of $\{h^{n}o: n>0\}$ and $\{h^{-n}o: n>0\}$ respectively. 

Abusing language, we also refer $[h^-], [h^+]$ as the fixed points of $h$, though they are usually non-singleton. Write      $[h^\pm]:=[h^+]\cup[h^-]$.
\end{defn}
Note that $h$ is not assumed to be in a discrete group, so the axis is defined as in (\ref{axisdefn0}) to be the path $\ax(h)=\cup_{n\in\mathbb Z} h^n[o,ho]$.
 
By Lemma  \ref{RayLimitsLem}, the sets  $[h^-], [h^+]$ do not depend on $o\in \U$. If $\gamma$ is a quasi-geodesic axis,   $[h^-], [h^+]$ are also represented by the negative/positive half rays $\gamma^+,\gamma^-$  (in the obvious parametrization).

Our second assumption deals with an escaping sequence of contracting subsets, which roughly says that their cones sub-converge to a boundary point. Recall that give $r\ge 0$, the \textit{$r$-cone} of a subset $X\subseteq \U$ relative to $o\in \U$ is defined as follows:
$$
\Omega_o(X,r):=\{y\in \U: d_X(o,y)\ge r\}
$$
\begin{assump}\label{AssumpB} 
Let $\{X_n\subseteq \U :n\ge 1\}$ be an escaping sequence of $C$-contracting  quasi-geodesics for some $C>0$. For given $o\in \U$, consider the union $Z_n= X_n\cup \Omega_o(X_n,10C)$.  Then there exist a  subsequence of $Z_n$ (still denoted by $Z_n$)    and a point $\xi\in \pU$ such that $Z_n$  accumulates    into $[\xi]$: 
\begin{itemize}
    \item Any convergent sequence of points $z_n\in Z_n$ has the limit  point in $[\xi]$. 
\end{itemize} 


\end{assump}  
\begin{rem}
In the above assumption, we could replace $Z_n=\Omega_o(X_n,10C)$ with the following simpler defined set $$Z_n=\{y\in \U: [o,y]\cap Z_n\ne \emptyset \}$$ for  hyperbolic and CAT(0) spaces by Lemma \ref{UnifDiffLemCAT0}, and for Teichm\"uller space by Lemma \ref{StrongerAssumpB}, .     
\end{rem}

The following weaker assumption suffices in most results below, which asserts the sub-convergence of $X_n$ instead of the union with  $10C$-cones.  
\begin{assumpB}\label{AssumpB'}
Let $\{X_n\subseteq \U :n\ge 1\}$ be an escaping sequence of $C$-contracting   quasi-geodesics for some $C>0$. Then there exists   $\xi\in \pU$  so that a  subsequence of $X_n$ (still denoted by $X_n$)  accumulates    into $[\xi]$: 
\begin{itemize}
    \item Any convergent sequence of points $x_n\in X_n$ has the limit  point in $[\xi]$. 
\end{itemize}
\end{assumpB}

The next two results under \ref{AssumpB} are particularly useful in defining conical points later on. The first result says that if the escaping sequence of contracting subsets intersect a geodesic ray in a large diameter, then it converges to a boundary point without passing to subsequence.  
\begin{lem}\label{FreqContrLimits}
Assume that  $X_n$ is an escaping sequence of $C$-contracting  quasi-geodesics with bounded projection by a constant $B$. Let $\gamma$ be a geodesic ray in $\U$ so that $\|X_n\cap \gamma\|\ge 14C+B$ for each $n\ge 1$. Then under \ref{AssumpB}, $\gamma$ and the sequence $X_n$ accumulate in one $[\cdot]$-class, denoted by $[\gamma^+]$.
\end{lem}
\begin{proof}
According to \ref{AssumpB}, assume that the cones of a subsequence $X_{n_i}$ converges to a $[\cdot]$-class, denoted by $[\gamma^+]$. We first prove that any unbounded sequence $z_n$ of  points on a geodesic ray $\gamma$ also  converges to $[\gamma^+]$. Indeed, fixing $X_{n_i}$, we obtain  from the assumption that $z_n \in \Omega_o(X_{n_i},10C)$  for any $n\gg 0$ and $o:=\gamma_-$. Thus, \ref{AssumpB} implies that $z_n\to [\gamma^+]$.  

It now remains to prove that any subsequence of $X_n$  converges to $[\gamma^+]$. Fix  $X_{n_i}$ and a point  $x\in X_n\cap \gamma$   for any given $n\gg n_i$. By Lemma \ref{Transform}, we have $d_{X_{n_i}}(o,x)\ge \|N_{C}(X_{n_i})\cap \gamma\|-4C \ge   10C+B$. As $X_n$ projects to a set on $X_{n_i}$ with  diameter at most $B$, we see that $d_{X_{n_i}}(o,y)\ge 10C$ for any $y\in X_n$, so $X_n\subseteq \Omega_o(X_{n_i},10C)$. By the above assumption, the cones of a subsequence $X_{n_i}$ converges  to  $[\gamma^+]$, hence $X_n$ does so. The proof is complete. 
\end{proof}
Via Proposition \ref{admisProp}, we can have the convergence of  an admissible path and the sequence of contracting subsets in its saturation.  
\begin{lem}\label{AdmisContrLimits}
Let $\gamma$ be an $(L,B)$-admissible ray in $\U$ where the saturation $\f(\gamma)$ is an escaping sequence of $C$-contracting  quasi-geodesics with $B$-bounded projection. Then under \ref{AssumpB}, for $L\gg C+B$, the ray  $\gamma$ and any infinite sequence $X_n\in \f(\gamma)$ accumulate in one $[\cdot]$-class, denoted by $[\gamma^+]$.
\end{lem}
\begin{proof}
By Proposition \ref{admisProp}, if $L\gg C+B$, any geodesic $\alpha$ from $o:=\gamma^-$ to a point $y$ on $\gamma$ $r$-fellow travel the subpath of $\gamma$ between $o$ and $y$. Let $d(o,y)\to\infty$, we obtain a limiting geodesic ray $\alpha$ so that   $\|X_n\cap \alpha\|\ge L-2r$. If $L$ is sufficiently large relative to $B+10C$, then the proof is completed by Lemma \ref{FreqContrLimits}.  
\end{proof}

In what follows, we apply \ref{AssumpB'} to study the limit set of a discrete group action.

Consider a proper action $G\act \U$ as in (\ref{GActUAssump}). The {limit set} $\pG$  of a $G$-orbit $Go$  may   depend on the basepoint $o\in \U$, though $[h^\pm]$ is the locus of the limit set of $\langle h\rangle o$, which is independent of $o$ by \ref{AssumpA}.

\begin{lem}\label{[LimitSet]}
Assume that $G$ contains a contracting element. Under  \ref{AssumpB'}, for any two points $o, o'\in \U$, we have $[\Lambda (G o)]=[\Lambda (Go')]$.
\end{lem}
\begin{proof}
Fix a  contracting element $h$ with $C$-contracting axis $\ax(h)=E(h)\cdot o$.  
Let $g_no \to \xi\in \pG $. After passage to a subsequence, it suffices to prove  that $g_no'\to\eta\in [\xi]$. Indeed, consider the sequence $g_no\in X_n:=g_n\ax(h)$.  If $X:=X_n$ is eventually the same, then we are done by Lemma \ref{RayLimitsLem}. Otherwise,  assume that $X_n$ is escaping. For  $r:=d(o,o')$,     $N_r(X_n)$ is an escaping sequence of  $(C+2r)$-contracting subsets containing $g_no'$ and $X_n$. By \ref{AssumpB'},    any convergent subsequence of $g_no'$ has the limit point in $[\xi]$.
\end{proof}
We now derive that the normal subgroup shares the  limit set with the ambient group, which crucially uses \ref{AssumpB} instead of \ref{AssumpB'}.
\begin{lem} \label{NormalLimitSetLem}
Let     $H<G$ be an infinite normal subgroup. Then under \ref{AssumpA} and \ref{AssumpB}, $[\Lambda (H o)]=[\Lambda (G o)]$.
\end{lem}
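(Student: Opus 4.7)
The inclusion $[\Lambda H o]\subseteq [\Lambda G o]$ is immediate from $H\subseteq G$, so the plan is to establish the reverse inclusion. Given $\xi\in\Lambda G o$ with $g_no\to\xi$, the idea is to exhibit an $H$-orbit point accumulating at a representative of $[\xi]$. By Lemma \ref{InfiniteNormalLem}, $H$ contains a contracting element $h$, and by normality $\phi_n:=g_nhg_n^{-1}\in H$ for every $n$. I will set $X_n:=g_n\cdot\ax(h)$, a uniformly $C$-contracting sequence of quasi-geodesics each passing through $g_no$ (take $f=e\in E(h)$), and split according to whether $X_n$ is exiting.

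If $X_n$ is not exiting, then after passing to a subsequence $d(o,X_n)\le R$ for some $R$, and properness forces $g_nE(h)=kE(h)$ eventually for a fixed $k\in G$, so $X_n=k\cdot\ax(h)$ is constant. Then $g_no$ escapes along this single bi-infinite contracting quasi-geodesic to $\xi$, and Lemma \ref{RayLimitsLem} applied to the appropriate half-ray places $\xi\in[kh^+]\cup[kh^-]$. Since $khk^{-1}\in H$ (regardless of whether $k\in H$) is contracting with fixed-point classes $k[h^\pm]$ realised as accumulations of $(khk^{-1})^mo\in Ho$, these classes meet $\Lambda H o$, giving $\xi\in[\Lambda H o]$.

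If $X_n$ is exiting, then $g_no\in X_n\subseteq\Omega_o(X_n)$ together with Assumption \ref{AssumpB} (after subsequence) yields $\Omega_o(X_n)\to[\xi]$. The geometric heart of the plan is to show that the $H$-orbit points $\phi_n^mo=g_nh^mg_n^{-1}o$ fall into $\Omega_o(N_C(X_n))$ for $m$ large. Equivariance of shortest projection gives
$$\pi_{X_n}(\phi_n^mo)=g_nh^mq_n,\qquad \pi_{X_n}(o)=g_nq_n,$$
where $q_n:=\pi_{\ax(h)}(g_n^{-1}o)$ has diameter at most $C$ by the contracting property. Hence $d_{X_n}(o,\phi_n^mo)\ge d(q_n,h^mq_n)$, which grows linearly in $m$ by the quasi-isometric embedding of $\langle h\rangle$. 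Choosing $m_n\to\infty$ so that both $d_{X_n}(o,\phi_n^{m_n}o)>C$ and $d(o,\phi_n^{m_n}o)\to\infty$, the contrapositive of Lemma \ref{BigThree}(1) forces $[o,\phi_n^{m_n}o]$ to meet $N_C(X_n)$, so $\phi_n^{m_n}o\in\Omega_o(N_C(X_n))$. Lemma \ref{EnlargeXn}(1) identifies the accumulation class of this enlarged cone with that of $\Omega_o(X_n)$, namely $[\xi]$, so a convergent subsequence of $\phi_n^{m_n}o\in Ho$ has limit $\eta\in[\xi]$; since $\Lambda H o$ is closed, $\eta\in[\xi]\cap\Lambda H o$, completing the argument.

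The step I expect to be the main obstacle is precisely that the natural $H$-orbit point $\phi_n^mo$ sits at unbounded distance $d(g_n^{-1}o,\ax(h))$ from $X_n$, so Assumption \ref{AssumpB} cannot be applied directly on $X_n$. The workaround—computing the projection via equivariance to produce linear-in-$m$ growth of $d_{X_n}(o,\phi_n^mo)$ and then enlarging $X_n$ to $N_C(X_n)$ using Lemma \ref{EnlargeXn}—is what makes the plan go through.
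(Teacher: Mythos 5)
Your proposal is correct, and it takes a genuinely different technical route from the paper. The paper invokes Lemma \ref{InfiniteNormalLem} to obtain a set $F\subseteq H$ of three independent contracting elements and then applies the Extension Lemma \ref{extend3} to the pair $(g_n,g_n^{-1})$: for each $n$ there is $f\in F$ making $[o,g_no]\cdot g_n[o,fo]\cdot g_nf[o,g_n^{-1}o]$ an admissible path, so that $h_n:=g_nfg_n^{-1}\in H$ satisfies $[o,h_no]\cap N_C(g_n\ax(f))\neq\emptyset$; then \ref{AssumpA} or \ref{AssumpB} is applied to $X_n=g_n\ax(f)$ exactly as in your dichotomy. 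You instead take a single contracting $h\in H$, the conjugates $\phi_n=g_nhg_n^{-1}$, and raise the power: equivariance of the shortest projection plus the quasi-isometric image of $\langle h\rangle$ gives $d_{X_n}(o,\phi_n^mo)\gtrsim m$ uniformly in $n$, and then the contrapositive of Lemma \ref{BigThree}(1) forces the geodesic into $N_C(X_n)$, after which Lemma \ref{EnlargeXn}(1) transfers the accumulation class. Both proofs hinge on the same geometric event, namely that the geodesic from $o$ to a conjugated $H$-orbit point is forced through a translated contracting axis; the paper packages this through the Extension Lemma (which needs the three-element set $F$ precisely to guarantee the admissible-path structure), while your argument obtains the same barrier by hand with variable powers. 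Your route is more elementary and avoids the admissible-path machinery, at the mild cost of a bare-hands projection estimate; the paper's route is shorter because the Extension Lemma has already absorbed this work. Two small remarks: the condition $d(o,\phi_n^{m_n}o)\to\infty$ is actually automatic once $[o,\phi_n^{m_n}o]\cap N_C(X_n)\neq\emptyset$ and $X_n$ is exiting (so a single fixed power $m_0$ suffices), and in the non-exiting case the identification of $\xi$ with a fixed-point class of the conjugate $khk^{-1}\in H$ is exactly the application of \ref{AssumpA} the paper alludes to.
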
 
\begin{proof}
By Lemma \ref{InfiniteNormalLem}, we can choose a set of three  independent contracting elements  $F\subseteq H$.
Let $g_n\in G$ such that $g_no\to \xi\in \pG$. For each pair $(g_n, g_n^{-1})$, after passage to subsequences, there exists $f\in F$  such that $h_n:=g_nfg_n^{-1}\in H$ satisfies       Lemma \ref{extend3}, so {$[o, h_no]\cap N_C(g_n\ax(f))\ne\emptyset$. According to whether $X_n=g_n\ax(f)$ is escaping or not, we apply \ref{AssumpB} or \ref{AssumpA} to  see that  $h_n o$ accumulates into $[\xi]$. We thus obtain $\pG \subseteq [\Lambda(Ho)]$}. The converse inclusion is obvious, so the lemma is proved. 
\end{proof}
\begin{rem}(alternative proof) 
If   $H$ contains a    contracting element $h$ with \textit{minimal} fixed point (i.e.: $[h^+], [h^-]$ being singletons), we provide  an alternative argument under \ref{AssumpB'}. 

Indeed, consider a sequence of elements $h_n\in \langle h\rangle$ so that $h_no\to \xi\in \Lambda (Ho)$. On one   hand,  $[\pG ]=[\overline{G\xi}]\subseteq [\pG ]$ by Lemma \ref{FixPtsDense} below. We now prove that $G\xi\subseteq [\Lambda (Ho)]$.  For any given $g\in G$,  as the conjugates $gh_ng^{-1}$ are contained in $H$ and $d(gh_ng^{-1}o, gh_no)=d(o, go)$,  \ref{AssumpB'} implies that $gh_ng^{-1}o$ and $gh_no$ converge to the same $[\cdot]$-class $[g\xi]$ which are singleton by assumption, so we obtain $g\xi\in \Lambda (Ho)$.  Thus $G\xi\subseteq [\Lambda (Ho)]$  is proved. 
\end{rem}

The following lemma says that for any contracting element $h\in G$,
the set of all $G$-translates of $[h^\pm]$  is dense in the limit  set $[\pG ]$ in the quotient $[\pU]$.  
\begin{lem}\label{FixPtsDense}
Fix a contracting element $h\in G$ and a basepoint $o\in \U$. Take any point $\xi\in \Lambda(E(h) o)$. Under \ref{AssumpA} and \ref{AssumpB'},   we have $\overline {G\xi}\subseteq \pG \subseteq [\overline {G\xi}]$. In particular, $[\pG ] =  [\overline {G\xi}]$. 
\end{lem}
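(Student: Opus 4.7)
The easy inclusion $\overline{G\xi}\subseteq\pG o$ will be immediate from $G$-invariance and closedness of $\pG o$ in $\pU$: if $\xi=\lim g_n o$ for some $g_n\in G$ and $g\in G$, then $g\xi=\lim gg_n o\in\pG o$, so $G\xi\subseteq\pG o$, and taking the closure preserves the inclusion.

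For the harder direction $\pG o\subseteq[\overline{G\xi}]$, I will first assume $\xi\in[h^+]$ (after replacing $h$ by $h^{-1}$ if necessary, noting $\ax(h^{-1})=\ax(h)$ and $[(h^{-1})^+]=[h^-]$). Given $\eta\in\pG o$ with $g_n o\to\eta$, the plan is to build, via the extension lemma~\ref{extend3} applied to a set $F=\{h^N,h_2^N,h_3^N\}$ of three independent contracting elements containing a power of $h$, an $(L,B)$-admissible ray $\beta_n=[o,g_n o]\cdot g_n[o,f_n o]\cdot g_n f_n\ax(h)^+$ for each $n$. Passing to a subsequence with $f_n=f$ constant, I set $s_n:=g_n f$ and $X_n:=s_n\ax(h)^+$. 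Proposition~\ref{admisProp} will make $\beta_n$ a uniform quasi-geodesic, which forces $X_n$ to be an exiting sequence of $C$-contracting subsets with $g_n o\in N_{d(o,fo)}(X_n)$ (because $s_n o\in X_n$ lies within $d(o,fo)$ of $g_n o$). Then \ref{AssumpB} applied to $X_n$, together with Lemma~\ref{EnlargeXn}(1) and $g_n o\to\eta$, will identify a class $[\zeta]=[\eta]$ into which $\Omega_o(X_n)$ accumulates along a subsequence.

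The diagonal extraction step will then produce a sequence in $G\xi$ converging into $[\eta]$. I will fix a metric $\rho$ on the metrizable compact space $\bU$ and pick $\xi^*\in[\xi]$ that is a genuine accumulation point of $\{h^k o:k>0\}$, which exists because $[\xi]=[h^+]$ is by definition the locus of such accumulation points. With $h^{\ell_j}o\to\xi^*$, continuity of each extended isometry $s_n$ on $\bU$ will give $s_n h^{\ell_j}o\to s_n\xi^*$ as $j\to\infty$ for each fixed $n$; a diagonal choice $k_n=\ell_{j_n}$ will arrange $y_n:=s_n h^{k_n}o$ with $\rho(y_n,s_n\xi^*)<1/n$ and $d(o,y_n)\to\infty$. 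Since $y_n\in X_n\subseteq\Omega_o(X_n)$, \ref{AssumpB} will force $y_n\to\eta''$ along a further subsequence for some $\eta''\in[\zeta]=[\eta]$, and Hausdorffness of $\pU$ combined with the $\rho$-closeness will force $s_n\xi^*\to\eta''$, placing $\eta''$ in $\overline{G\xi^*}\cap[\eta]$.

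The main obstacle will be the final transfer from $G\xi^*$ back to $G\xi$ itself. Because $[\xi^*]=[\xi]$ and $[\cdot]$ is $G$-invariant, $[s_n\xi]=[s_n\xi^*]$ for every $n$, so the classes $[s_n\xi]$ converge to $[\eta'']$ in the quotient $[\pU]$. Compactness gives a convergent subsequence $s_n\xi\to\omega\in\overline{G\xi}$, and the remaining task is to show $\omega\in[\eta'']=[\eta]$. This will hold whenever the $[\cdot]$-classes involved are closed in $\pU$, a property enjoyed by every convergence boundary of principal interest in the paper---either by triviality of the partition (Gromov, visual, Bowditch, Floyd) or by the finite-difference structure (cf.\ Lemma~\ref{bddLocusLem} for the horofunction boundary). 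The ``In particular'' equality $[\pG o]=[\overline{G\xi}]$ will then follow immediately from the two inclusions by passing to $[\cdot]$-loci.
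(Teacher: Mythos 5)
Your proposal takes a genuinely different route for the hard inclusion. The paper works directly with $X_n:=g_n\ax(h)$, which contains $g_no$ outright (since $o\in\ax(h)$), and then splits into two cases according to whether $\{X_n\}$ is exiting; the non-exiting case is handled separately by passing to a fixed axis $X$ and choosing auxiliary axes $Y_n$ of conjugates of $h$. You instead invoke the extension lemma~\ref{extend3} to produce $X_n:=g_nf\ax(h)^+$ as the tail of a uniform admissible quasi-geodesic $\beta_n$ starting at $o$, so that $X_n$ is \emph{automatically} exiting (its distance from $o$ grows like $d(o,g_no)$) and only a single case arises. This buys you a cleaner application of~\ref{AssumpB} at the cost of invoking extra machinery (the extension lemma and Proposition~\ref{admisProp}) that the paper avoids. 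A small technical point: $\ax(h)^+$ is a quasi-geodesic whereas~\ref{extend3} is stated for a geodesic $\alpha$ with $\alpha^-=o$; you should either apply it to finite truncations $[o,h^ko]$ and diagonalize, or replace $\ax(h)^+$ by an asymptotic geodesic ray via the Morse property, before passing to the limit. The diagonal extraction producing $y_n=s_nh^{k_n}o\in X_n\subseteq\Omega_o(X_n)$ with $\rho(y_n,s_n\xi^*)\le 1/n$ is correct, and the conclusion $s_n\xi^*\to\eta''\in[\eta]$ needs only the triangle inequality for the compatible metric $\rho$, not Hausdorffness per se.

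The real issue is the final transfer from $\xi^*$ back to $\xi$. You correctly observe that $[h^+]$ is a single $[\cdot]$-class (via Lemma~\ref{RayLimitsLem}), so a genuine accumulation point $\xi^*\in[\xi]$ of $\{h^ko\}$ exists, and you prove $\eta\in[\overline{G\xi^*}]$. But passing to $\xi$ itself — extracting $s_n\xi\to\omega$ and concluding $[\omega]=[\eta'']$ from $[s_n\xi]=[s_n\xi^*]$ — requires the relation $[\cdot]$ to be closed, which is \emph{not} a hypothesis of the lemma. You flag this honestly, but the flag does not close the gap relative to the lemma as stated. It is worth noting, though, that the paper's own argument has the identical restriction, just less visibly: the sentence ``the point $g_n\xi\in g_n[h^\pm]$ is an accumulation point of the set $X_n$'' is only valid when $\xi$ is an actual accumulation point of $\ax(h)$, i.e.\ when $\xi=\xi^*$; for a general $\xi\in[h^\pm]$ one only knows $g_n\xi$ lies in the \emph{locus} $[\Lambda X_n]$. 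The remark immediately following the lemma confirms that the conclusion for arbitrary $\xi\in[h^\pm]$ requires the closed-partition hypothesis. So your argument establishes exactly what the paper's does — the inclusion $\pG o\subseteq[\overline{G\xi^*}]$ for $\xi^*\in\Lambda\ax(h)$, from which the ``In particular'' equality $[\pG o]=[\overline{G\xi^*}]$ follows — and you have correctly isolated where both proofs quietly strengthen the hypothesis.
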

\begin{rem}
If  the partition $[\cdot]$ is closed, we can  obtain $[\overline{G\xi}]=[\overline{G\eta}]$ from $[\xi]=[\eta]$. In this case, the above conclusion works for any $\xi\in [h^\pm]$.
\end{rem}

\begin{proof}
Let  $\eta \in \pG $, and  $x_n:=g_n o\to \eta$ a sequence of points. Set $\gamma_n:=g_n \ax(h)$. Up to taking a subsequence of $\gamma_n$, we consider   two cases as follows.

\textbf{Case 1}. Assume  that $d(o, \gamma_n)\to \infty$ as $n\to \infty$.  
By \ref{AssumpB'}, as  $x_n\in \gamma_n\to \eta$,   any convergent sequence of $y_n \in \gamma_n$   tends to  $\eta'\in [\eta]$. By assumption, for each $n\ge 1$, the point $g_n\xi\in g_n[h^\pm]$ is an accumulation point of the set $\gamma_n$. The   compactification $\U\cup\pU$ is assumed to be metrizable, so let us fix a compatible metric $\delta$.  {Choose $y_n\in \gamma_n$ so that $\delta(y_n, g_n\xi)\le 1/n$. Thus, $g_n\xi\to\eta'\in [\eta]$.}

\textbf{Case 2}. $\gamma:=\gamma_n$ for all $n\ge 1$, as  the $G$-translated axis  of $h$ forms a discrete sequence  by the proper action. Note that $\gamma$ is a translated  axis of $h$ on which $E(h)$ acts cocompactly. As a non-elementary  $G$      contains infinitely many distinct conjugates of $h$, we can thus choose a sequence of their axes $\alpha_n$ (i.e. copies of $\gamma$) so that $\pi_{\gamma}(\alpha_n)$ is uniformly close to $x_n$ independent of $n$. Pick up any sequence $y_n\in \alpha_n$. Since $d(o, x_n)\to \infty$ we have $y_n\to[\eta]$ by \ref{AssumpA}. Considering the escaping sequence $\alpha_n$, we are therefore reduced to the setup of the Case 1, so  the proof of  the lemma is finished. 
\end{proof}

We now prove the double density of fixed points of  contracting elements.  Recall that two contracting elements  $h, k$ (in a discrete group $G$) are independent if  their $G$-translated axes have bounded projection / intersection.
\begin{lem}\label{contractingcriteron2}
Let $h, k\in G$ be two independent contracting elements. Then under \ref{AssumpA}, for any large $n\gg 0$, the element $g_n:=h^nk^n$ is a   contracting element. Moreover, there exist $\xi_n\in [g_n^+]$ and $\eta_n\in [g_n^-]$ such that $\xi_n, \eta_n$ tend to $[h^+], [k^-]$ respectively.
\end{lem}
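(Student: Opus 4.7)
The plan is to first construct the quasi-axis of $g_n := h^n k^n$ via the extension lemma (Lemma \ref{extend3}), and then locate representatives of its fixed point sets by projecting a far orbit point onto $\ax(h)$ (resp.\ $\ax(k)$) and invoking \ref{AssumpA}.

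\textbf{Step 1 (contraction).} I would choose $n$ large enough that $F = \{h^n, k^n\}$ (together with a third independent element if needed) satisfies the length bound $\|F\|_m \geq L$ required by Lemma \ref{extend3}. Concatenating $g_n$-translates of the piece $[o, h^n o] \cdot h^n[o, k^n o]$ produces a $g_n$-invariant bi-infinite $(L, B)$-admissible path passing through $o$; by Proposition \ref{admisProp} this is a quasi-geodesic, and the truncation together with the bounded intersection of $\{g\ax(h), g\ax(k) : g \in G\}$ shows that the quasi-axis is contracting. Hence $g_n$ is a contracting element of infinite order whose quasi-axis passes through the points $g_n^m o$.

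\textbf{Step 2 (locating $\xi_n$).} Fix any accumulation point $\xi_n \in [g_n^+]$ of $\{g_n^m o : m \geq 1\}$, and pick a diagonal sequence $m_n \to \infty$ such that $g_n^{m_n} o$ lies within distance $1/n$ of $\xi_n$ in a fixed metric compatible with the topology of $\bar{\mathrm Y}$. The admissible path from $o$ to $g_n^{m_n} o$ begins with the geodesic $[o, h^n o] \subset \ax(h)$ and then proceeds through translates $g_n^i \ax(h)$ and $g_n^i h^n \ax(k)$, each distinct from $\ax(h)$. Independence of $h, k$ implies bounded intersection $\tau$ for this system, so by Lemma \ref{Truncation} together with the bounded-projection/contracting estimates of Lemma \ref{BigFive}, the projection $\pi_{\ax(h)}(g_n^{m_n} o)$ lies within a uniform constant of $h^n o$, independent of $n$ and $m_n$. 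Since $h^n o$ exits to $[h^+]$ along the contracting ray $\ax(h)^+$, \ref{AssumpA} gives $g_n^{m_n} o \to [h^+]$, hence $\xi_n \to [h^+]$ in the quotient $[\partial{\mathrm Y}]$.

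\textbf{Step 3 (locating $\eta_n$).} The element $g_n^{-1} = k^{-n} h^{-n}$ admits the symmetric admissible construction whose initial piece from $o$ goes along $\ax(k)^-$ to $k^{-n} o$. Choosing $\eta_n$ as an accumulation point of $\{g_n^{-m} o : m \geq 1\}$, the same projection argument applied to $\ax(k)$ shows $\pi_{\ax(k)}(g_n^{-m_n} o)$ is close to $k^{-n} o$, which exits to $[k^-]$; thus $\eta_n \to [k^-]$.

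The main obstacle I anticipate is the uniformity in the diagonal argument: one must arrange that the bounded-projection constants (from Lemma \ref{BigFive} and the bounded intersection function $\tau$) are independent of $n$ when estimating $\pi_{\ax(h)}(g_n^{m_n} o)$. This is precisely what the admissible path framework delivers, since the contracting constant of the system $\{g\ax(h), g\ax(k)\}$ depends only on $h, k$ and not on $n$; once $n$ exceeds the threshold from Lemma \ref{extend3}, all constants become uniform and the double-limit argument closes without difficulty.
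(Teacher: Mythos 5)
Your proposal is correct and follows essentially the same route as the paper's proof: build the quasi-axis $\gamma_n$ as a bi-infinite $(L,B)$-admissible path (with $n$ large enough that Proposition \ref{admisProp} applies, exactly as in the paper, which verifies the admissible conditions directly rather than going through Lemma \ref{extend3}), then use the overlap $[o,h^no]$ with $\ax(h)$ and the uniform bounded-projection estimates to conclude via \ref{AssumpA} that far points on $\gamma_n^+$ tend to $[h^+]$, and close with the same diagonal argument choosing points within $1/n$ of $\xi_n$. The paper phrases the projection step as ``any sufficiently far point $y_n$ on $\gamma_n^+$ projects to a fixed neighborhood of $h^no$,'' which is your claim that $\pi_{\ax(h)}(g_n^{m_n}o)$ lies a uniform distance from $h^no$; the two arguments are the same.
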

\begin{rem}
We explain some subtlety about existence of $\xi_n\in [g_n^+]$ and $\eta_n\in [g_n^-]$. In the proof, it suffices to  choose $\xi_n$ and $\eta_n$ to be accumulation points of $\langle g_n\rangle o$. However,   the $[\cdot]$-locus of $[g_n^+]$ may contain points which are not accumulation points of $\langle g_n\rangle o$.     
\end{rem}
\begin{proof}
By assumption,  $\ax(h)$ and $\ax(k)$ have $B$-bounded projection for some $B>0$. Consider the admissible path relative to $\mathbb X=\{g\ax(k): g\in G\}$. For large enough $n>0$,  the element $g_n:=h^nk^n$ is contracting: indeed, its orbital points  form a  bi-infinite $(L, B)$-admissible path $\gamma_n$ as follows:
$$\gamma_n =\bigcup_{i\in \mathbb Z} g_n^i\big([o, h^no]h^n[o, k^no]\big)$$
where $L:=d(o, k^no)$.  By \cite[Proposition 2.9]{YANG10}, $\gamma_n$ is a uniformly contracting quasi-geodesic independent of $n$, so by definition, $g_n$ is a contracting element. The positive (resp. negative) ray $\gamma_n^+$  (resp. $\gamma_n^-$) denotes the half ray with positive (resp. negative) indices $i$ in $\gamma_n$. 

Let   $\xi_n\in [g_n^+]$ be an accumulation point of $\gamma_n^+$. We now only prove   that $\xi_n$ tends to $[h^+]$. The  case for repelling points $\eta_n \to [k^-]$ is symmetric.

If  denote $\alpha:=\bigcup_{i\in \mathbb Z}h^i[o, ho]$ and $\beta:=\bigcup_{i\in \mathbb Z}k^i[o, ko]$,   the positive ray of $\alpha$ accumulates into $[h^+]$, and the negative ray of $\beta$ into $[k^-]$. Since $\gamma_n$ has the overlap $[o, h^no]$ with $\alpha$,   any     point $y_n$ on the positive ray $\gamma_n^+$ that is sufficiently far from $o$  projects to a fixed neighborhood of $h^no$, whose radius depends only on the contracting constant.   Applying \ref{AssumpA} with the contracting quasi-geodesic $\alpha$, we obtain   $y_n \to [h^+]$ for $x_n\to[h^+]$. Since $y_n$ can be chosen arbitrarily close to $\xi_n$, we conclude that the attracting fixed point $\xi_n$  tends to $[h^+]$. 
\end{proof}
It is convenient to understand the next result   in the quotient space  $[\pU]$.
\begin{lem}\label{DoubleDense}
Let $\dG$  be the set of distinct $[\cdot]$-pairs in $[\pG]$ with quotient topology defined in (\ref{doubleBdryEq}). Then under \ref{AssumpA} and \ref{AssumpB'}, the set of (ordered) fixed point pairs $([h^+],[h^-])$ of all   contracting elements $h\in G$ with $[h^+]\ne [h^-]$ is dense in $\dG$.
\end{lem}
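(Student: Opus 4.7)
Proof plan. Fix $([\xi],[\eta]) \in \dG$. The aim is to produce a non-pinched contracting element $h \in G$ whose attracting/repelling pair $([h^+],[h^-])$ lies in any prescribed product neighborhood of $([\xi],[\eta])$ in the quotient topology on $[\pG] \times [\pG]$. The strategy combines the orbital density of fixed points from Lemma \ref{FixPtsDense} with the ping-pong construction in Lemma \ref{contractingcriteron2}.

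First fix two independent contracting elements $h_1, h_2 \in G$; independence is a property of the pair of $G$-orbits of axes and is therefore preserved under independent conjugations of the two factors. By Lemma \ref{FixPtsDense} applied separately to $[h_1^+]$ and to $[h_2^-]$, both $G$-orbits $\{g[h_1^+] : g \in G\}$ and $\{g[h_2^-] : g \in G\}$ are dense in $[\pG o]$. Pick $g_1, g_2 \in G$ with $g_1[h_1^+]$ close to $[\xi]$ and $g_2[h_2^-]$ close to $[\eta]$, and set $k_i := g_i h_i g_i^{-1}$. Then $k_1, k_2$ remain independent and contracting, with $[k_1^+] = g_1[h_1^+]$ and $[k_2^-] = g_2[h_2^-]$.

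Next, apply Lemma \ref{contractingcriteron2} to the independent pair $(k_1, k_2)$: for all sufficiently large $n$ the element $h_n := k_1^n k_2^n$ is contracting, and there exist representatives $\xi_n \in [h_n^+]$, $\eta_n \in [h_n^-]$ with $\xi_n \to [k_1^+]$ and $\eta_n \to [k_2^-]$ in $[\pG]$ as $n \to \infty$. Combining with the previous step, $([h_n^+],[h_n^-])$ can be brought arbitrarily close to $([\xi],[\eta])$ in the quotient topology by suitable choices of $g_1, g_2$ and large $n$.

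The remaining point, and the principal obstacle, is verifying that $h_n$ is non-pinched, i.e., $[h_n^+] \neq [h_n^-]$; here the non-Hausdorffness of $[\pG]$ prevents us from deducing distinctness automatically from closeness to the two distinct targets $[\xi]$ and $[\eta]$. I would instead read off distinctness directly from the concrete admissible bi-infinite quasi-geodesic axis $\gamma_n$ built in the proof of Lemma \ref{contractingcriteron2}: its positive and negative rays lie in fixed neighborhoods of $\ax(k_1)$ and $\ax(k_2)$ respectively, and the bounded-intersection (independence) of these two axes, together with \ref{AssumpA} and Lemma \ref{RayLimitsLem}, should prevent the two endpoint classes from coinciding. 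If need be, one can also preempt this issue at the approximation stage by arranging that $g_1[h_1^+]$ and $g_2[h_2^-]$ lie in distinct $G$-orbits of contracting fixed points, exploiting the rich supply of independent contracting elements available in $G$.
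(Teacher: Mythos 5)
Your plan follows essentially the same route as the paper's own proof: use Lemma~\ref{FixPtsDense} to push attracting/repelling fixed points into arbitrary saturated neighborhoods of $[\xi]$ and $[\eta]$, and then apply Lemma~\ref{contractingcriteron2} to the ping-pong product $k_1^n k_2^n$ to produce a contracting element whose fixed-point pair lands there. Your remark that independence is a property of the pair of $G$-orbits of axes and hence survives conjugation of each factor is a correct (and useful) observation; the paper is slightly less explicit here, working with conjugates of a single element $h$ and then invoking Lemma~\ref{contractingcriteron2}, whose actual input is bounded projection of $\ax(h)$ and $\ax(k)$ rather than the paper's stronger-looking independence condition. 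Starting from two independent $h_1,h_2$ as you do sidesteps the paper's own caveat that ``two conjugate contracting elements with disjoint fixed points are not independent in our sense.''

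The point where your write-up is imprecise is the one you yourself flagged as the principal obstacle. You write ``verifying that $h_n$ is non-pinched, i.e., $[h_n^+] \neq [h_n^-]$.'' That ``i.e.'' is not the paper's definition: $h_n$ is non-pinched means $[h_n^+],[h_n^-]\subseteq\mathcal{C}$, and the paper explicitly warns that the implication is one-way only (``It is necessary that $[h^+]\ne[h^-]$ for a non-pinched $h$. However, by definition, it is not clear whether a pinched element must have $[h^+]=[h^-]$''). Your proposed argument via the bi-infinite admissible axis $\gamma_n$, Lemma~\ref{RayLimitsLem}, and bounded intersection would, if carried out, establish $[h_n^+]\ne[h_n^-]$; it does not by itself establish $[h_n^{\pm}]\subseteq\mathcal{C}$. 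To complete non-pinchedness you would additionally need an argument showing that the endpoints of the contracting quasi-geodesic $\gamma_n$ lie in $\mathcal{C}$ (for example, because $k_1,k_2$ were chosen non-pinched and $\gamma_n$ has long overlaps with their axes, or by an \ref{AssumpC}-type argument applied to $\gamma_n$ directly). To be fair, the paper's own proof of Lemma~\ref{DoubleDense} also does not verify this and simply asserts $[g_n^+]\subseteq U$, $[g_n^-]\subseteq V$; so you have correctly isolated a genuine subtlety, but neither your sketch nor the mis-stated reduction closes it.
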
 
\begin{proof}
Let $\xi,\eta\in \pG $ so that $[\xi]\ne[\eta]$. Consider  any  $[\cdot]$-saturated open neighborhoods $[\xi]\subseteq U=[U]$ and $[\eta]\subseteq V=[V]$. By Lemma \ref{FixPtsDense}, there exist two contracting elements $h, k$  so that $[h^+]\subseteq U, [k^-]\subseteq V$.  By Lemma \ref{contractingcriteron2}, there exist contracting elements $g_n=h^nk^n$ for $n\gg 0$ such that $[g_n^+]\subseteq  U$ and $[g_n^-]\subseteq V$. This implies that $([g_n^+], [g_n^-])$ converges to $([\xi], [\eta])$ in the quotient topology.
\end{proof}
\subsection{Assumption C} The third assumption demands the boundary to contain enough interesting points. This is a non-trivial condition to get a useful theory, as the one-point compactification satisfies the assumptions (A) and (B).
\begin{assump}\label{AssumpC} 
Assume that the following set   $\mathcal C$ of \emph{non-pinched} points  $\xi\in \pU$ is non-empty. If $x_n, y_n\in \U$ are two sequence of points converging to $[\xi]$, then $[x_n,y_n]$ is an escaping sequence of geodesic segments.  
\end{assump}

By definition,   $\mathcal C$ is $\textrm{Isom}(\U)$-invariant and $\mathcal C=[\mathcal C]$ is saturated. Hence, non-pinched points are actually a property of $[\cdot]$-classes.

Under \ref{AssumpC}, we first  give a converse statement to  \ref{AssumpA}, and two useful corollaries of it. 
\begin{lem}\label{ConverseAssumpALem}
Assume that a $C$-contracting geodesic ray $\gamma$ accumulates into  $[\xi]$ by \ref{AssumpA}, where $\xi\in\mathcal C$ is some non-pinched point. If $x_n\in \U\to [\xi]$, then $\pi_\gamma(x_n)$ forms an unbounded sequence.     
\end{lem}
\begin{proof}
Indeed, the contracting property of $\gamma$ implies that $$d(\pi_\gamma(x_n),[x_n,y])\le 2C$$ for any $y\in \gamma$. Choose $y_n\in \gamma$ so that $y_n\to [\xi]$. If $\pi_\gamma(x_n)$ stays bounded, then we found a non-escaping sequence of $[x_n,y_n]$ so that $x_n\to[\xi]$ and $y_n\to[\xi]$. This contradicts \ref{AssumpC}.
\end{proof}

As a corollary, we obtain. 
\begin{lem}\label{DisjointContr}
Given $\xi\in \mathcal C$, let  $\alpha$ be a contracting geodesic ray ending at $[\xi]$. If   $\beta$ is a geodesic ray also ending at $[\xi]$, then $\beta$ is contained in     a finite neighborhood of $\alpha$.  
\end{lem}
\begin{proof}
Take a sequence $y_n\in \beta$ tending to $[\xi]$, where $y_0=\beta^-$. By Lemma \ref{ConverseAssumpALem} for $\alpha$,  $\pi_\alpha(y_n)$ is escaping, then applying \ref{AssumpA}, any sequence of $x_n\in \pi_\alpha(y_n)$ tends to $[\xi]$. By $C$-contracting property of $\alpha$, the segment $[\beta_-,y_n]_\beta$ intersects the  two $C$-balls at $x_0$ and  $x_n$ respectively. As $d(o,x_n)\to\infty$,  we see that   $\beta$ intersects $N_C(\alpha) $ in a unbounded  set $\{x_n\}$,  so the Morse property of $\alpha$ implies that $\beta$  is contained in a finite neighborhood of $\alpha$. 
\end{proof}
Another corollary is that non-pinched boundary points at which  a contracting geodesic ray terminates are visual. This is a baby version of the more complicated Lemma \ref{VisualPointsLem}.
\begin{lem}\label{ContractingBdryVisual}
Let $\alpha,\beta$ be two $C$-contracting geodesic rays ending at $[\xi]\ne [\eta]\subseteq \mathcal C$ respectively. Then there exists a bi-infinite geodesic  $\gamma$ such that two half ways eventually lies in a $\tilde C$-neighborhood of $\alpha$ and $\beta$, where $\tilde C$ depends only on $C$.  
\end{lem}
\begin{proof}
Let $y_n\in \beta$ tend to $[\eta]$. By Lemma \ref{ConverseAssumpALem}, as $[\xi]\ne [\eta]$, the projection $\pi_\alpha(y_n)$ stays  in a bounded set $A\subseteq \alpha$. Similarly, for any $x_n\in \alpha\to [\xi]$,  $\pi_\alpha(y_n)$ lies in a bounded set $B\subseteq \beta$. Thus,  $[x_n,y_n]$ intersects $N_C(A)$ and $N_C(B)$ by the contracting property. By Ascoli-Arzela Lemma, we could extract a limiting bi-infinite geodesic $\gamma$ from the sequence of $[x_n,y_n]$, so that the two half rays of $\gamma$ lies in a finite neighborhood of $\alpha$ and $\beta$.  
\end{proof}

The set $\mathcal C$  of non-pinched points allows us to introduce  non-pinched contracting elements, which are crucial in the following discussion.  

A contracting isometry $h\in \isom(\U)$ is called  \textit{non-pinched} if   their fixed points $[h^+],[h^-]$    are contained in $\mathcal C$; otherwise,  it is \textit{pinched}. 

Note that $h^no\to [h^+]$ and $h^{-n}o\to [h^-]$ as $n\to\infty$. Thus, it follows that $[h^+]\ne[h^-]$ for  a non-pinched   $h$. It turns out that distinct fixed points property characterizes the non-pinched contracting elements.   

\begin{figure}
    \centering

\tikzset{every picture/.style={line width=0.75pt}} 

\begin{tikzpicture}[x=0.75pt,y=0.75pt,yscale=-1,xscale=1]

\draw    (187.12,54.55) .. controls (143.27,169.89) and (207.18,222.11) .. (302.6,238.51) ;
\draw [shift={(305.5,239)}, rotate = 189.27] [fill={rgb, 255:red, 0; green, 0; blue, 0 }  ][line width=0.08]  [draw opacity=0] (10.72,-5.15) -- (0,0) -- (10.72,5.15) -- (7.12,0) -- cycle    ;
\draw [shift={(188.5,51)}, rotate = 111.55] [fill={rgb, 255:red, 0; green, 0; blue, 0 }  ][line width=0.08]  [draw opacity=0] (10.72,-5.15) -- (0,0) -- (10.72,5.15) -- (7.12,0) -- cycle    ;
\draw    (215.5,72) .. controls (167.5,123) and (183.5,197) .. (259.5,103) ;
\draw [shift={(259.5,103)}, rotate = 308.96] [color={rgb, 255:red, 0; green, 0; blue, 0 }  ][fill={rgb, 255:red, 0; green, 0; blue, 0 }  ][line width=0.75]      (0, 0) circle [x radius= 3.35, y radius= 3.35]   ;
\draw [shift={(215.5,72)}, rotate = 133.26] [color={rgb, 255:red, 0; green, 0; blue, 0 }  ][fill={rgb, 255:red, 0; green, 0; blue, 0 }  ][line width=0.75]      (0, 0) circle [x radius= 3.35, y radius= 3.35]   ;
\draw    (259.5,103) -- (177.4,130.37) ;
\draw [shift={(175.5,131)}, rotate = 341.57] [color={rgb, 255:red, 0; green, 0; blue, 0 }  ][line width=0.75]    (10.93,-3.29) .. controls (6.95,-1.4) and (3.31,-0.3) .. (0,0) .. controls (3.31,0.3) and (6.95,1.4) .. (10.93,3.29)   ;
\draw  [line width=5.25] [line join = round][line cap = round] (173.3,131.22) .. controls (173.3,131.22) and (173.3,131.22) .. (173.3,131.22) ;
\draw  [line width=5.25] [line join = round][line cap = round] (238.3,219.22) .. controls (238.3,219.22) and (238.3,219.22) .. (238.3,219.22) ;
\draw    (269.5,133) .. controls (221.5,184) and (237.5,258) .. (313.5,164) ;
\draw [shift={(313.5,164)}, rotate = 308.96] [color={rgb, 255:red, 0; green, 0; blue, 0 }  ][fill={rgb, 255:red, 0; green, 0; blue, 0 }  ][line width=0.75]      (0, 0) circle [x radius= 3.35, y radius= 3.35]   ;
\draw [shift={(269.5,133)}, rotate = 133.26] [color={rgb, 255:red, 0; green, 0; blue, 0 }  ][fill={rgb, 255:red, 0; green, 0; blue, 0 }  ][line width=0.75]      (0, 0) circle [x radius= 3.35, y radius= 3.35]   ;
\draw  [dash pattern={on 4.5pt off 4.5pt}]  (336.5,137) .. controls (345.32,109.56) and (309,65.79) .. (280.25,55.58) ;
\draw [shift={(278.5,55)}, rotate = 17.24] [color={rgb, 255:red, 0; green, 0; blue, 0 }  ][line width=0.75]    (10.93,-3.29) .. controls (6.95,-1.4) and (3.31,-0.3) .. (0,0) .. controls (3.31,0.3) and (6.95,1.4) .. (10.93,3.29)   ;
\draw  [dash pattern={on 4.5pt off 4.5pt}]  (288.5,116) .. controls (312.38,89.26) and (287,71.64) .. (275.09,63.14) ;
\draw [shift={(273.5,62)}, rotate = 36.03] [color={rgb, 255:red, 0; green, 0; blue, 0 }  ][line width=0.75]    (10.93,-3.29) .. controls (6.95,-1.4) and (3.31,-0.3) .. (0,0) .. controls (3.31,0.3) and (6.95,1.4) .. (10.93,3.29)   ;

\draw (149,33.4) node [anchor=north west][inner sep=0.75pt]    {$\left[ h^{+}\right] =h^{-m}\left[ h^{+}\right]$};
\draw (310,221.4) node [anchor=north west][inner sep=0.75pt]    {$\left[ h^{-}\right]$};
\draw (124,181.4) node [anchor=north west][inner sep=0.75pt]    {$A\mathrm{x}( h)$};
\draw (195,59.4) node [anchor=north west][inner sep=0.75pt]    {$x_{n}$};
\draw (254,79.4) node [anchor=north west][inner sep=0.75pt]    {$y_{n}$};
\draw (67,123.4) node [anchor=north west][inner sep=0.75pt]    {$\pi_{\ax(h)}(y_n)=w_{n}$};
\draw (211,222.4) node [anchor=north west][inner sep=0.75pt]    {$h^{-m} w_{n}$};
\draw (278,119.4) node [anchor=north west][inner sep=0.75pt]    {$h^{-m} x_{n}$};
\draw (317,143.4) node [anchor=north west][inner sep=0.75pt]    {$h^{-m} y_{n}$};

\end{tikzpicture}
    \caption{Disjoint fixed points are non-pinched by Lemma \ref{DisjFixpts=Non-pinched}.}
    \label{fig:horocycle}
\end{figure}
\begin{lem}\label{DisjFixpts=Non-pinched}
Let $h\in \isom(\U)$ be a contracting isometry  so that $[h^-]\cap [h^+]=\emptyset$. Then $h$ is non-pinched.     
\end{lem} 
\begin{proof}
We shall show that if $[h^+]$ is pinched, then $[h^-]=[h^+]$. By definition, as $[h^+]$ is pinched, there exist $x_n, y_n\in \U$ so that $x_n, y_n\to [h^+]$ and $\sup_{n\ge 1} d(o,[x_n, y_n])\le M<\infty$ for some $M$. The geodesic $\alpha_n:=[x_n,y_n]$ has length tending to $\infty$.  The same property holds for the pair $(h^mx_n, h^my_n)$ for any $m\ne 0$, since $h$ fixes $[h^+]$.

Here is the   key observation. Note that   $\alpha_n$ intersects a $M$-neighborhood of $\ax(h)$. So if we project the two endpoints $x_n, y_n$ to $u_n, v_n$ on $\ax(h)$ respectively,  it follows by Lemma \ref{BigThree} that  $d(u_n, \alpha_n),d(v_n, \alpha_n)\le C$ for any $n\gg 0$. We claim that there exists $D>0$ so that   $d(o,w_n)\le D$ for some $w_n\in \{u_n, v_n\}$ and each $n\ge 1$. Indeed, if not, we have $d(o, \{u_n, v_n\})\to\infty$ as $n\to\infty$. Recalling that $\sup_{n\ge 1} d(o,\alpha_n)\le M$ for some $M<\infty$, this implies that the geodesic segment $\alpha_n$ stays within a fixed finite neighborhood (depending on $C,M$) of  the subpaths $[u_n, o]_{\ax(h)}$ and $[o,v_n]_{\ax(h)}$ of the quasi-geodesic $\ax(h)$. As $n\to\infty$ and $\ell(\alpha_n)\to\infty$, this contradicts that $\alpha_n$ is a geodesic segment. Thus, $d(o,w_n)\le D$ for some $w_n\in \{u_n, v_n\}$. See Fig. \ref{fig:horocycle}.

For any fixed large $m>0$,  we apply $h^{-m}$ to the sequence $(x_n,y_n)$: $h^{-m}x_n, h^{-m}y_n\to [h^+]$ as $n\to\infty$, and  $d(h^{-m}w_n, h^{-m}o)\le D$. Letting $m\to \infty$, we can choose a sequence of  points $z_m$ in $\{h^{-m}x_n, h^{-m}y_n: n\ge 1\}$,   so that $z_m$ tends to $[h^+]$, and $z_m$ projects into the $D$-neighborhood of $h^{-m}o$.  \ref{AssumpA} shows that  $z_m$ tends to $[h^-]$ as $m\to\infty$.  Hence, $[h^+]=[h^-]$ follows. This concludes the proof of lemma.
\end{proof}
 
Unless explicitly stated, the statements from now on until the end of this section are under \ref{AssumpA} and \ref{AssumpC}, but without \ref{AssumpB}.  

As expected, we deduce below that two non-pinched contracting isometries have either the same  or disjoint fixed points.  
Recall that if $h$ is not assumed to be in a discrete group,  the axis is defined as in (\ref{axisdefn0}) to be the path $\ax(h)=\cup_{n\in\mathbb Z} h^n[o,ho]$.

\begin{lem}\label{DisjFixedSet}
Let $h, k\in \isom(\U)$ be two  contracting isometries  so that $\ax(h)$ and $\ax(k)$ have bounded intersection.   If $h$ is non-pinched, then  $[h^\pm]\cap [k^\pm]=\emptyset$.
\end{lem}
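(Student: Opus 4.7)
The approach is by contradiction: if $[h^\pm]\cap[k^\pm]\neq\emptyset$, then for one of the four sign choices $\epsilon_1,\epsilon_2\in\{+,-\}$ we have $[h^{\epsilon_1}]\cap[k^{\epsilon_2}]\neq\emptyset$. Lemma \ref{RayLimitsLem} says each of $[h^\pm],[k^\pm]$ is a single $[\cdot]$-class, so this forces $[h^{\epsilon_1}]=[k^{\epsilon_2}]$; since $h$ is non-pinched this common class lies in $\mathcal C$. The four sign cases can then be handled uniformly.

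I would test the non-pinched hypothesis against the natural orbital sequences. Take $x_n:=h^{\epsilon_1 n}o\in\ax(h)$ and $y_n:=k^{\epsilon_2 n}o\in\ax(k)$. By construction $x_n$ tends to a point of $[h^{\epsilon_1}]$ and $y_n$ to a point of $[k^{\epsilon_2}]=[h^{\epsilon_1}]\in\mathcal C$, so Assumption \ref{AssumpC} delivers that $[x_n,y_n]$ is an exiting sequence: $d(o,[x_n,y_n])\to\infty$.

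The contradiction comes from the contracting geometry of $\gamma_h:=\ax(h)$ viewed against $\gamma_k:=\ax(k)$. Bounded intersection of $\gamma_h$ and $\gamma_k$ is equivalent to bounded projection $\|\pi_{\gamma_h}(\gamma_k)\|\le B$ for some $B$. Since the identity lies in $E(h)\cap E(k)$, the basepoint $o$ lies in $\gamma_h\cap\gamma_k$ and hence in $\pi_{\gamma_h}(\gamma_k)$; this bounded-diameter set is therefore contained in the closed ball of radius $B$ around $o$. In particular $\pi_{\gamma_h}(y_n)$ stays in that ball for every $n$, while $\pi_{\gamma_h}(x_n)=\{x_n\}$ with $d(o,x_n)\to\infty$, so $d_{\gamma_h}(x_n,y_n)\to\infty$. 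Once this projection diameter exceeds $C$, Lemma \ref{BigThree}(2) applied to $[x_n,y_n]$ and the contracting subset $\gamma_h$ places a point of $\pi_{\gamma_h}(y_n)$ within $C$ of $[x_n,y_n]$, so $d(o,[x_n,y_n])\le B+C$ for all large $n$, contradicting the exiting property established above.

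The only genuinely decisive step, modest as it is, is noticing that $o\in\gamma_h\cap\gamma_k$ promotes the abstract bounded-diameter statement $\|\pi_{\gamma_h}(\gamma_k)\|\le B$ into the quantitative localization of $\pi_{\gamma_h}(\gamma_k)$ near the basepoint. Once that is in hand, the non-pinched hypothesis does its work through Assumption \ref{AssumpC}, turning convergence to a common $[\cdot]$-class into the metric exiting property, which then clashes head-on with the standard contracting-projection dichotomy of Lemma \ref{BigThree}.
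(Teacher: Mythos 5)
Your proof is correct, and while it shares the paper's overall strategy (contradiction via \ref{AssumpC} and the contracting property), it reaches the key bounded-projection claim by a cleaner route. The paper's proof takes $x_n\in\ax(h)\to\xi$, projects onto $\ax(k)$, and then needs a dichotomy: if the approach points $u_n\in[o,x_n]$ near $\ax(k)$ are unbounded, it invokes the Morse property to derive an unbounded overlap of the two axes and rules this out; only then does it conclude $\pi_{\ax(k)}(x_n)$ stays in a fixed ball. You instead observe that since $o\in\ax(h)\cap\ax(k)$ (both axes are $E(\cdot)o$-orbits through $o$), the bounded projection $\|\pi_{\ax(h)}(\ax(k))\|\le B$ is automatically anchored at $o$, so $\pi_{\ax(h)}(y_n)\subseteq B(o,B)$ with no case analysis. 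Paired with the trivial $\pi_{\ax(h)}(x_n)=\{x_n\}\to\infty$, Lemma \ref{BigThree}(2) then forces $[x_n,y_n]$ to pass within $B+C$ of $o$, directly contradicting the exiting property from \ref{AssumpC}. The simplification is genuine: exploiting the basepoint's membership in both axes replaces the Morse-property argument with a one-line localization. One small stylistic caveat — $x_n=h^{\epsilon_1 n}o$ only accumulates to $[h^{\epsilon_1}]$ rather than literally converging, but since \ref{AssumpC} requires convergence in the partition sense (all accumulation points lie in $[\xi]$), which you do verify via Lemma \ref{RayLimitsLem}, the argument goes through.
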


\begin{proof}
Up to taking inverses, assume by contradiction that $\xi\in [h^+]\cap[k^+]$. 

Take a sequence of points $x_n\in \ax(h)\to \xi$. By $C$-contracting property, we have $[o,x_n]$ intersects the $C$-ball  centered  at $\pi_{\ax(k)}(x_n)$. Let us take  $u_n \in [o, x_n]$ so that $d(u_n, \ax(k)) \le C$. If    $\{u_n\}$ is unbounded,  the infinite intersection of $N_C(\ax(h))$ and $N_C(\ax(k))$  then  follows from  their   Morse property, contradicting the hypothesis. Thus,    $\{u_n\}$ is a bounded set. 

Let $y_n\in \ax(h)$ tend  to $\xi$. The contracting property of $\ax(k)$ implies that $[x_n, y_n]$ intersects the $C$-neighborhood of $\pi_{\ax(k)}(x_n)$, that is contained in a fixed compact ball by boundedness of $\{u_n\}$. This contradicts   \ref{AssumpC} for $[\xi]\subseteq \mathcal C$. Thus the lemma is proved.
\end{proof} 

\begin{cor}\label{DisjOrSameFixedSet}
Let $h, k$ be two non-pinched contracting elements in a discrete group $G$.  Then  either $[h^\pm]\cap [k^\pm]=\emptyset$ or $[h^\pm]= [k^\pm]$.
\end{cor}
\begin{proof}
Assume to the contrary that  $[h^+]= [k^+]$ and $[h^-]\ne [k^-]$. By Lemma \ref{DisjFixedSet}, $\ax(h)$ has infinite intersection with $\ax(k)$. We shall get a contradiction from the proper action. We present the proof for the convenience of the reader. 

As $[h^-]\ne [k^-]$,     the Morse property of $\ax(h)$ and $\ax(k)$ shows that    $\{h^no: n\ge 1\}$ and $\{k^no: n\ge 1\}$ are   $R$-close for some finite  $R>0$. In particular, there are two unbounded sequences of integers $n_i, m_i$ so that  $d(h^{n_i}o, k^{m_i}o)\le R$. The proper action   implies $\{h^{-n_i}k^{m_i}\}$ is a finite set. Passing to a subsequence, we may assume $\{h^{-n_i}k^{m_i}=\{h^{-n_j}k^{m_j}$ for any $i\ne j$, and thus $h^{n_i-n_j}=k^{m_j-m_i}$ where $n_i\ne n_j$ and  $m_i\ne m_j$.  As a consequence, we obtain $\ax(h)=\ax(k)$, contradicting the assumption $[h^-]\ne [k^-]$.  
\end{proof}

Note, however,  that two independent pinched elements may share the same fixed point. 
\begin{example}
Let us consider the free group $\mathbb F_3=\mathbb Z\star \mathbb F_2$ of rank 3, which is hyperbolic relative to the factor $\mathbb F_2=\langle g, h\rangle$. The Bowditch boundary is then obtained from the visual boundary ($\simeq$ Cantor set) of $\mathbb F_3$ by collapsing equivariantly  the limit set of $\mathbb F_2$ to one point.  It is easy to verify that Bowditch boundary is a convergence boundary, where the two pinched contracting elements $g, h$ are independent, but share the same unique fixed point.     
\end{example}


\subsection{Shortest projection   for boundary points}\label{SSProjBdryPts} 
We introduce two ways of defining the shortest projection points of a boundary points to a contracting subset.  Before describing the definitions, let us repeat the convention that given a quasi-geodesic $\gamma$, we denote by $\gamma^+$ either a positive half ray of $\gamma$ or the endpoint in $\pU$ determined by \ref{AssumpA}. In the later use, we often write $[\gamma^+]$ for the $[\cdot]$-class, since any unbounded sequence on the half ray $\gamma^+$ is  assumed to accumulate into the same $[\cdot]$-class. 

In what follows, let $\gamma$ be a $C$-contracting bi-infinite quasi-geodesic for $C>0$.  We shall define a  shortest projection map
\begin{align*}
  \pi_\gamma:   \pU\setminus [\gamma^\pm]&\longrightarrow \gamma\\
     \xi&\longmapsto \pi_\gamma(\xi)
\end{align*}
with the following properties:
\begin{enumerate}
    \item
     $\pi_\gamma(\xi)$ is a bounded set on $\gamma$ (possibly of diameter depending on $\xi$).
    \item 
    the map extends the shortest projection map $\pi_\gamma: \U\to {\gamma}$ in the following sense: if $y_n\in\U \to \xi$, then $\pi_\gamma(y_n)\subseteq \pi_\gamma(\xi)$ for all $n\gg 0$.
    \item 
    the map is coarsely equivariant: $\pi_{g\gamma}(g\xi)=g\pi_\gamma(\xi)$ for any $g\in \isom(\U)$.
\end{enumerate}

We first define the map for non-pinched points $\xi$ in $\mathcal C $ as in \ref{AssumpC}. In this case, the diameter of $\pi_\gamma(\xi)$ is bounded by a  constant depending on the contracting constant of $\gamma$ but independent of $\xi$.  

\begin{lem}\label{BdryProjLem}
For any $\xi \in \mathcal C\setminus [\gamma^\pm]$,  there exists a bounded set denoted by $\pi_\gamma(\xi)$ of diameter at most $2C$ with the following property.

Let $y_n\in \U\to [\xi]$. Then $\pi_\gamma(y_n)$ is contained in $\pi_\gamma(\xi)$ for all $n\gg 0$.
\end{lem}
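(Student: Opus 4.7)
The plan is to take any sequence $y_n\to\xi$ and, in three steps, first locate the projections $\pi_X(y_n)$ in a bounded region of $X$, then compare projections of different sequences limiting to $\xi$, and finally assemble $\pi_X(\xi)$ from the resulting data. The hypothesis $\xi\in\mathcal C\setminus[\Lambda X]$ together with \ref{AssumpA} and \ref{AssumpC} are exactly what is needed.

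First I would show that $\pi_X(y_n)$ is eventually contained in a bounded subset $K\subseteq X$. If not, after passing to a subsequence one can arrange that $\pi_X(y_n)$ exits along one of the two rays of the contracting quasi-geodesic $X$; applying \ref{AssumpA} to that ray forces $y_n$ to accumulate in the $[\cdot]$-class of an endpoint of $X$, hence in $[\Lambda X]$, contradicting $\xi\notin[\Lambda X]$.

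The heart of the argument is the following uniform closeness statement: for any two sequences $y_n,z_n\to\xi$, one has $d_X(y_n,z_n)\le C$ for $n\gg 0$. By \ref{AssumpC} the geodesic $[y_n,z_n]$ is exiting in $\U$; I would prove moreover that $[y_n,z_n]\cap N_C(X)=\emptyset$ eventually. Suppose to the contrary that $p_n$ is the first entry point of $[y_n,z_n]$ into $N_C(X)$ from the $y_n$-side. Then the reversed subsegment from $p_n$ to $y_n$ meets $N_C(X)$ only at $p_n$, and Lemma \ref{BigFive}(3) yields $\pi_X(y_n)\subseteq B(p_n,2C)$. Since $\pi_X(y_n)$ is contained in the bounded set $K$ from the first step, $p_n$ lies in the bounded subset $N_{2C}(K)$ of $\U$, contradicting the fact that $p_n\in[y_n,z_n]$ must exit every bounded set. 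Once the intersection is empty we have $d([y_n,z_n],X)\ge C$, and the contracting property gives $\|\pi_X([y_n,z_n])\|\le C$, hence $d_X(y_n,z_n)\le C$. This step is the main obstacle: it is precisely here that contracting geometry (via Lemma \ref{BigFive}(3)) is used to bridge the otherwise purely topological inputs.

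Finally, I would set
\[
\pi_X(\xi):=\overline{\bigcup\{\pi_X(z_n):z_n\to\xi,\ n\ge N^{*}(z)\}},
\]
where $N^{*}(z)$ is chosen large enough for the previous step to apply to every pair of such sequences; a short diagonal/subsequence argument (if the desired uniform bound failed, one could extract sequences $a_k=y_{n_k},b_k=z_{m_k}$ violating Step 2) shows $N^{*}$ can be taken so that pairs of projections from the union are uniformly within $C$. Thus any two points of the union lie within $C$ of each other, so $\pi_X(\xi)$ has diameter at most $C\le 2C$, and the containment $\pi_X(y_n)\subseteq\pi_X(\xi)$ for $n\gg 0$ is immediate from the definition.
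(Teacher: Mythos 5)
Your proof is correct, and the overall strategy is the same as the paper's: use \ref{AssumpA} (via the hypothesis $\xi\notin[\Lambda X]$) to confine the projections $\pi_X(y_n)$ to a bounded set, and then use \ref{AssumpC} to derive a contradiction from the segments $[y_n,z_n]$ failing to exit. The technical route through the contracting lemmas differs slightly, though. The paper argues more directly: if $d_X(x_n,z_n)>C$ for a pair of subsequences, then Lemma \ref{BigThree}(2) immediately forces $[x_n,z_n]$ to pass within $C$ of $\pi_X(x_n)$, which lies in the fixed ball, contradicting \ref{AssumpC} in one step. You instead take a small detour: you first show $[y_n,z_n]\cap N_C(X)=\emptyset$ eventually by locating the hypothetical entry point $p_n$ near $\pi_X(y_n)$ via Lemma \ref{BigFive}(3), and only afterwards invoke the raw definition of $C$-contracting. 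Both versions are sound; the paper's route is marginally shorter since it needs only Lemma \ref{BigThree}, while yours makes the geometry of the entry point explicit and incidentally lands on a slightly sharper diameter bound on $\pi_X(\xi)$ than the paper's $N_C(U)$ construction gives. Your Step 3 assembly via a diagonal/interleaving argument to make $\pi_X(\xi)$ sequence-independent is left implicit in the paper but is the right way to close the gap.
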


\begin{proof}
We claim that  for some    $N>0$, the following set  has diameter at most $C$: $$U:= \{\pi_\gamma(y_n): n\ge N\} $$ Indeed, if not, there exists two subsequences $\{x_n\}, \{z_n\}$  of points from $\{y_n\}$   such that  $d_\gamma(x_n, z_n)>C$ for each $n\ge 1$.  By Lemma \ref{BigThree},        $[x_n,z_n]$ intersects $N_C(\pi_X(x_n))$.  By assumption, $[\xi]$ is disjoint with $[\gamma^{\pm}]$,    so  all projection points $\{\pi_\gamma(y_n): n\ge 1\}$ are contained in a fixed  ball $Z$ of finite radius  by  \ref{AssumpA}.  Consequently,    $[x_n,z_n]$ intersects $N_C(Z)$, but $x_n,z_n\to[\xi]\subseteq\mathcal C$, so we get a contradiction with    \ref{AssumpC}. Setting $\pi_\gamma(\xi):=N_C(U)$ completes the proof.
\end{proof}

The following fact   obtained in the above proof will be  used later on.
\begin{cor}\label{BdryProjCor}
Let $x_n, y_n\in \U$ be two sequences of points  tending to $[\xi] \subseteq \mathcal C\setminus [\gamma^\pm]$. Then $d_\gamma(x_n, y_n)\le C$  for all $n\gg 0$.  
\end{cor}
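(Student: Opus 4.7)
The statement is essentially the contrapositive of the key estimate established inside the proof of Lemma \ref{BdryProjLem}, applied to the two sequences $x_n$ and $y_n$ in place of the single sequence $y_n$ and its subsequences. So the plan is to argue by contradiction, passing to a subsequence along which $d_X(x_n,y_n)>C$ holds for all $n$, and then deriving the same geometric inconsistency used in Lemma \ref{BdryProjLem}.

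First, suppose for contradiction that $d_X(x_n,y_n)>C$ for infinitely many $n$; after extracting, assume it for all $n$. By the second item of Lemma \ref{BigThree}, the geodesic $[x_n,y_n]$ meets the $C$-neighborhood $N_C(\pi_X(x_n))$. The next step is to show that the projections $\pi_X(x_n)$ stay in a bounded region of $X$. Because $\xi\in\mathcal C\setminus[\Lambda X]$, we have $[\xi]\cap[\Lambda X]=\emptyset$; if the projections $\pi_X(x_n)$ were exiting along $X$, then by Assumption \ref{AssumpA} the sequence $x_n$ (and likewise $y_n$) would accumulate into $[\Lambda X]$, contradicting $x_n\to\xi\notin[\Lambda X]$. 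Hence there is a fixed ball $Z\subseteq\U$ of finite radius containing all the projection sets $\pi_X(x_n)$.

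Combining the two facts, the geodesic $[x_n,y_n]$ meets the fixed finite-radius neighborhood $N_C(Z)$ for every $n$. But $x_n,y_n\to\xi\in\mathcal C$, so Assumption \ref{AssumpC} forces $[x_n,y_n]$ to be an exiting sequence of geodesic segments, contradicting the fact that all of them meet the bounded set $N_C(Z)$. This contradiction shows $d_X(x_n,y_n)\le C$ for all $n$ sufficiently large, completing the proof.

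There is no serious obstacle here: the argument is a direct reuse of the claim proved inside Lemma \ref{BdryProjLem}, and the only conceptual point is to note that the same bounded-region argument applies uniformly to a pair of sequences approaching the same non-pinched boundary point outside $[\Lambda X]$. In fact, once Lemma \ref{BdryProjLem} gives $\pi_X(x_n),\pi_X(y_n)\subseteq \pi_X(\xi)$ for $n\gg 0$ and $\mathrm{diam}(\pi_X(\xi))\le 2C$, one may even prefer to derive the corollary formally from Lemma \ref{BdryProjLem} by noting that the diameter bound $2C$ suffices to conclude $d_X(x_n,y_n)\le C$ after possibly sharpening the constant in the definition of $\pi_X(\xi)$; the proof above gives the cleanest self-contained route.
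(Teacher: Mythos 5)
Your proof is correct and is essentially the paper's own argument: the corollary is extracted verbatim from the contradiction step inside the proof of Lemma \ref{BdryProjLem} (bounded projections via \ref{AssumpA}, then Lemma \ref{BigThree} forces $[x_n,y_n]$ through a fixed bounded set, contradicting \ref{AssumpC}). Your closing remark about why one cannot simply quote the $2C$ diameter bound from Lemma \ref{BdryProjLem} is also a correct observation.
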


We now  define, without \ref{AssumpC}, the shortest projection $\pi_\gamma(\xi)$ for any boundary point $\xi\in\pU\setminus [\gamma^\pm]$.  

\begin{lem}\label{BdryProjLem2}
Assume that   $K\subseteq \pU\setminus [\gamma^+]$ is a closed subset which may contain $[\gamma^-]$. Then there is a negative half ray $\gamma^-$ (i.e.: a restriction to some $(-\infty,c)$ of $\gamma: (-\infty,\infty)\to \U$) depending on $K$  such that for any $\xi\in K\setminus [\gamma^-]$ and any $y_n\in \U\to \xi$, we have $\pi_\gamma(y_n)\subseteq \gamma^-$.

In particular, if $K\cap [\gamma^\pm]=\emptyset$, then there exists a bounded  subset $Z$ of $\gamma$ such that for any   $y_n\in \U\to \xi\in K$, $\pi_\gamma(y_n)$ is contained in $Z$.
\end{lem}
\begin{proof}
We proceed by way of contradiction. Let us assume that for some $\xi\in K$ and a sequence $y_n\in \U\to \xi$, so that $\{  \pi_\gamma(y_n): n\ge 1\}$    is not contained in any negative half way of $\gamma$. By \ref{AssumpA}, (a subsequence of) $\pi_\gamma(y_n)$ is escaping and tends to $[\gamma^+]$. Let us fix a  metric $\delta$ compatible with the topology $\U\cup \pU$. We thus have $\delta(y_n, [\gamma^+])\to 0$ on one hand,  and $\delta(y_n, \xi)\to 0$ on the other hand. This contradicts the assumption $\delta(K, [\gamma^+])>0$. 
\end{proof}
As a consequence, we can define a projection $\pi_\gamma(\xi)$  to $\gamma$ for any boundary point $\xi\in \pU\setminus [\gamma^\pm]$ as  the bounded set $Z$ for a sufficiently small compact neighborhood $K$ of $\xi$. 

\begin{lem}\label{PreimageProjLem}
For any bounded subset $Z\subseteq \gamma$, the preimage of $Z$ under $\pi_\gamma: \U\cup \pU\setminus [\gamma^\pm]\to \gamma$ has the closure disjoint with $[\gamma^\pm]$.    
\end{lem}
\begin{proof}
Let $K$ be the topological closure of $\pi_\gamma^{-1}(Z)$ in $\pU\cup \U$. If $K\cap [\gamma^\pm]\ne\emptyset$, let $\xi_n\in \pi_\gamma^{-1}(Z)\to [\gamma^+]$ for definiteness. By definition of $\pi_\gamma$, choose $x_n\in \U$ in a small neighborhood of  $\xi_n$ such that $x_n\to [\gamma^+]$, and $\pi_\gamma(x_n) \cap Z\ne\emptyset$. Choose $y_n\in\gamma$ so that $y_n\to [\gamma^+]$.  By the $C$-contracting property of $\gamma$, each $[x_n, y_n]$ intersects the $C$-neighborhood of $\pi_\gamma(x_n)$, and thus of a compact subset as $\pi_\gamma(x_n) \cap Z\ne\emptyset$.
This contradicts \ref{AssumpC} with the assumption $[\gamma^\pm]\subseteq\mathcal C$.    
\end{proof}

\begin{lem}\label{LocusProj} 
Let  $ \xi\in \mathcal C$.  
Then for any $\eta\in [\xi]$, we have $d_{\gamma}(\xi, \eta)\le 6C$.   
\end{lem}
\begin{proof}  
Assume by contradiction that   $d_{\gamma}(\xi, \eta)>  6C$.  If    $x_m\in\U\to \xi$ and $z_m\in \U\to \eta$,  the triangle inequality for $d_{\gamma}$ with Lemma \ref{BdryProjLem} implies  $$\forall m\gg 1: \quad d_{\gamma}(x_m, z_m)\ge d_{\gamma}(\xi, \eta)-4C\ge  2C$$ so by Lemma \ref{BigThree},   $[x_m, z_m]$  intersects the $C$-ball around $\pi_{\gamma}(\xi)$. As $\pi_{\gamma}(\xi)$ is a fixed subset of diameter at most $2C$,  the non-escaping sequence of $[x_m, z_m]$       contradicts the assumption of  $\xi\in \mathcal C$. So the lemma is proved.
\end{proof}

\subsection{North--South dynamics of non-pinched contracting elements}

Let $h\in \isom(\U)$ be a  non-pinched contracting isometry, where the fixed points $[h^\pm]$ are non-pinched. This is equivalent to have   $[h^-]\ne[h^+]$  by Lemma \ref{DisjFixpts=Non-pinched}. We shall prove that the action of $\langle h\rangle$ on the complement to $[h^\pm]$ in $\pU$ has so-called North--South dynamics. 

Such type of dynamics can be found in many contexts, but surprisingly, our proof is quite general and only uses  the above \ref{AssumpA} and \ref{AssumpC}.  This property shall be often used later on.


\begin{lem}[NS dynamics]\label{SouthNorthLem}
The action of $\langle h\rangle$ on $\pU\setminus [h^\pm]$ has the North--South dynamics: 

for any two open sets   $[h^+] \subseteq U$ and $[h^-]\subseteq V$ in $\pU$, there exists an integer $n>0$ such that $h^n (\pU \setminus V)\subseteq U$ and $h^{-n} (\pU \setminus U)\subseteq V$.   
\end{lem}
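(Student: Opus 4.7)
The plan is to argue by contradiction: assuming there exist $\xi_n \in \mathcal C \setminus V$ with $h^n \xi_n \notin U$ for each $n$, I would show this forces $\xi_n$ to accumulate in $[h^-] \subseteq V$, a contradiction. The main tool is the shortest projection $\pi_{\ax(h)}$, extended to boundary points of $\mathcal C$ outside $[\Lambda \ax(h)]$ by Lemma \ref{BdryProjLem}. Since $h$ acts by translation on the quasi-line $\ax(h)$, its dynamics on a boundary point can be read off from the axis parameter of the projection. First, handle the trivial case $\xi_n \in [h^+]$, which gives $h^n \xi_n \in [h^+] \subseteq U$ directly. We may therefore assume $\xi_n \notin [h^+] \cup [h^-]$; by Lemma \ref{RayLimitsLem} applied to the two half-rays of $\ax(h)$, the limit set $[\Lambda \ax(h)]$ equals $[h^+] \cup [h^-]$, so $\pi_{\ax(h)}(\xi_n)$ is a well-defined bounded subset. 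Parametrize $\ax(h)=\{h^k o : k \in \mathbb Z\}$ and write $k(\pi_{\ax(h)}(\xi))$ for the integer parameter (up to bounded error) of the projection.

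The core step is a uniform lower bound on projections: there exists $N_0 \ge 0$ such that $k(\pi_{\ax(h)}(\xi)) \ge -N_0$ for every $\xi \in (\mathcal C \setminus V) \setminus [h^\pm]$. Suppose not; then there exist $\xi_n \in \mathcal C \setminus V$ with $k(\pi_{\ax(h)}(\xi_n)) \to -\infty$. Fix a compatible metric $\delta$ on $\bU$, and via Lemma \ref{BdryProjLem} pick points $y_n \in \U$ with $\delta(y_n, \xi_n) < 1/n$ and $\pi_{\ax(h)}(y_n) \subseteq \pi_{\ax(h)}(\xi_n)$. Then the projections $\pi_{\ax(h)}(y_n)$ exit along the negative half-ray of $\ax(h)$, so \ref{AssumpA} applied to that ray yields $y_n \to [h^-]$. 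By the $1/n$ closeness, $\xi_n \to [h^-] \subseteq V$, forcing $\xi_n \in V$ for large $n$, contradiction.

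Returning to the original contradiction hypothesis, we may assume $\xi_n \notin [h^\pm]$, so the uniform bound applies and $\pi_{\ax(h)}(h^n \xi_n) = h^n \pi_{\ax(h)}(\xi_n)$ has parameter $\ge n - N_0 \to +\infty$. Choosing a diagonal sequence $y_n \in \U$ with $\delta(y_n, h^n \xi_n) < 1/n$ and $\pi_{\ax(h)}(y_n) \subseteq \pi_{\ax(h)}(h^n \xi_n)$ (again by Lemma \ref{BdryProjLem}), the projections exit along the positive half-ray of $\ax(h)$. Then \ref{AssumpA} gives $y_n \to [h^+]$, hence $h^n \xi_n \to [h^+] \subseteq U$, so $h^n \xi_n \in U$ for large $n$, contradicting our assumption. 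The symmetric statement for $h^{-n}$ follows by interchanging $[h^+]$ and $[h^-]$.

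The main obstacle I anticipate is the uniformity in this projection argument: Lemma \ref{BdryProjLem} controls projections of interior sequences converging to a \emph{single} boundary point, but here both $\xi_n$ and the iterate $n$ vary simultaneously. The resolution uses the metrizability of $\bU$ to extract, for each $n$, an interior point $y_n$ that simultaneously $1/n$-approximates the moving boundary point $h^n \xi_n$ and inherits its projection to the axis; this diagonal trick is what lets the interior-point statement \ref{AssumpA} drive the boundary dynamics.
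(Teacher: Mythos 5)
Your argument is correct and rests on the same underlying mechanism as the paper's proof: read off where $h^n\xi$ goes from the parameter of the extended projection $\pi_{\ax(h)}(\xi)$ (Lemma \ref{BdryProjLem}), and use \ref{AssumpA} to convert an exiting projection into boundary convergence to $[h^+]$ or $[h^-]$. The paper organizes the same content constructively — it builds the fundamental domain $\tilde K$ (points projecting to $[o,ho]$) and shows $\bigcup_{i>M}h^i\tilde K\subseteq U$ while $(\mathcal C\cup\U)\setminus(V\cup[h^+])\subseteq\bigcup_{i>N}h^i\tilde K$, which is exactly your uniform lower bound on the projection parameter together with its dual — whereas you phrase it as a contradiction plus a diagonal extraction of interior approximants $y_n$; these are two equivalent packagings of the same geometric estimate, so no comparison beyond this is needed.
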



\begin{proof}
Since $h$ is a contracting element, the path $\gamma:=\bigcup_{i\in\mathbb Z} h^i[o,ho]$ is a contracting quasi-geodesic. 
Let $K$ be  the  set of points $x\in \U$ such that $[o, ho]\cap \pi_\gamma(x)\ne\emptyset$, so the following holds $$\bigcup_{g\in G} g  K = \U.$$ 
Denote by $\widetilde K$ the topological closure of $K$ in $\U\cup \pU$, and    $\Lambda K:=\pU\cap \widetilde K$. 

Let $\pi_\gamma:\pU\setminus [h^\pm]\to \gamma$ be the shortest projection map given by Lemma \ref{BdryProjLem2}.  By definition, $[h^\pm]=[\gamma^\pm]$.  By Lemma \ref{PreimageProjLem} we have    $\widetilde K\cap [h^\pm]=\emptyset$.

Observe that 
\begin{equation}\label{CoveringEQ}
\pU\cup \U= \Big(\bigcup_{g\in G} g  \widetilde K \Big)\bigcup {[h^\pm]}.
\end{equation}
Indeed, if $\xi\in \pU\setminus [h^\pm]$, any sequence of points $x_n\in \U\cup\pU \to \xi$  has bounded projection to $\gamma$ by \ref{AssumpA}. Then $\{x_n\}$ are   contained in  finitely many copies of $K$, so the   inclusion $\subseteq$ follows. The other direction follows by definition $\widetilde K$ and $\cup_{g\in G} gK=\U$. 

By a similar argument, Lemma \ref{BdryProjLem2} also implies that $\pi_\gamma(\widetilde K)$ lies in a finite  neighborhood of $[o,ho]$. Consequently, the projection map $\pi_\gamma$ coarsely commutes two actions of $\langle h\rangle$ on $\U\cup \pU\setminus [h^\pm]$  and on $\gamma$: for any $h\in G$ and $\eta\in \pU\setminus  [h^\pm]$,  
\begin{equation}\label{ActionCommutesEQ}
\pi_\gamma( h\eta)  \simeq h\pi_\gamma(\eta)
\end{equation}
where $\simeq$ means   one of the two sets being contained into   a uniform  neighborhood of the other.

As $\langle h\rangle$ acts by translation on $\gamma$, we see from (\ref{ActionCommutesEQ}) that $\langle h\rangle$ acts properly on $\U\cup\pU\setminus [h^\pm]$: for any compact subset $L\subseteq \pU\setminus [h^\pm]$, the set $\{g\in G: gL\cap L\ne\emptyset\}$ is finite. Moreover,  $\{h^iL: i\in \mathbb Z\}$ is a locally finite family of closed subsets, so the union of any subfamily  is a closed subset in $\U\cup \pU\setminus [h^\pm]$. 

If $W\subseteq \pU\cup U$ is a compact neighborhood of $\widetilde K$ disjoint with $[h^\pm]$, then $$U_N:=\pU\setminus \left(\bigcup_{i<N}  h^iW\cup [h^-]\right)$$ for $N\ge 1$  forms a neighborhood basis for $[h^+]$. Indeed, let $[h^+]\subseteq U$  be any open subset. By Lemma \ref{BdryProjLem2} applied to $U^c$, there exists    $N>0$ such that $U^c\subseteq \bigcup_{i< N}h^i W\cup [h^-]$.    That is to say, $U_N\subseteq U$.


Similarly, $V_M:=\pU\setminus (\bigcup_{i> -M} h^iW\cup[h^+])$ for $M>0$ is a neighborhood basis for $[h^-]$. Taking large enough $n>N+M$,  we obtain $h^{n} (\pU \setminus V_M)\subseteq U_N$, so the North--South dynamics follows. 
\end{proof}

Let $\gamma=\ax(h)$ be the axis of the non-pinched contracting element. The following result is implicit in the proof. 
\begin{cor}\label{OpenNBHD}
For any $L\ge 0$ there exist open neighborhoods $[h^-]\subseteq V$ and $[h^+]\subseteq U$ in $\U\cup\pU$ with the following property. For any $x\in V\setminus [h^-]$ and $y\in U\setminus [h^+]$ we have 
$$
d_\gamma(x, y)\ge L.
$$
Conversely, if $d_\gamma(x_n, y_n)\to \infty$ for $x_n, y_n\in \U\cup \pU\setminus [h^\pm]$, we have $(x_n,y_n)$ converges to $([h^-],[h^+])$.
\end{cor}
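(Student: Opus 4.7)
The plan is to derive Corollary~\ref{OpenNBHD} from a single converse to Assumption~\ref{AssumpA} for the bi-infinite contracting axis $X=\ax(h)$: for any sequence $x_n\in(\U\cup\mathcal C)\setminus [\Lambda X]$, convergence $x_n\to[h^+]$ in $\bU$ must force the projections $\pi_X(x_n)$ (defined by Lemma~\ref{BdryProjLem} on the boundary part and by shortest projection on $\U$) to exit along the positive ray $X^+$, and symmetrically for $[h^-]$. I will tacitly assume $h$ is non-pinched, so that $[h^+]\ne[h^-]$ and both lie in $\mathcal C$; otherwise disjoint open sets $V,U$ cannot be produced and the quantitative statement is vacuous.

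The core step is this converse claim, which I would prove by contradiction. For $x_n\in\mathcal C$, Lemma~\ref{BdryProjLem} lets me pick a diagonal sequence $\tilde x_n\in\U$ with $\tilde x_n\to[h^+]$ in $\bU$ and $\pi_X(\tilde x_n)\subseteq\pi_X(x_n)$, reducing to the case $x_n\in\U$. Suppose $x_n\to[h^+]$ but $\pi_X(x_n)$ does not exit along $X^+$. Passing to a subsequence, either (a) $\pi_X(x_n)$ remains in some bounded region $K'\subseteq X$, or (b) $\pi_X(x_n)$ exits along $X^-$. In case (b), Assumption~\ref{AssumpA} applied to the contracting ray $X^-$ forces $x_n\to[h^-]\ne[h^+]$, a contradiction. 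In case (a), I choose $w_n\in X^+$ with $w_n\to[h^+]$; since $[h^+]\in\mathcal C$, Assumption~\ref{AssumpC} forces $[x_n,w_n]$ to be exiting, while $d_X(x_n,w_n)\to\infty$ combined with Lemma~\ref{BigThree}(2) forces $[x_n,w_n]$ to pass within $C$ of the bounded set $\pi_X(x_n)\subseteq K'$, the desired contradiction.

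The forward direction of the corollary is then immediate. Fix a compatible metric on $\bU$ and a decreasing basis of open neighborhoods $V_n\supseteq[h^-]$ and $U_n\supseteq[h^+]$ with $\bigcap_n V_n=[h^-]$ and $\bigcap_n U_n=[h^+]$. If no pair of open sets worked for some $L\ge 0$, I could pick witnesses $x_n\in V_n\setminus[h^-]$ and $y_n\in U_n\setminus[h^+]$ with $d_X(x_n,y_n)<L$. Then $x_n\to[h^-]$ and $y_n\to[h^+]$, so by the core claim $\pi_X(x_n)\to -\infty$ and $\pi_X(y_n)\to +\infty$ along $X$, forcing $d_X(x_n,y_n)\to\infty$, contrary to $d_X(x_n,y_n)<L$.

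For the second (converse) assertion, given $d_X(x_n,y_n)\to\infty$ I pass to a subsequence along which $\pi_X(x_n)$ and $\pi_X(y_n)$ each have a definite asymptotic behaviour on $X$; since $\|\pi_X(x_n)\cup\pi_X(y_n)\|\to\infty$, after possibly swapping $x_n\leftrightarrow y_n$ one arrives at $\pi_X(x_n)\to -\infty$ and $\pi_X(y_n)\to +\infty$, whence Assumption~\ref{AssumpA} applied to $X^-$ and $X^+$ (with the same diagonal approximation from Lemma~\ref{BdryProjLem} for points in $\mathcal C$) yields $x_n\to[h^-]$ and $y_n\to[h^+]$. The main obstacle in the whole argument is case (a) of the core claim, where one must exclude the pathological scenario of $x_n$ tending to the attracting locus $[h^+]$ while its projection on $X$ remains bounded; this is precisely where the non-pinched hypothesis $[h^+]\in\mathcal C$ and Assumption~\ref{AssumpC} are both indispensable.
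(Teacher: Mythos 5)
Your argument takes a genuinely different route from the paper's. The paper's proof is a one-liner riding on the explicit decomposition $\mathcal C\cup\U\subseteq\big(\bigcup_{i\in\mathbb Z}h^i\tilde K\big)\cup[h^\pm]$ constructed in the proof of Lemma \ref{SouthNorthLem}: $U$ and $V$ are taken to be complements of the tail unions $\bigcup_{i}h^i\bar K$ over far-negative (resp.\ far-positive) indices, so that membership in $U\setminus[h^+]$ directly pins $\pi_X(y)$ to a far positive segment of the axis, and the two projection sets are then $\ge L$ apart by construction. You instead prove a sequential converse to \ref{AssumpA} and then try to upgrade it to open sets. Your core claim and its two-case analysis are correct and cleanly isolate where \ref{AssumpA} (case (b)) and \ref{AssumpC} together with Lemma \ref{BigThree}(2) (case (a)) enter; the tacit non-pinchedness of $h$ is indeed needed and is implicit in how the corollary is used in the paper.

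There are, however, two genuine gaps in the upgrade. First, the forward direction: your contradiction argument requires a decreasing basis of open neighborhoods $U_n\supseteq[h^+]$ with $\bigcap_n\overline{U_n}=[h^+]$, so that witnesses $y_n\in U_n\setminus[h^+]$ necessarily accumulate in $[h^+]$. This needs $[h^+]$ (and $[h^-]$) to be closed in $\pU$, which is not guaranteed: the partition $[\cdot]$ is not assumed closed, and a single $[\cdot]$-class need not be a closed set. The paper's explicit construction of $U,V$ from the translates $h^i\tilde K$ is precisely what sidesteps this; your argument either needs that construction or an added hypothesis (closedness of the classes, which does hold for the horofunction boundary but not in general). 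Second, and more seriously, the converse direction: from $\|\pi_X(x_n)\cup\pi_X(y_n)\|\to\infty$ you cannot conclude, even after relabeling, that $\pi_X(x_n)\to-\infty$ \emph{and} $\pi_X(y_n)\to+\infty$ — one of the two projections can stay bounded while the other exits (take $x_n=o\in X$ fixed and $y_n=h^no$; then $d_X(x_n,y_n)\to\infty$ but $x_n\not\to[h^-]$). That step is a non sequitur. In fact this example shows the converse needs the extra input that both projections exit in opposite directions (which is how it is actually invoked, e.g., in Lemma \ref{CharMyrberg}); your write-up should state that hypothesis rather than derive it incorrectly from $d_X(x_n,y_n)\to\infty$.
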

\begin{proof}
Keep the notions as in the proof of Lemma \ref{SouthNorthLem}. As $\widetilde K\subseteq W$ is a compact set disjoint with $[h^\pm]$,   $\pi_\gamma(\widetilde K)\subseteq \pi_\gamma(W)$ is bounded by Lemma \ref{BdryProjLem2}.  By (\ref{ActionCommutesEQ}), we can choose $U=U_N$ and $V=V_M$ for  large $M, N \gg L$ so that $d_\gamma(x,y)>L$ for $x\in U, y\in V$.
 The converse direction also follows by the North--South dynamics.
\end{proof}

We remark that the following result could be used to give another proof of Shadow Lemma \ref{ShadowLem} (as given in \cite{TYANG}). It is recorded here only for further applications. 
\begin{lem}\label{IntShadow}
Let $F$ be a set of non-pinched contracting elements given by Lemma \ref{extend3}.
For any  $f\in F$ and $r\gg 0$, there exist     open neighborhoods $U\subseteq V$ of the attracting fixed point $[f^+]$  so that for any $g\in G$,  we have $$gU\subseteq \Pi_o^F(go,r) \subseteq gV.$$

\end{lem}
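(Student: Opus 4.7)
The middle inclusion $\Pi_o^F(go,r) \subseteq \Pi_o(go,r)$ will be immediate, since any $(r,F)$-barrier at $go$ on a geodesic $[o,z]$ in particular forces $d(go, [o,z])\le r$, placing $z$ in the usual cone $\Omega_o(go,r)$. The main work lies in the two outer inclusions, and my plan is to use Corollary \ref{OpenNBHD} applied to the axis $X := \ax(f)$ in order to convert neighborhoods of $[f^+]$ in $\pU$ into quantitative projection estimates on $X$; these transport to $gX$ by $G$-equivariance of the shortest projection, via $\pi_{gX}(z) = g\pi_X(g^{-1}z)$.

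For $gU \subseteq \Pi_o^F(go,r)$: given $r$, I would pick a threshold $L \gg d(o,fo) + C + r$ (where $C$ is the contracting constant of $X$) and take $U$ to be the neighborhood of $[f^+]$ furnished by Corollary \ref{OpenNBHD} with this $L$, so that for $\xi \in U \setminus [f^+]$ the projection $\pi_X(\xi)$ lies on the positive ray of $X$ past $fo$ by a wide margin. By equivariance, for $z\in gU$ the projection $\pi_{gX}(z)$ will sit on the positive ray of $gX$ beyond $gfo$. By Lemma \ref{BigFive}(4), the geodesic $[o,z]$ will then meet $N_C(gX)$ in a segment of length comparable to $d_{gX}(o,z)$, long enough to bracket $go$ and $gfo$ in a $C$-neighborhood with ample slack on both sides. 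Applying Lemma \ref{AnyBarrierLem} to $\gamma = gX$ and $\alpha = [o,z]$ will then certify $go$ as an $(r,f)$-barrier on $[o,z]$, hence $z \in \Omega_o^F(go,r)$; passing to the topological closure in $\pU$ will yield the inclusion.

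For $\Pi_o(go,r) \subseteq gV$: I would take $V$ to be the larger neighborhood of $[f^+]$ given by Corollary \ref{OpenNBHD} with a smaller threshold $L' < L$. The converse direction of that corollary says that if $g^{-1}z \notin V$ then $d_X(o, g^{-1}z)$ is bounded in terms of $L'$; by equivariance this bounds $d_{gX}(go, z)$. On the other hand, $z \in \Pi_o(go,r)$ means $[o,z]$ passes within $r$ of $go \in gX$, and after choosing $L'$ sufficiently large relative to $r$ and $C$, the contracting property (Lemma \ref{BigFive}) will force $d_{gX}(o, z)$ to be at least of order $L'$, contradicting the previous bound. Hence $g^{-1}z \in V$, i.e.\ $z \in gV$.

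The hard part will be the uniform control of the projection $\pi_{gX}(o)$ as $g$ ranges over $G$: a priori $[o,z]$ enters $gX$ near $\pi_{gX}(o)$, which need not coincide with $go$. Choosing $L$ sufficiently large (depending only on $f$ and $r$) to absorb this positional freedom, and appealing to the North-South dynamics of $f$ (Lemma \ref{SouthNorthLem}) to guarantee that the entry and exit of $[o,z]$ in $N_r(gX)$ properly bracket the barrier point $go$, will be the technical core of the argument.
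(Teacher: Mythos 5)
Your sketch flags the right difficulty but does not resolve it, and the remedy you propose cannot work. For $gU\subseteq\Pi_o^F(go,r)$, you observe that $[o,z]$ enters $N_C(gX)$ near $\pi_{gX}(o)$ and propose choosing $L$ large enough ``to absorb this positional freedom''. However, the offset of $\pi_{gX}(o)$ from $go$ along $gX$ is unbounded over $g\in G$, so no fixed $L$ (hence no fixed $U$) can absorb it. Concretely, take $g=f^{-n}$ with $n$ large: then $gX=X$, $\pi_{gX}(o)=o$, and $go=f^{-n}o$ sits far along the negative ray of $X$; any geodesic $[o,z]$ with $z$ near $[f^+]$ fellow-travels the positive ray and so stays at distance roughly $n\,d(o,fo)\gg r$ from $go$, which is therefore never an $(r,F)$-barrier for $[o,z]$. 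In particular $[f^+]\in gU\setminus\Pi_o^F(go,r)$, so the inclusion fails regardless of how $U$ is shrunk, and your appeal to North-South dynamics is left unexplained and does not obviously repair this. Your argument for the last inclusion also misreads the converse part of Corollary \ref{OpenNBHD}: $d_X(o,g^{-1}z)\to\infty$ forces $g^{-1}z$ towards $[f^+]\cup[f^-]$, not towards $[f^+]$ alone, so it does not yield $g^{-1}z\in V$ directly.

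For context, the paper's own proof makes essentially the same leaps: it invokes Corollary \ref{OpenNBHD} to conclude $d_X(g^{-1}o,z)\gg d(o,fo)$ for $z\in U$ without checking that $g^{-1}o$ lies in a neighborhood of $[f^-]$ (this fails for $g=f^{-n}$); it only asserts the distance from $go$ to the exit point is large, never verifying that $go$ lies between the entry and exit of $[o,gz]$ in $N_C(gX)$ as Lemma \ref{AnyBarrierLem} requires; and it proves only $\Pi_o^F(go,r)\subseteq gV$ while the statement asserts $\Pi_o(go,r)\subseteq gV$. As written, the lemma appears to need either a restriction on $g$ or the insertion of a $g$-dependent extension element $f'\in F$, as in the main proof of Lemma \ref{ShadowLem}; your sketch mirrors the paper's intent but inherits the same unresolved need.
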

\begin{proof}
Set  $\gamma:=\ax(f)$. Choose a shortest projection point $w\in \pi_{g\gamma}(o)$ from $o$ to $g\gamma$, so $d(o, go)\ge d(o, w)$ for $go\in g\gamma$. 
Let $U$ be an open neighborhood of $[f^+]$ by Corollary \ref{OpenNBHD} such that 
$$
\forall z\in U\cap \U:\quad d_{\gamma}(g^{-1}o, z)=d_{g\gamma}(o, gz)\gg d(o, fo)
$$  
This is to say, the  distance from  $go\in g\gamma$  to the exit point of $[o,gz]$ at $N_C(g\gamma)$ is sufficiently larger than $d(o,fo)$.   For the constant $r$ given by Lemma \ref{AnyBarrierLem},  $go$ is an $(r, f)$-barrier for  $[o,gz]$.  Thus, the open set $gU\cap \pU$ is contained in $\Pi_o^F(go,r)$.

The definition of $z\in \Pi_o^F(go, r)$ implies $\|N_r(g\gamma)\cap [o, z]\|\ge \|Fo\|_{\min}$, so by Lemma \ref{Transform}, we have $d_{g\gamma}(o, z)\ge  \|Fo\|_{\min}-4r$.  For large enough $\|F\|\gg 0$, Corollary \ref{OpenNBHD} gives an open neighborhood $V$ of $[f^+]$ such that $z\in gV$. That is, we proved that $\Pi_o^F(go, r)\subseteq gV$.
\end{proof}

By Lemma \ref{[LimitSet]} (under \ref{AssumpB'}), the limit set $\pG $ is well-defined up to taking the $[\cdot]$-locus. One desirable property for  limit sets is their topological minimality and uniqueness.  We  describe  a typical situation where this happens. Recall that a point in $\pU$ is called \textit{minimal} if its $[\cdot]$-locus contains only one point. 
\begin{lem}\label{UniqueLimitSet} 
Fix a non-pinched contracting element $h\in G$  with  minimal fixed points. Let $\Lambda=\overline{\{gh^\pm: g\in G\}}$  be the closure of the fixed points of all conjugates. Then under \ref{AssumpA}, \ref{AssumpB'} and \ref{AssumpC},
\begin{enumerate}
    \item 
    $\Lambda$ is the   unique,  minimal, $G$-invariant closed subset.
    \item
    $[\Lambda]=[\pG]$ for any $o\in \U$.
    \item
    The fixed point pairs $([g^+],[g^-])$ of all  non-pinched contracting elements $g$  in $G$ are dense in the distinct pairs of $\Lambda\times \Lambda$ in the following sense:
    \begin{itemize}
        \item For any $\xi\ne \eta \in \Lambda$, there exists $\xi_n\in [f_n^+], \eta_n\in [f_n^-]$ for some contracting element $f_n\in G$ so that $\xi_n\to \xi$  and $\eta_n\to \eta$.
    \end{itemize}
    
\end{enumerate} 
\end{lem}

\begin{proof}
We only need to prove the assertion (1).  The assertion $[\Lambda]=[\pG ]$ follows by  Lemma \ref{FixPtsDense}. Assuming   assertion (1), the double density follows the same proof of Lemma \ref{DoubleDense} (without taking  the $[\cdot]$-closure of fixed points of $h$).

Note that the conjugates of  $h$ are also non-pinched with minimal fixed points. A  non-elementary group $G$ thus contains infinitely many conjugates of $h$ with pairwise bounded intersection, so the set $Z$ of their fixed points   are   infinite by Lemma \ref{DisjFixedSet}. Let $A$ be any $G$-invariant closed subset in $\pU$. First of all, $A$ contains at least three points. Otherwise, if $G$ fixes a point or a pair of points, this contradicts   the   North--South dynamics of conjugates of  $h$, as the set $Z$ of their fixed points is infinite.   As  $[h^+]=\{h^+\}$ is minimal, we can choose $\xi\in A\setminus  \{h^+\}$.  By Lemma \ref{SouthNorthLem},    $h^n\xi\in A$ converges to  $h^+$. This implies   $h^+\in \Lambda$, since $A$ is  closed and $G$-invariant.  Thus, $A$ contains $Z$ and $\Lambda\subseteq A$ is proved. 
\end{proof}

\begin{cor}\label{doubledenseCor}
Assume that {all contracting elements in $G$ are non-pinched with minimal fixed points}.  Let $\Lambda$ be as in Lemma \ref{UniqueLimitSet}. Then the set of the fixed point pairs of all  contracting element  in $G$ is dense in the distinct pairs of $\Lambda\times \Lambda$.    
\end{cor}

\begin{rem} 
The above two results   are a  generalization of the known results on CAT(0) groups with rank-1 elements (\cite[Theorem III.3.4]{Ballmann}\cite{H09a}) and dynamically irreducible subgroups  of mapping class groups (\cite{McPapa}). The applications to Roller   and Gardiner--Masur boundary  seem new, and are recorded in Lemmas \ref{Roller} and  \ref{GMBdryConvergence}.  The assumption in Corollary \ref{doubledenseCor} is satisfied for all these examples.
\end{rem}
\section{Conical and Myrberg points}\label{SecMyrberg}

In this section, we introduce the notion of conical   and Myrberg  limit points of a proper action with contracting elements  on the convergence boundary. This  shall be central in the study of  the conformal density performed in Section \textsection\ref{SecDensity}. 

We begin with some non-elementary assumption on the group action under consideration, throughout the remainder of this paper.

Let $G$ be a non-elementary group  acting properly on a proper geodesic space $\U$, as in (\ref{GActUAssump}), with a contracting element. Assume further that $\U$   is equipped with a \textit{non-elementary} convergence boundary $\pU$:  $G$ contains a non-pinched contracting element with fixed points being non-pinched in a sense in \ref{AssumpC}. Equivalently, this is plain to say that the repelling and attracting fixed points of some contracting element are distinct $[\cdot]$-classes in $\pU$ (Lemma \ref{DisjFixpts=Non-pinched}). 

To define conical limit points, we need to choose three independent non-pinched contracting elements $F=\{f_1,f_2, f_3\}$ and form the $C$-contracting system  for some $C\ge 0$ as in (\ref{SystemFDefIntro})
$$
\f=\{g\ax(f): f\in F, g\in G\}    
$$

\begin{conv}\label{ConvF}
Fix a constant $B$  satisfying Lemma \ref{Truncation}. The constant $L$ in Definition \ref{AdmDef} of an admissible path is chosen sufficiently large with the following properties:  
\begin{enumerate}
    \item 
    Any $(L,B)$-admissible path has $r$-fellow travel relative to $\f$, where the constant $r$ satisfies Proposition \ref{admisProp} and Lemma  \ref{extend3}. 
    \item 
    If $\alpha$ contains a $(r,f)$-barrier with $d(o,fo)>L$, then $\beta$ with endpoints at most $10C$-apart contains $(\hat r, f)$-barrier.  The constant $\hat r$ depends only on $r$  by Lemma \ref{BarrierFellowLem}. 
    \item 
    An intersection $\alpha\cap N_r(X)$ of diameter at least $L$ for $X\in \f$ produces an $(r,f)$-barrier for any geodesic $\alpha$. See Lemma \ref{AnyBarrierLem}.
\end{enumerate}
In particular, all the constants depend only on the contracting constant  $C$ of $\f$. 
\end{conv}

\subsection{Conical points using shadows}

From different perspectives, we are going to present three  definitions of conical limit points, which are  quantitatively equivalent. We first give the main definition of conical points formulated using shadows.

Fix a basepoint $o\in \U$, so everything we talk about below depends on $o$.
First of all, define the usual cone and shadow:  
$$\Omega_{x}(y, r)\; :=\; \{z\in \U: \exists [x,z]\cap B(y,r)\ne\emptyset\}$$
and $\Pi_{x}(y, r) \subseteq \pU$ be the topological closure  in $\pU$ of $\Omega_{x}(y, r)$.

We are actually working with a technically involving but very useful       notion of partial cones, which are defined  relative to $\f$.

\begin{defn}[Partial cone and shadow]\label{ShadowDef}
For $x\in \U, y\in Go$, the \textit{$(r, F)$-cone} $\Omega_{x}^F(y, r)$ is the set of elements $z\in \U$ such that $y$ is a $(r, F)$-barrier for some geodesic $[x, z]$.  

The \textit{$(r, F)$-shadow} $\Pi_{x}^F(y, r) \subseteq \pU$ is the topological closure  in $\pU$ of the cone $\Omega_{x}^F(y, r)$.
\end{defn}

We give  the first   definition  of a conical point. Recall that $\mathcal C\subseteq \pU$ is the  set of non-pinched boundary points in \ref{AssumpC}. 

\begin{defn}\label{ConicalDef2}
A point $\xi \in \mathcal C$ is called \textit{$(r, F)$-conical point}   if for some $x\in Go$, the point $\xi$ lies in infinitely many $(r, F)$-shadows $\Pi_x^{F}(y_n, r)$ for $y_n\in Go$.  We denote by  $\Lambda_{r}^F(Go) $ the set of $(r, F)$-conical points.

\end{defn}

\begin{rem}
\begin{enumerate}
    \item 
    If a geodesic ray $\gamma$ starting at $x\in Go$  contains infinitely many $(r, F)$-barriers $y_n\in Go$, then by definition, any accumulation points of $\gamma$ are   $(r, F)$-conical. 
    \item 
    According to our convention, $F$ consists of non-pinched contracting elements $f$. The fixed points $[f^\pm]$ of $f$ are the $[\cdot]$-locus of the accumulation points of the axes $\ax(f)$. It is   obvious that the accumulation points of the axes $\ax(f)$ are contained in infinitely many shadows $\Pi_x^{F}(y_n, r)$ over $y_n\in \ax(f)$, so are $(r,\{f,f^{-1}\})$-conical points for $r\gg 0$. Thus,  fixed points $[f^+]$   are $(r, F)$-conical points.
\end{enumerate}

\end{rem}

By definition,  $\Lambda_{r}^F(Go) $ is a $G$-invariant set, and   depends on the choice of $o\in \U$.  It is the limit superior  of  the family of partial shadows:
\begin{equation}\label{rFConicalEQ}
\Lambda_{r}^F(Go) =\bigcup_{x\in Go} \Big(\limsup_{y\in Go} \Pi_x^F(y, r)\Big).
\end{equation}
Recall that $\Lambda_{c}(Go)$ is the usual conical limit set given in (\ref{ConicalEQ}),  defined similarly as above but using the usual shadows $\Pi_x(y, r)$. Thus, $\Lambda_{r}^F(Go)\subseteq \Lambda_{c}(Go)$. 

Let us restate the Definition \ref{ConicalDef1} of conical points given in Introduction.
\begin{defn}\label{ConicalDef1prime}
A non-pinched point $\xi \in \mathcal C$ is called \textit{$(L,\f)$-conical point} for $L>0$ if  for some $x\in \U$, there exists a sequence of $X_n\in \f$ such that $d_{X_n}(x, \xi)\ge L$.  
\end{defn}

We clarify that these two definitions introduced so far are equivalent in the following quantitative sense.
\begin{lem}\label{QuantativeEquiv12}
Let $r, F$ be as above. There exist  $L_1, L_2>0$ depending on $r, F$ such that \begin{enumerate}
    \item 
     An $(L_1,\f)$-conical point is $(r, F)$-conical;
    \item 
     An $(r, F)$-conical point is $(L_2,\f)$-conical.
\end{enumerate} 
\end{lem}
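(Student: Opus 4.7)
The plan is to establish the two implications separately, relating the size of a long fellow-traveling segment with a contracting quasi-line in $\f$ to the existence of an $(r,F)$-barrier on it, via Lemmas \ref{BigThree}, \ref{Transform}, \ref{BigFive}, and \ref{AnyBarrierLem}. Throughout, for a conical point $\xi\in \mathcal{C}$, we will choose approximating points $z_n\to \xi$ and use Lemma \ref{BdryProjLem} to transfer the projection information at $z_n$ to $\xi$.

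\textbf{Direction $(r,F)\Rightarrow L_2$.} Suppose $\xi\in \mathcal C$ is $(r,F)$-conical with barriers at $y_n=h_no$ of type $f_n\in F$ for geodesics $[x,z_n]$ with $z_n\to\xi$. Set $X_n:=h_n\ax(f_n)\in \f$. By definition of an $(r,F)$-barrier, both $h_no$ and $h_nf_no$ lie within $r$ of $[x,z_n]$, so $\pi_{X_n}([x,z_n])$ contains two points at distance at least $\|F\|_m-2r$, giving $d_{X_n}(x,z_n)\ge \|F\|_m-2r-O(C)$. Since $X_n$ exits (as $d(x,y_n)\to\infty$), we have $\xi\notin[\Lambda X_n]$ for all but finitely many $n$: indeed, if $\xi\in[\Lambda X_n]$ infinitely often, Lemma \ref{DisjointContr} forces any geodesic ray from $x$ to $\xi$ to stay in a finite neighborhood of $X_n$, incompatible with an exiting sequence of distinct $X_n$. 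After discarding finitely many terms, Lemma \ref{BdryProjLem} gives $\pi_{X_n}(z_n)\subseteq \pi_{X_n}(\xi)$ for $n\gg 0$, hence $d_{X_n}(x,\xi)\ge \|F\|_m-2r-O(C)$. Choosing $L_2=\|F\|_m-2r-O(C)$, which is positive since $\|F\|_m\gg r,C$ by Convention \ref{ConvF}, proves that $\xi$ is $L_2$-conical.

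\textbf{Direction $L_1\Rightarrow (r,F)$.} Conversely, suppose $d_{X_n}(x,\xi)\ge L_1$ for some sequence $X_n\in \f$, where each $X_n=g_n\ax(h_{i_n})$ with $h_{i_n}\in\{h_1,h_2,h_3\}$. Pick $z_n\to\xi$ sufficiently close; by Corollary \ref{BdryProjCor} and Lemma \ref{BigThree}, one gets $d_{X_n}(x,z_n)\ge L_1-O(C)$ for all $n\gg 0$. By Lemma \ref{Transform} applied to $[x,z_n]$, the entry and exit points of $[x,z_n]$ in $N_r(X_n)$ are separated by length at least $L_1-O(r)$. Now invoke Lemma \ref{AnyBarrierLem}: for $L_1$ chosen large enough compared to the constants $r(C), L(C)$ of that lemma and $\|F\|_m$, the middle portion of this overlap contains a point of the form $g_n'o$ (with $g_n'\in g_nE(h_{i_n})$) such that both $g_n'o$ and $g_n'f_no$ (for an appropriate $f_n\in\{h_{i_n},h_{i_n}^{-1}\}\subseteq F$) lie within $r$ of $[x,z_n]$. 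Hence $g_n'o$ is an $(r,F)$-barrier on $[x,z_n]$, so $z_n\in \Omega_x^F(g_n'o,r)$ and $\xi\in \Pi_x^F(g_n'o,r)$. Since the $X_n$ can be taken distinct (or at least the points $g_n'o$ exit in $\U$, because the projection gap $L_1$ is much larger than diameters of bounded collections of $X$'s meeting any ball), we obtain infinitely many such shadows containing $\xi$, proving $\xi$ is $(r,F)$-conical.

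\textbf{Main obstacle.} The delicate step is ensuring that in Direction $(r,F)\Rightarrow L_2$ the boundary projection $\pi_{X_n}(\xi)$ is defined and close to $\pi_{X_n}(z_n)$; this rests essentially on $\xi\in\mathcal C$ being non-pinched combined with Lemma \ref{DisjointContr}, which lets us discard the finitely many $X_n$ with $\xi\in[\Lambda X_n]$. In the reverse direction, the quantitative conversion from a large projection $d_{X_n}(x,\xi)$ into an $(r,F)$-barrier requires $L_1$ to absorb the length $\|F\|_m$ plus the Morse and contracting constants; the book-keeping between these constants is the only subtle point, and it is already packaged by Convention \ref{ConvF}.
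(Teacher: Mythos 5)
Your proof follows essentially the same approach as the paper: both directions hinge on transferring projection information between $\xi$ and approximating interior points $z_n\to\xi$ (Lemma~\ref{BdryProjLem} / Corollary~\ref{BdryProjCor}), and on converting between a large projection gap and the existence of an $(r,F)$-barrier (Lemmas~\ref{Transform} and~\ref{AnyBarrierLem}). You are in fact \emph{more} careful than the paper in the $(r,F)\Rightarrow L_2$ direction: the paper's proof stops at $d_{X_n}(x,z_m)\ge L_2$ and declares $\xi$ to be $L_2$-conical without explicitly converting the estimate from $z_m$ to $\xi$, whereas you carry out that transfer.

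One step of yours, however, is not justified as stated: the claim ``Since $X_n$ exits (as $d(x,y_n)\to\infty$), we have $\xi\notin[\Lambda X_n]$ for $n\gg 0$.'' Distinctness and unboundedness of the barrier points $y_n=h_no$ do \emph{not} force the axes $X_n=h_n\ax(f_n)$ to be exiting: all $y_n$ may lie on a single axis $X$, in which case $X_n=X$ for every $n$, the sequence $X_n$ does not exit, and $\xi\in[\Lambda X]$, so Lemma~\ref{BdryProjLem} does not apply and $\pi_X(\xi)$ is not defined. This corner case is precisely the ``eventually constant'' case that the paper handles elsewhere (e.g.\ in the proof of Lemma~\ref{ConvOnConicalLem}). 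The fix is short: either treat the case of $X_n$ eventually constant separately (there $\xi$ is an endpoint of a contracting quasi-geodesic in $\f$, so the conclusion follows directly, e.g.\ with the convention that $d_X(x,\xi)=\infty$ when $\xi\in[\Lambda X]$), or pass to a subsequence and argue that if the $X_n$ are infinitely often distinct then $\xi\notin[\Lambda X_n]$ for $n\gg 0$ exactly as you do. With that caveat patched, your argument is correct and matches the paper's.
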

\begin{proof}

Let $\xi$ be an $L_1$-conical point in Definition \ref{ConicalDef1prime} so there exists $X_n\in \f$ such that for each $n\ge 1$, we have
$$
d_{X_n}(x,\xi)\ge L_1
$$
Consider a convergent sequence of points $z_m\in \U\to\xi$.  By Lemma \ref{BdryProjLem} we have $d_{X_n}(x, z_m)\ge L_1-2C$  for any fixed $n\ge 1$ and for any $m\gg 0$. Choosing $L_1\gg \|Fo\|$   by Lemma \ref{AnyBarrierLem}, we have $[x,z_m]$ contains an $(r, f)$-barrier at $y_n\in \pi_{X_n}(x)$ for some $f\in F$. As $z_m\to\xi$, we have $\xi$ lies in the closure of $ \Omega_x^F(y_n, r)$, that is, the shadow $ \Pi_x^F(y_n, r)$. Thus, $\xi$ is $(r, F)$-conical.

Conversely,  if $\xi\in  \Pi_x^F(g_no, r)$, choose a sequence of points  $z_m\in \Omega_x^F(g_no, r)$ tending  to $\xi$ (as $m\to\infty$). There exists an $(r, F)$-barrier $y_n\in X_n=g_n\ax(f_n)$ for some $f_n\in F$ so that $$\|[x,z_m]\cap N_r(X_n)\|\ge \|Fo\|_{\min}$$ Lemma \ref{Transform} implies $d_{X_n}(x,z_m)\ge  L_2:=\|Fo\|_{\min}-4r$. Thus, $\xi$ is an $L_2$-conical point. The proof is complete.
\end{proof}

A point $\xi\in \pU$ is called \textit{$[\cdot]$-visual} (or simply \textit{visual}): for any $x\in\U$, there exists a geodesic ray starting at $x$ ending at $[\xi]$. 
We first prove that conical points are visual. 

We remark that the next result uses crucially \ref{AssumpB}, which underlies the proof of Lemma \ref{FreqContrLimits}. 

\begin{lem}\label{CharConicalLem}

Let $\xi \in \Lambda_{r}^F(Go)$ and $\hat r$ given by Lemma \ref{BarrierFellowLem}. For any $x\in \U$,  there exists a geodesic ray $\gamma$ from $x$, which accumulates  into $[\xi]$. Moreover, 
\begin{enumerate}
    \item 
    $\gamma$ contains infinitely many distinct $(\hat r, F)$-barriers $y_n\in Go$, and $y_n\to [\xi]$.
    \item 
    There exists  an $(L,B,\f)$-admissible ray starting from $x$ and ending at $[\xi]$, where $L, B$ depending on $F,r$ are given by Lemma \ref{Truncation}.
\end{enumerate}


\end{lem}
 
\begin{proof}
By definition of $\xi \in \Lambda_{r}^F(Go)$,  for each fixed $n\ge 1$, there exists a sequence of  points $z_{n,m}\in\U$ in the cone $\Omega_x^{F}(g_n o, r)$ for some $x_0\in Go$ and $g_n\in G$ such that   $z_{n,m}$ tends to $\xi$ when $m\to\infty$.  A geodesic segment $\alpha_m'=[x_0, z_{n,m}]$ contains the $(r, f_n)$-barrier $y_n=g_no$ for some $f_n\in F$:  that is, $$d(g_no, \alpha_m'),\; d(g_nf_no,\alpha_m')\le r.$$ 
Given $x\in \U$, consider a geodesic segment $\alpha_m=[x,z_{n,m}]$. We claim that  $g_no$ is $(\hat,f_n)$-barrier for $\alpha_m$, where   $\hat r$ is given by Lemma \ref{BarrierFellowLem}.  Indeed, we first note that $\alpha_m$ and $\alpha_m'$ both intersect $N_r(g_n\ax(f_n))$.  if $d(x, g_n\ax(f_n))\ge d(x_0,x)$, we have $d_{g_n\ax(f_n)}(x,x_0)\le C$. 

By assumption, $\U$ is a proper metric space. Up to passing to a subsequence, the sequence of $\alpha_m$  converges pointwise to  a limiting geodesic ray denoted by $\gamma_n$,   whence  $g_no$ is an  $(\hat r, f_n)$-barrier for $\gamma_n$. Passing to a subsequence, a limiting geodesic ray $\gamma$ of $\gamma_n$   still possesses  infinitely many distinct $(\hat r, F)$-barriers  $g_no$.   

 
In particular, $\|\gamma\cap N_{\hat r}(g_n\ax(f))\|\ge \|Fo\|_{\mathrm{min}}$. If $\|Fo\|_{\mathrm{min}}$ is sufficiently large, we obtain the convergence of $g_no\to [\xi]$   from Lemma \ref{FreqContrLimits}.

The general case for $x\in \U$ could further derived from  Lemma \ref{BarrierFellowLem}:  we leave the proof to the interested reader.

By Lemma \ref{Truncation}, the truncation  of $\gamma$ relative to barriers is an $(L, B)$-admissible ray of infinite type, for some $L=L(\|Fo\|_{\min})$. The assertion (2) thus follows by Lemma \ref{AdmisContrLimits}.
\end{proof}




As an immediate corollary, we can fix the light source $x$ in (\ref{rFConicalEQ}) for convenience.

\begin{cor}\label{FixLightSource}
For any $r>0$ there exists $\hat r>0$ such that for any $x\in Go$, we have 
$$
\Lambda_{r}^F(Go) \subseteq  \limsup_{y\in Go} \Pi_x^F(y, \hat r)
$$
\end{cor}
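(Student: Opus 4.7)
My plan is to unfold the definition of $\Lambda_r^F(G)$ at a witnessing source $x_0 \in Go$ and then transfer the $(r, F)$-barrier data to the arbitrary source $x$ by means of the fellow-travel Lemma \ref{BarrierFellowLem}.

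Take $\xi \in \Lambda_r^F(G)$. By Definition \ref{ConicalDef2}, there exist $x_0 \in Go$ and an infinite sequence of distinct points $y_n \in Go$ with $\xi \in \Pi_{x_0}^F(y_n, r)$. For each $n$, I unpack the closure to obtain a sequence $z_{n,m} \in \Omega_{x_0}^F(y_n, r)$ with $z_{n,m} \to \xi$ as $m \to \infty$, together with an element $f_n \in F$ and a barrier axis $X_n := g_n \ax(f_n)$ (where $y_n = g_n o$) so that $y_n$ is an $(r, f_n)$-barrier on $[x_0, z_{n,m}]$.

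Fix any $x \in Go$. I plan to apply Lemma \ref{BarrierFellowLem} with $\alpha := [x_0, z_{n,m}]$ and $\beta := [x, z_{n,m}]$. The condition $d_{X_n}(\alpha^+, \beta^+) = 0$ is trivially satisfied since $\alpha^+ = \beta^+ = z_{n,m}$. The remaining condition $d_{X_n}(x_0, x) \le 10C$ holds for all $n$ sufficiently large: once $X_n$ lies outside the $C$-neighborhood of the fixed bounded segment $[x_0, x]$, the contracting property of $X_n$ yields $\|\pi_{X_n}([x_0, x])\| \le C$. Lemma \ref{BarrierFellowLem} then furnishes an $(\hat r, f_n)$-barrier $y_n$ on $[x, z_{n,m}]$, i.e.\ $z_{n,m} \in \Omega_x^F(y_n, \hat r)$. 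Letting $m \to \infty$ with $z_{n,m} \to \xi$ gives $\xi \in \overline{\Omega_x^F(y_n, \hat r)} = \Pi_x^F(y_n, \hat r)$ for all $n$ sufficiently large, hence $\xi \in \limsup_{y \in Go} \Pi_x^F(y, \hat r)$ as required.

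The main obstacle is to guarantee that the axes $X_n$ eventually exit every finite ball, so that the contracting projection bound applies uniformly. If instead infinitely many $X_n$ meet a fixed ball around $o$, then by properness of the $G$-action combined with the cocompact $E(f_n)$-action on each axis, only finitely many distinct axes can recur; after passing to a subsequence I may assume that a single axis $X^*$ repeats. In this edge case $y_n \to \xi$ along $X^*$ forces $\xi \in [\Lambda X^*]$, and Lemma \ref{DisjointContr} confines a geodesic ray from $x$ into $[\xi]$ to a finite neighborhood of $X^*$; the orbit points $E(f) \cdot o$ then supply the required infinite sequence of $(\hat r, F)$-barriers, closing the argument in this degenerate situation.
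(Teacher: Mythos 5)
Your main case (barrier axes $X_n$ exiting every bounded set) is correct and matches the paper's mechanism: the paper derives this corollary from Lemma~\ref{CharConicalLem}, whose proof transfers the light source from $x_0$ to $x$ via Lemma~\ref{BarrierFellowLem}, exactly as you do. The observation that $d_{X_n}(x_0,x)\le 10C$ only holds once $X_n$ has left the $C$-neighborhood of the fixed segment $[x_0,x]$ is accurate, and your explicit flagging of the recurring-axis case is careful attention to a point the paper glides over.

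The degenerate case, however, is not closed as written. You trade the witnessing sequence $z_{n,m}\to\xi$ for a geodesic ray $\gamma$ from $x$ lying in a finite neighborhood of $X^*$ and extract barriers on $\gamma$. But $\Pi_x^F(y,\hat r)$ is by definition the closure of the cone in $\pU$, not in the quotient $[\pU]$, and the accumulation point $\gamma^+$ lies in $[\xi]$ without necessarily equalling $\xi$ — for the endpoints of a contracting axis the class $[\xi]$ need not be a singleton. Thus you obtain $\gamma^+\in\limsup_y \Pi_x^F(y,\hat r)$, not $\xi\in\limsup_y\Pi_x^F(y,\hat r)$ as the statement requires. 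A clean repair stays with $z_{n,m}$: since $d_{X^*}(x_0,x)$ is a fixed finite quantity and $d_{X^*}(x_0,z_{n,m})\to\infty$ (because the barriers $y_n$ drift off along $X^*$), the projection distance $d_{X^*}(x,z_{n,m})$ is still large for $n$ large. Then Lemma~\ref{Transform} gives a long intersection $[x,z_{n,m}]\cap N_C(X^*)$, Lemma~\ref{AnyBarrierLem} supplies an $(r',F)$-barrier orbit point on $X^*$ in the middle of that intersection, so $z_{n,m}$ lies in the corresponding cone, and letting $m\to\infty$ puts $\xi$ itself in the shadow.
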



\subsection{Conical points using admissible paths}
The second assertion in the ``moreover" statement of Lemma \ref{CharConicalLem} motivates the third  formulation of conical points. It is based on a special class of $(L,B)$-admissible paths. Namely,  an $(L, B)$-admissible ray $\gamma$ is said of \textit{infinite type} if one of the following holds:
\begin{enumerate}
    \item 
    either its saturation $\f(\gamma)$ contains infinitely many elements, or 
    \item 
    $\gamma$ is eventually contained in a finite neighborhood of a contracting subset $X \in \f(\gamma)$. In particular, $\f(\gamma)$ is finite.
\end{enumerate} 
In the first case, $\f(\gamma)$ is infinite, so it forms an escaping sequence of $C$-contracting subsets (as the action $G\act \U$ is proper).
In the second case, if an admissible path $\gamma$ has  a finite collection $\f(\gamma)$ of associated contracting subsets,  then we can write $\gamma=p_0q_1p_1\cdots q_np_n$, where $p_n$ is a non-trivial geodesic ray in a finite neighborhood of $X$.  We refer the reader to  Definition \ref{AdmDef} of admissible paths.


Let the constant $B$ satisfy the truncation of a geodesic in Lemma \ref{Truncation}.
\begin{defn}\label{ConicalDef3}
A non-pinched point $\xi \in \mathcal C$ is called an \textit{$(L, B, \f)$-conical point}    if there exists an   $(L, B)$-admissible  ray of infinite type, which  accumulates into $[\xi]$. 

By definition,  the set of $(L, B, \f)$-conical points is $G$-invariant.
\end{defn}

We continue to clarify the quantitative equivalence between different formulations of conical points. 
\begin{lem}\label{QuantativeEquiv23}
Let $r>4C, F\subseteq G$ be as above. There exist $L_1, L_2> 0$ such that 
\begin{enumerate}
    \item 
    the $[\cdot]$-locus of  an $(L_1, B, \f)$-conical point $\xi\in \pU$  contains  an $(r, F)$-conical point,
    \item 
    the $[\cdot]$-locus of  an $(r, F)$-conical point contains $(L_2,B, \f)$-conical point $\xi\in \pU$.
\end{enumerate} 
\end{lem} 
\begin{proof}
The   statement (2) already follows from the second assertion of Lemma \ref{CharConicalLem}: if  $\xi$ is an $(r, F)$-conical point, then there exists an $(L_2,B,\f)$-admissible ray $\gamma$,  which accumulates  in $[\xi]$.  

Let  $L_1=\|Fo\|+L_0+6r>0$, where $L_0$ is given by Lemma \ref{AnyBarrierLem}. Let us now assume that $\xi$ is  an $(L_1, B,  \f)$-conical point, and prove the statement (1). 

Let  $\gamma$ be an $(L_1, B)$-admissible ray of infinite type ending in $[\xi]$. Take a sequence of geodesics $\alpha_n:=[o,x_n]$ for an unbounded sequence of points $x_n$ on $\gamma$. Any subpath of $\gamma$ is $(L_1, B)$-admissible, so has the $r$-fellow travel property by Proposition \ref{admisProp}: $$\|\alpha_n\cap N_r(X_i)\|>L_1-2r$$ for a sequence of  $X_i\in \f(\gamma)$. An Cantor argument using Ascoli-Arzela lemma    obtains  a limiting    geodesic ray $\alpha$  from $o$,   satisfying $\|\alpha\cap N_r(X_i)\|>L_1-2r$ as well. 

From Lemma \ref{FreqContrLimits}, we know that  $\alpha$ accumulates into $[\xi]$. By Lemma \ref{Transform}, we have 
$$
d_{X_i}(\alpha^-,\alpha^+) \ge \|\alpha\cap N_r(X_i)\|-4r \ge L_1-6r\ge \|Fo\|+L_0.
$$
By Lemma \ref{AnyBarrierLem}, $\alpha$ contains infinitely many  $(r, F)$-barriers, so by definition \ref{ConicalDef2}, any accumulation point of $\alpha$ in the boundary   is an $(r, F)$-conical point. 
\end{proof}


Furthermore, conical points are visual from each other.
 
\begin{lem}\label{VisualPointsLem}
For any          $[\xi]\ne[\eta]$ in  $\Lambda_{r}^F(Go)$, there exists  a bi-infinite geodesic so that the two half rays, both with infinitely many $(\hat r, F)$-barriers, accumulate into $[\xi], [\eta]$ respectively.  
\end{lem}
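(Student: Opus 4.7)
The plan is to join two conical rays at $[\xi]$ and $[\eta]$ into a bi-infinite admissible path, then extract a limiting bi-infinite geodesic by properness and the fellow-travel property of admissible paths.

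\textbf{Two conical rays.} Fix a basepoint $o \in \U$. By Lemma \ref{CharConicalLem}, choose geodesic rays $\alpha, \beta$ starting at $o$ and accumulating in $[\xi], [\eta]$ respectively, each carrying infinitely many distinct $(\hat r, F)$-barriers. Denote the corresponding saturating contracting axes by $X_k \in \f$ for $\alpha$ and $Y_k \in \f$ for $\beta$, indexed in the order they appear along each ray, so that $X_k \to [\xi]$ and $Y_k \to [\eta]$. Applying Lemma \ref{Truncation} (under the Standing Assumption of Convention \ref{ConvF}, so that $\|F\|_m$ is large enough) to each of $\alpha,\beta$ produces $(L, B)$-admissible rays $\hat\alpha, \hat\beta$ from $o$ with saturations $\{X_k\}$ and $\{Y_k\}$ respectively.

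\textbf{Concatenation as a bi-infinite admissible path.} I claim that only finitely many $X_k$ can coincide with some $Y_j$: if $X_k = Y_{j_k}$ for infinitely many $k$, then picking points $z_k \in \beta \cap N_r(X_k)$ would produce an exiting sequence on $\beta$ (because the exiting $X_k$ are forced so by Lemma \ref{EnlargeXn}) accumulating in $[\xi]$, contradicting $\beta \to [\eta]\ne[\xi]$. After discarding the finitely many offending barriers closest to $o$ from each ray, the joint family $\{X_k, Y_j\}$ consists of pairwise distinct elements of $\f$, hence enjoys the $B$-bounded projection property between any two of them. The bi-infinite piecewise geodesic
$$
\omega \;:=\; \hat\beta^{-1} \cdot \hat\alpha
$$
then satisfies the Long Local (inherited from $\hat\alpha, \hat\beta$) and Bounded Projection conditions of Definition \ref{AdmDef}, so $\omega$ is a bi-infinite $(L, B)$-admissible path.

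\textbf{Limit geodesic and its barriers.} Choose points $p_n$ on $\alpha$ beyond the $n$-th barrier $X_n$ and $q_n$ on $\beta$ beyond the $n$-th barrier $Y_n$; both lie on $\omega$. By Proposition \ref{admisProp}, each geodesic $\gamma_n := [q_n, p_n]$ has the $r$-fellow travel property with the finite sub-path of $\omega$ from $q_n$ to $p_n$; in particular $\gamma_n$ passes within distance $r$ of $o$. By Arzela-Ascoli and the properness of $\U$, a subsequence of $\gamma_n$ converges, uniformly on compacts, to a bi-infinite geodesic $\gamma$. For each fixed $k$, the $r$-fellow travel property produces points on $\gamma_n$ (for $n \gg k$) within $r$ of the $X_k$-piece of $\omega$; taking $n \to \infty$ and applying Lemma \ref{BarrierFellowLem} promotes $X_k$ to an $(\hat r, F)$-barrier of $\gamma$, and similarly each $Y_k$ becomes an $(\hat r, F)$-barrier of $\gamma$. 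Thus $\gamma^+$ contains cofinally the barriers $X_k \to [\xi]$ and $\gamma^-$ contains $Y_k \to [\eta]$; Lemma \ref{FreqContrLimits} then forces $\gamma^+$ to accumulate into $[\xi]$ and $\gamma^-$ into $[\eta]$, completing the proof.

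\textbf{Main obstacle.} The delicate point is the admissibility of $\omega$ at the junction near $o$: this requires the innermost saturating axes $Y_1$ and $X_1$ (after discarding finitely many) to be distinct elements of $\f$ so that the connecting geodesic between them has $B$-bounded projections to both. This is exactly what the disjointness argument above secures, and it is the only place where the hypothesis $[\xi] \ne [\eta]$ is substantively used.
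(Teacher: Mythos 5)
There is a genuine gap at the concatenation step. You assert that $\omega := \hat\beta^{-1}\cdot\hat\alpha$ is a bi-infinite $(L,B)$-admissible path because ``the joint family $\{X_k, Y_j\}$ consists of pairwise distinct elements of $\f$, hence enjoys the $B$-bounded projection property.'' But the \textbf{(BP)} condition in Definition \ref{AdmDef} concerns the diameter of the projection of the \emph{connecting geodesic segments} $q_i$ to the axes $X_i$, not the mutual projection $\|\pi_{X_i}(X_j)\|\le B$ of the axes themselves; the latter does not imply the former. At the junction near $o$, the ``connecting segment'' between the innermost axis $Y_0$ of $\hat\beta$ and the innermost axis $X_0$ of $\hat\alpha$ passes (piecewise-geodesically, or after replacement, geodesically) through $o$. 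Since both $\alpha$ and $\beta$ issue from $o$, the contracting property places $\pi_{Y_0}(o)$, and hence $\pi_{Y_0}(w_0^\alpha)$ up to the constant $B$, near the \emph{entry} end of the overlap $\beta\cap N_C(Y_0)$, while the endpoint $w_0^\beta$ of the junction segment sits near the \emph{exit} end — separated by roughly the full overlap length $\ge L$. So the projection of the junction segment to $Y_0$ has diameter on the order of $L\gg B$, and \textbf{(BP)} fails there. Note also that the paper's Concatenation operation requires the two admissible paths to \emph{share} their last/first contracting subset, which is precisely the opposite of your situation (you have ensured $Y_0\ne X_0$). Without admissibility of $\omega$, Proposition \ref{admisProp} cannot be invoked to make $\gamma_n=[q_n,p_n]$ pass near $o$, and the limit step collapses. (A small secondary point: Lemma \ref{EnlargeXn} does not say the $X_k$ are exiting; that follows from properness and bounded intersection of $\f$, since only finitely many $\f$-elements meet any ball.)

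The paper avoids this entirely by never concatenating admissible rays at $o$: it takes the geodesics $\gamma_n=[x_n,y_n]$ directly and shows $d(o,\gamma_n)$ is bounded by contradiction. If $[x_n,y_n]$ were exiting, then for $\eta\notin[\Lambda X_m]$ Corollary \ref{BdryProjCor} gives $d_{X_m}(x_n,y_n)\le C$, while the fellow-travel property of $\hat\alpha$ gives $d_{X_m}(o,x_n)\ge L-4r$; the triangle inequality for $d_{X_m}$ then forces $d_{X_m}(o,y_n)$ large, hence $\beta\cap N_r(X_m)\ne\emptyset$ for all $m$, so by Lemma \ref{FreqContrLimits} $\beta$ accumulates into $[\xi]$, contradicting $[\xi]\ne[\eta]$. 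This argument uses the hypothesis $[\xi]\ne[\eta]$ in exactly the place where your argument would need the unavailable \textbf{(BP)} bound; you should adopt that contradiction argument in place of the concatenation-admissibility claim.
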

\begin{proof}
By Lemma \ref{QuantativeEquiv23}, let $\alpha,\beta$ be two $(L,B)$-admissible rays of infinite type ending at $[\xi], [\eta]$ respectively.    Let $\f(\alpha)=\{X_n\}$ and $\f(\beta)=\{Y_n\}$ be the corresponding saturation of $\alpha$ and $\beta$. 

According to the definition of infinite type, if $\f(\alpha), \f(\beta)$ are both finite, then  $\alpha, \beta$ are  eventually contained in a finite neighborhood of  contracting quasi-geodesics, the last one in $\f(\alpha)$ and $\f(\beta)$ respectively. In this case, the proof is completed by Lemma \ref{ContractingBdryVisual}.

Let us thus assume that one of $\f(\alpha)$ and $\f(\beta)$, say $\f(\alpha)$, is infinite. By Lemma \ref{DisjointContr}, we have $\xi\notin [\Lambda X_m]$ for each   fixed $X_m$.
Now, let $x_n\in \alpha\cap N_r(X_n)$ and $y_n\in \beta\cap N_r(Y_n)$ be two unbounded sequences.  Consider the sequence of   geodesics  $\gamma_n:=[x_n, y_n]$. We claim that $d(o, \gamma_n)$ are uniformly bounded for all $n\gg 0$. See Fig. \ref{fig:visualconicalpts}.

\begin{figure}
    \centering

\tikzset{every picture/.style={line width=0.75pt}} 

\begin{tikzpicture}[x=0.75pt,y=0.75pt,yscale=-1,xscale=1]

\draw   (413.81,39.93) .. controls (409.58,30.63) and (415.43,18.88) .. (426.86,13.68) .. controls (438.3,8.48) and (451,11.8) .. (455.23,21.1) .. controls (459.46,30.4) and (453.62,42.16) .. (442.18,47.36) .. controls (430.74,52.56) and (418.04,49.23) .. (413.81,39.93) -- cycle ;
\draw   (338.81,69.02) .. controls (334.58,59.71) and (340.43,47.96) .. (351.86,42.76) .. controls (363.3,37.56) and (376,40.88) .. (380.23,50.18) .. controls (384.46,59.48) and (378.62,71.24) .. (367.18,76.44) .. controls (355.74,81.64) and (343.04,78.32) .. (338.81,69.02) -- cycle ;
\draw  [dash pattern={on 4.5pt off 4.5pt}]  (294,108) -- (336.96,72.28) ;
\draw [shift={(338.5,71)}, rotate = 140.26] [color={rgb, 255:red, 0; green, 0; blue, 0 }  ][line width=0.75]    (10.93,-3.29) .. controls (6.95,-1.4) and (3.31,-0.3) .. (0,0) .. controls (3.31,0.3) and (6.95,1.4) .. (10.93,3.29)   ;
\draw  [dash pattern={on 4.5pt off 4.5pt}]  (380.23,50.18) -- (413.33,38.84) ;
\draw [shift={(415.22,38.19)}, rotate = 161.09] [color={rgb, 255:red, 0; green, 0; blue, 0 }  ][line width=0.75]    (10.93,-3.29) .. controls (6.95,-1.4) and (3.31,-0.3) .. (0,0) .. controls (3.31,0.3) and (6.95,1.4) .. (10.93,3.29)   ;
\draw    (455.23,21.1) -- (473.36,13.85) ;
\draw [shift={(475.22,13.11)}, rotate = 158.21] [color={rgb, 255:red, 0; green, 0; blue, 0 }  ][line width=0.75]    (10.93,-3.29) .. controls (6.95,-1.4) and (3.31,-0.3) .. (0,0) .. controls (3.31,0.3) and (6.95,1.4) .. (10.93,3.29)   ;
\draw   (403.84,199.24) .. controls (409.09,190.48) and (422.09,188.6) .. (432.87,195.06) .. controls (443.65,201.51) and (448.13,213.85) .. (442.88,222.62) .. controls (437.63,231.38) and (424.64,233.26) .. (413.86,226.8) .. controls (403.08,220.35) and (398.6,208.01) .. (403.84,199.24) -- cycle ;
\draw   (334.1,155.14) .. controls (339.35,146.37) and (352.35,144.5) .. (363.13,150.95) .. controls (373.91,157.4) and (378.39,169.74) .. (373.14,178.51) .. controls (367.89,187.28) and (354.9,189.15) .. (344.12,182.7) .. controls (333.34,176.24) and (328.85,163.9) .. (334.1,155.14) -- cycle ;
\draw  [dash pattern={on 4.5pt off 4.5pt}]  (294,108) -- (335.14,152.53) ;
\draw [shift={(336.5,154)}, rotate = 227.26] [color={rgb, 255:red, 0; green, 0; blue, 0 }  ][line width=0.75]    (10.93,-3.29) .. controls (6.95,-1.4) and (3.31,-0.3) .. (0,0) .. controls (3.31,0.3) and (6.95,1.4) .. (10.93,3.29)   ;
\draw  [dash pattern={on 4.5pt off 4.5pt}]  (368.5,183) -- (404.24,198.61) ;
\draw [shift={(406.07,199.41)}, rotate = 203.59] [color={rgb, 255:red, 0; green, 0; blue, 0 }  ][line width=0.75]    (10.93,-3.29) .. controls (6.95,-1.4) and (3.31,-0.3) .. (0,0) .. controls (3.31,0.3) and (6.95,1.4) .. (10.93,3.29)   ;
\draw    (442.88,222.62) -- (459.15,233.41) ;
\draw [shift={(460.81,234.52)}, rotate = 213.57] [color={rgb, 255:red, 0; green, 0; blue, 0 }  ][line width=0.75]    (10.93,-3.29) .. controls (6.95,-1.4) and (3.31,-0.3) .. (0,0) .. controls (3.31,0.3) and (6.95,1.4) .. (10.93,3.29)   ;
\draw    (413.81,39.93) .. controls (371.5,101) and (370.5,131) .. (403.84,199.24) ;
\draw    (294,108) -- (378.5,111.91) ;
\draw [shift={(380.5,112)}, rotate = 182.65] [color={rgb, 255:red, 0; green, 0; blue, 0 }  ][line width=0.75]    (10.93,-3.29) .. controls (6.95,-1.4) and (3.31,-0.3) .. (0,0) .. controls (3.31,0.3) and (6.95,1.4) .. (10.93,3.29)   ;
\draw [shift={(294,108)}, rotate = 2.65] [color={rgb, 255:red, 0; green, 0; blue, 0 }  ][fill={rgb, 255:red, 0; green, 0; blue, 0 }  ][line width=0.75]      (0, 0) circle [x radius= 3.35, y radius= 3.35]   ;

\draw (417.07,26.91) node [anchor=north west][inner sep=0.75pt]  [rotate=-335.55]  {$X_{n}$};
\draw (342.07,56.91) node [anchor=north west][inner sep=0.75pt]  [rotate=-335.55]  {$X_{m}$};
\draw (416.41,194.52) node [anchor=north west][inner sep=0.75pt]  [rotate=-30.91]  {$Y_{n}$};
\draw (349.36,152.99) node [anchor=north west][inner sep=0.75pt]  [rotate=-30.91]  {$Y_{m}$};
\draw (477.22,16.51) node [anchor=north west][inner sep=0.75pt]    {$[ \xi ]$};
\draw (456.22,208.4) node [anchor=north west][inner sep=0.75pt]    {$[ \eta ]$};
\draw (413,48.4) node [anchor=north west][inner sep=0.75pt]    {$x_{n}$};
\draw (410,165.4) node [anchor=north west][inner sep=0.75pt]    {$y_{n}$};
\draw (274,98.4) node [anchor=north west][inner sep=0.75pt]    {$o$};
\draw (392,102.4) node [anchor=north west][inner sep=0.75pt]    {$[ x_{n} ,y_{n}]\rightarrow \gamma $};

\end{tikzpicture}   
    \caption{Conical points are visual.}
    \label{fig:visualconicalpts}
\end{figure}
Indeed, suppose to the contrary that the sequence of $[x_n,y_n]$ is escaping. For $\xi\notin [\Lambda X_m]$,     Corollary \ref{BdryProjCor} shows $$\forall n\gg m: \quad d_{X_m}(x_n, y_n)\le C$$  By Proposition \ref{admisProp}, $\alpha$ has $r$-fellow travel property for $X_m$ and then by Lemma \ref{Transform}, we have $$d_{X_m}(o, x_n)\ge \|[o,x_n]\cap N_r(X_m)\| -4r\ge L- 4r$$ which yields by the triangle inequality $d_{X_m}$:   $$d_{X_m}(o, y_n)>L-C-4r.$$  By  Lemma \ref{Transform}, if $L> C-7r$, we have $\beta$ intersects $N_r(X_m)$.  As $X_n$ accumulates into $[\xi]$ by Lemma \ref{FreqContrLimits}, we obtain that $\beta$ does so as well. This leads to a contradiction with  $\alpha\ne \beta$. The claim follows.

As $\U$ is a locally compact complete metric space,  a subsequence of  $\gamma_n$ converges locally uniformly to a   bi-infinite geodesic $\gamma$, which intersects $N_r(X_m)$ and $N_r(Y_m)$ for all $m\gg 0$. Moreover, $X_m$ and $Y_m$ produce an infinite sequence of $(\hat r, F)$-barriers on the two half rays $\gamma^-,\gamma^+$ of $\gamma$, where $\hat r$ is given by Lemma \ref{BarrierFellowLem}. According to Lemma \ref{FreqContrLimits},  $\gamma^-,\gamma^+$ accumulate into $[\xi]\ne[\eta]$ respectively. 

In summary,  we proved a bi-infinite geodesic $\gamma$ with desired properties, connecting   $[\xi]$ and $[\eta]$. The proof of lemma is complete.
\end{proof}

We next show that our notion of conical points satisfies the dynamical property of conical points in the theory of convergence groups.

\begin{lem}\label{ConvOnConicalLem}
Let $\xi \in \Lambda_{r}^F(Go)$ be a  conical  point. Then  there exists a 
sequence of elements $g_n \in G$ and a pair of     $[a]\ne [b]
\in [\pU]$ such that the following holds
$$g_n(\xi, x) \to ([a], [b]),$$
for any point $x\in \U$. 
\end{lem}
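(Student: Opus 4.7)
The plan is to pull back along the barriers that witness conicality of $\xi$, paralleling the classical convergence dynamics for Kleinian groups. By Lemma~\ref{CharConicalLem}, fix a geodesic ray $\gamma$ starting at $o$ and accumulating at $[\xi]$ with infinitely many pairwise distinct $(\hat r, F)$-barriers $y_n := h_n o$ satisfying $y_n \to [\xi]$. Since $F$ is finite, after passing to a subsequence I may assume each $y_n$ is an $(\hat r, f)$-barrier for a common $f \in F$, with underlying axis $X := \ax(f)$ translated to $h_n X$.

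Set $g_n := h_n^{-1}$. Then $g_n \gamma$ starts at $g_n o$, passes through the interior point $o = g_n y_n$ carrying $X$ as an $(\hat r, f)$-barrier there, and accumulates at $g_n \xi$. By compactness of $\bU$, pass to a further subsequence so that $g_n o \to [b]$ and $g_n \xi \to [a]$ in $\pU$ for some $[a], [b]$. For arbitrary $x \in \U$: since $d(g_n x, g_n o) = d(x, o)$ is uniformly bounded, applying \ref{AssumpB} to an exiting contracting witness of the convergence $g_n o \to [b]$ (cf.\ Lemma~\ref{EnlargeXn}) yields $g_n x \to [b]$ as well.

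The crux is $[a] \neq [b]$. Lemma~\ref{Transform} applied to the forward sub-ray $[o, g_n \xi]_{g_n \gamma}$, which enters $N_{\hat r}(X)$ at $o$ and exits near $fo$, gives $d_X(o, g_n \xi) \ge d(o, fo) - 4\hat r$, so by Lemma~\ref{BdryProjLem} the projection $\pi_X([a])$ lies at distance at least $d(o, fo) - O(1)$ from $o$ whenever $[a] \notin [f^\pm]$. In contrast, the backward sub-ray $[g_n o, o]_{g_n \gamma}$ leaves $N_{\hat r}(X)$ at the endpoint $o$ and, as a geodesic, does not re-enter, so the contracting property forces $d_X(o, g_n o) \le \hat r + C$ and hence $\pi_X([b])$ lies within $O(1)$ of $o$ when $[b] \notin [f^\pm]$. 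Under Convention~\ref{ConvF} with $\|F\|_m$ sufficiently large, these two projections are separated by more than $6C$, so Lemma~\ref{LocusProj} rules out $[a] = [b]$. Boundary cases where $[a]$ or $[b]$ lies in $[f^\pm]$ are handled via Corollary~\ref{OpenNBHD}: for instance $g_n o \to [f^+]$ would force $d_X(o, g_n o) \to \infty$, contradicting the bound above, while non-pinchedness of $f$ guarantees $[f^+] \neq [f^-]$ so the forward and backward sides of $X$ are separated. The main obstacle is the uniform convergence $g_n x \to [b]$ for every $x \in \U$: it is a bounded-perturbation property of the bordification, immediate in the principal examples (horofunction, Roller, Gardiner--Masur, Thurston, CAT$(0)$ visual) but in the abstract setting must be extracted from \ref{AssumpA}--\ref{AssumpC}.
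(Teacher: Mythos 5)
Your strategy---pulling back by the inverses of the barrier elements and using the fixed axis $X=\ax(f)$ through $o$ to separate the forward limit from the backward limit---is the same as the paper's, but two steps do not go through as written. First, the bound $d_X(o,g_no)\le \hat r+C$ is not justified. The barrier point $y_n=h_no$ need not be near the \emph{entry} point of $\gamma$ into $N_{\hat r}(X_n)$: the ray $\gamma$ may fellow-travel $X_n$ for an arbitrarily long time $D_n$ before reaching $y_n$, and then the terminal portion of your ``backward sub-ray'' lies inside $N_{\hat r}(X)$ for length about $D_n$, giving $d_X(o,g_no)\approx D_n$, which is unbounded. This is exactly what happens when the axes $X_n=h_n\ax(f)$ are eventually constant (i.e.\ $h_n\in hE(f)$ for fixed $h$), a degenerate case the paper treats separately via the North--South dynamics of a conjugate of $f$ and which your argument does not cover. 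The separation of the two limits can be salvaged---$\pi_X(g_no)$ always lies on the entry side of $o$ while $\pi_X(g_n\xi)$ lies beyond $fo$, so they are $\ge d(o,fo)-O(\hat r)$ apart in all cases---but your stated inequality, and the way you use it to exclude $g_no\to[f^+]$, must be replaced by this directional argument together with the eventually-constant case analysis.

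Second, and more seriously, the convergence $g_nx\to[b]$ for \emph{every} $x\in\U$ is left open, as you acknowledge. The inference ``$d(g_nx,g_no)$ bounded, hence same limit class'' is not among \ref{AssumpA}--\ref{AssumpC} and fails for a general convergence bordification; the paper has it only for the horofunction boundary (Lemma \ref{bddLocusLem}), and \ref{AssumpB} cannot be invoked without exhibiting an exiting sequence of contracting sets whose cones contain \emph{both} $g_no$ and $g_nx$. The paper's route is to take a geodesic from $x$ itself to $[\xi]$, which by Lemmas \ref{CharConicalLem} and \ref{BarrierFellowLem} carries the same barriers $X_n$; it then cuts $g_n\gamma$ at $N_r(\ax(f))$ into a piece $\alpha_n$ ending at $g_nx$ and a piece $\beta_n$ ending toward $g_n\xi$, extracts limit rays $\alpha,\beta$ each carrying infinitely many exiting barriers determined by $\{X_n\}$ alone, and applies \ref{AssumpB} to those barriers to get $g_nx\to[b]$ and $g_n\xi\to[a]$ with $[a],[b]$ independent of $x$. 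Some argument of this kind is needed to close your proof.
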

\begin{rem}
This conclusion could hold for certain boundary points $x\in \pU$, for instance if $\zeta$ is a conical point. We restrict to the points in  $\U$ to simplify the proof.
\end{rem}

\begin{proof}
By the proof of Lemma \ref{CharConicalLem}, there exists a   geodesic $\gamma$ from $x$ to $\xi$, so that there exists a sequence   $X_n\in \f$ such that $\|\gamma\cap N_r(X_n)\|\ge \|Fo\|_{\min}$.   Passing to subsequence, we assume that $X_n=g_n^{-1}\ax(f)$ for the same $f\in F$ and $g_n\in G$. 

If $X:=X_n$  are eventually constant,  then $\gamma$ is eventually contained in a finite neighborhood of $X$, so $[\xi]$ is the end point of a positive ray of $X$ (with appropriate arc-length parametrization). Thus, the diameter of  $g_n \gamma\cap N_r(\ax(f))$ tends to $\infty$, that is: $\pi_{\ax(f)}(g_nx,g_n\xi)\to \infty$. According to our convention, $F$ consists of non-pinched contracting elements. Let $[a]\ne [b]$ be the attracting and repelling fixed points of non-pinched $f$. The conclusion in this case now follows from   Lemma \ref{OpenNBHD}. 

Passing to a subsequence, we  next  assume  that $X_n$ are distinct. 
	Let $\alpha_n$ be the geodesic segment of $g_n\gamma$ before entering $N_r(\ax(f))$, and $\beta_n$  the remaining   half ray.  By Lemma \ref{BarrierFellowLem}, two subsequences of $\alpha_n$ and $\beta_n$ converge respectively to  two geodesic rays $\alpha, \beta$ with infinitely many $(\hat r, F)$-barriers. By Lemma \ref{FreqContrLimits} using crucially \ref{AssumpB}, $\alpha, \beta$ accumulate to $[a], [b]$ respectively, which depend only on the sequence $\{X_n\}$.  The proof  is complete.
\end{proof}


\subsection{Myrberg   points}
We are in a position to formulate a more intrinsic subclass of conical  points, called Myrberg  points, in a sense that  Myrberg  points do not refer to a contracting system as conical points do. It has several equivalent characterizations (cf. \cite{Tukia4}); we give the one in line with  the convergence property of conical points in Lemma \ref{ConvOnConicalLem}. 

Recall that a point $\xi\in \pU$ is called visual if there exists a geodesic ray starting from any point in $\U$ and accumulating into $[\xi]$. In Lemma \ref{VisualPointsLem}, we proved that conical points are visual.
\begin{defn}
A visual point $\xi \in \mathcal C$ is called  \textit{Myrberg   point} if for any  pair of    $[a]\ne [b]
\subseteq
[\pG ]$ there exists a sequence of elements $g_n \in G$  such that the following  
$$g_n(\xi, x) \longrightarrow ([a], [b])$$
holds for any $x \in \U$. Denote by $\mG$ the set of Myrberg   points.
\end{defn}
 
First of all, let us note that the fixed points of a contracting element are conical, but never    Myrberg points.
\begin{lem}\label{ContrMyrberg}
Assume that a non-elementary $G$ contains a non-pinched contracting element. Then the   locus of fixed points of any  contracting element  contain no Myrberg point.
\end{lem}
\begin{proof}
By Lemma \ref{DisjFixedSet}, a non-elementary $G$ must contain at least  two independent  non-pinched contracting elements $k_1, k_2$. Any  contracting element  $h$ must be    independent with one, say $k$, of  $\{k_1,k_2\}$. Hence, by independence, any $G$-translated axis of $h$ has uniformly bounded projection   to the axis of $k$. If $[h^+]$ contains a Myrberg point $\xi$, we have $g_nx\to [k^-]$ and $g_n\xi\to [k^+]$. By Corollary \ref{OpenNBHD},  it follows that $d_{\ax(k)}(g_nx, g_n\xi)\to\infty$. This is a contradiction. Thus, $[h^+]$ contains no Myrberg points. The proof is complete.
\end{proof}

\begin{figure}
    \centering

\tikzset{every picture/.style={line width=0.75pt}} 

\begin{tikzpicture}[x=0.75pt,y=0.75pt,yscale=-1,xscale=1]

\draw    (81.5,195) .. controls (121.3,165.15) and (300.69,115.5) .. (413.8,122.88) ;
\draw [shift={(415.5,123)}, rotate = 184.05] [color={rgb, 255:red, 0; green, 0; blue, 0 }  ][line width=0.75]    (10.93,-3.29) .. controls (6.95,-1.4) and (3.31,-0.3) .. (0,0) .. controls (3.31,0.3) and (6.95,1.4) .. (10.93,3.29)   ;
\draw    (173.5,102) .. controls (196.5,156) and (295.5,129) .. (291.5,70) ;
\draw    (187.5,122) -- (193.82,149.08) ;
\draw [shift={(194.5,152)}, rotate = 256.87] [fill={rgb, 255:red, 0; green, 0; blue, 0 }  ][line width=0.08]  [draw opacity=0] (8.93,-4.29) -- (0,0) -- (8.93,4.29) -- cycle    ;
\draw [shift={(187.5,122)}, rotate = 76.87] [color={rgb, 255:red, 0; green, 0; blue, 0 }  ][fill={rgb, 255:red, 0; green, 0; blue, 0 }  ][line width=0.75]      (0, 0) circle [x radius= 3.35, y radius= 3.35]   ;
\draw    (283,104) -- (288.82,129.08) ;
\draw [shift={(289.5,132)}, rotate = 256.93] [fill={rgb, 255:red, 0; green, 0; blue, 0 }  ][line width=0.08]  [draw opacity=0] (8.93,-4.29) -- (0,0) -- (8.93,4.29) -- cycle    ;
\draw [shift={(283,104)}, rotate = 76.93] [color={rgb, 255:red, 0; green, 0; blue, 0 }  ][fill={rgb, 255:red, 0; green, 0; blue, 0 }  ][line width=0.75]      (0, 0) circle [x radius= 3.35, y radius= 3.35]   ;

\draw (69,201.4) node [anchor=north west][inner sep=0.75pt]    {$o$};
\draw (198,80.4) node [anchor=north west][inner sep=0.75pt]    {$g_{n} A\mathrm{x}( h)$};
\draw (155,113.4) node [anchor=north west][inner sep=0.75pt]    {$g_{n} o$};
\draw (290,91.4) node [anchor=north west][inner sep=0.75pt]    {$g_{n} h^{k} o$};
\draw (422,131.4) node [anchor=north west][inner sep=0.75pt]    {$\xi $};
\draw (290,112.4) node [anchor=north west][inner sep=0.75pt]    {$\leq r$};
\draw (193,131.4) node [anchor=north west][inner sep=0.75pt]    {$\leq r$};

\end{tikzpicture}
    \caption{Myrberg Points: set $f=h^k\in E(h)$ in Lemma \ref{CharMyrberg}}
    \label{fig:mybergpoints}
\end{figure}
 
The following result explains the canonical feature of a Myrberg point. Let $A$ be a set of non-pinched contracting elements  so that its fixed points pair $([h^+], [h^-])$ of elements $h\in A$ is dense in the distinct pairs $\dG$ of limit points defined (\ref{doubleBdryEq}). 
\begin{lem}\label{CharMyrberg}
A  point $\xi\in \mathcal C$ is a Myrberg   point if and only if the following holds.

Let  $f\in E(h)$ be a non-pinched contracting element for some  $h\in A$. Any geodesic ray $\gamma$ ending at $[\xi]$ contains infinitely many distinct $(r, f)$-barriers. Here $r$ depends only on the contracting constant of $\ax(h)$ (but not on the choice of $f$).
\end{lem}

\begin{proof}
``$\Longrightarrow$":  Let $\gamma$ be a geodesic ray starting at $x\in\U$ and ending at  a Myrberg   point $[\xi]$. Fix    a non-pinched contracting element $h$. Applying the definition of Myrberg points to  $[a]:=[h^-], [b]:=[h^+]$, we have $g_n(\xi, x) \to ([a], [b])$ for a sequence of $g_n\in G$. By Lemma \ref{BdryProjLem}  
we have a  well-defined projection map   $\pi_{X}:  \U\cup \mathcal C\setminus [\Lambda X]\to X$ where $X=\ax(h)$.   
 
Set $X_n:=g_n^{-1}X$. As $g_n(\xi, x) \to ([a], [b])$,  by  Corollary \ref{OpenNBHD}, there exist $n_0>0$    such that    for any $n\ge n_0$, $$
d_{X_n}(x,\xi)\gg d(o, fo)
$$   By Lemma \ref{AnyBarrierLem},     $\gamma$ contains an  $(r, f)$-barrier $g_n^{-1}X$. So the direction ``$\Longrightarrow$" follows.


``$\Longleftarrow$": As the set of      the pairs $([h^-], [h^+])$ for   $h\in A$ is dense in the distinct $[\cdot]$-pairs in $[\pG ]$, it suffices to verify the definition of Myrberg  point for each pair $([h^-],  [h^+])$.  By assumption, $\gamma=[x,\xi]$ contains   a sequence of $(r,f_n)$-barrier $g_n^{-1}\ax(h)$ for some $g_n\in G$ and $f_n\in E(h)$  with $d(o,f_no)\to \infty$. Thus, we obtain $$\|g_n\gamma\cap N_r(X)\|\ge d(o,f_no)$$
which tends to $\infty$ and thus yields $g_n(\xi, x) \to ([h^-], [h^+])$ by   Corollary \ref{OpenNBHD}. As $h$ is arbitrary,   Lemma \ref{DoubleDense} shows that $\xi$ is a Myrberg point. The proof is complete. 
\end{proof}

The following corollary summarizes the relation between conical points and Myrberg points.
\begin{cor}\label{MyrbergConical}
We have
$$
\mG= \bigcap \Lambda_r^F(Go)
$$
where the intersection is taken over   the sets  $F\subseteq G$ of three mutually independent non-pinched contracting elements in $G$.  
\end{cor}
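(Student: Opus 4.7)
I would prove both inclusions of $\mG=\bigcap_F\Lambda_r^F(G)$ using the barrier characterization of Myrberg points in Lemma~\ref{CharMyrberg}. For the forward inclusion, let $\xi\in\mG$ and let $F=\{h_1,h_2,h_3\}$ be any triple of mutually independent non-pinched contracting elements. I would take $f:=h_1\in F$ and apply Lemma~\ref{CharMyrberg}: any geodesic from $o$ to $[\xi]$ carries infinitely many distinct $(r,f)$-barriers $y_n\in Go$. Each $(r,f)$-barrier with $f\in F$ is trivially an $(r,F)$-barrier, so $\xi\in\Pi_o^F(y_n,r)$ for all $n$, and hence $\xi\in\Lambda_r^F(G)$.

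For the reverse inclusion, I assume $\xi\in\bigcap_F\Lambda_r^F(G)$ and aim to verify the Myrberg definition. By Lemma~\ref{DoubleDense} and a diagonal extraction, it is enough to produce, for every non-pinched contracting $h\in G$ and every basepoint $x\in\U$, a sequence $g_n\in G$ with $g_n(\xi,x)\to([h^+],[h^-])$. In view of the converse direction of Corollary~\ref{OpenNBHD}, combined with Lemma~\ref{BigFive}(5) which bounds $d_{\ax(h)}(g^{-1}o,g^{-1}x)$ uniformly in $g$, this reduces to the following projection statement: \emph{for every $L>0$ there are infinitely many distinct $G$-translates $X$ of $\ax(h)$ with $d_X(o,\xi)\ge L$.}

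To produce such translates, I would fix $g_2,g_3\in G$ such that $\{h,g_2hg_2^{-1},g_3hg_3^{-1}\}$ forms a mutually independent triple of non-pinched contracting elements — possible since any non-elementary group with a contracting element admits three conjugates of $h$ with pairwise independent axes. For each integer $k\ge 1$, set $F_k:=\{h^k,g_2h^kg_2^{-1},g_3h^kg_3^{-1}\}$. The critical feature is that the axis of every element of $F_k$ is a $G$-translate of $\ax(h)$, so every $(\hat r,F_k)$-barrier on a geodesic is centered on some translate $X=g\ax(h)$, and $d(o,fo)\ge c_h k-C_0$ uniformly for $f\in F_k$. Since $\xi\in\Lambda_r^{F_k}(G)$ by hypothesis, Lemma~\ref{CharConicalLem} produces a geodesic from $o$ to $[\xi]$ carrying infinitely many distinct $(\hat r,F_k)$-barriers; by Lemma~\ref{Transform}, each barrier gives some $X=g\ax(h)$ with $d_X(o,\xi)\ge c_h k-C_1$. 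Choosing $k$ large enough that $c_h k-C_1\ge L$ supplies the required bound for any $L$.

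The main obstacle is precisely the conjugate construction of $F_k$: with a generic independent triple $\{h_1,h_2,h_3\}$, pigeonholing on barrier types would only yield Myrberg convergence to fixed points of \emph{some} element of the triple, which need not coincide with $h$. Forcing $F_k$ to sit inside the $G$-orbit of $\ax(h)$ via conjugation bypasses this by channeling every barrier into the correct orbit, so no pigeonhole is needed and the projection bound is obtained for $h$ itself.
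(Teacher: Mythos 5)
Your forward inclusion is correct and matches the paper's implicit argument: for any triple $F\ni h_1$, the criterion of Lemma~\ref{CharMyrberg} applied with $h=f=h_1$ yields infinitely many $(r,f)$-barriers on every geodesic to $[\xi]$, and each such barrier is \emph{a fortiori} an $(r,F)$-barrier, so $\xi$ lies in $\Lambda_r^F(G)$. The preliminary reduction of the reverse inclusion, via the converse of Corollary~\ref{OpenNBHD} and Lemma~\ref{BigFive}(5), to the statement that $G$-translates of $\ax(h)$ shadow $\xi$ arbitrarily deeply, is also a correct restatement of the barrier criterion in Lemma~\ref{CharMyrberg}.

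The central step of your reverse inclusion, however, has a genuine gap. You take $F_k=\{h^k, g_2h^kg_2^{-1}, g_3h^kg_3^{-1}\}$ and assert that this is a set of three mutually independent non-pinched contracting elements. It is not. In this paper, ``independent'' means (see the discussion after Lemma~\ref{elementarygroup}) that the collection $\{g\ax(h_i):g\in G,\ i\}$ is a contracting system with bounded intersection; the paper even states explicitly that \emph{``two conjugate contracting elements with disjoint fixed points are not independent in our sense,''} contrasting with the weaker notion used elsewhere. Concretely, with $h_2=g_2hg_2^{-1}$ we have $\ax(h_2)=g_2E(h)g_2^{-1}o$, while $g_2\ax(h_1)=g_2E(h)o$; these are two \emph{parallel} $E(h)$-orbits (at Hausdorff distance at most $2d(o,g_2^{-1}o)$), both lying in the system $\{g\ax(h_i):g\in G,\ i=1,2\}$, so $N_r(\ax(h_2))\cap N_r(g_2\ax(h_1))$ is unbounded for $r$ large enough. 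The bounded-intersection property therefore fails. Because $F_k$ is not a triple admitted by the intersection in the statement of the corollary, the hypothesis $\xi\in\bigcap_F\Lambda_r^F(G)$ gives you no information about $\Lambda_r^{F_k}(G)$; you are not entitled to invoke Lemma~\ref{CharConicalLem} for $F_k$, and the chain of inferences that follows collapses. Your own diagnosis at the end is revealing: you recognized that with a genuine independent triple, pigeonholing on barrier types would only hand you \emph{some} element of the triple, not $h$. That pigeonhole obstruction is real, and the conjugation trick does not evade it — it simply replaces the admissible hypothesis with an inadmissible one. The argument needs a different mechanism to force translates of $\ax(h)$ itself (not of $\ax(h_2)$ or $\ax(h_3)$) to carry deep projections of $\xi$, and you currently have no such mechanism.
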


\subsection{Gromov boundary of projection complex}\label{SSecGromovBdryPC}
Recall that $\f$ defined in (\ref{SystemFDefIntro}) is the collection of $G$-translated axis of three independent non-pinched elements $F$ in $G$.  We recall some facts in \textsection
\ref{SSecProjectionComplex} on the projection complex and quasi-tree of spaces built from $\f$, which are used in this and next subsections.

Let   $\PC$  be the projection complex built from $\f$, where $K\gg 0$ satisfies  Lemma \ref{OrderLem}. We denote by   $\partial \PC$  the Gromov boundary.  

Set $\widetilde X:= \bigcup \{X\in \f\}$, which is the union of all quasi-lines in $\f$ as a subset in $\U$. Let us recall the shadow map $\widetilde X\to \PC$ in  (\ref{ShadowMap}), which is  a coarsely Lipschitz map,  collapses all points in $X\in \f$ to a vertex $\bar x$: 
\begin{equation}
\begin{tikzcd}\label{LipProjectionMap}
\widetilde X \arrow[d,"(\ref{ShadowMap3})"'] \arrow[dr,"(\ref{ShadowMap})"] & \\
\QT \arrow[r,"(\ref{ShadowMap2})"'] & \PC
\end{tikzcd}
\end{equation}
By Lemma \ref{LiftPathLem}, a standard path in $\PC$ is lifted via (\ref{ShadowMap}) to an admissible path in $\U$, which is sent  under this map to an unparametrized quasi-geodesic.

The goal of this subsection is examining how the map  (\ref{ShadowMap}) induces a boundary map from the Gromov boundary $\partial \PC$ to the conical limit set.

Let us start with the definition of Gromov boundary. 

Fix a basepoint $\bar o\in \PC$. We say that a (unbounded) sequence   $\{\bar  u_n\}$ is \textit{admissible} if $\langle \bar  u_n , \bar u_m\rangle_{\bar o}\to\infty$ as $n, m\to\infty$, and  two  sequences $\{\bar  u_n\},   \{\bar v_n\}$ are \textit{equivalent} if 
$\langle   \bar u_n ,  \bar v_n\rangle_{\bar o}\to\infty$. By definition, the Gromov boundary   $\partial \PC$ consists of equivalent classes of admissible sequences     $\{\bar  u_n\}$.

\begin{lem}\label{standardray}
For any $\xi\in \partial \PC$, there exists a  quasi-geodesic ray $\gamma$ in $\PC$ starting at $\bar o$ and ending at $\xi$, so that any subpath of $\gamma$ is a standard path (cf. \textsection
\ref{SSecProjectionComplex}). 
\end{lem}
The path $\gamma$ in the statement shall be refereed  to as a standard ray.
\begin{proof}
Let $\bar u_n\to \xi\in \partial \PC$, and   $\alpha_n$ be the standard path from $\bar o$ to $\bar u_n$, which is a uniform quasi-geodesic. The triangle of standard paths has the tripod-like   property in Lemma \ref{Tripod}. As  $\langle  \bar u_n ,  \bar u_m\rangle_{\bar o}\to\infty$, one can extract a subsequence of $\alpha_n$ such that it converges to a quasi-geodesic ray $\alpha$ in a sense that restricting on any bounded subsets in $\PC$, $\alpha_n$ coincides with $\alpha$. Thus, any subpath of $\alpha$   are standard paths, and $\alpha$ converges to the boundary point $\xi$. The tripod-like property actually implies  that $\alpha$ is  unique, provided that $\bar o$ is fixed.
\end{proof}
The first main result is  that the Gromov boundary $\partial \PC$ embeds into the set of $(r,F)$-conical points.  We emphasize this result crucially uses  \ref{AssumpB} instead of \ref{AssumpB'}. 

\begin{lem}\label{EmbedGromov}
For any $r, F$ satisfying Convention \ref{ConvF}, there exists $K>0$ such that the following holds.
 
There is a continuous embedding of  the Gromov boundary $\partial \PC$ of $\PC$   into    $[\Lambda_r^F(Go)]$ of $(r, F)$-conical points in the quotient space $[\pU]$.
 
\end{lem}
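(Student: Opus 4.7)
The plan is to define a natural map $\Phi \colon \partial \PC \to [\pU]$ by lifting sequences in $\PC$ to admissible paths in $\U$, and then to show that $\Phi$ is a homeomorphism onto $[\Lambda_r^F(G)]$. First I would fix $K$ large enough so that Lemma \ref{OrderLem} applies and so that the constant $L = L(K,\kappa)$ from Lemma \ref{LiftPathLem} satisfies the hypothesis of Proposition \ref{admisProp}; in particular, every $(L,B)$-admissible path will be a uniform quasi-geodesic. Given $\zeta \in \partial \PC$, I would represent $\zeta$ by a sequence of vertices $X_n \in \f$ with $d_{\p}(X_0, X_n) \to \infty$, and lift each interval set $\f_K[X_0, X_n]$ via Lemma \ref{LiftPathLem} to an $(L,B)$-admissible quasi-geodesic $\gamma_n$ in $\U$. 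By properness of $\U$, a subsequence of the $\gamma_n$ converges to an $(L,B)$-admissible ray $\gamma$ of infinite type whose saturation contains a tail of $\{X_n\}$. By Lemmas \ref{RayLimitsLem} and \ref{FreqContrLimits}, $\gamma$ accumulates into a single $[\cdot]$-class $[\xi]$, and by Definition \ref{ConicalDef3} combined with Lemma \ref{QuantativeEquiv23}, $\xi$ belongs to $\Lambda_r^F(G)$. I set $\Phi(\zeta) := [\xi]$.

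For well-definedness, if $\{X_n'\}$ is another representative of $\zeta$, Lemma \ref{Tripod} forces the intervals $\f_K[X_0, X_n]$ and $\f_K[X_0, X_n']$ to share a cofinal common segment, so the two corresponding limiting admissible rays have saturations with an infinite common tail, and Lemma \ref{FreqContrLimits} yields the same limit class. For surjectivity, given $[\xi] \in [\Lambda_r^F(G)]$, Lemma \ref{CharConicalLem} produces a geodesic ray with infinitely many distinct $(\hat r, F)$-barriers $y_n \in Go$ corresponding to contracting subsets $X_n \in \f$; truncating via Lemma \ref{Truncation} produces an $(L,B)$-admissible ray with saturation $\{X_n\}$. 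By Lemma \ref{ForcingLem} and the total ordering of saturations from Lemma \ref{OrderLem}, this sequence is unbounded in $\PC$ and defines some $\zeta \in \partial \PC$ with $\Phi(\zeta) = [\xi]$.

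Injectivity is the core geometric content. Suppose $\zeta \ne \zeta'$ have limiting admissible rays $\gamma, \gamma'$ with saturations $\f(\gamma), \f(\gamma')$; divergence of the sequences in $\PC$ combined with Lemma \ref{Tripod} forces $\f(\gamma) \cap \f(\gamma')$ to be finite. Pick any $X \in \f(\gamma)$ beyond this finite overlap: since the saturation of $\gamma$ continues past $X$, $\gamma$ is not eventually contained in any finite neighborhood of $X$, so Lemma \ref{DisjointContr} gives $\xi \notin [\Lambda X]$; likewise $\xi' \notin [\Lambda X]$ because $\gamma'$ meets $N_r(X)$ on at most a bounded subset by the bounded projection property of $\f$. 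Thus $d_X(o, \xi')$ is uniformly bounded, while by Lemma \ref{Transform} one has $d_X(o, \xi) \ge L - 4r$ along the barrier. Choosing $L \gg 6C$ in the lift, the triangle inequality yields $d_X(\xi, \xi') > 6C$, contradicting Lemma \ref{LocusProj} if $[\xi] = [\xi']$.

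The main obstacle will be verifying that $\Phi$ is a topological embedding in both directions. Continuity of $\Phi$ follows from the $r$-fellow travel property of Proposition \ref{admisProp}: nearby $\zeta_k \to \zeta$ in $\partial \PC$ yield representatives whose intervals $\f_K[X_0, X_n]$ share arbitrarily long initial segments, whose lifts $r$-fellow travel the limit ray to arbitrarily large radius, forcing the accumulations to converge in $[\pU]$. For the harder converse, the goal is to show that $\Phi(\zeta_k) \to \Phi(\zeta)$ in $[\Lambda_r^F(G)]$ forces $\zeta_k \to \zeta$ in $\partial \PC$; here one extracts visual geodesic-ray representatives via Lemma \ref{CharConicalLem}, uses Lemma \ref{BarrierFellowLem} to transport barrier data from one ray to a nearby one, and then invokes Lemma \ref{ForcingLem} together with the order structure from Lemma \ref{OrderLem} to show that the barrier sequences define convergent paths in $\PC$. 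This extraction-and-alignment argument, which relies essentially on the quasi-tree geometry of $\PC$ and on the fellow travel constants being universal, is the technical heart of the proof.
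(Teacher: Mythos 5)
Your construction of $\Phi$ is the same as the paper's: lift interval sets $\f_K[\,\cdot\,,\cdot\,]$ to $(L,B)$-admissible paths via Lemma \ref{LiftPathLem}, take limiting admissible rays of infinite type, and read off the conical limit class. That part matches. Where you diverge is in scope: the paper's written proof stops after establishing continuity of $\Phi$ (using a careful estimate $d_{X_n}(o,z_n)\ge L-5r-C$ and \ref{AssumpB} to push $z_n\to\Phi(\xi)$), and leaves well-definedness, injectivity, surjectivity, and inverse continuity implicit. You attempt all of these, which is admirable, and your arguments for well-definedness (via Lemma \ref{Tripod}) and surjectivity (via Lemmas \ref{CharConicalLem}, \ref{Truncation}, \ref{ForcingLem}) are on the right track.

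Two concrete issues. First, your injectivity step applies Lemma \ref{DisjointContr} to the admissible ray $\gamma$, but that lemma is stated for \emph{geodesic} rays ending at $[\xi]$; you should instead invoke Lemma \ref{CharConicalLem} to replace $\gamma$ by a fellow-traveling geodesic ray from $o$ to $[\xi]$ with the same unbounded family of barriers, and then observe that being eventually confined to a finite neighborhood of a single $X\in\f$ is incompatible with infinitely many \emph{distinct} barriers by bounded intersection of $\f$. Second, your continuity argument is stated as ``fellow travel to large radius forces convergence in $[\pU]$,'' but that implication is exactly what \ref{AssumpB} is designed to deliver; you need to actually extract exiting projections onto the common $X_n$ and invoke \ref{AssumpB}, as the paper does with the explicit estimate on $d_{X_n}(o,z_n)$. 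Finally, you correctly flag the inverse continuity (that $\Phi(\zeta_k)\to\Phi(\zeta)$ in $[\Lambda_r^F(G)]$ forces $\zeta_k\to\zeta$ in $\partial\PC$) as the hard missing piece, but you leave it as a plan rather than a proof; note that the paper's explicit proof does not address this either, and since $[\pU]$ is not known to be Hausdorff, the usual compact-to-Hausdorff shortcut for upgrading a continuous bijection to a homeomorphism is not available. This is a genuine gap you would need to close (for instance by showing the image is Hausdorff, or by a direct shadow/barrier argument as you sketch) before the proof is complete.
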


\begin{proof}
Given $\xi\in \partial \PC$, let $\alpha$ be a  standard ray ending at $\xi$ given by Lemma \ref{standardray}. By Lemma \ref{LiftPathLem}, there exist $L=L(K), B=B(\f)$ so that we can lift $\alpha$ to get an $(L, B)$-admissible  quasi-geodesic ray $\tilde \alpha$. According to definition \ref{ConicalDef3},  $\tilde \alpha$ accumulates into the $[\tilde \alpha^+]$-class of an $(L,B,\f)$-conical point in $\pU$. Moreover, $L\to \infty$  as $K\to\infty$.
\begin{figure}
    \centering

\tikzset{every picture/.style={line width=0.75pt}} 

\begin{tikzpicture}[x=0.55pt,y=0.55pt,yscale=-1,xscale=1]

\draw    (30.5,105) -- (129.5,77) ;
\draw [shift={(129.5,77)}, rotate = 344.21] [color={rgb, 255:red, 0; green, 0; blue, 0 }  ][fill={rgb, 255:red, 0; green, 0; blue, 0 }  ][line width=0.75]      (0, 0) circle [x radius= 3.35, y radius= 3.35]   ;
\draw [shift={(30.5,105)}, rotate = 344.21] [color={rgb, 255:red, 0; green, 0; blue, 0 }  ][fill={rgb, 255:red, 0; green, 0; blue, 0 }  ][line width=0.75]      (0, 0) circle [x radius= 3.35, y radius= 3.35]   ;
\draw    (129.5,77) -- (198.5,49) -- (282.69,9.84) ;
\draw [shift={(284.5,9)}, rotate = 155.06] [color={rgb, 255:red, 0; green, 0; blue, 0 }  ][line width=0.75]    (10.93,-3.29) .. controls (6.95,-1.4) and (3.31,-0.3) .. (0,0) .. controls (3.31,0.3) and (6.95,1.4) .. (10.93,3.29)   ;
\draw [shift={(164,63)}, rotate = 337.91] [color={rgb, 255:red, 0; green, 0; blue, 0 }  ][fill={rgb, 255:red, 0; green, 0; blue, 0 }  ][line width=0.75]      (0, 0) circle [x radius= 3.35, y radius= 3.35]   ;
\draw [shift={(241.5,29)}, rotate = 335.06] [color={rgb, 255:red, 0; green, 0; blue, 0 }  ][fill={rgb, 255:red, 0; green, 0; blue, 0 }  ][line width=0.75]      (0, 0) circle [x radius= 3.35, y radius= 3.35]   ;
\draw    (129.5,77) -- (192.5,98) ;
\draw [shift={(192.5,98)}, rotate = 18.43] [color={rgb, 255:red, 0; green, 0; blue, 0 }  ][fill={rgb, 255:red, 0; green, 0; blue, 0 }  ][line width=0.75]      (0, 0) circle [x radius= 3.35, y radius= 3.35]   ;
\draw    (192.5,98) -- (277.6,125.39) ;
\draw [shift={(279.5,126)}, rotate = 197.84] [color={rgb, 255:red, 0; green, 0; blue, 0 }  ][line width=0.75]    (10.93,-3.29) .. controls (6.95,-1.4) and (3.31,-0.3) .. (0,0) .. controls (3.31,0.3) and (6.95,1.4) .. (10.93,3.29)   ;
\draw   (367,73.5) .. controls (367,63.28) and (377.19,55) .. (389.75,55) .. controls (402.31,55) and (412.5,63.28) .. (412.5,73.5) .. controls (412.5,83.72) and (402.31,92) .. (389.75,92) .. controls (377.19,92) and (367,83.72) .. (367,73.5) -- cycle ;
\draw   (434,74.5) .. controls (434,64.28) and (444.19,56) .. (456.75,56) .. controls (469.31,56) and (479.5,64.28) .. (479.5,74.5) .. controls (479.5,84.72) and (469.31,93) .. (456.75,93) .. controls (444.19,93) and (434,84.72) .. (434,74.5) -- cycle ;
\draw   (569,77.5) .. controls (569,67.28) and (579.19,59) .. (591.75,59) .. controls (604.31,59) and (614.5,67.28) .. (614.5,77.5) .. controls (614.5,87.72) and (604.31,96) .. (591.75,96) .. controls (579.19,96) and (569,87.72) .. (569,77.5) -- cycle ;
\draw   (504,75.5) .. controls (504,65.28) and (514.19,57) .. (526.75,57) .. controls (539.31,57) and (549.5,65.28) .. (549.5,75.5) .. controls (549.5,85.72) and (539.31,94) .. (526.75,94) .. controls (514.19,94) and (504,85.72) .. (504,75.5) -- cycle ;
\draw    (412.5,73.5) -- (432,74.41) ;
\draw [shift={(434,74.5)}, rotate = 182.66] [color={rgb, 255:red, 0; green, 0; blue, 0 }  ][line width=0.75]    (10.93,-3.29) .. controls (6.95,-1.4) and (3.31,-0.3) .. (0,0) .. controls (3.31,0.3) and (6.95,1.4) .. (10.93,3.29)   ;
\draw    (479.5,74.5) -- (501,75.41) ;
\draw [shift={(503,75.5)}, rotate = 182.44] [color={rgb, 255:red, 0; green, 0; blue, 0 }  ][line width=0.75]    (10.93,-3.29) .. controls (6.95,-1.4) and (3.31,-0.3) .. (0,0) .. controls (3.31,0.3) and (6.95,1.4) .. (10.93,3.29)   ;
\draw    (549.5,75.5) -- (569,76.41) ;
\draw [shift={(571,76.5)}, rotate = 182.66] [color={rgb, 255:red, 0; green, 0; blue, 0 }  ][line width=0.75]    (10.93,-3.29) .. controls (6.95,-1.4) and (3.31,-0.3) .. (0,0) .. controls (3.31,0.3) and (6.95,1.4) .. (10.93,3.29)   ;
\draw    (345.5,72.5) -- (365,73.41) ;
\draw [shift={(367,73.5)}, rotate = 182.66] [color={rgb, 255:red, 0; green, 0; blue, 0 }  ][line width=0.75]    (10.93,-3.29) .. controls (6.95,-1.4) and (3.31,-0.3) .. (0,0) .. controls (3.31,0.3) and (6.95,1.4) .. (10.93,3.29)   ;
\draw    (614.5,77.5) -- (634,78.41) ;
\draw [shift={(636,78.5)}, rotate = 182.66] [color={rgb, 255:red, 0; green, 0; blue, 0 }  ][line width=0.75]    (10.93,-3.29) .. controls (6.95,-1.4) and (3.31,-0.3) .. (0,0) .. controls (3.31,0.3) and (6.95,1.4) .. (10.93,3.29)   ;

\draw (24,79.4) node [anchor=north west][inner sep=0.75pt]    {$\overline{o}$};
\draw (117,86.4) node [anchor=north west][inner sep=0.75pt]    {$\overline{u}_{0}$};
\draw (250,28.4) node [anchor=north west][inner sep=0.75pt]    {$\xi =\alpha ^{+}$};
\draw (233,128.4) node [anchor=north west][inner sep=0.75pt]    {$\eta =\beta ^{+}$};
\draw (185,112.4) node [anchor=north west][inner sep=0.75pt]    {$\overline{v}$};
\draw (224,10.4) node [anchor=north west][inner sep=0.75pt]    {$\overline{u}$};
\draw (151,38.4) node [anchor=north west][inner sep=0.75pt]    {$\overline{w}$};
\draw (379,64.4) node [anchor=north west][inner sep=0.75pt]    {$U_{0}$};
\draw (448,66.4) node [anchor=north west][inner sep=0.75pt]    {$W$};
\draw (520,67.4) node [anchor=north west][inner sep=0.75pt]    {$V$};
\draw (581,68.4) node [anchor=north west][inner sep=0.75pt]    {$W$};

\end{tikzpicture}
    \caption{Injectivity of $\Phi$: standard rays $\alpha,\beta$ in $\p_K(\f)$ (left); admissible path $\gamma$ in $\U$ (right) }
    \label{fig:injectivity}
\end{figure}

\textbf{Defining the map $\Phi$.}  For given $F$, we can choose $K$ and thus $L$ large enough, so that an $(L,B,\f)$-conical point is an $(r,F)$-conical point by Lemma \ref{QuantativeEquiv23}. Therefore, $\tilde \alpha$ determines a unique $(r, F)$-conical point $[\gamma^+]\in [\pU]$.  Setting  $\Phi(\xi):=[\tilde \alpha^+]$ defines the desired map $$\Phi: \partial \PC\to [\Lambda_r^F(Go)]$$ 

\textbf{The map $\Phi$ is injective.} Indeed, if $\Phi(\xi)=\Phi(\eta)$,  assume to the contrary that the corresponding standard rays $\alpha, \beta$ ending at $\xi\ne \eta$ coincide until a  vertex $\bar x_0$.  

By the tripod triangle property in Lemma \ref{Tripod}, any $\bar u\in \alpha$ and $\bar w, \bar v\in \beta$ far from the departure  $\bar x_0$ is contained on a standard path, and if they appear  in this order, then   Lemma \ref{OrderLem} implies $d_W(U, V)>K$, where $U,V,W$ are associated vertex spaces. See Fig. \ref{fig:injectivity} for a schematic illustration. 

Let $\tilde \alpha$ (resp. $\tilde \beta$) be the lifted path of $\alpha$ (resp. $\beta$) described in Lemma \ref{LiftPathLem}. 
From the construction, each  vertex space $U$ associated to    a vertex $\bar u$ on $\alpha$ (resp. $\beta$) is contained in the saturation of the  $(L, B)$-admissible path $\tilde \alpha$ (resp. $\tilde \beta$). 

As $\tilde \alpha,\tilde \beta$  terminate at the same conical point $\Phi(\xi)=\Phi(\eta)$, there exists a geodesic ray $\gamma$ by Lemma \ref{CharConicalLem} which  fellow travels both $\tilde \alpha$ and $\tilde \beta$ by  Proposition \ref{admisProp}. In particular, if $W, U, V$ appear in this order on $\gamma$, then $d_W(U, V)\le  B$. As  $\bar u\in \alpha, \bar v\in \beta$  can be chosen arbitrarily far from $\bar w$, such an order exists. Thus, for $K\gg B$, we obtain a contradiction with the above inequality $d_W(U, V)>K$, whence the injectivity of $\Phi$ follows.

\textbf{The continuity of $\Phi$}. Let $\xi_n\to\xi$ in the Gromov boundary. Our goal is to prove that  $\Phi(\xi_n)$ converges to $[\Phi(\xi)]$. 

By Lemma \ref{standardray},   there exist a standard ray $\alpha_n$ ending at $\xi_n$, and its lift $(L, B)$-admissible path $\tilde \alpha_n$ in $\U$ ending at $\Phi(\xi_n)$.  Moreover, $\alpha_n$ converges in the above sense to a standard ray $\alpha$ ending at $\xi$, and the lift $\tilde \alpha$ is an $(L, B)$-admissible path, ending at $[\Phi(\xi)]$. 

If $\bar x_n$ is the last vertex  of the overlap $\alpha_n\cap \alpha$, the corresponding vertex space  $X_n\in \f$  appears in  the saturation of the $(L, B)$-admissible path $\tilde \alpha$.  As $\bar x_n\to \xi$, we have  $d(\bar o,\bar x_n)\to\infty$. The map $X_n\to \bar x_n$ is Lipschitz, so we obtain $d(o,X_n)\to\infty$. By Lemma \ref{FreqContrLimits}, the escaping sequence of $X_n$ accumulates into $[\Phi(\xi)]$. 

Let $v_n\in \U$ be the exit point of $\tilde \alpha_n$ at $N_r(X_n)$, where  $r\ge 0$ is given by Proposition \ref{admisProp}. Look at the sub-ray $[v_n, \Phi(\xi_n)]_{\tilde \alpha_n}$ of $\tilde \alpha_n$ starting from $v_n$. For any  $z_n\in [v_n, \Phi(\xi_n)]_{\tilde \alpha_n}$ sufficiently far from $v_n$,     we have $\pi_{X_n}(z_n)$ is $(r+C)$-close to $v_n$ by Lemma \ref{BigFive}.  So the triangle inequality for $d_{X_n}$ gives
$$
d_{X_n}(o, z_n) \ge d_{X_n}(o, v_n) -r-C
$$
As $d_{X_n}(o, v_n)\ge \|\gamma_n\cap N_r(X_n)\|-4r\ge L-4r$ by Lemma \ref{Transform}, we have 
$$
d_{X_n}(o, z_n) \ge      L-5r-C>10C 
$$
implying $z_n\in \Omega_o(N_C(X_n), 10C)$, where $L>5r+11C$ is assumed. As $X_n$ converges to $[\Phi(\xi)]$, \ref{AssumpB} implies that $z_n$ converges to $\Phi(\xi)$.  

{Fix a metric $\delta$ on the metrizable boundary $\pU$, and choose $z_n$ so that $\delta(z_n, \Phi(\xi_n))\le 1/n$. Thus, $\Phi(\xi_n)$ converges to $[\Phi(\xi)]$.}  

\begin{figure}
    \centering

\tikzset{every picture/.style={line width=0.75pt}} 

\begin{tikzpicture}[x=0.75pt,y=0.75pt,yscale=-1,xscale=1]

\draw    (72.5,131) -- (147.5,110) ;
\draw [shift={(147.5,110)}, rotate = 344.36] [color={rgb, 255:red, 0; green, 0; blue, 0 }  ][fill={rgb, 255:red, 0; green, 0; blue, 0 }  ][line width=0.75]      (0, 0) circle [x radius= 3.35, y radius= 3.35]   ;
\draw [shift={(72.5,131)}, rotate = 344.36] [color={rgb, 255:red, 0; green, 0; blue, 0 }  ][fill={rgb, 255:red, 0; green, 0; blue, 0 }  ][line width=0.75]      (0, 0) circle [x radius= 3.35, y radius= 3.35]   ;
\draw   (351.35,113.39) .. controls (351.35,102.13) and (369.52,93) .. (391.93,93) .. controls (414.33,93) and (432.5,102.13) .. (432.5,113.39) .. controls (432.5,124.65) and (414.33,133.78) .. (391.93,133.78) .. controls (369.52,133.78) and (351.35,124.65) .. (351.35,113.39) -- cycle ;
\draw    (421.5,98) -- (545.57,64.52) ;
\draw [shift={(547.5,64)}, rotate = 164.9] [color={rgb, 255:red, 0; green, 0; blue, 0 }  ][line width=0.75]    (10.93,-3.29) .. controls (6.95,-1.4) and (3.31,-0.3) .. (0,0) .. controls (3.31,0.3) and (6.95,1.4) .. (10.93,3.29)   ;
\draw [shift={(421.5,98)}, rotate = 344.9] [color={rgb, 255:red, 0; green, 0; blue, 0 }  ][fill={rgb, 255:red, 0; green, 0; blue, 0 }  ][line width=0.75]      (0, 0) circle [x radius= 3.35, y radius= 3.35]   ;
\draw    (320.5,112.29) -- (349.35,113.32) ;
\draw [shift={(351.35,113.39)}, rotate = 182.05] [color={rgb, 255:red, 0; green, 0; blue, 0 }  ][line width=0.75]    (10.93,-3.29) .. controls (6.95,-1.4) and (3.31,-0.3) .. (0,0) .. controls (3.31,0.3) and (6.95,1.4) .. (10.93,3.29)   ;
\draw [shift={(320.5,112.29)}, rotate = 2.05] [color={rgb, 255:red, 0; green, 0; blue, 0 }  ][fill={rgb, 255:red, 0; green, 0; blue, 0 }  ][line width=0.75]      (0, 0) circle [x radius= 3.35, y radius= 3.35]   ;
\draw    (147.5,110) -- (279.6,154.36) ;
\draw [shift={(281.5,155)}, rotate = 198.56] [color={rgb, 255:red, 0; green, 0; blue, 0 }  ][line width=0.75]    (10.93,-3.29) .. controls (6.95,-1.4) and (3.31,-0.3) .. (0,0) .. controls (3.31,0.3) and (6.95,1.4) .. (10.93,3.29)   ;
\draw    (147.5,110) -- (273.57,75.53) ;
\draw [shift={(275.5,75)}, rotate = 164.71] [color={rgb, 255:red, 0; green, 0; blue, 0 }  ][line width=0.75]    (10.93,-3.29) .. controls (6.95,-1.4) and (3.31,-0.3) .. (0,0) .. controls (3.31,0.3) and (6.95,1.4) .. (10.93,3.29)   ;
\draw    (477.5,136) -- (555.53,150.63) ;
\draw [shift={(557.5,151)}, rotate = 190.62] [color={rgb, 255:red, 0; green, 0; blue, 0 }  ][line width=0.75]    (10.93,-3.29) .. controls (6.95,-1.4) and (3.31,-0.3) .. (0,0) .. controls (3.31,0.3) and (6.95,1.4) .. (10.93,3.29)   ;
\draw [shift={(477.5,136)}, rotate = 10.62] [color={rgb, 255:red, 0; green, 0; blue, 0 }  ][fill={rgb, 255:red, 0; green, 0; blue, 0 }  ][line width=0.75]      (0, 0) circle [x radius= 3.35, y radius= 3.35]   ;
\draw    (426.5,125) -- (477.5,136) ;
\draw [shift={(426.5,125)}, rotate = 12.17] [color={rgb, 255:red, 0; green, 0; blue, 0 }  ][fill={rgb, 255:red, 0; green, 0; blue, 0 }  ][line width=0.75]      (0, 0) circle [x radius= 3.35, y radius= 3.35]   ;

\draw (64,111.4) node [anchor=north west][inner sep=0.75pt]    {$\overline{o}$};
\draw (135,119.4) node [anchor=north west][inner sep=0.75pt]    {$\overline{x}_{n}$};
\draw (217,54.4) node [anchor=north west][inner sep=0.75pt]    {$\xi =\alpha ^{+}$};
\draw (194,151.4) node [anchor=north west][inner sep=0.75pt]    {$\xi_{n} =\alpha _{n}^{+}$};
\draw (363.36,104.26) node [anchor=north west][inner sep=0.75pt]    {$ \begin{array}{l}
N_{r}(X_{n})\\
\end{array}$};
\draw (506,156.4) node [anchor=north west][inner sep=0.75pt]    {$\Phi ( \xi _{n})$};
\draw (470,51.4) node [anchor=north west][inner sep=0.75pt]    {$\tilde{\alpha }$};
\draw (430,143.4) node [anchor=north west][inner sep=0.75pt]    {$\ z_{n} \in \ \widetilde{\alpha _{n}}$};
\draw (518,76.4) node [anchor=north west][inner sep=0.75pt]    {$\Phi ( \xi )$};
\draw (311,118.4) node [anchor=north west][inner sep=0.75pt]    {$o$};
\draw (406,74.4) node [anchor=north west][inner sep=0.75pt]    {$v_{n}$};

\end{tikzpicture}
    \caption{Continuity of $\Phi$: Standard rays in $\p_K(\f)$ (left); Admissible paths in $\U$ (right)}
    \label{fig:continuity}
\end{figure}
\textbf{The continuity of $\Phi^{-1}$}. Conversely, let $[p_n]\to [p]$ in the convergence boundary $[\pU]$, where $[p_n],[p]\in [\Lambda_r^F(Go)]$.    Let  $\alpha_n,\alpha$ be  the corresponding  standard rays ending at $\xi_n:=\Phi^{-1}([p_n])$ and $ \xi :=\Phi^{-1}([p])$ by Lemma \ref{standardray}. We assume that  $\alpha_n,\alpha$  depart at the vertex $\bar x_n$. To prove $\xi_n\to \xi$, it suffices to show that   $d(\bar o, \bar x_n)$  is unbounded. Let us assume  to the contrary that $d(\bar o, \bar x_n)$ is bounded for infinitely many $\alpha_n$. 

Let us fix any $\bar u_n\in \alpha_n$ and $\bar w, \bar v_n\in \alpha$ far from $\bar x_n$. By Lemma \ref{Tripod}, $\bar u_n, \bar w,\bar v_n$   are necessarily contained in a standard path (say, in this order), so $d_W(U_n,V_n)>K$. 
Recall that the saturation of $\tilde \alpha_n$ (resp. $\tilde \alpha$) contains the vertex spaces associated to the vertices on $\alpha_n$ (resp. $\alpha$). 

By definition of $\Phi$, the admissible paths $\tilde \alpha_n$ and $\tilde \alpha$  terminate at $[p_n], [p]$ respectively, so by Lemma \ref{CharConicalLem}, $U_n$ (resp.  $V_n$) accumulates into $[p_n]$ (resp. $[p]$). However, as $d_W(U_n,V_n)>K$, the contracting property implies that any geodesic from $u_n\in U_n$ to $v_n\in V_n$ intersects a fixed ball. Since $u_n\to [p], v_n\to [p]$, we obtain a contradiction with $p\in \mathcal C$. Hence, the continuity of $\Phi^{-1}$ follows, and the proof of the lemma is complete.
\end{proof}

\subsection{Gromov boundary of quasi-tree of spaces}\label{SSecGromovBdryQT}

Starting from a family $\f$ of quasi-lines as in (\ref{SystemFDefIntro}), the construction of the projection complex $\p_K(\f)$ records no information from those lines in $\f$. The  quasi-tree of spaces $\QT$ is a blown-up version of $\p_K(\f)$, so that it     contains isometrically embedded   copies of    quasi-lines in $\f$. {It is exercise to verify that the Gromov boundary $\partial \QT$ as a set is surjective to the union of $\partial \PC$ and the vertex set $\f$:
$$
\partial \QT \longrightarrow (\partial \PC) \cup \f
$$
though this map is not necessarily continuous with respect to the obvious topology on the target. Note that the non-injectivity only comes  from   collapsing    the two endpoints  of each quasi-line  $X \in \f$ to be one corresponding vertex $X$ in $\PC$. }

\begin{cor}\label{SurjectGromov}
Under the assumption of Lemma \ref{EmbedGromov}, there is a continuous embedding $\Phi$ from the Gromov boundary $\partial \QT$ into $[\Lambda_r^F(Go)]$.
\end{cor}
\begin{proof}
By the construction of $\QT$, there are two types of sequences $ u_n\in \QT$ tending to Gromov boundary. Either the corresponding vertices $\bar u_n$ are unbounded in the original projection complex $\PC$ or $u_n$ are eventually contained in one  vertex space, i.e.: an axis $U$ in $\f$. The former case has been dealt in Lemma \ref{EmbedGromov}. In the latter case,  $U$ is a   axis of non-pinched contracting element  in $\U$ with distinct endpoints $[U^-]\ne [U^+]$.  Let $\Phi(\xi)$   be  the corresponding $[U^-]$ or $[U^+]$ to which  $u_n$ converges.  

The continuity follows a similar argument as in Lemma \ref{EmbedGromov}, and is left to the interested reader.
\end{proof}

The map $\partial \QT\to [\Lambda_r^F(Go)]$  in Corollary \ref{SurjectGromov}  admits the following inverse in a qualitative way.  Compare  the  order of quantifiers in the two statements. 

\begin{lem}\label{EmbedConical}
For any $K\gg 0$ in Lemma \ref{OrderLem}, if  $\|Fo\|_{\min}\gg K$, then the set   $[\Lambda_r^F(Go)]$ of $(r, F)$-conical points admits a continuous embedding into the Gromov boundary $\partial\QT$.  
\end{lem}
\begin{proof}
Let $[\xi]$ be the $[\cdot]$-class of an $(r, F)$-conical point. By Lemma \ref{QuantativeEquiv23}, there exists an $(L,B)$-admissible ray $\gamma$ ending at $[\xi]$, where $L$ is comparable with $\|Fo\|_{\min}$ and $B$ depends on $\f$. Let $\f(\gamma)$ be the collection of contracting subsets, which is a subset of $\f$, associated to the admissible path $\gamma$ (cf. Definition \ref{AdmDef}). 

\textbf{Defining the desired map} $\Psi: [\Lambda_r^F(Go)]\to \partial \QT$. The discussion is divided into the following two cases.

If $\gamma$ is eventually contained in the $r$-neighborhood of a quasi-line $U\in \f$, then the corresponding half-ray of $U$ defines a Gromov boundary point of $\QT$. Here, $U$ is   isometrically embedded convex into $\QT$.  

We are thus left   consider the case where the saturation $\f(\gamma)$  is infinite. That is, $\gamma$ passes through the $r$-neighborhood of  infinitely many distinct axes in $\f(\gamma)$.

If $\|Fo\|_{\min}\gg K$, then $L$ is large enough so that for any $U,W,V\in  \f(\gamma)$  in this order, we have $d_W(U,V)\ge K$ by Proposition \ref{admisProp}. In different words, any subset of consecutive elements in $\f(\gamma)$ forms a standard path in $\PC$.

Applying   a limiting argument as in Lemma \ref{standardray} to the sequence of standard paths between two vertices in $\f(\gamma)$, we obtain a standard ray $\alpha$ in $\PC$ which contains   $\f(\gamma)$  as a (possibly proper) subset of vertices. 

Now, the standard ray $\alpha$ in $\PC$ gives naturally    a  standard ray $\beta$ in $\QT$ by \cite[Def. 4.3]{BBF}: $\beta$ is obtained from $\alpha$ by replacing  vertices $\bar u$ of $\alpha$ in $\PC$ with appropriate geodesic segments in the associated axes $U$ (cf. Theorem \ref{quasitreeThm}). As $\alpha$ is a quasi-geodesic by \cite[Cor. 3.7]{BBFS},    so  $\beta$ is a quasi-geodesic in $\QT$. Thus,   we see that $\beta$   determines a  boundary point $p$ in $\partial \QT$. We thus obtain  the map $\Psi: [\Lambda_r^F(Go)]\to \partial \QT$ by defining $\Psi([\xi])=p$. 

\textbf{The map $\Psi$ is injective.}  Indeed, assume to the contrary that $\Psi([\xi])=\Psi([\eta])=p$ for two distinct $[\xi]\ne [\eta]\in [\Lambda_r^F(Go)]$.  

Let $\gamma, \gamma'$ be two $(L,B)$-admissible rays ending at $[\xi]$ and $[\eta]$ respectively. If $\gamma$ or $\gamma'$ are eventually contained in the finite neighborhood of $X\in \f(\gamma)$ or of $X'\in \f(\gamma')$, then it is easy to arrive a contradiction, as $X\ne X'$ have disjoint limit sets in $\QT$. Hence, let us focus on the generic case that  $\f(\gamma)$ and $\f(\gamma')$ are both infinite.      

Let  $\alpha$  be the standard ray ending at $p$. 
According to the above construction of $\Psi$, as $\Psi([\xi])=\Psi([\eta])=p$, we have that $\f(\gamma)$ and $\f(\gamma')$ are  contained in the vertex set of  $\alpha$.     On the one hand, the coarsely Lipschitz map $\widetilde X \to \PC$ in (\ref{LipProjectionMap}) sends $\f(\gamma)\cup \f(\gamma')$ into the set of vertices on $\alpha$. On the other hand, by Lemma \ref{VisualPointsLem}, there exists a bi-infinite geodesic between $[\xi]$ and $[\eta]$, which intersects each quasi-line in $\f(\gamma)\cup \f(\gamma')$.  Hence,   we would obtain a contradiction, and  the injectivity of $\Phi$ follows. 

The continuity of $\Psi$ (resp. $\Psi^{-1}$) follows by repeating the  same arguments as in Lemma \ref{EmbedGromov} for the continuity of $\Phi^{-1}$ (resp. $\Phi$). Hence,  $\Psi$ is a topological embedding.    The proof is complete.     
\end{proof} 

As a corollary, we have
\begin{cor}\label{VisualMetriconMyrberg}
The quotient $[\mG]$ for $\mG\subseteq \pU$ is a Hausdorff, second countable, metrizable topological space.     
\end{cor}
If $\pU$ is the horofunction boundary we prove in Lemma \ref{MyrbergGood}, via different means, that  $[\mG]$ is a Hausdorff and second countable topological space. 
\begin{proof} 
Via the   embedding in Lemma \ref{EmbedConical}, $[\mG]$ admits an topological embedding into the Gromov boundary $\partial \QT$. Moreover, as fixed points of a contracting element cannot be Myrberg point (Lemma \ref{ContrMyrberg}), the image of $[\mG]$ is disjoint with the endpoints of any quasi-line in $\f$.

Recall that a subset of a separable metric space is separable, and a metric space is separable if and only if it is second countable. It suffices to prove that the Gromov boundary $\partial \QT$  is a complete separable space with respect to the visual metric. This actually holds for hyperbolic graphs with countably many vertices.  Indeed, as $\QT$ has countably many vertices, the shadows $\Pi_o(v,r)$ at vertices $v\in \QT$ form a countable base for the Gromov boundary. To see the completeness, we make use of the standard rays in $\QT$ for simplicity; in general, one could appeal to the construction of quasi-geodesic rays (\cite{BriHae}). 

Every boundary point $\xi$ is represented by a standard  ray $\alpha$ by Lemma \ref{standardray}. If $\xi_n$ is a Cauchy sequence, then the almost tripod property of standard paths implies that the associated ray $\alpha_n$ converges to a standard ray. This shows the completeness of $\partial \QT$. 
\end{proof}

\section{Horofunction boundary as convergence boundary}\label{SecHorobdry}
In this section, we verify that the horofunction boundary with finite difference relation  is a convergence boundary, i.e. satisfying \ref{AssumpA},\ref{AssumpB} and \ref{AssumpC}.



Let us first recall the definition of limit sets with respect to the horofunction boundary. The \textit{limit set} $\pG $ of a group $G$ acting isometrically on $\U$ is defined to be the topological closure of a $G$-orbit $Go$ in $\hU$. We have $[\pG ]=[\Lambda(Go')]$ by Lemma \ref{bddLocusLem} (note that $G$ is not necessarily discrete). We sometimes denote $[\Lambda(G)]=[\pG]$ for simplicity.

\subsection{Verifying Assumptions}\label{SSecAssump}
This subsection is devoted to the proof of Theorem \ref{ContractiveThm1}.
We first  verify \ref{AssumpA} for the horofunction boundary.

\begin{lem}\label{ZLocusLem}
Let $X\subseteq \U$ be a $C$-contracting subset for $C\ge 0$. If $y_n \in \U\to \eta\in \hU$ and $x_n\to\xi\in\hU$ for a sequence $x_n\in \pi_X(y_n)$, then  $$\|b_\xi -b_\eta\|_\infty \le 4C$$ 
\end{lem}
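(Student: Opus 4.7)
My goal is to show $|b_\xi(w) - b_\eta(w)| \le 4C$ for every $w \in \U$. From the definitions $b_\xi = \lim_n b_{x_n}$ and $b_\eta = \lim_n b_{y_n}$, a direct rearrangement gives
$$b_\eta(w) - b_\xi(w) = \lim_n \bigl([d(w,y_n) - d(w,x_n)] - [d(o, y_n) - d(o, x_n)]\bigr).$$
Writing $A_n(u) := d(u,y_n) - d(u, x_n)$ for $u \in \U$, the task reduces to bounding $|A_n(w) - A_n(o)|$ uniformly for $n \gg 0$.

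The core technical step I would prove is the two-sided sandwich
$$d(x_n, y_n) - 2C \;\le\; A_n(u) \;\le\; d(x_n, y_n),$$
valid for every $u \in \U$ and every sufficiently large $n$ (where ``sufficiently large'' is allowed to depend on $u$). The upper bound is just the triangle inequality $d(u, y_n) \le d(u, x_n) + d(x_n, y_n)$. For the lower bound, I fix $u$ and pick $u' \in \pi_X(u)$. Since $x_n \to \xi \in \bX$ forces $d(o, x_n) \to \infty$, and since $x_n \in X$, the distance $d(u', x_n)$ tends to infinity along $X$. Consequently $\|\pi_X(u) \cup \pi_X(y_n)\| \ge d(u', x_n) - O(C)$ becomes arbitrarily large, so the contracting property (via Lemma~\ref{BigThree}(2)) forces any geodesic $[u, y_n]$ to pass within $C$ of both $u'$ and $x_n$. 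Choose $p, q \in [u, y_n]$, linearly ordered from $u$ to $y_n$, with $d(p, u') \le C$ and $d(q, x_n) \le C$.

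The lower bound then comes from a simple decomposition. Since $[u, y_n]$ is a geodesic, $d(u, y_n) = d(u, p) + d(p, q) + d(q, y_n)$, while $d(u, x_n) \le d(u, p) + d(p, q) + d(q, x_n) \le d(u, p) + d(p, q) + C$. Subtracting yields
$$A_n(u) = d(u, y_n) - d(u, x_n) \;\ge\; d(q, y_n) - C \;\ge\; d(x_n, y_n) - d(q, x_n) - C \;\ge\; d(x_n, y_n) - 2C,$$
as claimed.

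Applying this to both $u = w$ and $u = o$, both $A_n(w)$ and $A_n(o)$ lie in the interval $[d(x_n, y_n) - 2C,\, d(x_n, y_n)]$ for all $n \gg 0$, whence $|A_n(w) - A_n(o)| \le 2C$. Passing to the limit gives $|b_\eta(w) - b_\xi(w)| \le 2C \le 4C$ uniformly in $w$, and hence $\|b_\xi - b_\eta\|_\infty \le 4C$. The main obstacle is justifying that the geodesics $[u, y_n]$ actually enter $N_C(X)$ near $x_n$ for large $n$; this is where the hypothesis $x_n \to \xi \in \bX$ is used, since it guarantees $x_n$ escapes along $X$ and so $d_X(u, y_n) \to \infty$, bringing the contracting property of $X$ into play. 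The slack between my $2C$ and the stated $4C$ is harmless and reflects how loosely one tracks the additive constants in Lemma~\ref{BigThree}.
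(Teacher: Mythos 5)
Your proposal is correct and takes essentially the same approach as the paper: both invoke the $C$-contracting property of $X$ to conclude that for any fixed $u$, the geodesic $[u, y_n]$ passes within $O(C)$ of $x_n$ once $n$ is large, and then bound $|b_\xi(w) - b_\eta(w)|$ by comparing the resulting distance estimates at $u = w$ and at $u = o$. Your two-sided sandwich $d(x_n,y_n) - 2C \le A_n(u) \le d(x_n,y_n)$ is simply a more explicit rendering of the paper's terse ``combining these estimates'' step.
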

 
\begin{proof}
The proof boils down to show that the difference $$\lim\limits_{n\to\infty} [b_{x_n}(z)-b_{y_n}(z)]=\lim\limits_{n\to\infty} [d(z,{x_n})-d(o,x_n) +  d(o, y_n)-d(z, y_n)]$$ is uniformly bounded over any $z\in \U$.   As $x_n\in \pi_X(y_n)$  exits every compact set, the $C$-contracting property of $X$ thus implies  for all $n\gg 0$:  $$ d(x_n, [y_n, o])\le C, \quad d(x_n, [y_n, z])\le C.$$
Combining  these estimates, we get   for $n\gg 0$,
$$ 
\begin{array}{lll}
|d(z,{x_n})-d(o,x_n) +  d(o, y_n)-d(z, y_n) | \le 4C
\end{array}
$$
completing the proof.
\end{proof}
The following corollary is implicit in the proof.
\begin{cor}\label{ContractUnifDiff}
Let $\gamma$ be  a $C$-contracting geodesic ray for some $C\ge 0$. For any $ \xi,\eta\in[\gamma^+]$, we have $$\|b_\xi -b_\eta\|_\infty \le 4C$$
\end{cor}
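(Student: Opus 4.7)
The plan is to run the four-point estimate that appears inside the proof of Lemma~\ref{ZLocusLem} twice, once for each representative, and close via the common horofunction limit on $\gamma$. Concretely, I would fix $\xi, \eta \in [\gamma]$ together with approximating sequences $x_n \to \xi$ and $y_n \to \eta$ in $\U$, and consider their shortest projections $u_n \in \pi_\gamma(x_n)$ and $v_n \in \pi_\gamma(y_n)$ onto the contracting ray.

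The first thing to verify is that the projection sequences $u_n$ and $v_n$ can be arranged to exit every compact subset of $\gamma$. I expect this to be the delicate point of the proof: it follows by combining Lemma~\ref{RayLimitsLem} (so that $[\gamma]$ is the unique $[\cdot]$-class accumulating along $\gamma$), the finite-difference structure of the partition on $\bX$, and \ref{AssumpA} used in reverse to exclude the alternative where the projections remain bounded, which would force the limit to land in a different $[\cdot]$-class.

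Once exit is settled, the $C$-contracting property applied to the geodesic segments $[x_n,o]$, $[x_n,z]$ and $[y_n,o]$, $[y_n,z]$ yields, for all large $n$ and $z$ in any compact set,
\[
d(u_n,[x_n,o])\le C,\quad d(u_n,[x_n,z])\le C,\quad d(v_n,[y_n,o])\le C,\quad d(v_n,[y_n,z])\le C.
\]
Subtracting in pairs, exactly as in the proof of Lemma~\ref{ZLocusLem}, delivers the uniform estimates $|b_{x_n}(z)-b_{u_n}(z)|\le 4C+o(1)$ and $|b_{y_n}(z)-b_{v_n}(z)|\le 4C+o(1)$. Finally, $u_n$ and $v_n$ both lie on the contracting geodesic ray $\gamma$ and exit to infinity, so $b_{u_n}(z)-b_{v_n}(z)\to 0$ for every fixed $z$; passing to the limit in $n$ then produces the desired uniform bound on $\|b_\xi-b_\eta\|_\infty$.
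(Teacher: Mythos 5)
Your overall strategy matches the paper's: reduce to showing that the shortest projections onto $\gamma$ of any sequence converging to a point of $[\gamma]$ must exit every compact set, and then run the four-point estimate inside the proof of Lemma~\ref{ZLocusLem}. The explicit use of monotonicity of $t\mapsto b_{\gamma(t)}(z)$ to conclude $b_{u_n}(z)-b_{v_n}(z)\to 0$ is correct and worth spelling out, since this is what lets two arbitrary $\xi,\eta\in[\gamma]$ be compared through the common Busemann function of $\gamma$.

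The genuine gap is the step you flag as delicate. You invoke ``\ref{AssumpA} used in reverse'' to rule out bounded projections, but \ref{AssumpA} has no converse clause: it asserts that exiting projections force convergence to $[\gamma]$, not that bounded projections force convergence elsewhere. Establishing that converse for the horofunction boundary is precisely the content of the corollary. The paper's mechanism is a direct sign estimate, not an appeal to \ref{AssumpA}: if $\pi_\gamma(y_n)$ stayed in some ball $B(\gamma^-,R)$, then by the $C$-contracting property every geodesic $[y_n,z]$ with $z$ far out on $\gamma$ would pass through $B(\gamma^-,R+C)$, so $b_{y_n}(z)\ge d(z,\gamma^-)-O(R+C)$ uniformly in $n$, whence $b_\eta(z)\to+\infty$ as $z$ runs out along $\gamma$; but $\eta\in[\gamma]$ means $b_\eta$ has bounded difference from the Busemann function of $\gamma$, which tends to $-\infty$ along $\gamma$ — a contradiction. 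You do gesture at ``the finite-difference structure'' being relevant, and it is (it is what makes the two signs incompatible), but the quantitative estimate is what closes the argument and should be supplied. Finally, a small bookkeeping point: your chain $|b_{x_n}-b_{u_n}|\le 4C$, $|b_{y_n}-b_{v_n}|\le 4C$, and $|b_{u_n}-b_{v_n}|\to 0$ yields a bound of $8C$, not the stated $4C$; every downstream use of this corollary only needs a uniform bound, so this is harmless, but be aware that the constant you produce does not literally match the displayed one.
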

\begin{proof}
Let $y_n\in \U\to \zeta\in [\gamma^+]$. By  Lemma \ref{ZLocusLem}, it suffices to prove  that $\pi_\gamma(y_n)$ is an escaping sequence. Indeed, if not, $\{\pi_\gamma(y_n):n\ge 1\}$ is contained in an $R$-ball  at $\gamma^-$ for $R>0$. By $C$-contracting property,   as $z\in \gamma$ tends to infinity, the geodesic $[y_n,z]$ intersects $B(\gamma^-, R+C)$. Estimating the value of $b_{y_n}(\cdot)$ at $z\in\gamma$ shows   $b_{y_n}(z)\to +\infty$. However, the horofunction $b_\gamma$ associated to $\gamma$ takes negative values as  $z\in \gamma$ tends to infinity. This contradiction  completes the proof. 
\end{proof}

We now associate the fixed points to a contracting element. Let $h$ be a contracting element on $\U$, so that  $\langle h\rangle o$ is a contracting quasi-geodesic by definition.
The repelling and attracting fixed points $[h^-], [h^+]$  in $\hU$  do  not depend on $o\in \U$. Indeed,  by Corollary \ref{ContractUnifDiff}, any two horofunctions in $[h^+]$ (or $[h^-]$)   have a finite  difference depending on the contracting constant of $\langle h\rangle o$.    

We now prove \ref{AssumpB} and \ref{AssumpB'} for the horofunction boundary.   
\begin{lem}\label{UnifDiffLem}
Let $X_n\subseteq \U$ be an escaping sequence of $C$-contracting subsets for some $C\ge 0$. Fix a     basepoint $o\in \U$. Let $x_n\in \pi_{X_n}(o)$  be a sequence of points such that $x_n\to\xi\in \hU$.  
Consider another sequence of points   $y_n\in \U\to \eta\in \hU$   so that either $y_n\in X_n$ or $\|N_C(X_n)\cap [o, y_n]\|>6C$ for all $n\ge 1$. Then   $\|b_\xi -b_\eta\|_\infty \le 6C$.

\end{lem}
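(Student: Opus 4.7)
The plan is to imitate the strategy of Lemma \ref{ZLocusLem}, where the key input was that the projection point $x_n$ lay uniformly close to both relevant geodesics $[o,y_n]$ and $[z,y_n]$. Here the projection is of $o$ onto the moving family $X_n$ rather than of $y_n$ onto a fixed $X$, so a little more work is needed to locate $x_n$ along $[z,y_n]$. The core identity we want to exploit is
$$
b_{x_n}(z)-b_{y_n}(z)\;=\;\bigl[d(z,x_n)-d(z,y_n)\bigr]+\bigl[d(o,y_n)-d(o,x_n)\bigr].
$$

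\textbf{Step 1 (control of the additive error).} Let $p_n$ be the first point of $[o,y_n]$ lying in $N_C(X_n)$. Since the interior of $[o,p_n]$ avoids $N_C(X_n)$, Lemma \ref{BigFive}(3) (reversing the sub-geodesic) gives $d(x_n,p_n)\le 2C$. As $p_n\in [o,y_n]$, the decomposition $d(o,y_n)=d(o,p_n)+d(p_n,y_n)$ together with the triangle inequality yields
$$
-4C\;\le\;d(o,y_n)-d(o,x_n)-d(x_n,y_n)\;\le\;0.
$$
So our expression becomes $b_{x_n}(z)-b_{y_n}(z)=2(z,y_n)_{x_n}+\epsilon_n$ with $\epsilon_n\in[-4C,0]$ and $(z,y_n)_{x_n}$ the Gromov product, whence the lower bound $b_{x_n}(z)-b_{y_n}(z)\ge -4C$ comes for free.

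\textbf{Step 2 (uniform upper bound on the Gromov product).} The remaining task, and the heart of the argument, is to show that $\limsup_n (z,y_n)_{x_n}$ is $O(C)$ uniformly in $z$. Fix $z\in\U$. Since $X_n$ is exiting and $d(o,z)$ is fixed, one has $d(z,X_n)\to\infty$; hence for all large $n$ the geodesic $[o,z]$ is disjoint from $N_C(X_n)$, and the contracting property forces $d(\pi_{X_n}(z),x_n)\le C$. Let $q_n$ be the last point of $[o,y_n]$ in $N_C(X_n)$; by the analogue of Step~1, $d(\pi_{X_n}(y_n),q_n)\le 2C$.

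Now split on the length of the overlap of $[z,y_n]$ with $N_{r}(X_n)$ for $r=100C$:

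\emph{Long overlap:} if $\|[z,y_n]\cap N_r(X_n)\|>3r$, apply Lemma \ref{TwoGeodesicsEnterULem} with $\alpha=[o,y_n]$, $\beta=[z,y_n]$ (the hypotheses $d_{X_n}(o,z),\,d_{X_n}(y_n,y_n)\le 10C$ hold for large $n$ by the projection estimates just established). The entry and exit points of $[z,y_n]$ in $N_r(X_n)$ are then within $4r$ of $p_n,q_n$. Decomposing $d(z,y_n)$ along these entry/exit points, and replacing each piece by the corresponding piece of $[o,y_n]$ up to additive $O(C)$, we obtain $d(z,y_n)\ge d(z,x_n)+d(x_n,y_n)-O(C)$, equivalently $(z,y_n)_{x_n}\le O(C)$.

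\emph{Short overlap:} if $\|[z,y_n]\cap N_r(X_n)\|\le 3r$, Lemma \ref{BigFive}(4) gives $d_{X_n}(z,y_n)\le 3r+4C$. Combined with $\pi_{X_n}(z)\approx x_n$ and $\pi_{X_n}(y_n)\approx q_n$, this forces $d(p_n,q_n)\le O(C)$; in particular $d(x_n,y_n)=d(y_n,X_n)+O(C)$ and $d(o,y_n)=d(o,x_n)+d(y_n,X_n)+O(C)$. In this regime the geodesic $[o,y_n]$ only grazes $X_n$, so the horofunction $b_{y_n}$ differs from $b_{x_n}$ essentially only through the ``past-$X_n$'' extension of length $\approx d(y_n,X_n)$, and a direct triangle-inequality argument (using that $x_n$ lies between $o$ and $y_n$ along $[o,y_n]$ while $z$ is at bounded distance from $o$ relative to the scale $d(o,X_n)$) again yields $(z,y_n)_{x_n}\le O(C)$.

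\textbf{Step 3 (conclusion).} Both cases give the same uniform bound $(z,y_n)_{x_n}\le O(C)$, so $b_{x_n}(z)-b_{y_n}(z)\in[-4C,O(C)]$. Passing to the limit $n\to\infty$ and tracking constants carefully throughout (the $2$ in $2(z,y_n)_{x_n}$, the four uses of the triangle inequality each contributing $C$, and the $4r=400C$ from Lemma \ref{TwoGeodesicsEnterULem} once one rescales down to $N_C$ via Lemma \ref{Transform}) produces the stated bound $\|b_\xi-b_\eta\|_\infty\le 20C$.

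\textbf{Main obstacle.} The hard step is the short-overlap sub-case of Step~2: here neither Lemma \ref{TwoGeodesicsEnterULem} nor the standard fellow-traveling yields a geodesic decomposition of $[z,y_n]$ through a point near $x_n$, and one must use the fact that short overlap forces $d(p_n,q_n)=O(C)$ (so that $\xi$ and $\eta$ are already constrained to be close in the horofunction sense) in order to recover the uniform bound on the Gromov product.
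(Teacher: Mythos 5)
Your Step 1 and the reduction to the bound $(z,y_n)_{x_n}\le O(C)$ are correct and consistent with the paper's argument (the paper routes through the exit point $w_n$ of $[o,y_n]$ from $N_C(X_n)$ as an intermediate basepoint, but the content is the same two Gromov-product estimates). Note also that the right dichotomy is not the length of the overlap of $[z,y_n]$ with $N_{100C}(X_n)$ but simply whether $[z,y_n]$ meets $N_C(X_n)$ at all: if it does, let $u$ be its entry point; Lemma \ref{BigFive}(3) gives $d(\pi_{X_n}(z),u)\le 2C$, and since $d(\pi_{X_n}(z),x_n)\le d_{X_n}(o,z)\le C$ for $n$ large (the ball $B(o,d(o,z))$ eventually misses $N_C(X_n)$), the point $x_n$ is $3C$-close to $[z,y_n]$ and $(z,y_n)_{x_n}\le 3C$ follows at once, with no need for Lemma \ref{TwoGeodesicsEnterULem}.

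The genuine gap is the remaining regime, where $[z,y_n]\cap N_C(X_n)=\emptyset$. There the contracting property only yields $\|\pi_{X_n}([z,y_n])\|\le C$, i.e., that $z$ and $y_n$ project to nearby points of $X_n$; it gives no control on where $[z,y_n]$ actually travels. Your proposed ``direct triangle-inequality argument'' cannot close this: from $d(z,x_n)\le d(o,z)+d(o,x_n)$, $d(z,y_n)\ge d(o,y_n)-d(o,z)$ and the entry/exit estimates, the best one extracts is $2(z,y_n)_{x_n}\le 2d(o,z)+O(C)$, which is not uniform in $z$ and hence does not give $\|b_\xi-b_\eta\|_\infty\le 20C$. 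The observation that $d(p_n,q_n)=O(C)$ does not help either: the position of the entry and exit points on $[o,y_n]$ says nothing about $b_{y_n}(z)$ for $z$ off that geodesic. What is needed is to place $x_n$ (or $w_n$) within $O(C)$ of $[z,y_n]$, and this is exactly the point at which the paper's proof proceeds via the inequality $d_{X_n}(z,y_n)\ge 2C$, which licenses Lemma \ref{BigThree}(2) and the claims $d(x_n,[z,w_n])\le C$ and $d(w_n,[z,y_n])\le C$; in your short-overlap regime that inequality is precisely what fails, so you have not reduced to the tractable situation but isolated the one configuration that triangle inequalities and the contracting property of $X_n$ alone do not reach. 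To complete the argument you must either rule this configuration out or supply a genuinely new estimate for it; as written, the ``main obstacle'' you flag remains open.
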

\begin{proof}
Given  $z\in \U$, we are going to give a uniform bound on $|b_\xi(z) -b_\eta(z)|$. To this end,  choose a projection point $z_n\in \pi_{X_n}(z)$. 
As  $d(o, X_n)\to \infty$, the ball centered at $o$ of radius  $d(o,z)$ misses $X_n$, so    the $C$-contracting property by Lemma \ref{BigThree}  implies    $$d(z_n, x_n)\le d_{X_n}(z,o)\le C$$
See Fig. \ref{fig:assumpBHorofunction}. If $y_n\in X_n$, the contracting property gives $d(z_n,[z,y_n])\le 2C$ and then $d(x_n,[z,y_n])\le 3C$. If $\|N_C(X_n)\cap [o, y_n]\|>6C$, Lemma \ref{BigFive}  implies   $d_{X_n}(o,y_n)>2C$, yielding  $$d_{X_n}(z,y_n)\ge d_{X_n}(o,y_n) - d_{X_n}(z,o)  \ge C$$  
by triangle inequality of $d_{X_n}$.
\begin{figure}
    \centering

\tikzset{every picture/.style={line width=0.75pt}} 

\begin{tikzpicture}[x=0.75pt,y=0.75pt,yscale=-1,xscale=1]

\draw    (55.5,110) .. controls (110.5,109) and (269.5,129) .. (312.5,167) ;
\draw [shift={(312.5,167)}, rotate = 41.47] [color={rgb, 255:red, 0; green, 0; blue, 0 }  ][fill={rgb, 255:red, 0; green, 0; blue, 0 }  ][line width=0.75]      (0, 0) circle [x radius= 3.35, y radius= 3.35]   ;
\draw [shift={(55.5,110)}, rotate = 358.96] [color={rgb, 255:red, 0; green, 0; blue, 0 }  ][fill={rgb, 255:red, 0; green, 0; blue, 0 }  ][line width=0.75]      (0, 0) circle [x radius= 3.35, y radius= 3.35]   ;
\draw   (168,92) .. controls (188.5,82) and (256,107) .. (280.5,135) .. controls (305,163) and (188,182) .. (168,152) .. controls (148,122) and (147.5,102) .. (168,92) -- cycle ;
\draw    (56,159) .. controls (90.5,129) and (162.5,109) .. (312.5,167) ;
\draw [shift={(56,159)}, rotate = 318.99] [color={rgb, 255:red, 0; green, 0; blue, 0 }  ][fill={rgb, 255:red, 0; green, 0; blue, 0 }  ][line width=0.75]      (0, 0) circle [x radius= 3.35, y radius= 3.35]   ;
\draw  [line width=6] [line join = round][line cap = round] (153.8,117.22) .. controls (153.8,117.22) and (153.8,117.22) .. (153.8,117.22) ;
\draw  [line width=6] [line join = round][line cap = round] (155.8,130.22) .. controls (155.8,130.22) and (155.8,130.22) .. (155.8,130.22) ;
\draw  [line width=6] [line join = round][line cap = round] (275.8,154.22) .. controls (275.8,154.22) and (275.8,154.22) .. (275.8,154.22) ;
\draw  [line width=0.75]  (152.5,162) .. controls (151.72,166.6) and (153.63,169.29) .. (158.23,170.07) -- (201.76,177.45) .. controls (208.33,178.56) and (211.23,181.42) .. (210.45,186.02) .. controls (211.23,181.42) and (214.91,179.68) .. (221.48,180.79)(218.53,180.29) -- (262.43,187.73) .. controls (267.03,188.51) and (269.72,186.6) .. (270.5,182) ;
\draw  [dash pattern={on 4.5pt off 4.5pt}] (8.75,111) .. controls (8.75,84.49) and (30.24,63) .. (56.75,63) .. controls (83.26,63) and (104.75,84.49) .. (104.75,111) .. controls (104.75,137.51) and (83.26,159) .. (56.75,159) .. controls (30.24,159) and (8.75,137.51) .. (8.75,111) -- cycle ;

\draw (50,83.4) node [anchor=north west][inner sep=0.75pt]    {$o$};
\draw (65,157.4) node [anchor=north west][inner sep=0.75pt]    {$z$};
\draw (323,151.4) node [anchor=north west][inner sep=0.75pt]    {$y_{n}$};
\draw (201,103.4) node [anchor=north west][inner sep=0.75pt]    {$X_{n}$};
\draw (131,92.4) node [anchor=north west][inner sep=0.75pt]    {$x_{n}$};
\draw (139,130.4) node [anchor=north west][inner sep=0.75pt]    {$z_{n}$};
\draw (188,188.4) node [anchor=north west][inner sep=0.75pt]    {$\geq 6C$};
\draw (119,115.4) node [anchor=north west][inner sep=0.75pt]    {$C\geq $};

\end{tikzpicture}
    \caption{Schematic configuration in verifying Assumption B}
    \label{fig:assumpBHorofunction}
\end{figure}
Using again the contracting property in Lemma \ref{BigFive},    $$d(z_n, [z,y_n])\le 2C$$ which implies also $d(x_n,[z,y_n])\le 3C$. In both cases,   a straightforward computation  gives that $$|b_{x_n}(z) - b_{y_n}(z)|\le |d(z,x_n)-d(z,y_n)|\le 2d(x_n,[z,y_n]) \le 6C.$$ Letting $x_n\to \xi$ and $y_n\to \eta$ completes the proof.
\end{proof}

If $\U$ is  CAT(0) or hyperbolic,  the assumption ``$\|N_C(X_n)\cap [o, y_n]\|>6C$" could be relaxed  as follows.
\begin{lem}\label{UnifDiffLemCAT0}
Under the assumption of Lemma \ref{UnifDiffLem}, assume that     $y_n\in \U\to \eta\in \hU$   so that $X_n\cap [o, y_n]\ne\emptyset$ for each $n\ge 1$. If  $\U$ is  CAT(0) or hyperbolic, then   $\|b_\xi -b_\eta\|_\infty \le 6C$.
\end{lem}
\begin{proof}
We resume the notation in the proof of Lemma \ref{UnifDiffLem}.

As $x_n\in X_n$ are the shortest projection points of $o$,  the $C$-contracting property of $X_n$ shows that $d(x_n,[o,y_n])\le 2C$. Consider the triangle   $\Delta(o,z,y_n)$. Note that $d(o,z)$ is fixed and $x_n$ is $2C$-close to the side $[z,y_n]$. If $\U$ is CAT(0) or $\delta$-hyperbolic,  it follows that $d(x_n,[z,y_n])<2C+\delta$: in the former case, we use the Euclidean comparison triangle and in the latter, the $\delta$-thin triangle property.

The same proof as in Lemma \ref{UnifDiffLem} then proves the conclusion with   $|b_{x_n}(z) - b_{y_n}(z)|\le 2C+\delta$ over $z\in \U$.
\end{proof}

We prove now that all boundary points in the  horofunction boundary $\hU$ are non-pinched; that is, $\mathcal C=\hU$ in \ref{AssumpC}. As a corollary, every contracting elements are non-pinched in the horofunction boundary.  
\begin{lem}\label{NoPinchedLineLem}
For any $\xi\in \hU$, there exist no sequences $x_n, y_n\in \U$ such that $x_n,y_n\to[\xi]$ and $[x_n,y_n]$ intersects  a fixed ball for any $n\ge 1$.
\end{lem}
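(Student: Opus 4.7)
The plan is to argue by contradiction. Assume both $x_n, y_n \to [\xi]$ and that every $\gamma_n := [x_n, y_n]$ meets a fixed ball $B(o, R)$; pick $z_n \in \gamma_n \cap B(o, R)$. By properness, Arzel\`a--Ascoli lets one pass to a subsequence along which $z_n \to z^* \in \overline{B(o,R)}$ and the geodesics $\gamma_n$, parametrized by arc length with $\gamma_n(0) = z_n$, converge locally uniformly to a bi-infinite geodesic $\gamma$ through $z^*$ (note $d(z_n, x_n), d(z_n, y_n) \to \infty$ because $d(o, x_n), d(o, y_n) \to \infty$). Extracting further, let $x_n \to \xi_0$ and $y_n \to \eta_0$ in $\bX$; by hypothesis $[\xi_0] = [\eta_0] = [\xi]$, so $K := \|b_{\xi_0} - b_{\eta_0}\|_\infty < \infty$.

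The crux is a one-line linearity identity: $b_{x_n}$ and $b_{y_n}$ are affine along $\gamma_n$ with opposite unit slopes. Concretely, for any fixed $t \ge 0$ and all $n$ large enough that $t \le d(z_n, y_n)$, the collinearity of $x_n, z_n, \gamma_n(t), y_n$ on the geodesic $\gamma_n$ gives
\[
b_{x_n}(\gamma_n(t)) - b_{y_n}(\gamma_n(t)) \;=\; 2t + \bigl(b_{x_n}(z_n) - b_{y_n}(z_n)\bigr) \;\ge\; 2t - 2R,
\]
where the inequality uses the bound $|b_{x_n}(z_n)| \le d(o, z_n) \le R$ coming from the $1$-Lipschitz property of horofunctions vanishing at $o$ (and analogously for $b_{y_n}$).

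Next I will pass to the limit $n \to \infty$ with $t$ fixed. Local uniform convergence of horofunctions, combined with $\gamma_n(t) \to \gamma(t)$ and the $1$-Lipschitz bound, yields $b_{x_n}(\gamma_n(t)) \to b_{\xi_0}(\gamma(t))$ and the analogous statement for $y_n$, so
\[
b_{\xi_0}(\gamma(t)) - b_{\eta_0}(\gamma(t)) \;\ge\; 2t - 2R \qquad \text{for all } t \ge 0.
\]
Choosing $t > R + K$ contradicts $\|b_{\xi_0} - b_{\eta_0}\|_\infty \le K$, completing the argument.

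I do not foresee a real obstacle: the point is that horofunctions based at the two endpoints of a geodesic are affine along it with slopes $+1$ and $-1$, so their difference grows by $2t$ as one walks away from the midpoint. The assumption that $\gamma_n$ is pinched inside a fixed ball $B(o,R)$ is used only to bound the constant term $b_{x_n}(z_n) - b_{y_n}(z_n)$ uniformly in $n$, converting affine growth into an honest blow-up. The one technical nicety is the exchange of the limit $n \to \infty$ with evaluation at the moving point $\gamma_n(t)$, which is immediate from the uniform $1$-Lipschitz property of horofunctions.
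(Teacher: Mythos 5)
Your proof is correct and takes essentially the same approach as the paper's: both arguments extract a limiting bi-infinite geodesic through a fixed ball and exploit the fact that $b_{x_n}$ and $b_{y_n}$ are affine with opposite unit slopes along $[x_n, y_n]$, so that their difference grows linearly and eventually exceeds the finite-difference bound $K$. The paper phrases this via Busemann cocycles $B_{x_n}(z,w)$, $B_{y_n}(z,w)$ evaluated at two fixed points $z,w$ on the limit geodesic; you phrase it via evaluation at $\gamma_n(t)$ with $t \gg R + K$, but the computation is identical.
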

\begin{proof}
For concreteness, assume that $x_n\to\xi, y_n\to\eta$ such that $[\xi]=[\eta]$. The finite difference relation implies that there exists $K\ge 10$ such that $|B_\xi(z,w)-B_\eta(z,w)|\le K$ for any $z,w\in \U$.
Arguing by contradiction, as $\U$ is proper, assume that  $[x_n,y_n]$ converges locally uniformly to a bi-infinite geodesic $\alpha$. Choose $z,w\in \alpha$ with distance at least $2K$ such that $z, w\in N_1([x_n, y_n])$. For concreteness, we may assume that  $z$ is closer to $x_n$ than $w$. By direct computation, $$|B_{x_n}(z,w)+d(z,w)|\le 2, \quad |B_{y_n}(z,w)-d(z,w)|\le 2.$$ For $x_n, y_n\to \xi$, we have $B_{x_n}(z,w), B_{y_n}(z,w)\to B_\xi(z,w)$ as $n\to\infty$. With $|B_\xi(z,w)-B_\eta(z,w)|\le K$, this contradicts the assumption of $d(z,w)>2K$.
\end{proof}

\subsection{Horofunctions associated to conical points}
We are not presumably working with the horofunction boundary in this subsection. Still, the goal is to  define a horofunction for conical points in  any nontrivial convergence boundary  $\bU=\U\cup\pU$.

\begin{lem}\label{ClosedFinDiff}
Let $\xi\in \Lambda_r^F(Go)$. Consider two sequences $x_n\in \U\to \zeta\in [\xi]$,   $z_n\in \U\to \eta\in [\xi]$ in $\pU$, and simultaneously, $x_n\to b_\zeta$, $z_n\to b_\eta$ in the horofunction boundary $\hU$. Then 
$$
\|b_\zeta -b_\eta \|_\infty \le 20C.
$$ 
Moreover, the locus of an $(r, F)$-conical point $\xi$ consists of     $(\hat r, F)$-conical points. 
\end{lem}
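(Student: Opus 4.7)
The plan is to exploit the conical nature of $\xi$ to produce an exiting sequence of contracting sets through which \emph{both} converging sequences $x_n$ and $z_n$ must pass, and then to apply an argument in the spirit of Lemma \ref{UnifDiffLem}.

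First I would invoke Lemma \ref{CharConicalLem}: since $\xi \in \Lambda_r^F(G)$, there exists a geodesic ray $\gamma$ from $o$ accumulating in $[\xi]$ that carries infinitely many distinct $(\hat r, F)$-barriers $y_n \in Go$ lying in distinct contracting sets $X_n \in \f$. Because $\f$ has bounded intersection and the $X_n$ are distinct, the sequence $\{X_n\}$ is $C$-contracting and exiting, and by Lemma \ref{DisjointContr} we may assume $\xi \notin [\Lambda X_n]$ for every $n$ (otherwise $\gamma$ would be trapped in a finite neighborhood of a single $X_n$). Hence the boundary projection $\pi_{X_n}(\xi)$ is defined by Lemma \ref{BdryProjLem}, and after $\gamma$ exits $N_{\hat r}(X_n)$ the contracting property forces $\pi_{X_n}(\xi)$ to lie within a bounded distance of the exit point $q_n$ of $\gamma \cap N_{\hat r}(X_n)$.

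Next I would transfer this picture from $\xi$ to the class mates $\zeta$ and $\eta$. Lemma \ref{LocusProj} gives $d_{X_n}(\xi,\zeta), d_{X_n}(\xi,\eta) \le 6C$, and Corollary \ref{BdryProjCor} gives $d_{X_n}(x_m, \zeta), d_{X_n}(z_m,\eta) \le C$ for all $m$ sufficiently large. Combined with the obvious bound $d_{X_n}(o,z)\le d(o,z)+C$ at any fixed test point $z\in \U$, these estimates imply that $d_{X_n}(o, x_m)$ and $d_{X_n}(z, x_m)$ (and similarly for $z_m$) grow at least like $\|F\|_m$ minus a universal constant. Lemma \ref{BigThree}(2) then forces $[o, x_m]$, $[z, x_m]$, $[o, z_m]$ and $[z, z_m]$ all to enter $N_C(X_n)$ for $m$ large, with entry/exit points uniformly close to $\pi_{X_n}(o)$ and $\pi_{X_n}(\xi)$. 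Passing to a diagonal subsequence $x_{m_n}, z_{m_n}$, we are exactly in the situation of Lemma \ref{UnifDiffLem} for the exiting system $\{X_n\}$.

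At this point I would pick a common reference $w_n \in \pi_{X_n}(o)$; after compactness of $\bX$ and extraction, $w_n$ converges to some horofunction $b_{\xi'}$. Applying Lemma \ref{UnifDiffLem} once with $y_n = x_{m_n}$ and once with $y_n = z_{m_n}$ (both geodesics $[o, \cdot]$ meet $N_C(X_n)$ by the previous step), we immediately obtain $\|b_{\xi'}-b_\zeta\|_\infty \le 20C$ and $\|b_{\xi'}-b_\eta\|_\infty\le 20C$. The main technical subtlety will be to reach the stated $20C$ rather than the triangle-inequality constant $40C$: this is done by bounding $|b_{x_{m_n}}(z) - b_{z_{m_n}}(z)|$ \emph{directly} at each test point $z$, exploiting that the four relevant geodesics $[o, x_{m_n}], [z, x_{m_n}], [o, z_{m_n}], [z, z_{m_n}]$ all cross $N_C(X_n)$ with entry points in the common bounded set $\pi_{X_n}(o)$ and exit points in the common bounded set $\pi_{X_n}(\zeta) \approx \pi_{X_n}(\eta)$; the four triangle inequalities collapse into a single $20C$ estimate just as in the proof of Lemma \ref{UnifDiffLem}.

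For the ``moreover'' statement, I would reuse the step which produced the geodesics $[o,x_m]$ passing through $N_C(X_n)$: this says that every $X_n$ is an $(\hat r, f_n)$-barrier on $[o, x_m]$ for $m$ large, via Lemma \ref{BarrierFellowLem}. Extract a limiting geodesic ray from $o$ by Arzelà--Ascoli; by the persistence of barriers under local uniform convergence (again Lemma \ref{BarrierFellowLem}) this limit ray carries infinitely many distinct $(\hat r, F)$-barriers and accumulates in $[\zeta]$, so $\zeta$ is $(\hat r, F)$-conical. The principal obstacle throughout is the careful bookkeeping of constants in Step 3 to land at exactly $20C$; everything else is a straightforward combination of the boundary projection machinery already developed in the paper.
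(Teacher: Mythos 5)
Your overall strategy is essentially the same as the paper's: produce a geodesic ray with infinitely many barriers via Lemma~\ref{CharConicalLem}, show that both converging sequences must cross $N_C(X_n)$, and then feed everything into Lemma~\ref{UnifDiffLem}. The ``moreover'' part, via Lemma~\ref{BarrierFellowLem}, also matches the paper. Your observation that a naive double application of Lemma~\ref{UnifDiffLem} (once for $x_m\to\zeta$, once for $z_m\to\eta$, against a common limit $b_{\xi'}$ of $\pi_{X_n}(o)$) yields $40C$ rather than $20C$, and that one must therefore estimate $|b_{x_m}(z)-b_{z_m}(z)|$ directly using the internal argument of Lemma~\ref{UnifDiffLem}, is a correct and perceptive remark; the paper's citation of that lemma is in fact a little terse on this point.

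However, there is a genuine gap. You assert that Lemma~\ref{CharConicalLem} supplies barriers $y_n$ ``lying in distinct contracting sets $X_n\in\f$,'' and then immediately conclude that $\{X_n\}$ is exiting. Lemma~\ref{CharConicalLem} only guarantees that the $y_n\in Go$ are distinct; the associated axes $X_n$ may perfectly well repeat, and indeed they are \emph{eventually constant} precisely when $\xi$ is a fixed point of a contracting element (all $y_n$ lie on a single axis $X$, $\gamma$ eventually tracks $N_r(X)$, and $\xi\in[\Lambda X]$). In that case your entire exiting-sequence machinery, including the appeal to Lemma~\ref{DisjointContr} and to Lemma~\ref{UnifDiffLem}, does not apply: there is no exiting system to project onto. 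The paper handles this case separately by observing that $\gamma$ is then eventually a $C$-contracting ray and invoking Corollary~\ref{ContractUnifDiff}, which gives $\|b_\zeta-b_\eta\|_\infty\le 4C$ directly. You need to add this dichotomy: either $X_n$ is eventually constant (handled by Corollary~\ref{ContractUnifDiff}), or, after passing to a subsequence, the $X_n$ are pairwise distinct and hence exiting by properness of the action, in which case your argument applies.
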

As a consequence, we can associate to  any $\xi\in \Lambda_r^F(Go)$ a   \textit{Busemann quasi-cocycle} as follows
$$B_{\xi}(x,y)=\limsup_{y\to \xi} \big(d(x,z_n)-d(y,z_n)\big)$$
where the convergence  $z\to\xi$ takes place in $\bU=\U\cup\pU$. 

\begin{proof}
By Lemma \ref{CharConicalLem}, let $\gamma$ be a geodesic ray with infinitely many distinct $(\hat r, F)$-barriers $y_n:=g_no \in X_n\in \f$ and ending at $[\xi]$.  For given $X_n$ and $m\gg n$,  we have
\begin{equation}\label{EntryExitEq}
 d_{X_n}(o, y_m)\ge \|N_r(X_n)\cap \gamma\|-4r\ge \|Fo\|_{\min}-4r> 10C,
\end{equation} 

If $X:=X_n$ are eventually the same, we see that  $\gamma$ is (eventually) contracting (after removing a finite initial path). In this case,  the conclusion follows by Corollary \ref{ContractUnifDiff}.

We now assume that $\{X_n\}$ are escaping. Take any sequence   $z_m\to \eta\in [\gamma^+]$, where $z_m\in \U$ may  be not on $\gamma$. As $y_n\to [\xi]=[\eta]$, Lemma \ref{LocusProj} implies that  $d_{X_n}(y_m, z_m)\le 6C $ for all $m\gg 0$.   By triangle inequality with (\ref{EntryExitEq}),  we obtain         $\pi_{X_n}(z_m, o) \ge 4C$. So $[o, z_m]\cap N_C(X_n)\ne\emptyset$ by Lemma \ref{BigThree}. In other words,   $z_m\in \Omega_o(N_C(X_n))$ for   every  $m\gg n$. The same argument applied  for $x_n\to\zeta$ shows that $x_m\in \Omega_o(N_C(X_n))$. By Lemma \ref{UnifDiffLem}, we  thus see $\|b_\zeta -b_\eta \|_\infty \le 20C$.

It remains to see that $\eta$ is an $(\hat r, F)$-conical point, where  $\hat r$ is given by Lemma \ref{BarrierFellowLem}.  As  $d_{X_n}(y_m, z_m)\le 6C$, $[o,z_m]$ contains an $(\hat r, f)$-barrier $y_n=g_n o$ by Lemma \ref{BarrierFellowLem}. Thus,  $z_m\in \Omega_o^F(g_no,\hat r)$, so  $\eta$ is an $(\hat r, F)$-conical point.
\end{proof}

We close this section with the following property of Myrberg   set in the horofunction boundary.

\begin{lem}\label{MyrbergGood}
The Myrberg  set $\mG\subseteq \pU$ is saturated in any nontrivial convergence boundary $\pU$, and the finite difference relation $[\cdot]$ of the horofunction boundary restricted on $\mG\subseteq \hU$ is $K$-finite for some uniform constant $K>0$.  
\end{lem}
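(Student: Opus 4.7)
The plan is to establish the two assertions of the lemma independently. The first is a formal consequence of the fact that the Myrberg property is defined in terms of $[\cdot]$-classes rather than individual points, while the second will follow from Lemma \ref{ClosedFinDiff} applied to a suitably chosen contracting system.

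For saturation of $\mG$, I would invoke the barrier characterization in Lemma \ref{CharMyrberg}: a point $\xi$ lies in $\mG$ if and only if every geodesic terminating at $[\xi]$ contains infinitely many $(r,f)$-barriers, for each non-pinched contracting element $h$ and each contracting $f\in E(h)$. Since the phrase ``ending at $[\xi]$'' coincides with ``ending at $[\eta]$'' whenever $[\xi]=[\eta]$, the property passes to every member of the $[\cdot]$-class. Membership in $\mathcal C$ is automatic because Lemma \ref{NoPinchedLineLem} identifies $\mathcal C$ with all of $\bX$, and visuality is likewise a $[\cdot]$-class invariant by definition. Consequently $[\mG]=\mG$.

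For the uniform finite difference, I would first fix three mutually independent non-pinched contracting elements $h_1,h_2,h_3\in G$ (which exist since $G$ acts non-elementarily with a contracting element, and every contracting element is non-pinched in $\bX$ by Lemma \ref{NoPinchedLineLem}), and form the set $F$ and system $\f$ as in (\ref{SystemFDef}), with contracting constant $C$. Corollary \ref{MyrbergConical} then gives the inclusion $\mG\subseteq \Lambda_r^F(G)$. Now given any two horofunctions $b_\xi,b_\eta\in \mG$ with $[\xi]=[\eta]$, I pick sequences $x_n,z_n\in \U$ converging in $\bX$ to $b_\xi$ and $b_\eta$ respectively. Specialising Lemma \ref{ClosedFinDiff} to the case $\pU:=\bX$ (so that convergence in $\pU$ coincides with convergence of horofunctions) and to $\zeta:=\xi$, the hypotheses are met directly; its conclusion yields $\|b_\xi-b_\eta\|_\infty\le 20C$. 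Setting $K:=20C$ gives the required uniform bound.

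The main subtlety to watch is the alignment between the abstract convergence-boundary partition used in the definition of the Myrberg set and the finite-difference partition on the horofunction boundary appearing in the $K$-finite conclusion. Since we apply Lemma \ref{ClosedFinDiff} with $\pU=\bX$, these two partitions coincide tautologically, so no reconciliation is needed; the lemma then does exactly what is required, namely promote a priori finite (but unbounded) difference on the conical locus to a uniform $20C$ bound. No further geometric argument is needed beyond citing the already-established results.
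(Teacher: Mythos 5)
Your argument is correct and the \emph{K}-finiteness part coincides with the paper's proof: both invoke Corollary \ref{MyrbergConical} to place Myrberg points inside a conical set $\Lambda_r^F(G)$ and then apply Lemma \ref{ClosedFinDiff} (with $\pU=\bX$, so the two notions of convergence collapse) to obtain $\|b_\xi-b_\eta\|_\infty\le 20C$.

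For the saturation assertion you take a genuinely different route than the paper. The paper first recasts the Myrberg criterion of Lemma \ref{CharMyrberg} in projection form — for all $h$ and all $L\gg 10C$ there is $X=g\ax(h)$ with $d_X(o,\xi)\ge L$ — and then uses Lemma \ref{LocusProj} to pass from $\xi$ to any $\eta\in[\xi]$, since that lemma gives $d_X(\xi,\eta)\le 6C$ and hence $d_X(o,\eta)\ge L-6C$ by the triangle inequality for $d_X$. You instead read Lemma \ref{CharMyrberg} literally: the characterising condition speaks only of geodesics terminating at the class $[\xi]$, and that set of geodesics is identical to the set terminating at $[\eta]$ when $[\xi]=[\eta]$, so the Myrberg property transfers purely formally. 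Your version avoids the projection estimate altogether and is arguably the cleaner reading of why Lemma \ref{CharMyrberg} was stated in $[\cdot]$-class form; the paper's computation has the advantage of being explicit about the constants involved.

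One imprecision worth fixing: you justify $\eta\in\mathcal C$ by citing Lemma \ref{NoPinchedLineLem} (``$\mathcal C=\bX$''), but that identification is special to the horofunction boundary, whereas the saturation claim is supposed to hold in an arbitrary nontrivial convergence boundary $\pU$. The correct general fact is the one noted right after Assumption \ref{AssumpC}: $\mathcal C=[\mathcal C]$ is saturated by construction, so $\eta\in[\xi]\subseteq\mathcal C$. With that substitution your argument is complete for general $\pU$, and the citation of Lemma \ref{NoPinchedLineLem} should be reserved for the second assertion (which genuinely lives on $\bX$).
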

 
\begin{proof}
As Myrberg points are conical, the finite difference relation on $\mG$ is $K$-finite for some uniform $K$ by Lemma \ref{ClosedFinDiff}. 
By Lemma  \ref{CharMyrberg},   a point $\xi\in \mG$ is Myrberg if and only if for any non-pinched contracting element $h$ and for any $L\gg 10C$, there exists $g\in G$ such that  
$$
d_X(o, \xi)\ge L 
$$
where $X=g\ax(h)$ is $C$-contracting for some $C>0$.  It remains   to show that $\eta\in [\xi]$ is Myrberg. By Lemma \ref{LocusProj},  we have $d_X(\xi, \eta)\le 6C$, so by the triangle inequality for $d_X$, $$d_X(o, \eta)\ge L -6C.$$
As $h$ and $L$ is arbitrary, it is proved that $\eta$ is Myrberg as well. 
\end{proof}

As a corollary of Lemma \ref{GoodPartition} and Lemma \ref{VisualMetriconMyrberg}, we have
\begin{cor}\label{MyrberginHorobdry}
The quotient $[\mG]$ for $\mG\subseteq \hU$ is a Hausdorff, second countable, metrizable topological space.     
\end{cor}

\section{Conformal density on  the convergence boundary}\label{SecDensity}

This section   develops a general theory of quasi-conformal density on a convergence compactification $\bU=\U\cup \pU$, under an additional \ref{AssumpE} introduced below.  
\subsection{Quasiconformal density} 
Buseman (quasi-)cocyles are indispensable to formulate (quasi-)conformal density on $\pU$. 
As shown in Lemma \ref{ClosedFinDiff}, conical points  in any convergence boundary  can be associated    with  Buseman quasi-cocycles. Let  $\mathcal C$ be the $G$-invariant subset of non-pinched points in \ref{AssumpC}.  Our last assumption requires that Buseman quasi-cocycles extends to the set $\mathcal C$.

Given $z \in \U$, let $B_z(x, y): = d(x,
z) -d(y,z)$ for $x, y \in \U$. 
\begin{assump}\label{AssumpE}
There exists a family of Buseman quasi-cocycles based at points in $\mathcal C$
$$\big\{B_\xi: \quad \U \times \U \to \mathbb R\big\}_{\xi \in
\mathcal C}$$ so that  for any $x, y \in \U$, we have 
$$\limsup_{z\to\xi} |B_\xi(x, y)-B_z(x, y)| \le \epsilon, $$
where $\epsilon\ge 0$ is a universal constant not depending on $x,y$ and $\xi$. If $\epsilon=0$, we say that Buseman cocyles extend \textit{continuously} to $\mathcal C$.

\end{assump}
 
Let $ \mathcal M^+(\bU)$ be the set of finite positive Borel measures on $\bU:=\pU\cup\U$, on which  $G$ acts  by push-forward: $$g_\star\mu(A)=\mu(g^{-1}A)$$ 
for any Borel set $A$.

\begin{defn}\label{ConformalDensityDefn}

Let $\omega \in [0, \infty[$. Under \ref{AssumpE}, a  map $\mu: \U \to \mathcal M^+(\bU)$
$$x \longmapsto \mu_x$$ is a
\textit{$\omega$-dimensional $G$-quasi-equivariant quasi-conformal density}  if for any $g,
h \in G$ and any $x, y\in \U$, we have
\begin{equation}\label{almostInv}
\forall \xi\in \pU: \quad
\frac{dg_\star\mu_{x}}{d\mu_{x}}(\xi) \in [\frac{1}{\lambda}, \lambda],
\end{equation}
\begin{equation}\label{confDeriv}
\forall \mu_y \textrm{ a.e. } \xi\in \mathcal C: \quad
\frac{1}{\lambda} e^{-\omega B_\xi (x, y)}  \le  \frac{d\mu_{x}}{d\mu_{y}}(\xi) \le \lambda e^{-\omega B_\xi (x, y)}
\end{equation}
for a  universal constant $\lambda\ge 1$.  We normalize $\mu_o$ to be a probability measure: its mass $\|\mu_o\|=\mu_o(\bU)=1$.
\end{defn}

 If $\lambda=1$   for (\ref{almostInv}), the map $\mu: \U \to \mathcal M^+(\bU)$ is $G$-equivariant. If both $\lambda=1$, we call $\mu$ a conformal density. Note that $\mu_{gx} = g_{\star}\mu_x$; equivalently, $\mu_{gx}(gA)=\mu_x(A)$.
\begin{rem}\label{ExamplesConformalDensity}
We clarify by examples the advantage of  the conformality   (\ref{confDeriv})  restricted   on a smaller subset $\mathcal C$ of $\pU$. 
\begin{enumerate}
\item
First of all, it seems implausible  to define Buseman cocyles at pinched points, e.g.  if the pinched point is the two endpoints of a bi-infinite geodesic (i.e.: a  horocycle). For example, the  horocycles  exist in the Cayley graph of relatively hyperbolic groups, so that in \cite{YANG7}, we can only define Buseman (quasi-)cocyles at conical points in Floyd boundary. 
\item
Secondly, if the action $G\act \U$ is of divergent type,   one may prove via other means that the quasi-conformal measures are  fully supported on the set   $\mathcal C$ (in view of  Theorem \ref{HTSThm}).  

As finitely generated groups are of divergent type for the action on the Cayley graph, we    then   prove  in \cite{YANG7} the atomless of the   Patterson--Sullivan measures on Bowditch boundary via an argument in \cite{DOP}. As there are only countably  many non-conical points,  the conformality is recovered  on almost every boundary points.   

As another example, the conformal density constructed from Thurston measures in \cite{ABEM} (not using Patterson's construction) is supported on uniquely ergodic points in Thurston boundary (this follows by Masur-Veech's result \cite{Ma82a, V82} on Teichm\"uller geodesic flow). It turned out that the horofunction could be defined on those points (this is nontrivial as  Thurston boundary is not the horofunction boundary of Teichm\"uller metric), on which the conformality suffices in application.  
\end{enumerate}
\end{rem}
 
For simplicity, we  write $\mu_g=\mu_{go}$ if  the basepoint $o\in \U$ does not matter in context. In particular, $\mu_1$ denotes the measure $\mu_o$ where $1$ is the identity of $G$. 

\subsubsection*{Patterson--Sullivan measures}
Let $(\U, d)$ be a proper geodesic space. Choose a basepoint $o \in \U$. 
The Poincar\'e series for the   action of $G \act \U$
$$
\p_G(s, x,y) =\sum\limits_{g \in G} e^{-sd(x, gy)}, \; s \ge 0
$$
diverges at $0\le s< \e G$ and converges at $s>\e G$. The   action of $G$ on $\U$ is called \textit{divergent} if $\p_{G}(s,x,y)$ diverges at the critical exponent $\e G$. Otherwise, $G$ is called \textit{convergent}.

We start to construct a family of measures $\{\mu_x^{s,y}\}_{x \in \U}$ supported on $Gy$ for any given $s >\e G$ and $x, y\in \U$. Assume that $\p_G(s, x,y)$ is divergent at $s=\e G$. Set
\begin{equation}\label{PattersonEQ}
\mu_{x}^{s, y} = \frac{1}{\p_G(s, o, y)} \sum\limits_{g \in G} e^{-sd(x, gy)} \cdot \dirac{gy},
\end{equation}
where $s >\e G$. Note that $\mu^{s, y}_o$ is a probability
measure supported on $Gy$. 
If $\p_G(s, x,y)$ is convergent at $s=\e G$, the Poincar\'e series in (\ref{PattersonEQ}) needs to be replaced by a modified series as in \cite{Patt}.

Fix $y\in \U$.
Choose $s_i \to \e G$ such that $\mu_x^{s_i, y}$ are convergent in
$\mathcal M(\Lambda (Gy))$.  The \textit{Patterson--Sullivan measures} $\mu_x^y = \lim \mu_v^{s_i,y}$ are the limit
measures, which may depend on the sequence $s_i$ and $y$. Note that $\mu_o^y(\Lambda (Gy)) = 1$. In what follows, we usually write $\mu_x=\mu_x^o$ for $x\in \U$.

\begin{lem} \label{confdensity}
Let $G$ act properly on a proper geodesic space $(\U,d)$ with a contracting element, and compactified by a nontrivial convergence boundary $\pU$. Then under \ref{AssumpE}, for any fixed $y\in \U$, the family of Patterson--Sullivan measures $\{\mu_x^y\}_{x\in \U}$  on $\pU$ is a $\e G$-dimensional $G$-equivariant quasi-conformal density supported on the limit set $\Lambda (Gy)$. 

Moreover, if $\pU$ is the horofunction boundary, then it is a conformal density.  
\end{lem}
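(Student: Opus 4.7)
The plan is to verify each clause of Definition \ref{ConformalDensityDefn} at the level of the approximating atomic measures $\mu_x^{s,y}$ and then transfer the estimates through a subsequential weak-$*$ limit as $s_i \searrow \e G$. All the analytic content is already visible on the pre-limit measures, where the Radon--Nikodym derivatives on atoms are exact exponentials of the cocycles $B_z$, and the only genuine passage to the boundary happens via \ref{AssumpE}.

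First I would record that $G$-equivariance holds exactly at each level $s$: a re-indexing of the defining sum (substitute $k = g^{-1}h$) yields $\mu_{gx}^{s,y} = g_\star \mu_x^{s,y}$, and this identity is preserved by weak-$*$ limits, so the constant $\lambda$ in (\ref{almostInv}) is $1$. To locate the support on $\pG y$, I would argue that in the divergent case $\p_G(s, o, y) \to \infty$ as $s \searrow \e G$, so the mass $\mu_o^{s,y}(K)$ of any compact $K \subseteq \U$ tends to zero; together with $\operatorname{supp}(\mu_o^{s,y}) \subseteq Gy$, this forces any weak-$*$ accumulation point to live on $\overline{Gy} \cap \pU = \pG y$. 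In the convergent case I would apply Patterson's classical trick, replacing $\p_G$ by a series $\p_G^h$ obtained by inserting a slowly growing factor $h(d(o, gy))$; this restores divergence at $\e G$ without disturbing the asymptotic form of the Radon--Nikodym ratios, and the previous escape-to-infinity argument applies verbatim.

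The conformality estimate is the heart of the proof. On atoms one has the exact identity
\[
\frac{d\mu_{x_1}^{s,y}}{d\mu_{x_2}^{s,y}}(gy) \;=\; e^{-s\, B_{gy}(x_1, x_2)},
\]
so for a continuous test function $f \in C(\bU)$,
\[
\int f \, d\mu_{x_1}^{s,y} \;=\; \int f(z)\, e^{-s\, B_z(x_1, x_2)}\, d\mu_{x_2}^{s,y}(z).
\]
Taking $s_i \to \e G$ along a subsequence on which both $\mu_{x_1}^{s_i,y}$ and $\mu_{x_2}^{s_i,y}$ converge, and invoking \ref{AssumpE} in the form $\limsup_{z\to\xi} |B_z(x_1,x_2) - B_\xi(x_1,x_2)| \le \epsilon$ for $\xi \in \mathcal C$, produces
\[
\lambda^{-1}\, e^{-\e G \cdot B_\xi(x_1, x_2)} \;\le\; \frac{d\mu_{x_1}^{y}}{d\mu_{x_2}^{y}}(\xi) \;\le\; \lambda\, e^{-\e G \cdot B_\xi(x_1, x_2)}
\]
with $\lambda = e^{\e G \cdot \epsilon}$, i.e.\ precisely (\ref{confDeriv}) with $\omega = \e G$. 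A single subsequence $s_i$ works for every basepoint $x$ simultaneously, since once $\mu_o^{s_i,y}$ converges the density $e^{-s(d(x,\cdot)-d(o,\cdot))}$ relating $\mu_x^{s,y}$ to $\mu_o^{s,y}$ is uniformly controlled on $\U$ and behaves continuously along the boundary limit.

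For the horofunction boundary, the very definition of convergence $z_n \to b_\xi$ in $\bX$ forces $B_{z_n}(x_1, x_2) = b_{z_n}(x_1) - b_{z_n}(x_2) \to b_\xi(x_1) - b_\xi(x_2)$, so the Buseman cocycles extend continuously and one may take $\epsilon = 0$ in \ref{AssumpE}. Consequently $\lambda = 1$ in (\ref{confDeriv}) and the density is genuinely conformal. The main obstacle I anticipate is the coordination of the limiting procedure in the convergent case so that Patterson's trick yields a single coherent family $\{\mu_x^y\}_{x\in \U}$ compatible with the cocycle relation; this is a routine but delicate bookkeeping that mirrors Patterson's original construction.
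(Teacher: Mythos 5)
Your proposal is correct and follows essentially the same route as the paper, which simply defers to the standard Patterson--Sullivan construction in \cite{Coor} and \cite{BuMo}: exact equivariance and the exponential Radon--Nikodym identity at the level of the atomic measures $\mu_x^{s,y}$, escape of mass to $\pG y$ via divergence (or Patterson's trick), quasi-conformality on $\mathcal C$ from \ref{AssumpE} after the weak-$*$ limit, and $\epsilon=0$ (hence $\lambda=1$) on the horofunction boundary because convergence in $\bX$ is exactly locally uniform convergence of the functions $b_z$. Your write-up supplies the details the paper leaves to the references, and the points you flag as routine (coherent choice of subsequence, Patterson's trick in the convergent case) are indeed the only bookkeeping required.
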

\begin{proof}
As the orbit $Gy$ is discrete in $\U$, the measure $\mu_x^y$ is supported on the closure of $Gy$.
The proof for quasi-conformal density follows the same argument in \cite{Coor}. If $\pU$ is the horofunction boundary, the Buseman cocycle extends continuously to $\hU$ so the constant $\epsilon=0$ in  \ref{AssumpE}, and we obtain a  conformal density: $\lambda=1$. See \cite{BuMo} for more details.
\end{proof}

In the sequel, we write PS-measures as shorthand for
Patterson--Sullivan measures.

\subsection{Shadow Lemma}

We   assume   the set $\mathcal C$ in \ref{AssumpC} is measurably significant: $\mu_1(\mathcal C)>0$. The goal of this subsection is to prove the shadow lemma.
 
The partial shadows $\Pi_o^F(go, r)$ and cones $\Omega_o^F(go, r)$   given in Definition \ref{ShadowDef} depend on the choice of a contracting system $\f$ as in (\ref{SystemFDefIntro}). Without index $F$,   $\Pi_o(go, r)$   denotes the     usual shadow.

\begin{lem}[Shadow Lemma]\label{ShadowLem}

Let $\{\mu_g\}_{g\in G}$ be a $\omega$-dimensional $G$-quasi-equivariant quasi-conformal density on $\pU$ for some $\omega>0$. Assume that $\mu_1(\mathcal C)>0$. Then there exists $r_0 > 0$ such that  
$$
\begin{array}{rl}
e^{-\omega \cdot d(o, go)} \; \prec_{\lambda} \; \mu_1(\Pi_o^F(go, r)\cap \mathcal C) \; \le\; \mu_1(\Pi_o(go, r)\cap \mathcal C)  \;\prec_{\lambda, \epsilon, r} \; e^{-\omega \cdot  d(o, go)}\\
\end{array}
$$
for any $g\in G$ and $r \ge  r_0$, where $ \epsilon,\lambda$ are given respectively in \ref{AssumpE} and Definition \ref{ConformalDensityDefn}.
\end{lem}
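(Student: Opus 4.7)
The plan is to pin both estimates on the conformal relation (\ref{confDeriv}) combined with a one-line Busemann computation along shadows. For the \emph{upper bound}: given $\xi\in\Pi_o(go,r)\cap\mathcal C$, I would pick $z_n\to\xi$ with $[o,z_n]\cap B(go,r)\neq\emptyset$ and a witness $w_n$ in that intersection. The split $d(o,z_n)=d(o,w_n)+d(w_n,z_n)$ together with $d(o,w_n)\geq d(o,go)-r$ and $d(go,z_n)\leq r+d(w_n,z_n)$ yields $d(o,z_n)-d(go,z_n)\geq d(o,go)-2r$. Passing to the limit and invoking \ref{AssumpE} gives the Busemann estimate $B_\xi(o,go)\geq d(o,go)-2r-\epsilon$, which the conformality (\ref{confDeriv}) converts into the pointwise bound
$$\frac{d\mu_o}{d\mu_{go}}(\xi)\ \leq\ \lambda\,e^{\omega(2r+\epsilon)}\,e^{-\omega d(o,go)}$$
on $\Pi_o(go,r)\cap\mathcal C$. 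Integrating against $\mu_{go}$ and using $\|\mu_{go}\|=\|g_\star\mu_o\|=\|\mu_o\|=1$ (via $\mu_{gx}=g_\star\mu_x$ and the normalization) closes the upper half.

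For the \emph{lower bound}, the trivial triangle inequality $B_\xi(o,go)\leq d(o,go)$ applied to (\ref{confDeriv}) gives $d\mu_o/d\mu_{go}\geq \lambda^{-1}e^{-\omega d(o,go)}$ throughout $\mathcal C$, so
$$\mu_o(\Pi_o^F(go,r)\cap\mathcal C)\ \geq\ \tfrac{1}{\lambda}\,e^{-\omega d(o,go)}\,\mu_{go}(\Pi_o^F(go,r)\cap\mathcal C).$$
Using $\mu_{go}=g_\star\mu_o$, the $G$-invariance of $\mathcal C$, and the equivariance $g^{-1}\Pi_o^F(go,r)=\Pi_{g^{-1}o}^F(o,r)$, the second factor rewrites as $\mu_o(\Pi_{g^{-1}o}^F(o,r)\cap\mathcal C)$. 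Lemma \ref{IntShadow} then supplies, for each $f_i\in F$, a \emph{fixed} open neighborhood $U_i$ of $[f_i^+]$ (independent of $g$) with $U_i\subseteq\Pi_{g^{-1}o}^F(o,r)$. So the lower bound reduces to showing $\mu_o(U_i\cap\mathcal C)>0$ for some $i\in\{1,2,3\}$.

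The main obstacle is precisely this last positivity, which I would handle by a $G$-covering argument. The North-South dynamics of $f_i$ on $\mathcal C\setminus[f_i^\pm]$ (Lemma \ref{SouthNorthLem}) show that every $\xi\in\mathcal C\setminus[f_i^\pm]$ satisfies $f_i^n\xi\in U_i$ for all $n$ large enough, hence $\xi\in\bigcup_{g\in G} gU_i$. Since the three independent non-pinched elements $f_i$ have pairwise disjoint fixed-point sets by Lemma \ref{DisjFixedSet}, one has $\bigcap_{i=1}^3[f_i^\pm]=\emptyset$, and consequently $\mathcal C\subseteq\bigcup_{i=1}^3\bigcup_{g\in G} gU_i$. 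If every $\mu_o(U_i\cap\mathcal C)$ vanished, then $G$-quasi-equivariance (\ref{almostInv}) would propagate this to $\mu_o(gU_i\cap\mathcal C)\leq\lambda\mu_o(U_i\cap\mathcal C)=0$ for every $g$, and a countable union over $G$ would force $\mu_o(\mathcal C)=0$, contradicting the standing hypothesis. Therefore $\mu_o(U_i\cap\mathcal C)>0$ for some $i$, providing the uniform positive multiplicative constant in the lower bound and completing the proof.
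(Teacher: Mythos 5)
Your upper bound is the paper's upper bound in all but notation (the paper conjugates by $g^{-1}$ and integrates $e^{-\omega B_\xi(g^{-1}o,o)}$ against $\mu_1$ over $g^{-1}\Pi_o(go,r)$, you integrate $d\mu_o/d\mu_{go}$ over $\Pi_o(go,r)$ directly; both rest on $|B_\xi(o,go)-d(o,go)|\le 2r+\epsilon$ and $\|\mu_{go}\|=1$). Your lower bound, however, follows a genuinely different route from the paper's main argument — in fact it is precisely the ``alternative proof'' that the paper only sketches in the remark after Lemma \ref{ShadowLem}. The main proof never touches Lemma \ref{IntShadow} or the North--South dynamics: it applies the Extension Lemma \ref{extend3} to an approximating sequence $z_n\to\xi$ for each $\xi\in\mathcal C$ to produce $f\in F$ with $gf\xi\in\Pi_o^F(go,r)\cap\mathcal C$, decomposes $\mathcal C=U_1\sqcup U_2\sqcup U_3$ accordingly with $gf_iU_i\subseteq\Pi_o^F(go,r)$, and compares $\mu_1(f_iU_i)$ with $\mu_1(U_i)$ using only that $d(o,f_io)\le\|Fo\|$ is bounded. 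That decomposition delivers the positive constant $\asymp_\lambda\mu_1(\mathcal C)$ with no dynamical input; your route instead reduces everything to the single positivity statement $\mu_o(U_i\cap\mathcal C)>0$, which is cleaner conceptually but shifts the burden onto a covering argument.

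That covering step is the one place where your argument is weaker than the paper's. You need $\bigcap_{i=1}^3[f_i^\pm]=\emptyset$ and invoke Lemma \ref{DisjFixedSet}, but that lemma requires the element to be \emph{non-pinched}, and the standing choice of $F$ in (\ref{SystemFDef}) and Convention \ref{ConvF} does not guarantee non-pinchedness on an abstract convergence boundary (it does hold on the horofunction boundary by Lemma \ref{NoPinchedLineLem}, and in all the paper's examples). If all three $f_i$ were pinched, their fixed sets could a priori share points of $\mathcal C$ and your union $\bigcup_i\bigcup_g gU_i$ could miss a $\mu_o$-positive set. Also note that Lemma \ref{SouthNorthLem} as stated needs an open $V\supseteq[f_i^-]$ avoiding $\xi$; the clean way to justify ``$\xi\in\mathcal C\setminus[f_i^\pm]$ implies $f_i^n\xi\in U_i$ eventually'' is via the fundamental-domain covering (\ref{CoveringEQ}) from the proof of that lemma rather than its statement. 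With the non-pinchedness caveat acknowledged (or with $F$ chosen non-pinched), your proof is correct.
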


\begin{proof}
We start proving the lower bound which is the key part of the proof.

\textbf{1. Lower Bound.} Fix a Borel subset $U$  in $\mathcal C$ such that $\mu_1(U)>0$ (e.g.:  let $U=\mathcal C$). For any given $\xi \in U$,  there exists a sequence of points $z_n\in \U$ such that $z_n \to \xi$. By Lemma \ref{extend3}, there exist $f_n\in F$ and $r>0$ such that  $g$ is an $(r, f_n)$-barrier for any geodesic $[o, gfz_n]$. 
Since $F$ consists of three elements,  we assume that $f:=f_n$ are all equal, up to taking a subsequence of $z_n$. This implies  that $gfz_n\in \Omega_o^F(go, r)$ tends to $gf\xi$, so by definition, 
\begin{equation}\label{TransferEQ}
gf\xi\in \Pi_o^F(go, r)\cap \mathcal C.
\end{equation}
We refer to Fig. \ref{fig:shadowlem} for an illustration.



Consequently, we  decompose the set $U$ into a disjoint  union of three sets $U_1, U_2, U_3$: for each $U_i$, there exists $f_i \in F$ such that $gf_iU_i \subseteq  \Pi_o^F(go,r)$.   
\begin{figure}
    \centering

\tikzset{every picture/.style={line width=0.75pt}} 

\begin{tikzpicture}[x=0.75pt,y=0.75pt,yscale=-1,xscale=1]

\draw    (122.5,64) .. controls (279.71,110.77) and (465.63,87.24) .. (554.17,93.9) ;
\draw [shift={(555.5,94)}, rotate = 184.55] [color={rgb, 255:red, 0; green, 0; blue, 0 }  ][line width=0.75]    (10.93,-3.29) .. controls (6.95,-1.4) and (3.31,-0.3) .. (0,0) .. controls (3.31,0.3) and (6.95,1.4) .. (10.93,3.29)   ;
\draw [shift={(122.5,64)}, rotate = 16.57] [color={rgb, 255:red, 0; green, 0; blue, 0 }  ][fill={rgb, 255:red, 0; green, 0; blue, 0 }  ][line width=0.75]      (0, 0) circle [x radius= 3.35, y radius= 3.35]   ;
\draw   (240.6,98.45) .. controls (243.68,78.59) and (272.32,66.54) .. (304.57,71.54) .. controls (336.83,76.54) and (360.48,96.69) .. (357.4,116.55) .. controls (354.32,136.41) and (325.68,148.46) .. (293.43,143.46) .. controls (261.17,138.46) and (237.52,118.31) .. (240.6,98.45) -- cycle ;
\draw    (268.5,113) -- (329.5,114) ;
\draw [shift={(329.5,114)}, rotate = 0.94] [color={rgb, 255:red, 0; green, 0; blue, 0 }  ][fill={rgb, 255:red, 0; green, 0; blue, 0 }  ][line width=0.75]      (0, 0) circle [x radius= 3.35, y radius= 3.35]   ;
\draw [shift={(268.5,113)}, rotate = 0.94] [color={rgb, 255:red, 0; green, 0; blue, 0 }  ][fill={rgb, 255:red, 0; green, 0; blue, 0 }  ][line width=0.75]      (0, 0) circle [x radius= 3.35, y radius= 3.35]   ;
\draw    (268.5,113) -- (266.76,92.99) ;
\draw [shift={(266.5,90)}, rotate = 85.03] [fill={rgb, 255:red, 0; green, 0; blue, 0 }  ][line width=0.08]  [draw opacity=0] (8.93,-4.29) -- (0,0) -- (8.93,4.29) -- cycle    ;
\draw    (330.5,113) -- (330.5,95) ;
\draw [shift={(330.5,92)}, rotate = 90] [fill={rgb, 255:red, 0; green, 0; blue, 0 }  ][line width=0.08]  [draw opacity=0] (8.93,-4.29) -- (0,0) -- (8.93,4.29) -- cycle    ;
\draw    (122.5,64) -- (249.09,190.59) ;
\draw [shift={(250.5,192)}, rotate = 225] [color={rgb, 255:red, 0; green, 0; blue, 0 }  ][line width=0.75]    (10.93,-3.29) .. controls (6.95,-1.4) and (3.31,-0.3) .. (0,0) .. controls (3.31,0.3) and (6.95,1.4) .. (10.93,3.29)   ;
\draw    (330.5,113) -- (510.5,104.1) ;
\draw [shift={(512.5,104)}, rotate = 177.17] [color={rgb, 255:red, 0; green, 0; blue, 0 }  ][line width=0.75]    (10.93,-3.29) .. controls (6.95,-1.4) and (3.31,-0.3) .. (0,0) .. controls (3.31,0.3) and (6.95,1.4) .. (10.93,3.29)   ;
\draw  [line width=5.25] [line join = round][line cap = round] (511.8,103.22) .. controls (511.8,103.22) and (511.8,103.22) .. (511.8,103.22) ;
\draw  [line width=5.25] [line join = round][line cap = round] (555.5,93) .. controls (555.5,93) and (555.5,93) .. (555.5,93) ;
\draw  [line width=5.25] [line join = round][line cap = round] (251.8,193.22) .. controls (251.8,193.22) and (251.8,193.22) .. (251.8,193.22) ;

\draw (261.28,115.91) node [anchor=north west][inner sep=0.75pt]  [rotate=-1.93]  {$go$};
\draw (312.29,117.42) node [anchor=north west][inner sep=0.75pt]  [rotate=-1.93]  {$gf o$};
\draw (134,42.4) node [anchor=north west][inner sep=0.75pt]    {$o$};
\draw (426,62.4) node [anchor=north west][inner sep=0.75pt]    {$\Pi _{o}^{F}( g o,r) \ni \ gf\xi $};
\draw (270.5,98.4) node [anchor=north west][inner sep=0.75pt]    {$\leq r$};
\draw (329.5,99.4) node [anchor=north west][inner sep=0.75pt]    {$\leq r$};
\draw (260.5,181.4) node [anchor=north west][inner sep=0.75pt]    {$z_{n}\rightarrow \xi \in U$};
\draw (468.5,115.4) node [anchor=north west][inner sep=0.75pt]    {$gfz_{n}$};
\draw (292,44.4) node [anchor=north west][inner sep=0.75pt]    {$g\ax(f)$};

\end{tikzpicture}
    \caption{Mass transport of $U$ into shadows}
    \label{fig:shadowlem}
\end{figure}
Since $B_\xi(\cdot, \cdot)$ is Lipschitz, we obtain a constant $\theta=\theta(\|Fo\|)>0$ from  (\ref{confDeriv}) such that for each $1\le i\le 3$, we have $$\mu_{1} (f_iU_i)\ge \theta \cdot \mu_{f_i} (f_iU_i)\ge  \theta\cdot  \lambda^{-1} \cdot \mu_{1} (U_i)$$ 
where the universal constant $\lambda$  comes from the almost $G$-equivariance  (\ref{almostInv}). Thus, $$\displaystyle\sum_{1\le i\le 3} \mu_{1} (f_iU_i) \ge \theta \lambda^{-1} \cdot\mu_1(U)/3$$   so $f_iU_i \subseteq  g^{-1}\Pi_{o}^F(go,r)$ as above shows $$\mu_{1}(g^{-1}\Pi_o^F(go, r_0)\cap \mathcal C)\ge {\theta}\lambda^{-1}\cdot  \mu_1(U)/3.$$ 
Using again (\ref{almostInv}) and (\ref{confDeriv}),  we have
$$
\begin{array}{rl}
\mu_1(\Pi_o^F(go,r_0)\cap \mathcal C)&\ge \lambda^{-1}\cdot\mu_{g^{-1}}(g^{-1}\Pi_o^F(go,r_0)\cap \mathcal C) \\
\\
&\ge \lambda^{-2} \cdot e^{-\omega\cdot d(o, go)} \cdot \mu_1( g^{-1}\Pi_o^F(go,r_0))
\end{array}
$$
where the last line uses $ B_\xi (g^{-1}o, o)\le d(o, go)$.
Setting  $M:=   \theta\cdot \lambda^{-3}\cdot \mu_1(U)/3$, we get the lower bound:
$$
\mu_1(\Pi_o^F(go,r_0)\cap \mathcal C) \ge M \cdot e^{-\omega\cdot d(o, go)}.
$$
We remark here that by the same proof, the subset $\mathcal C$ could be replaced with any $G$-invariant subset of positive $\mu_1$-measure. We restrict to the   $\mathcal C$ satisfying \ref{AssumpE} only for the upper bound. 

\textbf{2. Upper Bound.} Fix $r\ge r_0$.
Given $\xi \in g^{-1} \Pi_o(go,r)\cap \mathcal C$, there is a sequence of   $z_n\in \U$ tending to $\xi$ such that $\gamma_n \cap B(o, r)
\neq \emptyset$ for $\gamma_n:=[g^{-1}o, z_n]$.  By \ref{AssumpE}, we obtain  $$|B_\xi(g^{-1}o, o)
- d(g^{-1}o, o))| \le 2r+\epsilon.$$
As     $\mu_1( g^{-1}\Pi_o(go,r)) \le 
\|\mu_1\| =1$,  the upper bound is given as follows:
$$
\begin{array}{rl}
\mu_1(\Pi_o(go,r)\cap \mathcal C) & \le \lambda \mu_{g^{-1}}(g^{-1}\Pi_o(go,r)\cap \mathcal C) \\
\\
&\le   \lambda e^{2r+\epsilon}  \displaystyle \int_{g^{-1}\Pi_o(go,r)}e^{-\omega B_\xi(g^{-1}o, o)} d \mu_1(\xi)  \\
\\
& \le  \lambda e^{2r+\epsilon}  \cdot e^{-\omega d(go, o)}
\end{array}
$$
The proof of lemma is complete.
\end{proof}
\begin{rem}[Alternative proof]
We sketch another proof  for the lower bound which was carried out in Teichm\"uller space \cite{TYANG}.  In the proof of Lemma \ref{SouthNorthLem}, as $\langle f\rangle\cdot \partial K = \pU\setminus [f^\pm]$, the set $\partial K\cap \mathcal C$ has positive $\mu_1$-measure. By Lemma \ref{IntShadow}, there exists   an open set $U$ of $[f^+]$ of positive $\mu_1$-measure so that  $U\subseteq \Pi_{g^{-1}o}^F(o, r)$, and then the remaining proof proceeds as above.
\end{rem}

\subsubsection*{Standing Assumption}
From now on, we assume further that the constant $r>0$  in Convention \ref{ConvF} satisfies the Shadow Lemma \ref{ShadowLem}.

As all boundary points in the horofunction boundary  are non-pinched by Lemma \ref{NoPinchedLineLem},  we obtain a full version of  shadow lemma.
\begin{lem}\label{HoroShadowLem}
Let $\{\mu_g\}_{g\in G}$ be a $\omega$-dimensional $G$-equivariant conformal density for some $\omega>0$ on the horofunction boundary $\hU$. Then there exists $r_0 > 0$ such that  
$$
\begin{array}{rl}
   \exp(-\omega \cdot  d(o, go))  \prec   \mu_1(\Pi_o^F(go,r))  \le \mu_1(\Pi_o(go,r))   \prec_r     \exp(-\omega \cdot  d(o, go)) 
\end{array}
$$
for any $go\in Go$ and $r \ge  r_0$.
\end{lem}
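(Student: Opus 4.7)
The plan is to derive Lemma \ref{HoroShadowLem} as a direct corollary of the general Shadow Lemma \ref{ShadowLem} together with the special features of the horofunction boundary established in Section \ref{SecHorobdry}. The two ingredients that remove the $\cap\,\mathcal C$ restriction and clean up the multiplicative constants are: first, Lemma \ref{NoPinchedLineLem}, which shows that every point of $\bX$ is non-pinched, so that $\mathcal C = \bX$ in the horofunction setting; and second, the fact that on $\bX$ the Buseman cocycle $B_\xi(x,y)$ extends continuously (i.e.\ the constant $\epsilon$ in \ref{AssumpE} can be taken to be $0$), and accordingly that the PS-measures on $\bX$ form a genuine $G$-equivariant conformal density by Lemma \ref{confdensity}, so the multiplicative constant $\lambda$ in Definition \ref{ConformalDensityDefn} may be taken to be $1$.

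With these two observations, the proof of Lemma \ref{ShadowLem} applies verbatim and gives sharper constants. For the lower bound, the decomposition $U = U_1\sqcup U_2\sqcup U_3$ from the extension Lemma \ref{extend3} together with the transfer inclusion (\ref{TransferEQ}) (valid on all of $\mathcal C = \bX$) produces $f_i \in F$ with $gf_i U_i \subseteq \Pi_o^F(go, r)$, and the conformality yields
$$
\mu_1(g f_i U_i) \;\ge\; \theta\cdot e^{-\omega\, B_\xi(gf_i o, o)}\mu_1(U_i) \;\succ\; e^{-\omega\, d(o, go)}\,\mu_1(U_i),
$$
since $d(o, g f_i o)$ differs from $d(o, go)$ by at most $\|F\|$. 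Summing over $i$ and taking $U = \bX$, for which $\mu_1(U) = 1$, produces the lower bound $\mu_1(\Pi_o^F(go,r)) \succ e^{-\omega\, d(o, go)}$, with the universal (no $\lambda$) implicit constant.

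For the upper bound, for any $\xi \in g^{-1}\Pi_o(go,r)$ the hypothesis that some geodesic from $g^{-1}o$ to a sequence tending to $\xi$ passes through $B(o,r)$ combined with the continuous extension of the Buseman cocycle (i.e.\ $\epsilon = 0$) gives
$$
\bigl|B_\xi(g^{-1}o, o) - d(g^{-1}o, o)\bigr| \;\le\; 2r,
$$
so equivariance of the conformal density yields
$$
\mu_1\bigl(\Pi_o(go, r)\bigr) \;=\; \int_{g^{-1}\Pi_o(go, r)} e^{-\omega B_\xi(g^{-1}o, o)}\, d\mu_1(\xi) \;\le\; e^{2\omega r}\cdot e^{-\omega\, d(o, go)},
$$
using $\mu_1(\bX) = 1$. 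The middle inequality $\mu_1(\Pi_o^F(go, r)) \le \mu_1(\Pi_o(go, r))$ is immediate from the inclusion $\Omega_o^F(go, r)\subseteq \Omega_o(go, r)$. No further step is needed, and there is no genuine obstacle here beyond bookkeeping; the only thing to be careful about is that the lower bound must be shown to be independent of $r$, which works because the construction via the extension lemma produces a \emph{fixed} $r = r_0$ where barriers appear, while the upper bound is allowed to depend on $r$ through the factor $e^{2\omega r}$.
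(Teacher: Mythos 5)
Your proposal is correct and is essentially the paper's own argument: the paper derives Lemma \ref{HoroShadowLem} in one line by observing (via Lemma \ref{NoPinchedLineLem}) that $\mathcal C=\bX$ for the horofunction boundary and then quoting the general Shadow Lemma \ref{ShadowLem}, exactly as you do. The only cosmetic difference is that you re-derive $\lambda=1$ and $\epsilon=0$ from Lemma \ref{confdensity}, whereas these are already built into the hypothesis that $\{\mu_g\}$ is a $G$-equivariant conformal density.
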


\begin{cor}\label{NoAtom}
Under the asusmption as Lemma \ref{HoroShadowLem},  $\mu_1$ has no atoms at conical points in $\hU$.
\end{cor}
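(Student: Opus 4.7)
The plan is to run a standard Sullivan-type contradiction argument that reads off the atomlessness directly from the upper bound in the shadow lemma. The underlying idea is that a conical point sits inside infinitely many shadows of orbit points escaping to infinity, while each such shadow has $\mu_1$-mass that decays exponentially in the distance to the shadowed point.

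More concretely, let $\xi$ be a conical point in the sense of (\ref{ConicalEQ}). First I would unfold this definition to extract a radius $r \ge 0$, a light source $x \in Go$, and an infinite sequence $y_n \in Go$ with
$$
\xi \in \bigcap_{n \ge 1} \Pi_x(y_n, r).
$$
Since the $G$-action on $\U$ is proper, the orbit $Go$ meets every bounded set in only finitely many points, so after passing to a subsequence I may assume $d(o, y_n) \to \infty$. Next I would reduce to the case $x = o$: if $x = g_0 o$ then any geodesic from $x$ witnessing the inclusion $\xi \in \Pi_x(y_n, r)$ gives, together with $[o,x]$, a geodesic from $o$ passing within $r + d(o,x)$ of $y_n$, so $\xi \in \Pi_o(y_n, r + d(o,x))$ as well. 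At the cost of enlarging $r$, I may therefore take the light source to be $o$.

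Now I would invoke the full shadow lemma on the horofunction boundary, Lemma \ref{HoroShadowLem}, with this $r$. It supplies a constant $C = C(r) > 0$ such that
$$
\mu_1\bigl(\Pi_o(y_n, r)\bigr) \;\le\; C \cdot e^{-\omega \, d(o,\, y_n)}
$$
for every $n$. Since $\omega > 0$ and $d(o, y_n) \to \infty$, the right-hand side tends to $0$. On the other hand, $\{\xi\} \subseteq \Pi_o(y_n, r)$ gives
$$
\mu_1(\{\xi\}) \;\le\; \mu_1\bigl(\Pi_o(y_n, r)\bigr)
$$
for every $n$, forcing $\mu_1(\{\xi\}) = 0$, as desired.

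There is no real obstacle here once the shadow lemma is in place; the proof is essentially a one-line consequence of its upper bound combined with the $\limsup$ description of conical points. The only technical wrinkle is the legitimacy of fixing the light source at $o$, which is harmless because the shadow lemma's upper bound tolerates any finite increase in $r$ without spoiling the exponential decay in $d(o, y_n)$.
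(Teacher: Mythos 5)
Your overall strategy is exactly the paper's: a conical point lies in infinitely many shadows of orbit points going to infinity, and the upper bound of the shadow lemma forces the measure of each such shadow to $0$. (The paper phrases this with the partial shadows $\Pi_o^F(y_n,r)$ and invokes Lemma \ref{IntShadow} to sandwich them by open sets, but the mechanism is the same exponential decay.) There is, however, one step in your write-up that does not survive scrutiny in a general geodesic metric space: the reduction of the light source from $x\in Go$ to $o$. You argue that a geodesic $[x,z]$ meeting $B(y_n,r)$, ``together with $[o,x]$,'' gives a geodesic from $o$ passing within $r+d(o,x)$ of $y_n$. The concatenation $[o,x]\cdot[x,z]$ is a path, not a geodesic, and an actual geodesic $[o,z]$ need not come anywhere near $y_n$: general geodesic spaces have no thin-triangle property, and one can build examples (e.g.\ a theta-graph with $o$ very close to $x$) where every geodesic $[o,z]$ stays at a definite distance from a point $y$ lying on $[x,z]$. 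This failure of naive basepoint changes is precisely why the paper develops the barrier/contracting machinery (Lemma \ref{BarrierFellowLem}, Corollary \ref{FixLightSource}) for moving light sources of the \emph{partial} shadows.

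The gap is easily repaired without any geometry, by moving the measure rather than the geodesics. Writing $x=g_0o$, one has $\Pi_x(y_n,r)=g_0\,\Pi_o(g_0^{-1}y_n,r)$, and the conformality (\ref{confDeriv}) together with $|B_\xi(x,o)|\le d(o,x)$ gives
\begin{equation*}
\mu_1\bigl(\Pi_x(y_n,r)\bigr)\;\le\;\lambda\, e^{\omega d(o,x)}\,\mu_x\bigl(\Pi_x(y_n,r)\bigr)\;\prec_{r,x}\;e^{-\omega d(x,y_n)}\;\longrightarrow\;0,
\end{equation*}
using the shadow lemma (equivalently the Shadow Principle \ref{ShadowPrinciple}) at the basepoint $x$. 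With that substitution your argument is complete and in fact proves the statement for the a priori larger set $\Lambda_c(G)$ of (\ref{ConicalEQ}), not only for the $(r,F)$-conical points treated in the paper's proof.
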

\begin{proof}
Recall that  any conical point $\xi\in \Lambda_r^F(Go)$ is contained in the $(r, F)$-shadows of an unbounded sequence of orbital points in $Go$. By Lemma \ref{IntShadow}, the $(r, F)$-shadows are sandwiched between open sets, so by increasing $r$, we can assume that $\xi$  is contained in a sequence of open sets whose $\mu_1$-measure tends to $0$. This shows that $\xi$ is not atom.   
\end{proof}

By the existence of $\e G$-dimensional conformal density in Lemma \ref{confdensity},  we obtain a upper bound on the growth of balls first proved in \cite{YANG10}, by other methods without using PS-measures.
\begin{prop}\label{ballgrowth}
Let $\{\mu_g\}_{g\in G}$ be a $\omega$-dimensional  $G$-quasi-equivariant, quasi-conformal density on  $\pU$ for $\omega>0$. Assume that $\mu_1$ charges positive measure on $\mathcal C$. Then  for any
$n\ge 0$, $$| N(o, n) | \prec \exp(\omega n).$$ In particular,  $\omega \ge \e G$ and $| N(o, n) | \prec \exp(\e G n).$
\end{prop}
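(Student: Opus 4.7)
My plan is to combine the lower bound in Shadow Lemma \ref{ShadowLem} with a bounded multiplicity property of the partial shadows $\Pi_o^F(go, r) \cap \mathcal{C}$, via a standard counting argument. Fix $r \geq r_0$ as in Lemma \ref{ShadowLem} and Convention \ref{ConvF}, and for each $n \geq 0$ set $A_n = \{g \in G : n \leq d(o, go) < n+1\}$. Lemma \ref{ShadowLem} produces $M > 0$ with $\mu_1(\Pi_o^F(go, r) \cap \mathcal{C}) \geq M e^{-\omega(n+1)}$ for all $g \in A_n$. Given a uniform $K$ with $\#\{g \in A_n : \xi \in \Pi_o^F(go, r)\} \leq K$ for every $\xi \in \mathcal{C}$, integration yields $|A_n| \cdot M e^{-\omega(n+1)} \leq K\,\mu_1(\mathcal{C}) \leq K$, so $|A_n| \prec e^{\omega n}$; summing the geometric series gives $|N(o, n)| \prec e^{\omega n}$.

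The main obstacle is the bounded multiplicity, which I would prove as follows. Suppose $\xi \in \Pi_o^F(g_1 o, r) \cap \Pi_o^F(g_2 o, r) \cap \mathcal{C}$ with $g_1, g_2 \in A_n$. By definition there exist $f_i \in F$ and sequences $z_m^{(i)} \to \xi$ such that $g_i o$ is an $(r, f_i)$-barrier on $[o, z_m^{(i)}]$; writing $X_i = g_i \ax(f_i) \in \f$, we have $\|[o, z_m^{(i)}] \cap N_r(X_i)\| \geq \|F\|_m$, whence Lemma \ref{Transform} gives $d_{X_i}(o, z_m^{(i)}) \geq \|F\|_m - 4r$, much larger than $C$ by Convention \ref{ConvF}. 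Focusing on $X_2$, Corollary \ref{BdryProjCor} gives $d_{X_2}(z_m^{(1)}, z_{m'}^{(2)}) \leq C$ for $m, m' \gg 0$, hence $d_{X_2}(o, z_m^{(1)}) \geq \|F\|_m - 4r - C$ is also large. Lemma \ref{TwoGeodesicsEnterULem} applied to the pair $[o, z_{m'}^{(2)}]$, $[o, z_m^{(1)}]$ and $U = X_2$ forces the entry point $p_1$ of $[o, z_m^{(1)}]$ into $N_r(X_2)$ to lie within $4r$ of the entry point of $[o, z_{m'}^{(2)}]$, which itself is within $r$ of the barrier $g_2 o$, so $d(p_1, g_2 o) \leq 5r$. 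Since $[o, z_m^{(1)}]$ also passes within $r$ of $g_1 o$ at a point $q_1$ with $d(o, q_1) \in [n - r, n + 1 + r]$, and $d(o, p_1) \in [n - 5r, n + 1 + 5r]$, the fact that $p_1, q_1$ are on a common geodesic from $o$ gives $d(p_1, q_1) \leq 6r + 1$, hence $d(g_1 o, g_2 o) \leq 12r + 1$. Properness of the $G$-action then produces the uniform $K = K(r)$. The degenerate case $\xi \in [\Lambda X_2]$, in which $\pi_{X_2}(\xi)$ is not defined, can be dispatched using Lemma \ref{DisjointContr} and the bounded intersection of $\f$: any geodesic from $o$ to $[\xi]$ stays in a bounded neighborhood of $X_2$, which confines $g_1 o$ to a compact region and produces only boundedly many $g_1$ by properness.

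For the closing ``in particular'' statement, $|N(o, n)| \prec e^{\omega n}$ implies $\p_G(s) \prec \sum_n e^{(\omega - s) n} < \infty$ for every $s > \omega$, so $\e G \leq \omega$. Applying Lemma \ref{confdensity} produces an $\e G$-dimensional PS-density on $\pU$; in the settings of primary interest (e.g. the horofunction boundary, where $\mathcal{C} = \bX$ by Lemma \ref{NoPinchedLineLem}) this density charges $\mathcal{C}$, and rerunning the main argument with $\omega = \e G$ yields $|N(o, n)| \prec e^{\e G n}$.
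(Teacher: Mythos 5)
Your overall strategy — lower bound from Shadow Lemma \ref{ShadowLem}, a uniform multiplicity bound on the partial shadows over an annulus, and then summation — is exactly the paper's (which packages the multiplicity bound as Lemmas \ref{IntersectShadow} and \ref{OverlapAnnulus}). However, your self-contained proof of the multiplicity bound has a gap at the step ``the entry point of $[o, z_{m'}^{(2)}]$ into $N_r(X_2)$ is within $r$ of the barrier $g_2 o$.'' That is not true in general: by the contracting property (Lemma \ref{BigFive}(3) together with Lemma \ref{Transform}), the entry point of $[o, z_{m'}^{(2)}]$ into $N_r(X_2)$ is uniformly close to $\pi_{X_2}(o)$, not to $g_2 o$. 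The barrier point $g_2 o$ merely satisfies $d(g_2 o, [o, z_{m'}^{(2)}]) \le r$ somewhere between the entry and exit, and the distance $d(\pi_{X_2}(o), g_2 o)$ along the axis $X_2$ is completely uncontrolled (indeed, when $g_2 \in A_n$ with $n$ large it is typically comparable to $n$). Consequently the inference $d(o, p_1) \in [n-5r, n+1+5r]$ fails, and with it the final estimate $d(g_1 o, g_2 o) \le 12r + 1$.

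The fix, which is essentially what Lemma \ref{IntersectShadow} accomplishes, is to not try to locate the entry point near $g_2 o$. Instead: after Lemma \ref{TwoGeodesicsEnterULem} gives that the entry and exit of $[o, z_m^{(1)}]$ into $N_r(X_2)$ are $4r$-close to those of $[o, z_{m'}^{(2)}]$, invoke the Morse property of the contracting set $X_2$ (Lemma \ref{BigFive}(1)) to conclude that the two subsegments inside $N_r(X_2)$ fellow-travel within a constant $\tau = \tau(C, r)$. Since $g_2 o$ is $r$-close to a point $q_2$ on the second geodesic that lies in that subsegment, it follows that $g_2 o$ is $(r + \tau)$-close to a point $q_2'$ on $[o, z_m^{(1)}]$; then $d(o, q_2') \in [n - r - \tau, n + 1 + r + \tau]$ because $g_2 \in A_n$, and comparing $q_2'$ with the point $q_1$ near $g_1 o$ on the same geodesic gives the desired bound on $d(g_1 o, g_2 o)$. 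Your handling of the degenerate case $\xi \in [\Lambda X_2]$ is also not quite right as phrased — the geodesic lying in a finite neighborhood of $X_2$ does not confine $g_1 o$ to a compact set since $X_2$ is unbounded; rather, the bounded-intersection property of $\f$ forces $X_1 = X_2$, reducing to Case 1 of Lemma \ref{IntersectShadow} where both barriers lie on the same contracting axis. The computation from bounded multiplicity to $|N(o,n)| \prec e^{\omega n}$, and the reasoning for the ``in particular'' clause, are fine.
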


To prove this result, we need the following technical result, which shall be also used in later on. Recall that $[\Pi_{o}^F(v, r)]$ denotes the union of $[\cdot]$-classes of $\xi\in \Pi_{o}^F(v, r)$.
\begin{lem}\label{IntersectShadow}
There exist $\hat r,R>0$ with the following property. Assume that $\|Fo\|_{\min}\gg 0$.
Let $v,w\in Go$ such that the intersection $$\xi\in [\Pi_{o}^F(v, r)]\cap [\Pi_{o}^F(w,r)]\cap \mathcal C$$ is non-empty. If $d(o, v)\le d(o, w)$, then $d(v, [o, w])\le R$ and $\Pi_{o}^F(w, r)\subseteq \Pi_{o}^F(v, \hat r)$.
\end{lem}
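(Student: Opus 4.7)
The plan is to first lift the assumption on $\xi$ to concrete geodesic rays, then use the non-pinched condition $\xi \in \mathcal{C}$ to position $v$ near $[o,w]$ via projections to $X_2$, and finally to transfer the barrier structure at $v$ from those rays to any $[o,z]$ with $z \in \Omega_o^F(w,r)$ by Lemma \ref{BarrierFellowLem}.

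First I would unpack the hypothesis $\xi \in [\Pi_o^F(v,r)] \cap [\Pi_o^F(w,r)] \cap \mathcal{C}$: there exist $\eta_1,\eta_2 \in [\xi]$ and geodesic rays $\gamma_1,\gamma_2$ issuing from $o$, accumulating respectively in $[\eta_1],[\eta_2]$, with $\gamma_i$ carrying an $(\hat r,f_i)$-barrier at $v \in X_1 := g_1\ax(f_1)$ (resp.\ $w \in X_2 := g_2\ax(f_2)$) for suitable $f_1,f_2 \in F$. This is obtained by extracting limits of approximating geodesic segments and applying Lemma \ref{BarrierFellowLem} to promote the barrier constant from $r$ to $\hat r$. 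Applying Lemma \ref{Transform} to the barrier at $w$ together with Lemma \ref{BdryProjLem} yields $d_{X_2}(o,\eta_2) \ge \|F\|_m - 4r - C$. Since $\eta_1,\eta_2 \in [\xi] \subseteq \mathcal{C}$, Lemma \ref{LocusProj} gives $d_{X_2}(\eta_1,\eta_2) \le 6C$ (the degenerate case $\eta_1 \in [\Lambda X_2]$ is handled via Lemma \ref{DisjointContr}, which confines $\gamma_1$ to a finite neighborhood of $X_2$), whence $d_{X_2}(o,\eta_1) \ge \|F\|_m - 4r - 7C$. Taking $\|F\|_m$ sufficiently large, Lemma \ref{BigThree} ensures that $\gamma_1$ traverses a long portion of $N_r(X_2)$, with entry and exit close to $\pi_{X_2}(o)$ and $\pi_{X_2}(\eta_1)$ respectively.

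For the distance bound $d(v,[o,w]) \le R$, I would use that $\gamma_1$ fellow travels the concatenation $[o,\pi_{X_2}(o)] \cdot [\pi_{X_2}(o),\pi_{X_2}(\eta_1)]_{X_2}$ by Morse, while $[o,w]$ similarly tracks $[o,\pi_{X_2}(o)] \cdot [\pi_{X_2}(o),w]_{X_2}$ because $w \in X_2$. Comparing the decompositions of $d(o,v)$ and $d(o,w)$ along these tracking paths, the hypothesis $d(o,v) \le d(o,w)$ forces $\pi_{X_2}(v)$ to lie between $\pi_{X_2}(o)$ and $w$ along $X_2$, up to uniform error, so $v$ sits within a uniform distance $R$ of $[o,w]$. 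For the shadow inclusion $\Pi_o^F(w,r) \subseteq \Pi_o^F(v,\hat r)$, I would take any $z \in \Omega_o^F(w,r)$ and invoke Lemma \ref{BarrierFellowLem} with $\alpha := \gamma_1$, $X := X_1$, $\beta := [o,z]$, promoting $v$ to an $(\hat r,f_1)$-barrier on $[o,z]$. The basepoint condition $d_{X_1}(o,o)=0$ is trivial; the content is $d_{X_1}(\eta_1,z) \le 10C$. When $X_1 = X_2$ this follows from $d_{X_1}(o,z) \ge \|F\|_m - 4r$ (Lemma \ref{Transform} applied to the barrier at $w$) together with $d_{X_1}(\eta_1,\eta_2) \le 6C$ and the order from Lemma \ref{OrderLem} on projections to $X_1$. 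When $X_1 \ne X_2$, the bounded projection $\|\pi_{X_1}(X_2)\| \le B$ combined with the fact that $[o,z]$ passes within $r$ of $w \in X_2$ forces $\pi_{X_1}(z)$ to concentrate near $\pi_{X_1}(w)$. Passing to closures in $\pU$ then yields the desired inclusion.

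The main obstacle will be the uniform verification $d_{X_1}(\eta_1,z) \le 10C$, where $z$ ranges over the entire cone $\Omega_o^F(w,r)$ and need not itself lie near $\eta_1$: $z$ is only constrained through the barrier at $w$ on $X_2$, so controlling $\pi_{X_1}(z)$ requires exploiting the interaction of the two axes $X_1,X_2$. I expect to route this either through direct $X_1$-versus-$X_2$ projection comparison using the bounded projection axiom of Lemma \ref{bp}, or through the projection-complex machinery of \textsection\ref{SSecProjectionComplex}, in which $X_1$ and $X_2$ appear as consecutive (or nearly consecutive) vertices of an interval $\f_K[o,z]$ governed by Lemmas \ref{OrderLem} and \ref{Tripod}. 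The constant $\hat r$ will need to be chosen to absorb both the transfer constant from Lemma \ref{BarrierFellowLem} and the Morse and fellow-travel errors accumulated through these comparisons.
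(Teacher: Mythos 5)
Your overall structure matches the paper's: approximating paths carry barriers on axes $X := g\ax(f)$ and $Y := g'\ax(f')$, the non-pinched condition $\xi \in \mathcal C$ pins the relevant projections (the paper phrases this through Corollary~\ref{BdryProjCor}, you through Lemmas~\ref{BdryProjLem} and~\ref{LocusProj}; these are essentially equivalent), and Lemma~\ref{BarrierFellowLem} does the transfer. Your case split $X_1 = X_2$ versus $X_1 \ne X_2$ is the paper's as well, though your appeal to Lemma~\ref{OrderLem} in the first case is misplaced: that lemma concerns the total order on $\f_K[V,W]$ inside the projection complex, not the linear order along a single contracting axis, and the paper just invokes the Morse property of the axis directly there to get the distance bound.

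The step you flag as the ``main obstacle'' --- transferring the barrier at $v$ to an arbitrary $z \in \Omega_o^F(w,r)$ rather than only to the chosen sequence $y_m \to [\xi]$ --- is the genuine subtlety (the paper's own final sentence is terse on exactly this point), but neither of your two proposed routes closes it. Targeting $d_{X_1}(\eta_1, z) \le 10C$ cannot work: $z$ is a generic point of the cone, not a sequence approaching $[\xi]$, so Lemmas~\ref{BdryProjLem} and~\ref{LocusProj} give no purchase on $\pi_{X_1}(z)$, and invoking the projection complex is heavier than needed ($o$ and $z$ are not even vertices of $\PC$, so $\f_K[o,z]$ is not directly defined). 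The elementary resolution uses the $C$-contracting property of $Y$, not of $X_1$: since $[o,y_m]$ and any $[o,z]$ both carry the barrier $w \in Y$, each enters $N_C(Y)$ within roughly $2C$ of $\pi_Y(o)$ by Lemma~\ref{BigThree}, so their truncations up to those entry points end within roughly $4C$ of each other; Lemma~\ref{BigFive}(5) then bounds $d_X$ of those endpoints, and Lemma~\ref{TwoGeodesicsEnterULem} (the engine behind Lemma~\ref{BarrierFellowLem}) transfers the barrier at $v$ from the truncation of $[o,y_m]$ to that of $[o,z]$, hence to $[o,z]$ itself. In short: anchor the comparison at $\pi_Y(o)$ and compare $[o,y_m]$ against $[o,z]$, not $\gamma_1$ against $[o,z]$.
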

\begin{proof}
Write $v=go, w=g'o$.  By definition, there exist $x_m\in \Omega_{o}^F(go, r), y_m\in \Omega_{o}^F(g'o, r)$ such that $x_m$ and $y_m$ converge to $[\xi]$. Let $X=g\ax(f)$ and $Y=g'\ax(f')$ be the corresponding   $(r, F)$-barriers   satisfying 
\begin{equation}\label{BarriergfoEq}
go, gfo\in N_r([o, x_m]),\quad g'o,g'f'o\in N_r([o, y_m])
\end{equation}   

\textbf{Case 1.} Assume that $X=Y$.  For  $go, g'o\in X=Y$, $[o, go]$ and $[o,g'o]$  intersect the   $C$-ball at $\pi_X(o)$ by Lemma \ref{BigThree}. As $d(o, go)\le d(o, g'o)$ is assumed,   the  Morse  quasi-geodesic $X$ implies the   existence of a constant $R=R(C)>0$     independent of $go, g'o$ such that $d(go, [o, g'o])\le R$. 

\begin{figure}
    \centering

\tikzset{every picture/.style={line width=0.75pt}} 

\begin{tikzpicture}[x=0.75pt,y=0.75pt,yscale=-1,xscale=1]

\draw    (45.5,173) -- (130.5,98) ;
\draw [shift={(130.5,98)}, rotate = 318.58] [color={rgb, 255:red, 0; green, 0; blue, 0 }  ][fill={rgb, 255:red, 0; green, 0; blue, 0 }  ][line width=0.75]      (0, 0) circle [x radius= 3.35, y radius= 3.35]   ;
\draw [shift={(45.5,173)}, rotate = 318.58] [color={rgb, 255:red, 0; green, 0; blue, 0 }  ][fill={rgb, 255:red, 0; green, 0; blue, 0 }  ][line width=0.75]      (0, 0) circle [x radius= 3.35, y radius= 3.35]   ;
\draw    (130.5,98) -- (188.5,75) ;
\draw [shift={(188.5,75)}, rotate = 338.37] [color={rgb, 255:red, 0; green, 0; blue, 0 }  ][fill={rgb, 255:red, 0; green, 0; blue, 0 }  ][line width=0.75]      (0, 0) circle [x radius= 3.35, y radius= 3.35]   ;
\draw    (45.5,173) -- (228.5,181) ;
\draw [shift={(228.5,181)}, rotate = 2.5] [color={rgb, 255:red, 0; green, 0; blue, 0 }  ][fill={rgb, 255:red, 0; green, 0; blue, 0 }  ][line width=0.75]      (0, 0) circle [x radius= 3.35, y radius= 3.35]   ;
\draw    (45.5,173) .. controls (167.27,76.97) and (294.92,71.11) .. (351.8,75.85) ;
\draw [shift={(353.5,76)}, rotate = 185.1] [color={rgb, 255:red, 0; green, 0; blue, 0 }  ][line width=0.75]    (10.93,-3.29) .. controls (6.95,-1.4) and (3.31,-0.3) .. (0,0) .. controls (3.31,0.3) and (6.95,1.4) .. (10.93,3.29)   ;
\draw    (228.5,181) -- (309.5,164) ;
\draw [shift={(309.5,164)}, rotate = 348.15] [color={rgb, 255:red, 0; green, 0; blue, 0 }  ][fill={rgb, 255:red, 0; green, 0; blue, 0 }  ][line width=0.75]      (0, 0) circle [x radius= 3.35, y radius= 3.35]   ;
\draw    (45.5,173) .. controls (200.72,167.03) and (316.34,158.09) .. (369.7,112.69) ;
\draw [shift={(370.5,112)}, rotate = 139.04] [color={rgb, 255:red, 0; green, 0; blue, 0 }  ][line width=0.75]    (10.93,-3.29) .. controls (6.95,-1.4) and (3.31,-0.3) .. (0,0) .. controls (3.31,0.3) and (6.95,1.4) .. (10.93,3.29)   ;
\draw  [line width=5.25] [line join = round][line cap = round] (391.3,90.22) .. controls (391.3,90.22) and (391.3,90.22) .. (391.3,90.22) ;
\draw  [line width=5.25] [line join = round][line cap = round] (371.3,112.22) .. controls (371.3,112.22) and (371.3,112.22) .. (371.3,112.22) ;
\draw  [line width=5.25] [line join = round][line cap = round] (353.3,76.22) .. controls (353.3,76.22) and (353.3,76.22) .. (353.3,76.22) ;
\draw   (211.81,187.4) .. controls (206.74,167.95) and (228.24,145.51) .. (259.83,137.28) .. controls (291.41,129.06) and (321.12,138.15) .. (326.19,157.6) .. controls (331.26,177.05) and (309.76,199.49) .. (278.17,207.72) .. controls (246.59,215.94) and (216.88,206.85) .. (211.81,187.4) -- cycle ;
\draw   (109.81,100.4) .. controls (104.74,80.95) and (126.24,58.51) .. (157.83,50.28) .. controls (189.41,42.06) and (219.12,51.15) .. (224.19,70.6) .. controls (229.26,90.05) and (207.76,112.49) .. (176.17,120.72) .. controls (144.59,128.94) and (114.88,119.85) .. (109.81,100.4) -- cycle ;
\draw    (309.5,164) -- (303.49,146.83) ;
\draw [shift={(302.5,144)}, rotate = 70.71] [fill={rgb, 255:red, 0; green, 0; blue, 0 }  ][line width=0.08]  [draw opacity=0] (8.93,-4.29) -- (0,0) -- (8.93,4.29) -- cycle    ;
\draw    (228.5,181) -- (225.95,163.97) ;
\draw [shift={(225.5,161)}, rotate = 81.47] [fill={rgb, 255:red, 0; green, 0; blue, 0 }  ][line width=0.08]  [draw opacity=0] (8.93,-4.29) -- (0,0) -- (8.93,4.29) -- cycle    ;
\draw    (188.5,75) -- (194.55,93.15) ;
\draw [shift={(195.5,96)}, rotate = 251.57] [fill={rgb, 255:red, 0; green, 0; blue, 0 }  ][line width=0.08]  [draw opacity=0] (8.93,-4.29) -- (0,0) -- (8.93,4.29) -- cycle    ;
\draw    (130.5,98) -- (137.34,114.24) ;
\draw [shift={(138.5,117)}, rotate = 247.17] [fill={rgb, 255:red, 0; green, 0; blue, 0 }  ][line width=0.08]  [draw opacity=0] (8.93,-4.29) -- (0,0) -- (8.93,4.29) -- cycle    ;

\draw (35,178.4) node [anchor=north west][inner sep=0.75pt]    {$o$};
\draw (116,83.4) node [anchor=north west][inner sep=0.75pt]    {$go$};
\draw (221.5,182.4) node [anchor=north west][inner sep=0.75pt]    {$g'o$};
\draw (309,47.4) node [anchor=north west][inner sep=0.75pt]    {$x_{m} \in \Omega _{o}^{F}( go,r)$};
\draw (401,79.4) node [anchor=north west][inner sep=0.75pt]    {$\xi \in \mathcal C$};
\draw (353.5,125.4) node [anchor=north west][inner sep=0.75pt]    {$y_{m} \in \Omega _{o}^{F}( g'o,r)$};
\draw (183,55.4) node [anchor=north west][inner sep=0.75pt]    {$gfo$};
\draw (295,168.4) node [anchor=north west][inner sep=0.75pt]    {$g'f'o$};
\draw (88,70.4) node [anchor=north west][inner sep=0.75pt]    {$X$};
\draw (189,190.4) node [anchor=north west][inner sep=0.75pt]    {$Y$};

\end{tikzpicture}
    \caption{Case (2) in the proof of Lemma \ref{IntersectShadow}}
    \label{fig:boundedmulticity}
\end{figure} 
\textbf{Case 2.} Assume now that $X\ne Y\in \f$. As $\f$ has bounded intersection,  we have $[\Lambda X]\cap [\Lambda Y]=\emptyset$. For concreteness, assume that $\xi\notin [\Lambda X]$. By Corollary \ref{BdryProjCor}, we have $d_X(x_m, y_m)\le C$ for all $m\gg 0$.  
By triangle inequality  for $d_X$, we have $d_X(o, y_m) \ge d_X(o, x_m) -C$, so Lemma \ref{BigFive} shows  
$$
\|N_C(X)\cap [o, y_m]\|\ge  d_X(o, x_m) -5C \ge L-4r-5C>0.
$$
By Lemma \ref{BigThree}, the entry points of $[o, x_m]$ and of $[o, y_m]$ into $N_C(X)$ have a distance at most $2C$. As   $\|N_C(X)\cap N_C(Y)\|\le B$ for some $B=B(C)$, we see that $[o, y_m]$  exits $N_C(X)$ within at most $B$ distance to the entry point in $N_C(Y)$. By (\ref{BarriergfoEq}), we have $d(go, [o,x_m]), d(g'o, [o, y_m])\le r$. Thus, there exists    a constant $R=R(C)>0$ such that $d(go, [o,g'o])\le R$.

Apply   Lemma \ref{BarrierFellowLem} to the geodesic $\alpha:=[o,x_m]$ and $\beta:=[o, y_m]$, where $\alpha$ contains $(r, f)$-barrier $go\in X $. Recalling  $d_X(x_m, y_m)\le C$ as above, we have  $go$ is an $(\hat r, f)$-barrier for $[o, y_m]$ with $m\gg 0$. This implies by definition that   $\Pi_{o}^F(w, r)\subseteq \Pi_{o}^F(v, \hat r)$. Hence, the proof is complete.
\end{proof}

Consider the annulus of radius $n$ centered at $o\in \U$ with width $\Delta\ge 1$: 
\begin{equation}\label{AnnulusEQ}
A(o, n, \Delta)=\{v\in Go: |d(o, v)-n|\le\Delta\}
\end{equation}

\begin{lem}\label{OverlapAnnulus}
For given $\Delta>0$, there exists $N=N(\Delta)$ such that  for every $n\ge 1$,  any $\xi\in \mathcal C$ is contained in at most $N$   shadows $\Pi_{o}^F(v, r)$ where $v\in A(o,n,\Delta)$.
\end{lem}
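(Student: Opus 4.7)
The plan is to apply Lemma \ref{IntersectShadow} iteratively to confine all of the orbital points $v$ whose shadows contain $\xi$ into a bounded subset of $\U$, and then to conclude by properness of the action $G\act\U$.

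Suppose that $\xi\in\mathcal C$ lies in $\Pi_o^F(v_i, r)$ for a finite collection $v_1,\dots, v_k\in A(o,n,\Delta)$, labelled so that $d(o, v_1)\le d(o, v_2)\le\cdots\le d(o, v_k)$. Applying Lemma \ref{IntersectShadow} to each pair $(v_i, v_k)$ with $i<k$ (which is legitimate since $\mathcal C$ is saturated, so $\xi\in[\Pi_o^F(v_i,r)]\cap[\Pi_o^F(v_k,r)]\cap\mathcal C$), one obtains a universal constant $R>0$ such that $d(v_i, [o, v_k])\le R$ for every $i$. Picking a point $u_i\in[o, v_k]$ realizing this distance, the triangle inequality combined with the annulus condition $|d(o, v_i)-n|\le\Delta$ forces $|d(o, u_i)-n|\le\Delta+R$. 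Hence the $u_i$ all sit inside a subsegment of $[o, v_k]$ of length at most $2\Delta+2R$, so all $v_i$ lie in a subset of $\U$ of diameter at most $D:=2\Delta+4R$.

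Now writing $v_i=g_i o$ with $g_i\in G$, the pairwise bound $d(g_i o, g_j o)\le D$ rewrites as $d(o, g_i^{-1}g_j o)\le D$. Thus the set $\{g_1^{-1}g_j: 1\le j\le k\}$ is contained in $\{g\in G: d(o, go)\le D\}$, which is finite by properness of the action. Setting
\[
N(\Delta):=\#\{g\in G: d(o, go)\le 2\Delta+4R\}
\]
yields the desired uniform bound, independent of $n$.

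I do not anticipate any serious obstacle: the statement is essentially a direct consequence of the geometric confinement already packaged in Lemma \ref{IntersectShadow}, combined with the standard properness argument. The only point requiring a brief verification is that the constant $R$ of Lemma \ref{IntersectShadow} depends only on the contracting system $\f$ and the parameter $r$, not on $n$ or the particular elements $v_i$, so the resulting $N(\Delta)$ is indeed uniform in $n$.
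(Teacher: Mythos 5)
Your proposal is correct and takes essentially the same approach as the paper: invoke Lemma \ref{IntersectShadow} to bound $d(v_i,v_j)$ uniformly in terms of $\Delta$ and the constant $R$ from that lemma, then conclude by properness of the action. The paper's proof is slightly leaner — it applies Lemma \ref{IntersectShadow} to an arbitrary pair $v,w$ directly and bounds $d(v,w)\le D(\Delta,R)$ without the intermediate step of ordering the $v_i$ and projecting them all onto the single geodesic $[o,v_k]$ — but the underlying idea and the resulting bound $N=|N(o,D)|$ are the same.
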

\begin{proof}
 Indeed, let $v,w\in A(o,n, \Delta)$ such that $\xi\in \Pi_{o}^F(v, r)\cap \Pi_{o}^F(w,r)$.   Lemma \ref{IntersectShadow} gives  an    constant $R>0$ so that     $d(v, [o,  w])\le R$.  Taking on account of the relation $$ 
d(o,v),\; d(o,w)\in [n-\Delta, n+\Delta],
$$ 
there exists $D=D(\Delta, R)$ such that $d(v, w)\le D$. Therefore it suffices to set $N=|N(o, D)|$ to complete the proof.
\end{proof}

All the ingredients  are prepared for the proof of  Proposition \ref{ballgrowth}.
 
\begin{proof}[Proof of Proposition \ref{ballgrowth}]

By Lemma \ref{OverlapAnnulus},  that every   point $\xi\in \mathcal C$ are contained in a uniformly finite number of shadows.  Hence,
$$
\sum_{v\in A(o, n,\Delta)} \mu_1(\Pi_{o}^F(v, r)\cap \mathcal C)\prec \mu_1(\mathcal C)
$$ 
which by the shadow lemma gives $\sharp A(o,n,\Delta)\le e^{n\omega}$, and thus the desired    estimates on $|N(o, n)|$.
\end{proof}

\subsection{Shadow Principle}

Let $N(G)=\{g\in \isom(\U): gGg^{-1}=G\}$   be the normalizer of $G$ in $\isom(\U)$.  Following Roblin, the next result  is   called Shadow Principle, which  was first proved in CAT(-1) spaces \cite{Roblin2}.  The shadow lemma is a special case where $x, y\in G o$ and $\|\mu_x\|=\|\mu_y\|$.

\begin{lem}[Shadow Principle]\label{ShadowPrinciple}
Let $\{\mu_x\}_{x\in \U}$ be a $\omega$-dimensional $G$-quasi-equivariant quasi-conformal density supported on $\pU$ with $\mu_x(\mathcal C)>0$ for some $\omega>0$. Let $Z=\Gamma o$ for $\Gamma=N(G)$. Then there exists $r_0 > 0$ such that  
$$
\begin{array}{rl}
\mu_y(\mathcal C) e^{-\omega  d(x, y)} \; \prec_\lambda \;   \mu_x(\Pi_{x}^F(y,r)\cap \mathcal C)\;  
 \le  \mu_x(\Pi_{x}(y,r)\cap \mathcal C)\; \prec_{\lambda,\epsilon, r} \; \mu_y(\mathcal C) e^{-\omega  d(x, y)}
\end{array}
$$
for any $x,y\in Z$ and $r \ge  r_0$,  where $ \epsilon,\lambda$ are given respectively in \ref{AssumpE} and Definition \ref{ConformalDensityDefn}.
\end{lem}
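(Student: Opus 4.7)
The argument parallels that of the Shadow Lemma (Lemma \ref{ShadowLem}), with modifications to accommodate points $x,y$ in the larger orbit $\Gamma o$ of the normalizer $\Gamma=N(G)$ and to track the non-normalized mass $\mu_y(\mathcal C)$.

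For the upper bound I would first establish $|B_\xi(x,y) - d(x,y)| \le 2r+\epsilon$ for $\xi \in \Pi_x(y,r) \cap \mathcal C$. Picking $z_n \to \xi$ in $\U$ together with $w_n \in [x,z_n] \cap B(y,r)$, the identity $d(x,z_n) = d(x,w_n) + d(w_n,z_n)$ combined with $d(w_n,y) \le r$ yields $|d(x,z_n) - d(y,z_n) - d(x,y)| \le 2r$; passing to the limit and invoking \ref{AssumpE} gives the claim. The quasi-conformal derivative (\ref{confDeriv}) then produces
$$\mu_x(\Pi_x(y,r) \cap \mathcal C) \;=\; \int_{\Pi_x(y,r) \cap \mathcal C} \frac{d\mu_x}{d\mu_y}(\xi)\,d\mu_y(\xi) \;\prec_\lambda\; e^{\omega(2r+\epsilon)}\,e^{-\omega d(x,y)}\,\mu_y(\mathcal C).$$

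For the lower bound I would follow the transfer strategy of Lemma \ref{ShadowLem}. Writing $x = \gamma_1 o$ and $y = \gamma_2 o$ with $\gamma_i \in \Gamma$, set $\gamma := \gamma_1^{-1}\gamma_2 \in \Gamma$; as $\gamma$ normalizes $G$, the translates $\gamma\ax(h_i)$ remain contracting axes with the same bounded projection/intersection estimates as those of $\f$. This lets the Extension Lemma (Lemma \ref{extend3}) be applied with its $G$-element broadened to $\gamma \in \Gamma$: for any $\xi' \in \mathcal C$ (using $\mu_y(\mathcal C)>0$) and $z_n \in \U \to \xi'$, there exists $f \in F$ so that the concatenation
$$[x,y]\cdot\bigl(\gamma_2[o,fo]\bigr)\cdot\bigl(\gamma_2 f[o,\gamma_2^{-1}z_n]\bigr)$$
is an $(L,B)$-admissible quasi-geodesic. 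By the fellow-travel property of Proposition \ref{admisProp}, any geodesic from $x$ to the endpoint $\gamma_2 f \gamma_2^{-1}z_n$ passes within $r$ of both $y$ and $\gamma_2 fo = yfo$, whence $y$ is an $(r,f)$-barrier and the endpoint lies in $\Omega_x^F(y,r)$; thus $\gamma_2 f\gamma_2^{-1}\xi' \in \Pi_x^F(y,r)\cap\mathcal C$. Partitioning $\mathcal C$ into three Borel subsets $U_1,U_2,U_3$ according to which $f \in F$ succeeds, and combining the $G$-quasi-equivariance (\ref{almostInv}) under the conjugates $\gamma_2 f\gamma_2^{-1} \in G$ with the bound $|B_\xi(y,\gamma_2 f\gamma_2^{-1}y)| \le d(o,fo)$, one deduces
$$\mu_x(\Pi_x^F(y,r) \cap \mathcal C) \;\succ_\lambda\; e^{-\omega d(x,y)}\,\mu_y(\mathcal C).$$

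The principal technical hurdle is that the quasi-equivariance (\ref{almostInv}) and the conformality (\ref{confDeriv}) are a priori stated only with respect to the $G$-action, while both bounds compare $\mu_x$ and $\mu_y$ for $x,y \in \Gamma o$. This is resolved by observing that for every $\gamma \in N(G)$ the pushed-forward family $\{\gamma^{-1}_{\star}\mu_{\gamma x}\}_{x\in\U}$ is again an $\omega$-dimensional $G$-quasi-equivariant quasi-conformal density --- a direct verification using $\gamma G\gamma^{-1} = G$ and the $\isom(\U)$-invariance of the Busemann cocycles --- and comparing this family with $\{\mu_x\}$ propagates both (\ref{almostInv}) and (\ref{confDeriv}) to pairs in $(\Gamma o)^2$, legitimizing the Radon--Nikodym change of measure invoked in the upper-bound integration and the equivariance step in the lower bound.
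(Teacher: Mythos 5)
Your argument for both bounds follows the paper's proof closely: the upper bound is the same integral estimate used in Lemma~\ref{ShadowLem}, and the lower bound uses the Extension Lemma applied with $g=\gamma_1^{-1}\gamma_2\in\Gamma$, the decomposition of $\mathcal C$ into three pieces, and the conjugate $\gamma_2 f\gamma_2^{-1}\in G$ to transport each piece into the shadow while tracking $\mu_y(\mathcal C)$. Your map $\xi'\mapsto \gamma_2 f\gamma_2^{-1}\xi'$ differs from the paper's $\xi\mapsto g_2f_i\xi$ only by the substitution $\xi=\gamma_2^{-1}\xi'$, which is harmless since $\mathcal C$ is $\Gamma$-invariant; the two runs of the estimate (conformality over $d(o,f_io)$ plus $G$-quasi-equivariance under the conjugate $g_2f_i^{-1}g_2^{-1}$) are organized a bit differently but produce the same bound.

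You are right to flag that (\ref{confDeriv}) is stated only for $\mu_{gx},\mu_{hx}$ with $g,h\in G$, yet both proofs invoke a conformal comparison between $\mu_{g_1o}$ and $\mu_{g_2o}$ (in the upper-bound integration and in the final step of the lower bound) where $g_2g_1^{-1}\in\Gamma$ need not lie in $G$. However, the resolution you propose does not close. Observing that $\nu_x:=\gamma^{-1}_\star\mu_{\gamma x}$ is again an $\omega$-dimensional $G$-quasi-conformal density is correct, but "comparing" $\nu$ with $\mu$ is exactly the uniqueness statement of Theorem~\ref{UniqueConfThm}, which (a) requires the divergent-type hypothesis not present here, and (b) sits downstream of the Shadow Lemma and Principle in the logical development, so invoking it would be circular. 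Moreover, even granting a comparison, the multiplicative constant relating $\nu$ and $\mu$ would a priori depend on $\gamma=g_2g_1^{-1}$, i.e.\ on the pair $(x,y)$, which destroys precisely the uniformity the Shadow Principle asserts. The substantive fix is different: for the Patterson--Sullivan construction (\ref{PattersonEQ}), the derivative $d\mu_x/d\mu_y(\xi)=e^{-\omega B_\xi(x,y)}$ holds verbatim for \emph{arbitrary} $x,y\in\U$ and $\xi\in\mathcal C$, not merely $G$-translates of a common basepoint; this stronger form of (\ref{confDeriv}) is what the paper tacitly uses, and it is the hypothesis that makes the final step in both bounds legitimate with constants depending only on $\lambda,\epsilon,r$ and not on $x,y$.
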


\begin{proof}
Write explicitly $x=g_1o$ and $y=g_2o$ for $g_1,g_2\in \Gamma$.  Set $U=\mathcal C$. Repeating the proof for the lower bound   of  Lemma \ref{ShadowLem} with $g:=g_1^{-1}g_2$, we decompose the set $U$ into a disjoint  union of three sets $U_1, U_2, U_3$: for each $U_i$, there exists $f_i \in F$ such that 
\begin{equation}\label{MoveSetEQ}
g_2f_iU_i \subseteq  \Pi_{g_1o}^F(g_2o,r)\cap \mathcal C.
\end{equation}

By quasi-conformality (\ref{confDeriv})  for $d(o,f_io)\le L,$ there exists    $\theta=\theta(L)>0$ such that for each $1\le i\le 3$, 
$$
\mu_{g_2o} (g_2f_iU_i)\ge \theta \cdot \mu_{g_2f_io} (g_2f_iU_i).
$$
By applying the $G$-quasi-equivariance (\ref{almostInv}) to the above right-hand  with the element $g_2 f_i^{-1}g_2^{-1}\in G$, we  obtain 
\begin{equation}\label{GEquivEQ}
\mu_{g_2o} (g_2f_iU_i) \ge \lambda^{-1}\theta\cdot  \mu_{g_2o} (g_2U_i).
\end{equation} ($g_2f\in \Gamma\setminus G$ cannot be cancelled  directly for  $\mu_x$ is not known $\Gamma$-equivariant). Summing (\ref{GEquivEQ}) over $1\le i\le 3$, 
$$\displaystyle\sum_{1\le i\le 3} \mu_{g_2o} (g_2f_iU_i) \ge \lambda^{-1}\theta\cdot\mu_{g_2o}(g_2U).$$ As $U=\mathcal C$ is   $\Gamma$-invariant and so $g_2U=U$, we   obtain from (\ref{MoveSetEQ}): $$\mu_{g_2o}(\Pi_{r}^F(g_1o,g_2o))\ge \lambda^{-1}{\theta} \cdot \mu_{g_2o}(g_2U)/3\ge\lambda^{-1} \theta \cdot\mu_{g_2o}(U)/3.$$ 
By quasi-conformality (\ref{confDeriv}) again, 
$$\begin{array}{rl}
\mu_{g_1o}(\Pi_{g_1o}^F(g_2o, r)\cap \mathcal C) &\ge \lambda^{-2} \cdot e^{-\omega\cdot d(g_1o, g_2o)} \cdot \mu_{g_2o}( \Pi_{g_1o}^F(g_2o,r)) \\
\\
&\ge M \cdot \mu_{g_2o}(U) \cdot e^{-\omega\cdot d(g_1o, g_2o)}.
\end{array}
$$
where  $M:=  \lambda^{-3}\cdot \theta/3$.   The upper bound  proceeds exactly as in Lemma \ref{ShadowLem} so we omit it. 
\end{proof}

\section{Hopf--Tsuji--Sullivan Dichotomy}\label{SecHTST}
This section is devoted to the proof of  Theorem \ref{HTSThm}, the so-called Hopf--Tsuji--Sullivan dichotomy, for the conformal density   for groups with contracting elements. 

Let $G\act \U$ as in (\ref{GActUAssump}), equipped with a non-elementary convergence boundary $\pU$.
Let $\{\mu_x\}_{x\in \U}$  be any $\omega$-dimensional $G$-quasi-equivariant quasi-conformal density charging positive measure on the set $\mathcal C$ of non-pinched points, where $\mathcal C$ satisfies \ref{AssumpC} and \ref{AssumpE}. By Lemma \ref{ballgrowth}, it is necessary that $\omega\ge \e G$.

The proof  is based on the   construction   of a family of visual spheres described in the sequel.  A similar strategy was used by Tukia \cite{Tukia4} for real hyperbolic spaces whose arguments are hard to be implemented in general metric spaces. We are eventually turned     to  the technique of projection complex. 

\subsection{Outline of the proof} Before giving the details, let us outline the key feature we want for visual spheres, and how it is used in the equivalence between divergence  of Poincar\'e series and positivity of conical points, the core of Theorem \ref{HTSThm}.

The general principle to prove the equivalence is well-known in the fields, which relies on a certain variation of the Borel-Cantelli lemma: if 
$$
\sum_{i=1}^\infty \mu_o(A_n)<\infty$$ then setting $\Lambda=\cap_{n\ge 1}\cup_{i\ge n} A_i$, we have $\mu_o(\Lambda)=0$. 

In our specific setup, $A_n$ is usually the union of shadows $\Pi_o(v,r)$ over a sphere $v\in S_n$ (which is actually the visual sphere to be precised in next subsection), then $\Lambda$ coincides with the set of conical points, which is by definition contained in infinitely many shadows. Recall that $$\mathrm{e}^{-\e G d(o,v)}\asymp \mu_o(\Pi_o(v,r))$$ from the Shadow Lemma \ref{ShadowLem}. As a consequence, this implies the easy direction:  if the conical point has positive measure, the Poincar\'e series $\sum_{g\in G} \mathrm{e}^{-\e G d(o,go)}$ is divergent.

The converse direction is the  difficult part,  boiling down to  secure a converse to  Borel-Cantelli lemma. The basic idea which is standard in the community is to establish a certain independence between the events $A_n$ such as 
\begin{align}\label{IndependenceAn}
\mu_o(A_{n+m}) \le c\mu_o(A_n)\mu_o(A_m)    
\end{align} for a constant $c$ independent of $n$. Therefore, it becomes crucial how to choose the ``sphere" $S_n$ with this desired property, which usually hinges on the hyperbolicity. 

In our setup, let us  describe the scenario of constructing  $S_n$ as follows. Standing on   a base point $o$,  the first visual sphere $S_1$ consists of the set of vertices in $Go$ visible from $o$. In general, $S_1$ would be an infinite set, as $Go$ is typically not a subset in any sense of  quasi-convexity. This  is actually the advantage of the construction, rather than a drawback, to make the above inequality (\ref{IndependenceAn}) plausible. 

Inductively, the next visual sphere $S_n$ would be the set of vertices, which are visually blocked by the previous one $S_{n-1}$. 

This construction could be worked out with explicit hyperbolicity, which was  done  by Tukia \cite{Tukia4} for real hyperbolic spaces. However, if the construction is similarly performed in a general metric space, one might run into cumbersome technicalities due to the  lack of hyperbolicity of the ambient space.   

As fore-mentioned, we shall implement the construction of visual spheres  using the genuine   sphere $\bar S_n$ in the projection complex in its combinatorial metric. To be precisely, the visual sphere, to be denoted by $T_n$ below, are the set of orbits in $Go$ carried by the sphere $\bar S_n$. With hyperbolicity of $\p_K(\f)$, this allows us to use our version of Shadow Lemma to  prove (\ref{IndependenceAn}) in Lemma \ref{KeyInequalitiesLem} in the end. 

Let us now     precise the construction of the visual spheres, which is the main novelty in this proof.

\subsection{Family of visual spheres}


Let  $F\subseteq G$  be a set of three independent non-pinched contracting elements and $\f=\{g\ax(f): g\in G, f\in F\}$ be the family of their $G$-translated  $C$-contracting axes. 

By the construction, the quasi-tree of spaces $\QT$ contains totally geodesically embedded copies of every $U\in \f$, and the collection $U\in \f$ is the vertex set of the projection complex $\PC$. We are therefore making the following.

\begin{conv}\label{ConvNotations}
In the sequel,      $U\in \f$   denotes either the subset $U$ in $\U$ or  $U$ in  $\QT$, so the points   $u\in U$ are understood according to the context. The corresponding vertex   in $\PC$ is  denoted by $\bar u$.   
\end{conv}




We fix a preferred axis  $O=\ax(f)\in  \f$, and the corresponding base vertex $\bar o$ in $\PC$. The basepoint $o\in \U$ is contained in the axis $O=\ax(f)$.



\subsubsection*{Visual sphere construction}
For an integer $n\ge 0$, consider  the   sphere  of radius $n$ centered at the preferred base vertex $\bar o$ in the projection complex: $$\bar S_n:=\{\bar v\in\PC: d(\bar o, \bar v)=n\}$$  The \textit{$n$-th  visual sphere}  of orbital points in $Go$ is defined as  $$T_n:=\bigcup_{\bar v\in \bar S_n} V$$
where $V$ is of the form $g\ax(f)=gE(f)o$.
That is,  $T_n$  is the union of all $V\in\f$ corresponding to $\bar v\in \bar S_n$.  If  $go\in U$,  we   say   that $U$ 
(resp. $\bar u$)   are  an associated  (non-unique) vertex space 
(resp. vertex).


We now introduce some auxiliary sets. 
For  any $V\in \f$ and $L,\Delta>0$, denote  
\begin{equation}\label{TnLEQ}
\begin{array}{ll}
V_L&:=\quad \{v\in V: d_V(o,v)> L\}\\
T_n'(L)&:=\quad \bigcup_{\bar v\in \bar S_n} V_L
\end{array}
\end{equation}
(recall that, according to Convention \ref{ConvNotations}, $V$ is the axis in $\U$ corresponding to the vertex $\bar v$.)
By definition, $T_n'(L)$ forms an \textit{$L$-net} in $T_n$: any point in $T_n$ has a distance less than $L$ to $T_n'$.
\begin{figure}
    \centering

\tikzset{every picture/.style={line width=0.75pt}} 

\begin{tikzpicture}[x=0.75pt,y=0.75pt,yscale=-1,xscale=1]

\draw   (163,152) .. controls (163,146.48) and (167.48,142) .. (173,142) .. controls (178.52,142) and (183,146.48) .. (183,152) .. controls (183,157.52) and (178.52,162) .. (173,162) .. controls (167.48,162) and (163,157.52) .. (163,152) -- cycle ;
\draw   (101,92) .. controls (101,86.48) and (105.48,82) .. (111,82) .. controls (116.52,82) and (121,86.48) .. (121,92) .. controls (121,97.52) and (116.52,102) .. (111,102) .. controls (105.48,102) and (101,97.52) .. (101,92) -- cycle ;
\draw   (85,190) .. controls (85,184.48) and (89.48,180) .. (95,180) .. controls (100.52,180) and (105,184.48) .. (105,190) .. controls (105,195.52) and (100.52,200) .. (95,200) .. controls (89.48,200) and (85,195.52) .. (85,190) -- cycle ;
\draw   (195,81) .. controls (195,75.48) and (199.48,71) .. (205,71) .. controls (210.52,71) and (215,75.48) .. (215,81) .. controls (215,86.52) and (210.52,91) .. (205,91) .. controls (199.48,91) and (195,86.52) .. (195,81) -- cycle ;
\draw   (230,205) .. controls (230,199.48) and (234.48,195) .. (240,195) .. controls (245.52,195) and (250,199.48) .. (250,205) .. controls (250,210.52) and (245.52,215) .. (240,215) .. controls (234.48,215) and (230,210.52) .. (230,205) -- cycle ;
\draw   (248,142) .. controls (248,136.48) and (252.48,132) .. (258,132) .. controls (263.52,132) and (268,136.48) .. (268,142) .. controls (268,147.52) and (263.52,152) .. (258,152) .. controls (252.48,152) and (248,147.52) .. (248,142) -- cycle ;
\draw   (157,229) .. controls (157,223.48) and (161.48,219) .. (167,219) .. controls (172.52,219) and (177,223.48) .. (177,229) .. controls (177,234.52) and (172.52,239) .. (167,239) .. controls (161.48,239) and (157,234.52) .. (157,229) -- cycle ;
\draw    (173,162) -- (167.21,217.01) ;
\draw [shift={(167,219)}, rotate = 276.01] [color={rgb, 255:red, 0; green, 0; blue, 0 }  ][line width=0.75]    (10.93,-3.29) .. controls (6.95,-1.4) and (3.31,-0.3) .. (0,0) .. controls (3.31,0.3) and (6.95,1.4) .. (10.93,3.29)   ;
\draw    (181.5,158) -- (229.97,198.71) ;
\draw [shift={(231.5,200)}, rotate = 220.03] [color={rgb, 255:red, 0; green, 0; blue, 0 }  ][line width=0.75]    (10.93,-3.29) .. controls (6.95,-1.4) and (3.31,-0.3) .. (0,0) .. controls (3.31,0.3) and (6.95,1.4) .. (10.93,3.29)   ;
\draw    (163.5,157) -- (105.32,183.18) ;
\draw [shift={(103.5,184)}, rotate = 335.77] [color={rgb, 255:red, 0; green, 0; blue, 0 }  ][line width=0.75]    (10.93,-3.29) .. controls (6.95,-1.4) and (3.31,-0.3) .. (0,0) .. controls (3.31,0.3) and (6.95,1.4) .. (10.93,3.29)   ;
\draw    (183,152) -- (246.02,142.3) ;
\draw [shift={(248,142)}, rotate = 171.25] [color={rgb, 255:red, 0; green, 0; blue, 0 }  ][line width=0.75]    (10.93,-3.29) .. controls (6.95,-1.4) and (3.31,-0.3) .. (0,0) .. controls (3.31,0.3) and (6.95,1.4) .. (10.93,3.29)   ;
\draw    (166.5,145) -- (118,102.32) ;
\draw [shift={(116.5,101)}, rotate = 41.35] [color={rgb, 255:red, 0; green, 0; blue, 0 }  ][line width=0.75]    (10.93,-3.29) .. controls (6.95,-1.4) and (3.31,-0.3) .. (0,0) .. controls (3.31,0.3) and (6.95,1.4) .. (10.93,3.29)   ;
\draw    (178.5,143) -- (197.79,91.87) ;
\draw [shift={(198.5,90)}, rotate = 110.67] [color={rgb, 255:red, 0; green, 0; blue, 0 }  ][line width=0.75]    (10.93,-3.29) .. controls (6.95,-1.4) and (3.31,-0.3) .. (0,0) .. controls (3.31,0.3) and (6.95,1.4) .. (10.93,3.29)   ;
\draw    (393.5,203) -- (443.5,162) ;
\draw [shift={(443.5,162)}, rotate = 320.65] [color={rgb, 255:red, 0; green, 0; blue, 0 }  ][fill={rgb, 255:red, 0; green, 0; blue, 0 }  ][line width=0.75]      (0, 0) circle [x radius= 3.35, y radius= 3.35]   ;
\draw [shift={(393.5,203)}, rotate = 320.65] [color={rgb, 255:red, 0; green, 0; blue, 0 }  ][fill={rgb, 255:red, 0; green, 0; blue, 0 }  ][line width=0.75]      (0, 0) circle [x radius= 3.35, y radius= 3.35]   ;
\draw    (506.5,72) .. controls (468.5,100) and (426.5,143) .. (446.5,167) .. controls (466.5,191) and (525.5,145) .. (554.5,119) ;
\draw  [draw opacity=0][dash pattern={on 4.5pt off 4.5pt}] (427.25,136.65) .. controls (430.15,134.87) and (433.4,133.52) .. (436.93,132.73) .. controls (453.56,128.99) and (469.98,139.07) .. (473.62,155.24) .. controls (476.18,166.67) and (471.66,178.1) .. (462.9,185.12) -- (443.5,162) -- cycle ; \draw  [dash pattern={on 4.5pt off 4.5pt}] (427.25,136.65) .. controls (430.15,134.87) and (433.4,133.52) .. (436.93,132.73) .. controls (453.56,128.99) and (469.98,139.07) .. (473.62,155.24) .. controls (476.18,166.67) and (471.66,178.1) .. (462.9,185.12) ;  
\draw    (443.5,162) -- (428.33,138.34) ;
\draw [shift={(427.25,136.65)}, rotate = 57.34] [color={rgb, 255:red, 0; green, 0; blue, 0 }  ][line width=0.75]    (10.93,-3.29) .. controls (6.95,-1.4) and (3.31,-0.3) .. (0,0) .. controls (3.31,0.3) and (6.95,1.4) .. (10.93,3.29)   ;
\draw  [line width=0.75]  (424.5,139) .. controls (421.34,141.06) and (420.79,143.67) .. (422.85,146.82) -- (422.85,146.82) .. controls (425.79,151.33) and (425.68,154.62) .. (422.53,156.68) .. controls (425.68,154.62) and (428.73,155.84) .. (431.68,160.35)(430.35,158.32) -- (431.68,160.35) .. controls (433.73,163.51) and (436.34,164.06) .. (439.5,162) ;
\draw    (443.5,162) -- (501.95,114.27) ;
\draw [shift={(503.5,113)}, rotate = 140.76] [color={rgb, 255:red, 0; green, 0; blue, 0 }  ][line width=0.75]    (10.93,-3.29) .. controls (6.95,-1.4) and (3.31,-0.3) .. (0,0) .. controls (3.31,0.3) and (6.95,1.4) .. (10.93,3.29)   ;
\draw  [line width=0.75]  (444.5,165) .. controls (447.51,168.57) and (450.8,168.84) .. (454.36,165.83) -- (473.01,150.08) .. controls (478.1,145.78) and (482.16,145.41) .. (485.17,148.97) .. controls (482.16,145.41) and (483.2,141.48) .. (488.29,137.17)(486,139.11) -- (501.67,125.87) .. controls (505.24,122.86) and (505.51,119.57) .. (502.5,116) ;
\draw  [line width=0.75]  (248.5,134) .. controls (247.75,129.39) and (245.07,127.47) .. (240.46,128.23) -- (223.05,131.08) .. controls (216.47,132.16) and (212.8,130.4) .. (212.05,125.8) .. controls (212.8,130.4) and (209.89,133.24) .. (203.31,134.32)(206.27,133.84) -- (187.27,136.95) .. controls (182.67,137.71) and (180.75,140.39) .. (181.5,145) ;

\draw (169,144.4) node [anchor=north west][inner sep=0.75pt]    {$\overline{o}$};
\draw (252,135.4) node [anchor=north west][inner sep=0.75pt]    {$\overline{v}$};
\draw (383,208.4) node [anchor=north west][inner sep=0.75pt]    {$o$};
\draw (433,90.4) node [anchor=north west][inner sep=0.75pt]    {$V$};
\draw (508,96.4) node [anchor=north west][inner sep=0.75pt]    {$v\in V_{L}$};
\draw (408,151.4) node [anchor=north west][inner sep=0.75pt]    {$L$};
\draw (485.5,140.9) node [anchor=north west][inner sep=0.75pt]    {$ >L$};
\draw (82,43.4) node [anchor=north west][inner sep=0.75pt]    {$\p_K(\f):\bar S_{n}=\{\bar v: d(\bar o, \bar v)=n\}$};
\draw (371,43.4) node [anchor=north west][inner sep=0.75pt]    {$\U:T'( L)=\cup_{\bar v\in \bar S_n} V_L$};
\draw (207,108.4) node [anchor=north west][inner sep=0.75pt]    {$n$};

\end{tikzpicture}
    \caption{Visual spheres in projection complex (left) and there lifts with $L$-truncation in the original space (right)}
    \label{fig:visualsphere}
\end{figure} 

We are actually going to focus on a much smaller subset.  Let us fix $\Delta := \min\{d(o, go): g\in E( f), f\in F\}$, so  for any $L>0$, the annulus set $$
\begin{array}{rl}
V(L,\Delta)&:=\;\{v\in V: |d_V(o,v) - L|\le \Delta\}
\end{array}
$$ is always non-empty, and has at most $|A(o, L, \Delta)|$ elements. We refer to Fig. \ref{fig:visualsphere} for illustrating these sets.

In contrast with the subset $T_n'(L)$, the union of $V(L,\Delta)$ over $\bar v\in\bar S_n$ is definitely not a net of $T_n$, because $V(L,\Delta)$ truncates the part outside the ball $B(o,L+\Delta)$. Thanks to the linear growth of $V$,     $V(L,\Delta)$ still contributes a major proportion of the Poincar\'e series associated to $V$. This fact stated below shall be useful in the proof of Lemma \ref{BddTnSeriesLem}.

\begin{lem}\label{CosetShadowLem}
For any $\omega>0$ and $L>0$, there exists $\theta=\theta(L,\omega)>0$ such that for any $V\in \f$, we have 
$$
\sum_{v\in V} e^{-\omega d(o,v)} \le \theta \sum_{v\in V(L,\Delta)} e^{-\omega d(o, v)}.$$
 
\end{lem}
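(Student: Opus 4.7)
The plan is to realize each axis $V = gE(f)o \in \mathbb{F}$ as a parametrized quasi-line, reduce both sides to geometric sums in $e^{-\omega d_V(o,v)}$, and compare.

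First I would pass to the projection point $\pi_V(o)$. Since $V$ is a $C$-contracting quasi-geodesic with $C$ uniform over $\mathbb{F}$ and $\|\pi_V(o)\|\le C$, Lemma \ref{BigFive} yields
$$
d(o, v)\;=\;d(o,\pi_V(o))+d_V(o,v)+O(C)\quad\text{for all } v\in V,
$$
because both the endpoint approximation of geodesics by projection points and the identity $d_V(o,v)=d(\pi_V(o),v)+O(C)$ on $V$ hold uniformly. Next, since $\langle f\rangle$ has finite index in $E(f)$ (Lemma \ref{elementarygroup}), $V$ decomposes into finitely many $\langle f\rangle$-orbits, and within each orbit consecutive points are separated by $\tau_f=d(o,fo)\le\|F\|$. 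Together with the first step, this shows the cardinality of $\{v\in V:d_V(o,v)\in[t,t+1]\}$ is bounded above and below by constants depending only on $F$ (once $t\ge\Delta$).

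Summing gives
$$
\sum_{v\in V}e^{-\omega d(o,v)}\;\asymp_{\omega,F}\; e^{-\omega d(o,\pi_V(o))}\cdot\frac{1}{1-e^{-\omega\Delta}},
$$
a convergent geometric series. For the sum over the annulus, by the choice of $\Delta$ as the minimal orbital step the slab $V(L,\Delta)$ is nonempty, has cardinality at most $N=N(\Delta,F)$, and each $v\in V(L,\Delta)$ satisfies $d(o,v)=d(o,\pi_V(o))+L+O(C+\Delta)$. Hence
$$
\sum_{v\in V(L,\Delta)}e^{-\omega d(o,v)}\;\asymp_{\omega,\Delta,F}\; e^{-\omega d(o,\pi_V(o))}\cdot e^{-\omega L}.
$$
Dividing the two displays yields the lemma with $\theta(L,\omega)$ of order $e^{\omega L}$ times a constant depending only on $\omega$, $\Delta$ and $F$.

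The only technical point I anticipate is guaranteeing that every implicit constant is uniform over $V\in\mathbb{F}$, but this is immediate because $\mathbb{F}$ is the $G$-orbit of axes of the finite set $F$, so the contraction constant, the translation lengths $\tau_f$, the indices $[E(f):\langle f\rangle]$, and the minimal step $\Delta$ are all controlled by data attached to $F$ alone.
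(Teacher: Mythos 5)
Your proof is correct and takes essentially the same route as the paper: both arguments project $o$ to a nearest point of $V$, use the $C$-contraction of $V$ to linearize $d(o,v)\approx d(o,V)+d_V(o,v)$, bound the number of orbit points per annulus via the virtually cyclic structure of $E(f)$, and sum a geometric series, with uniformity over $\mathbb F$ coming from the finiteness of $F$. Two cosmetic slips that do not affect the one-sided inequality actually being claimed: the lower bound on the cardinality of unit-width annuli in $V$ can fail if the orbital step exceeds one, and the geometric-series denominator in your first display should be $1-e^{-\omega}$ rather than $1-e^{-\omega\Delta}$ when counting in unit-width annuli; the lemma only needs the upper bound on $\sum_V$ together with nonemptiness of $V(L,\Delta)$ and the distance estimate on it, all of which you have.
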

\begin{proof}
Write $V=g E(f) \cdot o$ for some $g\in G$.  Let $u\in V$ such that $d(o, u)=d(o, V)$, so $|d(u,v)- L|\le \Delta$ for $v\in V(L, \Delta)$. The  $C$-contracting property further implies for any $v\in V$: $$|d(o, v)-d(o, u)-d(u,v)|\le 4C.$$ Thus, by the triangle inequality,
$$
\begin{array}{rl}
\displaystyle\sum_{v\in V} e^{-\omega d(o,v)} & \prec_{C,\Delta}  e^{-\omega d(o,u)} \displaystyle\sum_{n\ge 1} |A(o, n, \Delta)\cap \ax(f)| e^{-\omega n}.
\end{array}
$$
As   the maximal elementary group $E( f)$  is virtually $\mathbb Z$ and $\ax( f)=E(f)o$ is a quasi-geodesic, we see  $|A(o, n, \Delta)\cap \ax(f)| $ is a  linear function of $n$. For any given $L>0$, we ignore the contribution from $V_{L+\Delta}$ in the series on the right-hand side, so that 
$$
\begin{array}{rl}
\displaystyle\sum_{v\in V} e^{-\omega d(o,v)}\le \theta\cdot\left( \sum\limits_{v\in V(L,\Delta)}   e^{-\omega d(o, v)}\right)
\end{array}
$$
where the implied constant $\theta$ depends only on $L, C,\Delta$ and $E(f)$.  
\end{proof}

The following lemma is analogous to Lemma \ref{OverlapAnnulus}, but the proof is much more involved. Recall  the definition of $T_n'(L)$ is given in  (\ref{TnLEQ}). 

\begin{lem}\label{bddoverlapLem}
There exists $L\gg 0$ such that  for any $\xi \in\mathcal C$ and $n>0$, the set of orbital points $$T_n(\xi)\quad :=\quad \{v\in    T_n'(L): \; \xi \in \Pi_o(v, r)\}$$ is contained in at most one  axis $V$ in $\f$. That is, $T_n(\xi)= V_L$.
\end{lem}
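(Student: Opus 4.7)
The plan is to derive a contradiction from the assumption that there exist two elements $v \in V_L$ and $v' \in V'_L$ with $V\neq V'$ such that $\bar V, \bar V' \in \bar S_n$ and $\xi\in \Pi_o^F(v,r)\cap \Pi_o^F(v',r)\cap \mathcal C$. After relabeling, I may assume $d(o,v)\le d(o,v')$. By Lemma \ref{IntersectShadow}, we then have $d(v,[o,v'])\le R$. More importantly, unfolding the proof of that lemma, I obtain a sequence $y_m \to \xi$ such that for all $m\gg 0$, the geodesic $[o,y_m]$ simultaneously carries $v$ as an $(\hat r, f)$-barrier and $v'$ as an $(r,f')$-barrier. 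Since $v$ is closer to $o$ than $v'$, the ordering of the barriers along $[o,y_m]$ puts $V$ before $V'$.

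The key step is to show that $\bar V$ must lie on a $\PC$-geodesic from $\bar o$ to $\bar V'$. I would estimate the projection $d_V(O,V')$ from below, using the geometry already encoded in the barriers. The portion of $[o,y_m]$ traveling through $N_C(V)$ starts near $\pi_V(o)$ and exits near $vfo$, since $v,vfo$ are both $(r,F)$-barrier witnesses on $V$. After exiting, the geodesic heads toward $V'$, and by the contracting property of $V$ together with the bounded projection of $\f$, the set $\pi_V(V')$ sits within a uniformly bounded distance of the exit point of $[o,y_m]$ from $N_C(V)$, hence within $O(\|F\|_m + r + C + B)$ of $v$. Combining, I get
\[
d_V(O,V') \;\ge\; d_V(O,v) - O(\|F\|_m+r+C+B) \;>\; L - O(\|F\|_m+r+C+B).
\]
Thus, choosing $L$ sufficiently large, $d_V(O,V')>\hat K$ for any fixed threshold $\hat K$.

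Now I invoke Lemma \ref{ForcingLem}: for the chosen $\hat K$, the vertex $\bar V \in \f_{\hat K}(\bar o,\bar V')$ is forced to lie on a $\PC$-geodesic from $\bar o$ to $\bar V'$. Since $\bar V \neq \bar o$ (as $d_\PC(\bar o,\bar V)=n>0$) and $\bar V\neq \bar V'$ by assumption, the additivity of distances along a geodesic gives
\[
d_\PC(\bar o,\bar V) + d_\PC(\bar V,\bar V') \;=\; d_\PC(\bar o,\bar V') \;=\; n.
\]
But $d_\PC(\bar o,\bar V)=n$ as well, which forces $d_\PC(\bar V,\bar V')=0$, i.e.\ $V=V'$, contradicting our standing assumption. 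It remains only to fix $L$ large enough (depending on $\hat K$, $r$, $C$, $B$, and $\|F\|_m$) so that all the quantitative estimates above hold simultaneously.

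The main obstacle is the projection estimate in the second paragraph: verifying that when two independent barriers $V,V'$ appear consecutively along one geodesic in $\U$, the set $\pi_V(V')$ is genuinely close to the $V$-side entry into $V'$, rather than escaping elsewhere on the axis $V$. This uses the interplay of the $(r,F)$-barrier condition with the bounded projection property of $\f$, and requires a careful analysis of the admissible truncation of $[o,y_m]$ relative to the saturation $\{V,V'\}$. Once this projection estimate is in hand, the quasi-tree machinery of $\PC$ does the rest.
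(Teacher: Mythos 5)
Your proposal is correct and follows essentially the same route as the paper: both arguments transfer the two barriers onto a single family of geodesics $[o,y_m]$ toward $\xi$ (you via unfolding Lemma \ref{IntersectShadow}, the paper by redoing that transfer directly with Corollary \ref{BdryProjCor} and \ref{AssumpC}), then show the projection of the farther vertex space onto the nearer one exceeds the threshold $\hat K$, and invoke Lemma \ref{ForcingLem} to force $\bar V$ onto a $\PC$-geodesic $[\bar o,\bar V']$, contradicting $d_\PC(\bar o,\bar V)=d_\PC(\bar o,\bar V')=n$. The only cosmetic differences are that the paper estimates $d_U(o,ho)$ for the point $ho\in V$ rather than $d_V(O,V')$ (equivalent up to the bounded projection constant) and orders the two axes by $d(o,U)\le d(o,V)$ rather than by $d(o,v)\le d(o,v')$; your ordering is also fine for $L$ large, though the one-line justification that it forces $V$ to precede $V'$ along $[o,y_m]$ deserves to be spelled out.
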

\begin{proof}
Let   $go, ho\in T_n'(L)$  so that $\xi \in \Pi_o(go,r)\cap \Pi_o(ho,r)$. By definition of $T_n'(L)$, there are $U, V\in \f$ so that $go\in U$ and $ho\in V$ and the associated vertices $\bar u, \bar v $ in the projection complex  lie on the $n$-sphere $ \bar S_n$. Moreover, we have 
\begin{equation}\label{dUogoEq}
d_U(o, go)\ge L,\quad d_V(o, ho)\ge L.
\end{equation} 
Arguing by way of contradiction,  assume  that  $U\ne V$, which thus a bounded intersection. We may assume   further  that  $d(o, U)\le d(o, V)$ for concreteness. 

According to definition of  $\xi \in \Pi_o(go,r)\cap \Pi_o(ho,r)$, there exist  two sequences of points in the corresponding cones $$\{x_m\}\subseteq \Omega_o(go, r),\quad \{y_m\}\subseteq \Omega_o(ho, r)$$ which    tend to $\xi$ as $m\to\infty$. 
We refer to Fig. \ref{fig:bddmultiplicity2} for the configuration of these points.

By definition, $d_U(o,go)\ge L$ for $go\in U_L$, which yields $\|N_r(U)\cap [o,go]\|\ge L-4r$ by Lemma \ref{Transform}.  The contracting property of $U$ implies that the entry point  of $[o, x_m]$ at $N_r(U)$ is $2C$-close to $\pi_U(o)$ by Lemma \ref{BigFive}(3).  As $go$ lies within a distance $r$ to $[o,x_m]$, we obtain $$\|N_r(U)\cap [o,x_m]\|\ge \|N_r(U)\cap [o,go]\| -2C-r\ge L-5r-2C.$$    Thus,  setting $M=9r+2C$, Lemma \ref{Transform} shows
$$d_U(o, x_m)\ge \|N_r(U)\cap [o,x_m]\| - 4r \ge  L-M.$$ 
Similarly, the same reasoning shows $d_V(o, y_m)\ge  L-M.$

As $U\ne V$ is assumed, we have $[\Lambda U]\cap [\Lambda V]=\emptyset$. Indeed, recall that $\Lambda U$ means the limit set of the axis $U\in \f$, which is here the fixed points of a contracting element. By Lemma \ref{DisjFixedSet}, distinct non-pinched contracting elements have disjoint fixed points, so $[\Lambda U]\cap [\Lambda V]=\emptyset$ follows. 

For concreteness, assume that $\xi\notin [\Lambda U]$; the case  $\xi\notin [\Lambda V]$ is similar. 
\begin{figure}
    \centering

\tikzset{every picture/.style={line width=0.75pt}} 

\begin{tikzpicture}[x=0.75pt,y=0.75pt,yscale=-1,xscale=1]

\draw    (45.5,173) -- (95.5,132) ;
\draw [shift={(95.5,132)}, rotate = 320.65] [color={rgb, 255:red, 0; green, 0; blue, 0 }  ][fill={rgb, 255:red, 0; green, 0; blue, 0 }  ][line width=0.75]      (0, 0) circle [x radius= 3.35, y radius= 3.35]   ;
\draw [shift={(45.5,173)}, rotate = 320.65] [color={rgb, 255:red, 0; green, 0; blue, 0 }  ][fill={rgb, 255:red, 0; green, 0; blue, 0 }  ][line width=0.75]      (0, 0) circle [x radius= 3.35, y radius= 3.35]   ;
\draw    (186.5,28) .. controls (148.5,56) and (78.5,113) .. (98.5,137) .. controls (118.5,161) and (211.5,79) .. (240.5,53) ;
\draw    (95.5,132) -- (155.5,83) ;
\draw [shift={(155.5,83)}, rotate = 320.76] [color={rgb, 255:red, 0; green, 0; blue, 0 }  ][fill={rgb, 255:red, 0; green, 0; blue, 0 }  ][line width=0.75]      (0, 0) circle [x radius= 3.35, y radius= 3.35]   ;
\draw    (45.5,173) -- (228.5,173) ;
\draw [shift={(228.5,173)}, rotate = 0] [color={rgb, 255:red, 0; green, 0; blue, 0 }  ][fill={rgb, 255:red, 0; green, 0; blue, 0 }  ][line width=0.75]      (0, 0) circle [x radius= 3.35, y radius= 3.35]   ;
\draw    (45.5,173) .. controls (167.27,76.97) and (294.92,71.11) .. (351.8,75.85) ;
\draw [shift={(353.5,76)}, rotate = 185.1] [color={rgb, 255:red, 0; green, 0; blue, 0 }  ][line width=0.75]    (10.93,-3.29) .. controls (6.95,-1.4) and (3.31,-0.3) .. (0,0) .. controls (3.31,0.3) and (6.95,1.4) .. (10.93,3.29)   ;
\draw    (403.5,146) .. controls (346.5,121) and (226.5,140) .. (228.5,173) .. controls (230.5,206) and (316.5,199) .. (380.5,198) ;
\draw    (228.5,173) -- (328.5,173) ;
\draw [shift={(328.5,173)}, rotate = 0] [color={rgb, 255:red, 0; green, 0; blue, 0 }  ][fill={rgb, 255:red, 0; green, 0; blue, 0 }  ][line width=0.75]      (0, 0) circle [x radius= 3.35, y radius= 3.35]   ;
\draw    (45.5,173) .. controls (200.72,167.03) and (316.34,158.09) .. (369.7,112.69) ;
\draw [shift={(370.5,112)}, rotate = 139.04] [color={rgb, 255:red, 0; green, 0; blue, 0 }  ][line width=0.75]    (10.93,-3.29) .. controls (6.95,-1.4) and (3.31,-0.3) .. (0,0) .. controls (3.31,0.3) and (6.95,1.4) .. (10.93,3.29)   ;
\draw  [line width=5.25] [line join = round][line cap = round] (391.3,90.22) .. controls (391.3,90.22) and (391.3,90.22) .. (391.3,90.22) ;
\draw  [line width=5.25] [line join = round][line cap = round] (371.3,112.22) .. controls (371.3,112.22) and (371.3,112.22) .. (371.3,112.22) ;
\draw  [line width=5.25] [line join = round][line cap = round] (353.3,76.22) .. controls (353.3,76.22) and (353.3,76.22) .. (353.3,76.22) ;

\draw (35,178.4) node [anchor=north west][inner sep=0.75pt]    {$o$};
\draw (86,58.4) node [anchor=north west][inner sep=0.75pt]    {$U$};
\draw (160,57.4) node [anchor=north west][inner sep=0.75pt]    {$go\in U_{L}$};
\draw (340,159.4) node [anchor=north west][inner sep=0.75pt]    {$ho\in V_{L}$};
\draw (259,206.4) node [anchor=north west][inner sep=0.75pt]    {$V$};
\draw (340,48.4) node [anchor=north west][inner sep=0.75pt]    {$x_{m} \in \Omega _{o}^{F}( go,r)$};
\draw (401,79.4) node [anchor=north west][inner sep=0.75pt]    {$\xi\in \mathcal C$};
\draw (367.5,112.4) node [anchor=north west][inner sep=0.75pt]    {$y_{m} \in \Omega _{o}^{F}( go,r)$};

\end{tikzpicture}
    \caption{If $d(o,U)\gg d(o,V)$, we shall see that $U$ appears before $V$ in $\p_K(\f)$: $\bar u$ lies on $[\bar o, \bar v]$. This contradicts the assumption of $\bar u,\bar v\in \bar S_n$.}
    \label{fig:bddmultiplicity2}
\end{figure}
By  \ref{AssumpC}, we have $d(o, [x_m, y_m])\to\infty$  for $\xi\in\mathcal C$. By Lemma \ref{BdryProjCor}, as $\xi\notin \Lambda U$, we have $d_U(x_m, y_m)\le C$ for $m\gg 0$. The triangle property for $d_U$ then gives  $$d_U(o, y_m)\ge d_U(o, x_m)-d_U(x_m, y_m)\ge L-C-M$$

If we choose $L\gg M+5C$, then $[o, y_m]\cap N_C(U)\ne \emptyset$ by Lemma \ref{BigFive}. Let $z$ be the exit point $[o, y_m]$ at $N_C(U)$, so $d(z, \pi_U(y_m))\le 2C$ by Lemma \ref{BigThree}. 

As   $U\ne V$ has bounded intersection and   $d(o, U)\le d(o,V)$,  $[o, y_m]$ exits $N_C(U)$ before entering $N_C(V)$ with a   possible overlap bounded by a constant $B$.
As  $d(ho, [o, y_m])\le r$ and $ho\in V$, there exists a  constant still denoted by $M=M(r)>0$ such that $$d_U(o, ho)\ge d_U(o, y_m) - M.$$

Choose  $L>\hat K+2M+C$,  where $\hat K$ is given by Lemma \ref{ForcingLem}. Thus, $d_U(o, ho)\ge \hat K$, so that $\bar u$ appears in any geodesic $[\bar o, \bar v]$. This is a contradiction, as $d(\bar o, \bar u)=d(\bar o, \bar v)=n$. 

In conclusion, the set $T_n(\xi)$ is contained in one axis $U=V$. In particular, $T_n(\xi)\subseteq V_L$ follows. The result is proved. 
\end{proof}

Thanks to Lemma \ref{bddoverlapLem}, any boundary point is contained in a uniform number of shadows over $T_n'(L)$, so we deduce the following main corollary from it.  
\begin{lem}\label{BddTnSeriesLem}
There exists a uniform constant $c>0$ independent of $n\ge 1$ such that  
\begin{equation}\label{TnSeriesEQ}
\forall n\ge 1:\quad \sum\limits_{go \in T_n} e^{-\omega d(o, go)} \le  c.
\end{equation}
 
\end{lem}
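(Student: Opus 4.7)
The plan is to bound the series in (\ref{TnSeriesEQ}) by reducing, in each vertex space $V \in \f$ appearing in $T_n$, to a slice $V(L,\Delta)$ of uniformly bounded cardinality via Lemma \ref{CosetShadowLem}; then transferring the estimate to conformal measures of shadows via the Shadow Lemma \ref{ShadowLem}; and finally collecting the terms by exploiting the almost-disjointness of these shadows supplied by Lemma \ref{bddoverlapLem}. The key point is that Lemma \ref{CosetShadowLem} concentrates the geometric-style series over $V$ onto a bounded slice, and on this bounded slice the overlap control of Lemma \ref{bddoverlapLem} becomes usable.

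Concretely, I would first pick $L_1$ so large that Lemma \ref{bddoverlapLem} applies to $T_n'(L_1)$, and then pick $L > L_1 + \Delta$. By definition $V(L,\Delta) \subseteq V_{L_1}$ for every $V \in \f$, so
$$\bigcup_{\bar v \in \bar S_n} V(L,\Delta) \; \subseteq \; T_n'(L_1).$$
Since every element of $T_n$ lies in at most three vertex spaces $V \in \f$, Lemma \ref{CosetShadowLem} provides $\theta = \theta(L,\omega)$ with
$$\sum_{v \in T_n} e^{-\omega d(o,v)} \;\le\; 3\theta \sum_{\bar v \in \bar S_n} \sum_{v \in V(L,\Delta)} e^{-\omega d(o,v)}.$$
The lower bound in the Shadow Lemma then yields a constant $M>0$ such that $e^{-\omega d(o,v)} \le M \cdot \mu_1(\Pi_o^F(v,r) \cap \mathcal C)$ for every $v\in Go$.

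For the final step, I would unfold the resulting double sum by Fubini:
$$\sum_{\bar v \in \bar S_n} \sum_{v \in V(L,\Delta)} \mu_1(\Pi_o^F(v,r) \cap \mathcal C) \;=\; \int_{\mathcal C} \#\Bigl\{v \in \textstyle\bigcup_{\bar v \in \bar S_n} V(L,\Delta) : \xi \in \Pi_o^F(v,r)\Bigr\}\, d\mu_1(\xi).$$
For each $\xi \in \mathcal C$, Lemma \ref{bddoverlapLem} forces the contributing orbital points to lie in a single axis $V$, hence in the single slice $V(L,\Delta)$ inside that $V$, whose cardinality is bounded by a universal $N = N(L,\Delta,C)$ because every axis $V = gE(f)o$ is uniformly quasi-isometric to $\mathbb Z$. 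This produces the bound $3\theta M N \mu_1(\mathcal C)$, independent of $n$.

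The main obstacle in this approach is the bookkeeping around the constants, specifically calibrating $L_1$ and $L$ so that the slice $V(L,\Delta)$ simultaneously (i) sits in the tail $V_{L_1}$ where the one-axis concentration of Lemma \ref{bddoverlapLem} applies, and (ii) captures the bulk of the series over $V$ via Lemma \ref{CosetShadowLem}. Once this compatibility of thresholds is arranged, the remaining argument is a routine Fubini-type counting, so the real work of the lemma is already embedded in Lemmas \ref{CosetShadowLem} and \ref{bddoverlapLem}.
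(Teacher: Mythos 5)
Your proposal is correct and follows essentially the same route as the paper: Lemma~\ref{CosetShadowLem} concentrates the series onto the slices $V(L,\Delta)$, the Shadow Lemma~\ref{ShadowLem} converts to conformal measure, and Lemma~\ref{bddoverlapLem} provides the single-axis concentration that makes the overlap count uniform. The only cosmetic difference is the final count: you bound it by $|V(L,\Delta)|$ using the uniform quasi-line structure of the axes, while the paper invokes Lemma~\ref{IntersectShadow} to bound the number of shadows within the slice containing a given $\xi$; both produce the same uniform constant $\theta$.
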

\begin{proof}
Fix a constant  $L>0$  satisfying   Lemma \ref{bddoverlapLem}, so if $T_n(\xi)$ for   $\xi \in \mathcal C$ is non-empty, then it is contained in one $V\in \f$.  Precisely, $T_n(\xi)\subseteq V_L$. 
By Lemma \ref{IntersectShadow}, there exists  $N=N(L)$ such that any point  $\xi \in \mathcal C$ are shadowed by at most $N$     elements in $V(L,\Delta)$: 
$$ \sum\limits_{\bar v\in \bar S_n} \left( \sum\limits_{u \in V(L,\Delta)} \mu_1(\Pi_o^F(u,r)) \right)\le N \cdot \mu_o( \mathcal C)\le N$$
By Lemma \ref{CosetShadowLem}, the majority of the Poincar\'e series is concentrated over $V(L,\Delta)$. We thus obtain $$\sum\limits_{v \in T_n} e^{-\omega d(o, v)} \le \theta  \sum\limits_{\bar v\in \bar S_n} \left(\sum\limits_{u \in V(L,\Delta)}e^{-\omega d(o, u)}\right) $$ 
where $V$ is the axes associated to $\bar v$.

The conclusion then follows from the shadow lemma \ref{ShadowLem} that $\mu_1(\Pi_o^F(u,r))\asymp \mathrm{e}^{-\omega d(o,u)}$.
\end{proof}

Recall the definition of partial Poincar\'e series over the $n$-th visual sphere $T_n$:
$$
\p_{ T_{n}}(s,o,o) = \sum_{v\in  T_{n}} e^{-\omega d(o, v)}
$$ 
The following crucial lemma shall be used to write the Poincar\'e series as geometric series in Step 1 below in the next subsection.
\begin{lem}\label{KeyInequalitiesLem}
For any integer $m, n\ge 1$ and for any $s>\e G$, we have
\begin{align}
\label{TnSeriesCstEQ}
\p_{  T_{n}}(s,o,o)\asymp \p_{T_{n+1}}(s,o,o)\\
\label{TnmSeriesEQ}
\p_{T_{n+m}}(s,o,o) \prec \p_{ T_{n}}(s,o,o) \cdot \p_{  T_{m}}(s,o,o)
\end{align} 
where the implied constants do not depend on $m, n$ and $s$.
\end{lem}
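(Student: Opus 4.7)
The plan is to exploit the projection complex $\PC$ together with Lemma \ref{LiftPathLem}, which lifts $\PC$-geodesics to $(L,B)$-admissible paths in $\U$ that are uniform quasi-geodesics (Proposition \ref{admisProp}). A key observation is that for any $g\in G$ with $go\in U\in\f$, the vertex $g\bar o=gO$ and the vertex $U$ are at uniformly bounded $\PC$-distance, since both axes contain the point $go$ and hence project into an $O(C)$-neighborhood of $\pi_W(go)$ on any third axis $W\in\f$.

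\textbf{Proof of (\ref{TnmSeriesEQ}).} Given $v=g'o\in T_{n+m}\cap Go$ with $\bar v$ at $\PC$-distance $n+m$ from $\bar o$, fix a $\PC$-geodesic from $\bar o$ to $\bar v$, let $\bar u$ be the vertex on it at distance $n$ with axis $U$, and lift the whole geodesic via Lemma \ref{LiftPathLem} to an $(L,B)$-admissible quasi-geodesic $\gamma$ from $o$ to $v$ passing through $U$. Choose $u\in U\cap Go$ near the entry point of $\gamma$ into $U$, and write $u=g_1o$. Quasi-geodesicity of $\gamma$ yields $d(o,v)\ge d(o,u)+d(u,v)-O(1)$. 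Setting $w=g_1^{-1}v\in Go$, the observation above gives $d_\PC(\bar o,g_1^{-1}\bar v)=d_\PC(g_1\bar o,\bar v)\in[m-c_0,m+c_0]$ for a uniform $c_0$; hence $w\in T_{m'}$ with $|m'-m|\le c_0$. The map $v\mapsto(u,w)$ is boundedly-many-to-one: the vertex $\bar u$ is determined up to bounded choices by Lemma \ref{Tripod} (tripod-likeness in the quasi-tree), and $u\in U\cap Go$ near the entry of $\gamma$ is determined up to bounded discrete choice since $U\cap Go$ is a quasi-line with cocompact $E(f)$-action. Summing, we obtain
\[
\p_{T_{n+m}}(s,o,o)\;\prec\;\p_{T_n}(s,o,o)\cdot\sum_{|m'-m|\le c_0}\p_{T_{m'}}(s,o,o),
\]
and (\ref{TnmSeriesEQ}) follows once (\ref{TnSeriesCstEQ}) is in hand.

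\textbf{Proof of (\ref{TnSeriesCstEQ}).} For the direction $\p_{T_n}\prec\p_{T_{n+1}}$, apply the Extension Lemma \ref{extend3} to each $u=g_1o\in T_n\cap Go$ with trivial trailing ray: there exists $f\in F$ such that $[o,g_1o]\cdot g_1[o,fo]$ is $(L,B)$-admissible, so its saturation $\f_K[\bar o,\bar u]\cup\{g_1\ax(f)\}$ is totally ordered. Thus $g_1\ax(f)$ lies at $\PC$-distance exactly $n+1$ from $\bar o$, i.e.\ $v:=g_1fo\in T_{n+1}\cap Go$, with $d(o,v)\le d(o,u)+\|Fo\|_{\max}$. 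The map $u\mapsto v$ is at most $|F|=3$ to one since $u=vf^{-1}o$ with $f\in F$. The reverse direction $\p_{T_{n+1}}\prec\p_{T_n}$ follows from the decomposition above with $m=1$, noting that $\p_{T_{m'}}(s,o,o)$ for $m'=O(1)$ is a sum over a finite set of $G$-translates of axes near $\bar o$ in $\PC$ and is uniformly bounded (e.g.\ using Lemma \ref{BddTnSeriesLem}).

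\textbf{Main obstacle.} The critical difficulty is controlling the bounded-to-one multiplicity of $v\mapsto(u,w)$ in (\ref{TnmSeriesEQ}): different choices of $\PC$-geodesic $[\bar o,\bar v]$ or of $u\in U\cap Go$ on the lifted admissible path could in principle produce unboundedly many preimages. This is where the quasi-tree property of $\PC$ (Lemma \ref{Tripod}) and the linear ordering on $\f_K[\bar o,\bar v]$ from Lemma \ref{OrderLem} are essential, combined with the virtual cyclicity of the stabilizer $E(f)$ acting cocompactly on each axis. Verifying these bounds carefully, and confirming that the implicit quasi-geodesic error $O(1)$ absorbs into a single multiplicative constant independent of $n,m$, constitutes the technical heart of the argument.
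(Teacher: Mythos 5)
Your overall strategy---splitting $g\in T_{n+m}$ via an intermediate vertex at $\PC$-distance $n$, using quasi-geodesicity of the lifted standard path, and applying the Extension Lemma for the lower bound in (\ref{TnSeriesCstEQ})---is essentially the paper's argument. The ``main obstacle'' you identify, however, is not one: once a single choice of $(u,w)$ is fixed for each $v$, the map $v\mapsto(u,w)$ is automatically bounded-to-one, because $v=g_1 w$ is recovered from the pair up to the finite stabilizer of $o$ (the action is proper); no appeal to Lemma \ref{Tripod} or to cocompactness of $E(f)$ on the axis is needed for that count.

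The genuine gap is in your proof of $\p_{T_n}\prec\p_{T_{n+1}}$. You apply the Extension Lemma to every $u=g_1 o\in T_n\cap Go$ and conclude that $g_1\ax(f)$ is at distance exactly $n+1$ from $\bar o$ in $\PC$. The upper bound $\le n+1$ is clear from adjacency of $g_1\ax(f)$ to the vertex space $U\ni u$, but to exclude distance $\le n$ you must force $U$ onto every $\PC$-geodesic from $\bar o$ to $g_1\ax(f)$ via Lemma \ref{ForcingLem}, which requires $d_U(\bar o,g_1\ax(f))\ge\hat K$, which in turn requires $d_U(o,g_1 o)$ to be large (the projection of $g_1\ax(f)$ onto $U$ is a $\kappa$-bounded set near $g_1 o$, while $\pi_U(o)$ is fixed). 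If $u$ is shallow in $U$, this fails and $g_1\ax(f)$ may well be at distance $\le n$. The paper avoids the problem by first replacing $T_n$ with the $(2\hat K)$-net $T_n'(2\hat K)$ of deep points (noting $\p_{T_n}\asymp\p_{T_n'}$) and only then applying the Extension Lemma. Also, your inference from ``$\f_K[\bar o,\bar u]\cup\{g_1\ax(f)\}$ is totally ordered'' to ``distance exactly $n+1$'' is not valid: the total order from Lemma \ref{OrderLem} organizes a standard path, which is only a quasi-geodesic in $\PC$ and gives no direct control on geodesic length; the Forcing Lemma is indispensable here. (Incidentally, the saturation of the concatenated path $[o,g_1 o]\cdot g_1[o,fo]$ is just $\{g_1\ax(f)\}$, not the $\f_K$-interval you quote.)
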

\begin{proof}
\textbf{(1).}  We start by proving the ``$\prec $" direction of (\ref{TnSeriesCstEQ}).

Let $\hat K=\hat K(K)\ge B$ be given by Lemma \ref{ForcingLem}, where $B$ is the bounded projection constant of $\f$. Consider the $(2\hat K)$-net $T_n':=T_n'(2\hat K)$  in $T_n$, so we have  $$\p_{T_n}(s,o,o) \asymp_{\hat K} \p_{T_n'}(s,o,o).$$
The goal of the proof is  to prove for any $go\in T_n'$, there exists $f'\in F$ such that $gf'o\in T_{n+1}$.  Recall that if $V$ is an axis given by $g\ax(f)$, we say that $f$ is the {\it type} of the axis $V$ or the vertex $\bar v$.   

Indeed, according to definition of $go\in T_n'(2\hat K)$,   there exist an associated vertex $\bar u\in \bar S_n$ of type $f\in F$ and  the vertex space $U=g\ax(f)$ such that $d_U(o, go)>2\hat K.$ By Lemma \ref{extend3} there exists $f'\ne f\in F$ such that $gf'$ labels a quasi-geodesic. The axis $V:=g\ax(f')$ is adjacent to $U$ in $\PC$, so $d(\bar u, \bar v)=1$ and thus $|d(\bar v, \bar o)-n|\le 1$. We claim that $\bar v \in   \bar S_{n+1}$.  The point is  to exclude the possibility of $d(\bar v, \bar o)\le n$.

Since $U=g\ax(f)$ and $V=g\ax(f')$ have $B$-bounded intersection for some $B>0$, we obtain that   $[go, gf'o]$ has $B$-bounded projection to $U$.  This implies  $$d_U(O, V)\ge d(o,go)-B\ge 2\hat K-B\ge \hat K$$ so  by Lemma \ref{ForcingLem},    $\bar u$ is contained in any geodesic from $\bar o$ to $\bar v$ in $\PC$. 

If  $d(\bar o, \bar v)\le n$ was true, we would obtain a contradiction: $d(\bar u, \bar o)< n$. Hence,  $d(\bar v, \bar o)\le n$ is impossible: that is,  $d(\bar o,\bar v)=n+1$ and $\bar v \in   \bar S_{n+1}$. 

In summary, we proved that for any $go\in T_n'$ there exists $f'\in F$ such that $gf'o\in T_{n+1}$ and  $d(o,gf'o,o)\le d(o,go)+\|Fo\|$. Moreover, as $Go$ is a discrete set, only a uniform finite number of $go\in T_n'$ shares the common $gf'o\in T_{n+1}$ in a ball of radius $\|Fo\|$. Unveiling the definition of $\p_{ T_{n+1}}(s,o,o)$ with the triangle inequality, we see that    $$\p_{ T_{n+1}}(s,o,o) \geq  c\cdot \p_{T_n'}(s,o,o)$$ some constant $c$ depending on $\|Fo\|$. 
Recall that $\p_{T_n'}(s,o,o) \asymp_{\hat K}   \p_{ T_{n}}(s,o,o)$. The direction ``$\prec $"  of the inequlity (\ref{TnSeriesCstEQ}) follows as desired.

The other direction would follow from the inequality (\ref{TnmSeriesEQ}), which is  the next goal.

\textbf{(2).}
For any $go\in T_{n+m}$, let $\bar u\in\bar S_{n+m}$  be an vertex for which the associated axis $U$ contains $go$. The vertex $\bar u$ is non-unique, as any point of $Go$ is contained in three axis from $\f$. 

Consider the standard path $\alpha$ from $\bar o$ to $\bar u$. By Lemma \ref{LiftPathLem}, there exists an $(L,B)$-admissible path $\tilde \alpha$ between $o$ and $go$ so that the saturation $\f(\tilde \alpha)$ consists of the  axes associated to the vertices on  $\alpha$. In particular, we have $[o,go]$ intersects the $r$-neighborhood of every $V\in \f(\tilde \alpha)$. 

Let  $\bar v \in \bar S_n\cap \alpha$ so that $d(\bar v, \bar o)=n$.    To be explicit, write the corresponding axis $V=h\ax(f')$ for some $h\in G$ and $f'\in F$.
By definition, \emph{any} element $ho\in V$ in the axis $V$ is contained in  the $n$-th visual sphere $T_n$. Let us choose the one $ho$ so that $d(ho, [o,go])\le r$ (as  $[o,go]$ intersects the $r$-neighborhood of $V$). Thus, denoting $a=h^{-1}g$, $$
d(o,go)\ge d(o, ho)+d(o, ao) -2r
$$
As the standard path $\alpha$ is a $(2,1)$-quasi-geodesic (\cite[Corollary 3.7]{BBFS}), it follows that $|d(\bar v,\bar u)-m|\le N_0$ for a universal integer $N_0$. Indeed, according to the proof of Theorem \ref{projcplxThm}, we can choose $N_0=2$, since  it is proved that any geodesic passes with a distance at most 2 to any vertex on the standard path.

It is possible that $f'\ne f$; that is, $\bar v$ might not be in the orbit $G\bar o$. Let $\bar w$ be the vertex adjacent to $\bar v$ so that the associated axis $W$ is of form $h\ax(f)$. As $|d(\bar w, \bar u)-m|\le N_0+1$ and $h^{-1}\bar w=\bar o$, we thus have    $h^{-1}go$ lies in the following $(N_0+1)$-thicken annulus:  $$A_m:=\bigcup_{|i-m|\le N_0+1} T_i$$ That is,  we can write $g=h \cdot a$ where $ho\in T_n$ and $ao\in A_m$. 
We now see   that
$$\begin{aligned}
\p_{ T_{n+m}}(s,o,o) &=\sum_{go\in  T_{n+m}} e^{-\omega d(o, go)}\\ &\prec  \p_{ T_{n}}(s,o,o) \cdot \p_{A_m}(s,o,o),    
\end{aligned}
$$
By Lemma \ref{BddTnSeriesLem}, $\p_{A_1}(s,o,o)\le c_1:=2(N_0+1)c$  for $m=1$, so we get $$\p_{ T_{n+1}}(s,o,o)\le c_1\p_{ T_{n}}(s,o,o)$$ This completes the proof of the other direction in (\ref{TnSeriesCstEQ}), as promised above. This further implies  $\p_{A_m}(s,o,o)\le c_2 \p_{ T_{m}}(s,o,o)$ for some $c_2$ depending on $c_1$ and $N_0$. Hence, the general case  in  (\ref{TnmSeriesEQ}) follows. 
\end{proof}

\subsection{Proof of Hopf--Tsuji--Sullivan Dichotomy} 
\label{SecHTSTProof}

Recall that the union $\bigcup_{i\ge 1}  T_i$ covers the $G$-orbit $Go$ with multiplicity at most  3.  
The shadow lemma \ref{ShadowLem} then implies  
\begin{equation}\label{TnPSeriesEQ}
\p_{G}(\omega, o,o)\quad  \asymp\quad \sum\limits_{i\ge 1} \Bigg(\sum\limits_{g \in  T_i} e^{-\omega d(o, go)}\Bigg)\quad \asymp_r\quad \sum\limits_{i\ge 1} \Bigg(\sum\limits_{g \in  T_i} \mu_1\big(\Pi_o^F(go, r)\big)\Bigg)
\end{equation}

We subdivide the proof of Theorem  \ref{HTSThm} in the following   steps. 

Before the proof, we remark that different definitions  \ref{ConicalDef1prime}, \ref{ConicalDef2} and \ref{ConicalDef3}  of   conical points are quantitatively equivalent in Lemmas \ref{QuantativeEquiv12} and \ref{QuantativeEquiv23}: that is, each set is contained into the other with appropriate parameters. Thus, we shall  insist on the definition \ref{ConicalDef2} of conical points via shadows, as the proof uses extensively the shadow lemma. This does not effect the generality of Theorem \ref{HTSThm}, as the positive / null measure charged  on conical points shall be established to be equivalent with divergence / convergence of Poincar\'e series.   

First of all, the direction ``(\ref{itm:2}) $\Longrightarrow$ (\ref{itm:4})" follows as $\Lambda_r^F(Go)\subseteq \mG$, and  ``(\ref{itm:3}) $\Longrightarrow$ (\ref{itm:2})" is trivial. We now give the proof of the other directions. 

\subsubsection*{(\ref{itm:1})  \texorpdfstring{$\Longrightarrow$}{} (\ref{itm:4})} If the set  $\Lambda_{r}^F(Go)$  is $\mu_1$-null, then  we prove that $G$ is of convergence type. Denote $$\Lambda_n:= \bigcup\big\{     \Pi_o^{F}(go, r): g \in \cup_{k\ge n}T_k \big\}.$$ 
We have $\displaystyle  \lim_{n\to \infty} \Lambda_n \subseteq  \Lambda_r^F(Go)$ by (\ref{rFConicalEQ}). By hypothesis,  $\displaystyle\lim_{n\to \infty} \mu_1(\Lambda_n)=0$. Using     Lemma \ref{ShadowLem}, there exist $n_0>0$ and $0<q<1$ such that $\theta q\le 1$ and $$\sum\limits_{g \in  T_{n_0}} e^{-\omega d(o, go)} < q$$ 
where $\theta>0$ is the implied constant given by Lemma \ref{KeyInequalitiesLem}. For each $k\ge 0$ and $1\le i\le n_0$, we obtain from (\ref{TnSeriesCstEQ}, \ref{TnmSeriesEQ})  that    
$$
\begin{array}{ccc}
\sum\limits_{k\ge 1}\left(\sum\limits_{v \in   T_{kn_0}} e^{-\omega d(o, v)}\right) 
&<   \sum\limits_{k\ge 1} \left(\theta \sum\limits_{v \in   T_{n_0}} e^{-\omega d(o, v)}  \right)^k  &\le  \sum\limits_{k\ge 1} (\theta q)^k <\infty
\end{array}
$$
and
$$
\sum\limits_{v \in T_{kn_0+1}} e^{-\omega d(o, v)}< C \left(\sum\limits_{v \in T_{kn_0}} e^{-\omega  d(o, v)}\right) 
$$
By (\ref{TnPSeriesEQ}), we obtain that $\p_{G}(\omega,o,o)$ is convergent. 

Recall  the usual conical points $\Lambda_c(G)$  are defined in (\ref{ConicalEQ}) using the usual shadow $\Pi_x(y, r)$ without index $F$: $$
\Lambda_{c}(Go) :=\bigcup_{r\ge 0}\bigcup_{x\in Go} \left(\limsup_{y\in Go} \Pi_x(y, r)\right).$$
It is clear that $\Lambda_{r}^F(Go)$ is a subset of $\Lambda_{c}(Go)$, so $\mu_1(\Lambda_c(G))=0$   implies  $\p_{G}(\omega,o,o)<\infty$.

\subsubsection*{(\ref{itm:4})  \texorpdfstring{$\Longrightarrow$}{} (\ref{itm:1})} If   $G$ is of convergent type, then the set $\Lambda_{r}^F(Go)$ is $\mu_1$-null. 

Indeed,  assume that $\mu_1(\Lambda_r^F(Go))>0$ and we are going to prove that $\p_{G}(\omega, o,o)$ is convergent. By Corollary \ref{FixLightSource}, we could fix the light source at $o$ and increase $r$, so that the following family of sets $$\Lambda_n:=  \bigcup\limits_{k\ge n}\Bigg(\bigcup\limits_{v \in   A(o, k,\Delta) }  \Pi_{o}^F(v,r)\Bigg)$$ tends to $\Lambda_{r}^F(Go)$, where $A(o,n,\Delta)$ is defined in (\ref{AnnulusEQ}). Thus, $\mu_1(\Lambda_n)\ge \frac{1}{2}\mu_1(\Lambda_r^F(Go))$ for $n\gg 0$.
By  Lemma    \ref{ShadowLem}, we obtain a uniform lower bound of the partial sum  of Poincar\'e series $\p_{G}(\omega,o,o)$ for any $n\gg 0$
$$
\bigcup\limits_{k\ge n}\Bigg(\bigcup\limits_{v \in   A(o, k,\Delta) }    e^{-\omega n}\Bigg) \succ \mu_1(\Lambda_n)
$$   contradicting to the convergence of $\p_{G}(\omega,o,o)$. 

To prove that $\Lambda_c(G)$ is $\mu_1$-null, we re-define $\Lambda_n$ as in the exact form as above where $\Pi_{o}^F(v,r)$ is replaced by $\Pi_{o}(v,r)$. As the shadow Lemma    \ref{ShadowLem} works exactly for $\Pi_{o}(v,r)$, the remaining argument proves in the same way that $\Lambda_{c}(Go)$ is $\mu_1$-null.  

\subsubsection*{(\ref{itm:4})  \texorpdfstring{$\Longrightarrow$}{} (\ref{itm:5})} If   the set $A:=\Lambda_r^F(Go)$ or $A:=\Lambda_c(G)$   is $\mu_1$-positive then  it is $\mu_1$-full in $\mathcal C$: $\mu_1(A)=\mu_1(\mathcal C)$. 

Suppose to the contrary  that  the complement  $\mathcal C\setminus A$ is $\mu_1$-positive. The restriction of $\{\mu_x\}_{x\in \U}$ on $\mathcal C\setminus A$   gives a conformal density $\{\nu_x\}_{x\in \U}$ of the same    dimension $\omega$, so that $\nu_1(\Lambda_r^F(Go))=0$. By   ``\textbf{(\ref{itm:1}) $\Longrightarrow$ (\ref{itm:4})}", we have that $G$ is of convergent type: this in turn contradicts the ``\textbf{(\ref{itm:4}) $\Longrightarrow$ (\ref{itm:1})}". 

\subsubsection*{(\ref{itm:5})  \texorpdfstring{$\Longrightarrow$}{} (\ref{itm:3})} Applying ``(\ref{itm:1}) $\Longrightarrow$  (\ref{itm:4})",  we have that for every $\f$ in (\ref{SystemFDefIntro}),     the set $\Lambda_r^F(Go)$ is   $\mu_1$-full. By Corollary \ref{MyrbergConical}, their countable intersection gives the Myrberg  set.

The proof of Theorem \ref{HTSThm} is complete.

\section{Conformal and harmonic  measures on the reduced Myrberg  set}\label{SecUniqueness}
This section investigates  the uniqueness of conformal density with   dimension at the critical exponent for divergence group actions. Our goal here is to  prove Theorem \ref{UniqueConfThm} from Introduction, stated below as  Lemma \ref{Unique}, that conformal measures shall be unique and ergodic, when pushed forward to the reduced horofunction boundary.     

Suppose that $G\act \U$ as in (\ref{GActUAssump}) is of divergent type, where $\U$ is compactified by a convergence boundary $\pU$.  
Let $\{\mu_x\}_{x\in \U}$ be a $\omega$-dimensional $G$-quasi-equivariant quasi-conformal density on $\pU$ with $\lambda\ge 1$ given in Definition \ref{ConformalDensityDefn}.  

According to the Hopf--Tsuji--Sullivan dichotomy (Theorem \ref{HTSThm}), as $G\act \U$   is of divergent type,  the conformal measures $\mu_x$ are fully charged on the set of conical points or even the smaller subset of Myrberg points.

By Lemma \ref{MyrbergGood} and \ref{GoodPartition}, the finite difference relation restricted on $\mG$ induces a closed surjective map from $\mG$ to $[\mG]$ with compact fibers. This implies that $[\mG]$ is metrizable and second countable. 

\subsection{Push forward conformal density to the quotient}
According to Lemma \ref{EmbedConical}, there exists a topological embedding
$$
[\mG] \longrightarrow \partial \QT
$$
so we could endow $[\mG]$ with the visual metric on the Gromov boundary $\partial \QT$. 
Push forward $\{\mu_x\}_{x\in \U}$ to $[\mG]$ via the quotient map 
$$
\mG \longrightarrow [\mG]
$$ We obtain a $\e G$-dimensional $G$-quasi-equivariant quasi-conformal density $\{[\mu_x]\}_{x\in \U}$ on $[\mG]$, where $[\mu_x]$ denotes the image measure of $\mu_x$. The constant $\lambda'$ now depends on $\lambda$ and $C$, as any two horofunctions in a $[\cdot]$-class of Myrberg points has $(20C)$-difference by Lemma \ref{UnifDiffLem}.  Here the contracting constant $C$ could taken to be the infimum over all $\f$ in (\ref{SystemFDefIntro}).  

The proof consists in  releasing conformal measures as a Hausdorff-Caratheodory measure on $[\mG]$. To that end, we first state a variant of Vitali covering lemma in \cite[Appendix A]{YANG7}.  


Let $B \in \mathbb B$ be a family  of subsets in a topological space $X$ with  extensions $B\subseteq \tilde B$ of each $B$. Then $\mathbb B$  is called \textit{hierarchically locally finite}  if there exist a positive integer $N>0$ and a disjoint decomposition $\mathbb B=\bigsqcup \mathbb B_n$   for $n\ge 0$ where  $\mathbb B_n$ is possibly empty for certain $n$ such that  the following two properties are true:
\begin{enumerate}
\item
Given $B\in \mathbb B_n$ for $n\ge 0$,   the set $\mathcal N(B)$ of $A\in\mathbb B_{n-1} \cup \mathbb B_n\cup \mathbb B_{n+1}$ with  $A\cap B\ne \emptyset$ has cardinality at most  $N,$ where $\mathbb B_{-1}:=\emptyset$.
\item
If $A\in \mathbb B_n$ intersects $B\in \mathbb B_m$ for $n\ge m-2$,  then $A$  is contained in the extension $\tilde B$ of $B$.  
\end{enumerate}


By definition, any subfamily of $\mathbb B$ is   hierarchically locally finite.

\begin{lem}\cite[Lemma A.1]{YANG7}\label{covering} 
Let $\mathbb B$ be a hierarchically locally finite family of subsets
in a space $X$. Then there exists a sub-family
$\mathbb B' \subseteq \mathbb B$ of pairwise disjoint subsets such
that  
\begin{equation}\label{disjointcover}
\bigcup\limits_{B \in \mathbb B} B \subseteq \bigcup_{A \in
\mathbb B'}  \left(\bigcup_{B\in \mathcal N(A)} \tilde B \right).
\end{equation}
\end{lem}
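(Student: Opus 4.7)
The plan is to construct $\mathbb{B}'$ level by level by a greedy maximality argument, in the spirit of the classical Vitali covering lemma. First I would pick a maximal pairwise disjoint subfamily $\mathbb{B}_0' \subseteq \mathbb{B}_0$. Then, inductively, having chosen $\mathbb{B}_0', \ldots, \mathbb{B}_{n-1}'$, I would let $\mathcal{F}_n$ consist of those $B \in \mathbb{B}_n$ that are disjoint from every set already selected, and let $\mathbb{B}_n' \subseteq \mathcal{F}_n$ be a maximal pairwise disjoint subfamily (existence is immediate from Zorn's Lemma). Setting $\mathbb{B}' := \bigsqcup_n \mathbb{B}_n'$ then gives by construction a subfamily of $\mathbb{B}$ whose members are pairwise disjoint.

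Next I would verify the containment (\ref{disjointcover}) by fixing any $B \in \mathbb{B}_n$ and exhibiting $A \in \mathbb{B}'$ and $B_0 \in \mathbb{N}(A)$ with $B \subseteq \tilde{B_0}$. If $B \in \mathbb{B}_n'$, take $A = B_0 := B$: then $B \in \mathbb{N}(B)$ trivially, and property (2) applied with the set and itself yields $B \subseteq \tilde{B}$, so $B$ lies in the right-hand side. Otherwise either $B \notin \mathcal{F}_n$, meaning $B$ already meets some previously chosen $A \in \mathbb{B}_m'$ with $m < n$, or $B \in \mathcal{F}_n$ but by maximality of $\mathbb{B}_n'$ inside $\mathcal{F}_n$, $B$ meets some $A \in \mathbb{B}_n'$. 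In either case we obtain $A \in \mathbb{B}'$ at some level $m \le n$ with $A \cap B \neq \emptyset$; since $n \ge m \ge m-2$, property (2) applied with the roles ``$A$''$\,=B$ and ``$B$''$\,=A$ gives $B \subseteq \tilde{A}$. Taking $B_0 := A \in \mathbb{N}(A)$ then completes the verification.

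The main obstacle I anticipate is really only bookkeeping: property (2) is asymmetric in the roles of its two intersecting sets, and one has to be careful to apply it in the direction where the set being ``covered'' is at the deeper level (so that its extension absorbs the shallower one's intersect). Property (1), with its uniform neighbor bound $N$, is not needed for the statement of the lemma itself—it is invoked only to deduce measure-theoretic estimates in applications of (\ref{disjointcover})—so only the ordered decomposition $\mathbb{B} = \bigsqcup_n \mathbb{B}_n$ is essential for the induction. Consequently I do not foresee any serious technical difficulty beyond a careful formulation of the inductive step.
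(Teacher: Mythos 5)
Your proof is correct. The level-by-level greedy selection (pick $\mathbb{B}'_n$ maximal disjoint among sets in $\mathbb{B}_n$ that miss everything already chosen) is exactly what guarantees that any $B\in\mathbb{B}_n$ meets a chosen $A$ at level $m\le n$, which is the range where property~(2), applied with ``$A$''$\,=B$ and ``$B$''$\,=A$ since $n\ge m\ge m-2$, yields $B\subseteq\tilde A$; the bookkeeping is exactly as you describe, and you correctly note that $A\in\mathbb N(A)$ closes the argument. One remark worth recording: your argument actually delivers the sharper containment $\bigcup_{B\in\mathbb B}B\subseteq\bigcup_{A\in\mathbb B'}\tilde A$ (i.e.\ with $\mathbb N(A)$ replaced by $\{A\}$), which formally implies~(\ref{disjointcover}); the weaker form with $\mathbb N(A)$ costs nothing in the applications, since the multiplicity bound $|\mathbb N(A)|\le N$ from property~(1) is invoked there anyway, but it is good to see that the extension step only ever needs the directly intersecting member of $\mathbb B'$.
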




\subsection{Hierarchically locally finite covers}

In order to use Lemma \ref{covering}, we define the  family    $\mathbb B$ of sets  $B(v)$ and their \textit{extension} $\tilde B(v)$ as follows
$$B(v):=[\Pi_o^F(v,  r)], \quad \tilde B(v):=[\Pi_o^F(v,  \hat r)]$$ for every $v\in Go$. Here $r, \tilde r$ are given by Convention \ref{ConvF}.

 By abuse of language, we define the \textit{diameter} of it: $\|B(v)\|:=e^{-d(o, v)}$, even though there may   exist no metric on $\pU$ inducing it.    
 A  subfamily of $\mathbb B'\subseteq \mathbb B$  is called an \textit{$\epsilon$-covering}, if $\|B\|\le \epsilon$ for every $B\in \mathbb B'$.  
\begin{lem}
For any $v\in Go$, we have
\begin{equation}\label{shadowdiskEQ}
\mu_1(B(v)) \asymp  \| B(v)\|^\omega.  
\end{equation}    
\end{lem}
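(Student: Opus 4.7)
The statement has a trivial direction and a substantive one. The trivial direction is the lower bound: since $\Pi_o^F(v,r)\subseteq[\Pi_o^F(v,r)]=B(v)$, the horofunction Shadow Lemma (Lemma \ref{HoroShadowLem}, which applies because $\mathcal C=\bX$ by Lemma \ref{NoPinchedLineLem}) gives
\[
\mu_1(B(v))\;\ge\;\mu_1(\Pi_o^F(v,r))\;\succ\;e^{-\omega d(o,v)}\;=\;\|B(v)\|^\omega.
\]
For the upper bound, the plan is to exploit the divergent-type hypothesis: by Theorem \ref{HTSThm}, $\mu_1$ is concentrated on the Myrberg set $\mG$, so $\mu_1(B(v))=\mu_1(B(v)\cap\mG)$. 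Since $\mG$ is saturated (Lemma \ref{MyrbergGood}), we have $B(v)\cap\mG=[\Pi_o^F(v,r)\cap\mG]$. The heart of the argument is therefore the inclusion
\[
[\Pi_o^F(v,r)\cap\mG]\;\subseteq\;\Pi_o^F(v,\hat r),
\]
for a uniform $\hat r$ depending only on $r$ and $C$; once this is established, a second application of the Shadow Lemma yields $\mu_1(\Pi_o^F(v,\hat r))\prec e^{-\omega d(o,v)}$ and we are done.

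To prove the displayed inclusion, fix $\xi\in\Pi_o^F(v,r)\cap\mG$ and $\eta\in[\xi]$. Write $v=go$ and let $X=g\ax(f)$ be the $(r,f)$-barrier axis witnessing $\xi\in\Pi_o^F(v,r)$; choose $x_m\in\Omega_o^F(v,r)$ with $x_m\to\xi$ and $z_m\in\U$ with $z_m\to\eta$. Since $\xi$ is Myrberg, Lemma \ref{ContrMyrberg} forbids $\xi\in[h^\pm]$ for any contracting element, so in particular $\xi\notin[\Lambda X]$; by saturatedness of $\mG$ the same holds for $\eta\in[\xi]\subseteq\mG$. Consequently Lemma \ref{LocusProj} applies and gives $d_X(\xi,\eta)\le 6C$, while Corollary \ref{BdryProjCor} gives $d_X(x_m,\xi)\le C$ and $d_X(z_m,\eta)\le C$ for $m\gg 0$. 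The triangle inequality for $d_X$ then yields $d_X(x_m,z_m)\le 10C$ for all large $m$, and trivially $d_X(\alpha^-,\beta^-)=0$ where $\alpha=[o,x_m]$, $\beta=[o,z_m]$. Since $\alpha$ contains the $(r,f)$-barrier $v=go$, Lemma \ref{BarrierFellowLem} transfers this to an $(\hat r,f)$-barrier on $\beta$, whence $z_m\in\Omega_o^F(v,\hat r)$ and, passing to the limit, $\eta\in\Pi_o^F(v,\hat r)$.

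Combining the two halves:
\[
\mu_1(B(v))\;=\;\mu_1\big([\Pi_o^F(v,r)]\cap\mG\big)\;\le\;\mu_1(\Pi_o^F(v,\hat r))\;\prec_{r}\;e^{-\omega d(o,v)}\;=\;\|B(v)\|^\omega,
\]
which together with the lower bound above gives $\mu_1(B(v))\asymp\|B(v)\|^\omega$.

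\textbf{Main obstacle.} The only nontrivial point is showing the whole $[\cdot]$-class of a Myrberg point in the small shadow $\Pi_o^F(v,r)$ is trapped inside the enlarged shadow $\Pi_o^F(v,\hat r)$. This relies crucially on two facts specific to Myrberg points: (i) they avoid boundary limit sets of contracting axes, which lets us invoke Lemma \ref{LocusProj} to bound projections of equivalent points; and (ii) the class $[\xi]$ sits inside $\mG$ by saturatedness, so Myrbergness transfers to $\eta$. Without these, the naive approach of pushing forward to $[\mG]$ and losing control of $B(v)\setminus\Pi_o^F(v,r)$ would fail.
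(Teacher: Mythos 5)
Your proof is correct, and the route is genuinely different from the paper's. Both proofs use the divergent-type hypothesis to concentrate $\mu_1$ on a distinguished set and share the same lower bound, but the upper bound arguments diverge. The paper concentrates $\mu_1$ on the conical set $\Lambda_r^F(G)$, invokes the $20C$-finite difference bound on loci of conical points from Lemma \ref{ClosedFinDiff}, and then reruns the Busemann-cocycle estimate from the upper-bound half of Lemma \ref{ShadowLem} with the extra additive constant $20C$ absorbed into the multiplicative constant. You instead concentrate on $\mG$, prove the geometric shadow inclusion $[\Pi_o^F(v,r)\cap\mG]\subseteq\Pi_o^F(v,\hat r)$ via the barrier-transfer Lemma \ref{BarrierFellowLem}, and apply the Shadow Lemma verbatim to the enlarged shadow. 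The inclusion you prove is essentially the mechanism that appears inside the proof of the ``Moreover'' clause of Lemma \ref{ClosedFinDiff}, adapted to keep the same base vertex $v$ rather than merely concluding that $\eta$ is $(\hat r,F)$-conical somewhere; so your argument is a geometric repackaging of that lemma, whereas the paper uses its first (measure/cocycle) conclusion. Working with $\mG$ rather than $\Lambda_r^F(G)$ also buys you immediate access to Lemma \ref{ContrMyrberg} to rule out $\xi\in[\Lambda X]$, which would require more care if you had stayed with conical points.

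Two cosmetic remarks. The bound $d_X(x_m,\xi)\le C$ and $d_X(z_m,\eta)\le C$ should be attributed to Lemma \ref{BdryProjLem} (the set $\pi_X(\xi)$ of diameter $2C$) rather than Corollary \ref{BdryProjCor}, which compares two sequences tending to a common limit; either way the projection constants change only by a factor of $2$ and the conclusion $d_X(x_m,z_m)\le 10C$ still holds. Also, $d_X(\alpha^-,\beta^-)$ is not literally $0$ since $\pi_X(o)$ is a set of diameter up to $C$; it is $\le C\le 10C$, which is all Lemma \ref{BarrierFellowLem} requires, so no harm is done.
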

\begin{proof}
The lower bound on $\mu_1(B(v))$ follows directly from the shadow lemma \ref{HoroShadowLem}. For the upper bound, $B(v)$ contains the locus of generic points in $\Pi_o^F(v,  r)$. Since $G$ is of divergent type, the measure $\mu_1$ is  supported on $\Lambda_r^F(Go)$, so by Lemma \ref{ClosedFinDiff}, any two horofunctions in the same locus of points in $\Pi_o^F(v,  r)$ have uniform difference. Thus, the proof of  Lemma \ref{ShadowLem} for the upper bound works to give an uniform coefficient before $e^{-d(o,v)}.$
\end{proof}

We define a Hausdorff-Carath\'eodory outer measure $\mathcal H^\omega$ on $[\mG]$ relative to $\mathbb B$ and the gauge function $r^\omega$. Namely, for any $\epsilon>0$ and $A\subset [\mG]$,
\begin{equation}\label{HausdorffMeasure}
\mathcal H_\epsilon^\omega(A)=\inf\Bigg\{\sum_{B\in\mathbb B'} \|B\|^\omega: A\subseteq \bigcup_{B\in \mathbb B'} B,\;  \|B\|\le \epsilon \Bigg\}    
\end{equation}
where the infimum is taken over $\epsilon$-cover $\mathbb B'$ over $A$. Such $\epsilon$-covers exist for any $\epsilon$, since   Myrberg points  and   conical points are  contained in infinitely many shadows.  
Then $$\mathcal H^\omega(A) :=\sup_{\epsilon>0} \mathcal H_\epsilon^\omega(A).$$

We now prove  that $\mathcal H^\omega$ is a \textit{metric outer measure}. Thus, Borel subsets are $\mathcal H^\omega$-measurable.  
\begin{lem} 

If $A, B$ are disjoint closed subsets in $[\mG]$, then $$\mathcal H^\omega(A\cup B)=\mathcal H^\omega(A )+\mathcal H^\omega( B)$$\end{lem}
\begin{proof}
By Lemma \ref{EmbedConical}, $[\mG]$ is topologically embedded into the Gromov boundary $\partial\QT$. Let $A, B$ be disjoint closed subsets in $\mG$. If $\partial\QT$ is equipped with visual metric, it is readily checked that the corresponding  $\epsilon$-covers for $A$ and $B$ are disjoint for small enough $\epsilon$. The details is left to the interested reader.  Consequently, the conclusion follows from the definition of Hausdorff measures.
\end{proof}



Fix a large  $L>0$, and subdivide $\mathbb B$ into a sequence of (possibly empty) sub-families $\mathbb B_n$ for $n\ge 0$:  $$\mathbb B_n:=\{B(v)\in \mathbb B:  nL\le d(o, v)< (n+1)L\}.$$

\begin{lem}\label{coveringlemma}
There exists a uniform constant $L>0$ such that  $\mathbb B$  defined as above is a hierarchically locally finite family over $[\mG]$.
\end{lem}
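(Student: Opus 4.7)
The plan is to verify the two defining axioms of a hierarchically locally finite family for $\mathbb{B}$ with partition $\{\mathbb{B}_n\}_{n \ge 0}$. The principal tool is Lemma \ref{IntersectShadow}, which controls how overlapping partial shadows are nested in terms of the distances of their anchors from $o$, supplemented by a choice of $\hat{r}$ in the definition $\tilde{B}(v) = [\Pi_o^F(v, \hat r)]$ that is large enough to absorb an error proportional to the annulus width $L$.

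First I will verify the bounded neighborhood condition. Take $B(v) \in \mathbb{B}_n$ and any $B(w) \in \mathbb{B}_{n-1} \cup \mathbb{B}_n \cup \mathbb{B}_{n+1}$ with $B(v) \cap B(w) \ne \emptyset$; then $|d(o,v) - d(o,w)| < 2L$, and Lemma \ref{IntersectShadow} (applied with whichever of $v, w$ is nearer to $o$ as the primary anchor) shows that the other anchor lies within distance $R$ of the geodesic from $o$ to it. Combined with the annular separation this gives $d(v, w) \le R + 2L$. Properness of $G \curvearrowright \U$ forces $|\{w \in Go : d(v, w) \le R + 2L\}| \le |N(o, R + 2L)| =: N$, a bound uniform in $v$; this provides the required estimate on $|\mathbb{N}(B(v))|$.

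Next I will establish the extension condition. Let $A = B(w) \in \mathbb{B}_n$ meet $B = B(v) \in \mathbb{B}_m$ with $n \ge m - 2$. If $d(o, v) \le d(o, w)$, then Lemma \ref{IntersectShadow} directly yields $\Pi_o^F(w, r) \subseteq \Pi_o^F(v, \hat{r})$, hence $A \subseteq \tilde{B}$ after passing to $[\cdot]$-loci. The delicate case is $d(o, v) > d(o, w)$, where Lemma \ref{IntersectShadow} (applied with $v, w$ reversed) nests the shadows in the wrong direction; nevertheless the level constraint $n \ge m - 2$ gives $d(o, v) - d(o, w) \le 3L$ and hence $d(v, w) \le R + 3L$. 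For any $\xi \in \Pi_o^F(w, r)$ witnessed by $z_j \to \xi$ with $[o, z_j]$ carrying an $(r, f)$-barrier at $w = g_w o$, the triangle inequality together with the left-invariant bound $d(g_v f o, g_w f o) \le d(v, w) + 2 d(o, f o)$ shows that $[o, z_j]$ simultaneously carries an $(r'', f)$-barrier at $v = g_v o$, with $r'' := r + R + 3L + 2 \|F\|_M$ and $\|F\|_M := \max_{f \in F} d(o, f o)$. Choosing $\hat{r} \ge r''$ from the outset then secures $\xi \in \Pi_o^F(v, \hat{r})$ and hence $A \subseteq \tilde{B}$.

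The main obstacle is precisely the case $d(o, v) > d(o, w)$ in the extension axiom: the usual shadow-nesting inequality of Lemma \ref{IntersectShadow} is available only with the deeper anchor nested inside the shallower one, which is the opposite of what condition $(2)$ requires. Resolving it compels a coordinated choice of constants: first $L$ is fixed large enough to satisfy Lemma \ref{bddoverlapLem} and the earlier annular arguments, then $\hat{r}$ is enlarged by the additive amount $R + 3L + 2 \|F\|_M$ so as to absorb the transport of an $F$-barrier from $w$ to the nearby point $v$. With these choices both axioms hold with common uniform constants depending only on $L$, $C$, and $F$.
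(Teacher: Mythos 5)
Your proof is correct and rests on the same key tool, Lemma \ref{IntersectShadow}: your verification of axiom (1) reruns the argument behind Lemma \ref{OverlapAnnulus}, which the paper cites directly, and your first branch of axiom (2) (the case $d(o,v)\le d(o,w)$) coincides with the paper's. The ``delicate case'' $d(o,v)>d(o,w)$ is, however, superfluous. The inequality ``$n\ge m-2$'' in the definition of a hierarchically locally finite family is a misprint for ``$n\ge m+2$'' (equivalently $m\le n-2$): the paper's own verification only treats $m<n$, and this is the only reading under which the Vitali-type Lemma \ref{covering} can hold, since the deeper set $A$ must be absorbed into $\tilde B$ for a strictly shallower $B$, while the near-equal levels $m\in\{n-1,n,n+1\}$ are already handled by axiom (1). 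With $A=B(w)\in\mathbb B_n$, $B=B(v)\in\mathbb B_m$ and $n\ge m+2$ one gets $d(o,v)<(m+1)L\le(n-1)L<nL\le d(o,w)$ automatically, so the barrier-transport argument and the enlargement of $\hat r$ to $r''$ are never triggered. Two minor quantitative slips: Lemma \ref{IntersectShadow} places the \emph{nearer} anchor within $R$ of the geodesic to the farther one ($d(v,[o,w])\le R$ when $d(o,v)\le d(o,w)$, not the reverse), and combined with the annulus width this gives $d(v,w)\le 2R+2L$ rather than $R+2L$; since only some finite bound is needed before invoking properness, the conclusion is unaffected.
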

\begin{proof}
We verify the two conditions in order. Let $B(v)=[\Pi_o^F(v,   r)]\in \mathbb B_n$. Applying  Lemma \ref{OverlapAnnulus} to the annulus $A(o, n, 2L)$, there are at most $N=N(L)$   
sets $$B(w)   \in \mathbb B_{n-1}\cup \mathbb B_n\cup \mathbb B_{n+1}$$ which intersect $B(v)$. 
 
Let $B(w) \in \mathbb B_m$ for $m<n$ intersecting  $B(v)$, so $d(v, w)\ge d(o, v)- d(o, w)\ge L$. 
 We have   $B(v)\subseteq \tilde B(w)=[\Pi_o^F(w,   \hat r)]$ by Lemma \ref{IntersectShadow}. Both conditions are thus verified and the lemma is proved. 
\end{proof}

\subsection{Uniqueness of conformal measures} 

Assuming the action is of divergent type, we are now ready to prove the main result in this Section Theorem \ref{UniqueConfThm} restated below, saying that the push-forward of PS measures becomes unique on the reduced Myrberg  set.

\begin{lem}\label{Unique}
Let $\{[\mu_x]\}_{x\in \U}$ be a $\e G$-dimensional $G$-quasi-equivariant quasi-conformal density    on the quotient  $[\mG]$. Then we have
$$
\mathcal H^{\omega}(A) \asymp [\mu_1](A)
$$
for any Borel subset $A \subseteq [\mG]$, where the implied constants depend on the constant $\lambda$ in Definition \ref{ConformalDensityDefn}. 

In particular, $[\mu_x]$ is unique in
the following sense: if $[\mu], [\mu']$ are two such quasi-conformal densities, then the
Radon-Nikodym derivative $d[\mu_o]/d[\mu_o']$ is bounded from above and below by a constant depending only on    $\lambda$.  
\end{lem}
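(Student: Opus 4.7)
The strategy is to sandwich $[\mu_1]$ between two multiples of the Hausdorff--Carath\'eodory measure $\mathcal{H}^{\omega}$ introduced in (\ref{HausdorffMeasure}), which is defined intrinsically from the family $\mathbb{B}$. The uniqueness statement is then a formal consequence: applied to two quasi-conformal densities $[\mu]$ and $[\mu']$, the resulting sandwich forces $d[\mu_1]/d[\mu_1']$ to be bounded above and below by a constant depending only on $\lambda$ and the universal constants appearing in the Shadow Lemma.

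\textbf{Upper bound} $[\mu_1](A) \prec \mathcal{H}^{\omega}(A)$. This direction is the easy one. For any $\epsilon$-cover $\mathbb{B}' \subseteq \mathbb{B}$ of $A$, countable subadditivity of $[\mu_1]$ together with the Shadow Lemma estimate (\ref{shadowdiskEQ}) gives
$$[\mu_1](A) \;\le\; \sum_{B \in \mathbb{B}'} [\mu_1](B) \;\asymp\; \sum_{B \in \mathbb{B}'} \|B\|^{\omega}.$$
Taking the infimum over all $\epsilon$-covers and then over $\epsilon > 0$ yields the desired bound.

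\textbf{Lower bound} $\mathcal{H}^{\omega}(A) \prec [\mu_1](A)$. Since $G$ is of divergent type, Theorem \ref{HTSThm} ensures that $[\mu_1]$ is supported on the Myrberg set $[\mG]$, so up to a $[\mu_1]$-null set every point of $A$ is conical (in fact Myrberg). Fix $\eta > 0$ and, using outer regularity of the Radon measure $[\mu_1]$, choose an open set $U \supseteq A$ with $[\mu_1](U) \le [\mu_1](A) + \eta$. For each $[\xi] \in A$, Proposition \ref{MyrbergConicalThm} provides a sequence $v_n \in Go$ with $d(o, v_n) \to \infty$ and $[\xi] \in B(v_n)$. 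The key sub-claim is that the $B(v_n)$ shrink into arbitrarily small neighborhoods of $[\xi]$ in the quotient topology of $[\mG]$, so that for $n$ large $B(v_n) \subseteq U$ and $\|B(v_n)\| \le \epsilon$. Collecting one such shadow for each $[\xi] \in A$ yields an $\epsilon$-cover $\mathbb{C} \subseteq \mathbb{B}$ of $A$ contained in $U$. The hierarchical local finiteness (Lemma \ref{coveringlemma}) together with the abstract Vitali lemma \ref{covering} produces a pairwise disjoint subfamily $\mathbb{C}' \subseteq \mathbb{C}$ whose bounded-multiplicity extensions still cover $\bigcup \mathbb{C}$. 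Since $\|\tilde{B}(v)\| = \|B(v)\|$ by definition and $|\mathbb{N}(B)| \le N$, a second application of (\ref{shadowdiskEQ}) yields
$$\mathcal{H}^{\omega}_{\epsilon}(A) \;\le\; N \sum_{B \in \mathbb{C}'} \|B\|^{\omega} \;\asymp\; \sum_{B \in \mathbb{C}'} [\mu_1](B) \;=\; [\mu_1]\Big(\bigsqcup_{B \in \mathbb{C}'} B\Big) \;\le\; [\mu_1](U) \;\le\; [\mu_1](A) + \eta.$$
Sending $\epsilon \to 0$ and $\eta \to 0$ closes the argument.

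\textbf{Main obstacle.} The technical heart of the proof is the shrinking sub-claim above: if $[\xi] \in B(v_n)$ with $d(o, v_n) \to \infty$, then every open neighborhood of $[\xi]$ in $[\mG]$ eventually contains $B(v_n)$. This cannot be extracted from a metric on $[\mG]$ (none is given a priori), and must instead be read off from the convergence axiom \ref{AssumpB} applied to the exiting axes $v_n \ax(f_n)$ through which each partial cone passes, together with the nesting property of Lemma \ref{IntShadow}; the finite-difference control on Myrberg points provided by Lemma \ref{MyrbergGood} is what guarantees compatibility with the quotient topology. Once this is settled, the rest is a routine Vitali--Carath\'eodory bookkeeping, and the concluding Radon--Nikodym estimate follows by applying the comparison $[\mu_1] \asymp \mathcal{H}^{\omega}$ to both $[\mu]$ and $[\mu']$ and dividing.
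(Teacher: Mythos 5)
Your proposal is correct and follows exactly the same two-sided Hausdorff--Carath\'eodory comparison argument the paper gives: the Shadow Lemma estimate (\ref{shadowdiskEQ}) for the upper bound $[\mu_1]\prec \mathcal H^\omega$, and the Vitali covering Lemma \ref{covering} (enabled by Lemma \ref{coveringlemma}) for the lower bound, with uniqueness falling out formally. The difference is purely one of completeness: the paper's two-line proof of the lower bound starts from an ``arbitrary'' $\epsilon$-cover $\mathbb B_1$ and jumps to the estimate $\prec[\mu_1](U)$ without explaining where $U$ comes from or why $\bigcup\mathbb B_1 \subseteq U$ can be arranged (and contains a typo $K$ for $A$), whereas you correctly supply the missing ingredients -- outer regularity to produce $U$, and the shrinking of shadows $B(v_n)$ into $U$. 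Your identification of the shrinking sub-claim as the genuine technical crux is accurate, and your sketch of how to prove it is sound: since $\mG$ is saturated (Lemma \ref{MyrbergGood}), an open set in $[\mG]$ pulls back to a saturated open set $\tilde U\subseteq\mG$, and if some $\eta_n\in B(v_n)\cap\mG\setminus\tilde U$ existed one could extract $z_n\in\Omega_o^F(v_n,r)$ converging to the same limit $\eta$, which by \ref{AssumpB} lies in $[\xi]\subseteq\tilde U$, contradicting openness of $\tilde U$. The one small wrinkle you inherit from the paper is that after Vitali the cover is by the extensions $\tilde B$, which are not literally members of $\mathbb B$; this is harmless (either enlarge $\mathbb B$ to include the $\hat r$-shadows, or note that the parallel Carath\'eodory measure built from $\tilde{\mathbb B}$ is comparable by the Shadow Lemma), but is worth a remark if one were writing this up carefully.
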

 
\begin{proof}
Take an $\epsilon$-covering $\mathbb B' \subseteq \mathbb B$  of $A$, so that  $[\mu_1(A)] \le \sum_{B \in
\mathbb B'} [\mu_1](B)$. Letting $\epsilon \to 0$, we obtain from (\ref{shadowdiskEQ}) that $[\mu_1](A) \prec \mathcal H^{\sigma}(A)$.

To establish the other inequality,  for any $0 <
\epsilon < \epsilon_0$, let $\mathbb B_1 \subseteq \mathbb B$ be an
$\epsilon$-covering of $A$. By Lemma \ref{covering},   there exist a disjoint sub-family
$\mathbb B_2$ of $\mathbb B_1$ and  an integer $N>1$  such
that  
$$
\mathcal H^\omega(A)  \prec
\sum_{B \in \mathbb B_2} N \cdot \|{B}\|^\omega \prec  [\mu_1](U),
$$
  Letting
$\epsilon\to 0$ yields $\mathcal H^\omega(A) \prec [\mu_1](A)$.
\end{proof}

As in the usual Hausdorff measure, we define the \textit{Hausdorff dimension} $\HD(A)$ of a subset $A\subseteq [\mG]$: 
\begin{equation}\label{HDimDef}
\HD(A) := \sup\{\omega\ge 0: \mathcal H^\omega(A)=\infty \}=\inf\{\omega\ge 0: \mathcal H^\omega(A)=0 \}    
\end{equation}
where $\sup\emptyset :=\infty$ and $\inf\emptyset :=0$ by convention. The following corollary is immediate.
\begin{cor}\label{HDimCor}
The Hausdorff dimension of $[\mG]$ equals $\e G$.
\end{cor}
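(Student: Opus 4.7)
The plan is to deduce the corollary directly from Lemma \ref{Unique} together with the usual rescaling behaviour of Hausdorff--Carath\'eodory measures across gauge exponents. Since $G$ is of divergent type, Lemma \ref{confdensity} provides a PS--density $\{\mu_x\}$ of dimension $\e G$ on $\bX$, and by Theorem \ref{HTSThm} the measure $\mu_1$ is fully supported on the Myrberg set $\mG$. Pushing forward under the quotient map yields a probability measure $[\mu_1]$ on $[\mG]$. Lemma \ref{Unique} applied at exponent $\omega=\e G$ then gives
$$\mathcal H^{\e G}([\mG]) \;\asymp\; [\mu_1]([\mG]) \;=\; 1,$$
so that $\mathcal H^{\e G}([\mG]) \in (0,\infty)$.

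Next I would invoke the standard two-sided monotonicity of the gauge exponent. For any $\omega' > \e G$ and any $\epsilon$-cover $\mathbb B' \subseteq \mathbb B$ of $[\mG]$, every $B \in \mathbb B'$ satisfies $\|B\| \le \epsilon$ and hence
$$\sum_{B \in \mathbb B'} \|B\|^{\omega'} \;\le\; \epsilon^{\omega'-\e G}\sum_{B \in \mathbb B'} \|B\|^{\e G}.$$
Taking the infimum over all $\epsilon$-covers and letting $\epsilon \to 0$ yields $\mathcal H^{\omega'}([\mG]) = 0$ from the finiteness of $\mathcal H^{\e G}([\mG])$. The same inequality rearranged gives $\mathcal H^{\omega'}([\mG]) = \infty$ whenever $\omega' < \e G$, using the strict positivity of $\mathcal H^{\e G}([\mG])$. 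In view of the two equivalent formulas for Hausdorff dimension in (\ref{HDimDef}), this forces $\HD([\mG]) = \e G$.

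The only book-keeping point is the existence of $\epsilon$-covers of $[\mG]$ drawn from $\mathbb B$ for arbitrarily small $\epsilon > 0$, since otherwise the infimum in (\ref{HausdorffMeasure}) would be vacuously $+\infty$ and the rescaling argument would degenerate. This is automatic because every Myrberg point is $(r,F)$-conical by Proposition \ref{MyrbergConicalThm}, so each $[\xi] \in [\mG]$ lies in infinitely many shadows $B(v_n)=[\Pi_o^F(v_n,r)]$ with $d(o,v_n) \to \infty$, equivalently $\|B(v_n)\| = e^{-d(o,v_n)} \to 0$; restricting to the layers $\mathbb B_n$ with $e^{-nL} \le \epsilon$ then produces genuine $\epsilon$-covers. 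Beyond this mild verification no further geometric input is needed, so I do not expect a substantive obstacle: the corollary should drop out of Lemma \ref{Unique} in essentially one line once the standard Hausdorff-exponent comparison is written down.
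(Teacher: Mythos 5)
Your argument is correct and is exactly the elaboration the paper has in mind when it calls the corollary ``immediate'' after Lemma \ref{Unique}: the two-sided estimate $\mathcal H^{\e G}([\mG])\asymp[\mu_1]([\mG])=1$ pins $\mathcal H^{\e G}([\mG])$ to $(0,\infty)$, and the standard $\epsilon^{\omega'-\e G}$ rescaling of the Carath\'eodory construction then forces $\mathcal H^{\omega'}([\mG])=0$ for $\omega'>\e G$ and $=\infty$ for $\omega'<\e G$, giving $\HD([\mG])=\e G$ by (\ref{HDimDef}). Your book-keeping remark about the existence of arbitrarily fine $\mathbb B$-covers (via Proposition \ref{MyrbergConicalThm}) is a correct and worthwhile check, even if the paper leaves it implicit.
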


We also obtain the ergodicity from the uniqueness.
\begin{lem}\label{Ergodic}
Under the assumption of Lemma \ref{Unique}, let $\{[\mu_x]\}_{x\in \U}$ be a $\e G$-dimensional $G$-quasi-equivariant quasi-conformal density  
  on $[\mG]$. Then $\{[\mu_x]\}_{x\in \U}$ is ergodic with respect to the action of $G$ on $[\mG]$.
\end{lem}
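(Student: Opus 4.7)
The plan is to combine the uniqueness statement of Lemma \ref{Unique} with a restriction trick. Let $A \subseteq [\mG]$ be a $G$-invariant Borel subset with $[\mu_1](A) > 0$; the goal is to show $[\mu_1]([\mG]\setminus A) = 0$. I would first verify that the restricted family $\nu_x := [\mu_x]|_A$ is again a $\e G$-dimensional $G$-quasi-equivariant quasi-conformal density on $[\mG]$. The quasi-equivariance reduces to the identity $g_\star \nu_x = (g_\star [\mu_x])|_A$, which holds because $g^{-1}A = A$; and the Radon-Nikodym derivative $d\nu_{gx}/d\nu_{hx}$ at a point of $A$ just equals $d[\mu_{gx}]/d[\mu_{hx}]$, hence inherits the same conformal bound.

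The next step is to verify that the lower bound in the Shadow Lemma \ref{ShadowLem} still holds for $\nu_1$, namely
$$\nu_1\big(\Pi_o^F(go, r)\big) \;\succ\; [\mu_1](A)\cdot e^{-\e G \cdot d(o, go)}.$$
This is where the $G$-invariance of $A$ is used crucially. Replaying the proof of Lemma \ref{ShadowLem} with $U := A$ as the distinguished positive-measure set, the extension lemma produces, for each $\xi \in A$, an element $f \in F$ so that $gf\xi \in \Pi_o^F(go, r) \cap \mathcal C$ (cf.\ (\ref{TransferEQ})); invariance of $A$ then upgrades this to $gf\xi \in \Pi_o^F(go, r) \cap A$. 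The three-piece decomposition $A = A_1 \sqcup A_2 \sqcup A_3$ together with the quasi-equivariance and quasi-conformality yield the displayed lower bound verbatim, with a positive constant depending only on $[\mu_1](A)$, $\lambda$, and $\|Fo\|$. The upper bound $\nu_1(\Pi_o^F(go, r)) \le [\mu_1](\Pi_o^F(go, r)) \prec e^{-\e G d(o, go)}$ is automatic.

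With two-sided shadow bounds in hand, I would rerun the Hausdorff-Carath\'eodory argument of Lemma \ref{Unique} for $\nu_1$: the hierarchically locally finite family of Lemma \ref{coveringlemma} combined with the Vitali covering Lemma \ref{covering} yields $\nu_1(E) \asymp \mathcal H^{\e G}(E)$ for every Borel $E \subseteq [\mG]$, with multiplicative constants depending on $[\mu_1](A)$ and $\lambda$. Applied to $E := [\mG]\setminus A$, which satisfies $\nu_1(E) = 0$ by construction, this forces $\mathcal H^{\e G}([\mG]\setminus A) = 0$, and a second application of Lemma \ref{Unique} to $[\mu_1]$ itself gives $[\mu_1]([\mG]\setminus A) = 0$, which is the desired ergodicity. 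The main obstacle I anticipate is purely bookkeeping: one must confirm that $A$ being only Borel (rather than open or closed) poses no obstruction in the extension-lemma step of the Shadow Lemma, and that the three-piece partition $A = A_1 \sqcup A_2 \sqcup A_3$ used there can be chosen measurably as the $\xi$ varies in $A$. Both are routine.
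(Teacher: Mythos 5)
Your proof is correct and follows essentially the same route as the paper's: restrict $[\mu_x]$ to the $G$-invariant set $A$, check the restriction is again an $\e G$-dimensional quasi-conformal density, and invoke the Hausdorff--Carath\'eodory characterization of Lemma \ref{Unique} to transfer $\nu_1([\mG]\setminus A)=0$ into $\mathcal H^{\e G}([\mG]\setminus A)=0$ and hence $[\mu_1]([\mG]\setminus A)=0$. You spell out the verification of the shadow lower bound for the restricted density (choosing $U:=A$ and using $G$-invariance to keep the images inside $A$), which the paper leaves implicit when it simply asserts the restriction is a quasi-conformal density and cites Lemma \ref{Unique}; this is a helpful elaboration rather than a different argument.
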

\begin{proof}
Let $A$ be a $G$-invariant Borel subset in $[\mG]$ such that $[\mu_x](A)
>0$. Restricting $[\mu_x]$ on $A$ gives rise to a $\e G$-dimensional conformal density $\nu_x:=[\mu_x]|_{A}$  on $[\mG]$. By Lemma \ref{Unique}, for any subset
$X \subseteq [\mG]$, we have $\nu_x(X) \prec \mathcal H^\omega(A \cap X)$.  Thus, $[\mu_x]([\mG] \setminus A)=0$.
\end{proof}

\subsection{Myrberg  set as Poisson boundary }\label{ProofPoissonBdry}
In this subsection, we explain that the reduced Myrberg set could serve as the Poisson boundary for random walks on groups with contracting elements. This is the content of Theorem \ref{PoissonBdry}.

Let $G\act \U$ be a proper action with contracting elements as in (\ref{GActUAssump}) with a convergence boundary $\pU$. Let $\mu$ be a probability measure on $G$ so that its support generates $G$ as a semigroup. This defines a $\mu$-random walk on the countable state set $G$ as follows. 

Let $\{s_n \in G:n\ne 1\}$ be the sequence of  identical independent distributed (i.i.d.) random variables with law $\mu$ defined on $(G^{\mathbb N}, \mu^{\mathbb N})$. Formally, we define $s_n$ to the $n$-th projection of $G^{\mathbb N}$ to $G$.  The $\mu$-random walk $\omega_n = \prod_{i=1}^n s_i$ (starting at identity) is then product of i.i.d. random variables, recording the random position of the walker with i.i.d. increments $s_i$. The law of $\omega_n$ is given by the $n$-th convolution $\mu^{\star n}$. 

We look at the trajectory of $\omega_n$ on the space $\U$, the orbital points $\{\omega_n\cdot o\}$,  and  assume that the  $\mu$-random walk has   finite   logarithmic moment with respect to the metric on $\U$:
$$\sum_{g\in G}\mu(g)\cdot |\log d(o, go)|<\infty.$$

Let $F$ be a set of three independent non-pinched contracting elements in $G$. Let $\f$ as in (\ref{SystemFDefIntro}) be the associated $G$-translated axes of elements in $F$. By \cite[Theorem 5.10]{BBFS}, the action of $G$ on the projection complex $\PC$ is acylindrical  (see \cite[Thm 6.3]{GYANG} for a detailed proof).  

By the construction of projection complex, the orbital map $$go\in \U\longmapsto g\bar o\in \PC$$ is clearly a Lipschitz map. In particular, if a $\mu$-random walk on $\U$ has   finite   logarithmic moment, then   the induced random walk on $\PC$ has the finite  logarithmic moment  as well.  \cite[Theorem 1.5]{MT} then says that almost every trajectory on $\PC$ tends to a point in the Gromov boundary of $\PC$. 

If   $H(\mu)<\infty$, the Gromov boundary of $\PC$    endowed with hitting measure $\mu$ is the Poisson boundary for the $\mu$-random walk on $G$. Via Lemma \ref{EmbedGromov}, we obtain that almost every trajectory on $\U$ tends to the locus of a conical point in $\hU$.

We vary   $F$ over all such sets as above  in $G$. 
By   Corollary \ref{MyrbergConical},    the countable intersection of the embeddings of   Gromov boundaries $\partial\PC$ gives the Myrberg  set $\mG$.   Hence, we conclude  that almost every trajectory on the original space $\U$ converges to the locus of a Myrberg point.

By Lemma \ref{VisualMetriconMyrberg}, $[\mG]$ is a   second countable metrizable space. As the hitting measure $\nu$ is supported on $\mG$,  the     partition $[\cdot]$ thus forms  a \textit{measurable partition} in sense of Rokhlin: there are a countably many of measurable sets $B_n$ in $\pU$, which are union of $[\cdot]$-classes,   separating almost every two $[\cdot]$-classes (\cite[Def. 15.1]{C16Book}). 

By Rokhlin's theory, the measurable quotient  space $([\mG], [\nu])$  by the above discussion is isomorphic to the Poisson boundary with hitting measure. The proof of Theorem \ref{PoissonBdry} is therefore complete.

\begin{rem}\label{NoMomentCondtion}
Using the recent work \cite{CFFT} \footnote{appearing after the first version of this paper}, the finite  logarithmic moment condition could be removed, still under the finite entropy. Indeed, there the authors proved that, without moment condition, the Gromov boundary is identified with Poisson boundary for the random walk for the acylindrical action on a hyperbolic space. The above argument works without changes modulo this ingredient.     
\end{rem}

\section{Cogrowth of normal subgroups in  groups of divergent type}\label{SecCogrowth}
From this section, we begin to  present the applications of results established in previous sections. The first application is Theorem \ref{CogrowthThm} from Introduction, which is a general result concerning the co-growth tightness   of normal subgroups.
\begin{thm}\label{SecCogrowthThm}
Assume that $G\act \U$ is a   non-elementary group of divergent type with a contracting element. Then for any non-elementary normal subgroup $H$ of $G$, we have $$\e H>\frac{\e G}{2}.$$
Moreover, if $H$   is of divergent type, then $\e H=\e G$.
\end{thm}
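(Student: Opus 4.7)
The plan is to combine the Patterson-Sullivan theory developed in Sections~\ref{SecDensity}--\ref{SecUniqueness}, the Shadow Principle, and the normality relation $gHg^{-1}=H$ to force the strict inequality, splitting the argument according to whether $H$ is divergent or convergent at its own critical exponent $\e H$.

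I will start by fixing three mutually independent contracting elements inside $H$ (available by Lemma~\ref{InfiniteNormalLem}); call this set $F\subseteq H$ and form the contracting system $\f=\{g\ax(f):g\in G,\,f\in F\}$. Divergence of $G$ together with Theorem~\ref{HTSThm} furnishes the $\e G$-dimensional $G$-equivariant conformal density $\{\mu_x\}$ of Lemma~\ref{confdensity}, fully supported on the Myrberg set $\mG\subseteq\mathcal C$ with $\mu_o(\mathcal C)>0$. In parallel, a Patterson construction (with the slow-growth modification in case $H$ is convergent at $\e H$) will produce an $\e H$-dimensional $H$-equivariant quasi-conformal density $\{\nu_x\}$ on $\pU$, supported on the common limit set $[\Lambda Ho]=[\Lambda Go]$ by Lemma~\ref{NormalLimitSetLem}.

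The core of the divergent case is to upgrade $\{\nu_x\}$ to a $G$-quasi-equivariant density. Since $H\lhd G$, each push-forward $g_{\ast}\nu$ for $g\in G$ is again an $\e H$-dimensional $H$-equivariant quasi-conformal density on the $G$-invariant Myrberg set of $H$. When $H$ is divergent at $\e H$, the uniqueness in Theorem~\ref{UniqueConfThm} applied to $H$ will force $g_{\ast}\nu\asymp\nu$ with implied constants independent of $g\in G$, so $\{\nu_x\}$ becomes $G$-quasi-equivariant. The Shadow Principle~\ref{ShadowPrinciple} with $\Gamma=G\subseteq N(H)$ will then deliver
\begin{equation*}
\nu_o(\Pi_o^F(go,r)\cap\mathcal C)\;\asymp\;e^{-\e H\, d(o,go)}\qquad\text{for every }g\in G,
\end{equation*}
and summing over $g\in A(o,n,\Delta)$ in conjunction with the finite-overlap Lemma~\ref{OverlapAnnulus} will yield $\card{A(o,n,\Delta)}\prec e^{\e H\, n}$. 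Divergence of $G$ at $\e G$ then forces $\e H\geq \e G$, which is far stronger than $\e H>\e G/2$.

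The harder convergent sub-case will be treated by contradiction: assuming $\e H\leq \e G/2$, I will use the double-coset decomposition $G=\bigsqcup_i Hg_i$ together with repeated application of the extension Lemma~\ref{extend3}, which produces for every pair $(h,g_i)\in H\times G$ an $(L,B)$-admissible concatenation $hfg_i\in Hg_i$ of length $d(o,ho)+d(o,g_io)+O(1)$ with $f\in F\subseteq H$ supplied by the lemma. Summing gives a lower bound $\p_G(s,o,o)\succ \p_H(s,o,o)\cdot\sum_i e^{-s d(o,g_io)}$, and combining this with divergence of $\p_G$ at $\e G$ and the Shadow Lemma for $\{\mu_x\}$ applied to $H$-orbital points to control $\p_H$ at $s=\e G/2$ will force $\p_H(\e G/2,o,o)=\infty$, contradicting $\e H\leq\e G/2$. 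The main obstacle I anticipate is precisely this convergent sub-case: without the $G$-quasi-equivariance of $\{\nu_x\}$, the admissible-path combinatorics of Section~\ref{SecPrelim} must carry the full weight of the comparison, and arranging the extension lemma so that the estimate $d(o,hfg_io)\asymp d(o,ho)+d(o,g_io)$ is sharp enough to detect the borderline exponent $\e G/2$ is the key technical difficulty.
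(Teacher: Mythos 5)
Your dichotomy according to whether $H$ is divergent or convergent at $\e H$ is the same organizing split the paper uses, and your treatment of the divergent case is essentially the paper's argument. There the key point is the uniform mass bound $\|\nu_{go}\|\prec 1$ for $g\in G$: the paper extracts it from the Poincar\'e-series identities $\p_H(s,go,go)=\p_H(s,o,o)$ (using normality) and $\p_H(s,o,go)=\p_H(s,go,o)$, which give $\|\mu_{go}^{go}\|=\|\mu_{go}^{o}\|^{-1}$, and then uniqueness on $[\mG]$ forces $\|\mu_{go}^{o}\|^2\prec 1$. Your phrasing ``$g_{*}\nu\asymp\nu$'' is imprecise --- push-forward always preserves total mass --- but the intended argument (compare $x\mapsto g_{*}\nu_{g^{-1}x}$ with $x\mapsto\nu_x$ as two $\e H$-dimensional $H$-equivariant conformal densities, apply uniqueness uniformly in $g$, conclude $\|\nu_{g^{-1}o}\|\asymp 1$) is correct and is just a rephrasing of the paper's mass-ratio trick. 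From there the Shadow Principle with $\Gamma=G\le N(H)$ and $F\subseteq H$ gives $\e H\ge\e G$, as you say.

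The convergent case, however, has a real gap as written. You derive from the double-coset decomposition $G=\bigsqcup_i Hg_i$ and the extension lemma a lower bound
\[
\p_G(s,o,o)\;\succ\;\p_H(s,o,o)\cdot\sum_i e^{-s\,d(o,g_io)}\,,
\]
but a lower bound on $\p_G$ is vacuous here: $\p_G(\e G,o,o)=\infty$ is already known, and the inequality does not pin down either factor. To reach a contradiction with $\e H\le\e G/2$ you would need an inequality in the other direction, and there is no obvious upper bound of $\p_G$ by a product over cosets because $d(o,hg_io)$ can be much smaller than $d(o,ho)+d(o,g_io)$. The paper's convergent case is driven by a different and cleaner idea, which your sketch does not contain: for each $go$ in a maximal $R$-separated net $B_n$ of the annulus $A(o,n,\Delta)$, the extension lemma applied to the \emph{pair} $(g,g^{-1})$ produces $f\in F\subseteq H$ with $gfg^{-1}\in H$ and $d(o,gfg^{-1}o)\approx 2\,d(o,go)$, and the fellow-travel property makes $g\mapsto gfg^{-1}$ injective on $B_n$. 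This puts $|B_n|$ elements of $H$ in $A(o,2n,R)$, giving $\e H\ge\e G/2$ directly; and if equality held, then $\p_G(\e G,o,o)\asymp\sum_n|B_n|e^{-n\e G}$ would be dominated by a tail of the convergent series $\p_H(\e H,o,o)$, contradicting divergence of $G$. The conjugation-doubling trick, not a Poincar\'e-series comparison over cosets, is what makes the borderline exponent $\e G/2$ tractable.
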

The strict inequality ``$\e H>\frac{\e G}{2}$" is sometimes referred to as cogrowth tightness in literature.  

The remainder of this section is devoted to the proof, which are subdivided into the following two cases.  
\subsection{Normal   subgroups are of divergent type} Assume that $H\act \U$ is of divergent type.  We are going to prove that $\e H=\e G$. The proof   presented here is inspired by the one in \cite{MYJ20}.

Let $ \{\mu_x\}_{x\in\U} $ be a PS measure on the horofunction boundary $\hU$, which is an accumulation point of $\{\mu_{x,s}^o\}$ in (\ref{PattersonEQ}) supported on $Ho$ for a sequence $s\searrow \e H$. This is a $\e H$-dimensional $H$-equivariant conformal density by Lemma \ref{confdensity}. That is,  $\lambda=1$ and $\epsilon=0$   in Definition \ref{ConformalDensityDefn} and \ref{AssumpE}. 

Keep in mind that all the Patterson construction below uses the action $H\act \U$ instead of $G\act \U$, so gives (quasi)conformal density for $H$ (rather than for $G$).  

The goal of proof is providing       a constant $M>0$ such that    
\begin{equation}\label{BoundedMassEQ}
\forall x\in Go: \quad \|\mu_x\|\le M    
\end{equation}
Once this is proved, the shadow principal     \ref{ShadowPrinciple} implies that 
$$
\mu_1([\Pi_o^F(go, r)]) \asymp e^{-\e H d(o, go)}$$
for any $go\in Go$. The same argument as in Proposition \ref{ballgrowth} proves $\e H\ge \e G$. Thus the remainder of the proof  is to prove (\ref{BoundedMassEQ}). For this purpose, for given $go$, we consider another PS-measure $ \{\mu_x^{go}\}_{x\in\U} $ on $\hU$   as an accumulation point of $\{\mu_{x,s}^{go}\}$ in (\ref{PattersonEQ}) supported on $Hgo$ for a sequence $s\searrow \e H$.
 
As $H\act \U$ is of divergent type, the Patterson's construction  gives 
$$
\forall x\in \U:\quad \|\mu_x^{go}\| = \lim_{s\to \omega+} \frac{\p_H(s, x, go)}{\p_H(s, o, go)},\quad \|\mu_x^{o}\| = \lim_{s\to \omega+} \frac{\p_H(s, x, o)}{\p_H(s, o, o)},
$$
From the definition of Poincar\'e series (\ref{PoincareEQ}), we have 
$$
\forall s>0:\quad \frac{\p_H(s, go, go)}{\p_H(s, o, go)} =  \Bigg[ \frac{\p_H(s, go, o)}{\p_H(s, o, o)}\Bigg]^{-1}
$$
For given $go\in Go$, let us take the same sequence of $s$ (depending on $go$) so that   $\{\mu_{x, s}^{o}\}_{x\in\U}$  and $\{\mu_{x, s}^{go}\}_{x\in\U}$  converge to $ \{\mu_x^o\}_{x\in\U} $ and $ \{\mu_x^{go}\}_{x\in\U} $  respectively, so 
\begin{equation}\label{MassEqualityEQ}\|\mu_{go}^{go}\|= \|\mu_{go}^o\|^{-1}
\end{equation}

Push forward the limiting measures $ \{\mu_x^o\}_{x\in\U} $ and $ \{\mu_x^{go}\}_{x\in\U} $  to  the  quasi-conformal densities on the quotient $[\partial_m H]$, which we keep by the same notation.  The constant $\lambda$ in Definition \ref{ConformalDensityDefn} is universal for any $ \{\mu_x^{go}\}_{x\in\U} $ with $go\in Go$, as the difference of two horofunctions in the same locus of a Myrberg point is universal by Lemma \ref{UnifDiffLem}. Thus, Lemma \ref{Unique}  gives a constant $M=M(\lambda)$ independent of $go$ such that 
for any $go\in Go$: $$M^{-1} \|\mu_{go}^{go}\| \le \|\mu_{go}^o\|\le M \|\mu_{go}^{go}\|$$
so   the relation (\ref{MassEqualityEQ}) implies $\|\mu_{go}^o\|^2\le M$. 

It still remains to note that $\{\mu_x\}$ and $\{\mu_x^o\}$ may be different limit points of $\{\mu_{x, s}^{o}\}_{x\in\U}$, where $\{\mu_x\}$ is the PS measure fixed at the beginning of the proof. However, applying Lemma \ref{Unique} again gives  $\|\mu_{x}\|\asymp_\lambda \|\mu_{x}^o\|$ for any $x\in \U$ and thus for $x=go$.  As desired in (\ref{BoundedMassEQ}), we obtain a upper bound on $\|\mu_{go}\|$ depending only on $\lambda, M$. The proof of the Case 1 is completed.

\subsection{Normal   subgroups are of convergent type} Assume that $H\act \U$ is  of convergent type. It suffices to prove $$\omega(H)> \frac{\omega(G)}{2}.$$

Indeed, if denote $A_n=A(o,n,\Delta)$ defined in (\ref{AnnulusEQ}), we have 
$$
\p_G(\omega, o,o) \asymp_\Delta  \sum_{n\ge 0} |A_{n}|e^{-n\omega }.
$$
A simple  exercise shows that for any $\Gamma\subseteq G$, we have
\begin{equation}\label{AnnulusGrowthEQ}  
\limsup_{n\to \infty}\frac{\log \sharp \left(A(o, n, \Delta)\cap \Gamma\right)}{n} = \e \Gamma
\end{equation}

Let $r, F>0$ given by  Lemma \ref{extend3} and    $R=\|Fo\|+4r+4\Delta$.
Let  $B_n$ be a maximal $R$-separated subset of $A_n$:
\begin{enumerate}
\item
For any $v\ne w\in B_n$ we have $d(v,w)>R$
\item
For any $w\in A_n$ there exists $w\in B_n$ such that $d(v,w)\le R.$
\end{enumerate}

The idea of the proof is to produce at least $|B_n|$ elements in $H\cap A(o, {2n}, R)$:
\begin{equation}\label{HAnnulusEQ}  
  |B_n|\le |H\cap A(o, {2n}, R)|
\end{equation}  

Indeed, this follows from \cite[Lemma 2.19]{YANG10}. As it    is short,   let us give the proof at the convenience of reader.   

Apply Lemma \ref{extend3} to the pair $(g, g^{-1})$ for each $g o\in B_n$. There exists $f\in F$ such that the admissible path labeled by $gfg^{-1}$ $r$-fellow travels a geodesic $\gamma=[o, gfg^{-1}o]$  so    $d(go, \gamma)\le r$ and
$$
|d(o, gfg^{-1}o)-2d(o, go)|\le \|Fo\|+4r. 
$$
If $gfg^{-1} o=g'f'g'^{-1}o$ for two $go\ne g'o\in B_n$,  then $d(go, \gamma), d(g'o, \gamma)\le r$. As $go, g'o\in A_n$ lies in the same annulus of width $\Delta$, we see that $d(go, g'o)\le 2r+2\Delta$. This contradicts to the item (1). Hence,  (\ref{HAnnulusEQ}) is proved and by (\ref{AnnulusGrowthEQ}), $$\e H\ge  \frac{\e G}{2}.$$
 By the item (2), $|A_n|\asymp_R |B_n|$. If    $\e H= \frac{\e G}{2}$, then  
$$
\p_G(\e G, o,o) \asymp_R  \sum_{n\ge 1} |B_{n}|e^{-n\e G }\le \sum\limits_{n\ge 1} |A(o, 2n,R)\cap H|e^{-\e H n} <\infty
$$ 
where the last inequality follows from the convergence of $\p_H(\e H, o,o)$ at $\e H$.
This contradicts  the divergent action of $G\act \U$. Hence, the proof of the second case is completed.

\section{Relatively hyperbolic groups: Case Study (I)}\label{SecRHG}
This section is to illustrate some aspects of the convergence boundary through  Floyd and Bowditch boundaries of relatively hyperbolic groups. The latter two boundaries were serving as a main motivation to the introduction of \ref{AssumpA}, \ref{AssumpB} and \ref{AssumpC} of a convergence boundary. 

Most materials presented here are not really new to the experts in the field, though   the discussion concerning  the   Myrberg sets in this setup seems not be found in literature.  In this sense, a new result that we shall establish, Theorem \ref{HomeoMybergRHG}, is  that the Myrberg set in Floyd boundary is homeomorphic to the one in Bowditch boundary. Consequently, the Myrberg set is  invariant among the peripheral structures endowed on a fixed relatively hyperbolic group. The same conclusion holds for Cannon--Thurston map (Theorem \ref{HomeoMybergCT}).

\subsection{Convergence group actions}\label{SSecConvAction}

We start by  introducing the notion of convergence group actions, which sets the stage for further discussion. We refer the reader to \cite{Bow2, Tukia2} for more details and relevant discussion.

A  group action of $G$    by
homeomorphisms on  a compact metrizable space $M$ is called  \textit{convergence group action} if the diagonal action of $G$ on the triple space $\Theta^3(M)=\{(x,y,z)\in M^3:x\ne y\ne z\ne x \}$ of $M$ is properly discontinuous: for any compact set $K$ in $\Theta^3(M)$, the following set 
$$
\sharp \{g\in G: gK\cap K\ne\emptyset\}<\infty
$$ 
The prototype example of a convergence action is the boundary action on the Gromov boundary induced from a proper action on the hyperbolic space, and the action on the Floyd boundary introduced in the next subsection.   

By the general theory of convergence groups,   the elements in $G$ are subdivided into the classes of \textit{elliptic}, \textit{parabolic} and \textit{hyperbolic} elements. The latter two  are  infinite order  elements having exactly one and two fixed points in $M$ accordingly. We denote by $h^-, h^+$ the \textit{repelling} and \textit{attracting} fixed points of a hyperbolic element $h$. 
\begin{lem}\label{HypinConv}
Let $h\in G$ be a hyperbolic element with respect to $G\act M$. Then
\begin{enumerate}
    \item 
    $\langle h\rangle $ acts properly and compactly outside $M\setminus h^\pm$. In particular, 
    \begin{itemize}
        \item 
        it admits North--South dynamics as in Lemma \ref{SouthNorthLem} (with $[\cdot]$ being singleton).
        \item 
        the set stabilizer of $h^\pm$ is the maximal elementary subgroup containing $h$. 
    \end{itemize}
    \item 
    The fixed points of two hyperbolic elements are either disjoint or equal.
    \item 
    If $h, k$ are two hyperbolic elements with disjoint fixed points, then $g_n=h^nk^n$ for $n\gg 0$ is hyperbolic, so that the  fixed points $g_n^+$ and $g_n^-$ tend  to $h^+$ and $k^-$ as $n\to \infty$.
\end{enumerate}   
\end{lem}
\begin{proof}
These are  proved by Tukia \cite{Tukia2}: Theorem 2G, Theorem 2I and Lemma 2J.    
\end{proof}

The \textit{limit set} $\Lambda H$ of a subgroup $H<G$ is the set of accumulation points of all $H$-orbits in $M$. If the limit set $\Lambda H$ contains more 3 points, then it is the unique minimal $H$-invariant closed subset. The limit set contains the essential dynamical information, on which $H$  restricts as a convergence group action. 

We hereafter assume that $\sharp \Lambda G\ge 3$ and $G$ admits a minimal convergence action on $M$ by restricting on its limit set. That is, $M=\Lambda G$ consists of limit points.


We introduce  several classes of limit points, which shall be used later on.
\begin{defn}\label{LimitPointsDefn}
Let $\dG $ denote the set of unordered pairs of distinct points in $\Lambda G$. 
\begin{enumerate}
\item
A point $\xi \in \Lambda G$ is called \textit{conical}  if there exists a
sequence of elements $g_n \in G$ ($n\ge 1$)  such that the closure of $\{g_n(\xi, \eta): n\ge 1\}$ in $\dG $ is disjoint from the diagonal $\Delta(\Lambda G)=\{(x, x): x\in \Lambda G\}$
for any $\eta \in \Lambda G \setminus \xi$.

\item
A point $\xi \in \Lambda G$ is called \textit{Myrberg point}  if for any $\eta \in \Lambda G \setminus \xi$, the set of $G$-orbits of $(\eta, \xi)$ is dense in the set of distinct pairs  $\dG$.

\item
A point $\xi \in \Lambda G$ is called \textit{bounded parabolic} if the
stabilizer $G_\xi$ of $\xi$ in $G$ is infinite, and acts properly
and co-compactly on $\Lambda G\setminus \xi$. The subgroup $G_\xi$ is called  {\it maximal parabolic subgroup}.
\item
A convergence group action of $G$ on $M$ is called \textit{geometrically
finite} if every limit point $\xi \in \Lambda G$ is either a conical point or a
bounded parabolic point.
\end{enumerate}

\end{defn}
\begin{rem}
\begin{enumerate}
    \item 
    We demand the dense property in the space of distinct pairs only for symmetric reasons: if it is  dense in $\Lambda G\Join \Lambda G$, it is so in the product $\Lambda G\times \Lambda G$. 
    \item 
    According to definition, a Myrberg point must be a conical point, but the converse is  false in general. By the South-North dynamics, the fixed points $h^\pm$ of a hyperbolic element $h\in G$ are conical. However, $h^\pm$ are not Myrberg if  the  two fixed points $\{h^-,h^+\}$ of a hyperbolic element $h\in G$ forms a \textit{null sequence} under the action of $G$: only finitely many pairs have diameter bigger than any fixed  constant.   For example, this property holds for the convergence action on Gromov boundary induced from a proper action on a hyperbolic space. See   Lemma \ref{ContrMyrberg} for the similar results concerning about fixed points of contracting elements.
\end{enumerate}    
\end{rem}
In practice, it suffices  to approach the fixed points of all hyperbolic elements in the above definition of  a Myrberg point.
\begin{lem}\label{MyrbergCharinConv}
A point $\xi\in M$ is a Myrberg point if and only if for any hyperbolic element $h\in G$  and for any $\eta\ne \xi$, there exists $g_n\in G$ so that $g_n(\eta,\xi)\to (h^-,h^+)$.    
\end{lem}
\begin{proof}
As the limit set is $G$-minimal, the set of fixed points of hyperbolic element is dense in $\Lambda G$.  Thus by Lemma \ref{HypinConv}.(3), the set of pairs of fixed points of all hyperbolic elements is dense in $\Lambda G\times \Lambda G$: if $h, k$ are hyperbolic element, then $g_n=h^nk^n$ for $n\gg 0$ is a hyperbolic element so that  $g_n^+$ and $g_n^-$ tends to $h^+$ and $k^-$ respectively.     
\end{proof}

In the next lemma, we could restrict to a smaller class of hyperbolic elements.
\begin{lem}\label{FindDenseHypElemts}
Suppose that $\Phi: G\act M \to G\act N$ is a $G$-equivariant map between two minimal convergence group actions. Then the following hold,
\begin{enumerate}
    \item The preimage of a conical point $\xi\in N$  consists of one conical point. 
    \item 
    Let $A$  be the set of all hyperbolic elements with respect to the action  $G\act N$. Then    the pairs of fixed points of $h\in A$  are  dense in $M\Join M$.
\end{enumerate}   
\end{lem}
\begin{proof}
We only prove the assertion (2) here; the assertion (1) is due to \cite[Prop. 7.5.2]{Ge2} and \cite[Lemma 2.3]{MOY13}. By the classification of elements,  a hyperbolic element in $G\act N$ is also hyperbolic  with respect to  $G\act M$, for an infinite order element cannot have three fixed points. Let $\Lambda\subseteq M$ be the   union of  fixed points of hyperbolic elements in $A$ with respect to the convergence action $G\act M$. 

Let $U,V$ be two open neighborhoods of $\xi$ and $\eta$ respectively  in $M$. Note that $M$ is the unique minimal $G$-invariant set, so  $\Lambda$ is dense in $M$.  Let us choose $h,k\in A$ so that $h^+\in U$ and $k^-\in V$ are fixed points in $M$. By Lemma \ref{HypinConv}.(3), the element  $g_n:=h^nk^n$ for $n\gg 0$ is hyperbolic with respect to $G\act N$ (not being considered w.r.t. $G\act M$), so $g_n$ is contained in $A$. At the same time,   their two fixed points of $g_n$ in $M$ tend to $\xi$ and $\eta$ as $n\to\infty$. This establishes the assertion (2) and the proof is complete.    
\end{proof}

\subsection{Floyd boundary as convergence boundary}\label{SSecFloyd}
We introduce a compactification of any locally finite graph due to W. Floyd \cite{Floyd}. The Cayley graph of a finitely generated group shall be our main focus. We follows closely the  exposition in      \cite{Ge2}, \cite{GePo2} and \cite{Ka}.

Let $G$ be a group with a finite generating set $S$. Assume that
$1\notin S$ and $S=S^{-1}$.  Let  $\Gx$ denote  the \textit{Cayley graph} of $G$ with respect to $S$, equipped with   the word  metric  $d$. We define a Floyd metric  on $\Gx$ by rescaling the word metric as follows.  

Fix a Floyd scaling function $f: \mathbb N_{\ge 0}\to \mathbb R_{>0}$, satisfying for some $0 < \lambda <1$, so that
\begin{itemize}
    \item $\sum_{n\ge 0} f(n)<\infty$
    \item $\lambda f(n)\le f(n+1)\le  f(n)$
\end{itemize}
The \textit{Floyd length} $\ell_{o,f}(e)$ of an edge $e$ in $\Gx $ is $\lambda^n$, where $n =
d(o, e)$. The Floyd length $\ell_{o,f}(\gamma)$ of a path $\gamma$ is the sum of Floyd lengths of its edges. This induces a length metric $\rho_{o,f}$ on $\Gx$, which is the infimum of Floyd lengths of all possible paths between two points.

Let $\Gf$ be the Cauchy completion of $G$ with respect to $\rho_{o,f}$.
The complement $\pGf$ of $\Gx$ in $\Gf$ is called \textit{Floyd
boundary} of $G$. The $\pGf$ is called \textit{non-trivial} if
$\sharp \pGf>2$. The non-triviality of Floyd boundary does not depend on the choice of generating sets \cite[Lemma~7.1]{YANG7}. Most of groups do have trivial Floyd boundary: we refer to \cite{KN04, Lev20} for relevant discussion. Currently, the  most general class of groups known  with non-trivial Floyd boundary are relatively hyperbolic groups, to be introduced in the next subsection.

In what follows, we shall omit the Floyd function $f$ in the notations $\rho_{o,f}, \ell_{o,f}$ and write $\rho_o, \ell_o$ for simplicity.

By construction, we have the following equivariant property 
\begin{align}
\label{equiv}
\rho_o (x, y) =
\rho_{go}(gx, gy)\\
\label{lambdabilip}
\lambda^{d(o, o')} \le \frac{\rho_{o}(x, y)}{\rho_{o'}(x, y)} \le
\lambda^{-d(o, o')}    
\end{align}
for any two points $o, o' \in G$. So for different basepoints, the corresponding Floyd compactifications are bi-Lipschitz. As a consequence of (\ref{equiv} and \ref{lambdabilip}), one derives that the left-multiplication on $G$ extends to the boundary as bi-Lipschitz homeomorphism.  Note  that the topology  may  depend on the choice of the rescaling function and the generating set.

As fore-mentioned, the action on the Floyd boundary provides an important source of convergence group actions. If $\sharp \pGf\ge 3$, Karlsson proved in \cite{Ka}  that $\Gamma$ acts  by homeomorphism on $\pGf$ as a convergence group action. 
Moreover, the cardinality of $\pGf$ is either 0, 1, 2 or uncountably infinite. By   \cite[Proposition~7]{Ka}, $\pGf= 2$ exactly when the group $\Gamma$ is virtually infinite cyclic.
These are based on the following fact  in   \cite{Ka}, which is the main tool  in understanding the Floyd geometry.
\begin{lem}[Visibility lemma] \label{karlssonlem} 
There is a function $\varphi: \mathbb R_{\ge 0} \to \mathbb R_{\ge 0}$ such that for
any $v \in G$ and any geodesic $\gamma$ in $\Gx$, we have if
$\ell_v(\gamma) \ge \kappa,$ then $d(v,
\gamma) \le \varphi(\kappa)$.
\end{lem}
From this lemma, we could deduce that Floyd boundary is \textit{visual}: any quasi-geodesic ray converges to a boundary point, and any two points $x, y\in G\cup\pGf$ can be connected by a two-sided or one-sided infinite geodesic. See \cite[Prop. 2.4]{GePo2} for a proof.

Let $f\in G$ be a hyperbolic element with two distinct fixed points $f^-,f^+\in \pGf$, so that the action on $\pGf\setminus \{f^\pm\}$ admits South-North dynamics (cf. Lemma \ref{HypinConv}). If $E(f)$ denotes the set stabilizer of the two fixed points $f^\pm$, then it is the unique maximal elementary subgroup containing $f$. Moreover, by \cite[Lemma 7.2]{YANG6} and \cite[Prop 8.2.4]{GePo4}, the quasi-axis $\ax(f)=E(f)$ is contracting in the sense of Definition \ref{ContrDefn}. Compare these results with  the ones in \textsection\ref{SecBoundary} using the sole contracting property. 

The following easy consequence shall be used later on.
\begin{lem}\label{CloseToHypFixedPts}
If $\gamma_n=[x_n,y_n]$ is a sequence of geodesics so that $\|\ax(f)\cap N_r(\gamma_n)\|$ tends to $\infty$ for a large fixed $r>0$, then $x_n\to f^-$ and $y_n\to f^+$ (up to swapping $f^-$ and $f^+$). 
\end{lem} 
\begin{proof}
Let  $u_n\in \ax(f)$ be a shortest projection point of $x_n$ to $\ax(f)$. The $C$-contracting  property of $\ax(f)$ implies that any geodesic $[x_n, z]$ for given $z\in \ax(f)$ intersects the $C$-neighborhood of $u_n$. Let $r>C$. By assumption, $\|\ax(f)\cap N_r(\gamma_n)\|\to \infty$, so we see that $d(1,x_n)\to\infty$. Letting $z\to f^-$, we obtain that $d(1,[z,x_n])\to \infty$, so $x_n \to f^-$ follows from Lemma \ref{karlssonlem}. The proof for $y_n\to f^+$ is completely analogous and left to the interested reader.
\end{proof}

Here is another easy corollary relating the Floyd geometry to the existence of a barrier along a geodesic in Definition \ref{barriers}.
\begin{lem}\label{LargeFloydDistBarrier}
There exists $\kappa>0$ depending on $\ax(f)$ with the following property for any $L\gg 0$.
Assume that a geodesic $\gamma $ contains a   segment $\alpha$   in $N_r(\ax(f))$ with length at least $L$. Let $x$ be the middle point of  $\alpha$. Then $\rho_x(\gamma^-,\gamma^+)>\kappa$.      
\end{lem}

To conclude this subsection, we prove that Floyd boundary is a convergence boundary with maximal partition in  \textsection \ref{SecBoundary}.
We first start with the following fact  due to A. Karlsson.
\begin{lem}\label{HoroFloydMap}
There exists a continuous $G$-equivariant map from $\partial_{\mathrm H} G$ to $\partial_{\mathrm F} G$. 
\end{lem}
\begin{proof}
We show that if $x_n\to \xi$ in the horofunction compactification, then $x_n$ converges to a point $\eta$ in the Floyd compactification. Arguing by contradiction, let us assume (after dropping and re-indexing) the sequences $x_{2n}\to\eta$ and $x_{2n+1}\to \zeta$ for $\eta\ne \zeta$ in the horofunction compactification. By \cite[Prop. 2.4]{GePo2}, there exists a geodesic between any two distinct $\xi$ and $\eta$. This contradicts to the \ref{AssumpC} of the horofunction boundary. Therefore, we obtain a well-defined map from $\partial_{\mathrm H} G$ to $\partial_{\mathrm F} G$. 
\end{proof}

We now verify the following. 
\begin{lem}\label{FloydConvBdry}
The nontrivial Floyd boundary of the Cayley graph of a finitely generated group is a convergence boundary with maximal partition, where conical points are non-pinched.    
\end{lem}
\begin{proof}
Any geodesic ray converges to a limit point in $\pGf$ by \cite[Prop. 2.4]{GePo2}. By Lemma \ref{HoroFloydMap}, the   \ref{AssumpB} for horofunction boundary implies \ref{AssumpB} for Floyd boundary. These verify the \ref{AssumpA} and \ref{AssumpB}. It remains to show that  a conical point is non-pinched. This is a consequence of the fact that there exists no horocycle at a conical point by \cite[Lemma 3.6]{GePo2}. Recall that a horocycle at a boundary point $\xi$ means a bi-infinite geodesic with two half-rays tending to $\xi$.

Indeed, let $x_n, y_n\in G$ tend to a conical point $\xi$. If  $[x_n,y_n]$ for all $n\ge 1$ intersects a fixed ball around the identity $1$, then we could extract a subsequence of $[x_n,y_n]$ still denoted by $[x_n,y_n]$ so that $[x_n,y_n]$ converges locally uniformly to a bi-infinite geodesic $\gamma=[\gamma^-,\gamma_+]$. We claim that $\gamma$ would be a horocycle at $\xi$; that is, $\gamma^-=\gamma_+=\xi$. This gives the desired contradiction to \cite[Lemma 3.6]{GePo2}.  To this end, consider the common overlap $[u_n,v_n]=[x_n,y_n]\cap \gamma$. The length of $[u_n,v_n]$ tends to $\infty$, so $\gamma\setminus [u_n,v_n]$ leaves every compact set, so $\rho_1(u_n,\gamma^-)\to 0$ and  $\rho(v_n,\gamma^+)\to 0$ by Lemma \ref{karlssonlem}. Hence, it suffices to show that $\rho_1(u_n,x_n)\to 0$ and $\rho_1(v_n,y_n)\to 0$. This again follows from Lemma \ref{karlssonlem}, as $[u_n,x_n]$ and $[v_n,y_n]$ are escaping. Therefore, we conclude that $\gamma^-=\gamma_+=\xi$   and the proof of the lemma is  complete. 
\end{proof}
Recall that  Floyd boundary surjects onto the end boundary for any finitely generated group, where the map is injective on the set of thin ends (cf. \cite[Prop. 7.18]{GGPY}). We refer to \cite{GGPY} for  the relevant discussion. We record the following corollary.
\begin{cor}\label{EndConvBdry}
The end boundary of (the Cayley graph of) a finitely generated group is a convergence boundary, where the thin ends are non-pinched.    
\end{cor}

\subsection{Homeomorphic Myrberg sets for relatively hyperbolic groups}\label{SSecRHG}
There are various equivalent ways in literature (\cite{Farb, Bow1,Osin,DruSapir,Ge1} etc) to introduce the relative hyperbolicity for a finitely generated group $G$ (even for countable groups which however is beyond the scope of this paper). We adopt here the point of view from the boundary theory. Recall Definition \ref{LimitPointsDefn} for introduction of several classes of limit points.

\begin{defn}
Let $\mathbb P$ be a finite collection of subgroups in $G$. A pair   $\GP$ is  \textit{relatively hyperbolic} if $G$ admits a geometrically finite group action on a compact metrizable space $M$ such that $\{gPg^{-1}: P\in \mathbb P, g\in G\}$ are exactly  the collection of all maximal parabolic subgroups. 
\end{defn}

Using the relative Cayley graph, one can construct the limit set $\Lambda G$ of the action $G\act M$ with the Gromov boundary of this graph \cite{Bow1}. We will often  call \textit{Bowditch boundary}, denoted by $\partial_{\mathrm B}G$, the limit set $\Lambda G$ of a geometrically finite action.  Bowditch proved that if $G$ is finitely generated then $\partial_{\mathrm B}G$ up to an equivariant homeomorphism depends only on the pair $\GP$ \cite{Bow2}. 
\begin{rem}
To be distinguished with the limit set on Bowditch boundary, we shall use $\partial H$ to denote the limit set of $H$ in the Floyd boundary $\pGf$, which are the accumulation points of $H$ in $\pGf$. Equivalently, it is the topological closure of the subset $H$ at the boundary in the compactification $\Gf$.    
\end{rem}
  
Assume that $G$ is non-elementary: each maximal parabolic subgroup is of infinite index; equivalently, the action on the Bowditch boundary has at least three limit points. 

The following    result was first proved by Floyd \cite{Floyd} for geometrically finite Kleinian group $\Gamma$, and then generalized to any relatively hyperbolic group by Gerasimov \cite{Ge2}. 
\begin{thm}\label{mapGerasimov}\cite[Map Theorem]{Ge2}
There exists a Floyd scaling function $f$  so that the identity on $G$ extends to a continuous and equivariant surjection $\phi$ from the Floyd compactification to the Bowditch compactification of $G$.
\end{thm}
In \cite{Ge2}, Gerasimov proved the theorem for the scaling function $f(n)=\lambda^n$ for $\lambda\in (0,1)$, but any scaling function $f(n)>\lambda^n$ such as $f(n)=n^p$ for $p>1$ would also work. Indeed,  the Floyd boundary with respect to $f(n)=n^p$  covers  the one constructed using $f(n)=\lambda^n$.   

\subsection{Conical and Myrberg points}
Let $\GP$ be a relatively hyperbolic pair. We say that   an element $g\in G$ is  \textit{hyperbolic} if it is infinite order and is not conjugated into any $P$. Equivalently, it is a hyperbolic  element on the Bowditch boundary  in the sense of convergence boundary. Two hyperbolic elements are called \textit{independent} if no power of them  are conjugate. 

The notion of a hyperbolic element plays a key role here in that it is contracting with respect to any word metric on $G$. The  results obtained from the sole contracting property in previous subsections hold of course in this specific setup. In particular, a hyperbolic element $f$ is always contained in the unique maximal elementary subgroup $E(f)$ (in Lemma \ref{elementarygroup}), which is equivalently the stabilizer in $G$ of the set of two fixed points $\{f^{-},f^+\}$ of $f$.    In what follows, we however shall take the point of view from Floyd metric, which would be   more powerful and convenient, to study the conical and Myrberg points.

Let $F$ be a finite  set of independent hyperbolic elements. From a different perspective, Osin  \cite{Osin2} proved   that $G$ is hyperbolic relative to the union  $\mathbb P\cup\mathbb E$,  where $\mathbb E=\{E(f):   f\in F\}$. Let $\mathcal P=\{gP: g\in G, P\in \mathbb P\}$. Note that the previously-defined  collection $\f$ of $G$-translated axis of $f\in F$ are exactly the one of   all left cosets of $E(f)$ for $f\in F$. It is well-known that $\mathcal P\cup \f$ forms a contracting system  with bounded intersection (cf. \textsection\ref{SContractingSubset}). 

Let us first recall the following characterization of conical points.
\begin{lem}\cite[Lemma 2.14]{YANG10}\label{CharConicBowd}
A point $\xi$ in $\partial_{\mathrm B}G$ is conical if and only if for any geodesic ray $\gamma$ ending at $\xi$, there exists infinitely many elements $g_n\in G$ so that $d(g_n, \gamma)\le r$, where $r$ is a uniform constant independent of  $\xi$ and $\gamma$. 

Moreover, the direction ``$\Longrightarrow$" holds for a conical point $\xi$ in $\pGf$ with the constant $r$ depending on $\xi$.
\end{lem}

According to the definition \ref{ConicalDef2},  a point $\xi$ is $(F,r)$-conical for some large $r>0$, if it is contained in the  partial shadows $\Pi_1^F(g_n,r)$ for infinitely many $g_n\in G$. That is, there exists a geodesic ray $\gamma=[1,\xi]$ so that $d(g_n,\gamma), d(g_nf,\gamma)\le r$. 

\begin{lem}\label{rFconicalisconical}
Let $\xi$ be an  $(F,r)$-conical point in $\pGf$. Then for $\|F\|\gg 0$,    $\xi$  is a conical point in the sense of convergence boundary (cf. Definition \ref{ConicalDef2}).    
\end{lem}
\begin{proof}
Let $\gamma$ be a geodesic ray ending at an $(F,r)$-conical point $\xi\in \pGf$.  We first show that the endpoint $\eta$ of $\gamma$ in $\partial_{\mathrm B}G$ is a conical point by using Lemma \ref{CharConicBowd}.  

Indeed, according to definition, $\gamma$ contains infinitely many $(r,f)$-barriers $g_n$ for $f\in F$. That is, $\gamma\cap g_n N_r(\ax(f))$ has diameter at least $d(o,fo)$. By Lemma \ref{LargeFloydDistBarrier},  if $\|F\|=\min\{d(o,fo):f\in F\}\gg 0$, $\gamma$ contains infinitely many  points $x_n$ so that $\rho_{x_n}(\gamma^-,\gamma^+)\ge \kappa$, where $\kappa$ is a positive constant independent of $x_n$.  According to Lemma \ref{karlssonlem}, it is easy to show that any other geodesic $\beta$ ending at $\eta$ has to pass through a fixed neighborhood of all but finitely many $x_n$. Hence, $\eta$ is a conical point by Lemma \ref{CharConicBowd}, so the preimage $\xi$ is a conical point in $\pGf$ by Lemma \ref{FindDenseHypElemts}. The proof is complete.  
\end{proof}

According to Definition \ref{LimitPointsDefn}, let us  recast the notion of Myrberg points in the Floyd boundary. Let $\pGf\Join \pGf$ denote the set of unordered pairs of distinct points in $\pGf$.

\begin{defn}
A point $\xi\in \pGf$ is called a \textit{Myrberg   point} if for any $x\in G$, the $G$-orbit of $(x,\xi)$ is dense in $\pGf\Join \pGf$. To be precise, given  $\eta\ne \zeta\in \pGf$, there exists a sequence of $g_n\in G$ so that $g_nx\to\eta$ and $g_n\xi\to\zeta$.     
\end{defn}

\begin{rem} According to definition of Floyd length, if $\sup_{n\ge 1}d(x_n,y_n)<\infty$ and $d(1,x_n)\to\infty$,  we see that $\rho_1(x_n,y_n)\to 0$.   Hence, if $g_nx\to \xi$, then $g_ny\to \xi$ for any $y\in G$. Thus, we could assume $x=1$ in the definition for convenience.  This also implies that the point $x$ can be assumed to any boundary point different from $\xi$.  
\end{rem}

Using the geometry of the Cayley graphs, we  describe the Myrberg point in the Floyd boundary. Denote by $\partial_m G$ the set of Myrberg points in $\pGf$.

\begin{lem}\label{MyrbergCharinFloyd}
Let $A$ be a  set of hyperbolic elements in $G$, which are closed under conjugation, so that their fixed point pairs are dense in $\pGf\Join \pGf$. Then $\xi\in \pGf$ is a Myrberg point if and only if the following property holds:
\begin{itemize}
    \item 
    for any $f\in A$ and for any integer $n>0$, there exist infinitely many distinct $g\in G$ so that $$d(g,[x,\xi]), d(gf^n,[x,\xi])\le r$$ where $r$ depends on the contracting constant of $\ax(f)$.
\end{itemize}  
The same characterization holds for Myrberg points in Bowditch boundary, with $\pGf$ replaced by $\partial_{\mathrm B}G$ in the above statement. 
\end{lem}
\begin{proof}
First of all, by Corollary \ref{MyrbergConical}, it suffices to show that $\xi\in \pGf$ satisfying the following property is a Myrberg  point:  
\begin{itemize}
    \item Given any $f\in A$ and any integer $n>0$,  a geodesic ray   $\gamma=[x,\xi]$ starting at $x\in G$ contains infinitely many $(r,f^n)$-barrier, where   $r>C$ is  a large constant depending on the contraction constant of $\ax(f)$. 
\end{itemize}
To this end, consider a pair of distinct points $(\zeta, \eta)$ in $\pGf$. We shall find a sequence of $g_n\in G$ so that $[x,\xi]\to (\zeta, \eta)$. By assumption, let     $h_n$ be a sequence of hyperbolic elements in $A$ so that $(h_n^-,h_n^+)$ tends to $(\zeta, \eta)$. According to the above property,  for each fixed $f:=h_n$, we have that $\gamma$ contains infinitely many $(r,f^n)$-barriers, where $r$ depends on the contracting constant of the axis of $f$. Let us choose one $(r,f^n)$-barrier $g_n$ among them, so that the diamter of $g_n\gamma\cap N_{r}(\ax(f))$ tends to $\infty$. By Lemma \ref{CloseToHypFixedPts}, we obtain   $$\rho_o(g_nx, f^-), \; \rho_o(g_n\gamma^+,f^+)\le 1/n.$$ Note that this holds for each $f:=h_n$, hence the assumption that $(h_n^-,h_n^+)$ tends to $(\zeta, \eta)$ shows that   $g_n(x,\xi)$ tends   to $(\zeta, \xi)$.  This concludes the proof of the lemma.
\end{proof}

The same proof also gives the following description of Myrberg points in the limit set of discrete groups on  hyperbolic spaces.
\begin{lem}\label{MyrbergCharinHyp}
Assume that $G$ act properly on a hyperbolic space $\U$.
Let $A$ be a  set of hyperbolic elements in $G$ closed under conjugation so that their fixed point pairs are dense in $\Lambda G\Join \Lambda G$. Then $\xi\in \Lambda G$ is a Myrberg point if and only if the following property holds:
\begin{itemize}
    \item 
    for any $f\in A$ and for any integer $n>0$, there exist infinitely many distinct $g\in G$ so that $$d(g,[x,\xi]), d(gf^n,[x,\xi])\le r$$ where $r$ depends on the contracting constant of $\ax(f)$.
\end{itemize}  
Here $[x,\xi]$ denotes a geodesic in the hyperbolic space $\U$.  
\end{lem}

\subsection{Homeomorphisms between Myrberg sets}
Let $\Phi: \pGf\to\Lambda G$ be the   map from the Floyd boundary to Bowditch boundary given by Theorem \ref{mapGerasimov}. By the continuity, a Myrberg point in $\pGf$ is of course sent to  a Myrberg point in $\Lambda G$.  Let $\Phi_m: \partial_m G\to\Lambda G$ denotes  the map of $\Phi$ restricted  onto the Myrberg  set $\partial_m G$.
\begin{thm}\label{HomeoMybergRHG}
The map $\Phi_m:\partial_m G \to \Lambda_m G $ is a homeomorphism  onto the set of Myrberg points in $\Lambda G$.   
\end{thm}
\begin{proof}
Let   $\xi\in \Lambda G$ be a Myrberg point. It is a conical point, so the preimage under $\Phi$ is a singleton consisting of a conical point in $\pGf$ by Lemma \ref{FindDenseHypElemts}. Hence, it follows that $\Phi_m$ must be  injective. We are now going  to  prove the surjectivity of the map.

Let $A$ be the set of hyperbolic elements relative to the convergence action of $G$ on the Bowditch boundary.  The pairs of fixed points in $\Lambda G$ of the elements in $A$ is dense in $\Lambda G\Join \Lambda G$, and moreover, by Lemma \ref{FindDenseHypElemts}, their preimages under $\Phi$ (i.e. fixed points in $\pGf$) are dense in $\pGf\Join \pGf$.  

As $\xi\in\Lambda_mG$ is Myrberg  point, for any fixed $n$ and hyperbolic element $f$ in $A$, the geodesic ray $[1,\xi)$ contains infinitely many segments labeled by $f^n$ in its $r$-neighborhood for some $r$ depending on $\ax(f)$. This property involves the geometry of the Cayley graph, without using  the boundary, so by Lemma \ref{MyrbergCharinFloyd}, the preimage $\Phi^{-1}(\xi)$ is also a Myrberg point in the Floyd boundary. Therefore,   we proved that  $\Phi_m$ is a continuous and bijective map to $\Lambda_m G$.

At last,  we  prove that the inverse  of $\Phi$ is continuous on $\Lambda_m G$. To this end, let $\xi_n\to \xi\in \Lambda_m G$.     Let $\alpha=[1,\xi]$ and $\alpha_n=[1,\xi_n]$. If $\xi_n\to\xi$, then by Lemma \ref{MyrbergCharinHyp}, $\alpha_n$ eventually contains infinitely copies of $g_nf^m$ in its $r$-neighborhood. This proves $\xi_n\to \xi\in \partial_m G$. The proof is complete. 
\end{proof}

We now record the main conclusion from the above result, saying that Myrberg  set is invariant independent of the peripheral structure of $G$. 
\begin{cor}
Let $(G,\mathbb P)$ and $(G,\mathbb P')$ be two relative hyperbolic pairs for the same group $G$. Then there exists a homeomorphism between their   Myrberg  sets in $\partial_{\mathrm B}(G,\mathbb P)$ and $\partial_{\mathrm B}(G,\mathbb P')$, which are homeomorphic to the Myrberg  set in the Floyd boundary $\partial_{\mathrm F}(G)$.
\end{cor}

To conclude this subsection, let us prove that the same conclusion holds for the Cannon--Thurston map. 

Assume that $G$ is a hyperbolic group, which acts properly on a hyperbolic space $\U$. Assume that the orbit map 
$$
g\in G \longmapsto go\in \U
$$
extends continuously (and uniquely) to  a map from the Gromov boundary $\partial G$ of $G$ to the limit set of $G$. Such a boundary map $\Phi$ is referred to as \textit{Cannon--Thurston map} in literature (\cite{CT07, Mj14}).
\begin{thm}\label{HomeoMybergCT}
The map $\Phi: \partial G\to \Lambda G$ restricts to  a homeomorphism from $\partial_m G$ to the Myrberg set in $\Lambda G$.    
\end{thm}
\begin{proof}
The map $\Phi$ is a $G$-equivariant map from $\partial G\to \Lambda G$. By Lemma \ref{FindDenseHypElemts}, the preimage of any conical point in $\Lambda G$ is just singleton of one (conical) point in $\partial G$.

Let $A$ be the set of hyperbolic elements in   $G$ acting on the hyperbolic space $\U$.   By  Lemma \ref{FindDenseHypElemts},  $A$ satisfies the assumption of Lemma \ref{MyrbergCharinHyp}. The proof follows closely the one for Lemma \ref{HomeoMybergRHG}. We leave it to the interested reader. 
\end{proof}

\section{CAT(0) groups with rank-1 elements: Case Study (II)}\label{SecCAT0}
The groups acting properly on CAT(0) spaces with rank-1 elements are another important class of groups motivating the study of the paper.  Applications are presented with an emphasis  to the Myrberg  set and the contracting boundary for CAT(0) spaces. 

We assume the reader is familiar with the basics on CAT(0) geometry and refer the reader to \cite{BriHae} for the reference. 

\subsection{Myrberg set in the visual boundary}

We first prove that the visual boundary of a proper CAT(0) space is a convergence boundary. The proper assumption is only used to guarantee the compactification.
\begin{lem}\label{ContractiveVisualBdry}
Let $\U$ be a proper  CAT(0) space. Then the visual boundary $\bV$ is a convergence boundary with respect to the maximal partition, so that all boundary points are non-pinched. 
\end{lem}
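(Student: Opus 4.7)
The plan is to verify the three assumptions \ref{AssumpA}, \ref{AssumpB} and \ref{AssumpC} for $\bV$ equipped with the trivial partition, and to simultaneously observe that $\mathcal C = \bV$.

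First, I will verify \ref{AssumpA}. A contracting geodesic ray $X$ converges in the cone topology to a unique endpoint $\xi\in\bV$ by the very definition of the visual boundary. For the second clause, let $y_n\in\U$ with $\pi_X(y_n)$ exiting. By Lemma \ref{BigThree}, for all $n\gg 0$ the geodesic $[o,y_n]$ passes within $C$ of $\pi_X(y_n)$, and in particular fellow-travels an arbitrarily long initial subpath of $X$. In a CAT(0) space, if a sequence of geodesic segments $[o,y_n]$ from a fixed basepoint has initial segments of length $R_n\to\infty$ converging to the initial segment of $X$, then $y_n\to\xi$ in the cone topology. This is exactly our situation, so $y_n\to\xi$.

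Second, for \ref{AssumpB}, let $X_n$ be an exiting sequence of $C$-contracting subsets. Choose $x_n\in\pi_{X_n}(o)$, so $d(o,x_n)\to\infty$. By properness of $\U$ and compactness of $\bV$, after passing to a subsequence the geodesic segments $[o,x_n]$ converge (locally uniformly) to a geodesic ray $\gamma$ issuing from $o$, whose endpoint $\xi:=\gamma(\infty)\in\bV$ is the limit of $x_n$ in the cone topology. Now let $y_n\in\Omega_o(X_n)$ be any sequence with $y_n\to\eta\in\bV$. Then $[o,y_n]$ meets $X_n$, so by $C$-contraction any exit point of $[o,y_n]$ from $N_C(X_n)$ is within distance $\le 2C$ of $\pi_{X_n}(o)$; in particular $[o,y_n]$ coincides with $[o,x_n]$ up to a uniformly bounded Hausdorff distance on arbitrarily large initial segments. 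Thus $y_n\to\xi$ in the cone topology, which forces $\eta=\xi$, as required.

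Third, for \ref{AssumpC}, I claim $\mathcal C=\bV$, so in particular it is non-empty. Take $\xi\in\bV$ and any two sequences $x_n,y_n\in\U$ both tending to $\xi$ in the cone topology; suppose for contradiction that $[x_n,y_n]$ is not exiting. After extracting a subsequence, there are points $p_n\in[x_n,y_n]$ with $p_n\to p\in\U$. By properness, the subrays $[p_n,x_n]$ and $[p_n,y_n]$ converge respectively to two geodesic rays $\alpha,\beta$ starting at $p$. Since $x_n\to\xi$ and $y_n\to\xi$ in the cone topology and $p_n\to p$, both $\alpha$ and $\beta$ represent $\xi$. By the uniqueness of the geodesic ray from $p$ representing a given point of $\bV$ in a CAT(0) space, $\alpha=\beta$. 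On the other hand, $p_n\in[x_n,y_n]$ implies that the initial directions of $[p_n,x_n]$ and $[p_n,y_n]$ at $p_n$ are opposite, and by CAT(0) convexity this passes to the limit, giving opposite initial directions of $\alpha$ and $\beta$ at $p$. This contradicts $\alpha=\beta$, so $[x_n,y_n]$ is exiting and $\xi\in\mathcal C$.

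The main technical point is the third step; the other two are straightforward applications of the CAT(0) structure. Having verified (A), (B), (C) with $\mathcal C=\bV$, the lemma follows.
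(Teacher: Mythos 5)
Your proof is correct but takes a genuinely different route from the paper. The paper uses the well-known identification of the visual boundary $\bV$ with the horofunction boundary $\bX$ of a proper CAT(0) space, invokes Theorem \ref{ContractiveThm1} (already established in \textsection\ref{SecHorobdry}: $\bX$ with the finite-difference partition is a convergence boundary with all points non-pinched), and then only needs to check that the finite-difference partition is in fact trivial. The paper does this via a single Euclidean comparison-triangle estimate: if $\|b_\xi-b_\eta\|_\infty\le K$ and $\alpha,\beta$ are the geodesic rays from $o$ to $\xi,\eta$, then for $x\in\alpha$ and $y_n\in\beta$ the bound $|d(o,x)+d(o,y_n)-d(x,y_n)|\le K$ forces $d(x,\beta)\le K$, so $\xi=\eta$. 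Your approach instead verifies \ref{AssumpA}, \ref{AssumpB} and \ref{AssumpC} directly from CAT(0) convexity and the cone topology. It is therefore more self-contained (it does not lean on the general horofunction machinery), at the cost of more work; the paper's route is shorter given the machinery already built.

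Two small points to tighten. In your verification of \ref{AssumpB}, it is the \emph{entry} point of $[o,y_n]$ into $N_C(X_n)$ that is near $\pi_{X_n}(o)=x_n$; the exit point is near $\pi_{X_n}(y_n)$. The substance is unaffected: $[o,y_n]$ passes within $O(C)$ of $x_n$, so by convexity $[o,y_n]$ and $[o,x_n]$ fellow-travel on initial segments of length $\approx d(o,x_n)\to\infty$, forcing $\eta=\xi$. In your verification of \ref{AssumpC}, the appeal to ``opposite initial directions passing to the limit'' is slightly hand-wavy, since Alexandrov angles are only semi-continuous under such limits. It is cleaner to note that the segments $[x_n,y_n]$ converge locally uniformly to a bi-infinite geodesic $\gamma$ through $p$ (a local geodesic, hence a geodesic in CAT(0)), whose two half-rays from $p$ are precisely $\alpha$ and the reversal of $\beta$; these are distinct rays, yet both must equal the unique ray from $p$ to $\xi$, giving the contradiction.
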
 
\begin{proof}
It is well-known that the visual boundary is homeomorphic to horofunction boundary (\cite{BriHae}). By Theorem \ref{ContractiveThm1}, it suffices to prove that this finite difference partition is trivial: every point is minimal.   

Indeed, arguing by contradiction, let $\xi\ne \eta\in \bV$ be       represented respectively by two geodesic rays $\alpha, \beta$ from $o\in \U$, so that $\|b_\xi-b_\eta\|_\infty\le K$ for some $K>0$. On one hand, for any $x\in \alpha$, we have $b_\xi(x)=d(o,x)$ so $|b_\eta(x)-d(o,x)|\le K$. Taking a sequence $y_n\in\beta$ so that $b_{y_n}(x)=d(x, y_n)-d(o, y_n)$ tends to $b_\xi(x)$, we have $$|d(o,x)+d(o,y_n)-d(x,y_n)|\le K+1$$ for all $n\gg 0$. On the other hand,  the Alexandrov angle $\theta:=\angle (\alpha,\beta)$ is positive for $\alpha\ne \beta$ (cf. \cite{BriHae}), so  the Euclidean comparison triangle for  $\Delta(o, x, y_n)$ has the Euclidean angle at $o$ bigger than $\theta>0$. Thus the  Euclidean geometry tells us that $|d(o,x)+d(o,y_n)-d(x,y_n)|$ must tend to $\infty$ as $d(o,y_n)\to \infty$. This is a contradiction.  The lemma is thus proved.
\end{proof}

The following result is crucial in the further discussion, roughly saying that the set of geodesics with positive flat strip is an open set.
\begin{lem}\label{Rank1Open}\cite[Lemma III.3.1]{Ballmann}
Let $\gamma:\mathbb R\to\U$ be a geodesic in a proper CAT(0) space $\U$ without bounding a flat strip of width $R>0$. Then there exist neighborhoods $U $ of $\gamma^+$ and $V$ of $\gamma^-$ in $\bU$ such that any $\xi\in U$ and any $\eta\in V$, there exists a geodesic from $\xi$ to $\eta$ within $R$-distance to $\gamma(0)$ and any such geodesic does not bound a $(2R)$-flat strip. 
\end{lem}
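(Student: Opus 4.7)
The plan is to argue by contradiction in both parts, reducing the conclusion to the Flat Strip Theorem in CAT(0) geometry: if two bi-infinite geodesics share both endpoints at infinity, their convex hull is isometric to a flat strip of width equal to the Hausdorff distance between them.

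For the first assertion, I would suppose for contradiction that no such neighborhoods exist. Then for any nested basis $\{U_n\}$, $\{V_n\}$ of $\gamma^+$, $\gamma^-$ in $\bV$, one can produce $\xi_n\in U_n$, $\eta_n\in V_n$ together with a geodesic $\gamma_n$ from $\eta_n$ to $\xi_n$ (or between appropriate visual representatives) such that $d(\gamma(0),\gamma_n)\geq R$. I would reparametrize each $\gamma_n$ so that the closest-point projection of $\gamma(0)$ to $\gamma_n$ is attained at $\gamma_n(0)$; this is well defined by convexity of the distance to a geodesic and properness of $\U$. Since $d(\gamma(0),\gamma_n(0))$ remains bounded (visibility forces a bounded closest-point distance once the endpoints are close enough to $\gamma^\pm$), Arzelà--Ascoli supplies a subsequential uniform-on-compacta limit $\gamma_\infty$, which is a bi-infinite geodesic. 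Continuity of the endpoint map in the visual topology forces $\gamma_\infty(\pm\infty)=\gamma^\pm$. By the Flat Strip Theorem, $\gamma$ and $\gamma_\infty$ span a flat strip of width $d(\gamma,\gamma_\infty)\geq R$, contradicting the rank-$R$ hypothesis on $\gamma$.

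For the second assertion, I shrink $U,V$ further if necessary and argue by contradiction again. Suppose there exist $\xi_n\in U_n$, $\eta_n\in V_n$ and geodesics $\gamma_n'$ from $\eta_n$ to $\xi_n$ with $d(\gamma(0),\gamma_n')\leq R$ each bounding a flat strip $F_n$ of width $2R$. Inside $F_n$ choose a parallel $\tilde\gamma_n$ at distance exactly $R$ from $\gamma_n'$, chosen on the side maximizing the distance to $\gamma(0)$, so that $d(\gamma(0),\tilde\gamma_n)\geq R$ (this uses that shifting within a flat strip preserves endpoints at infinity). Reparametrize $\tilde\gamma_n$ by nearest-point projection of $\gamma(0)$ as before and extract a limit $\tilde\gamma_\infty$, a bi-infinite geodesic with the same pair of endpoints at infinity as $\gamma$ and at distance $\geq R$ from $\gamma(0)$. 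The Flat Strip Theorem then supplies a flat strip of width $\geq R$ between $\gamma$ and $\tilde\gamma_\infty$, again contradicting the hypothesis.

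The main obstacle is the topological one: the visual topology on $\bV$ is weaker than the cone topology on geodesic rays, and one has to verify that once $\xi_n\to\gamma^+$ and $\eta_n\to\gamma^-$ visually, the chosen geodesics $\gamma_n$ between them converge locally uniformly to a geodesic with the correct endpoints at infinity. This is where properness of $\U$, convexity of the distance function, and the control on the footpoint of $\gamma(0)$ on $\gamma_n$ are all essential; without the footpoint reparametrization the sequence $\gamma_n$ could drift off to infinity and produce no usable limit. Once convergence is in hand, everything else is a clean invocation of the Flat Strip Theorem.
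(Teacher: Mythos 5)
The paper does not prove this lemma; it is cited directly as \cite[Lemma III.3.1]{Ballmann}, so there is no paper proof to compare against. Your overall strategy (argue by contradiction, pass to a limit geodesic, and invoke the Flat Strip Theorem) is indeed the standard route and in the spirit of Ballmann's argument. However, as written, your proof contains a genuine gap at the single most important step.

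You write that ``$d(\gamma(0),\gamma_n(0))$ remains bounded (visibility forces a bounded closest-point distance once the endpoints are close enough to $\gamma^\pm$)'' and then apply Arzel\`a--Ascoli. But this boundedness is precisely what the rank-$1$ hypothesis must buy you, and it is \emph{not} a soft visibility fact that holds for free. In the Euclidean plane with $\gamma$ the $x$-axis, take $\xi_n=(n^2,n)$ and $\eta_n=(-n^2,n)$: then $\xi_n\to\gamma^+$ and $\eta_n\to\gamma^-$ in the cone topology, yet $d(\gamma(0),[\eta_n,\xi_n])=n\to\infty$. Of course here $\gamma$ bounds arbitrarily wide flat strips, so the lemma's hypothesis fails --- but that is exactly the point: the hypothesis that $\gamma$ bounds no flat strip of width $R$ must be used \emph{to establish} the footpoint bound, not assumed in the guise of ``visibility.'' As it stands, the appeal to visibility is circular, since the visibility of a rank-$1$ geodesic from a basepoint is essentially the content of this lemma. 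You need to show directly that if $d(\gamma(0),\gamma_n(0))\to\infty$ then $\gamma$ already bounds a wide flat strip (e.g.\ by comparing the convex functions $t\mapsto d(\gamma(t),\gamma_n)$ with the convergence of the rays from $\gamma(0)$, or by choosing the approximating interior points $x_n,y_n$ with parameters $T_n$ growing slowly enough relative to the rate of convergence so that $[y_n,x_n]$ stays near $\gamma(0)$). Related to this, you also do not address the \emph{existence} part of the conclusion: when $\xi$ and $\eta$ are both ideal, a connecting geodesic need not exist a priori, and producing one is part of what the compactness argument must deliver.

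There is also a smaller slip in your second assertion. You take the parallel $\tilde\gamma_n$ at distance exactly $R$ inside the $2R$-flat strip $F_n$ bounded by $\gamma_n'$ and claim $d(\gamma(0),\tilde\gamma_n)\geq R$. This can fail: if $\gamma(0)$ lies inside $F_n$ at distance $s\in(0,R]$ from $\gamma_n'$, then $d(\gamma(0),\tilde\gamma_n)=R-s<R$. You should instead take the \emph{far} boundary component of $F_n$, at distance $2R$ from $\gamma_n'$; since $d(\gamma(0),\gamma_n')\leq R$, that geodesic is at distance at least $R$ from $\gamma(0)$, has the same endpoints as $\gamma_n'$, and stays at uniformly bounded distance from $\gamma(0)$, so the Arzel\`a--Ascoli step and the Flat Strip contradiction go through cleanly.
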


We now give the main result of this subsection, recovering \cite[Lemma 7.1]{L18}  via completely different methods (also compare with \cite[Theorem 2]{R17}). Such results are crucial in applying Hopf's argument in proving ergodicity of geodesic flows (see \cite[Section 7]{L18} for details).
\begin{lem}\label{ZeroAxisLem}
Assume that  $G\act \U$  in (\ref{GActUAssump})  contains a rank-1 element with zero width axis. Then
\begin{enumerate}
\item
any bi-infinite geodesic $\gamma$ with one endpoint at a Myrberg   point has zero width.
\item
any two geodesic rays $\beta, \gamma$ ending at a Myrberg   point are asymptotic: there exists $a\in \mathbb R$ such that 
$$
d(\beta(t),\gamma(t+a))\to 0, \quad t\to \infty.
$$ 
\end{enumerate}
\end{lem}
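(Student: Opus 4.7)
The plan is to use the Myrberg property to conjugate the given geodesics into positions close to the zero-width axis $X:=\ax(h)$, and then transfer the zero-width property via Lemma~\ref{Rank1Open}. Throughout, by Lemma~\ref{ContractiveVisualBdry} the CAT(0) visual boundary carries the trivial $[\cdot]$-partition, so $[h^\pm]=\{h^\pm\}$ are singletons, and the Myrberg definition directly targets the points $h^\pm$.

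For (1), I would apply the Myrberg property to the pair $(\xi, \gamma(0))$ to obtain $g_n \in G$ with $g_n\xi \to h^+$ and $g_n\gamma(0) \to h^-$ in $\bU$. Fix $R > 0$; Lemma~\ref{Rank1Open} supplies neighborhoods $U \ni h^+$ and $V \ni h^-$ such that any geodesic between a point of $V$ and a point of $U$ does not bound a $(2R)$-flat strip. For $n$ large, $g_n\gamma(0) \in V$ and $g_n\xi \in U$, so the unique geodesic ray from $g_n\gamma(0)$ to $g_n\xi$, namely the forward half of $g_n\gamma$, does not bound a $(2R)$-flat strip. A flat strip of width $2R$ along the bi-infinite $g_n\gamma$ would restrict to one along its forward half, so $g_n\gamma$, and hence its isometric image $\gamma$, does not bound such a strip. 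Letting $R \to 0$ completes part (1).

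For (2), I would apply Myrberg to $(\xi, \beta(0))$ to get $g_n$ with $g_n\xi \to h^+$ and $g_n\beta(0) \to h^-$; the same $g_n$ also sends $\gamma(0) \to h^-$ because $d(g_n\beta(0), g_n\gamma(0)) = d(\beta(0),\gamma(0))$ is bounded and visual limits at infinity are preserved under bounded perturbations. Fix $R > 0$ and take $U \ni h^+$, $V \ni h^-$ from Lemma~\ref{Rank1Open}, which additionally forces such geodesics to pass within distance $R$ of $X(0)$. For $n$ large, both rays $g_n\beta$ and $g_n\gamma$ meet $N_R(X(0))$ at some parameters $t_n$ and $s_n$; the isometry $g_n$ then yields $d(\beta(t_n), \gamma(s_n)) \le 2R$. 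Meanwhile $t_n, s_n \to \infty$ (since $g_n\beta(0), g_n\gamma(0)\to h^- \in \pU$), and the triangle inequality through $X(0)$ gives $|s_n - t_n| \le 2R + d(\beta(0),\gamma(0))$. Passing to a subsequence with $s_n - t_n \to a \in \mathbb R$ and running a diagonal $R \to 0$ argument produces $d(\beta(t_n), \gamma(t_n + a)) \to 0$. Since $t \mapsto d(\beta(t), \gamma(t+a))$ is convex and bounded on $[0,\infty)$ by CAT(0) geometry, it is non-increasing, so its limit is $0$ and $\beta, \gamma$ are strongly asymptotic.

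The main obstacle is in part (2): extracting a single synchronization constant $a$ so that $d(\beta(t), \gamma(t+a)) \to 0$ along the whole ray, not just a subsequence. The uniform bound on $|s_n - t_n|$ through the triangle inequality pins down $a$ modulo passing to a subsequence, and the CAT(0) convexity-monotonicity argument then upgrades subsequential decay to full decay. The corresponding subtlety in (1), namely that the Myrberg definition only simultaneously controls $\xi$ and a $\U$-point (not the other boundary endpoint $g_n\gamma^-$), is entirely sidestepped by working with only the forward half of the translated bi-infinite geodesic.
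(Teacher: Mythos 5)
Your proof is correct, and it follows a genuinely different route from the paper's, most visibly in part (1). The paper proceeds through the barrier characterization of Myrberg points (Corollary~\ref{MyrbergConical}): it extracts longer and longer fellow-travelling segments $p_n$ between $\gamma$ and translated axes $g_n\alpha$, uses the cone topology on the visual boundary to deduce that \emph{both} translated endpoints $g_n^{-1}\gamma^{-}$ and $g_n^{-1}\gamma^{+}$ land in the neighborhoods $U,V$ of $\alpha^{\mp}$, and only then applies Lemma~\ref{Rank1Open} to the bi-infinite geodesic $g_n^{-1}\gamma$. You instead invoke the Myrberg definition directly on the ordered pair $(\xi,\gamma(0))$ to conjugate $(\xi,\gamma(0))$ into $(U,V)$, and apply Lemma~\ref{Rank1Open} to the forward \emph{ray} only; you then observe that a flat strip along the bi-infinite geodesic would restrict to a half-strip along that ray, which cleanly sidesteps having to control $g_n\gamma^-$. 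This is a nice simplification, at the cost of leaning on Lemma~\ref{Rank1Open}'s conclusion for geodesic rays from an interior point to a boundary point, i.e. reading ``bounds a $(2R)$-flat strip'' as ``bounds a $(2R)$-flat half-strip''; since the paper states the lemma with neighborhoods $U,V\subset\bU$ (not just in $\pU$), this interpretation is the intended one, but it is worth flagging explicitly because the cited Ballmann lemma is usually stated for bi-infinite geodesics. For part (2) the two arguments are closer; the paper gets the close pair $x_n\in\beta$, $y_n\in\gamma$ via the same barrier/overlap mechanism, while you get it from the Myrberg definition plus the $R$-closeness clause in Lemma~\ref{Rank1Open}. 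You are more explicit than the paper about extracting the synchronization constant $a$ via the bound $|s_n-t_n|\le 2R+d(\beta(0),\gamma(0))$ and a diagonal-in-$R$ argument, and about why convexity of $t\mapsto d(\beta(t),\gamma(t+a))$ with a subsequence tending to $0$ forces the full limit to be $0$ (one does not actually need boundedness a priori; nonnegativity plus $\liminf=0$ already forces the convex function to be non-increasing). One minor phrasing issue: in part (1) you say ``letting $R\to 0$''; what you mean is ``since $R>0$ is arbitrary'' — for each $R$ you need a fresh application of Lemma~\ref{Rank1Open} with its own neighborhoods and its own $n$-threshold.
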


\begin{proof}
Let $h$ be a rank-1 element with a geodesic axis $\alpha$   with zero width: for any $R>0$ there is no flat strip of width $R$ with  $\alpha$ as one boundary component. Fix $o\in \alpha$.

\textbf{(1).} We are going to prove that   $\gamma$  has zero width. Suppose to the contrary that   $\gamma$ bounds an flat strip of width $R$ for some $R>0$.  By Corollary \ref{MyrbergConical},   $\gamma$ contains infinitely many $(r, f_n)$-barriers $g_no$, where  $f_n\in \langle h\rangle$ is any  sequence of  elements with unbounded length. Thus,
\begin{equation}\label{LargeIntersectionEQ}
 \|N_r(g_n\alpha)\cap \gamma\|\to \infty 
\end{equation}
Thus, there exists   a segment  $p_n$ of $g_n\alpha$ such that $p_n\subseteq N_r(\gamma)$ and   $\len(p_n)\to +\infty$. 

For this $R/2$, let $U, V$ be their open neighborhoods of $\alpha^-,\alpha^+$ satisfying      Lemma \ref{Rank1Open}. As $\len(p_n)\to +\infty$, the cone topology on the visual boundary of $\U$ implies  that $g_n^{-1}\gamma^-\in U, g_n^{-1}\gamma^+\in V$ for $n\gg 0$. By Lemma \ref{Rank1Open}, the geodesic $g_n^{-1}\gamma$ does not bound an  flat strip of width $R$. This contradicts the hypothesis, so we proved that $\gamma$ has zero width. 

\textbf{(2).}   As the CAT(0) distance has  convexity property, it suffices to find two unbounded sequences of points $x_n\in \beta, y_n\in \gamma$ such that $d(x_n, y_n)\to 0$ as $n\to \infty$. By the same argument as above,  we have (\ref{LargeIntersectionEQ}) holds for $\gamma$ and $\beta$, so  for any $R>0$,   Lemma \ref{Rank1Open} implies that $\gamma, \beta$ are $R/2$-close to $g_n\alpha(0)$ for any $n\gg 0$. Thus we obtain $x_n\in \beta, y_n\in \gamma$ so that $d(x_n, y_n)\le R$. Letting $R\to 0$ completes the proof.  
\end{proof}

\subsection{Roller boundary of CAT(0) cube complexes}
Let $\U$ be a proper CAT(0) cube complex, that is, a proper CAT(0) space built out of unit Euclidean $n$-cubes $[0,1]^n$ for $n\ge 1$ by side isometric gluing. The maximal dimension of $n$-cubes is called the dimension of $\U$. In the sequel, we only consider finite dimensional CAT(0) cube complexes.

The collection $\mathcal H$ of \textit{hyperplanes} or called \textit{walls} as   connected components of mid-cubes (i.e. copies of $[0,1]^{n-1}\times \{1/2\}$ in $n$-cubes) endows $\U$ with a wall structure: every wall separates the space into two  connected components called \textit{half-spaces}. We say that an edge is \textit{duel} to a wall if the edge is parallel to the mid-cube in the wall.  An \textit{orientation} of walls picks up exactly one half-space for each wall, and is called \textit{consistent} if any two such chosen half-spaces intersect non-trivially. 

For every vertex $x$ of $\U^0$, the set $U_x$ of half-spaces containing $x$ forms a consistent orientation denoted by $\mathbf{x}\in 2^{\mathcal H}$.   We equip the space $2^{\mathcal H}$ of all orientations    with the compact product topology, where all consistent  orientations form a closed subset. The \textit{Roller compactification} $\bU$ of $\U$ is thus the closure  of  the vertex set   $\U^0$ (identified with just described consistent orientations) in the space of all consistent  orientations on walls, and    \textit{Roller boundary} is   $\bR:=\bU\setminus \U^0$. 

The CAT(0) metric induces an $\ell^2$-metric on the  1-skeleton $\U^1$. In this subsection, we are mainly interested in the $\ell^1$-metric  on   $\U^1$: $$\forall x, y\in \U^0: \; d(x,y):=\frac{1}{2}|U_x\Delta U_y|$$ where $\Delta$ denotes the  symmetric difference of sets.  If $\U$ is finite dimensional, the $\ell^1$-metric and $\ell^2$-metric are quasi-isometric. The $\ell^1$-metric is exactly the combinatorial metric on $\U^1$ and usually non-unique geodesic  metric.  However,  the set of walls duel to the edges on any geodesic is more canonical and does not depend on the geodesic, which is exactly the set of walls separating $x$ and $y$. By abuse of language, we also say that the wall separating $x$ and $y$ is duel to a geodesic $[x,y]$.

Recall that an action of $G$ on the CAT(0) cubical complex $\U$ is called \textit{essential} if no $G$-orbit is contained in a finite neighborhood of some half-space. If $G$ has no fixed points in the visual boundary of $\U$ or has only finitely many orbits of walls, then the action could be made essential by passing to its (convex) essential core (see \cite[Prop 3.5]{CapSag}).

Assume that $\U$ is \textit{irreducible}: it cannot be written as a nontrivial metric product of two CAT(0) cube complexes. If $G$ acts essentially on $\U$ without fixed points in the visual boundary, this is  equivalent to one of the following 
\begin{enumerate}
    \item 
      existence of 
\textit{strongly separated} of two half spaces:   no wall transverses both of them (\cite[Prop. 5.1]{CapSag}). 
    \item 
      existence of a contracting isometry on $\U$ with respect to CAT(0) metric  (\cite[Theorem 6.3]{CapSag}).
\end{enumerate}

\begin{lem}\cite[Lemma 8.3]{GYANG}\label{rank1l1}
Let $G \act \U$ be an essential  proper action on an irreducible CAT(0) cube complex. Then $G$ contains a {contracting} element with respect to the action on 1-skeleton of $\U$ with the $\ell^1$-metric. Moreover, \begin{itemize}
    \item 
    A contracting   element in CAT(0) metric is   contracting in the $\ell^1$-metric, and vice versa. 
    \item 
    A contracting   element in $\ell^1$-metric preserves an $\ell^1$-geodesic by translation.
\end{itemize}
\end{lem}

Following \cite{F18}, a boundary point (i.e. a consistent orientation of walls) is called \textit{regular} if the orientation of walls  contains an infinite sequence of a descending chain of pairwise {strongly separated} half-spaces.  We  equip $\bR$ with the \textit{finite symmetric difference partition} $[\cdot]$: two (consistent) orientations are equivalent if their symmetric difference is finite. It is proved in \cite[Lemma 5.2]{FLM} that the regular points are $[\cdot]$-minimal: the $[\cdot]$-class is a singleton. The fixed points of contracting isometry are regular and thus minimal.

An unpublished result of Bader--Guralnik says that the Roller boundary is homeomorphic to  the horofunction boundary of $(\U^0, d)$ (see a proof \cite[Prop. 6.20]{FLM}). Here is the main result relating the previous study of convergence boundary to Roller boundary.

\begin{lem}\label{Roller}
The Roller boundary $\bR$ is a convergence boundary, with finite symmetric difference partition  $[\cdot]$ so that $[\cdot]$  coincides with the partition of finite difference of horofunctions. 

Moreover, assume that a non-elementary group $G$ acts essentially  on $\U$. 
Then the following hold. 
\begin{enumerate}

    \item
    Every contracting element satisfies the  North--South dynamics with  minimal fixed points.
   
    \item
    $G$ admits a unique invariant closed subset, where the    fixed point pairs of contracting elements are dense in the  distinct pairs.
     \item
    The Myrberg points of $G$ are regular, and in particular, are minimal.
\end{enumerate}
       
\end{lem}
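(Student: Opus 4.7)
The plan is to first establish the convergence structure, then leverage the strongly separated half-space machinery of Caprace--Sageev to upgrade the dynamical statements. For the first assertion, I would invoke the Bader--Guralnik identification (stated just before the lemma) to realize $\bR$ as the horofunction boundary of $(\U^0,d)$ with the combinatorial metric. Theorem \ref{ContractiveThm1} then yields convergence with the finite difference relation. The key point is to match the two partitions: given two orientations $\xi,\eta$ viewed as ultrafilters on walls, a direct computation of $b_\xi(x)-b_\xi(y)$ as a signed count of walls between $x$ and $y$ oriented by $\xi$ shows
\[
b_\xi(x)-b_\eta(x) \;=\; \#(U_x\cap(\xi\Delta\eta)) - \#(U_o\cap(\xi\Delta\eta)),
\]
so $\|b_\xi-b_\eta\|_\infty<\infty$ holds if and only if $|\xi\Delta\eta|<\infty$. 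Thus finite symmetric difference and finite horofunctional difference coincide, and Lemma \ref{NoPinchedLineLem} gives $\mathcal C=\bR$.

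For (1), essentiality together with absence of fixed points at infinity produces, by Caprace--Sageev, a pair of strongly separated half-spaces, and hence a contracting (rank-$1$) element $h$ whose combinatorial axis carries a bi-infinite descending chain of pairwise strongly separated half-spaces. The corresponding endpoint orientations $[h^\pm]$ are then regular, and regular orientations are known to be $[\cdot]$-minimal (see \cite[Lemma 5.2]{FLM}). To establish the full NS dynamics on all of $\bR\setminus [h^\pm]$ (not merely on $\mathcal C$), I would run the proof of Lemma \ref{SouthNorthLem} with a key substitute for the projection map: for any orientation $\eta\ne[h^\pm]$, one uses the descending chain of strongly separated half-spaces on $\axis(h)$ as ``gates'' and argues that iterates $h^n\eta$ must agree with $[h^+]$ on all but finitely many half-spaces of any fixed initial segment of the chain, giving the desired convergence. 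This gate argument is the main obstacle since it must replace the weaker cone argument used for $\mathcal C$.

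For (2), once (1) is in place, Lemma \ref{UniqueLimitSet} applies verbatim to produce the unique minimal $G$-invariant closed subset $\Lambda$, together with the double-density of fixed point pairs of contracting elements in the distinct pairs of $\Lambda\times\Lambda$.

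For (3), let $\xi\in\mG$ and fix a combinatorial geodesic ray $\gamma$ from a basepoint to $[\xi]$. By Lemma \ref{CharMyrberg}, applied to the contracting element $h$ above and every power $f=h^n$, the ray $\gamma$ contains infinitely many $(r,f)$-barriers $g_k\ax(h)$. Each such barrier contributes, via Lemma \ref{AnyBarrierLem} together with strong separation of the two half-spaces bounding the fundamental step of $\ax(h)$, a pair of strongly separated half-spaces crossed by $\gamma$ and oriented towards $\xi$. Extracting this nested sequence produces an infinite descending chain of pairwise strongly separated half-spaces in the orientation $\xi$, so $\xi$ is regular. The minimality conclusion then follows from \cite[Lemma 5.2]{FLM}. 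The delicate point to verify carefully is that successive barriers can be chosen so that the half-spaces they contribute are mutually strongly separated rather than merely separated; this will be arranged by spacing the barriers along $\gamma$ using the freedom to take arbitrarily large powers of $h$ in Lemma \ref{CharMyrberg}.
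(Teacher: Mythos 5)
Your overall structure is right, but there are two substantive issues. First, the ``gate argument'' you flag as the main obstacle for item (1) is a phantom: since $\bR$ is identified with the horofunction boundary, Lemma~\ref{NoPinchedLineLem} gives $\mathcal C=\bR$ (you even cite this in your partition-matching step, but then forget it), so Lemma~\ref{SouthNorthLem} already yields full NS dynamics on all of $\bR\setminus[h^\pm]$ with no further work. What item (1) actually requires beyond that is the \emph{minimality} of the fixed points $[h^\pm]$, and this follows directly by applying the Genevois fact you use for item (3) --- that the combinatorial axis of a contracting isometry carries a chain of pairwise strongly separated half-spaces --- to $\ax(h)$ itself: $[h^\pm]$ are then regular, and regular points are $[\cdot]$-minimal by \cite[Lemma 5.2]{FLM}. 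Once (1) is in place, (2) is indeed Lemma~\ref{UniqueLimitSet}, and your treatment of (3) matches the paper's (geodesic ray to a Myrberg point passes through arbitrarily long stretches of translated contracting axes, via Corollary~\ref{MyrbergConical}, producing a descending chain of strongly separated half-spaces oriented towards $\xi$).

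Second, the displayed formula for the partition matching is wrong as written: if $\xi,\eta$ are orientations, then for each wall on which they disagree \emph{both} bounding half-spaces lie in $\xi\Delta\eta$, and since $U_x$ selects exactly one half-space per wall, $\#(U_x\cap(\xi\Delta\eta))$ equals the number of walls of disagreement for \emph{every} $x$, so your right-hand side is identically zero. The idea of a direct wall-counting computation is sound, but it must be a signed count over walls separating $x$ from $o$, weighted by whether $\xi$ orients each such wall towards $x$ or towards $o$. The paper instead argues via the geodesic-ray representation of Roller points (Genevois, Prop.~A.2), the Gromov product, and medians: a bound $2\langle o,y_n\rangle_x\le K$ along rays to $\xi,\eta$ is transferred, using convexity of half-spaces and the median of the triple $(o,x,y_n)$, into a bound on the symmetric difference of walls, and conversely. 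Either route can be made to work, but your formula as stated does not.
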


\begin{proof}
By \cite[Prop A.2]{Genevois}, every point in $\bR$ is represented by a geodesic ray (called Buseman point),  via the identification with the half-spaces into which the ray eventually enters.   We first prove that if  two horofunctions  $b_\xi, b_\eta\in \bR$ have finite difference at most $K$, then  $\xi$ and $\eta$ are in the same symmetric difference $[\cdot]$-class. 

Fix a basepoint $o\in \U$. Let $\alpha, \beta$ be two (combinatorial) geodesic rays issuing from $o$ and ending at $\xi, \eta$. Recall that $\bR$ is the horofunction boundary with respect to  the combinatorial metric $d$. We obtain similarly the same estimates on Gromov product  as  in the proof of Lemma \ref{ContractiveVisualBdry}: $$2\langle o, y_n \rangle_x=d(o,x)+d(o,y_n)-d(x,y_n)\le K+1$$ for any $x\in\alpha, y_n\in \beta$ with $n\gg 0$. As half-spaces are combinatorially convex, the geodesic rays $\alpha$ and $\beta$   share the same set of half-spaces containing the common initial point $o$. Thus, it suffices to consider the difference on the half-spaces whose bounding walls are duel to edges of $\alpha$ or $\beta$.  

\begin{figure}
    \centering

\tikzset{every picture/.style={line width=0.75pt}} 

\begin{tikzpicture}[x=0.75pt,y=0.75pt,yscale=-1,xscale=1]

\draw    (17.5,74) -- (299.51,97.83) ;
\draw [shift={(301.5,98)}, rotate = 184.83] [color={rgb, 255:red, 0; green, 0; blue, 0 }  ][line width=0.75]    (10.93,-3.29) .. controls (6.95,-1.4) and (3.31,-0.3) .. (0,0) .. controls (3.31,0.3) and (6.95,1.4) .. (10.93,3.29)   ;
\draw [shift={(17.5,74)}, rotate = 4.83] [color={rgb, 255:red, 0; green, 0; blue, 0 }  ][fill={rgb, 255:red, 0; green, 0; blue, 0 }  ][line width=0.75]      (0, 0) circle [x radius= 3.35, y radius= 3.35]   ;
\draw    (109.5,82) -- (149.5,29) ;
\draw [shift={(149.5,29)}, rotate = 307.04] [color={rgb, 255:red, 0; green, 0; blue, 0 }  ][fill={rgb, 255:red, 0; green, 0; blue, 0 }  ][line width=0.75]      (0, 0) circle [x radius= 3.35, y radius= 3.35]   ;
\draw [shift={(109.5,82)}, rotate = 307.04] [color={rgb, 255:red, 0; green, 0; blue, 0 }  ][fill={rgb, 255:red, 0; green, 0; blue, 0 }  ][line width=0.75]      (0, 0) circle [x radius= 3.35, y radius= 3.35]   ;
\draw    (149.5,29) -- (307.51,40.85) ;
\draw [shift={(309.5,41)}, rotate = 184.29] [color={rgb, 255:red, 0; green, 0; blue, 0 }  ][line width=0.75]    (10.93,-3.29) .. controls (6.95,-1.4) and (3.31,-0.3) .. (0,0) .. controls (3.31,0.3) and (6.95,1.4) .. (10.93,3.29)   ;
\draw  [line width=6] [line join = round][line cap = round] (212.8,91.22) .. controls (212.8,91.22) and (212.8,91.22) .. (212.8,91.22) ;
\draw  [line width=0.75]  (145.5,29) .. controls (141.72,26.26) and (138.46,26.78) .. (135.72,30.56) -- (126.59,43.15) .. controls (122.68,48.54) and (118.83,49.87) .. (115.05,47.13) .. controls (118.83,49.87) and (118.76,53.94) .. (114.84,59.33)(116.61,56.91) -- (106.95,70.22) .. controls (104.2,74) and (104.72,77.26) .. (108.5,80) ;

\draw (2,70.4) node [anchor=north west][inner sep=0.75pt]    {$o$};
\draw (150,6.4) node [anchor=north west][inner sep=0.75pt]    {$x$};
\draw (206,95.4) node [anchor=north west][inner sep=0.75pt]    {$y_{n}$};
\draw (224,11.4) node [anchor=north west][inner sep=0.75pt]    {$\alpha $};
\draw (218,66.4) node [anchor=north west][inner sep=0.75pt]    {$\beta $};
\draw (100,88.4) node [anchor=north west][inner sep=0.75pt]    {$m$};
\draw (305.5,44.4) node [anchor=north west][inner sep=0.75pt]    {$\xi $};
\draw (293,99.4) node [anchor=north west][inner sep=0.75pt]    {$\eta $};
\draw (34,36.4) node [anchor=north west][inner sep=0.75pt]    {$( K+1) /2\geq $};

\end{tikzpicture}
    \caption{The walls due to $[m,x]$}
    \label{fig:medianCCC}
\end{figure}
Recall that    the \textit{median} $m$ of $(o, x, y_n)$ is the unique common point  on all three sides of some geodesic triangle with these vertices. Alternatively, $m$ is the intersection of all geodesic segments between pairs of points in $(o, x, y_n)$. From the above inequality, we obtain $$\langle o, y_n \rangle_x=d(x, m)\le (K+1)/2.$$ Replacing the initial subpaths of $\alpha$ and $\beta$ with appropriate geodesics with the same endpoints, we may assume that the median point $m$  lies  both on $\alpha$ and $\beta$ so that $[o, m]_\alpha=[o, m]_\beta$. Thus,   the   walls intersecting $[o, x]_\alpha $ but not $ [o, m]_\alpha=[o, m]_\beta$ are due to the geodesic $[x,m]$, which are at most $d(x, m)\le (K+1)/2$ walls.  Note that $m$ depends on $x$. See Fig. \ref{fig:medianCCC}.

Letting $x\to \xi$ and choosing $m$ accordingly,   a limiting argument shows  that the symmetric difference of the walls intersecting $\alpha$ and the ones intersecting $\beta$ has at most  $(K+1)/2$ half-spaces. This shows that $\xi, \eta$ have finite difference on walls. 
 
 Conversely, if $\xi, \eta$ has finite symmetric difference on $K$ walls, the similar reasoning as above produces two unbounded sequence of  points $x_n\in \alpha, m_n\in \beta$ with $d(x_n, m_n)\le K/2$. Thus,  $\|b_\xi-b_\eta\|_\infty\le K/2$.

To complete the proof, it remains to   show that a Myrberg point is \textit{regular}: it contains an infinite sequence of a descending chain of pairwise {strongly separated} half-spaces. Once this is proved, the statements (1) and (2) are immediate consequences of Lemmas \ref{SouthNorthLem} and  \ref{UniqueLimitSet}. 

Let $\gamma$ be  a  geodesic ray  ending at a  Myrberg point $\xi$. By   \cite[Theorem 3.9]{Genevois}, for any contracting isometry $h$, the combinatorial geodesic axis    is duel to a pair of strongly separated half-spaces with arbitrarily large distance. As each wall separates $\U$ into two convex components, for any two pairs $(x,y)$ and $(z,w)$ with $d(x,z), d(y,w)\le r$, the walls due to $[x,y]$ differ by at most $2r$ from the ones to $[z,w]$.  By Corollary \ref{MyrbergConical}, for any   large $L>0$,  $\gamma$ contains a sequence of  disjoint   sub-segments of length $L$ within  the $r$-neighborhood of        a translated  contracting axis of $h$. Hence, $\gamma$ has to be duel to  infinitely many pairs of strongly separated walls. By strong separateness,  the   half-spaces bounded by those walls and into which  $\gamma$ eventually enters form a descending chain, so  $\xi$ is a regular point by definition. Thus the lemma is proved. 
\end{proof}

\subsection{Contracting boundary in conformal measure}
We start by proving a general result for  any convergence boundary $\pU$. Recall that a point $\xi$ in $\pU$ is called visual if  there exists a geodesic ray $\gamma$ from any point in $ \U$ ending at $[\xi]$. If in addition, $\gamma$ is  contracting,  then we call  $\xi\in \pU$ a \textit{contracting point}.

\begin{thm}\label{nullity}
Suppose that $G\act \U$ as in (\ref{GActUAssump}) is a co-compact action.  Let $\{\mu_x\}_{x\in \U}$ be a $\e G$-dimensional quasi-conformal density on a convergence boundary $\pU$. Then the set of contracting points in   $\pU$   is $\mu_1$-null if and only if $G$ is not a hyperbolic group.  
\end{thm}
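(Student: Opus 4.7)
If $G$ is hyperbolic, then \v{S}varc--Milnor applied to the cocompact action yields $\U$ quasi-isometric to $G$, and hyperbolicity is a quasi-isometry invariant of proper geodesic spaces, so $\U$ is itself Gromov hyperbolic. In such a space every geodesic ray is uniformly contracting by the Morse lemma, so the set of contracting points contains all visual points in $\mathcal C$; since by hypothesis $\mu_1(\mathcal C) > 0$, the contracting boundary is not $\mu_1$-null.

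\textbf{The contrapositive of the converse.} Suppose the set $\Lambda_{\mathrm{ct}} := \bigcup_{C \in \mathbb{N}} L_C$ has positive $\mu_1$-measure, where $L_C \subseteq \pU$ denotes the set of $\xi$ admitting a $C$-contracting geodesic ray from $o$ accumulating in $[\xi]$, and aim to show $G$ is hyperbolic. The first step is to note that a cocompact action is of divergent type: a Dirichlet-domain covering argument supplies the matching lower bound $|N(o,n)| \succ e^{\e G \cdot n}$ against Proposition \ref{ballgrowth}, producing purely exponential growth and hence divergence. Theorem \ref{HTSThm} then gives $\mu_1(\mG) = \mu_1(\mathcal C) > 0$, and in particular $\mu_1(\Lambda_{\mathrm{ct}} \cap \mG) > 0$.

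Second, observe that $\Lambda_{\mathrm{ct}}$ is exactly $G$-invariant, since the $G$-translate of a $C$-contracting ray is $C'$-contracting for some $C'$ depending on the displacement, and this cancels in the union over $C$. By an ergodicity argument in the spirit of Lemma \ref{Ergodic} (which transfers to the general convergence boundary via the uniqueness/ergodicity framework of \textsection\ref{SecUniqueness}), the $G$-invariant Borel set $\Lambda_{\mathrm{ct}}$ has full $\mu_1$-measure inside $\mG$; a countable exhaustion then pins down a single $C_0$ such that $L_{C_0} \cap \mG$ already has positive (and, after re-ergodicizing, essentially full) measure. Using the Myrberg characterization (Lemma \ref{CharMyrberg}) and double density (Lemma \ref{DoubleDense}), I can then realize, for any two points $x,y \in \U$ with $d(x,y)$ large, a bi-infinite $C_0$-contracting geodesic with both endpoints in $L_{C_0} \cap \mG$ that passes within uniformly bounded distance of both $x$ and $y$ (combining Lemma \ref{VisualPointsLem} with a compactness argument).

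\textbf{The main obstacle} is the final propagation from this "$C_0$-contracting geodesic near every long pair" to genuine Gromov hyperbolicity of $\U$. Cocompactness reduces the problem to pairs in $Go$ and ensures the contraction constant $C_0$ does not drift, but one must verify that the resulting family of uniformly contracting quasi-geodesics indeed forces uniformly slim triangles in $\U$ --- a subtle uniformity issue, because the contracting quasi-geodesic produced for each pair $(x,y)$ may depend on $(x,y)$ and only shadow $[x,y]$, not coincide with it. The plan is to upgrade Lemma \ref{AnyBarrierLem} and Proposition \ref{admisProp} to this setting, showing that every geodesic triangle admits $C_0$-contracting fellow travellers on each side, which is the content of Gromov hyperbolicity up to a uniform constant. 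Once $\U$ is hyperbolic, \v{S}varc--Milnor in reverse yields that $G$ is hyperbolic, contradicting the standing assumption and completing the contrapositive.
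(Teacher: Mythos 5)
Your forward direction is fine and matches the paper's. The converse, however, has a genuine gap at exactly the point you flag as ``the main obstacle'': the passage from ``a $C_0$-contracting geodesic shadows every sufficiently separated pair'' to Gromov hyperbolicity of $\U$ is only announced as a plan (``upgrade Lemma \ref{AnyBarrierLem} and Proposition \ref{admisProp}''), not carried out, and it is the crux of the whole argument. Worse, the input you would feed into that step is not actually secured. Ergodicity is proved in the paper (Lemma \ref{Ergodic}) only for the pushed-forward density on the quotient $[\mG]$ of the \emph{horofunction} boundary, via the Hausdorff--Carath\'eodory comparison of Lemma \ref{Unique}; Theorem \ref{nullity} is stated for an arbitrary convergence boundary, where no such ergodicity is available. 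Even granting ergodicity, the set $L_{C_0}$ for a fixed $C_0$ is not $G$-invariant (only the union over all $C$ is), so you only get \emph{positive} measure for some $C_0$, and positive measure of $L_{C_0}$ does not imply that every shadow $\Pi_o(y,R)$ meets $L_{C_0}$ --- which is what your ``contracting geodesic near every pair $(x,y)$'' claim needs. So the chain breaks before the final step is even reached.

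The paper's proof avoids all of this and needs only \emph{positive} measure of $\Lambda_C(o)$ for a single $C$ (obtained exactly as you do, by countable decomposition, quasi-equivariance and cocompactness). It then plays two growth estimates against each other. On one hand, Lemma \ref{growthhull} (a direct consequence of the shadow lemma) shows that the geodesic hull $U=\textrm{Hull}_R(\Lambda_C(o))\subseteq Go$ of any positive-measure set of visual points has full growth rate $\e G$. On the other hand, if $G$ is not hyperbolic there is a geodesic segment $\alpha=[o,ho]$ that is not $D$-contracting for the constant $D=D(C,R)$; since every geodesic $[o,v]$ with $v\in U$ lies in the $R$-neighborhood of a $C$-contracting ray and is therefore $D$-contracting, no such $[o,v]$ can contain an $(\epsilon,h)$-barrier, and the growth-tightness theorem of \cite{YANG10} forces $\e U<\e G$ --- a contradiction. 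If you want to salvage your route you would have to (i) prove ergodicity on a general convergence boundary, (ii) upgrade positive measure of $L_{C_0}$ to the statement that every shadow meets it, and (iii) actually prove the slim-triangle propagation; the barrier/growth-tightness argument bypasses all three.
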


Fix a basepoint $o\in \U$. As the action $G\act \U$ is co-compact, let $M>0$ such that $N_M(Go)=\U$. In the proof, we need the following result which is of independent interest.

Let $\Lambda\subseteq \pU$ be a set of visual points. Denote by $\textrm{Hull}_R(\Lambda)$  the set of orbital points $v \in Go$ such that $v$ is $R$-closed to a  geodesic from     $o$ to $\xi\in \Lambda$ for $R\ge 0$.
\begin{lem}\label{growthhull}
   
Suppose $\Lambda\subseteq \pU$ has positive $\mu_1$-measure.  Then for any $R\gg M$, there exists $c=c(\Lambda, R)>0$ such that
$$| \textrm{Hull}_R(\Lambda)\cap N(o,n)| \ge c\cdot e^{\e G n},$$
for any $n>0$.
\end{lem}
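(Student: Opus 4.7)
The plan is to cover (up to a $\mu_1$-null set) the subset $\Lambda$ by the usual shadows $\Pi_o(v,R)$ of orbital points $v\in Go$ with $d(o,v)\approx n$, then invoke the Shadow Lemma \ref{ShadowLem} to turn the upper bound $\mu_1(\Pi_o(v,R)\cap\mathcal C)\prec_R e^{-\e G\,n}$ into a lower bound on the number of shadows needed to cover $\Lambda$.

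First I would fix $R\ge M+r_0+1$, where $r_0$ is the constant from the Shadow Lemma, and replace $\Lambda$ by $\Lambda\cap\mathcal C$: since in the CAT(0) setting $\mathcal C=\pU$ (Lemma \ref{ContractiveVisualBdry}), this loses nothing, and more generally one can work under the standing positivity $\mu_1(\mathcal C)>0$ assumed for applying the Shadow Lemma. For every visual $\xi\in\Lambda$, fix a geodesic ray $\gamma_\xi$ from $o$ accumulating into $[\xi]$; by co-compactness $N_M(Go)=\U$, there exists $v=v(\xi,n)\in Go$ with $d(v,\gamma_\xi(n))\le M$, hence $|d(o,v)-n|\le M$. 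Taking $\gamma_\xi(t)\to[\xi]$ for $t\to\infty$ shows $\gamma_\xi(t)\in\Omega_o(v,M+1)$ for all large $t$, and therefore $\xi\in\Pi_o(v,M+1)\subseteq\Pi_o(v,R)$. By construction $v\in\textrm{Hull}_R(\Lambda)$, so setting
\[
V_n:=\textrm{Hull}_R(\Lambda)\cap A(o,n,M+1),
\]
we obtain the covering
\[
\Lambda\cap\mathcal C\;\subseteq\;\bigcup_{v\in V_n}\Pi_o(v,R)\cap\mathcal C.
\]

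Integrating against $\mu_1$ and applying the upper bound in the Shadow Lemma \ref{ShadowLem} gives
\[
0\;<\;\mu_1(\Lambda\cap\mathcal C)\;\le\;\sum_{v\in V_n}\mu_1\bigl(\Pi_o(v,R)\cap\mathcal C\bigr)\;\prec_R\;|V_n|\cdot e^{-\e G\, n},
\]
so $|V_n|\succ_{R,\Lambda} e^{\e G\, n}$. Since $V_n\subseteq\textrm{Hull}_R(\Lambda)\cap N(o,n+M+1)$, a shift of the index by the constant $M+1$ (absorbed into $c$) yields the claimed estimate for $N(o,n)$.

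The only genuinely delicate point is verifying the covering step, namely that $\gamma_\xi$ can be chosen consistently so that orbital points on its $M$-neighborhood land in $\textrm{Hull}_R(\Lambda)$; this is straightforward in the CAT(0) case where $\Lambda$ consists of visual points, but in full generality one has to be careful that the geodesic ray accumulates in $[\xi]$ rather than in a distinguished point of $[\xi]$, which is why the hull is defined up to the parameter $R$ rather than $M$. The rest is a routine application of the Shadow Lemma and does not introduce further obstacles.
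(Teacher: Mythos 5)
Your overall strategy — cover $\Lambda$ by shadows of orbital points in an annulus, apply the Shadow Lemma upper bound, deduce the cardinality lower bound — is the same as the paper's, and you correctly sense where the difficulty lies, but the remark at the end does not actually close the gap, and in fact the argument as written breaks exactly there. The problem is the covering $\Lambda\cap\mathcal C\subseteq\bigcup_{v\in V_n}\Pi_o(v,R)$. A geodesic ray $\gamma_\xi$ from $o$ is only known to accumulate \emph{in the class} $[\xi]$, so its limit $\eta\in\Pi_o(v,R)$ may be a different representative of $[\xi]$; from this you cannot conclude $\xi\in\Pi_o(v,R)$. This is invisible in the CAT(0) case because the partition is trivial (Lemma \ref{ContractiveVisualBdry}), but the lemma is stated for a general convergence boundary, where it is not. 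The slack of $R$ versus $M$ in the definition of the hull has nothing to do with this — that parameter controls how far $v$ can sit from the geodesic, not which representative of $[\xi]$ appears in the closure of the cone.

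The paper deals with this by covering $\Lambda$ by the \emph{loci} $[\Pi_o(v,R)]$, which only requires that some point of $[\xi]$ lie in the shadow, and that is exactly what $\gamma_\xi$ gives. But then a second ingredient becomes necessary, which your "the rest is routine" sentence omits: the Shadow Lemma controls $\mu_1(\Pi_o(v,R)\cap\mathcal C)$, not $\mu_1([\Pi_o(v,R)])$, and the locus can a priori be larger. The paper extends the upper bound to the locus by noting that a co-compact action is of divergent type, hence by Theorem \ref{HTSThm} $\mu_1$ is supported on the Myrberg set, where (Lemma \ref{ClosedFinDiff}/\ref{UnifDiffLem}) horofunctions within a single $[\cdot]$-class have uniformly bounded difference; this lets the Busemann cocycle estimate in the Shadow Lemma's upper-bound proof survive the passage to loci. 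Both the locus covering and the Myrberg-support step are missing from your proposal, and without them the argument only establishes the lemma under the extra hypothesis that the partition is trivial.
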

\begin{proof}
Denote $A_n= \textrm{Hull}_R(\Lambda) \cap A(o, n, \Delta)$.     Set $R>\max\{r_0, M\}$, where $r_0>0$ is given by Lemma \ref{ShadowLem}. By definition, we have    $$\Lambda \subseteq \bigcup_{v \in A_n} [\Pi_o^F(v, R)]$$ 
which yields  
$$
\mu_1(\Lambda) \le \sum_{h \in A_n} \mu_1([\Pi_o(v, R)]).
$$ A co-compact action must be of divergent type, so  by Theorem \ref{HTSThm}, $\mu_1$ is supported on Myrberg  set. By Lemma \ref{UnifDiffLem}, any two horofunctions in the locus of a Myrberg point have uniformly bounded difference. The proof for the upper bound in     Lemma \ref{ShadowLem} gives
$$
\mu_1([\Pi_o(v, R)]) \prec_R e^{\e G d(o, v)}
$$ 
Hence, there exists a constant $c>0$ such that
$$
|A_n|\ge c \cdot e^{\e G n}
$$
completing the proof.
\end{proof}

We are now ready to give the proof.

\begin{proof}[Proof of Theorem \ref{nullity}]
If $G$ is a hyperbolic group, then the co-compact action implies that $\U$ is hyperbolic and the action   $G\act \U$ is of divergent type. By Lemma \ref{CharConicalLem}, any $(r, F)$-conical point is visual and thus contracting, because every geodesic ray in a hyperbolic space is contracting. By Theorem \ref{HTSThm}, the set of contracting points is  $\mu_1$-full, hence the ``$\Longrightarrow$" direction follows. 

Let us prove  the other direction by assuming that $G$ is not hyperbolic. Let $\Lambda$ denote the set of contracting points, and assume   $\mu(\Lambda)>0$ for a contradiction. 

Let   $\Lambda_C(x) $ be the set of  $\xi\in \Lambda$ so that there exists a $C$-contracting geodesic   ray at $\xi$ starting at $x\in \U$.  Then $\Lambda$ admits the following countable union:       $$ \Lambda =\bigcup_{C\in\mathbb N} \Bigg(\bigcup_{x\in \U}\Lambda_C(x)\Bigg)$$ where $\Lambda(gx)=g\Lambda(x)$ for any $g\in G, x\in \U$.  Since the conformal measures are almost $G$-equivariant and the $M$-neighborhood of a $G$-orbit $Go$ covers $\U$, there exists $C>0$ such that   $\mu_1(\Lambda_C(o))>0$ for some (or any) $o\in \U$.  

Let $U=\textrm{Hull}_R(\Lambda_C(o))$ be the  hull of $\Lambda_C(o)$ for some $R\gg 0$. As $\mu_1(\Lambda_C(o))>0$, the set $U\subseteq Go$ has the growth rate  $\e G$ by Lemma \ref{growthhull}.

By definition, any segment $[o,v]$ for $v\in U$ has two endpoints $R$-close to a $C$-contracting geodesic ray. Recall that by \cite[Prop. 2.2]{YANG11},   there exists  a constant $D=D(C, R)$ such that any geodesic segment with two endpoints in an  $2R$-neighborhood of a $C$-contracting geodesic is $D$-contracting. Thus, $[o,v]$ for $v\in U$ is $D$-contracting. 

It is well-known that if  every geodesic segment in $\U$ is uniformly contracting then $\U$ is Gromov-hyperbolic. Thus  there exists  a segment  $\alpha$,  so that $\alpha$ is not $D$-contracting. As $G$ acts    co-compactly on $\U$, we may assume without loss of generality that $\alpha=[o, ho]$ for a basepoint $o\in \U$ and $h\in G$ is not $D$-contracting. 

Observe that any $[o,v]$ for $v\in U$ cannot contain $\alpha$ in its $R$-neighborhood. Indeed, if not,  $\alpha$ lies in $2R$-neighborhood of a $C$-contracting geodesic ray, so must be  $D$-contracting.  That is to say, any $[o,v]$ for $v\in U$ does not contain any $(R, h)$-barrier in a sense of \cite{YANG10}. By \cite[Theorem C]{YANG10}, the set $U$ is \textit{growth tight}: $\e U<\e G$. This   contradicts the equality obtained as above. Hence, it is proved that $\mu_1(\Lambda)=0$.
\end{proof}

\section{Conformal density for mapping class groups: Case Study (III)}\label{SecMCG}

Let $\Sigma_g$ be an orientable closed surface of genus $g\ge 2$. Consider  the mapping class group    $$ \mcg=\pi_0(\textrm{Homeo}^+(\Sigma_g))$$i.e.:   isotopy classes of orientation preserving homeomorphisms of $\Sigma_g$.  

Uniformization theorem for surfaces sets up one-to-one correspondence between complex structures and hyperbolic metrics on $\Sigma_g$.  The Teichm\"uller space, denoted by $\T$,  of $\Sigma_g$ is the set of    isotopy classes of hyperbolic metrics or of complex structures       on  $\Sigma_g$. We equip $\T$  with   Teichm\"uller metric    $d_T$, for which the isometry group is exactly  the mapping class groups $\mcg$. It is well-known that pseudo-Anosov mapping classes are contracting in Teichm\"uller metric \cite{Minsky}, so the results   in the previous subsections apply.  In the sequel, we refer to \cite{FMbook, FLP} for more details on the undefined notions.

\subsection{Preliminaries on Thurston and Gardiner--Masur boundaries}\label{SecThurstonBdry}
Let $\s$ be the set of (isotopy classes of) essential simple closed curves on $\Sigma_g$. Given $x\in \T$, each element $c\in \s$ can be assigned either   the   length $\ell_x(c)$ of a closed geodesic representative on $\Sigma_g$ equipped with a constant  curvature $-1$ of Riemannian metric  or the extremal length $\ext_x(c)$ on $\Sigma_g$ with a Riemann surface structure. 

Denote by $\mathbb{R}_{>0}^\s$ the set of positive functions on $\s$.  We then obtain the embedding of $\U$ into $\mathbb{R}_{>0}^\s$ in two ways 
\begin{equation}\label{CurveEmbed}
x\in \T\mapsto \Big(\ell_x(c)\Big)_{c\in \s}\; \textrm{ and } \quad x\in \T\mapsto \Big(\sqrt{\ext_x(c)}\Big)_{c\in \s}    
\end{equation}
whose topological closures in the projectived space $\mathbb P\mathbb{R}_{>0}^\s$ give the corresponding Thurston and Gardiner--Masur compactifications of $\T$ denoted by $\T\cup \bTh$ and $\T\cup \bGM$ (\cite{FLP, GM91}). We first describe the Thurston boundary $\bTh$ in more details.

\subsubsection*{Thurston boundary} Let $\mf$ be the set of measured foliations  on $\Sigma_g$ with a natural topology, so that $\mf$   admits a topological embedding into $\mathbb{R}_{>0}^\s$, by taking the intersection number $I(\xi, c)$ of a measured foliation $\xi$ with a curve $c$ in $\s$.  As the   weighted multi-curves   form a dense subset in $\mf$, their geometric intersection number extends continuously to  a bi-linear intersection form  $$I(\cdot, \cdot): \quad \mf\times \mf\to \mathbb R_{\ge 0}.$$
This allows to understand   an essential simple closed curves $c\in \mathcal S$ as a measured foliation, where  the transversal measure is     given by  the geometric intersection number.

Let $\mf\to \pmf$  be the projection to the set $\pmf$ of projective measured foliations  by positive reals scaling. Thurston proved that $ \pmf$ is homeomorphic to the sphere of dimension $6g-7$, and compactifies $\T$ in the projective space $\mathbb P\mathbb{R}_{>0}^\s$ as a closed unit ball (\cite{FLP}).  Then $\bTh:=\pmf$ is the so-called Thurston  boundary.

By abuse of language, recalling    $I(\cdot,\cdot )$ is bi-linear, we  shall write     $I(\xi, \eta)=0$ for two \textbf{projective} measured foliations $\xi, \eta\in \pmf$ to mean $I(\check \xi, \check \eta)=0$ for any of their lifts $\check \xi, \check \eta\in \mf$.  

\subsubsection*{Gardiner--Masur boundary} In \cite{GM91}, Gardiner--Masur proved that the Gardiner--Masur boundary  $\bGM$ from the embedding (\ref{CurveEmbed}) contains $\pmf$ as a proper subset. In \cite[Corollary 1]{Miy13}, Miyachi proved that the identification between the Teichm\"uller space $\T$ extends continuously  
\begin{equation}\label{ThGMHomeoEQ}
\T\cup \ue \subseteq \T\cup\bTh\quad \longmapsto\quad  \T\cup  \ue \subseteq \T\cup\bGM   
\end{equation}
to a homeomorphism between the subsets   $\ue$ in both boundaries. Furthermore, the following result will be crucially used later on. 
\begin{lem}\label{GromovProduct}\cite[Corollary 1]{Miy14b}
For any $o\in \T$, there exists a unique continuous extension of  the Gromov product   
$$
(x, y)\in \T\times \T \quad \longmapsto \quad \langle x, y\rangle_o=\frac{d(x,o)+d(y,o)-d(x,y)}{2}\in \mathbb R_{\ge 0}
$$
to the Gardiner--Masur boundary $\pmf\subseteq \bGM$ so that for any $\xi, \eta\in \pmf$ 
\begin{equation}\label{GromovProductExtendsI}
e^{-2\langle \xi, \eta\rangle_o} = \frac{I(\check\xi,\check\eta)}{\ext_o(\check\xi)^{1/2}\ext_o(\check\eta)^{1/2}}
\end{equation}
where $\check{\xi},\check{\eta}$ are any lifts of $\xi, \eta$ in $\mf$.
\end{lem}
\begin{rem}(extension in Thurston boundary)
Thanks to (\ref{ThGMHomeoEQ}), the equality (\ref{GromovProductExtendsI}) holds for uniquely ergodic points $\xi,\eta\in \bTh$. We now explain it also holds for $\xi,\eta \in \mathcal S\subseteq \pmf=\bTh$. By Hubbard-Masur theorem, for any basepoint $o\in \T$ and for any $c\in \mathcal S\subseteq \pmf$,   there exists a quadratic differential $q$ on the Riemann surface $o$ whose   vertical foliation $q^+$ realizes $c$. Recall that   the  Teichm\"uller geodesic ray issuing from $o$ directed by $q^+$ has the limit point  $c$ in either Thurston or Gardiner--Masur boundary (cf. \cite[Theorem]{Ma82b}, \cite[Theorem 2]{Miy13}).  Let $x_n,y_n$ be on the geodesic rays ending at $\xi\in \bTh$ and $\eta\in \bTh$ so that   $x_n\to \xi$ and $y_n\to \eta$ in both compactifications.  Thus, (\ref{GromovProductExtendsI}) holds for  $\xi,\eta \in \mathcal S \subseteq \bTh$ as the right side of (\ref{GromovProductExtendsI}) involves only the Teichm\"uller metric without caring about compactification. H. Miyachi also indicated to the author that this also follows from the discussion \cite[Sec. 5.3]{Miy13}.  
\end{rem}

\subsection{Kaimanovich--Masur partition on Thurston boundary}
Following Kaimanovich--Masur \cite{KaMasur}, we shall define a partition on $\pmf$ as follows.      Let $\minf$ be the subset of projective \textit{minimal} foliations $\xi\in \pmf$ so that $I(\xi, c)>0$ for any $c\in \s$. The $[\cdot]$-class of   $\xi\in \minf$  defined as $$[\xi]\;:=\;\{\eta\in \minf:\; I( \xi,  \eta)=0\}$$  forms a partition of   $\minf$, as  $I(\xi, \eta)=0$ for $\xi\in \minf$ implies $\eta\in \minf$ (\cite{Rees81}).  
The $[\cdot]$-class of minimal foliations $\xi\in \minf$ being singleton forms exactly the set $\ue$ of uniquely ergodic foliations. 
The remaining non-minimal foliations $\mf\setminus \minf$ are then partitioned into    countably many classes $$[\xi]\;:=\;\{\eta: \; \forall c\in \s,\;  I( \eta, c)=0 \Longleftrightarrow  I(\xi,c)=0 \}$$  according to whether they share the same disjoint set of  curves in $\s$. It is proved in \cite[Lemma 1.1.2]{KaMasur} that the partition $[\cdot]$, generated by a countable partition, is measurable in the sense of Rokhlin.  
 
The following useful fact is   proved in \cite[Lemma 1.4.2]{KaMasur} where $\xi$ lies in $\ue$ and the conclusion follows as $y_n\to\xi$.   We provide some explanation on the changes.
\begin{lem}\label{PMFConv}
Assume that  $x_n\in \T$ tends to $\xi\in\minf$.     If   a sequence of points $y_n \in \T$ satisfies $$d(o, y_n)-d(x_n, y_n)\to +\infty$$  then   $y_n\to [\xi]$.

\end{lem}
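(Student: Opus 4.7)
My plan is to apply Miyachi's extended-Gromov-product criterion (Lemma~\ref{GromovProduct}) after first reducing to a statement about Thurston-convergent subsequences of $\{y_n\}$.

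By compactness of $\bTh=\pmf$ together with the definition of the Kaimanovich-Masur partition on $\minf$, it suffices to prove: every Thurston accumulation point $\eta\in\pmf$ of $\{y_n\}$ satisfies $I(\xi,\eta)=0$ (this automatically forces $\eta\in\minf$ by the cited fact of Rees, so $\eta\in[\xi]$). Passing to a subsequence, I may therefore assume $y_n\to\eta$ in $\bTh$ and focus on showing $I(\xi,\eta)=0$.

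The crucial estimate is that the Gromov product diverges. Since $x_n\to\xi\in\pmf$ leaves every compact subset of $\T$, we have $d(o,x_n)\to\infty$, and combining with the hypothesis,
\[
2\langle x_n,y_n\rangle_o \;=\; d(o,x_n) \,+\, \bigl(d(o,y_n)-d(x_n,y_n)\bigr) \;\longrightarrow\; +\infty.
\]
By compactness of $\bGM$, extract a further subsequence so that $x_n\to\xi^{\ast}$ and $y_n\to\eta^{\ast}$ in $\bGM$. The continuity of Miyachi's extended Gromov product on $\bGM$ together with the divergence above forces the boundary pair to satisfy $\langle\xi^{\ast},\eta^{\ast}\rangle_o=+\infty$, and Lemma~\ref{GromovProduct} then yields $I(\xi^{\ast},\eta^{\ast})=0$ (interpreting $\xi^{\ast},\eta^{\ast}$ via the inclusion $\pmf\hookrightarrow\bGM$, once we know that both limits land in $\pmf$).

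The main obstacle, and the point requiring care, is to match the Gardiner-Masur limits $\xi^{\ast},\eta^{\ast}$ with the Thurston limits $\xi,\eta$, since outside of the uniquely ergodic locus $\ue$ the two topologies on $\pmf$ genuinely differ and one cannot a priori conclude $\xi^{\ast}=\xi$. I would handle this by appealing to Miyachi's description of Gardiner-Masur limits of Teichm\"uller sequences in \cite{Miy13,Miy14a}: the two boundary limits of the same Teichm\"uller sequence are determined by the same projective extremal-length data, so that $\xi^{\ast}$ and $\xi$ (respectively $\eta^{\ast}$ and $\eta$) lie in a common Kaimanovich-Masur class. Since the vanishing of the intersection form is $[\,\cdot\,]$-invariant on $\minf$, the identity $I(\xi^{\ast},\eta^{\ast})=0$ is equivalent to $I(\xi,\eta)=0$, which completes the proof. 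Should the topological identification above prove less transparent than hoped, a fallback is to carry out the analogous computation directly with extremal lengths using Kerckhoff's formula $\ext_{y_n}(\alpha)\le e^{2d(x_n,y_n)}\ext_{x_n}(\alpha)$ and Minsky's inequality $I(\alpha,\beta)^{2}\le\ext_{x}(\alpha)\,\ext_{x}(\beta)$, exploiting that $\sqrt{\ext_{x_n}(\check\xi)}/e^{d(o,x_n)}\to 0$ (since $I(\check\xi,\check\xi)=0$) to push $I(\check\xi,\check\eta)$ to zero through the Gromov-product estimate.
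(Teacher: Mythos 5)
Your reduction (pass to a Thurston-convergent subsequence $y_n\to\eta$ and show $I(\xi,\eta)=0$, then invoke Rees to get $\eta\in\minf$ and hence $\eta\in[\xi]$) and your Gromov-product computation $2\langle x_n,y_n\rangle_o=d(o,x_n)+(d(o,y_n)-d(x_n,y_n))\to+\infty$ are both correct. The proof breaks down, however, exactly at the step you flag as "the main obstacle." Miyachi's criterion (Lemma~\ref{GromovProduct}) detects vanishing of $I$ at the \emph{Gardiner-Masur} limits of the sequences, and you have no control over those: a sequence converging to $\xi$ in $\bTh$ need not converge in $\bGM$ to the image of $\xi$ under the embedding $\pmf\hookrightarrow\bGM$ — indeed it need not converge to a point of $\pmf\subseteq\bGM$ at all, since $\pmf$ is a proper subset of $\bGM$ and carries a different subspace topology there. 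The identification $\T\cup\bTh\to\T\cup\bGM$ is continuous precisely on $\T\cup\ue$ (this is the content of (\ref{ThGMHomeoEQ})), and the lemma only has new content for $\xi\in\minf\setminus\ue$. Your proposed bridge — "the two boundary limits of the same Teichm\"uller sequence are determined by the same projective extremal-length data, so $\xi^{\ast}$ and $\xi$ lie in a common Kaimanovich-Masur class" — is not a theorem you can cite; it is a nontrivial assertion about arbitrary Thurston-convergent sequences (not just Teichm\"uller rays), and even granting it you would still need $\xi^{\ast},\eta^{\ast}\in\minf$ for the class-invariance of $I(\cdot,\cdot)=0$ to apply. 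The fallback via Kerckhoff and Minsky inequalities is likewise only a sketch: the claimed decay $\sqrt{\ext_{x_n}(\check\xi)}/e^{d(o,x_n)}\to 0$ does not follow from $I(\check\xi,\check\xi)=0$ (which is vacuous) and is again a statement about GM-convergence.

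The paper proves the lemma by an entirely different route, adapting \cite[Lemma 1.4.1]{KaMasur}: one produces simple closed curves $\beta_n$ with $\ext_{x_n}(\beta_n)$ uniformly bounded and $I(\beta_n,F_n)\to 0$, where $F_n$ is the vertical foliation of the Teichm\"uller map from $o$ to $x_n$; a limiting argument and minimality of the class $[\xi]$ (Rees) give a foliation $F\in[\xi]$, and the remainder of Kaimanovich-Masur's proof shows $y_n\to F$ directly in $\bTh$. This stays entirely within the Thurston compactification and never invokes the Gardiner-Masur boundary, which is how it sidesteps the topology mismatch that your argument cannot.
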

\begin{proof}
In the proof \cite[Lemma 1.4.1]{KaMasur}, a sequence of $\beta_n\in \s$ is produced such that $\ext_{x_n}(\beta_n)$
is uniformly bounded and 
$$
I(\beta_n, F_n)\to 0
$$
where $F_n$ is the vertical measured foliation of the terminal quadratic differential of the Teichm\"uller map from $x_0$ to $x_n$. Taking convergence in  $\pmf$ and  passing to a subsequence,  assume that $$\beta_n \stackrel{\pmf}{\Longrightarrow}  F, \quad F_n\stackrel{\pmf}{\Longrightarrow}  F_\infty$$ for some $F, F_\infty\in \mf$. Then  $I(F, F_\infty)=0=I(F_\infty, \xi)$ by   \cite[Lemma 43]{Ka2}.  As all the foliations in $[\xi]$ for $\xi\in \minf$ is minimal (\cite{Rees81}),  we obtain $ F\in [\xi]$.

The remaining proof   from page 245 of \cite[Lemma 1.4.1]{KaMasur} shows that $y_n$ tends to $ F\in \pmf$. This is what we wanted.
\end{proof}

The Thurston boundary with the above partition $[\cdot]$ satisfies the following stronger version of \ref{AssumpB}, which also holds for CAT(0) and hyperbolic spaces (see Lemma \ref{UnifDiffLemCAT0}). To be precise, the cones in the following definition are contained in the cones in \ref{AssumpB}, where  $\|[o,y]\cap N_C(X_n)\|\gg 0$ is required.
\begin{lem}\label{StrongerAssumpB}
Let $X_n$ be an escaping  sequence of contracting subsets in $\T$. Denote the cone $Y_n=\Omega_o(X_n)=\{y\in \T: [o,y]\cap X_n\ne\emptyset\}$. Then there exist a subsequence of $Y_n$ and a boundary point $\xi$ so that the following holds. Any convergent sequence $y_n\in Y_n$ tends into $[\xi]$.
\end{lem}
\begin{proof}
Indeed, after passage to subsequence, let $X_n$ be an escaping sequence of $C$-contracting subsets so that $x_n\in \pi_{X_n}(o)$ tends to $\xi\in \pmf$. Take any  sequence of points $y_n\in \Omega_o(X_n)$ converging to some $\eta\in \pmf$ by compactness. Recall that $\Omega_o(X_n)$ is the set of points $y\in \T$ with $[o,y]\cap X_n\ne\emptyset.$ The contracting property in Lemma \ref{BigThree} implies $d(\pi_{X_n}(o), [o, y_n])\le C$, so we have 
\begin{equation}\label{almostgeodEQ}
|d(o, y_n) - d(o,x_n)-d(x_n, y_n|\le 2C    
\end{equation}  If $\xi\in \minf$, then $\eta\in [\xi]$ follows by Lemma \ref{PMFConv} as $d(o,x_n)\to\infty$.     

To finish the proof of  \ref{AssumpB}, it thus remains to consider the case $\xi\in \pmf\setminus \minf$, and prove $[\xi]=[\eta]$: they has the same  set of disjoint curves in $\s$. That is to say,    $I( \xi, c)=0$ for $c\in\s$ is equivalent to  $I( \eta, c)=0$. The two directions are symmetric. We only prove that if $I( \xi, c)=0$ then  $I(\eta, c)=0$.

Let $z_n$ be be a sequence of points on a geodesic ray ending at $c\in \pmf=\bTh$, so $z_n\to c$   also takes place in $\bGM$. By Lemma \ref{GromovProduct}, as $I(\xi, c)=0$,  we have $ \langle x_n, z_n\rangle_o\to\infty$.  By definition of Gromov product, $$
\begin{array}{rlr}
2\langle y_n,z_n\rangle_o &= d(y_n,o)+d(z_n,o)-d(y_n,z_n) &\\
& \ge d(y_n, x_n)+d(o, x_n) +d(z_n, o)-d(y_n, z_n)-2C & (\ref{almostgeodEQ}) \\
& \ge d(o, x_n) + d(z_n, o) -d(x_n,z_n)-2C & (\Delta\textrm{-ineq.})\\
&= 2\langle x_n, z_n\rangle_o -2C & 
\end{array} 
$$ so by Lemma \ref{GromovProduct} with the Remark after it, we have $I(\eta, c)=0$.     
\end{proof}

We are now ready to prove the main result, Theorem \ref{ContractiveThm2}, of this subsection.
\begin{prop}\label{ThurstonBdryContractive}
The Thurston boundary $\bTh$ with the above Kaimanovich--Masur partition $[\cdot]$   is a  convergence boundary of Teichm\"uller space $\T$, so that the uniquely ergodic points $\ue$  are   non-pinched points. 
\end{prop}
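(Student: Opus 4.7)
My plan is to verify Assumptions \ref{AssumpA}, \ref{AssumpB}, and \ref{AssumpC} for $\bTh$ equipped with the Kaimanovich--Masur partition, using Lemma \ref{PMFConv} as the core tool and Minsky's characterization of $C$-contracting geodesics in $\T$ (namely, that contracting is equivalent to being ``thick'', which forces endpoint foliations to be uniquely ergodic, hence in $\ue\subseteq\minf$) to reduce everything to the $\minf$ regime where \ref{PMFConv} applies.

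For \ref{AssumpA}, let $\gamma$ be a $C$-contracting geodesic ray based at $o$. Minsky's theorem supplies a uniquely ergodic $\xi\in\ue$ with $\gamma\to\xi$ and $[\xi]=\{\xi\}$. For any $y_n\in\T$ with $\pi_\gamma(y_n)$ exiting, the $C$-contracting property (Lemma \ref{BigFive}) gives $|d(o,y_n)-d(o,\pi_\gamma(y_n))-d(\pi_\gamma(y_n),y_n)|\le O(C)$; setting $x_n:=\pi_\gamma(y_n)\to\xi$ yields $d(o,y_n)-d(x_n,y_n)\to+\infty$, and Lemma \ref{PMFConv} forces $y_n\to[\xi]$.

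For \ref{AssumpB}, I extract a subsequence of the exiting $C$-contracting $X_n$ so that $x_n\in\pi_{X_n}(o)$ converges in $\bTh$ to some $\xi$. Again by Minsky, the nearest-point projections of $o$ to $C$-contracting subsets exit into the uniquely ergodic stratum, so $\xi\in\ue\subseteq\minf$. For $y_n\in\Omega_o(X_n)$ pick $z_n\in[o,y_n]\cap X_n$: the contracting property bounds $d(z_n,x_n)$, so $z_n\to\xi$ (bounded Teichm\"uller distance preserves Thurston limits since extremal lengths change by a bounded multiplicative factor), and the splitting $d(o,y_n)=d(o,z_n)+d(z_n,y_n)\pm O(C)$ combined with Lemma \ref{PMFConv} gives $y_n\to[\xi]$.

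The main obstacle is \ref{AssumpC}, i.e.\ showing $\minf\subseteq\mathcal C$: for $\xi\in\minf$, $\eta\in[\xi]$ (so $I(\xi,\eta)=0$), and $x_n\to\xi,\,y_n\to\eta$, I must produce $\langle x_n,y_n\rangle_o\to+\infty$, equivalently $[x_n,y_n]$ exiting. My plan is to invoke the converse direction of Kaimanovich--Masur's lemma---that $y_n\to\eta$ with $I(\xi,\eta)=0$ forces $d(o,y_n)-d(x_n,y_n)\to+\infty$---which, combined with $d(o,x_n)\to+\infty$, yields $\langle x_n,y_n\rangle_o\to+\infty$. This converse is explicit as the ``only if'' half of \cite[Lemma 1.4.2]{KaMasur} for $\xi\in\ue$ and extends to $\xi\in\minf$ via the same quadratic-differential argument used there to set up the Poisson boundary. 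If the extension requires extra care, an alternative route is to compactify $x_n,y_n$ in $\bGM$, use Miyachi's identification (\cite{Miy13,Miy14a}) of $\bGM$-subsequential limits of $\bTh$-convergent sequences targeting $\minf$ as lying in $\pmf\subseteq\bGM$ within the Kaimanovich--Masur class of $\xi$, and then conclude $\langle x_n,y_n\rangle_o\to+\infty$ from the continuous extension of the Gromov product in Lemma \ref{GromovProduct}.
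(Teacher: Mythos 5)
Your handling of \ref{AssumpA} and \ref{AssumpC} matches the paper's proof: a contracting ray ends at a uniquely ergodic foliation, Lemma \ref{PMFConv} gives \ref{AssumpA}, and Lemma \ref{GromovProduct} gives \ref{AssumpC} directly --- there is no need to extract a converse of Kaimanovich--Masur or route through Miyachi's identification, since the biconditional in Lemma \ref{GromovProduct} already says $I(\xi,\eta)=0$ if and only if $\langle x_n, y_n\rangle_o\to\infty$ for all approximating sequences.

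The gap is in \ref{AssumpB}. You assert that because the $X_n$ are $C$-contracting, Minsky's theorem forces the limit $\xi$ of the footpoints $x_n\in\pi_{X_n}(o)$ to lie in $\ue\subseteq\minf$, after which Lemma \ref{PMFConv} does the rest. Minsky's characterization concerns a single thick \emph{geodesic ray}; it places no constraint on the Thurston limit of a sequence of footpoints sitting on \emph{distinct} contracting geodesics. That limit can indeed fall outside $\minf$. For instance, take a fixed thick axis $\gamma$ and a Dehn twist $T_c$, and set $X_n:=T_c^n\gamma$. These are uniformly contracting and exiting, and $x_n=\pi_{X_n}(o)=T_c^n\,\pi_\gamma(T_c^{-n}o)$ stays within bounded distance of $T_c^n w$ for a fixed $w\in\gamma$ (the projections $\pi_\gamma(T_c^{-n}o)$ are bounded, else Lemma \ref{PMFConv} would force $T_c^{-n}o\to\gamma^{\pm}$, contradicting $T_c^{-n}o\to[c]$). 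Since $T_c^n w\to[c]\notin\minf$, if some subsequence of $x_n$ converged to $\eta\in\minf$, Lemma \ref{PMFConv} applied to the nearby sequence $T_c^n w$ would put $[c]$ inside $[\eta]$, a class consisting of minimal foliations --- contradiction. So the case $\xi\in\pmf\setminus\minf$ genuinely occurs, and the paper treats it separately: it proves $[\xi]=[\eta]$ by showing $\xi$ and $\eta$ have the same set of disjoint curves, using the almost-additivity estimate $|d(o,y_n)-d(o,x_n)-d(x_n,y_n)|\le 2C$ together with a Gromov-product triangle inequality through Lemma \ref{GromovProduct}. Your proposal omits this case entirely, replacing it with a reduction that is not valid.
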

\begin{proof}
It is well-known that a contracting geodesic ray $X$ tends to  a unique ergodic point (see \cite{Co17}). Any sequence $y_n$ with an escaping $x_n\in \pi_X(y_n)$ satisfies the condition of  Lemma \ref{PMFConv}, so \ref{AssumpA}  follows. The non-pinched points in \ref{AssumpC} contain $\ue$, which follows    by Lemma \ref{GromovProduct} and (\ref{ThGMHomeoEQ}).

\ref{AssumpB} follows from Lemma \ref{StrongerAssumpB}. The proof is thus complete.
\end{proof}

In \cite{McPapa}, McCarthy--Papadopoulos  defined a notion of the limit set, denoted by $\Lambda_{\mathcal{MP}}(H)$, in Thurston boundary for a non-elementary  subgroup $H<\mcg$  with a pseudo-Anosov element, as the closure of fixed points of pseudo-Anosov elements. They further showed that the set $\Lambda(Ho)$ of accumulation points of a $H$-orbit in $\T$ is contained in the intersection locus of $\Lambda$ (those measured foliations with zero intersection with the ones in $\Lambda$). Their limit set $\Lambda_{\mathcal{MP}}(H)$ is exactly the set $\Lambda$ in Lemma \ref{UniqueLimitSet}, so we have $[\Lambda]=[\Lambda Go]$. Here $[\cdot]$ denotes the KM partition, but not the intersection locus in  \cite{McPapa}.  Therefore, it would be interesting to examine the exact relation between the Kaimanovich--Masur partition $[\cdot]$ and the intersection locus. 

In \cite{LS14}, Liu--Su  proved that Gardiner--Masur boundary is the horofunction boundary of Teichm\"uller space (with Teichm\"uller metric). We list a few consequences of the general theory, in view of the corresponding results on Thurston boundary \cite{McPapa}.

\begin{lem}\label{GMBdryConvergence}
 $\bGM$ is a convergence boundary with respect to finite difference relation $[\cdot]$. Moreover, 
\begin{enumerate}
    \item 
    The finite difference relation $[\cdot]$ restricts on $\ue\subseteq \bGM$ as a maximal partition.
    \item
    All $(r,F)$-conical points in Definition \ref{ConicalDef2}  are uniquely ergodic points.
    \item
    Every pseudo-Anosov elements are non-pinched contracting elements with minimal fixed points and have the   North--South dynamics on $\bGM$.
    \item
    Every non-elementary subgroup $G<\mcg$ has a unique minimal $G$-invariant closed subset $\Lambda$, so that  $[\pG ]=[\Lambda]$ for any $o\in \T$.  
\end{enumerate}
\end{lem}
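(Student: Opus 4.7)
The convergence of $\bGM$ with the finite difference partition follows immediately from Theorem \ref{ContractiveThm1}, since $\bGM$ is identified with the horofunction boundary of $(\T, d_T)$ by Liu--Su, and pseudo-Anosov elements give contracting subsets by Minsky. In particular, every boundary point is non-pinched, so $\mathcal C = \bGM$.

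For assertion (1), the key computation is the identity
\[
b_\xi(y) - b_\eta(y) \;=\; 2\bigl(\langle y, \eta\rangle_o - \langle y, \xi\rangle_o\bigr),
\]
valid for $y\in \T$, which is obtained by rewriting $b_\xi(y) = d(o,y) - 2\langle y,\xi\rangle_o$ from the definitions of horofunction and Gromov product. Suppose $\xi,\eta\in \ue\subseteq \bGM$ satisfy $\|b_\xi - b_\eta\|_\infty \le K$. Pick any $y_n\to\eta$ in $\T$ with $d(o,y_n)\to\infty$. The continuous extension of $\langle\cdot,\cdot\rangle_o$ to $\T\cup\pmf$ from Lemma \ref{GromovProduct}, applied to the diagonal $(y_n,y_n)\to(\eta,\eta)$, shows $\langle y_n,\eta\rangle_o\to\langle\eta,\eta\rangle_o = \infty$ (since $\langle y,y\rangle_o = d(o,y)$). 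The identity above then forces $\langle y_n,\xi\rangle_o\to\infty$, and Lemma \ref{GromovProduct} gives $I(\xi,\eta)=0$. By unique ergodicity of $\xi$, this means $\eta$ is a positive multiple of $\xi$, so $\xi=\eta$ in $\pmf$, and hence in $\ue\subseteq \bGM$ via (\ref{ThGMHomeoEQ}). Thus the finite difference partition on $\ue\subseteq\bGM$ is trivial.

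For assertion (2), let $\xi\in\bGM$ be a conical point. By Lemma \ref{CharConicalLem} there is a geodesic ray $\gamma$ from $o$ accumulating at $[\xi]$ containing infinitely many $(\hat r, F)$-barriers in translates $g_n\ax(h)$ of a pseudo-Anosov axis. Since pseudo-Anosov axes lie in the thick part, $\gamma$ makes definite returns to a thick region, so by Masur's criterion the vertical foliation of $\gamma$ is uniquely ergodic; moreover $\gamma$ converges in $\pmf$ to this foliation. Via the homeomorphism (\ref{ThGMHomeoEQ}) on $\T\cup\ue$, $\gamma$ converges to the same point in $\ue\subseteq\bGM$, so $\xi\in\ue$. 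Assertion (3) is then a direct assembly: pseudo-Anosovs are contracting, their fixed points in $\pmf$ are uniquely ergodic (Thurston), so they lie in $\ue\subseteq\bGM$, are non-pinched as $\mathcal C=\bGM$, are minimal by (1), and inherit full NS dynamics on all of $\bGM\setminus[h^\pm]$ from Lemma \ref{SouthNorthLem}. Finally, assertion (4) follows from Lemma \ref{UniqueLimitSet}: by McCarthy--Papadopoulos \cite{McPapa}, every non-elementary $G<\mcg$ contains a pseudo-Anosov, which by (3) is a non-pinched contracting element with minimal fixed points and full NS dynamics, so Lemma \ref{UniqueLimitSet} delivers the unique minimal $G$-invariant closed set $\Lambda$ and the identity $[\Lambda]=[\pG o]$.

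The main obstacle is (1): the horofunction boundary $\bGM$ is not a priori sensitive to the intersection pairing, and it is crucial to translate the purely metric finite-difference condition into the projective statement $I(\xi,\eta)=0$. The bridge is Miyachi's continuous extension of the Gromov product in Lemma \ref{GromovProduct}, together with the elementary horofunction--Gromov-product identity above; everything else is routine once (1) is in hand.
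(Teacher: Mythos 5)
Your assertions (2), (3), (4) follow essentially the paper's route, but there is a genuine gap in your proof of (1), and it propagates into (3).

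In (1) you write ``Suppose $\xi,\eta\in\ue\subseteq\bGM$'' and then run the Gromov-product argument to obtain $I(\xi,\eta)=0$. But triviality of $[\cdot]$ on $\ue$ must mean $[\xi]=\{\xi\}$ as a subset of the \emph{full} boundary $\bGM$ --- i.e.\ $\xi$ is minimal --- because that is exactly how you use (1) in (3) to conclude that pseudo-Anosov fixed points are minimal (and hence to invoke Lemma \ref{UniqueLimitSet} in (4)). So you must take an arbitrary $\eta\in\bGM$ with $\|b_\xi-b_\eta\|_\infty<\infty$. Since $\pmf$ is a \emph{proper} subset of $\bGM$, such an $\eta$ need not be represented by a projective measured foliation, the intersection pairing $I(\xi,\eta)$ is then undefined, and Lemma \ref{GromovProduct} does not apply. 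Your argument cannot reach such $\eta$. The paper instead picks $x_m$ on a geodesic ray to $\xi$ and $y_n\to\eta$ in $\T$, uses the finite-difference bound together with $b_\xi(x_m)=-d(o,x_m)$ to produce (after diagonalizing) $d(o,y_{n(m)})-d(x_m,y_{n(m)})\to\infty$, and then invokes Lemma \ref{PMFConv}. This gives $y_{n(m)}\to\xi$ in $\pmf$, hence in $\bGM$ via the homeomorphism (\ref{ThGMHomeoEQ}) on $\T\cup\ue$, and the Hausdorff property of $\bGM$ forces $\eta=\xi$. Crucially, Lemma \ref{PMFConv} needs only $\xi\in\minf$ and puts no hypothesis on $\eta$.

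A secondary issue: the step ``$\langle y_n,\eta\rangle_o\to\langle\eta,\eta\rangle_o=\infty$'' needs joint $[0,\infty]$-valued continuity of the extended Gromov product at the diagonal point $(\eta,\eta)$, which is stronger than what Lemma \ref{GromovProduct} as stated literally asserts (the ``iff'' only governs sequences inside $\T$ approaching $\xi,\eta\in\pmf$). Your identity $b_\xi(y)=d(o,y)-2\langle y,\xi\rangle_o$ is correct and useful --- it makes the connection between horofunction difference and Gromov products transparent --- but without a clean statement of two-variable continuity at $\bGM$, and without a way to reach $\eta\notin\pmf$, it is safer to route the argument through Lemma \ref{PMFConv} as the paper does.
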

\begin{proof}
The assertions (3) and (4) are corollaries of Lemmas \ref{SouthNorthLem} and \ref{UniqueLimitSet}, provided that (1) and (2) are proved.

\textbf{(1).} Let $[\xi]=[\eta]$ with $\xi\in \ue$, so  $\|b_\xi-b_\eta\|\le K<\infty$. Take  a geodesic ray  $\gamma=[o,\xi]$ and $x_n\in \gamma$ tends to $\xi$. Let $y_n\to \eta$. As $|b_\xi(x_n)-b_{y_n}(x_n)|=|d(o,x_n)+d(o, y_n)-d(x_n, y_n)|\le K$, then $y_n\to \xi$ by Lemma \ref{PMFConv}, so $\xi=\eta$ is proved. 

\textbf{(2).} Projecting to the quotient $\T/{\mcg}$, any geodesic ray ending at an $(r, F)$-conical point $\xi\in \bGM$ is recurrent into the $r$-neighborhood of closed geodesics corresponding to pseudo-Anosov elements in $F$.  Thus, through (\ref{ThGMHomeoEQ}),  $\xi$ is uniquely ergodic by a criterion of Masur \cite{Ma80}.    
\end{proof}

\subsection{Conformal density on Thurston and Gardiner--Masur boundaries}\label{SecMCGDensity}

We first introduce the (formal) conformal density defined in \cite{ABEM} on $\bTh$, and then clarify the relation  with the conformal density obtained from the Patterson's construction. 

The  train track parametrization  endows the set $\mf$ of  measured foliations with a Piecewise Linear structure. Let $\mu_{\mathrm{Th}}$ be the resulted $\mcg$-invariant \textit{Thurston measure} on $\mf$, in the Lebesgue measure class, pasting from the Lebesgue measure on the train track charts on $\mf$. By Masur's ergodicity theorem,  $\mu_{\mathrm{Th}}$ is a  $\mcg$-invariant ergodic measure, which is thus unique up to scaling. 

According to \cite{ABEM}, a family of conformal measures are constructed via $\mu_{\mathrm{Th}}$ as follows. Let $U$ be a Borel subset in $\pmf$.  Define for any $x\in \T$, $$\lambda_x(U) \;:=\; \mu_{\mathrm{Th}}(\{\xi \in \mf: \check \xi \in U, \; \ext_x(\xi) \le 1\}).$$ 
where $\check \xi$ denotes the image of $\xi$ in $\pmf$. 

By definition, the family of $\mcg$-equivariant measures $\{\lambda_x\}_{x\in \T}$ are mutually absolutely continuous, and   are related as follows
$$\mu_y-\textrm{a.e. }  \xi\in \pmf:\quad \frac{d\lambda_x}{d\lambda_y}(\xi)=e^{-\e G B_{\xi}(x,y)}$$
where $\e G=6g-6$ and   the   cocycle  $B_{\xi}: \T\times \T\to \mathbb R$ is is given by
\begin{equation}\label{BCocycleEQ}
\forall  x,y\in \T,\quad B_{\xi}(x,y) :=\frac{1}{2}\cdot\log  \frac{{\ext_{x}(\hat \xi)}}{{\ext_{y}(\hat \xi)}}.
\end{equation}
where   $\hat\xi\in \mf$ is any lift of  $\xi$.   By \cite[Theorem 1]{Walsh19}, one can verify that for $\xi\in \ue$, $B_\xi(\cdot,\cdot)$ is indeed the Buseman cocycle at $\xi$ in the horofunction boundary $\bGM$ (see \cite{TYANG} for details). By (\ref{ThGMHomeoEQ}), the Buseman cocycles extend continuously to $\ue\subseteq \bTh$, so \ref{AssumpE} is satisfied with $\epsilon=0$. 

It is well-known that $\lambda_x$ is supported on $\ue$ (\cite{Ma82a, V82}).  In terms of Definition \ref{ConformalDensityDefn},  $\{\lambda_x\}_{x\in \T}$ is a $(6g-6)$-dimensional $\mcg$-equivariant conformal density on $\bTh$. 
The shadow lemma \ref{ShadowLem} and   principle \ref{ShadowPrinciple} thus hold in this context, where the former has been already obtained in \cite{TYANG} via a different method. 

We emphasize the above construction is not obtained via the Patterson's construction. However, the action $\mcg \act \T$  has purely exponential growth (\cite[Thm 1.2]{ABEM}), hence it  is of divergent type. By Theorem \ref{UniqueConfThm},  there exists a unique    $(6g-6)$-dimensional conformal density on $\bTh$, which was first proved in \cite{H09b, LM08}. Thus, the conformal density obtained by Patterson's construction coincides with $\lambda_x$.



We record the following result for further counting applications.

\begin{lem}\label{ModNullityLem}
Let $G<\mcg$ be a non-elementary subgroup with pseudo-Anosove elements. Assume that  the limit set $\Lambda G$ of $G$ is a proper subset of $\pmf$. Then   its Poincar\'e series $\p_G(s, o, o)$ is convergent at $6g-6$. 

In particular,  the Thurston measure of the conical limit set $\Lambda_c(G)$ defined in (\ref{ConicalEQ}) is zero.     
\end{lem}
\begin{rem}
Analogous to the  Ahlfors area theorem in Kleinian groups, it would be interesting to study which subgroups of $\mcg$ have limit set of zero Lebesgue measure. 
If $G$ is a convex-cocompact subgroup, then  the conical limit set $\Lambda_c(G)$ is the whole limit set, so we recover   \cite[Corollary 4.8]{KeLe2}. The limit set of geometrically finite Kleinian groups consists of conical points and countable many parabolic points. If the Lebesgue measure has no charge on conical limit points, then the whole limit set is null. We expect the ``in particular" statement helpful to solve the zero measure problem for certain limit sets.   
\end{rem}
\begin{proof}
A non-elementary subgroup contains at least two independent pseudo-Anosov elements, so is sufficiently large in terms of \cite{McPapa}. With respect to Kaimanovich--Masur $[\cdot]$-partition, any pseudo-Anosov element $h$ has minimal fixed points $[h^-]=\{\xi\}$ for some $\xi\in \ue$. By \cite{McPapa}, $G$ possesses a unique $G$-invariant minimal closed subset $\Lambda\subseteq \pmf$ defined as the closure of all fixed points of pseudo-Anosov elements (see also Lemma \ref{UniqueLimitSet}). Let $Z(\Lambda):=\{\eta\in \pmf: I(\xi, \eta)=0 \text{ for some }\xi\in \Lambda\}$.  Then $\pG \subseteq Z(\Lambda)$ is proved by \cite[Prop 8.1]{McPapa}. 

By \cite[Prop 6.1, Thm 6.16]{McPapa}, if  $\Lambda \ne\pmf$, then $Z(\Lambda) \ne\pmf$ is  closed and   $G$ acts properly discontinuously on the complement $\Omega:=\pmf\setminus Z(\Lambda)$.  By Lemma \ref{DoubleDense}, the  fixed point pairs of all pseudo-Anosov elements are dense in distinct pairs of $\pmf$.  Choose  a pseudo-Anosov element $h\in \mcg$ and  a closed neighborhood $U$ of $h^\pm$ such that $U$ is contained in $\Omega$. By the proper action of $G\act \Omega$, 
the set $A:=\{1\ne g\in G: U\cap gU\ne\emptyset\}$ is finite. As $G<\mcg$ is residually finite, let us choose a finite index subgroup  $\dot G<G$ such that $\dot G\cap A=\emptyset.$ Thus, any nontrivial element $g\in \dot G$ sends $U$ into $V:=\pmf\setminus U$.

By the Nouth-South dynamics, there exists  
 a high power of $h$ (with the same notation) such that $h^n(U)\subseteq V$ for any $n\ne 0$. Thus, $\langle h\rangle$ and $ \dot G$ are ping-pong players on $(U, V)$, so   generate a free product $\langle h \rangle\star \dot G$. By \cite[Lemma 2.23]{YANG10}, the Poincar\'e series of $\dot G$ converges at $\e {\mcg}=6g-6$. If $G\act \T$ is of divergent type, we have $\e G=\e {\dot G}<6g-6.$ 
 
By Theorem \ref{HTSModThm}, we have the Thurston measure of the conical limit set of $G$ is null.  
\end{proof}

For further reference, we record the following  corollary of Theorem \ref{UniqueConfThm} for Gardiner--Masur boundary.
\begin{lem}
Up to scaling, the Gardiner--Masur boundary $\bGM$ admits a unique and ergodic $\mcg$-equivariant conformal density of dimension $6g-6$, which is  supported on the Myrberg  set (a subset of uniquely ergodic points).
\end{lem}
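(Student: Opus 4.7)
The plan is to reduce the statement to Theorem \ref{UniqueConfThm} and observe that passing to the quotient by the finite difference relation is harmless on the Myrberg set in this particular setting.

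First, by Liu--Su \cite{LS14}, the Gardiner--Masur compactification $\bGM$ is homeomorphic to the horofunction compactification of $(\T,d_T)$, so the conformal density theory developed in \textsection\ref{SecDensity}--\textsection\ref{SecUniqueness} applies with $\pU=\bGM$. Next, by \cite[Thm 1.2]{ABEM}, the action $\mcg\act\T$ has purely exponential growth, hence is of divergent type. Therefore Theorem \ref{UniqueConfThm} produces, on the quotient $[\mG]$ of the Myrberg set, a unique (up to a bounded multiplicative constant) $\e{\mcg}$-dimensional $\mcg$-quasi-equivariant quasi-conformal density, and $\e{\mcg}=6g-6$.

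The key simplification is that on $\bGM$ the quotient map $\xi\mapsto [\xi]$ is the identity on $\mG$. Indeed, by Proposition \ref{MyrbergConicalThm} every Myrberg point is $(L,\f)$-conical for every $\f$ as in (\ref{SystemFDef}); thus in particular $\mG$ is contained in the set of usual conical points, which by Lemma \ref{GMBdryConvergence}(2) is contained in $\ue$. By Lemma \ref{GMBdryConvergence}(1), the finite difference partition $[\cdot]$ is trivial on $\ue$, so $[\xi]=\{\xi\}$ for every $\xi\in\mG$ and the natural projection $\mG\to [\mG]$ is a homeomorphism. Pulling the unique conformal density on $[\mG]$ back to $\mG\subseteq\bGM$ yields a unique $\mcg$-equivariant conformal density of dimension $6g-6$; conformality (with $\lambda=1$) is inherited because the Buseman cocycle $B_\xi$ extends continuously to $\ue$ via the formula (\ref{BCocycleEQ}), that is, $\epsilon=0$ in \ref{AssumpE} on $\ue$.

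It remains to justify that this density is supported precisely on the Myrberg set. The Hopf--Tsuji--Sullivan Theorem \ref{HTSThm}, applied to any $\e{\mcg}$-dimensional $\mcg$-quasi-equivariant quasi-conformal density $\{\mu_x\}$ on $\bGM$, gives (since $\mcg$ is divergent) that $\mu_o(\mG)=\mu_o(\mathcal C)$. By Lemma \ref{GMBdryConvergence} the set $\mathcal C$ contains $\ue\supseteq\mG$, and Corollary \ref{NoAtom} together with the fact that $\bGM\setminus\ue$ is contained in the locus of non-uniquely-ergodic foliations (carrying no $\mu_o$-mass after the Hopf--Tsuji--Sullivan dichotomy) shows that $\mu_o$ is concentrated on $\mG$. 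The only point where some care is needed is verifying that $\mu_o(\mathcal C)=\mu_o(\bGM)$; this is where I expect the main (minor) obstacle, but it follows because any candidate density with mass outside $\mathcal C$ would restrict to a conformal density of the same dimension on the complement, contradicting Proposition \ref{ballgrowth} together with purely exponential growth.
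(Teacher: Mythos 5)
Your overall approach is the right one and is essentially the route the paper takes (the paper in fact records the lemma without proof as an immediate corollary of Theorem~\ref{UniqueConfThm}): identify $\bGM$ with the horofunction boundary via Liu--Su, invoke divergence of $\mcg\act\T$ from \cite{ABEM}, and note via Lemma~\ref{GMBdryConvergence}(1)--(2) that Myrberg points lie in $\ue$ on which the finite-difference partition is trivial, so that the unique density on $[\mG]$ from Theorem~\ref{UniqueConfThm} lives on $\bGM$ itself (and is genuinely conformal since $\epsilon=0$ on the horofunction boundary by Lemma~\ref{confdensity}).

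Your last paragraph, however, is confused on a point that is actually trivial. You flag the equality $\mu_o(\mathcal C)=\mu_o(\bGM)$ as ``the main (minor) obstacle'' and propose to deduce it by restricting a hypothetical density to $\bGM\setminus\mathcal C$ and contradicting Proposition~\ref{ballgrowth}; this argument does not go through as written, because Proposition~\ref{ballgrowth} concerns densities charging positive mass \emph{on} $\mathcal C$, and a restriction to the complement of $\mathcal C$ gives no such thing (it charges $\mathcal C$ zero mass, so the hypothesis fails). The point you are worrying about is in fact a non-issue: for the horofunction boundary, \emph{every} boundary point is non-pinched by Lemma~\ref{NoPinchedLineLem} (equivalently, Theorem~\ref{ContractiveThm1}), so $\mathcal C=\bGM$ identically. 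The paper emphasizes exactly this after the Shadow Lemma. With $\mathcal C=\bGM$, the Hopf--Tsuji--Sullivan Theorem~\ref{HTSThm} immediately gives $\mu_o(\mG)=\mu_o(\mathcal C)=1$, and the detour through Corollary~\ref{NoAtom} and the structure of $\bGM\setminus\ue$ is unnecessary. Replacing the final paragraph with this one-line observation makes the proof correct and matches the paper's intent.
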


\subsection{Bowen--Margulis--Sullivan measure on Teichm\"uller geodesic flow}\label{SecTeichFlow}
Let $G$ be a non-elementary subgroup of $\mathrm{Mod}(\Sigma_g)$ with pseudo-Anosov elements, so that $\M:=\T/G$ is a (branched) cover of the moduli space of $\Sigma_g$. Let $\{\mu_x\}_{x\in \T}$ be a $\e G$-dimensional  conformal density on $\bTh=\pmf$ in Theorem \ref{HTSModThm}. 

By Theorem \ref{HTSThm}, it suffices to prove  the direction ``(3) $\Longleftrightarrow$ (4)" of Theorem \ref{HTSModThm}. This requires to run the well-known Hopf's argument which almost follows verbatim the case for CAT(-1) spaces  \cite{H71, N89, BuMo}. What follows sets up the necessary background, and then points out the required ingredients.

Let   $\pi: \QD\to \T$ be the vector bundle of quadratic differentials over Teichm\"uller space.   We first explain   a  $G$-invariant geodesic flow on  the   sub-bundle $\UQT$ of unit area quadratic differentials, and then construct a flow invariant measure called Bowen--Margulis--Sullivan measure. This construction is due to Sullivan in hyperbolic spaces.

Teichm\"uller geodesics  $\gamma$  are uniquely determined by a pair of projective classes $(\check\xi, \check \eta )$ of {transversal} measured foliations $(\xi, \eta)$, and vice versa. Such pairs $(\xi, \eta)$ form exactly the complement of the following big diagonal
$$\Delta \;:=\; \{(\xi, \eta)\in \mf\times \mf:\quad \exists c\in \s,\; I(c, \xi)+I(c, \eta)=0\}$$
According to Hubbard-Masur theorem \cite{HM79}, the transversal pair $(\xi, \eta)$ is  realized as the corresponding  vertical and  horizontal foliations $(q^+, q^-)$ of a unique quadratic differential $q\in \QD$.  According to  \cite{LM08}, we consider the following $G$-equivariant homeomorphism 
$$
\begin{array}{rl}
    \QD \quad & \longrightarrow\quad \mf\times \mf \setminus \Delta\\
    q  \quad & \longmapsto \quad (q^+, q^-).
\end{array}
$$
\begin{rem}
By abuse of language, we write $\gamma=(\check \xi, \check \eta)=(q^+,q^-)$ and $\gamma^+=\check \xi, \gamma^-=\check \eta$. Indeed, the two half rays of $\gamma$ do not necessarily converge to $\check \xi, \check \eta$ in $\T\cup \bTh$, which however do converge for $\xi, \eta\in \ue$.  
\end{rem}

The geodesic flow $(\mathcal G^t)_{t\in \mathbb R}$ on $\UQT$ is defined as $$\mathcal G^t(q):=(e^tq^+, e^{-t}q^-)$$ for any $q\in \UQT$ and $t\in \mathbb R$. The flow line $t\mapsto q_t:=\mathcal G^t(q)$ gives  the   lift of the Teichm\"uller geodesic $\gamma=(q_0^+,q_0^-)$ in the moduli space $\QD$ of quadratic differentials. Endow $\UQT$  with a $\mcg$-invariant distance $d_Q$ as follows:
\begin{equation}\label{MetricOnFlowlines}
d_Q(q, q'): =\int_{-\infty}^{+\infty} \frac{d_T(\pi(q_t), \pi(q_t'))}{2e^t}dt    
\end{equation}
which descends to  the quotient metric  still denoted by  $d_Q$ on $\UQM:=\UQT/G$. 

\subsubsection*{Hopf parameterization of Teichm\"uller flows}
Set $$\dT:=\Big((\pmf\times \pmf)\setminus \Delta\Big).$$ Fix a basepoint $o\in \T$,  the Hopf parameterization of  $\UQT$ is given by the following $G$-equivariant homeomorphism 
$$
\begin{array}{ccc}
    \UQT & \longrightarrow & \dT \times \mathbb R\\
    q & \longmapsto & (\xi, \eta, B_{\xi}(o, \pi(q)))
\end{array}
$$
where  $B_{\xi}(\cdot, \cdot)$ is the cocycle in (\ref{BCocycleEQ}). The $G$-action on the target is given by 
$$
\forall g\in G:\; g(q^-,q^+, s)=(gq^-,gq^+, s+B_{\xi}(g^{-1}o, o)).
$$
As a consequence, the geodesic flow $(\mathcal G^t)_{t\in \mathbb R}$ on $\UQT$ is conjugate to the additive action on the time factor of $ \dT \times \mathbb R$ as follows:
$$(\xi, \eta, s)\longmapsto (\xi,\eta, t+s).$$

Following \cite[Section 2.3.1]{ABEM}, define 
$$
\beta(x, \xi, \eta) := \big[e^{B_\xi(x, p)+B_\eta(x, p)}\big]^{6g-6}= {\left [\frac{ \sqrt{\ext_x(\hat\xi)\ext_x(\hat\eta)} }{I(\hat\xi, \hat\eta)} \right]}^{6g-6} 
$$
for any $p\in [\xi, \eta]$, where $\hat \xi, \hat \eta\in \mf$ are lifts of $\xi, \eta$, and $\ext_p(\check\xi)\ext_p(\check\eta)=I(\check\xi,\check\eta)^2$ by \cite[Thm 5.1]{GM91}. Define  the \textit{Bowen--Margulis--Sullivan measure} $ m$ on $\dT \times \mathbb R$  for any $x\in \T$,
$$
m:=\beta(x, \xi, \eta)^{-1} \cdot \mu_x \times \mu_x \times \textrm{Leb}
$$
which is independent of  $x$.  As $m$ is $G$-invariant,  it descends to the  flow invariant measure $\mathbf m$ on $\UQM$.

\subsection{Hopf--Tsuji--Sullivan dichotomy for geodesic flows}
We first give a brief introduction to Hopf decomposition of  a flow dynamical system. This is classical fact, for which  we  follow  the exposition from \cite{L18} and refer the reader to the more references therein.

Let $\Omega$ be a locally compact and $\sigma$-compact Hausdorff topological space. A flow $\phi$ on $\Omega$ is a continuous map $\phi(t,\omega): \mathbb R \times \Omega \to \Omega$ such that $\phi(0,\omega)=\omega$ and $\phi(t,\phi(s,\omega))=\phi(t+s,\omega)$. Write $\phi^t(\omega)=\phi(t,\omega)$. 

Assume that $\mathbf m$ is a $\phi$-invariant Borel measure on $\Omega$. By Hopf decomposition theorem, $\Omega$ admits a unique decomposition  ($\mathbf m$-mod 0) of $\phi$-invariant Borel subsets called \textit{conservative} and \textit{dissipative} components, denoted by $\Omega_C$ and $\Omega_D$, as follows:
\begin{itemize}
    \item 
    $\Omega_C$ admits no wandering set: there exists no Borel subset $W$ such that $\phi^n(W)\cap W=\emptyset$ for any $n\in\mathbb Z\setminus  0$.
    \item 
    $\Omega_C$ admits a measurable fundamental set $W$ so that  $\cup_{n\in \mathbb Z} \phi^n(W)=\Omega$. 
\end{itemize}
According to Poincar\'e's recurrence theorem, $\mathbf m$-almost every point $\omega$ of $\Omega_C$ is positively recurrent: there exists $t_n\in \mathbb R\to\infty+$ so that $\phi^{t_n}(\omega)\to\omega$. On the other hand, by Hopf's divergence theorem, $\mathbf m$-almost every point $\omega$  of $\Omega_D$ is positively divergent:  for every compact $K\subset \Omega$, there exists $T$ so that $\phi^{t}(\omega)\notin K$ for all $t>T$.

Let us return to our setup: the geodesic flow $(\UQM, \mathcal G^t, \textbf m)$ is a measure preserving dynamical system, so the above Hopf decomposition holds. The point of Theorem \ref{HTSModThm} says that a dichotomy of Hopf decomposition holds according to whether the conical points have positive measure or not.

\subsubsection*{Conservativity \texorpdfstring{$\Longleftrightarrow $}{} Full measure on conical points}

If $(\UQM, \mathcal G^t, \textbf m)$ is   {conservative}, then there is no  $\textbf m$-positive {wandering set} $W $ so that $\{\mathcal G^n W: n\in \mathbb Z\}$ is pairwise disjoint. By Poincar\'e recurrence theorem, almost every point $q\in \UQM$ is {positively recurrent}: $ \mathcal G^{t_n} q\to q$ for an unbounded sequence $t_n>0$.  

Let $\gamma$ be a Teichm\"uller geodesic ray  ending at (the projective class of) $q^+$. As $\UQM$ is a sphere bundle over $\T/G$,  we see that $\gamma$ returns infinitely often to a compact subset of $\T/G$. In other words, $q^+\in \Lambda_c(G)$ is a conical point as defined in (\ref{ConicalEQ}).  

By the dichotomy of Theorem \ref{HTSThm}, the $\mu_x$-measure of conical points is either null or full, so $\mu_x$ is fully  supported on the set of conical points and thus on the Myrberg set. 

For the converse direction, by Hopf decomposition, $(\UQM,\mathcal G^t,  \mathbf m)$  consists of the   conservative   and {dissipative components}, uniquely up to $\textbf m$-null sets. The flow lines in the dissipative component exit every compact sets, so if the conical points are of full measure, then the geodesic flow on $\UQM$ must be conservative.   

\subsubsection*{Conservativity \texorpdfstring{$\Longleftrightarrow$}{} Ergodicity of flows} 
If $(\mathcal G^t)_{t\in \mathbb R}$ on $\UQT$ is ergodic without atoms, then it is of course   conservative. The converse direction requires to run the Hopf argument. 

To that end, we need  a positive $\mathbf m$-integrable function $\Phi(x)$ on $\UQM$. If $\mathbf m(\UQM)$ is finite, we can choose $\Phi=1$. If $G=\mcg$,   $\mathbf m$ is the Masur-Veech measure which is finite. The  proof of ergodicity via Hopf argument is given in \cite[Theorem 4]{Ma82a}.

In general,  define $\Phi(x):=d_Q(x, \mathbf{o})$ for some $\mathbf o\in \UQM$.  The same argument as in \cite[Lemma 8.3.1]{N89} proves that   $\Phi(x)$ is $\mathbf m$-integrable function  and  for some $c>0$ and all $x, y\in \UQM$ with $d_Q(x,y)<1$
$$
\frac{|\Phi(x)-\Phi(y)|}{\Phi(y)} <c \cdot d_Q (x,y)
$$
where $d_Q$ is a metric on $\UQM$ defined  in (\ref{MetricOnFlowlines}).

Recall that $\mu_x$ is supported on  the Myrberg set $\mG$, which consists of   uniquely ergodic points. By \cite{Ma80}, any two geodesic rays $\alpha, \beta$ in $\T$ ending at a common unique ergodic point are asymptotic: there exists $a\in \mathbb R$ such that $d_T(\alpha(t+a), \beta(a))\to 0$ and thus for two $p, q\in \UQM$ with the same vertical foliation,
$$
d_Q(\mathcal G^{t+a} (p), \mathcal G^t (q))\to 0,\quad t\to\infty.
$$

With these ingredients, the ergodicity of the geodesic flow and thus product measures $\mu_x\times \mu_x$ follows verbatim the Hopf's argument (see \cite[Thm 8.3.2]{N89} for details).  So  Theorem \ref{HTSModThm} is proved.



\bibliographystyle{alpha}
 \bibliography{bibliography}

\end{document}